\pdfoutput=1 
\documentclass[draft=false, leqno]{article}
\usepackage{amsmath,amsthm,mathrsfs}
\usepackage[normal]{boilerart1}
\usepackage{longtable}
\usepackage{booktabs}


\hypersetup{
    urlcolor = slate,
}

\crefname{section}{Chapter}{Chapters}
\crefname{subsection}{\S\!}{\S\S\!}
\Crefname{section}{Chapter}{Chapters}
\Crefname{subsection}{Section}{Sections}


\theoremstyle{definition}
\newtheorem{rmk}[equation]{Remark}

\newcommand{\interior}{\upiota}
\DeclareMathOperator{\invert}{\upvarepsilon}

\newcommand{\andeq}{\text{\qquad and\qquad}}
\newcommand{\period}{\rlap{\ .}}
\newcommand{\comma}{\rlap{\ ,}}
\newcommand{\semicolon}{\rlap{\ ;}}

\usepackage[style=british]{csquotes}
\newcommand{\defn}[1]{\emph{#1}}
\newcommand{\upperth}{\textsuperscript{th}\,}


\newcommand{\pifinite}{\xspace{$\uppi$-fi\-nite}\xspace}
\newcommand{\pifiniteness}{\xspace{$\uppi$-fi\-nite\-ness}\xspace}

\newcommand{\basechange}{\xspace{base\-change}\xspace}
\newcommand{\Basechange}{\xspace{Base\-change}\xspace}

\DeclareMathOperator{\upt}{t}
\newcommand{\tstructure}{\xspace{$\upt$-struc\-ture}\xspace}
\newcommand{\tstructures}{\xspace{$\upt$-struc\-tures}\xspace}
\newcommand{\texact}{\xspace{$\upt$-exact}\xspace}
\newcommand{\tboundedabove}{\xspace{$\upt$-bound\-ed-above}\xspace}

\newcommand{\proobject}{\xspace{pro\-öbject}\xspace}
\newcommand{\proobjects}{\xspace{pro\-öbjects}\xspace}

\newcommand{\category}{\xspace{$\infty$-cat\-e\-go\-ry}\xspace}
\newcommand{\categories}{\xspace{$\infty$-cat\-e\-gories}\xspace}
\newcommand{\categorical}{\xspace{$\infty$-cat\-e\-gor\-i\-cal}\xspace}
\newcommand{\Categorical}{\xspace{$\infty$-Cat\-e\-gor\-i\-cal}\xspace}
\newcommand{\acategory}{{an $\infty$-cat\-e\-go\-ry}\xspace}
\newcommand{\Acategory}{{An $\infty$-cat\-e\-go\-ry}\xspace}
\newcommand{\acategorical}{{an $\infty$-cat\-e\-gor\-i\-cal}\xspace}

\newcommand{\groupoid}{\xspace{$\infty$-group\-oid}\xspace}
\newcommand{\groupoids}{\xspace{$\infty$-group\-oids}\xspace}
\newcommand{\agroupoid}{{an $\infty$-group\-oid}\xspace}
\newcommand{\Agroupoid}{{An $\infty$-group\-oid}\xspace}

\newcommand{\topos}{\xspace{$\infty$-topos}\xspace}
\newcommand{\topoi}{\xspace{$\infty$-topoi}\xspace}
\newcommand{\toposic}{\xspace{$\infty$-topos\-ic}\xspace}
\newcommand{\atopos}{{an $\infty$-topos}\xspace}
\newcommand{\Atopos}{{An $\infty$-topos}\xspace}

\newcommand{\Topoi}{\xspace{$\infty$-Topoi}\xspace}

\newcommand{\pretopos}{\xspace{$\infty$-pre\-topos}\xspace}
\newcommand{\pretopoi}{\xspace{$\infty$-pre\-topoi}\xspace}

\newcommand{\apretopos}{{an $\infty$-pre\-topos}\xspace}
\newcommand{\Apretopos}{{An $\infty$-pre\-topos}\xspace}

\newcommand{\site}{\xspace{$\infty$-site}\xspace}
\newcommand{\sites}{\xspace{$\infty$-sites}\xspace}
\newcommand{\asite}{{an $\infty$-site}\xspace}
\newcommand{\Asite}{{An $\infty$-site}\xspace}

\newcommand{\presite}{\xspace{$\infty$-pre\-site}\xspace}

\newcommand{\apresite}{{an $\infty$-pre\-site}\xspace}


\newcommand{\ellbar}{\bar{\ell}}

\newcommand{\clowerstar}{c_{\ast}}

\newcommand{\elowerstar}{e_{\ast}}
\newcommand{\eupperstar}{e^{\ast}}
\newcommand{\ilowerstar}{i_{\ast}}
\newcommand{\iupperstar}{i^{\ast}}

\newcommand{\jlowerstar}{j_{\ast}}
\newcommand{\jupperstar}{j^{\ast}}

\newcommand{\fbar}{\bar{f}}
\newcommand{\flowerstar}{f_{\ast}}
\newcommand{\fupperstar}{f^{\ast}}
\newcommand{\fuppershriek}{f^{!}}
\newcommand{\flowershriek}{f_{!}}
\newcommand{\gbar}{\bar{g}}
\newcommand{\glowerstar}{g_{\ast}}
\newcommand{\gupperstar}{g^{\ast}}
\newcommand{\guppershriek}{g^{!}}

\newcommand{\hupperstar}{h^{\ast}}

\newcommand{\plowerstar}{p_{\ast}}
\newcommand{\pupperstar}{p^{\ast}}
\newcommand{\puppershriek}{p^{!}}
\newcommand{\plowershriek}{p_{!}}
\newcommand{\prupperstar}{\pr^{\ast}}
\newcommand{\qlowerstar}{q_{\ast}}
\newcommand{\rlowerstar}{r_{\ast}}
\newcommand{\rupperstar}{r^{\ast}}

\newcommand{\supperstar}{s^{\ast}}

\newcommand{\qupperstar}{q^{\ast}}
\newcommand{\xlowerstar}{x_{\ast}}
\newcommand{\xupperstar}{x^{\ast}}
\newcommand{\xuppershriek}{x^{!}}
\newcommand{\ylowerstar}{y_{\ast}}

\newcommand{\yuppershriek}{y^{!}}
\newcommand{\wlowerstar}{w_{\ast}}
\newcommand{\wupperstar}{w^{\ast}}

\newcommand{\zlowerstar}{z_{\ast}}
\newcommand{\zupperstar}{z^{\ast}}
\newcommand{\zuppershriek}{z^{!}}
\newcommand{\epsilonlowerstar}{\varepsilon_{\ast}}
\newcommand{\epsilonupperstar}{\varepsilon^{\ast}}

\newcommand{\phiupperstar}{\phi^{\ast}}
\newcommand{\philowershriek}{\phi_{!}}

\newcommand{\lambdahat}{\widehat{\uplambda}}
\newcommand{\piupperstar}{\pi^{\ast}}

\newcommand{\xiupperstar}{\xi^{\ast}}

\newcommand{\sigmahat}{\hat{\sigma}}

\newcommand{\Xtilde}{\widetilde{X}}
\newcommand{\Ytilde}{\widetilde{Y}}
\newcommand{\Ztilde}{\widetilde{Z}}

\newcommand{\ctop}{\operatorname{c}}
\DeclareMathOperator{\upC}{C}
\newcommand{\Chat}{\widehat{\upC}}

\newcommand{\ksep}{k^{\sep}}
\newcommand{\kbar}{\overline{k}}

\newcommand{\Gammalowerstar}{\Gammaup_{\ast}}
\newcommand{\Gammaupperstar}{\Gammaup^{\ast}}
\newcommand{\Gammauppershriek}{\Gammaup^{!}}
\newcommand{\Gammalowershriek}{\Gammaup_{!}}

\DeclareMathOperator{\Mup}{M}
\DeclareMathOperator{\Wup}{W}
\newcommand{\overleftW}{\overleftarrow{\Wup}}
\newcommand{\overleftrightW}{\overleftrightarrow{\Wup}}


\DeclareMathOperator{\sep}{sep}
\DeclareMathOperator{\Nis}{nis}
\DeclareMathOperator{\fg}{fg}
\DeclareMathOperator{\bcc}{bcc}
\DeclareMathOperator{\cart}{cart}

\DeclareMathOperator{\cons}{cons}

\DeclareMathOperator{\lc}{lc}

\DeclareMathOperator{\an}{an}
\DeclareMathOperator{\perf}{perf}

\DeclareMathOperator{\zar}{zar}
\DeclareMathOperator{\hyp}{hyp}
\DeclareMathOperator{\post}{post}
\DeclareMathOperator{\bdd}{b}
\DeclareMathOperator{\spec}{spec}
\DeclareMathOperator{\disc}{disc}
\DeclareMathOperator{\indisc}{indisc}
\DeclareMathOperator{\loc}{loc}
\DeclareMathOperator{\Stone}{Stn}
\DeclareMathOperator{\qc}{qc}

\DeclareMathOperator{\coh}{coh}
\DeclareMathOperator{\bc}{bc}
\DeclareMathOperator{\lex}{lex}
\DeclareMathOperator{\ft}{ft}
\DeclareMathOperator{\inv}{inv}
\DeclareMathOperator{\cts}{cts}

\DeclareMathOperator{\sober}{sob}
\DeclareMathOperator{\sh}{sh}
\DeclareMathOperator{\hens}{h}
\DeclareMathOperator{\alg}{alg}

\DeclareMathOperator{\toptextit}{top}
\DeclareMathOperator{\abs}{abs}
\DeclareMathOperator{\norm}{norm}

\DeclareMathOperator{\cc}{cc}
\DeclareMathOperator{\adm}{adm}
\DeclareMathOperator{\und}{und}
\DeclareMathOperator{\pyk}{pyk}
\DeclareMathOperator{\cg}{cg}

\DeclareMathOperator{\num}{ng}

\newcommand{\pt}{\ast}

\DeclareMathOperator{\source}{s}
\DeclareMathOperator{\target}{t}


\newcommand{\intersect}{\cap}
\newcommand{\colonequals}{\coloneq}
\newcommand{\tensor}{\otimes}
\newcommand{\coproduct}{\sqcup}
\newcommand{\cross}{\times}
\newcommand{\crosslimits}{\operatornamewithlimits{\cross}} 
\newcommand{\unionlimits}{\operatornamewithlimits{\cup}} 
\newcommand{\leftadjoint}{\shortlefttack}
\newcommand{\isomorphic}{\cong}
\newcommand{\orientedcup}{\mathbin{\overleftarrow{\cup}}}
\newcommand{\orientedcupbc}{\orientedcup_{\bc}}
\newcommand{\orientedtimes}{\mathbin{\overleftarrow{\times}}}
\newcommand{\of}{\circ}
\newcommand{\equivalent}{\simeq}
\newcommand{\paren}[1]{\left(#1\right)}

\renewcommand{\rhd}{\smalltriangleright}

\newcommand{\horn}[2]{\upLambda_{#1}^{#2}}

\newcommand{\llbracket}{\lBrack} 
\newcommand{\rrbracket}{\rBrack} 


\newcommand{\unit}{u}
\newcommand{\counit}{c}

\DeclareMathOperator{\LMod}{LMod}

\newcommand{\Subpi}{\Sub_{\uppi}}

\newcommand{\Deltaop}{\DDelta^{\op}}

\newcommand{\Pos}{\categ{Pos}}
\newcommand{\Posfin}{\Pos^{\fin}}
\newcommand{\Setfin}{\Set^{\fin}}
\newcommand{\Preord}{\categ{Pord}}

\newcommand{\Comp}{\categ{Comp}}
\newcommand{\Stn}{\categ{Stn}}
\newcommand{\Proj}{\categ{Proj}}
\newcommand{\Projop}{\Proj^{\op}}

\newcommand{\Sch}{\categ{Sch}}
\newcommand{\Schft}{\Sch^{\ft}}
\newcommand{\SchftCC}{\Schft_{\CCup}}
\newcommand{\Schperf}{\Sch_{\perf}}

\newcommand{\TSpc}{\categ{TSpc}}

\newcommand{\TSpccg}{\TSpc^{\cg}}
\newcommand{\TSpcng}{\TSpc^{\num}}
\newcommand{\TSpcspec}{\TSpc^{\spec}}

\newcommand{\sSet}{\operatorname{s\Set}}

\newcommand{\preTop}{\categ{Pretop}_{\infty}}
\newcommand{\preTopbdd}{\preTop^{\bdd}}
\newcommand{\Topcoh}{\Top_{\infty}^{\coh}}
\newcommand{\Topbc}{\Top_{\infty}^{\bc}}
\newcommand{\Topbcadm}{\Top_{\infty}^{\bc,\adm}}
\newcommand{\Toploc}{\Top_{\infty}^{\loc}}
\newcommand{\Toppt}{\Top_{\infty,\ast}}
\newcommand{\TopStone}{\Top_{\infty}^{\Stone}}
\newcommand{\StrTop}{\categ{StrTop}}
\newcommand{\StrTopcc}{\StrTop^{\cc}}
\newcommand{\StrTopbcc}{\StrTop^{\bcc}}
\newcommand{\StrTopspec}{\StrTop^{\spec}}
\newcommand{\LocCocart}{\categ{LocCocart}}

\newcommand{\profincomp}{_{\uppi}^{\wedge}}
\renewcommand{\Space}{\categ{S}}
\newcommand{\Spaceu}{\Space^{\operatorname{u}}}
\newcommand{\Spacefin}{\Space_{\uppi}}
\newcommand{\Spacefinite}{\Space^{\fin}}
\newcommand{\Spacefinitept}{\Space_{\ast}^{\fin}}
\newcommand{\Spaceprofin}{\Space_{\uppi}^{\wedge}}
\newcommand{\Str}{\categ{Str}}
\newcommand{\Strat}{\categ{Str}}
\newcommand{\StrP}{\Str_{P}}

\newcommand{\Strfin}{\Str_{\uppi}}
\newcommand{\StrfinP}{\Str_{\uppi,P}}
\newcommand{\Stratfin}{\Str_{\uppi}}
\newcommand{\Strprofin}{\Str_{\uppi}^{\wedge}}
\newcommand{\Strprofinadm}{\Str_{\uppi}^{\wedge,\adm}}
\newcommand{\StrprofinS}{\Str_{\uppi,S}^{\wedge}}
\newcommand{\StrprofinP}{\Str_{\uppi,P}^{\wedge}}
\newcommand{\Stratprofin}{\Str_{\uppi}^{\wedge}}

\newcommand{\Lay}{\categ{Lay}}
\newcommand{\Laycart}{\Lay^{\cart}}
\newcommand{\Layfin}{\Lay_{\uppi}}
\renewcommand{\Stab}{\operatorname{Sp}}

\newcommand{\lambdapi}{\uplambda_{\uppi}}

\DeclareMathOperator{\Cons}{Cons}
\newcommand{\Aff}{\categ{Aff}}
\newcommand{\Conset}{\categ{Cons}_{\et}}
\newcommand{\PShet}{\categ{PSh}_{\et}}
\newcommand{\Shet}{\categ{Sh}_{\et}}

\DeclareMathOperator{\Dup}{D}

\renewcommand{\Perf}{\operatorname{Perf}}
\newcommand{\Dcons}{\Dup_{\!\cons}}

\DeclareMathOperator{\Eup}{E}

\DeclareMathOperator{\ho}{h}
\newcommand{\hoSpace}{{\ho_1\kern-0.15em\Space}}

\DeclareMathOperator{\Pyk}{Pyk}
\DeclareMathOperator{\CS}{CS}
\DeclareMathOperator{\CO}{CO}

\DeclareMathOperator{\LC}{LC}

\DeclareMathOperator{\PSh}{PSh}
\DeclareMathOperator{\Sh}{Sh}
\newcommand{\Shhyp}{\Sh^{\!\hyp}}
\newcommand{\Sheff}[1]{\Sh_{\eff}(#1)}
\newcommand{\Sheffhyp}[1]{\Sh_{\eff}^{\!\hyp\!}(#1)}

\DeclareMathOperator{\Vect}{Vect}
\renewcommand{\Rep}{\operatorname{Rep}}

\DeclareMathOperator{\FC}{FC}
\DeclareMathOperator{\UH}{UH}

\DeclareMathOperator{\Pup}{P}

\DeclareMathOperator{\Alex}{Alex}
\DeclareMathOperator{\Special}{S}

\newcommand{\trun}{\uptau}

\newcommand{\exrep}{\uprho}


\newcommand{\xtilde}{\widetilde{x}}
\newcommand{\stilde}{\tilde{s}}
\newcommand{\Ktilde}{\widetilde{K}}
\newcommand{\Stilde}{\widetilde{S}}
\newcommand{\Ttilde}{\widetilde{T}}
\newcommand{\Ptilde}{\widetilde{P}}
\newcommand{\Qtilde}{\widetilde{Q}}
\newcommand{\Mtilde}{\widetilde{M}}
\newcommand{\Utilde}{\widetilde{U}}
\newcommand{\Wtilde}{\widetilde{W}}

\newcommand{\Pitilde}{\widetilde{\Pi}}
\newcommand{\mbfPitilde}{\widetilde{\mbfPi}}


\DeclareMathOperator{\Isom}{Isom}

\DeclareMathOperator{\Gal}{Gal}
\newcommand{\GalDelta}{\Gal^{\upDelta}}

\DeclareMathOperator{\mat}{mat}

\DeclareMathOperator{\Pro}{Pro}
\DeclareMathOperator{\rk}{rk}
\DeclareMathOperator{\Sing}{Sing}
\DeclareMathOperator{\Exit}{Exit}
\newcommand{\ExittildeP}{\widetilde{\Exit}{}^{P}}

\DeclareMathOperator{\Pt}{Pt}

\DeclareMathOperator{\Nbd}{Nbd}

\DeclareMathOperator{\MOR}{Hom}
\newcommand{\Funlowerstar}{\Fun_{\ast}}
\newcommand{\Funupperstar}{\Fun^{\ast}}
\newcommand{\Funlex}{\Fun^{\lex}}
\newcommand{\sdop}{\sd^{\op}}
\DeclareMathOperator{\rank}{rk}

\newcommand{\Functs}{\Fun^{\cts}}

\newcommand{\Funcross}{\Fun^{\cross}}

\DeclareMathOperator{\Frac}{Frac}

\newcommand{\SC}{\upbeta}

\newcommand{\restrict}[2]{#1|_{#2}}
\newcommand{\Union}{\bigcup}
\DeclareMathOperator{\Lup}{L}

\DeclareMathOperator{\Nerve}{N}
\newcommand{\NNerve}{\NNup}
\renewcommand{\Bar}{\operatorname{Bar}}
\DeclareMathOperator{\Hup}{H}
\DeclareMathOperator{\Oup}{O}
\newcommand{\Ohens}{\Oup^{\hens}}
\newcommand{\Osh}{\Oup^{\sh}}
\DeclareMathOperator{\Tup}{T}

\DeclareMathOperator{\Frob}{Frob}
\DeclareMathOperator{\Mon}{Mon}

\DeclareMathOperator{\BC}{BC}


\newcommand{\cohbdd}{_{<\infty}^{\coh}}
\newcommand{\UUcoh}{\UU^{\coh}}
\newcommand{\WWcoh}{\WW^{\coh}}
\newcommand{\XXcoh}{\XX^{\coh}}
\newcommand{\YYcoh}{\YY^{\coh}}
\newcommand{\ZZcoh}{\ZZ^{\coh}}
\newcommand{\UUcohbdd}{\UUcoh_{<\infty}}
\newcommand{\WWcohbdd}{\WWcoh_{<\infty}}
\newcommand{\XXcohbdd}{\XXcoh_{<\infty}}
\newcommand{\YYcohbdd}{\YYcoh_{<\infty}}
\newcommand{\ZZcohbdd}{\ZZcoh_{<\infty}}


\newcommand{\XXhyp}{\XX^{\hyp}}
\newcommand{\YYhyp}{\YY^{\hyp}}
\DeclareMathOperator{\Pathop}{Path}
\newcommand{\Path}[1]{\Pathop(#1)}

\newcommand{\XXan}{(X/X^{\zar})_{\an}}
\newcommand{\XPan}{(X/P)_{\an}}

\newcommand{\XSan}{(X/S)_{\an}}
\newcommand{\XSet}{(X/S)_{\et}}

\newcommand{\YYan}{(Y/Y^{\zar})_{\an}}


\DeclareMathOperator{\lisse}{lisse}
\DeclareMathOperator{\locsys}{lc}



\newcommand{\Pcons}{P\textup{-}\!\operatorname{cons}}
\newcommand{\Qcons}{Q\textup{-}\!\operatorname{cons}}

\newcommand{\Scons}{S\textup{-}\!\operatorname{cons}}

\newcommand{\Tcons}{T\textup{-}\!\operatorname{cons}}
\newcommand{\Mcons}{M\textup{-}\!\operatorname{cons}}
\newcommand{\Ucons}{U\textup{-}\!\operatorname{cons}}

\newcommand{\Sfcons}{S\textup{-}\!\operatorname{fcons}}

\newcommand{\Sspec}{S\textup{-}\!\operatorname{spec}}
\newcommand{\nspec}{[n]\textup{-}\!\operatorname{spec}}
\newcommand{\ncoh}{n\text{-}\!\operatorname{coh}}


\newcommand{\DecTop}[1]{\Dec_{#1}(\Topbc)}
\newcommand{\DecTopStone}[1]{\Dec_{#1}(\TopStone)}
\newcommand{\DecSpace}[1]{\Dec_{#1}(\Space)}
\newcommand{\DecSpacefin}[1]{\Dec_{#1}(\Spacefin)}
\newcommand{\DecSpaceprofin}[1]{\Dec_{#1}(\Spaceprofin)}


\newcommand{\orientedpull}[3]{#1 \orientedtimes_{#2} #3}
\newcommand{\orientedpush}[3]{#1 \orientedcup^{#2} #3}
\newcommand{\commacat}[3]{#1 \mathbin{\downarrow}_{#2} #3}
\newcommand{\commacatdisplay}[3]{#1 \underset{#2}{\mathbin{\downarrow}} #3}
\newcommand{\Loc}[2]{{#1}_{(#2)}}

\newcommand{\near}{\upPsi}
\newcommand{\conear}{\upPsi}
\newcommand{\conearlowerstar}{\conear_{\ast}}


\newcommand{\AAup}{\mathbfup{A}}
\newcommand{\CCup}{\mathbfup{C}}
\newcommand{\FFup}{\mathbfup{F}}
\newcommand{\NNup}{\mathbfup{N}}
\newcommand{\NNrhd}{\NNup^{\rhd}}
\newcommand{\PPup}{\mathbfup{P}}
\newcommand{\QQup}{\mathbfup{Q}}
\newcommand{\UUup}{\mathbfup{U}}
\newcommand{\ZZup}{\mathbfup{Z}}
\newcommand{\FFp}{\FFup_{\!p}}
\newcommand{\FFq}{\FFup_{\!q}}
\newcommand{\QQell}{\QQup_{\el}}
\newcommand{\QQellbar}{\overline{\QQup}_{\el}}
\newcommand{\ZZell}{\ZZup_{\el}}
\newcommand{\ZZhat}{\widehat{\ZZup}}
\DeclareMathOperator{\BG}{BG}
\DeclareMathOperator{\BD}{BD}
\DeclareMathOperator{\BI}{BI}
\DeclareMathOperator{\Gup}{G}
\DeclareMathOperator{\upU}{U}
\newcommand{\Gk}{\Gup_{k}}
\newcommand{\GK}{\Gup_{K}}
\newcommand{\BGk}{\BG_{k}}
\newcommand{\BGK}{\BG_{K}}
\DeclareMathOperator{\Iup}{I}
\DeclareMathOperator{\Kup}{K}
\DeclareMathOperator{\Sup}{S}
\newcommand{\fan}{\wedgemidvert} 
\DeclareMathOperator{\Bup}{B\!}
\newcommand{\BZZ}{{\Bup\ZZup}}
\newcommand{\BZZhat}{{\Bup\ZZhat}}


\newcommand{\mfrak}{\mathfrak{m}}
\newcommand{\pfrak}{\mathfrak{p}}


\newcommand{\Shape}{\upPi_{\infty}}
\newcommand{\Shapen}{\upPi_{n}}
\newcommand{\Shapetrun}{\upPi_{<\infty}}
\newcommand{\Shapeprofin}{\widehat{\upPi}_{\infty}}
\newcommand{\Shapeet}{\Shape^{\et}}
\newcommand{\Shapeettrun}{\Shapetrun^{\et}}
\newcommand{\Shapeetprofin}{\Shapeprofin^{\et}}
\newcommand{\Shapeetprofinone}{\widehat{\upPi}_{1}^{\et}}
\newcommand{\Shapeethyp}{\Shape^{\et,\hyp}}
\newcommand{\Shapeethypprofin}{\Shapeprofin^{\et,\hyp}}

\newcommand{\StrShape}{\widehat{\upPi}_{(\infty,1)}}

\newcommand{\StrShapeet}{\StrShape^{\et}}

\newcommand{\pietext}{\uppi^{\et}}
\newcommand{\piet}{\hat{\uppi}^{\et}}
\newcommand{\Pihat}{\widehat{\upPi}}


\usepackage{todonotes} 
\usepackage{xargs}  
\newcommandx{\PHtodo}[2][1=]{\todo[linecolor=blue,backgroundcolor=blue!25,bordercolor=blue,#1]{#2}}


\newcommand{\enumref}[2]{(\hyperref[#1.#2]{\ref*{#1}.\ref*{#1.#2}})}

\setcounter{secnumdepth}{2}
\numberwithin{equation}{subsection}

\hyphenation{pro-group-oid}
\hyphenation{mon-oid-al}
\hyphenation{Grun-dlehren}
\hyphenation{Groth-en-dieck}

\defbibheading{references}[\refname]{%
	\section*{#1}%
	\addcontentsline{toc}{part}{References} %
	\markboth{#1}{#1}
	}

\addbibresource{../universal.bib} 


\indexsetup{toclevel=part} 
\makeindex[name=terminology,title={Index of Terminology},columns=2,intoc]
\makeindex[name=notation,title={Index of Notation},columns=2,intoc]


\title{Exodromy}

\author{Clark Barwick \and Saul Glasman \and Peter Haine}

\date{August 21, 2020}


\begin{document}

\maketitle

\begin{abstract} 
	Let $X$ be a quasicompact quasiseparated scheme.
	Write $\Gal(X)$ for the category whose objects are geometric points of $X$ and whose morphisms are specializations in the étale topology.
	We define a natural profinite topology on the category $ \Gal(X) $ that globalizes the topologies of the absolute Galois groups of the residue fields of the points of $ X $.
	One of the main results of this book is that $\Gal(X)$ variant of MacPherson's exit-path category suitable for the étale topology: we construct an equivalence between representations of $\Gal(X)$ and constructible sheaves on $ X $. 
	We show that this \textit{exodromy equivalence} holds with nonabelian coefficients and with finite abelian coefficients.
	More generally, by using the pyknotic/condensed formalism, we extend this equivalence to coefficients in the category of modules over profinite rings and algebraic extensions of $ \QQell $.
	As an `exit-path category', the topological category $ \Gal(X) $ also gives rise to a new, concrete description of the étale homotopy type of $ X $.

	We also prove a higher categorical form of Hochster Duality, which reconstructs the entire étale topos of a quasicompact and quasiseparated scheme from the topological category $\Gal(X)$.
	Appealing to Voevodsky's proof of a conjecture of Grothendieck, we prove the following reconstruction theorem for normal varieties over a finitely generated field $ k $ of characteristic $ 0 $: the functor $ \goesto{X}{\Gal(X)} $ from normal $ k $-varieties to topological categories with an action of $\Gk$ and equivariant functors that preserve minimal objects is \textit{fully faithful}.
\end{abstract}

\newpage 

\bookmark[page=2,level=1]{Contents}
\setcounter{tocdepth}{2}
\tableofcontents

\newpage


\setcounter{section}{-1}
\section{Introduction}


Let $X$ be a quasicompact quasiseparated scheme.

\begin{cnstr}\label{cnstr:GalXcat}
 	Define a category $\Gal(X)$ as follows.
	\begin{itemize}
 		\item An object is a \textit{geometric point} $\fromto{x}{X}$: a point $ \Spec\upkappa(x) \to X $ that exhibits $\upkappa(x) $ as a separable closure of the residue field $\upkappa(x_0)$ of its image $x_0\in X^{\zar}$.
 
 		\item Given geometric points $x\to X$ and $y\to X$, a morphism $\fromto{x}{y}$ in $ \Gal(X) $ is a \textit{specialization} $x\leftsquigarrow y$: a lift of the geometric point $ y \to X $ to a geometric point $y\to X_{(x)}$ of the strict localization \smash{$X_{(x)} \colonequals \Spec(\Osh_{X,x_0}) $}. 
	\end{itemize}
\end{cnstr}

The assignment $\goesto{x}{x_0}$ defines a functor from $ \Gal(X) $ to the \textit{specialization poset} of the Zariski topological space $X^{\zar}$ of $ X $: the poset of points of $ X^{\zar} $ in which $x_0\leq y_0$ if and only if $x_0$ lies in the closure of $y_0$. 
The fiber over a point $x_0$ is a connected groupoid in which the automorphism group of each object is the absolute Galois group $\Gup_{\upkappa(x_0)}$ of $\upkappa(x_0)$.
The category $\Gal(X)$ thus \textit{globalizes} the absolute Galois groups of the residue fields of the points of $ X $.

Accordingly, we synthesize the profinite topologies on these Galois groups into a global topology on the category $ \Gal(X) $:

\begin{cnstr}\label{cnstr:GalXtop}
	For any point $u\to X$ that is finite over its image $u_0\in X^{\zar}$, we form the unramified extension $A$ of the henselization $\Ohens_{X,u_0}$ with residue field the separable closure of $\upkappa(u_0)$ in $\upkappa(u)$, and we write $X_{(u)}\coloneq\Spec A$.
	If $ v \to X $ is finite over its image $v_0\in X^{\zar}$, then a \emph{specialization} $u\leftsquigarrow v$ is a point $v\to X_{(u)}$ of $X_{(u)}$ lying over $v\to X$.
	For any specialization $u\leftsquigarrow v$, we define the subset $U(u\leftsquigarrow v)$ of the set of morphisms of $\Gal(X)$ consisting of those specializations $x\leftsquigarrow y$ that lie over $u\leftsquigarrow v$. 
	We endow the morphisms of $\Gal(X)$ with the topology generated by the sets $U(u\leftsquigarrow v)$. 
\end{cnstr}

In this book, we prove three theorems about the topological category $\Gal(X)$: the \textit{Exodromy}, \textit{Homotopy}, and \textit{Reconstruction Theorems}.
The Exodromy Theorem is a classification theorem for constructible sheaves on $X$.
To state it, we consider the constructible derived category%
\footnote{More precisely, we work with the natural \categorical enhancements of these derived categories.}
$ \Dcons(X; \Lambda) $ in each of the following situations:
\begin{enumerate}[(1)]
	\item If $\Lambda$ is a finite ring, then $\Dcons(X;\Lambda)$ is the constructible derived category of complexes of $\Lambda$-modules with perfect stalks.

	\item If $\Lambda$ is a noetherian ring that is complete with respect to an ideal $ I \subset \Lambda $ such that the quotients $ \Lambda/I^n $ are finite, then $\Dcons(X;\Lambda)$ is the category of constructible complexes of $\Lambda$-modules (with perfect stalks) as constructed by Deligne \cite[\S1.1]{MR601520}, Ekedahl \cite{MR1106899}, and Bhatt--Scholze \cite{MR3379634}.

	\item If $\Lambda $ is an algebraic extension of \smash{$ \QQell $}, then $\Dcons(X;\Lambda)$ is the category of constructible complexes of $\Lambda$-vector spaces (with perfect stalks) as constructed by Deligne \cite{MR601520} and Bhatt--Scholze \cite{MR3379634}.
\end{enumerate}
Using the formalism of pyknotic/condensed mathematics \cite{pyknoticI,Scholze:Condensedtalk,Scholze:condensedtalknotes,Scholze:condensednotes}, one can make sense of `topologies' on (higher) categories and the continuity of functors between them.
In each of these cases, the natural topology on $\Lambda$ induces a `topology' on the category $\Perf(\Lambda)$ of perfect complexes of $ \Lambda $-modules; we show that for every constructible complex $F\in \Dcons(X;\Lambda)$, the formation of the stalks $ x \mapsto F_x$ defines a continuous functor
\begin{equation*}
	\exrep_F \colon \Gal(X) \to \Perf(\Lambda) \period
\end{equation*}
We prove:

\begin{thm}[{Exodromy; \Cref{thm:exodromyfinitering,thm:exodromyZell,thm:exodromyQellbar}}]\label{lede:exodromy}
	Let $ X $ be a scheme whose underlying topological space is noetherian.
	Then the assignment $F \mapsto \exrep_F $ is an equivalence of \categories
	\begin{equation*}
		\Dcons(X;\Lambda) \simeq \Functs(\Gal(X),\Perf(\Lambda)) \period
	\end{equation*}
\end{thm}

In fact, the coefficients in the Exodromy Theorem can be taken to be even more general, even nonabelian (see \Cref{cor:Exoforschemes,mainthm:exodromyfinitering,introthm:exodromyZell,introthm:exodromyQellbar} for more refined statements).
As a result, we are able to use Hoyois' description of the \textit{étale homotopy type} $\Shapeet(X)$ of Artin--Mazur--Friedlander \cites{MR0245577}[\S 4]{MR676809} via Lurie's shape theory \cites[\HTTsubsec{7.1.6}]{HTT}[\HAsec{A.1}]{HA}[\SAGsec{E.2}]{SAG}[Corollary 5.6]{MR3763287} to prove our \textit{Homotopy Theorem}.
This result shows that we can recover the étale homotopy type from $\Gal(X)$ in the following manner.
We may invert all the morphisms of $\Gal(X)$ in a way that respects the topology to obtain a \textit{classifying prospace} $\invert(\Gal(X))$.
We then construct a natural map
\begin{equation*}
	\theta_X \colon \Shapeet(X) \to \invert(\Gal(X)) \comma
\end{equation*}
and prove that $ \theta_X $ is an equivalence (in an appropriate sense):

\begin{thm}[Homotopy; \Cref{thm:mainAG}]\label{lede:homotopy}
	Let $ X $ be a quasicompact quasiseparated scheme.
	For each geometric point $x \to X$ and integer $n \geq 1$, the map $\theta_X$ induces an isomorphism of homotopy progroups
	\begin{equation*}
		\isomto{\pietext_n(X,x)}{\uppi_n(\Gal(X),x)} \period
	\end{equation*}
\end{thm}

\noindent Thus the category $\Gal(X)$ is a \textit{refinement} of the étale homotopy type of $ X $.
Moreover, the Homotopy Theorem provides a new, concrete description of the étale homotopy type.

Once again, more is true: $\Gal(X)$ recovers not only the étale homotopy type but the whole étale topos.
This fact, combined with an early result of Voevodsky \cite{MR1098621}, yields the following \textit{Reconstruction Theorem}.
We view this Reconstruction Theorem as a globalization of the Neukirch--Uchida Theorem \cites{MR0244211}{MR0258804}{MR0432593}: it provides situations in which the topological category $\Gal(X)$ is a complete invariant of the scheme $X$.

\begin{thm}[Reconstruction; \Cref{nul:mainthmonGal}]\label{lede:reconstruction} 
	Let $k$ be a finitely generated field of characteristic zero, and let $\Gk$ be the absolute Galois group of $ k $.
	Then the assignment
	\begin{equation*}
		\goesto{X}{\Gal(X)}
	\end{equation*}
	is fully faithful as a functor from normal $k$-varieties to topological categories over $\BGk$ and continuous functors over $\BGk$ that carry minimal objects to minimal objects.
\end{thm}

\noindent Together with a generalization of the Riemann Existence Theorem (\Cref{thm:headlinestratRiemannexistence}), this proves that normal $k$-varieties and morphisms between them can be reconstructed entirely from topological and group-theoretic data.

The Exodromy Theorem (\Cref{lede:exodromy}) justifies thinking of the category $\Gal(X)$ is an algebro-geometric analogue of MacPherson's exit-path category.
To further explain this point, we turn to a discussion of homotopy types and exit-path categories in topology and algebraic geometry.
We then give a more detailed overview of the contents of this book.


\subsection{Monodromy for topological spaces}

It is a truth universally acknowledged, that a locally constant sheaf of $\CCup$-vector spaces on a connected topological manifold $M$ is completely determined by its attached \textit{monodromy representation}: a choice of basepoint $ x \in M $ specifies an equivalence of categories
\begin{equation*}
	\Mon_x \colon \equivto{\LC(M;\Vect(\CCup))}{\Rep_{\CCup}(\uppi_1(M,x))} \period
\end{equation*}
To avoid selecting a point and to drop the connectivity hypothesis on $M$, we can combine the set of connected components and the various fundamental groups of $M$ to form the \textit{fundamental groupoid} $\upPi_1(M)$. 
Then the monodromy equivalence becomes a natural equivalence
\begin{equation*}
	\Mon \colon \equivto{\LC(M;\Vect(\CCup))}{\Fun(\upPi_1(M),\Vect(\CCup))} \period
\end{equation*}

An early insight of Kan was that, in a similar fashion, \textit{all} the homotopy groups and all the $k$ invariants of $M$ could be combined to form a single combinatorial object which knows everything about the homotopy type of $M$.
This combinatorial object a simplicial set $\Shape(M)$ known as the \textit{singular simplicial set} or the \textit{fundamental \groupoid} of $M$.
Perhaps the clearest formulation of this insight was that of Quillen, who showed that when topological spaces and simplicial sets are given the conventional choices of model structure, the functor
\begin{equation*}
	\Shape \colon \fromto{\TSpc}{\sSet}
\end{equation*}
is a right Quillen equivalence. 
Nowadays we go a step farther and think of $\Shape$ as an equivalence $\equivto{\Space}{\Gpd_{\infty}}$ between the underlying \textit{\category} of spaces and that of \groupoids.

The fundamental \groupoid of $M$ appears in derived versions of the monodromy equivalence: for instance, the monodromy of a locally constant sheaf of \textit{complexes} of $\CCup$-vector spaces is a functor from $\Shape(M)$ to complexes, and there is a monodromy equivalence of \categories
\begin{equation*}
	\Mon \colon \equivto{\LC(M;\Dup(\CCup))}{\Fun(\Shape(M),\Dup(\CCup))} \period
\end{equation*}
Here $ \Dup(\CCup) $ denotes the derived \category of $ \CCup $.
All of these monodromy equivalences follow from the universal example of locally constant sheaves of \textit{spaces} on $M$, which are known as \textit{parametrized homotopy types} in the homotopy theory literature \cites{MR3890766}{MR3252967}{MR2271789}. 
That is to say, there is a natural monodromy equivalence of \categories
\begin{equation}\label{eq:monmanifold}
	\Mon \colon \equivto{\LC(M;\Space)}{\Fun(\Shape(M),\Space)} 
\end{equation}
from \category of locally constant sheaves of spaces on $ M $ to space-valued representations of the \groupoid $ \Shape(M) $ \cite[Theorems \HAappthmlink{A.1.15} \& \HAappthmlink{A.4.19}]{HA}.


\subsection{Monodromy for schemes and topoi}

To replace the manifold in the monodromy story with a scheme $ X $, Grothendieck introduced the \textit{extended étale fundamental group} $ \pietext_1(X,x) $ of \cite[Exposé X, \S 6]{MR43:223b}. 
Here it is not the Zariski topological space of $ X $ that is relevant, but its \textit{étale topos}.
Moreover, since universal covers in algebraic geometry do not exist as étale covers, but only as \textit{proétale} covers, the extended étale fundamental group is not a group, but rather a \textit{progroup}.
In the case that $ X $ is connected and \textit{locally connected}, Grothendieck proved a monodromy equivalence between locally constant étale sheaves and representations of $ \pietext_1(X,x) $.

In the algebro-geometric setting, local connectedness is not automatic, and removing this hypothesis requires a tradeoff.
On the one hand, without the local connectedness hypothesis, the monodromy equivalence only holds for \textit{lisse} sheaves, i.e., locally constant sheaves of finite sets that can be trivialized on a finite cover.
On the other hand, to classify lisse sheaves, we can work with the (more simple) profinite completion of $\pietext_1(X,x) $: the usual \textit{profinite étale fundamental group} $ \piet_1(X,x) $.
In this setting, the monodromy equivalence states the following: if $ X $ is a connected scheme, then a choice of geometric point $ x \to X $ provides an equivalence between the category of lisse sheaves of sets on $ X $ and finite sets with a continuous action of the profinite group $ \piet_1(X,x) $.


\subsubsection{Monodromy for topoi}

Dubuc \cite[\S\S 5--6]{MR2440260} provided a simultaneous generalization of the fundamental groupoid of a topological space and the étale fundamental group of a scheme: given a topos $ \XX $, Dubuc defined the \textit{fundamental progroupoid} $\upPi_1(\XX)$ of $ \XX $.
In the two cases of interest, Dubuc's progroupoid recovers the existing notions: if $ \XX $ is the category of sheaves of sets on a manifold $ M $, then $\upPi_1(\XX)$ is the fundamental groupoid of $ M $, and if $ \XX $ is the étale topos of scheme $ X $, then $ \upPi_1(\XX) $ is a progroupoid whose automorphism groups at every point are the extended étale fundamental groups of $ X $.
In addition, if the topos $ \XX $ is \textit{locally connected} in a suitable sense, then there is a monodromy equivalence
\begin{equation*}
	\XX^{\locsys} \simeq \Fun(\upPi_1(\XX),\Set)
\end{equation*}
between the locally constant objects of $\XX$ and (continuous) $ \Set $-valued representations of the progroupoid $\upPi_{1}(\XX)$ (see \cite[Remark 2.14 \& Theorem 3.3]{MR3763287}).

Again, to remove the local connectedness hypothesis on $ \XX $, there's a tradeoff: without the local connectedness hypothesis, the monodromy equivalence only holds for \textit{lisse} objects.
On the other hand, to classify lisse sheaves, we can work with the (more simple) profinite completion of $ \Pihat_1(X) $ of $ \upPi_1(X) $: there is a natural monodromy equivalence
\begin{equation*}
	\XX^{\lisse} \simeq \Fun(\Pihat_1(\XX),\Setfin) \period
\end{equation*}


\subsubsection{Monodromy for higher topoi: \texorpdfstring{$\infty$}{∞}-Categorical Stone Duality}

Lurie provided a homotopical refinement of the Dubuc's fundamental progroupoid: given \atopos $ \XX $, Lurie constructed a pro-\groupoid $ \Shape(\XX) $ called the \textit{shape} of $ \XX $ \cites[\HTTsubsec{7.1.6}]{HTT}[\HAsec{A.1}]{HA}[\SAGsec{E.2}]{SAG}.
If $ \XX $ is \textit{locally contractible}%
\footnote{The terms \textit{locally of constant shape} and \textit{locally $ \infty $-connected} are also used for local contractibility \cites[\HAappthm{Definition}{A.1.5} \& \HAappthm{Proposition}{A.1.8}]{HA}[Definition 3.2]{MR3763287}}, 
then there is a natural monodromy equivalence
\begin{equation*}
	\XX^{\locsys} \simeq \Fun(\Shape(\XX),\Space)
\end{equation*}
between the locally constant objects of $\XX$ and (continuous) space-valued representations of the pro-\groupoid $ \Shape(\XX) $ \HAa{Theorem}{A.1.15}.
This theory is well-suited to the study of locally constant sheaves on manifolds: since manifolds are locally contractible, the \topos of sheaves on a manifold is locally contractible. 
Moreover, for any manifold $ M $, the shape of the \topos of sheaves of spaces on $ M $ coincides with the fundamental \groupoid $ \Shape(M) $ of $ M $; the monodromy equivalence for locally contractible \topoi is, in fact, how Lurie proves the monodromy equivalence \eqref{eq:monmanifold}.

The local contractability assumption becomes problematic in the algebro-geo\-metric setting: the étale topos of a scheme is rarely locally simply connected, never mind locally contractible.
In order to have a monodromy equivalence for étale sheaves, it is necessary that we work with the profinite completion of $ \Shape(X_{\et}) $ and \textit{lisse sheaves of spaces} on $ X $.
The setting of higher topos theory, the category of finite sets is replaced by the \category of \textit{\pifinite spaces}, and a \textit{lisse} sheaf is a locally constant sheaf of \pifinite spaces that can be trivialized on a finite cover.
The monodromy theorem in this context is as follows: for any \topos $ \XX $, there is a natural natural monodromy equivalence of \categories
\begin{equation}\label{eq:monshapeprofin}
	\XX^{\lisse} \simeq \Fun(\Shapeprofin(\XX),\Spacefin)
\end{equation}
between the lisse objects on $\XX $ and representations of $\Shapeprofin(\XX)$ valued in the \category $\Spacefin$ of \pifinite spaces \cite[\SAGthm{Proposition}{A.8.3.2}, \SAGthm{Theorem}{E.2.4.1}, \& \SAGthm{Theorem}{E.3.1.1}]{SAG}.
(See also \cite[Proposition 10.1]{MotivicNorms:BachmannHoyois} and \Cref{prop:Stonemonodromy}.)

\begin{exm}
	Hoyois showed that if $ X $ is a locally connected scheme, then the profinite shape $\Shapeprofin(X_{\et})$ of the étale \topos $X_{\et}$ of $ X $ coincides with the \textit{profinite étale homotopy type} $\Shapeetprofin(X)$ of Artin--Mazur--Friedlander \cite[Corollary 5.6]{MR3763287}.
	Thus Lurie's shape theory both provides a new perspective on the étale homotopy type and shows that the profinite étale homotopy type classifies lisse étale sheaves.
\end{exm}

The monodromy equivalence \eqref{eq:monshapeprofin} is a higher categorical form of \textit{Stone Duality}.
Recall that the classsical Stone Duality Theorem identifies profinite sets with totally disconnected compact Hausdorff topological spaces.
Lurie's \Categorical Stone Duality Theorem identifies profinite \textit{spaces} with a certain class of \topoi in the following manner.

\begin{thm}[{\Categorical Stone Duality; \SAG{Theorem}{E.2.4.1}}]\label{introthm:StoneDuality}
	The fully faithful functor $\incto{\Spacefin}{\Top_{\infty}}$ given by the assignment $\goesto{\Pi}{\Fun(\Pi,\Space)}$ extends along inverse limits to a fully faithful right adjoint
	\begin{align*}
		\Pro(\Spacefin) &\inclusion \Top_{\infty} \\ 
		\{\Pi_{\alpha}\}_{\alpha \in A} &\mapsto \lim_{\alpha \in A} \Fun(\Pi_{\alpha},\Space) 
	\end{align*}
	from profinite spaces to \topoi.
	Moreover, the left adjoint of this functor is the profinite shape $ \Shapeprofin \colon \fromto{\Top_{\infty}}{\Pro(\Spacefin)} $.
\end{thm}

The essential image of this embedding $ \incto{\Pro(\Spacefin)}{\Top_{\infty}} $ is spanned by a higher categorical version totally disconnected compact Hausdorff topological spaces; we will return to this point momentarily (see \cref{sec:Exodromyforhighertopoi}).


\subsection{Exodromy for stratified topological spaces}

A string of results has suggested the possibility that \textit{stratified spaces} and \textit{constructible sheaves} might be modeled in a combinatorial fashion similar to spaces and locally constant sheaves.
MacPherson proved that constructible sheaves of sets on a (suitably nice) stratified topological space $T$ over a poset $P$ determine and are determined by a functor from the \textit{exit-path category} $\Exit_1^P(T) $ of $T$.
The objects of $\Exit_1^P(T) $ are points of $T$ and the morphisms are stratified homotopy equivalence classes of \textit{exit paths} -- paths from a stratum $T_p$ to a stratum $T_q$ for $q\geq p$. 
We call this equivalence
\begin{equation*}
	\Ex^P \colon\equivto{\Sh(T;\Set)^{\Pcons}}{\Fun(\Exit_1^P(T),\Set)}
\end{equation*}
between $ P $-constructible sheaves of sets on $ T $ and set-valued representations of $ \Exit_1^P(T) $ the \textit{exodromy} equivalence.%
\footnote{\textgreek{ἔξω}: outer; \textgreek{δρόμος}: avenue.} 
Observe that $\Exit_1^P(T)$ is a category with a conservative functor to $P$:
for each point $p\in P$, the fiber of the functor $ \Exit_1^P(T) \to P $ over $p$ is the fundamental groupoid $\upPi_1(T_p)$ of the stratum $T_p$.

Treumann \cite{MR2575092} extended MacPherson's result to give an exodromy equivalence between constructible \textit{stacks} with groupoid-valued representations of the \textit{exit-path $2$-category} of $T$. 
Lurie \cite[\HAapp{A}]{HA} extended this result still further to give an exodromy equivalence
\begin{equation*}
	\Ex^P \colon \equivto{\Sh(T;\Space)^{\Pcons}}{\Fun(\Exit^P(T),\Space)}
\end{equation*}
between $ P $-constructible sheaves on $T$ with values in the \category of spaces and space-valued representations of an \textit{exit-path \category} $\Exit^P(T)$ of $ T $.
The objects of $ \Exit^P(T) $ are points of $T$, the morphisms are exit-paths, the $2$-morphisms are stratified homotopies, the $3$-morphisms are stratified homotopies of homotopies, etc., \textit{ad infinitum}.
Again, $\Exit^P(T)$ is \acategory with a conservative functor to $P$: over each point $p\in P$, the fiber of this functor is the fundamental \groupoid $\Shape(T_p)$ of the stratum $T_p$.


\subsubsection{Stratified spaces as \texorpdfstring{$\infty$}{∞}-categories with a conservative functor to a poset}

One is led to seek an analogue of the Kan--Quillen Theorem that states that the formation of the exit-path \category is an equivalence of homotopy theories between stratified topological spaces and suitable \categories.
A geometric form of this result was proven by Ayala--Francis--Rozenblyum \cite{MR3941460}, who showed that the exit-path \category construction is fully faithful from a homotopy theory of \textit{conically smooth} stratified spaces to \categories.

A still closer stratified analogue of the Kan--Quillen equivalence has now been provided by the simultaneous work of three teams:
Douteau \cites{Douteau:thesis}{Douteau:chapter7} (after Henriques \cites{Henriques:thesis}{Henriques:stratifiedmodel}), Nand-Lal--Woolf \cites{Nand-Lal:thesis}{NandLalWoolf}, and the third-named author \cite{Haine:stratifiedmodel}.
These papers each take a slightly different point of view, but for our purposes the salient point (expressed in \cite{Haine:stratifiedmodel}) is that the functor $\Exit^P$ is an equivalence between the following homotopy theories:
\begin{enumerate}[(1)]
	\item Topological spaces with a sufficiently nice stratification over $P$ -- in which a weak equivalence of such is a weak equivalence on strata and (homotopy) links.

	\item The \category of \categories equipped with a conservative functor to $P$.
\end{enumerate}

We thus refer to \categories with a conservative functor to a poset $P$ as \textit{$P$-stratified spaces}.
This makes it possible to port some of the ideas of stratified homotopy theory to the study of schemes, as we shall soon see.


\subsection{Exodromy for higher topoi: \texorpdfstring{$\infty$}{∞}-Categorical Hochster Duality}

With motivation from existing monodromy and exodromy results in place, in the remainder of this introduction we explain the results of this book in more detail.
The first goal of this book is to prove that for any coherent\footnote{Following the Grothendieck school we use the term `coherent scheme' synonymously with `quasicompact quasiseparated scheme' \cref{nul:coherentqcqs}.} scheme $ X $, there exists a `profinite' \category $ \StrShapeet(X) $ that classifies constructible sheaves of \textit{spaces} on $ X $.
That is, for which there is a natural \textit{exodromy equivalence}
\begin{equation}\label{introeq:desireexodromy}
	X_{\et}^{\cons} \equivalent \Fun(\StrShapeet(X), \Spacefin) \period
\end{equation}
Due to the failure of the étale \topos to be locally simply connected, here it is crucial that the term `constructible' is interpreted in the way it is usually used in algebraic geometry, rather than topology: a sheaf $ F $ is \textit{constructible} if there is a finite stratification such that the restriction of $ F $ to each stratum is lisse (not just locally constant).
The second goal is to identify the profinite \category $ \StrShapeet(X) $ with the category in profinite topological spaces $ \Gal(X) $ introduced in \Cref{cnstr:GalXcat,cnstr:GalXtop}.


\subsubsection{Hochster Duality}

In order to accomplish the first goal, we take a hint from Lurie's proof that the profinite shape of a \topos classifies lisse sheaves.
Through his proof of \Categorical Stone Duality (\Cref{introthm:StoneDuality}), Lurie identifies the essential image of the embedding
\begin{equation*}
	\Pro(\Spacefin) \inclusion \Top_{\infty}
\end{equation*} 
of profinite spaces into \topoi with those \topoi that are \textit{bounded coherent} in which the truncated coherent objects coincide with the \textit{lisse} sheaves \cite[\SAGsec{E.3}]{SAG}.
We call these \topoi \textit{Stone \topoi.}%
\footnote{Lurie calls these \textit{profinite \topoi}.
In this book we introduce a more general class of \topoi that could also reasonably be called `profinite \topoi', so we use the distinct term `Stone \topoi' to avoid confusion.}
\textit{Boundedness} is a technical condition that means that the \topos can be recovered by its truncated objects in a particular way; \textit{coherence} is the higher-toposic version of being quasicompact and quasiseparated and generalizes Groth\-endieck's notion of coherence for ordinary topoi \cite[Exposé VI]{MR50:7131}. 
The key example to keep in mind is that the étale \topos of a coherent scheme is bounded coherent.
However, in the case of the étale \topos, the truncated coherent objects coincide with the larger class of \textit{constructible} sheaves, rather than the lisse sheaves.

Thus in order to provide the desired equivalence \eqref{introeq:desireexodromy}, we aim to extend \Categorical Stone Duality to an equivalence between profinite \textit{stratified} spaces and a class of \textit{stratified} \topoi.
Hochster's thesis \cites{Hochster:thesis}{Hochster:primeideal} gives evidence that such a generalization should be possible: Hochster identifies the category of profinite posets with the category of \textit{spectral topological spaces}, i.e., those topological spaces that underlie coherent schemes. 
This functions as a simultaneous generalization of Alexandroff Duality (which identifies finite posets with finite $ \Tup_0 $ topological spaces) and Stone Duality. 

For our generalization of Hochster Duality, we need to identify the correct notion of \textit{\pifiniteness} for stratified spaces.
This turns out to be the following simple generalization of \pifiniteness for spaces: 

\begin{dfn}
	A stratified space $ \fromto{\Pi}{P} $ is \defn{\pifinite} if the poset $ P $ is finite, the \category $ \Pi $ has finitely many objects up to equivalence, and the mapping spaces of $ \Pi $ are \pifinite spaces.

	Write $ \Strfin $ for the \category of \pifinite stratified spaces.
	We call objects of the \category $ \Pro(\Strfin) $ of \proobjects of $ \Strfin $ \defn{profinite stratified spaces}.
\end{dfn}

Profinite stratified spaces are, by definition, stratified by profinite posets, equivalently, by spectral topological spaces. 
With this in mind, there is an obvious way to talk about stratified \topoi on the same footing.

\begin{dfn}
	Let $ S $ be a spectral topological space.
	An \defn{$ S $-stratified \topos} is \atopos $ \XX $ equipped with a geometric morphism $ \fromto{\XX}{\Sh(S)} $ to the \topos of sheaves of spaces on $ S $.
	We write $ \StrTop_{\infty} $ of the \category of stratified \topoi.
\end{dfn}

The following two examples are of particular importance to our results.

\begin{exm}
	Let $ P $ be a finite poset.
	Then there is a natural equivalence 
	\begin{equation*}
		\Sh(P) \equivalent \Fun(P,\Space)
	\end{equation*}
	between sheaves on the Alexandroff topological space attached to $ P $ and functors $ \fromto{P}{\Space} $.

	More generally, if $ S $ is a spectral topological space regarded as a profinite poset $ \{P_{\alpha}\}_{\alpha \in A} $, then
	\begin{equation*}
		\Sh(S) \equivalent \lim_{\alpha \in A} \Fun(P_{\alpha},S) \period
	\end{equation*} 
\end{exm}

\begin{exm}
	Let $ X $ be a coherent scheme.
	Then the natural geometric morphism 
	\begin{equation*}
		X_{\et} \to X_{\zar} = \Sh(X^{\zar})
	\end{equation*}
	is a stratification of the étale \topos of $ X $ by the Zariski topological space $ X^{\zar} $ of $ X $.
\end{exm}

To identify profinite stratified spaces with a class of stratified \topoi, we follow the paradigm of Lurie's \Categorical Stone Duality Theorem and extend the functor
\begin{align*}
	\Strfin &\to \StrTop_{\infty}
\intertext{given by the assignment}
	[\Pi \to P] &\mapsto [\Fun(\Pi,\Space) \to \Fun(P,\Space)]
\end{align*}
along inverse limits.
We also generalize the theory of constructible sheaves to stratified \topoi, and prove the following higher categorical refinement of Hochster Duality.

\begin{thm}[\Categorical Hochster Duality; \Cref{thm:inftyHochster}]\label{thm:headlineinftyHochster}
	Write
	\begin{equation*}
		\lambdahat \colon \Pro(\Strfin) \to \StrTop_{\infty}
	\end{equation*}
	for the functor that carries a profinite stratified space $ \mbfPi = \{\Pi_{\alpha} \}_{\alpha \in A} $ to the \topos 
	\begin{equation*}
		\Fun(\mbfPi,\Space) \colonequals \lim_{\alpha \in A} \Fun(\Pi_{\alpha},\Space) \period
	\end{equation*}
	Then the functor $ \lambdahat $ is fully faithful with its essential image those bounded coherent stratified \topoi in which the truncated coherent objects coincide with the constructible sheaves.
\end{thm}

\noindent We call these \topoi \textit{spectral \topoi} (\Cref{def:spectraltopos}).
This is partially justified by the fact that they are the natural higher categorical extension of Hochster's spectral topological spaces.


\subsubsection{Exodromy for higher topoi}\label{sec:Exodromyforhighertopoi}

The next thing we show is that for each spectral topological space $ S $, the fully faithful functor
\begin{equation*}
	\lambdahat_{S} \colon \incto{\Pro(\Strfin)_{S}}{\StrTop_{\infty,S}} 
\end{equation*}
from profinite $ S $-stratified space to $ S $-stratified \topoi admits a left adjoint
\begin{equation*}
	\StrShape^{S} \colon \fromto{\StrTop_{\infty,S}}{\Pro(\Strfin)_{S}} \period
\end{equation*}
Given the existence of this left adjoint, the following \textit{Exodromy Theorem} is an immediate consequence of \Categorical Hochster Duality.

\begin{thm}[Exodromy; \Cref{thm:exodromyforstrattopoi}]\label{mainthm:classifyconstr}
	Let $ S $ be a spectral topological space.
	For any $S$-stratified \topos $\XX$, the unit
	\begin{equation*}
		\fromto{\XX}{\Fun(\StrShape^{S}(\XX),\Space)}
	\end{equation*}
	of the adjunction to profinite stratified spaces restricts to an equivalence
	\begin{equation*}
		\Fun(\StrShape^{S}(\XX),\Spacefin) \simeq \XX^{\Scons}
	\end{equation*}
	between the \category of representations of $ \StrShape^{S}(\XX) $ valued in \pifinite spaces and $ S $-constructible sheaves $\XX$.
	We call this identification the \emph{exodromy equivalence} for stratified \topoi.
\end{thm} 

\noindent We call the profinite \category $ \StrShape^{S}(\XX)$ the \textit{profinite $S$-stratified shape of $\XX$}.

\begin{exm}[\Cref{exm:profinshapedeloc}]\label{exm:stratshapeisdelocofprofinshape}
	Let $ \XX $ be a spectral $ S $-stratified \topos.
	The profinite stratified shape is a \textit{refinement} of the usual profinite shape type of $\XX$: the profinite classifying space of \smash{$ \StrShape^{S}(\XX)$} is precisely \smash{$\Shapeprofin(\XX)$}.
\end{exm}

\begin{exm}[\Cref{lem:matofPiinfisPt}]\label{exm:stratshaperefinesPt}
	Let $ \XX $ be a spectral $ S $-stratified \topos.
	The profinite stratified shape is a \textit{refinement} of the \category
	\begin{equation*}
		\Pt(\XX) \colonequals \Funupperstar(\Space,\XX)
	\end{equation*}
	of \textit{points} of $ \XX $ in the following sense.
	The profinite stratified shape \smash{$ \StrShape^{S}(\XX) $} is a pro-\category, and taking the limit of a prosystem defining \smash{$ \StrShape^{S}(\XX) $} recovers the \category $ \Pt(\XX) $.
	We thus think of the profinite stratified shape as the \category $ \Pt(\XX) $ equipped with an a `profinite structure'.
\end{exm}


\subsection{Exodromy for schemes \& the Reconstruction Theorem}

Our interest in the profinite stratified shape is primarily due to the following example.

\begin{exm}\label{exm:stratetshapeisGal}
	If $ X $ is a coherent scheme, then the stratified \topos $ \fromto{X_{\et}}{X_{\zar}} $ is spectral.
	We call the profinite \category
	\begin{equation*}
		\StrShapeet(X) \coloneq \StrShape^{X^{\zar}}(X_{\et}) 
	\end{equation*}
	the \textit{stratified étale homotopy type} of $ X $.
	
	Since the étale \topos of $ X $ comes from an ordinary topos, the stratified étale homotopy type \smash{$ \StrShapeet(X) $} is simply a profinite $1$-category.
	In light of \Cref{exm:stratshaperefinesPt}, the $ 1 $-category obtained by taking the limit of a prosystem defining \smash{$ \StrShapeet(X) $} is the $ 1 $-category $ \Pt(X_{\et}) $ of points of the étale \topos of $ X $.
	The Grothendieck School showed that the $ 1 $-category $ \Pt(X_{\et}) $ is the $ 1 $-category $ \Gal(X) $ of geometric points of $ X $ introduced in \Cref{cnstr:GalXcat} \cite[Exposé VIII, Théorème 7.9]{MR50:7131}.

	We are able to identify stratified $1$-types with $1$-categories equipped with a suitable topology; under this correspondence, the stratified étale homotopy type of $ X $ agrees with the \textit{topological} category $\Gal(X)$ \Cref{cnstr:GalXcat,cnstr:GalXtop}.
	That is, there is a natural identification
	\begin{equation*}
		\StrShapeet(X) \equivalent \Gal(X) \period
	\end{equation*}
\end{exm}

\Cref{mainthm:classifyconstr} thus provides the following exodromy equivalence for schemes:

\begin{cor}[Exodromy for schemes]\label{cor:Exoforschemes}
	Let $ X $ be a coherent scheme.
	Then there is a natural exodromy equivalence
	\begin{equation*}
		\Fun(\Gal(X),\Spacefin) \simeq X_{\et}^{\cons} \period
	\end{equation*}
\end{cor}

Armed with this, the Reconstruction Theorem (\Cref{lede:reconstruction}) follows as soon as we know that the $ k $-schemes in question can be recovered from their étale \topoi. 
On this score, in his letter to Faltings, Grothendieck conjectured -- and Voevodsky proved \cite{MR1098621} -- that the assignment $\goesto{X}{X_{\et}}$ is a fully faithful functor from normal schemes of finite type over a finitely generated field $k$ of characteristic $0$ to \topoi with an action of the absolute Galois group $\Gk$ and `admissible' $\Gk$-equivariant morphisms. 
Combined with our results on the profinite stratified shape, we obtain our \Cref{lede:reconstruction}.

Whereas one only expects that the étale homotopy type is a complete invariant of varieties constructed iteratively from hyperbolic curves, the addition of the natural stratification on the étale homotopy type makes the stratified étale homotopy type a complete invariant of \textit{all} normal varieties.

In positive characteristic and for more general arithmetic schemes, the presence of inseparable extensions forces us to give a more careful formulation of Grothendieck's conjecture (\Cref{cnj:main}).
Both it and the analogue of \Cref{lede:reconstruction} remain open problems in this case.


\subsection{Extending exodromy: coefficients \& \texorpdfstring{$\el$}{ℓ}-adic sheaves}

The nonabelian exodromy equivalence readily implies the abelian version of the Exodromy Theorem (\Cref{lede:exodromy}) with coefficients in a finite ring.

\begin{thm}[{Exodromy with finite coefficients; \Cref{thm:exodromyfinitering}}]\label{mainthm:exodromyfinitering}
	Let $ X $ be a coherent scheme and let $ R $ be a finite ring.
	Then there is a natural equivalence of \categories
	\begin{equation*}
		\Dcons(X;R) \simeq \Fun(\Gal(X),\Perf(R)) \period
	\end{equation*}
\end{thm}

\noindent Thus the datum of a constructible sheaf $F$ of complexes of $ R $-modules on $ X $ is essentially the same information as that of a \textit{exodromy representation}
\begin{equation*}
	\exrep_F \colon \Gal(X) \to \Perf(R) \period
\end{equation*}

The claim that $ \Gal(X) $ classifies constructible $ \el $-adic sheaves (\Cref{lede:exodromy}), however, is not obvious from this, and we emphasize that the question a bit subtle.
Recall that Deligne's famous example of a curve of genus $ \geq 1 $ with two points identified shows that the extended étale fundamental group $ \pietext_1(X) $ is insufficient to reconstruct lisse \smash{$ \QQell $}-sheaves \cite[Example 7.4.9]{MR3379634}.
The scheme in Deligne's example is necessarily non-normal; for geometrically unibranch schemes, even the \textit{profinite} étale fundamental group $ \piet_1(X) $ is sufficient to reconstruct lisse \smash{$ \QQell $}-sheaves \cite[Lemmas 7.4.7 \& 7.4.10]{MR3379634}.
Nevertheless, even though $\Gal(X)$ is profinite, it is still capable of classifying constructible \smash{$ \QQell $}-sheaves.
What rescues us is that passage to a sufficiently fine stratification ensures that the strata are all normal.

In order to access coefficients like \smash{$ \ZZell $} and \smash{$ \QQell $} with \textit{topological} structure, we employ a small piece of the \textit{pyknotic} (\textsc{aka} \textit{condensed}) formalism, which was introduced simultaneously by the first- and third-named authors \cite{pyknoticI} and by Clausen--Scholze \cites{Scholze:Condensedtalk,Scholze:condensedtalknotes,Scholze:condensednotes}.
With this, we can speak of \textit{continuous functors} from $\Gal(X)$ into \acategory of perfect complexes over \smash{$ \ZZell $} or \smash{$ \QQell $} that incorporates their topologies into the definition.

Using this formalism, we first extend the exodromy equivalence to coefficients in profinite rings like the ring of integers in a nonarchimedean local field (e.g., \smash{$ \ZZell $} or $ \FFq\llbracket t \rrbracket $):

\begin{thm}[{Exodromy with profinite coefficients; \Cref{thm:exodromyZell}}]\label{introthm:exodromyZell}
	Let $ X $ be a coherent scheme, $\Lambda$ be a noetherian ring, and $ I \subset \Lambda $ an ideal.
	Assume that $ \Lambda $ is complete with respect to the $I$-adic topology and that for each integer $ n \geq 1 $, the quotient ring $ \Lambda/I^n $ is finite.
	Then there is a natural equivalence of \categories
	\begin{equation*}
		\Dcons(X;\Lambda) \simeq \Functs(\Gal(X),\Perf(\Lambda)) \period
	\end{equation*}
\end{thm}

We finally complete the proof of \Cref{lede:exodromy} by extending \Cref{introthm:exodromyZell} to coefficients in \smash{$ \QQell $} or \smash{$ \QQellbar $}.
For this extension, we need the underlying topological space of $ X $ to be noetherian in order to ensure that the usual notion of a constructible complex of \smash{$ \QQell $}-sheaves is equivalent to the requirement that the sheaf is lisse over a finte stratification (see \cite[\S6.6]{MR3379634}).

\begin{thm}[{Exodromy for $ \el $-adic sheaves; \Cref{thm:exodromyQellbar}}]\label{introthm:exodromyQellbar}
	Let $ X $ be a scheme whose underlying topological space is noetherian, $ \el $ be a prime number, and $ E $ be an algebraic field extension of \smash{$ \QQell $}.
	Then there is a natural equivalence of \categories
	\begin{equation*}
		\Dcons(X;E) \simeq \Functs(\Gal(X),\Perf(E)) \period
	\end{equation*}
\end{thm}


\subsection{Other roles of \texorpdfstring{$\Gal(X)$}{Gal(X)}}

We also prove a number of results about the stratified étale homotopy type that are not directly related to constructible sheaves.
In this section, we explain two such results. 


\subsubsection{New description of the étale homotopy type}

Let $ X $ be a cohrent scheme.
We have seen that the stratified étale homotopy type $ \Gal(X) $ of $ X $ is a delocalization of the profinite étale homotopy type $ \Shapeetprofin(X) $ (\Cref{exm:stratshapeisdelocofprofinshape,exm:stratetshapeisGal}).
We have also seen that the whole étale \topos of $ X $ can be reconstructed from the profinite $ 1 $-category $ \Gal(X) $.
Since the uncompleted étale homotopy type $ \Shapeet(X) $ of $ X $ only depends on the étale \topos of $ X $, and since $\Gal(X)$ recovers the étale topos in its entirely, we are assured that the étale homotopy type can be abstractly reconstructed from $\Gal(X)$.

Our Homotopy Theorem (\Cref{lede:homotopy}) is more precise:
it essentially states that the classifying prospace of $ \Gal(X) $ (with no profinite completion) coincides with the étale homotopy type of $ X $.
The Homotopy Theorem follows from the following more general result for spectral \topoi.

\begin{thm}[{Homotopy; \Cref{thm:protruncdeloc}}]
	Let $ \fromto{\XX}{\Sh(S)} $ be a spectral $ S $-stratified \topos, and write $ \invert \colon \fromto{\Pro(\Cat_{\infty})}{\Pro(\Space)} $ for the left adjoint to the inclusion.
	Then there is a natural morphism
	\begin{equation*}
		\theta_{\XX} \colon \fromto{\Shape(\XX)}{\invert(\StrShape^S(\XX))}
	\end{equation*}
	from the shape of $ \XX $ to the prospace obtained by inverting all morphisms in the profinite stratified shape $ \StrShape^S(\XX) $ of $ \XX $.
	Moreover, the morphism $ \theta_{\XX} $ induces an equivalence on protruncations.
\end{thm}

\begin{rmk}		
	A morphism of prospaces induces an equivalence on protruncations if and only if it induces an equivalence on all homotopy prosets.
	Moreover, is not clear what invariants of a prospace exist that are not detected by its protruncation.
	We emphasize that essentially all existing results in étale homotopy theory work with the étale homotopy type up to protruncation (cf. \Cref{rmk:ArtinMazurnatural,rmk:Isaksenmodelstructure}).
	Hence the fact that we prove that the morphism $ \theta_{\XX} $ is an equivalence on protruncations should not be seen as a problematic, but rather as the best result one could reasonably expect. 
\end{rmk}


\subsubsection{Stratified Riemann Existence}

Let $ X $ be a $\CCup$-scheme of finite type, and write $ X^{\an} $ for the topological space of complex points of $ X $ with its analytic topology.
A modern formulation of the Riemann Existence Theorem is that there is a natural comparison morphism 
\begin{equation*}
	\fromto{\Shape(X^{\an})}{\Shapeet(X)}
\end{equation*}
from the fundamental \groupoid of $ X^{\an} $ to the étale homotopy type of $ X $, and this morphism becomes 
an equivalence after profinite completion \cites[Theorem 12.9]{MR0245577}[Proposition 4.12]{Carchedi:higheretale}.
In this book, we prove the following \textit{stratified} refinement of the Riemann Existence Theorem.

\begin{thm}[Stratified Riemann Existence; \Cref{cor:stratRiemann}]\label{thm:headlinestratRiemannexistence}
	Let $ X $ be a $\CCup$-scheme of finite type.
	Then there is a natural equivalence
	\begin{equation*}
		\Gal(X) \simeq \lim_{X^{\zar} \to P} \Exit^{P}(X^{\an})\profincomp \comma
	\end{equation*}
	where the right hand side is limit over finite algebraic stratifications $ X^{\zar} \to P $ of the profinite completions of the exit-path \categories $ \Exit^{P}(X^{\an}) $.
\end{thm}

Combining Stratified Riemann Existence with the Reconstruction Theorem (\Cref{lede:reconstruction}) provides the following variant of the Reconstruction Theorem:

\begin{exm}[{\Cref{exm:StratRiemannfgfield}}]
	Let $ k $ be a finitely generated field of characteristic $ 0 $, and let \smash{$ \kbar $} be an algebraic closure of $ k $.
	Choose a complex embedding \smash{$ \incto{\kbar}{\CCup} $}.
	Then a normal $k$-variety $ X $ can be reconstructed from the stratified homotopy type of the topological space
	\begin{equation*}
		(X \times_{\Spec k} \Spec \kbar)^{\an}
	\end{equation*}
	along with its action of the absolute Galois group $\Gk$.

	In dimension $1$, for example, a connected, smooth, and complete curve over $k$ is uniquely specified by a genus $g$ and a suitable action of $\Gk$ on a diagram of free groups whose ranks depend on $g$ (see \cref{subsec:examplecurves}).
\end{exm}


\subsection{Technical overview}

This book consists of four parts.
\Cref{part:stratspace,part:topoi,part:strattopoi} reflect the three ingredients necessary to construct the profinite stratified shape and to prove the central \Categorical Hochster Duality Theorem (\Cref{thm:headlineinftyHochster}=\Cref{thm:inftyHochster}).
\Cref{part:stratetale} is then focused applying this machinery to the étale \topoi of schemes.

The first ingredient is a small (and quite elementary) piece of abstract homotopy theory in the study of stratified spaces and profinite stratified spaces.
Most of this work is relatively formal, but one important notion is that of a \textit{spatial décollage}, which is a presheaf on the subdivision of a poset satisfying a Segal condition (\Cref{def:spatialdecollage}).
We prove that the \category of stratified spaces is equivalent to that of spatial décollages via a nerve construction (\Cref{thm:nerveequiv}).
The upshot is that a stratified space can be recovered from its `unglued' form\footnote{whence the term `décollage'} -- a collection of strata and links, suitably organized.

On the toposic side, we need to be able to perform the same ungluing procedure, so that we can recover \atopos $\XX$ from the data of a closed subtopos $\ZZ$, its open complement $\UU$, and the gluing information in the form of the \textit{deleted tubular neighborhood} $\WW$ of $\ZZ$ in $\UU$.
This is the second major ingredient -- \textit{gluing squares} of \topoi, which are certain squares
	\begin{equation*}
		\begin{tikzcd}
			\WW \arrow[r, "q_{\ast}" above] \arrow[d, "p_{\ast}" left] & \UU \arrow[d, "\jlowerstar" right] \arrow[dl, phantom, "\scriptstyle \sigma" below right, "\Longleftarrow" sloped] \\ 
			\ZZ \arrow[r, "\ilowerstar" below] & \XX
		\end{tikzcd}
	\end{equation*}
of geometric morphisms with a noninvertible natural transformation $\sigma$. In order to make sense of this, there are three nontrivial tasks:
\begin{enumerate}[(1)]
	\item We must work -- systematically and \textit{ab initio} -- with \textit{bounded coherent} \topoi. This involves some care, particularly as boundedness and coherence are not stable under the formation of recollements.
	See \Cref{sec:rechighertopoi}.

	\item We must develop the higher categorical analogue of Deligne's \textit{oriented fiber product} \cites{MR3309086}{MR726426}{MR2249998}. 
	The \textit{tubular neighborhood} of $\ZZ$ in $\XX$ is the \textit{evanescent} \topos $\ZZ\orientedtimes_{\XX}\XX$, and the \textit{deleted tubular neighborhood} $\WW$ is then the open subtopos $\ZZ\orientedtimes_{\XX}\UU\subseteq\ZZ\orientedtimes_{\XX}\XX$.
	See \Cref{sec:orientedfiberprod}.

	\item Finally, and most crucially, we must prove a rather delicate \textit{Basechange Theorem for oriented fiber products} (\Cref{thm:BCfororientedfibs}), which ensures that the two gluing functors $\iupperstar\jlowerstar$ and $p_{\ast}q^{\ast}$ agree, at least on truncated objects.
	See \Cref{sec:localtopoi,section:BC}.
\end{enumerate}

We then define \textit{stratified \topoi} in a manner completely analogous to our definition of stratified topological spaces (\Cref{sec:strattopoi}).
Our study of gluing squares now permits us to prove that the \category of \textit{bounded coherent} stratified \topoi are equivalent to a \category of \textit{toposic décollages}, i.e., presheaves of \topoi on the subdivision of a poset that satisfy a kind of \textit{oriented Segal condition} (\Cref{thm:strattopanddecollages}).
This condition ensures that a chain $\{p_0 < \cdots < p_n\} \subset P $ is carried to the iterated oriented fiber product $\XX_{p_0}\orientedtimes_{\XX}\cdots\orientedtimes_{\XX}\XX_{p_n}$ of the strata.
Passing to \proobjects in the base permits us to contemplate stratified \topoi over spectral topological spaces.

Among the bounded coherent stratified \topoi are those in which the strata are Stone \topoi.
These are the \textit{spectral \topoi.}
We prove that these agree with those bounded coherent stratified \topoi in which the truncated coherent objects are exactly the \textit{constructible sheaves} -- i.e., those sheaves that restrict to a lisse sheaf on any stratum.
If $\mbfPi$ is a profinite stratified space, then the stratified \topos $ \Fun(\mbfPi,\Space) $ is spectral in this sense.
As in Lurie's \Categorical Stone Duality, there is a left adjoint to the functor $\goesto{\mbfPi}{\Fun(\mbfPi,\Space)}$, which carries a stratified \topos to its \textit{stratified homotopy type}.

The \Categorical Hochster Duality Theorem (\Cref{thm:headlineinftyHochster}=\Cref{thm:inftyHochster}) now follows from a sequence of three moves:
\begin{enumerate}[(1)]
	\item We reduce to the case of a finite poset $P$.
	This is formal.

	\item We then show that the stratified homotopy type of a spectral \topos stratified by a finite poset can be computed by ungluing to the toposic décollage, forming the homotopy type objectwise to get a spatial décollage, and then regluing to a profinite stratified space.

	\item We then appeal to Lurie's \Categorical Stone Duality Theorem.
\end{enumerate}


\subsection{Open problems}

There are a number of questions we have not answered in this book. 
Here are two. 

\begin{qst*}
	Our work here leaves \Cref{cnj:main} open.
	In effect, it predicts that a large class of \textit{absolute schemes} (see \Cref{dfn:absolute}) can be reconstructed from their stratified étale homotopy types.
\end{qst*}

\begin{qst*}
	We may ask whether one can recover an absolute scheme $ X $ from the profinite stratified space at a finite stage.
	That is, is there a finite constructible stratification $\fromto{X^{\zar}}{P}$ such that for any absolute scheme $Y$, the map
	\begin{equation*}
		\fromto{\Map_{\Sch_k}(X,Y)\simeq\Map_{\BGk}(\Gal(Y),\Gal(X))}{\uppi_0\Map_{\BGk}(\Gal(Y),\Gal(X/P))}
	\end{equation*}
	is a bijection?
	Here $ \Gal(X/P) $ denotes the profinite stratified shape of $ X_{\et} $ with respect to the stratification $\fromto{X^{\zar}}{P}$.
	One might expect that it suffices to choose stratification in which the strata in $ X $ are strongly hyperbolic Artin neighborhoods \cite[Exposé XI, §§2 \& 3]{MR50:7132}; at this point, we do not know.
\end{qst*}


\subsection{Acknowledgements}

The Université Montpellier has recently released a collection of notes of Groth\-endieck \cite{GrothendieckMontpelier}, including `Cote n\textsuperscript{o} 151: Espaces stratifiés', in which he develops some elements of stratified topos theory and some elements of an attached shape theory, to which he referred in his \emph{Esquisse d'un Programme} \cite[p. 36]{MR1483107}.
It is not clear to us how much of the work here he anticipated.

We have used the framework and results in Jacob Lurie's three big books \cites{HTT}{HA}{SAG} everywhere here. 
The impact of his ideas here is obvious and extensive.
We are also grateful to him for his very helpful answers to a number of technical questions we pelted him with over the course of this project.

Much of our understanding of stratified spaces is directly the result of our conversations with David Ayala, who independently developed the `spatial décollage' perspective on stratified spaces.
We are exceedingly grateful to him for sharing with us a portion of these ideas.
Ayala's thinking has had a tremendous influence on us, and conversations with him in the first phase of this project have been vital to our work.
Additionally, he has read some early editions of this monograph, spotted some mistakes, and helped us improve our writing.

Dustin Clausen and Jacob Lurie have each developed a portion of the topos-theoretic material here. In particular, it seems that they were each already aware of the \Categorical Hochster Duality Theorem.
We are grateful to them both for sharing their insights on this matter.

We are grateful to Marc Hoyois for carefully reading a draft of this monograph and for providing us with helpful feedback.

We are grateful to Ko Aoki and Harry Gindi for carefully reading this monograph and for spotting a number of typos and  pointing out some mistakes.

We thank Chang-Yeon Chough for helpful conversations on the relationship between the nonabelian proper base\-change theorem in algebraic geometry and our base\-change theorem for oriented fiber products.

Some of the ideas here were originally conceived during the Bourbon Seminar at MIT. 
We are grateful to our fellow participants -- Marc Hoyois, Denis Nardin, Sune Precht Reeh, and Jay Shah.

The first- and third-named authors are grateful to the Isaac Newton Institute in Cambridge and to the Mathematical Sciences Research Institute in Berkeley, whose hospitality they enjoyed while completing some of this monograph.

The third-named author gratefully acknowledges support from both the MIT Dean of Science Fellowship and the National Science Foundation Graduate Research Fellowship
under Grant \#112237.
They would also like to thank Alex Sear for her generous hospitality during his visits to the University of Edinburgh during which much of this monograph was written.

This text is partially based upon work supported by the National Science Foundation under Grant \#DMS-1440140, while the first- and third-named authors were in residence at the Mathematical Sciences Research Institute in Berkeley, California, during the Spring 2020 semester.
During this time, the first author was supported by the Simons Foundation.


\subsection{Terminology \& notations}


\subsubsection{Set theoretic conventions}

\begin{nul}
	Recall that if $\delta$ is a strongly inaccessible cardinal (which we always assume to be uncountable), then the set $\VV_{\delta}$ of all sets of rank strictly less than $\delta$ is a Grothendieck universe of rank and cardinality $\delta$ \cite[Exposé I, Appendix]{MR50:7130}.
	Conversely, if $\VV$ is a Grothendieck universe that contains an infinite cardinal, then $\VV=\VV_{\delta}$ for some strongly inaccessible cardinal $\delta$.

	In order to deal precisely and simply with set-theoretic problems arising from the consideration of `large' collections, we append to \textsc{zfc} the Axiom of Universes (\textsc{au}).
	This asserts that any cardinal is dominated by a strongly inaccessible cardinal.

	We write $\updelta_0$ for the smallest strongly inaccessible cardinal.%
	\index[notation]{delta@$\updelta_0, \updelta_1, \ldots$}
	Now \textsc{au} implies the existence of a hierarchy of strongly inaccessible cardinals
	\begin{equation*}
		\updelta_0 < \updelta_1 < \updelta_2 < \cdots \comma
	\end{equation*}
	in which for each ordinal $\alpha$, the cardinal $\updelta_{\alpha}$ is the smallest strongly inaccessible cardinal $\updelta_{\alpha}$ that dominates $\updelta_{\beta}$ for any $\beta<\alpha$.\footnote{Thus $\VV_{\updelta_{\alpha}}$ models \textsc{zfc} plus the axiom `the set of strongly inaccessible cardinals is order-isomorphic to $\alpha$'.}

	We certainly will not use the full strength of \textsc{au}; the existence of only $\updelta_0$, $\updelta_1$, and $\updelta_2$ suffices for our work here.
	At the cost of some circumlocutions, one could even get away with \textsc{zfc} alone.
\end{nul}

\begin{nul}
	We write $ \NNup $ for the poset of \defn{nonnegative integers}.%
	\index[notation]{N@$\NNup, \NNup^{\ast}, \NNrhd$}
	We write $ \NNup^{\ast} \coloneq \NNup \smallsetminus \{0\} $, and $ \NNrhd \coloneq \NNup \cup \{\infty \} $.
\end{nul}


\subsubsection{Higher categories}

Throughout this book we use the language and tools of \category theory, as defined by Boardman--Vogt and developed by Joyal \cites{MR1935979}{Joyal}{Joyal08} and Lurie \cites{HTT}{HA}{SAG}.
There are many places in this book where we do not know how construct the functors we're interested in without \category theory.
It would be prohibitively difficult to prove even the most basic properties about our constructions using a different framework.
In particular, we assume that the reader is familiar with the basics of \category theory, (co)limits \cite[\HTTsubsec{1.2.13} \& \HTTch{4}]{HTT}, adjunctions \cite[\HTTsec{5.2}]{HTT}, Kan extensions \cite[\HTTsec{4.13}]{HTT}, (co)cartesian fibrations \cite[\HTTch{2}]{HTT}, and \topoi \cite[\HTTch{6}]{HTT}.

We also use the language of \category theory in a mostly model-independent manner, which means that some standard terminology is used with a slightly different meaning than usual. 
For example, a full subcategory is always assumed to be closed under equivalences, a cartesian fibration is any functor that is equivalent to a cartesian fibration in the sense of \cite[\HTTsec{2.4}]{HTT}, etc. 
We also simply regard categories as \categories, namely those with discrete mapping spaces (up to equivalence).

\begin{nul}\label{nul:conventionshighercats}
	We will generally follow the terminological and notational conventions of Lurie's trilogy \cites{HTT,HA,SAG}.
	In particular:
	\begin{itemize}


		\item For each integer $ n \geq 0 $, we write $ [n] $\index[notation]{n@$[n]$} for the poset $ \{0 < \cdots < n \} $.

		\item Let $\delta$ be a strongly inaccessible cardinal.
		We say that a set, group, simplicial set, \category, ring, etc., is \textit{$\delta$-small}%
		\footnote{The adverb `essentially' is often deployed in this situation.}%
		\index[terminology]{delta-small@$\delta$-small}
		if it is equivalent (in whatever appropriate sense) to one that lies in $\VV_{\delta}$.
		We abbreviate \textit{$\updelta_0$-small} to \defn{small}.\index[terminology]{small}

		\item \Acategory $C$ is \defn{locally $\delta$-small}%
		\index[terminology]{delta-small@$\delta$-small!locally}	
		if and only if, for any objects $x,y\in C$, the mapping space $\Map_C(x,y)$ is $\delta$-small.
		We abbreviate \textit{locally $\updelta_0$-small} to \defn{locally small}.\index[terminology]{small!locally}

		\item Accessibility%
			\index[terminology]{accessible}
			\index[terminology]{category@\category!accessible}
		of \categories and functors and presentability of \categories will always refer to accessibility and presentability with respect to some $\updelta_0$-small cardinal.
			\index[terminology]{presentable}
			\index[terminology]{category@\category!presentable}
		Please observe that an accessible \category is always $\updelta_1$-small and locally $\updelta_0$-small.

		\item We will use the terms \emph{\groupoid} or \emph{space} interchangeably for \acategory in which every morphism is invertible.%
			\footnote{Clausen and Scholze have suggested the term \defn{anima} for this notion \cite[\S5.1.4]{CesnaviciusScholze}, following Beilinson's use of the term \defn{animation}.}
			\index[terminology]{groupoid@\groupoid}
			\index[terminology]{space}

		\item Let $\delta$ be a strongly inaccessible cardinal.
		Then we write $\Space_{\delta}$%
		\index[notation]{Space@$\Space, \Space_{\delta}$}
		for the \category of $\delta$-small spaces and $\Cat_{\infty,\delta}$ for the \category of $\delta$-small \categories.%
		\index[notation]{Cat@$\Cat_{\infty}, \Cat_{\infty,\delta}$}
		In particular, we shall write $\Space$ and $ \Cat_{\infty} $ for $\Space_{\updelta_0}$ and $ \Cat_{\infty,\updelta_0} $, respectively.

		\item Let $C$ be \acategory and $ W \subseteq \Mor(C) $ a set of morphisms of $C$.
		Then we write $W^{-1}C$\index[notation]{Winverse@$W^{-1}C$} for the result of inverting the morphisms of $W$.
		If $\delta$ is an inaccessible cardinal for which $C$ is $\delta$-small, then $W^{-1}C$ is $\delta$-small as well.
		This \category comes equipped with a functor $\fromto{C}{W^{-1}C}$ that, for any \category $D$, induces a fully faithful functor
		\begin{equation*}
			\incto{\Fun(W^{-1}C,D)}{\Fun(C,D)}
		\end{equation*}
		that identifies $\Fun(W^{-1}C,D)$ with the full subcategory spanned by those functors $\fromto{C}{D}$ that carry the morphisms of $W$ to equivalences in $D$.
		See \cite[\S7.1]{MR3931682}.
	\end{itemize}
\end{nul}

\begin{nul}\label{nul:nCat}
	For any $n\in\NNrhd$, write $\Cat_n \subseteq \Cat_{\infty} $\index[notation]{Catn@$\Cat_n$} for the full subcategory spanned by the $(n,1)$-categories;
	that is, \acategory $C$ lies in $\Cat_n$ if and only if for any $x,y\in C$, the \groupoid $\Map_C(x, y)$ is equivalent to an $(n-1)$-groupoid.
	In particular, $\Cat_0 \simeq \Pos $, the $1$-category of partially ordered sets.

	The inclusion $\Cat_n \subseteq \Cat_{\infty} $ admits a left adjoint $\ho_n$ \cite{MR4042793}.
	\index[notation]{ho@$\ho_n$}
	If $C$ is a \category, then the unit $C \to \ho_n(C)$ exhibits $\ho_n(C)$ as the $n$-categorical truncation, so that the objects of $\ho_n(C)$ are exactly those of $C$ and whose mapping spaces are defined by the condition that the map
	\begin{equation*}
		\Map_{C}(x, y) \to \Map_{\ho_n(C)}(x, y)
	\end{equation*}
	exhibits $\Map_{\ho_n(C)}(x, y)$ as the $(n-1)$-truncation of $\Map_{C}(x, y)$.
	The $1$-categorical truncation $\ho_1(C)$ is also known as the \defn{homotopy category} of $C$.
	The $0$-categorical truncation $\ho_0(C)$ is equivalent to the preorder whose elements are the equivalence classes of objects of $C$ in which $x\leq y$ if and only if there exists a morphism $x \to y$.
\end{nul}


\subsubsection{Proöbjects in higher categories}

\begin{nul}
	We say that a $\updelta_0$-small \category $A$ is \defn{inverse}\index[terminology]{category@\category!inverse}\index[terminology]{inverse!\category} if and only if its opposite \category $A^{\op}$ is filtered.
	Hence an \defn{inverse system} in \acategory $C$ is a functor $\fromto{A}{C}$ from an inverse \category $A$, and an \defn{inverse limit}\index[terminology]{limit!inverse}\index[terminology]{inverse!limit} is a limit of an inverse system.
\end{nul}

\begin{nul}
	Let $ C $ be \acategory.
	The \category $ \Pro(C) $\index[notation]{ProC@$ \Pro(C) $} of \defn{\proobjects in $C$}\index[terminology]{pröobject} and the Yoneda embedding%
	\footnote{The Hiragana character `\kern-0.6em{\begin{CJK}{UTF8}{min} \CJKfamily{goth} ょ \end{CJK}}\kern-0.3em' is pronounced `yo'.} 
	\begin{equation*}
		\yo\colon\incto{C}{\Pro(C)} \comma \index[notation]{yo@$\yo$}
	\end{equation*}
	are defined by the following universal property
	\begin{itemize}
		\item The \category $ \Pro(C) $ admits $ \updelta_0 $-small inverse limits.

		\item For any \category $ D $ with $ \updelta_0 $-small inverse limits, composition with $ \yo $ induces an equivalence 
		\begin{equation*}
			\equivto{\Fun^{\inv}(\Pro(C),D)}{\Fun(C,D)} \comma
		\end{equation*}
		where $ \Fun^{\inv}(\Pro(C),D) \subset \Fun(\Pro(C),D) $ full subcategory of functors that preserve $\updelta_0 $-small inverse limits.
	\end{itemize}

	The existence of $ \Pro(C) $ is a special case of (the dual of) \HTT{Proposition}{5.3.6.2}.
\end{nul}

\begin{nul}
	The formation of \proobjects is formally dual to the formation of indobjects: for any \category $ C $, we have a natural identification $\Pro(C)^{\op} \simeq \Ind(C^{\op})$.
\end{nul}

\begin{nul}
	For any accessible \category $C$ that with finite limits, the \category $ \Pro(C) $ admits an explicit description: $ \Pro(C) $ is equivalent to the full subcategory of $ \Fun(C,\Space)^{\op} $ spanned by the left exact accessible functors \SAG{Proposition}{A.8.1.6}.
	Under this identification, the Yoneda embedding $ \yo \colon \fromto{C}{\Pro(C)} $ is the restriction of the opposite of the Yoneda embedding $ \incto{C^{\op}}{\Fun(C,\Space)} $

	If $X\colon\fromto{A}{C}$ is an inverse system, then its limit in $\Pro(C)$ is the functor
	\begin{equation*}
		\goesto{Y}{\colim_{\alpha \in A^{\op}}\Map_C(X_{\alpha},Y)} \period
	\end{equation*}
	We abuse notation and denote this \proobject by $X=\{X_{\alpha}\}_{\alpha \in A}$.
	Every \proobject of $C$ can be exhibited in this manner, and for \proobjects $X=\{X_{\alpha}\}_{\alpha \in A}$ and $Y=\{Y_{\beta}\}_{\beta \in B}$ we obtain the familiar formula
	\begin{equation*}
		\Map_{\Pro(C)}(X,Y)\simeq\lim_{\beta \in B}\colim_{\alpha \in A^{\op}}\Map_C(X_{\alpha},Y_{\beta}) \period
	\end{equation*}
	We thus often speak of objects of $\Pro(C)$ as if they \textit{were} inverse systems.
	In particular, we call a \proobject $ X $ \defn{constant} if and only if $ X $ lies in the essential image of $\yo$;
	equivalently, $ X $ is constant if and only if, as a functor $\fromto{C}{\Space}$, $ X $ preserves inverse limits.
\end{nul}

\begin{nul}[proëxistent left adjoint]\label{nul:proexistentadjoint}
	Let $ \delta $ be an inaccessible cardinal, $C$ a locally $\delta$-small \category that admits all $\updelta_0$-small limits, $D$ an accessible \category that admits finite limits, and $u\colon\fromto{D}{C}$ a left exact functor.
	The functor $ u $ does not generaly admit a left adjoint, but passage to \proobjects often repairs this.
	Indeed, one may extend $u$ to a (unique) functor $U\colon\fromto{\Pro(D)}{C}$ that preserves inverse limits, and in the other direction, one may consider the composite
	\begin{equation*}
		F \coloneq u^{\ast} \circ \yo \colon C\to\Fun(C,\Space_{\delta})^{\op}\to\Fun(D,\Space_{\delta})^{\op}
	\end{equation*}
	of the Yoneda embedding $ \yo $ with the restriction along $ u $.
	The functor $F$ carries an object $ c \in C$ to the assignment $ \goesto{d}{\Map_C(c,u(d))}$.
	We have to make two set-theoretic assumptions:
	\begin{itemize}
		\item Assume that for any object $ c \in C$ and any object $ d \in D$, the space $\Map_C(c,u(d))$ is $\updelta_0$-small.

		\item Assume that for any object $ c \in C $, there exists a regular cardinal $ \delta' < \updelta_0 $ such that for any $ \delta' $-filtered diagram $ d_{\ast} \colon\fromto{A}{D} $, the natural map
		\begin{equation*}
			\fromto{\colim_{\alpha \in A} \Map_C(c,u(d_\alpha))}{\Map_C(c,\colim\nolimits_{\alpha \in A} u(d_\alpha))}
		\end{equation*}
		is an equivalence.
	\end{itemize}
	In this case, the functor $ F $ lands in $ \Pro(D) $, and $ F $ is left adjoint to $ U $.
	We shall call $ F $ the \defn{proëxistent left adjoint to $u$}.%
	\index[terminology]{proëxistent left adjoint}%
	\index[terminology]{left adjoint!proëxistent}
	If $u$ \textit{already} admits a left adjoint $ f $, then $ F $ lands in $ D $ and coincides with $ f $.
\end{nul}


\subsubsection{Recollements}

\begin{nul}[oriented fiber product of \categories]\label{nul:orientedfpinCat}
	Given functors $ F \colon \fromto{X}{Z} $ and $ G \colon \fromto{Y}{Z} $ between \categories, we write
	\begin{equation*}
		\commacat{X}{Z}{Y} \coloneq X \crosslimits_{\Fun(\{0\},Z)} \Fun([1],Z) \crosslimits_{\Fun(\{1\},Z)} Y
	\end{equation*}
	for the \defn{oriented fiber product}\index[terminology]{oriented fiber product!of categories@of \categories}\index[notation]{XZY@$\commacat{X}{Z}{Y}$} of \categories.
\end{nul}

\begin{nul}\label{nul:orientedfpinCatesssmall}
	Let $ X $ and $ Y $ be $ \updelta_0 $-small \categories, let $ Z $ be a locally $ \updelta_0 $-small \category, and let $ F \colon \fromto{X}{Z} $ and $ G \colon \fromto{Y}{Z} $ be functors.
	Write $ Z' \subset Z $ for the full subcategory spanned by those objects in the image of $ F $ or the image of $ G $.
	Then $ Z' $ is $ \updelta_0 $-small and the oriented fiber product $ \commacat{X}{Z}{Y} $ is equivalent to $ \commacat{X}{Z'}{Y} $, whence $ \commacat{X}{Z}{Y} $ is $ \updelta_0 $-small.
\end{nul}

\begin{nul}[see \protect{\cite[\HAsec{A.8}]{HA}}]\label{rec:recollement}
	Let $C$ be \acategory that admits finite limits. 
	Then two functors $\ilowerstar\colon\fromto{C_Z}{C}$ and $\jlowerstar\colon\fromto{C_U}{C}$ exhibit $C$ as a \defn{recollement}\index[terminology]{recollement} of $C_Z$ and $C_U$ if and only if the following conditions are satisfied.
	\begin{itemize}
		\item Both $\ilowerstar$ and $\jlowerstar$ are fully faithful.

		\item There are left exact left adjoints $\iupperstar$ and $\jupperstar$ to the functors $\ilowerstar$ and $\jlowerstar$, respectively.

		\item The functor $\jupperstar\ilowerstar$ is constant at the terminal object of $ C_U $.

		\item The functor $(\iupperstar,\jupperstar)\colon\fromto{C}{C_Z\times C_U}$ is conservative.
	\end{itemize}
	We refer to the \category $C_Z$ as the \defn{closed subcategory}, the \category $C_U$ as the \defn{open subcategory}, and the functor $\iupperstar\jlowerstar\colon\fromto{C_U}{C_Z}$ as the \defn{gluing functor}.

	If $C$ is the recollement of \categories $C_Z$ and $C_U$, then $C_Z$ is canonically equivalent to the \emph{kernel} of $\jupperstar$ (i.e., the full subcategory spanned by those objects $x$ such that $\jupperstar(x)$ is terminal in $C_U$).

	If $C_Z$ and $C_U$ are any \categories with finite limits, and $\phi\colon\fromto{C_U}{C_Z}$ is a left exact functor, then we write
	\begin{equation*}
		C_Z\orientedcup^{\phi}C_U\coloneq\commacat{C_Z}{C_Z}{C_U} \period
	\end{equation*}
	The projections
	\begin{equation*}
		\iupperstar\colon\fromto{C_Z\orientedcup^{\phi}C_U}{C_Z} \andeq \jupperstar\colon\fromto{C_Z\orientedcup^{\phi}C_U}{C_U}
	\end{equation*}
	admit right adjoints
	\begin{equation*}
		\ilowerstar\colon\fromto{C_Z}{C_Z\orientedcup^{\phi}C_U} \andeq \jlowerstar\colon\fromto{C_U}{C_Z\orientedcup^{\phi}C_U}
	\end{equation*}
	that together exhibit $C_Z\orientedcup^{\phi}C_U$ as a recollement of $C_Z$ and $C_U$. 
	Furthermore, every recollement is of this form, where $\phi$ is the gluing functor.

	If $C_Z$ contains an initial object, then $\jupperstar$ admits a further left adjoint $j_!$, so in this case we may also write $ j^{!} \coloneq \jupperstar$. If, moreover, $C$ contains a zero object (whence so do $C_Z$ and $C_U$), then $\ilowerstar$ admits a further right adjoint $i^{!}$, so in this case we may also write $i_{!}\coloneq \ilowerstar$.
\end{nul}

\begin{nul}\label{nul:truncatednessinrecoll}
	Let $ C $ be \acategory with finite limits and let $\ilowerstar\colon\incto{C_Z}{C}$ and $\jlowerstar\colon\incto{C_U}{C}$ be functors which exhibit $C$ as a recollement of $C_Z$ and $C_U$
	Then for any integer $ n \geq -2 $, since the left exact functor
	\begin{equation*}
		(\iupperstar,\jupperstar) \colon\fromto{C}{C_Z\times C_U}
	\end{equation*}
	is conservative, a morphism $ f $ of $ C $ is $ n $-truncated if and only if $ \iupperstar(f) $ and $ \jupperstar(f) $ are both $ n $-truncated.
\end{nul}


\subsubsection{Relative adjunctions}

\begin{nul}\label{rec:reladj}
	Given a commutative triangle of \categories
	\begin{equation*}
		\begin{tikzcd}
			C \arrow[dr, "p"'] & & D \arrow[dl, "q"] \arrow[ll, "G"'] \\ 
			 & E &
		\end{tikzcd}
	\end{equation*}
	where $ p $ and $ q $ are isofibrations, we say that $ G $ \defn{admits a left adjoint relative to $ E $}\index[terminology]{relative adjunction} if the following condition holds:
	\begin{itemize}
		\item There exists a functor $ F \colon \fromto{C}{D} $ and a natural transformation $ \unit \colon \fromto{\id_{C}}{GF} $ which exhibits $ F $ as a left adjoint to $ G $ such that $ p\unit \colon \fromto{p}{pGF \equivalent qF} $ is an equivalence in $ \Fun(C,E) $.
	\end{itemize}

	In this situation, given a functor $ \fromto{E'}{E} $, define $ C_{E'} \coloneq C \cross_E E' $, $ D_{E'} \coloneq D \cross_E E' $, and write $ G_{E'} \colon \fromto{D_{E'}}{C_{E'}} $ and $ F_{E'} \colon \fromto{C_{E'}}{D_{E'}} $ for the induced functors on pullbacks.
	Then the induced natural transformation $ \fromto{\id_{C_{E'}}}{G_{E'}F_{E'}} $ exhibits $ F_{E'} $ as a left adjoint to $ G_{E'} $ relative to $ E' $.
	See \HA{Proposition}{7.3.2.5}.

	If $ p $ and $ q $ are cartesian fibrations, $ G $ admits a left adjoint relative to $ E $ if and only if the following conditions hold:
	\begin{itemize}
		\item For every object $ e \in E $, the induced functor $ G_{e} \colon \fromto{D_{e}}{C_{e}} $ admits a left adjoint.

		\item The functor $ G $ carries $ p $-cartesian morphisms in $ D $ to $ q $-cartesian morphisms in $ C $.
	\end{itemize}
	See \HA{Proposition}{7.3.2.6}.
	In this case, if $f\colon a\to b$ is a morphism of $E$, then there is a natural equivalence
	\begin{equation*}
		f^{\ast}G_b\simeq G_af^{\ast} \period
	\end{equation*}

	Dually, if $ p $ and $ q $ are cocartesian fibrations, $ G $ admits a left adjoint relative to $ E $ if and only if the following (somewhat more complicated) conditions hold:
	\begin{itemize}
		\item For every object $ e \in E $, the induced functor $ G_{e} \colon \fromto{D_e}{C_{e}} $ admits a left adjoint $ F_{e} $.

		\item Let $ c \in C $ and $ \alpha \colon \fromto{e}{e'} $ be a morphism of $ e $ where $ e \equivalent p(c) $.
		Let $ \widetilde{\alpha} \colon \fromto{F_e(c)}{d} $ be a $ q $-cocartesian morphism in $ D $ lying over $ \alpha $, and let $ \beta \colon \fromto{c}{G(d)} $ be the composite $ \beta \coloneq G(\widetilde{\alpha}) \of \unit(c) $.
		Choose a factorization of $ \beta $ as
		\begin{equation*}
			\begin{tikzcd}[sep=1.5em]
				\beta \colon c \arrow[r, "\beta'"] & c' \arrow[r, "\beta''"] & G(d) \comma
			\end{tikzcd}
		\end{equation*}
		where $ \beta' $ is a $ p $-cocartesian morphism lifting $ \alpha $ and $ \beta'' $ is a morphism in $ C_{e'} $.
		Then $ \beta'' $ induces an equivalence $ \fromto{F_{e'}(c')}{d} $ in the \category $ D_{e'} $.
	\end{itemize}
	See \HA{Proposition}{7.3.2.11}.
	In this case, if $f\colon a\to b$ is a morphism of $E$, then there is a natural equivalence
	\begin{equation*}
		G_bf_! \simeq f_!G_a \period
	\end{equation*}
\end{nul}


\subsubsection{Schemes}

\begin{nul}\label{nul:coherentqcqs}
	Following the Grothendieck school \cites[Exposé VI, Exemples 1.22]{MR50:7131}[Exposé XVII, 0.12]{MR50:7132}{MR3309086}{MR2249998}, we say that scheme $ X $ is \defn{coherent}\index[terminology]{coherent!scheme}\index[terminology]{scheme!coherent} if and only if $ X $ is quasicompact and quasiseparated.
	All schemes considered in this book will be coherent.
\end{nul}


\newpage

\part{Stratified spaces}\label{part:stratspace}

In \Cref{sec:posetrec}, we recall the \textit{Alexandroff} topology on a poset.
Just as the category of profinite sets can be identified with that of Stone topological spaces, the category of profinite posets be identified with that of \textit{spectral} topological spaces.
These topological spaces allow one to define stratifications of topological spaces over finite and profinite posets.

The homotopy theory of spaces stratified over a poset $P$ is introduced in \Cref{sec:hothystratspaces}.
These can be described in two equivalent ways: as a \category with a conservative functor to a poset and as a \textit{spatial décollage} -- a diagram of spaces indexed by the subdivision of $P$ that satifies a Segal condition.
These descriptions are equivalent, and they both permit one to study the Postnikov tower of stratified spaces and identify finiteness conditions on them.


\section[Aide-mémoire on the topology of posets \& profinite posets]{Aide-mémoire on the topology of posets \& profinite \\ posets}\label{sec:posetrec}

In this short chapter we review the topologies on posets, and stratifications of topological spaces by posets.
We also recall Hochster's Theorem classifying spectral topological spaces in terms of \proobjects in finite posets (\Cref{thm:Hoschterthesis}).


\subsection{Alexandroff Duality}

We start by reviewing the relationship between topological spaces and preorders.
The first thing to note is that every topological space gives rise to a preorder.

\begin{dfn}\label{def:specializationpreorder}
	Let $ T $ be a topological space.
	The \defn{specialization preorder}\index[terminology]{specialization preorder} on $ T $ is the preorder on the underlying set of $ T $ with order relation $ x \leq y $ if and only if \smash{$ x \in \overline{\{y\}} $}.
	We denote the specialization preorder on $ T $ by $ \Special(T) $.
\end{dfn}

Every preorder also gives rise to a topological space.

\begin{dfn}\label{dfn:sievepreorder}\label{dfn:Alexandroff}
	Let $ P $ be a preorder.
	\begin{enumerate}[(\ref*{dfn:sievepreorder}.1)]
		\item We say that a subset $ U \subseteq P $ a \defn{cosieve}\index[terminology]{cosieve} if for any points $p,q\in P$ such that $p\leq q$, if $ p\in U $ then $ q\in U $.

		\item We say that a subset $ Z \subseteq P $ is a \defn{sieve}\index[terminology]{sieve} if for any points $p,q\in P$ such that $p\leq q$, if $q\in Z$ then $p\in Z$.

		\item We say that subset $ W \subseteq P $ is an \defn{interval}\index[terminology]{interval} if for any points $p,q,r\in P$ such that $p\leq q\leq r$, if $p,r\in W$ then $q\in W$.
	\end{enumerate}

	The \defn{Alexandroff topology}\index[terminology]{Alexandroff topology} on $ P $ is the topology on the underlying set of $ P $ in which a subset $ U \subseteq P $ is open if and only if $ U $ is a cosieve. 
	We write $ \Alex(P) $ or simply $ P $ for the set $ P $ equipped with the Alexandroff topology.
\end{dfn}

\begin{nul}
	Note that, a subset $Z\subseteq P$ is closed if and only if $ Z $ is a sieve, and subset $ W \subseteq P $ is locally closed if and only if $ W $ is an interval.
\end{nul}

Alexandroff topologies admit a well-known characterization.

\begin{prp} 
	The following are equivalent for a topological space $T$.
	\begin{itemize}
		\item The space $T$ is \emph{finitely generated}; that is, a subset $U\subseteq T$ is open if for any finite topological space $ F $ and continuous map $f\colon\fromto{F}{T}$, the inverse image $f^{-1}(U)$ is open.

		\item The union of any collection of closed subsets of $T$ is again closed.

		\item The topology on $T$ coincides with the Alexandroff topology attached to the specialization preorder on $T$.
	\end{itemize}
\end{prp}

\begin{nul}[{Alexandroff Duality\index[terminology]{Alexandroff Duality}}] 
	The formation of the Alexandroff topology defines an equivalence of categories
	\begin{equation*}
		\Alex \colon \equivto{\Preord}{\TSpc^{\fg}}
	\end{equation*}
	from the category of preorders to the category of finitely generated topological spaces.
	The inverse \smash{$ \Special \colon \equivto{\TSpc^{\fg}}{\Preord} $} given by taking the specialization preorder.
	In particular, the functors $ \Alex$ and $ \Special $ restrict to an equivalence between the category of finite preorders and the category of finite topological spaces.

	The functors $ \Alex $ and $ \Special $ also restrict to an equivalence between:
	\begin{itemize}
		\item the category of posets and the category of $ \Tup_0 $ finitely generated topological spaces,

		\item the category of \textit{noetherian} preorders (i.e., those for which every nonempty subset contains a maximal element) and the category of quasi-sober finitely generated topological spaces, and thus

		\item the category of noetherian posets and the category of sober finitely generated topological spaces.
	\end{itemize}
\end{nul}

\begin{ntn}
	Let $P$ be a preorder.
	For any subset $W\subseteq P$, we write $P_{\geq W}$ for the cosieve generated by $W$, which is the smallest open neighborhood of $W$.
	Dually, we write $P_{\leq W}$ for the sieve generated by $W$, which is the closure of $W$.

	In particular, we will refer to the sets of the form%
	\index[notation]{Pp@$P_{\geq p}, P_{\leq p}, P_{>p}, P_{<p}$}
	\begin{equation*}
		P_{\geq p} = \{q \in P \, | \, q \geq p \} 
	\end{equation*}
	for $p\in P$, which are sometimes called the \defn{principal open sets},
	and the sets of the form
	\begin{equation*}
		P_{\leq p} = \{q \in P \, | \, q \leq p \} \comma
	\end{equation*}
	which are sometimes called the \defn{principal ideals}.

	Similarly, we write
	\begin{equation*}
		P_{>p} \coloneq P_{\geq p} \smallsetminus \{p\} \andeq P_{<p} \coloneq P_{\leq p} \smallsetminus \{p\} \period
	\end{equation*}
\end{ntn}

\begin{nul}
	A poset is quasicompact in the Alexandroff topology if and only if its set of minimal elements is finite and limit-cofinal.
	A monotone map $f\colon\fromto{Q}{P}$ of posets is quasicompact if and only if, for any $p\in P$, the poset $f^{-1}(P_{\geq p})$ is quasicompact.
\end{nul}

\begin{ntn}
	Let $ P $ be a poset.
	We call a nonempty linearly ordered finite subset $ \Sigma \subset P $ a \defn{chain}\index[terminology]{chain} in $ P $.
	We write $ \sd(P) $\index[notation]{sd@$\sd$} for the \defn{subdivision}\index[terminology]{subdivision} of $ P $; that is, $ \sd(P) $ is the poset of chains $ \Sigma \subset P $ ordered by containment.
	
	Note that there is a natural forgetful functor $\fromto{\sd(P)}{\DDelta}$.

	Let $\Sigma\subset P$ be a chain.
	Then every closed subset $Z\subset\Sigma$ is again a chain, and we denote the inclusion $ Z \subset \Sigma $ by $i_{Z\subset\Sigma}$%
	\index[notation]{iZ@$i, i_{Z \subset\Sigma}$}
	(or simply $i$ if $Z$ and $\Sigma$ are clear from the context).
	Dually, every open subset $U\subset\Sigma$ is also a chain, and we denote the inclusion $ U \subset \Sigma $ by $j_{Z\subset\Sigma}$%
	\index[notation]{jU@$j, j_{U \subset\Sigma}$}
	(or simply $j$ if $U$ and $\Sigma$ are clear from the context).

	In more general situations, we write $e_{W\subset\Sigma} \colon \incto{W}{\Sigma}$%
	\index[notation]{eW@$e, e_{W \subset\Sigma}$}
	for an inclusion $W\subset\Sigma$ that is not known to be be either closed or open.
\end{ntn}


\subsection{Stratifications of topological spaces}

The theory of stratified topological spaces can now be neatly organized in terms of topological spaces equipped with a continuous map to a poset in the Alexandroff topology.

\begin{dfn}\label{dfn:strattopspace}
	A \defn{stratification}\index[terminology]{stratification!of a topological space} of a topological space $T$ by a poset $P$ is a continuous map $f\colon\fromto{T}{P}$.
	For any point $p\in P$, we write%
	\index[notation]{Tp@$T_{\geq p}, T_{>p}, T_{\leq p}, T_{<p}, T_p$}
	\begin{align*}
		T_{\geq p} &\coloneq f^{-1}(P_{\geq p}) \comma \\
		T_{>p} &\coloneq f^{-1}(P_{>p}) \comma \\
		T_{\leq p} &\coloneq f^{-1}(P_{\leq p}) \comma \\
		T_{<p} &\coloneq f^{-1}(P_{<p}) \comma \\
		T_p &\coloneq T_{\geq p}\cap T_{\leq p} \period
	\end{align*}
	The subspaces $T_{\geq p}$ and $T_{>p}$ are open in $T$, and $T_{\leq p}$ and $T_{<p}$ are closed in $T$.
	The subspace $ T_p \subset T $ is locally closed; we call $ T_p $ the \smash{$p$\upperth} \defn{stratum}\index[terminology]{stratum} of $ f \colon \fromto{T}{P} $.

	We say that the stratification $f\colon\fromto{T}{P}$ is \defn{nondegenerate}\index[terminology]{stratification!nondegenerate} if each stratum $T_p$ is nonempty, and for all $p,q\in P$ such that $p\leq q$, we have \smash{$T_p\subseteq\overline{T}_q$}.

	We say that a stratification is \defn{finite}\index[terminology]{stratification!finite} if and only if its base poset is finite.
	We say that the stratification $f\colon\fromto{T}{P}$ is \defn{constructible}\index[terminology]{stratification!constructible} if and only if, for all $p\in P$, the open subset $T_{\geq p}\subset T$ is retrocompact -- i.e., for every quasicompact open $ V\subset T$, the intersection $ V \intersect T_{\geq p} $ is quasicompact.
\end{dfn}


\subsection{Hochster Duality}\label{subsec:ordinaryHochsterdual}

In this section we recall Hochster's characterization of topological spaces that arise as the Zariski space of a coherent scheme in terms of \proobjects in the category of finite posets.
This characterization provides a convenient way to study stratifications of schemes.

\begin{ntn}\label{ntn:FC}
	For any topological space $T$, we write $\FC(T)$\index[notation]{FC@$ \FC $} for the $1$-category of finite, nondegenerate, constructible stratifications $\fromto{T}{P}$. 
	We abuse notation and write merely $P$ for an object $\fromto{T}{P}$ of this category, leaving the structure morphism implicit.
	The $1$-category $\FC(T)$ is, up to equivalence, an inverse poset in which $P\leq Q$ if and only if $P$ \textit{refines} $Q$.
	That is, $P\leq Q$ if and only if the structure morphism $\fromto{T}{Q}$ factors through the structure morphism $\fromto{T}{P}$.
\end{ntn}

\begin{dfn}
	A topological space $S$ is \defn{spectral}%
	\index[terminology]{topological space!spectral}\index[terminology]{spectral!topological space}.
	\footnote{Others call such topological spaces \textit{coherent}; see for example \cites[\SAGseclink{A.1}]{SAG}[Chapter III \S 3.4 \& p. 78]{MR861951}.
	We use Hochster's algebro-geometric terminology \cites{Hochster:thesis,Hochster:primeideal}.}
	if and only if $ S $ is the limit of its finite, nondegenerate, constructible stratifications; that is, if and only if the natural map
	\begin{equation*}
		\fromto{S}{\lim_{P\in \FC(S)} P}
	\end{equation*}
	is an isomorphism the $1$-category of topological spaces.
	We write $\TSpcspec \subset \TSpc $ for the subcategory of spectral topological spaces and \emph{quasicompact} continuous maps.
\end{dfn}

\begin{ntn}
	We write the $\Pos$\index[notation]{Pos@$\Pos, \Posfin$} for $1$-category of posets, and $\Posfin$ for the $1$-category of finite posets.
	We regard the $1$-category $\Pos$ as a full subcategory of $\Cat_{\infty}$; indeed one has $ \Pos \simeq \Cat_{0} $.

	Passing to \proobjects, we obtain the $1$-category $\Pro(\Pos)$ of proposets and the full subcategory $\Pro(\Posfin)$ of \proobjects in the category of finite posets -- which we call \defn{profinite posets}\index[terminology]{profinite!poset}\index[terminology]{poset!profinite}.
\end{ntn}

\begin{nul}
	The formation of the Alexandroff topology extends to an equivalence of $1$-categor\-ies
	\begin{equation*}
		\Alex \colon\equivto{\Pro(\Posfin)}{\TSpcspec} \period
	\end{equation*}
	We will therefore fail to distinguish between a spectral topological space and its corresponding profinite poset.
\end{nul}

Hochster's characterization of spectral topological spaces justifies their name:

\begin{thm}[{Hochster Duality\index[terminology]{Hochster Duality} \cites{Hochster:thesis,Hochster:primeideal}}]\label{thm:Hoschterthesis}
	The following are equivalent for a topological space $S$.
	\begin{itemize}
		\item The topological space $S$ is spectral.

		\item The topological space $S$ is sober, quasicompact, and quasiseparated; additionally, the set of quasicompact open subsets forms a basis for the topology of $S$.

		\item The topological space $S$ is homeomorphic to the underlying Zariski topological space of $\Spec R$ for some ring $R$.

		\item The topological space $S$ is homeomorphic to the underlying Zariski topological space of some coherent scheme $Y$.
	\end{itemize}
\end{thm}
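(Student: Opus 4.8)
The plan is to establish the classical equivalences (2) $\Leftrightarrow$ (3) $\Leftrightarrow$ (4) first, and then to splice in condition (1) by proving (1) $\Rightarrow$ (2) and (2) $\Rightarrow$ (1) separately; since these conditions are then mutually implied, they are all equivalent.

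The implications among (2), (3), (4) are Hochster's original theorem together with elementary scheme theory. Here (3) $\Rightarrow$ (4) is immediate, since $\Spec R$ is the underlying Zariski space of the affine --- hence coherent (\Cref{nul:coherentqcqs}) --- scheme $\Spec R$. For (4) $\Rightarrow$ (2) I would recall that every scheme is sober, that a coherent scheme is quasicompact and quasiseparated by definition, and that its affine opens form a base consisting of quasicompact opens (quasiseparatedness moreover making them closed under finite intersection); this yields (3) $\Rightarrow$ (2) as a special case. The reverse implication (2) $\Rightarrow$ (3) is exactly Hochster's theorem \cite{Hochster:thesis,Hochster:primeideal}, which I would invoke as a black box: given a space satisfying (2) one passes to its patch (constructible) topology, presents that patch space as a cofiltered limit of finite sets indexed by finite sublattices of the quasicompact opens, realises each resulting finite $T_0$ space as a prime spectrum, and assembles a ring structure along a filtered colimit of rings.

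Next, (1) $\Rightarrow$ (2). If $S\simeq\lim_{P\in\FC(S)}P$ in $\TSpc$, then $S$ is a cofiltered limit of finite posets in the Alexandroff topology. Each such finite poset is a finite spectral space --- a finite $T_0$ space satisfies (2) trivially, sobriety being the statement that finite posets are noetherian --- and the transition maps, being quasicompact continuous maps between spectral spaces, are spectral maps. It is a standard closure property, which I would cite from the theory of spectral spaces (or reprove in a line using the patch topology and a Tychonoff-type argument for quasicompactness), that spectral spaces and spectral maps are closed in $\TSpc$ under cofiltered limits; hence $S$ satisfies (2). (Alternatively one reads this off the equivalence $A\colon\Pro(\poSetfin)\xrightarrow{\sim}\TSpc^{\spec}$ recalled above.)

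The substantive implication, and where I expect the real work to lie, is (2) $\Rightarrow$ (1). Given $S$ satisfying (2), I must show the canonical continuous map $\theta\colon S\to\lim_{P\in\FC(S)}P$ is a homeomorphism, using that $\FC(S)$ is a cofiltered poset. The strategy is to produce enough finite nondegenerate constructible stratifications from finite sublattices $\mathcal{L}$ of the distributive lattice of quasicompact opens of $S$: each sufficiently rich $\mathcal{L}$ determines, via the Birkhoff correspondence between finite distributive lattices and finite posets, a finite poset $P_{\mathcal{L}}$ and a map $S\to P_{\mathcal{L}}$ whose open strata $X_{\geq p}$ generate $\mathcal{L}$, hence a constructible stratification; the nondegeneracy condition $X_p\subseteq\overline{X}_q$ for $p\leq q$ then has to be forced, and holds only for a cofinal family of such $\mathcal{L}$. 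Granting that these $P_{\mathcal{L}}$ are cofinal in $\FC(S)$ --- which follows because any finite nondegenerate constructible stratification $f\colon S\to P$ has each $X_{\geq p}$ retrocompact, hence quasicompact open in the quasicompact $S$, so that $f$ factors through the finite sublattice they generate --- one has $\lim_{\FC(S)}P\simeq\lim_{\mathcal{L}}P_{\mathcal{L}}$, and it remains to check $\theta$ is a homeomorphism: it is injective because $S$ is $T_0$ with a base of quasicompact opens, so any two points are separated by some $\mathcal{L}$; it is surjective because a thread $(x_{\mathcal{L}})_{\mathcal{L}}$ determines a cofiltered family of nonempty closed subsets $\overline{X}_{x_{\mathcal{L}}}\subseteq S$ (nonempty by nondegeneracy), whose intersection is nonempty since $S$ is quasicompact, any point of which maps to the thread; and it is open onto its image because $\theta$ carries each $U\in\mathcal{L}$ onto the trace of a basic open of the limit, and these exhaust bases on both sides. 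The delicate points --- the ones I would actually spend effort on --- are the nondegeneracy bookkeeping (pinning down which sublattices yield nondegenerate stratifications, and verifying cofinality survives that constraint) and the openness of $\theta$, i.e. matching the quasicompact-open base of $S$ against the canonical base of the inverse limit.
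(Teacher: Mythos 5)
The paper offers no proof of this statement: it is recorded as an aide-m\'emoire and deferred entirely to Hochster's thesis and paper, so the only comparison available is with the classical argument. Your architecture --- the equivalences among (2), (3), (4) via Hochster as a black box, then splicing in condition (1) by exhibiting $S$ as the cofiltered limit of the finite posets $P_{\mathcal L}$ attached to finite sublattices $\mathcal L$ of quasicompact opens, with the $P_{\mathcal L}$ cofinal in $\FC(S)$ --- is the standard and correct route.

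One step fails as written, though: the surjectivity of $\theta\colon S\to\lim_{P\in\FC(S)}P$. A thread $(x_{\mathcal L})$ does give a downward-directed family of nonempty strata, but intersecting their \emph{closures} $\overline{X}_{x_{\mathcal L}}$ and asserting that any point of the intersection maps to the thread is false: a point of $\overline{X}_{x_{\mathcal L}}$ only maps to some $p\leq x_{\mathcal L}$ in $P_{\mathcal L}$. Concretely, for $S=\Spec V$ with $V$ a discrete valuation ring and the stratification $S\to[1]$, the closed point lies in $\overline{X}_1=S$ yet maps to $0$, not to the thread $(1)$. The repair is to intersect the strata $X_{x_{\mathcal L}}$ themselves: they form a filter base of nonempty constructible sets, whose intersection is nonempty by quasicompactness of the patch (constructible) topology --- a fact already contained in the Hochster machinery you invoke for (2)$\Rightarrow$(3), so the fix costs nothing new. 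Separately, you are right to flag the nondegeneracy bookkeeping as the other genuine issue: the Birkhoff poset of an arbitrary finite sublattice need not yield a nondegenerate stratification (for $S=\Spec k\sqcup\Spec V$ and $\mathcal L$ generated by the open point of $\Spec V$, the condition $X_p\subseteq\overline{X}_q$ already fails), and your proposal defers, rather than supplies, the argument that every $\mathcal L$ enlarges to one whose stratification is nondegenerate; that enlargement is where the remaining real content of (2)$\Rightarrow$(1) lives.
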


\begin{nul}\label{nul:dualitycube} 
	On one hand, Alexandroff Duality characterizes posets as finitely generated topological spaces.
	On the other, \textit{Stone Duality} characterizes profinite sets as \textit{Stone topological spaces},%
	\index[terminology]{topological spaces!Stone}\index[terminology]{Stone!topological spaces}
	i.e., totally separated quasicompact topological spaces.
	Hochster Duality provides a common extension of each of these forms of duality.
	The situation is summarized in the cube
	\begin{equation*}
		\begin{tikzcd}[column sep={10ex,between origins}, row sep={8ex,between origins}]
			& \Setfin \arrow[dl, hooked'] \arrow[rr, "\sim"{yshift=-0.2em}] \arrow[dd, hooked, xshift=-0.25em]  & & \TSpc^{\fin,\disc} \arrow[dl, hooked'] \arrow[dd, hooked, xshift=-0.5em] \\
			\Pro(\Setfin) \arrow[dd, hooked] \arrow[rr, crossing over, "\sim"{yshift=-0.2em, near end}] & & \TSpc^{\Stone} \\
			& \Posfin \arrow[dl, hooked'] \arrow[rr, "\sim"{yshift=-0.2em, near start}] & & \TSpc^{\fin} \arrow[dl, hooked'] \\
			\Pro(\Posfin) \arrow[rr, "\sim"{yshift=-0.2em}] & & \TSpcspec \arrow[from=uu, crossing over, hooked] & \phantom{\TSpc^{\fin,\disc}} \comma
		\end{tikzcd}
	\end{equation*}
	where $ \TSpc^{\fin} $ denotes the $ 1 $-category of finite spectral topological spaces, and the horizontal functors marked `$\sim$' are equivalences of $1$-categories.

	One of the main technical results of this book -- the \Categorical Hochster Duality Theorem (\Cref{thm:headlineinftyHochster}=\Cref{thm:inftyHochster}) -- is an extension of this cube of dualities to one in which the $1$-category of finite sets is replaced with the \category of \pifinite \groupoids.
	Part of this extension is already established in the literature: Lurie proves \acategorical form of Stone Duality \cite[\SAGsec{E.3}]{SAG}.
	This \Categorical Stone Duality Theorem identifies the \category of profinite \groupoids with the \category of what we call \textit{Stone \topoi}.%
	\footnote{Lurie calls these \textit{profinite \topoi}.
	In \Cref{sec:spectraltopoi} we introduce a more general class of \topoi that could also reasonably be called `profinite \topoi', so we use the distinct term `Stone \topoi' to avoid confusion.}
\end{nul}


\subsection{Profinite stratifications}

The theory of stratifications also works well for profinite stratifications.

\begin{dfn}\label{def:profinstrat}
	A \defn{profinite stratification}\index[terminology]{profinite stratification!of a topological space}\index[terminology]{stratification!profinite!of a topological space} of a topological space $T$ is a spectral topological space $S$ and a continuous map $f\colon\fromto{T}{S}$.
	We say that $ f $ is \defn{constructible}\index[terminology]{profinite stratification!constructible}\index[terminology]{constructible!profinite stratification}\index[terminology]{stratification!constructible} if and only if, for every quasicompact open subset $U\subseteq S$, the inverse image $f^{-1}(U)\subseteq T$ is retrocompact.
\end{dfn}

\begin{nul}
	A profinite stratification with base $S$ is the same as a compatible family of stratifications with base $P$ for each nondegenerate, finite, constructible stratification $\fromto{S}{P}$.
\end{nul}

\begin{ntn}\label{ntn:Xzar}
	Let $ X $ be a scheme.
	We write $ X^{\zar} $ for the underlying Zariski topological space of $ X $.
\end{ntn}

\begin{exm}\label{exm:Xanprofinstrat}
	Let $ X $ be a scheme of finite type over the complex numbers.
	Write $ X^{\an} $ for the set $ X(\CCup) $ of complex points of $ X $ equipped with the complex analytic topology.
	Then the natural continuous map $ \fromto{X^{\an}}{X^{\zar}} $ is a profinite stratification of $ X^{\an} $ by $ X^{\zar} $.
\end{exm}

\newpage

\section{The homotopy theory of stratified spaces}\label{sec:hothystratspaces}

In this chapter we develop the homotopy theory of stratified spaces.
To start, \cref{subsec:stratisconsfunctor} explains how to think about the homotopy theory of stratified spaces in terms of \categories with a conservative functor to a poset.
\Cref{subsec:consrepair} explains how the \categories of stratified spaces relate as the poset varies.
\Cref{subsec:stratPostnikov} explains the correct notions of connectedness and truncatedness for stratified spaces.
\Cref{subsec:finitestratspaces} explains the analogue of \textit{\pifinite} spaces (i.e., truncated spaces with finite homotopy groups) in the stratified setting.
These \textit{\pifinite stratified spaces} are crucial in our formulation of one of the main results of this text: \Categorical Hochster Duality (\Cref{thm:headlineinftyHochster}).
\Cref{subsec:profinstratspaces} explains the theory of profinite stratified spaces.
\Cref{subsec:spatialdecollages,subsec:stratifiednerve} explain a complete Segal space style approach to stratified spaces that we'll use again and again throughout the text; the power of this approach is that it reduces many questions about stratifications over a general poset $ P $ to questions about \textit{strata} and \textit{links} (and essentially to the poset $ P = [1] $).
Finally, \cref{subsec:profintiedecollages} explains how the complete Segal space approach works in the profinite setting.


\subsection{Stratified spaces as \texorpdfstring{$\infty$}{∞}-categories with a conservative functor to a poset}\label{subsec:stratisconsfunctor}

The equivalence between the homotopy theory of topological spaces and that of simplicial sets -- supplied in the 1950s and 1960s by the work of Kan and Quillen \cites{MR90047}{MR0096210}{MR111032}{MR111033}{MR111036}{Quillen:homotopicalalgebra} -- justifies the treatment of the \category of Kan complexes as `the' homotopy theory of spaces.
Today, the theory of stratified spaces stands on similarly good footing.
Work of Ayala--Francis--Rozenblyum \cite{MR3941460}, Nand-Lal--Woolf \cites{Nand-Lal:thesis}{NandLalWoolf}, Douteau \cite{Douteau:chapter7}, and finally the third-named author \cite{Haine:stratifiedmodel} furnish an equivalence between the homotopy theory of stratified topological spaces and that of \categories with a conservative functor to a poset.

In this section we breifly recall this result and begin to develop the theory of stratified spaces as \categories with a conservative functor to a poset.
To state the core result, we need to fix a convenient category of topological spaces.

\begin{ntn}
	We write $ \TSpcng \subset \TSpc $\index[notation]{TSpcng@$\TSpcng$} for the full subcategory spanned by the \defn{numerically generated} topological spaces.%
	\index[terminology]{numerically generated topological space}\index[terminology]{topological space!numerically generated}
	The category $ \TSpcng $ is a convenient category of topological spaces and is presentable; see \cites{DuggerDelta}{MR2425555}{HaraguchiThesis}[\S3]{MR3289294}{MR3884529}.
	Moreover, every poset in the Alexandroff topology is a numerically generated topological space.
\end{ntn}

\begin{thm}[{\cite[\S3]{Haine:stratifiedmodel}}]\label{thm:layeredmodel}
	Let $P$ be a poset.
	There is a class $W$ of \emph{stratified weak equivalences}\index[terminology]{stratified weak equivalence}\index[terminology]{weak equivalence!stratified} in the category $\TSpcng_{/P}$ with the following properties.
	\begin{itemize}
		\item There is an equivalence of \categories
		\begin{equation}\label{eq:equivofstrspaces}
			\TSpcng_{/P}[W^{-1}] \equivalence \Cat_{\infty,/P}^{\cons} \comma
		\end{equation}
		where the target is the \category of \categories with a conservative functor to $ P $.

	\item Let $ T $ be a $ P $-stratified topological space whose stratification is \emph{conical}\index[terminology]{conical stratification}\index[terminology]{stratification!conical} in the sense of Lurie \HAa{Definition}{A.5.5}, e.g., $ T $ is a \emph{topologically stratified}\index[terminology]{stratification!topological}\index[terminology]{topological stratification} in the sense of Goresky--MacPherson \cite[\S1.1]{MR696691}.
		Then the equivalence \eqref{eq:equivofstrspaces} sends $ T $ to the \emph{exit path \category}\index[terminology]{exit path \category}\index[terminology]{category@\category!exit path} of $ T $.

		\item If $ T $ and $ T' $ are conically $ P $-stratified spaces, then a $ P $-stratified map $ f \colon T \to T' $ is a stratified weak equivalence if and only if $ f $ induces a weak homotopy equivalence on strata and links.
	\end{itemize}
\end{thm}

\begin{nul}
	\Acategory $ C $ admits a conservative functor to a poset if and only if every endomorphism of an object of $ C $ is an equivalence.
	In this case, the homotopy $ 0 $-category $ \ho_0(C) $ is a poset and the natural functor $ \fromto{C}{\ho_0(C)} $ is conservative. 
\end{nul}

We therefore give the following definition. 

\begin{dfn}\label{dfn:stratspaces} 
	We define the \category $\Str$\index[notation]{Str@$ \Str, \StrP $} as the full subcategory of $\Fun([1],\Cat_{\infty})$ spanned by those functors $f\colon \fromto{\Pi}{P}$ where $P$ is a poset and the functor $f$ is conservative.
	We call an object of $\Str$ a \defn{stratified space}.\index[terminology]{stratified space}\index[terminology]{space!stratified}

	Let $ P $ be a poset.
	We write $ \StrP $ for the fiber the target functor $ \target \colon \fromto{\Str}{\Pos} $ over $ P $.
	That is to say, $ \StrP \equivalent \Cat_{\infty,/P}^{\cons} $ is the \category of \categories with a conservative functor to $ P $.
	We call an object of $\StrP$ a \defn{$P$-stratified space}\index[terminology]{Pstratified space@$P$-stratified space}.
\end{dfn}

\begin{nul}
	The \category $ \StrP $ can also be described as the underlying \category of the third-named author's \emph{Joyal--Kan model category}\index[terminology]{model structure!Joyal--Kan}\index[terminology]{Joyal--Kan model structure} $ \sSet_{/P} $ \cite[Corollary 2.5.11]{Haine:stratifiedmodel}.
\end{nul}

\begin{ntn}
	Please observe that if $\Pi$ and $\Pi'$ are $P$-stratified spaces, then the \category $ \Fun_P(\Pi,\Pi') $ of functors $\fromto{\Pi}{\Pi'}$ over $P$ is \agroupoid. 
	Thus $ \Fun_P(\Pi,\Pi') $ coincides with the mapping space $ \Map_{\StrP}(\Pi,\Pi') $.
	To simplify notation we write\index[notation]{MapP@$\Map_P$}
	\begin{equation*} 
		\Map_{P}(\Pi,\Pi') \colonequals \Map_{\StrP}(\Pi,\Pi') \period
	\end{equation*}
\end{ntn}

\begin{dfn}
	Let $f\colon \fromto{\Pi}{P}$ be a $ P $-stratified space.
	For each point $p\in P$, we call the space\index[notation]{Pip@$\Pi_p$}
	\begin{equation*}
		\Pi_p \coloneq \Map_{P}(\{p\},\Pi) \equivalent \{p\} \cross_P \Pi
	\end{equation*}
	the \defn{$p$-th stratum}\index[terminology]{stratum} of $\Pi$.
	For each pair of points $p,q\in P$ with $p\leq q$, we call the space
	\begin{equation*}
		\Nerve_P(\Pi)\{p \leq q\} \coloneq \Map_{P}(\{p \leq q\},\Pi) \index[notation]{NPPipq@$ \Nerve_P(\Pi)\{p \leq q\} $}
	\end{equation*}
	the \defn{link}\index[terminology]{link}\footnote{Our link corresponds to what Frank Quinn and others called the \textit{homotopy link} or \textit{holink}.
	The significance of our chosen notation will become clear in \Cref{cnstr:nerveofstratifiedspace}.} from the $p$-th stratum to the $q$-th stratum.

	Please observe that the link comes equipped with source and target maps
	\begin{equation*}
		(\source,\target) \colon \fromto{\Nerve_P(\Pi)\{p \leq q\}}{\Pi_p \times \Pi_q} \comma
	\end{equation*}
	and the fiber of $ (\source,\target) $ over a point $ (x,y) \in \Pi_p \times \Pi_q $ is the mapping space $\Map_{\Pi}(x,y)$.
	When $ p = q $, each of $ \source $ and $ \target $ is an equivalence, whence $ (\source,\target) $ is equivalent to the diagonal
	\begin{equation*}
		\Pi_p \to \Pi_p \times \Pi_p \period
	\end{equation*}
\end{dfn}

\begin{nul}
	A morphism $ f \colon \fromto{\Pi'}{\Pi}$ of $\StrP$ is an equivalence if and only if, for every pair of points $p,q\in P$ with $p\leq q$, the map on links
	\begin{equation*}
		\fromto{\Nerve_P(\Pi')\{p \leq q\}}{\Nerve_P(\Pi)\{p \leq q\}}
	\end{equation*}
	is an equivalence (in particular, when $p=q$, the map on strata $\fromto{\Pi'_p}{\Pi_p}$ is an equivalence).
	That is, $ f $ is an equivalence in $ \StrP $ if and only if $ f $ induces and equivalence on all strata and links.
\end{nul}


\subsection{Functoriality in the poset}\label{subsec:consrepair}

In this section we explain how the \categories of stratified spaces relate as the poset varies.
Notice that if $ \phi \colon \fromto{P'}{P} $ is a morphism of posets, then the functor
\begin{equation*}
	\fromto{\Cat_{\infty,/P'}}{\Cat_{\infty,/P}}
\end{equation*}
given by postcomposition with $ \phi $ does not generally send $ P' $-stratified spaces to $ P $-strat\-ified spaces.
However, we can easily repair this by inverting all morphisms that lie over identities in $ P $.
To explain this point, let us first explain the left and right adjoints to the inclusion $ \StrP \subset \Cat_{\infty,/P} $.

To begin with, we fix some notation.

\begin{ntn}\label{ntn:inverteverything}
	We write $ \interior \colon \fromto{\Cat_{\infty}}{\Space} $\index[notation]{iota@$\interior$} for the right adjoint to the inclusion, given by sending \acategory $ C $ to the largest \groupoid $ \interior C\subseteq C $ contained in $ C $.
	We call $ \interior C $ the \defn{interior}\index[terminology]{interior of \acategory} of $C$.

	We write $ \invert \colon \fromto{\Cat_{\infty}}{\Space} $\index[notation]{epsilon@$\invert$} for the left adjoint to the inclusion.
	The functor $ \invert $ is given by sending \acategory $ C $ to the \groupoid $ \invert(C) $ obtained by inverting every morphism of $ C $.
	We call $ \invert(C) $ the \defn{classifying space}\index[terminology]{classifying space of \acategory}\index[terminology]{space!classifying} of $ C $.%
	\footnote{In simplicial sets the functor $ \invert $ can be modeled as Kan's $ \Ex^{\infty} $ functor.
	The notation $ \Bup C $ is often used for the classifying space of $ C $.
We use the notation $ \invert(C) $ for three reasons: to avoid conflict with the notation $ \Bup G $ for the $ 1 $-object groupoid with automorphism group $ G $, to pay homage to Kan's $ \Ex^{\infty} $ functor, and because `$\invert$' stands for \emph{everything} in the phrase \emph{invert everything}.}
	The \groupoid $ \invert(C) $ can be computed as the colimit $ \invert(C) \equivalent \colim_{C} 1_{\Space} $ of the constant diagram $ \fromto{C}{\Space} $ at the terminal object.
\end{ntn}

\begin{cnstr} 
	Let $P$ be a poset.
	Then inclusion $ \incto{\StrP}{\Cat_{\infty,/P}} $ admits a right adjoint $ \interior_P \colon \fromto{\StrP}{\Cat_{\infty,/P}} $.\index[notation]{iotaP@$\interior_P$}
	Indeed, if $ C $ is \acategory, and $ f\colon \fromto{C}{P}$ is any functor, we write $ \interior_P(C) \subset C $ for the largest subcategory of $ C $ with the property that the composite
	\begin{equation*}
		\begin{tikzcd}[sep=1.5em]
			\interior_P(C) \arrow[r, hooked] & C \arrow[r, "f"] & P
		\end{tikzcd}
	\end{equation*}
	is conservative.
	Concretely, $ \interior_P(C) \subset C $ is the subcategory containing all objects such that morphism $ e $ of $ C $ lies in $ \interior_P(C) $ if and only if $ e $ satisfies one of the following (disjoint) conditions:
	\begin{itemize}
		\item The morphism $ e $ is an equivalence.

		\item The morphism $ e $ is not sent to an identity morphism in $ P $.
	\end{itemize}

	Given a $ P $-stratified space $ \Pi $, every functor $ \fromto{\Pi}{C} $ over $ P $ factors through $ \interior_P(C) $.
	Hence the assignment $ \goesto{C}{\interior_P(C)} $ defines a right adjoint to the inclusion $ \StrP \subset \Cat_{\infty,/P} $.
\end{cnstr}

\begin{cnstr}
	Let $ P $ be a poset.
	Then the inclusion $ \incto{\StrP}{\Cat_{\infty,/P}} $ admits a left adjoint $ \invert_P \colon \fromto{\StrP}{\Cat_{\infty,/P}} $.
	\index[notation]{epsilonP@$\invert_P$}
	Indeed, if $ C $ is \acategory, and $f\colon \fromto{C}{P}$ is any functor, we can formally invert those morphisms of $ C $ that are sent to identities in $ P $ by forming the pullback 
	\begin{equation*}
		\invert_P(C) \colonequals \invert(C) \cross_{\invert(P)} P 
	\end{equation*}
	in $ \Cat_{\infty} $.
	Note that the second projection $ \fromto{\invert_P(C)}{P} $ is conservative: for each $ p \in P $ we have
	\begin{equation*}
		\invert_P(C) \cross_{P} \{ p \} \equivalent \invert(C) \cross_{\invert(P)} \{ p\} \comma
	\end{equation*}
	so that $ \invert_P(C) \cross_{P} \{ p \} $ is the fiber of a map between \groupoids.
	We regard $ \invert_P(C) $ as a $ P $-stratified space via the second projection $ \fromto{\invert_P(C)}{P} $.

	The functor $ f \colon \fromto{C}{P} $ and the unit $ \fromto{C}{\invert(C)} $ induce a natural functor $ \fromto{C}{\invert_P(C)} $.
	By construction, this functor exhibits $ \invert_P(C) $ as the localization of $ C $ at those morphisms that lie over identities in $ P $.
	Moreover, the natural functor $ \fromto{C}{\invert_P(C)} $ is the unit of the desired adjunction.
\end{cnstr}

Now we can describe the functionality of the construction $ \goesto{P}{\StrP} $.

\begin{nul}
	Let $ \phi \colon \fromto{P'}{P} $ be a morphism of posets.
	Since the pullback of a conservative functor is conservative, the pullback functor
	\begin{equation*}
		\phiupperstar \colonequals (-) \cross_P P' \colon \fromto{\Cat_{\infty,/P}}{\Cat_{\infty,/P'}} 
	\end{equation*}
	carries $ P $-stratified spaces to $ P' $-stratfied spaces.
	The pullback functor $ \phiupperstar \colon \fromto{\StrP}{\Str_{P'}} $ admits a left adjoint $ \philowershriek $ given by the composite
	\begin{equation*}
		\begin{tikzcd}[sep=1.5em]
			\Str_{P'} \arrow[r] & \Cat_{\infty,/P} \arrow[r, "\invert_P"] & \StrP
		\end{tikzcd}
	\end{equation*}
	of postcomposition with $ \phi $ followed by the left adjoint to the inclusion $ \StrP \subset \Cat_{\infty,/P} $.
\end{nul}

\begin{prp}
	The target functor $ \target \colon \fromto{\Str}{\Pos}$ is a bicartesian fibration.
\end{prp}

\begin{proof}
	Let $ \phi \colon \fromto{P'}{P}$ be a morphism of posets.
	If $ f \colon \fromto{\Pi}{P}$ is a $P$-stratified space, the resulting pullback square
	\begin{equation*}
		\begin{tikzcd}
			P' \times_{P} \Pi \arrow[r] \arrow[d, swap, "\phi^{\ast}(f)"] \arrow[dr, phantom, very near start, "\lrcorner", xshift=-0.5em, yshift=0.25em] & \Pi \arrow[d, "f"] \\
			P' \arrow[r, swap, "\phi"]& P
		\end{tikzcd}
	\end{equation*}
	is a $ \target $-cartesian morphism lying over $\phi$.

	In the other direction, let $ f' \colon \fromto{\Pi'}{P'}$ be a $P'$-stratified space.
	Then formally inverting those morphisms of $ \Pi' $ that are sent to identities by $ \phi \of f' $, we see that the resulting square
	\begin{equation*}
		\begin{tikzcd}
			\Pi' \arrow[r] \arrow[d, swap, "f'"]& \invert_P(\Pi') \arrow[d, "\philowershriek(f')"] \\
			P' \arrow[r, swap, "\phi"]& P
		\end{tikzcd}
	\end{equation*}
	is a $ \target $-cocartesian morphism of $ \Str $ lying over $\phi$.
\end{proof}  

We can use the pullbacks to describe limits in $ \Str $:

\begin{nul}
	To compute the limit of a diagram $ \goesto{\alpha}{\left[\Pi_{\alpha}\to P_{\alpha}\right]}$ in $ \Str $, we first form the limit $ P \coloneq \lim_{\alpha} P_{\alpha}$; then pulling back along the various projections $p_{\alpha}\colon \fromto{P}{P_{\alpha}}$, we obtain the diagram $\goesto{\alpha}{p_{\alpha}^{\ast}\Pi_{\alpha}}$ of $P$-stratified spaces.
	We then form the limit $ \Pi \coloneq \lim_{\alpha} \Pi_{\alpha}$ in $ \StrP $.
	If the diagram is \textit{connected}, then the limit $ \lim_{\alpha} \Pi_{\alpha} $ is computed in $ \Cat_{\infty} $.
\end{nul}


\subsection{The stratified Postnikov tower}\label{subsec:stratPostnikov}

In this section we investigate a Postnikov tower for stratified spaces.
Importantly, the correct notion of `$ n $-truncatedness' is \textit{not} the notion of $ n $-truncatedness interntal to the \category $ \Str_{P} $ (in the sense of \cite[\HTTsubsec{5.5.6}]{HTT}); rather it corresponds to the categorical level of the stratified space.\footnote{Recall that an $ n $-category is an $ n $-truncated object of $ \Cat_{\infty} $, but the converse is false.}
For this, recall that we write $ \ho_n \colon \fromto{\Cat_{\infty}}{\Cat_{n}} $ for the left adjoint to the inclusion $ \Cat_{n} \subset \Cat_{\infty} $ \Cref{nul:nCat}.

\begin{dfn}
	Let $P$ be a poset and $\Pi$ a $P$-stratified space.
	We call the tower of $P$-stratified spaces
	\begin{equation*}
		\Pi\to\cdots\to \ho_3\Pi\to \ho_2\Pi\to \ho_1\Pi\to \ho_0\Pi\to P \comma
	\end{equation*}
 	the \defn{stratified Postnikov tower}\index[terminology]{stratified Postnikov tower} of $ \Pi $.

	In particular, please observe that $\ho_0\Pi\to P$ is a morphism of posets.
\end{dfn}

\begin{nul}
	If $P=\{0\}$, then the stratified Postnikov tower coincides with the usual Postnikov tower of spaces.
\end{nul}

\begin{nul}\label{nul:truncatedstratspace}
	The following are equivalent for a poset $P$, a $P$-stratified space $f\colon\Pi\to P$, and a nonnegative integer $n\in\NNup$:
	\begin{itemize}
		\item the \category $\Pi$ is an $n$-category;

		\item the natural functor $\fromto{\Pi}{\ho_n\Pi}$ is an equivalence;

		\item for all objects $x,y\in\Pi$, the space $\Map_{\Pi}(x,y)$ is $(n-1)$-truncated;

		\item for all points $p,q\in P$ such that $p\leq q$, the source and target map
		\begin{equation*}
			(\source,\target)\colon \fromto{\Nerve_P(\Pi)\{p \leq q\}}{\Pi_p \times \Pi_q}
		\end{equation*}
		is $(n-1)$-truncated (in particular, when $p=q$, the stratum $\Pi_p$ is $n$-truncated).
	\end{itemize} 
\end{nul}

\begin{dfn}\label{def:truncatednessforstratspaces}
	Let $P$ be a poset and $n\in\NNup$.
	We say that a $P$-stratified space $\Pi$ is \defn{$n$-truncated}\index[terminology]{ntruncated@$ n $-truncated!$ P $-stratified space}\index[terminology]{Pstratified space@$ P $-stratified space!$ n $-truncated} if $ \Pi $ satisfies the equivalent conditions of \Cref{nul:truncatedstratspace}.
	We write $\Str_{P,\leq n}\subset\StrP$\index[notation]{StrPn@$ \Str_{P,\leq n} $} for the full subcategory spanned by the $n$-truncated $P$-stratified spaces. 

	We caution that an $n$-truncated $P$-stratified space is generally \emph{not} the same thing as an $n$-truncated object of the \category $\StrP$ in the sense of \HTT{Definition}{5.5.6.1}.
	Nor is it the same thing as a $P$-stratified space whose strata are $n$-truncated; truncatedness in our sense involves a condition on the links as well.
\end{dfn}

\begin{nul}\label{nul:connectivestratspace}
	Dually, the following are equivalent for a poset $P$, a $P$-stratified space $f\colon\Pi\to P$, and a nonnegative integer $n\in\NNup$:
	\begin{itemize}
		\item the natural functor $\fromto{\ho_n\Pi}{P}$ is an equivalence;
		\item for all objects $x,y\in\Pi$ such that $f(x)\leq f(y)$, the space $\Map_{\Pi}(x,y)$ is $(n-1)$-connected;
		\item the strata of $ \Pi $ are nonempty and for all points $p,q\in P$ such that $p\leq q$, the map
		\begin{equation*}
			(\source,\target)\colon \fromto{\Nerve_P(\Pi)\{p \leq q\}}{\Pi_p \times \Pi_q}
		\end{equation*}
		is $(n-1)$-connected (in particular, when $p=q$, the stratum $\Pi_p$ is $n$-connected).
	\end{itemize}
\end{nul}

\begin{dfn}
	Let $P$ be a poset and $n\in\NNup$.
	We say that a $P$-stratified space $\Pi$ is \defn{$n$-connected}\index[terminology]{n-connected@$ n $-connected!$ P $-stratified space}\index[terminology]{P-stratified space@$ P $-stratified space!$ n $-connected} if $ \Pi $ satisfies the equivalent conditions of \Cref{nul:connectivestratspace}.
	We write $ \Str_{P,\geq n} \subset \StrP $\index[notation]{StrPn@$ \Str_{P,\geq n} $} for the full subcategory spanned by the $n$-connected $P$-stratified spaces. 
\end{dfn}

We can easily identify the $ 0 $-connected stratified spaces.

\begin{dfn}
	We say that a $1$-category $ C $ is \defn{layered}%
	\footnote{Layered categories are often called \textit{EI} categories.}
	if and only if every endomorphism of an object of $ C $ is an isomorphism.
	We say that \acategory $\Pi$ is \defn{layered}\index[terminology]{layered!\category}\index[terminology]{category@\category!layered} if and only if its homotopy category $ \ho_1(\Pi) $ is a layered $ 1 $-category.
	This holds if and only if the natural functor $\fromto{\Pi}{\ho_0(\Pi)}$ is conservative.
	Thus a layered \category $\Pi$ is naturally an $\ho_0(\Pi)$-stratified space.

	We write $\Lay_{\infty}$\index[notation]{Lay@$\Lay_{\infty}$} for the full subcategory of $\Cat_{\infty} $ spanned by the layered \categories.
\end{dfn}

\begin{nul}
	The assignment $\goesto{[\Pi\to P]}{\Pi}$ defines a functor $\fromto{\Str}{\Lay_{\infty}}$ with a fully faithful left adjoint that carries $\Pi$ to the $\ho_0(\Pi)$-stratified space $\Pi$. Consequently, we obtain an identification
	\begin{equation*}
		\Lay_{\infty}\simeq\Str_{\geq 0} \period
	\end{equation*}
	Here $\Str_{\geq 0}\subset\Str$ is the full subcategory spanned by the $0$-connected stratified spaces.
\end{nul}


\subsection{\texorpdfstring{$\uppi$}{π}-finite stratified spaces}\label{subsec:finitestratspaces}

In this section we introduce the key finiteness condition that we impose on almost all of the stratified spaces we consider in this book.
This finiteness condition is the stratified version of \textit{\pifiniteness} for spaces.

\begin{rec}[{\SAG{Definition}{E.0.7.8}}]
	\Agroupoid $K$ is \defn{\pifinite}\index[terminology]{space!\pifinite}\index[terminology]{pifinite@\pifinite!space} if and only if the following conditions are satisfied.
	\begin{itemize}
		\item The set $\uppi_0(K)$ is finite.

		\item For every point $x\in K$ and any $i\geq 1$, the group $\uppi_i(K,x)$ is finite.

		\item The \groupoid $K$ is $n$-truncated for some $n\in\NNup$.
	\end{itemize}
	We write $ \Spacefin \subset \Space $\index[notation]{Spacepi@$ \Spacefin $} for the full subcategory spanned by the \pifinite \groupoids.
\end{rec}

\begin{wrn}
	We caution that a \pifinite space is not the same thing as what is normally called a \defn{finite space}\index[terminology]{space!finite} -- one obtained via finite colimits from the point. 
	In fact, the overlap between these two classes of spaces is essentially trivial: the spaces satisfying both of these conditions are exactly the discrete spaces with finitely many connected components.
\end{wrn}

We now define the analogous condition for a \emph{stratified} space.

\begin{dfn}\label{dfn:finstratspaces}
	We say that a stratified space $\fromto{\Pi}{P}$ is \defn{\pifinite}\index[terminology]{stratified space!\pifinite}\index[terminology]{pifinite@\pifinite!stratified space} if and only if the following conditions are satisfied.
	\begin{itemize}
		\item The poset $P$ is finite.

		\item For every point $p\in P$, the set $\uppi_0(\Pi_p)$ is finite.

		\item For all $ x,y \in \Pi $, the mapping space $ \Map_{\Pi}(x,y) $ is a \pifinite space.

		\item The \category $\Pi$ is an $n$-category for some $n\in\NNup$.
	\end{itemize}
	In particular, a nondegenerate stratified space $\fromto{\Pi}{P}$ is \pifinite if and only if $\Pi$ has finitely many objects up to equivalence and is \defn{locally \pifinite}\index[terminology]{category@\category!locally \pifinite}\index[terminology]{locally \pifinite!\category} in the sense that each mapping space $\Map_{\Pi}(x,y)$ is \pifinite.

	We write $\Strfin \subset \Str $\index[notation]{Strpi@$ \Strfin, \StrfinP $} for the full subcategory spanned by the \pifinite stratified spaces.
	Given a finite poset $P$, we write $\StrfinP \subset\StrP$ for the full subcategory spanned by the \pifinite $P$-stratified spaces.
\end{dfn}

\begin{nul}
	The target functor $ \target \colon \fromto{\Strfin}{\Posfin}$ is a cartesian fibration.
	However, it is not a cocartesian fibration because the pullback functor doesn't admit a left adjoint when restricted to \pifinite stratified spaces.
	To see this, note that the free pair of parallel arrows $ 0 \rightrightarrows 1 $ is \pifinite as a $ [1] $-stratified space, but its classifying space is equivalent to $ \BZZ $, which is not \pifinite.

	However, the pullback does preserve finite limits, hence has a proëxistent left adjoint; we will discuss this in \cref{subsec:profinstratspaces}.
\end{nul}

\begin{lem}\label{lem:Stratpiaccess}
	The full subcategory $\Strfin \subset \Str$ is an accessible subcategory that is closed under finite limits.
\end{lem}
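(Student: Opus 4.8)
The plan is to verify the two closure properties separately, reducing each to the analogous statement for the fibres $\Strat_{\pi,P}$ together with properties of the cartesian fibration $t\colon\Strat\to\poSet$.

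For \emph{closure under finite limits}: by \Cref{nul:truncatedstratspace} and the description of limits in $\Strat$ recalled earlier, to compute the limit of a finite diagram $\goesto{\alpha}{[\Pi_\alpha\to P_\alpha]}$ one first forms $P\coloneq\lim_\alpha P_\alpha$, which is finite since each $P_\alpha$ is and the diagram is finite; then one pulls back to obtain a finite diagram in $\Strat_P$ and forms the limit there. So it suffices to check (i) that the pullback functors $p_\alpha^{\ast}\colon\Strat_{\pi,P_\alpha}\to\Strat_{\pi,P}$ land in $\pi$-finite stratified spaces --- this is recorded in the excerpt, since $t\colon\Strat_\pi\to\poSetfin$ is a cartesian fibration --- and (ii) that $\Strat_{\pi,P}\subset\Strat_P$ is closed under finite limits. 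For (ii), a finite limit in $\Strat_P$ is computed on strata and links via the $(s,t)$ description; concretely, for a string $\{p\le q\}$ one has $N_P(\lim_\alpha\Pi_\alpha)\{p\le q\}\simeq\lim_\alpha N_P(\Pi_\alpha)\{p\le q\}$, and the corresponding statement on strata. Now $\Space_\pi\subset\Space$ is closed under finite limits (it is an accessible subcategory closed under finite limits, e.g.\ \SAG{}{E.0.7.8} and the surrounding discussion), and $n$-truncatedness of the map $(s,t)$ is preserved by finite limits; since $\pi$-finiteness of a stratified space over a finite poset is exactly finiteness of the homotopy groups of all links together with the bounded-categorical-level condition, it is inherited by finite limits. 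This handles closure under finite limits.

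For \emph{accessibility}: the cleanest route is to exhibit $\Strat_\pi$ as the pullback
\begin{equation*}
	\Strat_\pi\simeq\Strat\times_{\poSet}\poSetfin\times_{?}(\cdots)
\end{equation*}
--- more precisely, to observe that $\Strat_\pi$ is cut out inside $\Strat$ by: (a) the condition that the base poset be finite, i.e.\ lie in the accessible subcategory $\poSetfin\subset\poSet$ (finite posets are the $\omega$-compact, or indeed the finite, objects, and $\poSetfin\subset\poSet$ is closed under finite limits and filtered colimits of monomorphisms appropriately --- in any case it is an accessible subcategory); and (b), fibrewise over a finite poset $P$, the condition of being $\pi$-finite. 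Since $t\colon\Strat\to\poSet$ is a bicartesian fibration (as proved just above in the excerpt) classified by an accessible functor $\poSet^{\op}\to\widehat{\Cat}_\infty$, and since each fibre $\Strat_P$ is an accessible (indeed presentable) \category --- it is the underlying \category of the Joyal--Kan model structure on $\sSet_{/P}$ --- one can argue that $\Strat_\pi$, being the full subcategory on objects lying over $\poSetfin$ and fibrewise $\pi$-finite, is accessible. The key input is that $\Strat_{\pi,P}\subset\Strat_P$ is an accessible subcategory for each finite $P$: the conditions defining $\pi$-finiteness (finitely many objects up to equivalence; mapping spaces $\pi$-finite) are each accessible --- e.g.\ $\Space_\pi\subset\Space$ is accessible, the mapping-space functors $\Pi\mapsto\Map_\Pi(x,y)$ are accessible, and the bounded-truncatedness condition is accessible --- and a finite intersection of accessible subcategories closed under sufficiently filtered colimits is accessible. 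Then one assembles these fibrewise accessible subcategories over the accessible base $\poSetfin$ using the standard result that a total space of a cartesian fibration over an accessible base with accessible fibres and accessible transition functors is accessible.

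\textbf{Main obstacle.} The routine parts are the link-wise computations and the elementary observations about $\Space_\pi$. The delicate point is the accessibility argument: one must be careful that the defining conditions are closed under \emph{filtered} colimits of the right cardinality inside $\Strat_P$ (the subtlety being that colimits in $\Strat_P$ are not simply colimits of underlying \categories, and that "finitely many objects up to equivalence" is not obviously stable under all filtered colimits --- though it is stable under $\kappa$-filtered colimits for a suitable $\kappa$, which is all that accessibility requires). Pinning down the correct regular cardinal $\kappa$ and checking that the full subcategory $\Strat_{\pi,P}$ is closed under $\kappa$-filtered colimits in $\Strat_P$ --- and then propagating this uniformly over the accessible base $\poSetfin$ --- is where the real work lies; everything else is bookkeeping with the already-established fibration structure on $t$.
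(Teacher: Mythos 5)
Your treatment of closure under finite limits is essentially the paper's argument: reduce to the (finite) limit of the base posets, pull back, and check fibrewise that strata and links remain $\pi$-finite; this is fine, modulo also noting that a \emph{finite} limit of $n_\alpha$-categories is an $n$-category for $n=\max_\alpha n_\alpha$ (an infinite limit of locally $\pi$-finite \categories need not be locally $\pi$-finite, so the finiteness of the diagram is doing real work here).

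The accessibility half is where you diverge, and as written it is incomplete: you explicitly defer the key verification (choosing $\kappa$, checking closure of $\Strat_{\pi,P}$ under $\kappa$-filtered colimits, and assembling this over $\poSetfin$) to ``where the real work lies.'' None of that machinery is needed. The paper's observation is that $\Strat_{\pi}$ is \emph{essentially $\delta_0$-small} (up to equivalence there is only a set of $\pi$-finite stratified spaces: a finite poset, finitely many objects, $\pi$-finite mapping spaces, and a categorical truncation bound) and \emph{idempotent complete} (retracts of finite posets are finite, retracts of $\pi$-finite spaces are $\pi$-finite, retracts of $n$-categories are $n$-categories). By the fact recalled in the paper's preliminaries on proöbjects, an essentially $\delta_0$-small idempotent-complete \category is accessible, and every functor out of it is accessible --- so in particular the inclusion $\Strat_{\pi}\subset\Strat$ is an accessible functor with no cardinal-chasing. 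Your proposed route via fibrewise accessible subcategories of the presentable fibres $\Strat_P$ is also somewhat misdirected: since every object of $\Strat_{\pi}$ is ``small'' in a strong sense, the question of closure under highly filtered colimits inside $\Strat$ simply does not arise. I would replace the entire second half of your argument with the two-line smallness observation.
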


\begin{proof}
	Finite limits of finite posets are finite, pullbacks of \pifinite stratified spaces along maps of finite posets are \pifinite, and finite limits of locally \pifinite \categories are locally \pifinite.
	Finally, $\Strfin$ is $\updelta_0$-small and idempotent complete.
\end{proof}


\subsection{Profinite stratified spaces}\label{subsec:profinstratspaces}

In light of \Cref{nul:proexistentadjoint}, \Cref{lem:Stratpiaccess} allows us to speak of profinite stratified spaces in terms of left exact accessible functors.
We now turn to stratifications over profinite posets (i.e., spectral topological spaces).

\begin{dfn}
	We call objects of the \category $\Pro(\Str)$ \defn{stratified prospaces};%
	\index[terminology]{stratified prospace}\index[terminology]{prospace!stratified}
	the target functor $ \target \colon \fromto{\Str}{\Pos}$ from stratified spaces to posets extends to a target functor
	\begin{equation*}
		\target \colon \fromto{\Pro(\Str)}{\Pro(\Pos)} \period
	\end{equation*}
	Let $ P $ be a poset, regarded as a constant proposet.
	The fiber $ \Pro(\Str)_{P} $ of $ \target $ over $ P $ can be identified with the \category $\Pro(\StrP)$ of \proobjects in $ \StrP $.
	We refer to objects of $ \Pro(\Str)_{P} $ as \defn{$P$-stratified prospaces}.%
	\index[terminology]{Pstratified@$P$-stratified!prospace}\index[terminology]{prospace!Pstratified@$P$-stratified}

	Similarly, if $\PP$ is a proposet, then we refer to the fiber $\Pro(\Str)_{\PP}$ of $ \target $ over $\PP$ as the \category of \defn{$\PP$-stratified prospaces}.
\end{dfn}

\begin{nul}
	A stratified prospace can be exhibited as an inverse system $\{\Pi_{\alpha}\to P_{\alpha}\}_{\alpha \in A}$ of stratified spaces.
	The target functor $ \target $ carries this stratified prospace to the proposet $\{P_{\alpha}\}_{\alpha \in A}$.
\end{nul}

\begin{exm}
	Our primary interest is in stratified prospaces stratified by a spectral topological space regarded as a profinite poset.
	Still, in order to reason effectively with these, it is occasionally necessary to deal with more general stratified prospaces.
\end{exm}

We now turn to the functoriality of the assignment $ \goesto{\PP}{\Pro(\Str)_{\PP}} $.
The key point is that the adjunctions relating the \categories $ \Str_P $ as $ P $ varies extend to stratified prospaces.

\begin{cnstr}[pullbacks of stratified prospaces]
	Please observe that the target functor
	\begin{equation*}
		\target \colon \fromto{\Pro(\Str)}{\Pro(\Pos)}
	\end{equation*}
	is a cartesian fibration.
	Indeed, let $ \phi \colon \fromto{\PP'}{\PP} $ be a morphism of proposets, and exhibit $ \PP' $ and $ \PP $ as inverse systems of proposets $ \{P'_{\alpha}\}_{\alpha \in A} $ and $ \{P_{\alpha}\}_{\alpha \in A} $, respectively.
	Given a $ \PP $-stratified prospace $ \mbfPi = \{\Pi_{\alpha}\to P_{\alpha}\}_{\alpha \in A} $, we define a $ \PP' $-stratified prospace $ \phiupperstar(\mbfPi) $ as the inverse system
	\begin{equation*}
		\phiupperstar(\mbfPi) \colonequals \{\Pi_{\alpha}\times_{P_{\alpha}}P'_{\alpha} \to P'_{\alpha} \}_{\alpha \in A} \period
	\end{equation*}
	The morphism $ \phiupperstar(\mbfPi) \to \mbfPi $ is a $ \target $-cartesian morphism lying over $ \phi $ and the assignment $ \goesto{\mbfPi}{\phiupperstar(\mbfPi)} $ defines a functor $ \phiupperstar \colon \fromto{\Pro(\Str)_{\PP}}{\Pro(\Str)_{\PP'}} $.
\end{cnstr}

We now describe the left adjoint to this functor.

\begin{cnstr}[Pushforwards of stratified prospaces]\label{cnstr:pushforwardofprostrat}
	Let $\eta\colon \fromto{\PP'}{P}$ a morphism of proposets where $ P $ is constant.
	Regarding $ \PP' $ as a left exact accessible functor
	\begin{equation*}
		\fromto{\Pos}{\Set} \comma
	\end{equation*}
	the morphism $ \eta $ defines an element $ \eta \in \PP'(P) $.
	For a $ \PP' $-stratified prospace $ \mbfPi' $, there exists a $ \target $-cocartesian edge $\fromto{\mbfPi'}{\eta_!\mbfPi'}$ covering $\eta$; indeed, for any $ P $-stratified space $ \Pi $, we have an equivalence
	\begin{equation*}
		(\eta_!\mbfPi')(\Pi) \simeq \mbfPi'(\Pi) \crosslimits_{\PP'(P)} \{\eta\} \period
	\end{equation*}
	Equivalently, if we exhibit $\mbfPi'$ as an inverse system $\{\fromto{\Pi'_{\alpha}}{P'_{\alpha}}\}_{\alpha \in A}$ in $\Str$, then the $ P $-stratified prospace $\eta_!\mbfPi'$ can be exhibited as the inverse system $ A \times_{\Pos}\Pos_{/Q}\to\StrP$ given by
	\begin{equation*}
		\goesto{\left(\alpha,\fromto{P'_{\alpha}}{P}\right)}{\invert_P(\Pi'_{\alpha})} \period
	\end{equation*}
	Note in particular that if $ \PP' $ and $ \mbfPi' $ are constant, then so is $ \eta_!\mbfPi' $.

	In the \category $\Pro(\Str)$, the inverse system $\fromto{\Pos_{\PP'/}}{\Str}$ given by $\goesto{\eta}{\eta_!\mbfPi'}$ is identified with $ \mbfPi' $ itself.

	Given \textit{any} morphism of proposets $ \phi \colon \fromto{\PP'}{\PP} $ and $\PP'$-stratified prospace $\mbfPi'$, the inverse system $\fromto{\Pos_{\PP/}}{\Str}$ given by the assignment
	\begin{equation*}
		\goesto{\eta}{(\eta\circ\phi)_!\mbfPi'}
	\end{equation*}
	defines a $ \PP $-stratified prospace $\philowershriek\mbfPi'$.
	As this notation suggests, the morphism
	\begin{equation*}
		\fromto{\mbfPi'}{\philowershriek\mbfPi'}
	\end{equation*}
	is a $ \target $-cocartesian edge over $\phi$.
	Thus $ \target \colon \fromto{\Pro(\Str)}{\Pro(\Pos)}$ is a cocartesian fibration.
\end{cnstr}

We thus combine the previous two points:

\begin{prp}
	The target functor $ \target \colon \fromto{\Pro(\Str)}{\Pro(\Pos)} $ is a bicartesian fibration.
\end{prp}

We now turn to \proobjects in \pifinite stratified spaces.

\begin{dfn}\label{dfn:profinitestratspace}
	A \defn{profinite stratified space}\index[terminology]{stratified space!profinite}\index[terminology]{profinite!stratified space} is a \proobject of the \category $\Strfin$.
	We write $\Strprofin\coloneq\Pro(\Strfin) $\index[notation]{Strhatpi@$\Strprofin$} for the \category of profinite stratified spaces.

	The target functor
	\begin{equation*}
		\target \colon \Strprofin \to \Pro(\Posfin) \equivalent \TSpcspec
	\end{equation*}
	is a cartesian fibration.
	Given a spectral topological space $S$, we write $\StrprofinS$ for the fiber of $ \target $ over $S$.
	We call $ \StrprofinS $ the \category of \defn{profinite $S$-stratified spaces}.%
	\index[terminology]{profinite!Sstratified space@$S$-stratified space}\index[terminology]{Sstratified space@$S$-stratified space!profinite}

	The inclusion $\incto{\Strfin}{\Str}$ extends to a fully faithful functor $\incto{\Strprofin}{\Pro(\Str)}$, which admits a left adjoint $\goesto{\mbfPi}{\mbfPi\profincomp}$ given by restriction of left exact accessible functors.
	We call the profinite stratified space $\mbfPi\profincomp$ the \defn{profinite completion}\index[terminology]{profinite completion} of $\mbfPi$.
\end{dfn}

\begin{nul} 
	The profinite completion functor $\goesto{\mbfPi}{\mbfPi\profincomp}$ is not a relative left adjunction over $\Pro(\Pos)$; however, the inclusion $\incto{\Strfin}{\Str}$ \emph{does} induce a fully faithful functor
	\begin{equation*}
		\incto{\Strprofin}{\Pro(\Str) \crosslimits_{\Pro(\Pos)}\TSpcspec} \comma
	\end{equation*}
	and profinite completion \emph{does} define a relative left adjoint over $\TSpcspec$.
	In particular, if $S$ is a spectral topological space and $\mbfPi$ is an $S$-stratified prospace, then $\mbfPi\profincomp$ is a profinite $S$-stratified space, and the morphism $\fromto{\mbfPi}{\mbfPi\profincomp}$ lies over $S$.
\end{nul}

\begin{cnstr}[Pushforwards of profinite stratified spaces]\label{cnstr:pushforwardofprofinstrat}
	Let $\phi\colon \fromto{S'}{S}$ be a quasicompact continuous map of spectral topological spaces, and let $\fromto{\mbfPi'}{S'}$ be a profinite $ S' $-stratified space.
	Then following \Cref{cnstr:pushforwardofprostrat}, we obtain an $S$-stratified prospace $\fromto{\philowershriek\mbfPi'}{S}$.
	Forming the profinite completion $\fromto{(\philowershriek\mbfPi')\profincomp}{S}$ of $ \philowershriek\mbfPi' $, we see that the map
	\begin{equation*}
		\fromto{\mbfPi'}{(\philowershriek\mbfPi')\profincomp}
	\end{equation*}
	is a cocartesian edge over $\phi$ for the target functor $ \target \colon \fromto{\Strprofin}{\TSpcspec}$.
\end{cnstr}

\noindent We thus obtain:

\begin{prp}
	The target functor $ \target \colon \fromto{\Strprofin}{\TSpcspec}$ is a bicartesian fibration.
\end{prp}

\begin{prp}\label{prp:profinstratSislimoverFS}
	Let $S$ be a spectral topological space. 
	Then the natural functor
	\begin{equation*}
		\fromto{\StrprofinS}{\lim_{P\in \FC(S)}\StrprofinP}
	\end{equation*}
	is an equivalence.
\end{prp}

\begin{proof}
	The formation of the limit in $\Strprofin$ is an inverse.
\end{proof}


\subsection{Complete Segal spaces \& spatial décollages}\label{subsec:spatialdecollages}

\begin{rec}
	\Acategory can be modeled as a simplicial \emph{space}.
	In effect, if $C$ is \acategory, then one may extract a functor $ \Nerve(C)\colon \fromto{\Deltaop}{\Space}$ in which $ \Nerve(C)_m$ is the \groupoid of functors $\fromto{[m]}{C}$ (the `moduli space of sequences of arrows in $C$').
	The simplicial space $ \Nerve(C)$ is what Charles Rezk \cite{MR1804411} called a \emph{complete Segal space}\index[terminology]{complete Segal space} -- i.e., a functor $D\colon \fromto{\Deltaop}{\Space}$ satisfying the following conditions.
	\begin{enumerate}[(1)]
		\item For all $m\in\NNup^{\ast}$, the natural map
		\begin{equation*}
			D_m\to D\{0 < 1\} \crosslimits_{D\{1\}} D\{1 < 2\} \crosslimits_{D\{2\}} \cdots \crosslimits_{D\{m-1\}}D\{m-1 < m\}
		\end{equation*}
		is an equivalence.

		\item Let $ I $ denote the unique contractible $1$-groupoid with two objects.
		Then the map
		\begin{equation*}
			D_0 \to \Map_{\Fun(\Deltaop,\Space)}(\Nerve(I),D)
		\end{equation*}
		induced by the projection $ \fromto{I}{\{0\}} $ is an equivalence.
	\end{enumerate}
	(This same formalism can be deployed to define category objects in any \category with finite limits; we exploit this in \Cref{subsec:categoryobjs}.)

	Joyal and Tierney \cite{MR2342834} showed that the assignment $\goesto{C}{\Nerve(C)}$ defines an equivalence from the \category $\Cat_{\infty} $ of \categories to the \category $\CSS$\index[notation]{CSS@$\CSS$} of complete Segal spaces.

	We can isolate the \groupoids in $\CSS$: \acategory $C$ is \agroupoid if and only if $ \Nerve(C)\colon \fromto{\Deltaop}{\Space}$ is constant. 
\end{rec}

In the remainder of this section and the next, we shall demonstrate that the homotopy theory of stratified spaces admits an analogous description.

\begin{ntn}
	For a poset $ P $, we write $ \sdop(P) \coloneq \sd(P)^{\op} $.\index[notation]{sdop@$\sd^{\op}$}
\end{ntn}

\begin{dfn}\label{def:spatialdecollage}
	Let $P$ be a poset.
	A functor $D\colon \fromto{\sdop(P)}{\Space}$ is  a \defn{spatial décollage (over $P$)}%
	\index[terminology]{spatial décollage}\index[terminology]{decollage@décollage!spatial}
	if and only if, for every string $\{p_0 < \cdots < p_m\}\subseteq P$, the map
	\begin{equation*}
		D\{p_0 < \cdots < p_m\}\to D\{p_0 < p_1\} \crosslimits_{D\{p_1\}}D\{p_1 < p_2\} \crosslimits_{D\{p_2\}} \cdots \crosslimits_{D\{p_{m-1}\}}D\{p_{m-1} < p_m\}
	\end{equation*}
	is an equivalence.
	We write%
	\index[notation]{Dec@$\DecSpace{}, \DecSpace{P}$}
	\begin{equation*}
		\DecSpace{P}\subseteq\Fun(\sdop(P),\Space)
	\end{equation*}
	for the full subcategory spanned by the spatial décollages.
\end{dfn}

\begin{exm}
	Let $ P $ be a poset of rank $ \leq 1 $.
	Then every functor $ \fromto{\sdop(P)}{\Space} $ automatically satisfies the décollage condition.
	So in this case, 
	\begin{equation*}
		\DecSpace{P} = \Fun(\sdop(P),\Space) \period
	\end{equation*}
\end{exm}

Now we turn to the functoriality of the assignment $ \goesto{P}{\DecSpace{P}}$:

\begin{cnstr}[functoriality of spatial décollages]\label{cnstr:DecS}
	Write 
	\begin{equation}\label{eq:cartfiboverPos}
		\fromto{\textstyle \int_{\Pos} \Fun(\sdop,\Space)}{\Pos} \index[notation]{zzzP@$ \textstyle \int_{\Pos} \Fun(\sdop,\Space) $}
	\end{equation}
	for the cartesian fibration classified by the functor $ \fromto{\Pos^{\op}}{\Cat_{\infty}} $ given by the assignment
	\begin{equation*}
		\goesto{P}{\Fun(\sdop(P),\Space)}
	\end{equation*}
	with functoriality given by right Kan extension \HTT{Corollary}{3.2.2.13}.
	Thus the objects of \smash{$ \int_{\Pos} \Fun(\sdop,\Space) $} consist of a pair $ (P,F) $ of a poset $ P $ and a functor
	\begin{equation*}
		F \colon \fromto{\sdop(P)}{\Space} \period
	\end{equation*}
	The fiber of \eqref{eq:cartfiboverPos} over a poset $P$ is the \category $ \Fun(\sdop(P),\Space) $.

	Let
	\begin{equation*}
		\DecSpace{} \subset \textstyle \int_{\Pos} \Fun(\sdop,\Space) 
	\end{equation*}
	denote the full subcategory spanned by the pairs $(P,D)$ in which $D$ is a spatial décollage. 
	Since $\DecSpace{}$ contains all the cartesian edges, the functor $\fromto{\DecSpace{}}{\Pos}$ is a cartesian fibration.
\end{cnstr}


\subsection{The nerve of a stratified space}\label{subsec:stratifiednerve}

We now show that the \category $\Str$ of stratified spaces and the \category $\DecSpace{}$ of décollages are equivalent over $\Pos$.

\begin{cnstr}[nerve of a stratified space]\label{cnstr:nerveofstratifiedspace} 
	Let $P$ be a poset. 
	Any chain contained in $P$ can be regarded as a $P$-stratified space via the inclusion map. 
	This assignment defines a functor $\fromto{\sd(P)}{\StrP}$. 
	For any $P$-stratified space $\Pi$, we define the \defn{nerve}\index[terminology]{nerve!of a stratified space} of $ \Pi $ to be the functor
	\begin{equation*}
		\Nerve_P(\Pi)\colon \fromto{\sdop(P)}{\Space}
	\end{equation*}
	given by the assignment $\goesto{\Sigma}{\Map_P(\Sigma,\Pi)}$. 
	(This is the moduli space of sections over $\Sigma$.) 
	An equivalence of $P$-stratified spaces is carried to an objectwise equivalence of functors; hence the nerve defines a functor
	\begin{equation*}
		\Nerve_P \colon \fromto{\StrP}{\Fun(\sdop(P),\Space)} \period
	\end{equation*}
	Furthermore, the assignment $\goesto{\left[\Pi\to P\right]}{(P,\Nerve_P(\Pi))}$ defines a functor
	\begin{equation*}
		\Nerve \colon \fromto{\Str}{\textstyle \int_{\Pos} \Fun(\sdop,\Space)} \period
	\end{equation*}
\end{cnstr}

\begin{exm}\label{exm:linkindecollage}
	For any poset $P$, $P$-stratified space $\Pi$, and points $p,q\in P$ such that $p\leq q$, the space
	\begin{equation*}
		\Nerve_P(\Pi)\{p \leq q\}\simeq\Map_P(\{p \leq q\},\Pi)
	\end{equation*}
	is the \emph{link} between the $p$-th and $q$-th strata of $\Pi$.
\end{exm}

Let us demonstrate that the functor $ \Nerve $ lands in the full subcategory
\begin{equation*}
	\DecSpace{}\subset \textstyle \int_{\Pos} \Fun(\sdop,\Space) \period
\end{equation*}

\begin{lem}
	For any poset $P$ and $P$-stratified space $\Pi$, the functor $\Nerve_P(\Pi)$ is a spatial décollage.
\end{lem}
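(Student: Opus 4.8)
The plan is to reduce the spatial décollage condition for $N_P(\Pi)$ to a statement about colimits in $\Strat_P$ and then invoke the fact that $N_P(\Pi) = \Map_P(-,\Pi)$ turns colimits into limits. First I would note that for any string $T \subseteq P$ the value $N_P(\Pi)(T) = \Map_P(T,\Pi)$ is the space of sections over $T$, which coincides with the mapping space $\Map_{\Strat_P}(T,\Pi)$ computed in the full subcategory $\Strat_P \subseteq \Cat_{\infty,/P}$ (both $T$ and $\Pi$ are $P$-stratified spaces, and the stratified mapping space $\Fun_P(T,\Pi)$ is already \agroupoid). Since $\Map_{\Strat_P}(-,\Pi)$ is corepresentable on $\Strat_P^{\op}$, it carries colimits in $\Strat_P$ to limits in $\Space$. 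Thus, given a string $\Sigma = \{p_0 \leq \cdots \leq p_m\}$, it suffices to show that, viewed as an object of $\Strat_P$ through the functor $\fromto{\sd(P)}{\Strat_P}$, $\Sigma$ is the iterated pushout of its edges $\{p_i \leq p_{i+1}\}$ along its vertices $\{p_i\}$; then applying $\Map_{\Strat_P}(-,\Pi)$ produces exactly the iterated fibre-product diagram appearing in the definition of a spatial décollage.

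To establish this colimit description I would argue as follows. As a poset, $\Sigma$ is isomorphic to $[m]$, with nerve $\Delta^m$; recall that the spine inclusion $\Delta^{\{0,1\}} \cup \Delta^{\{1,2\}} \cup \cdots \cup \Delta^{\{m-1,m\}} \hookrightarrow \Delta^m$ is inner anodyne, so that in $\Cat_{\infty}$ the object $\Delta^m$ is the colimit of the diagram $\Delta^{\{0,1\}} \leftarrow \Delta^{\{1\}} \to \Delta^{\{1,2\}} \leftarrow \cdots \to \Delta^{\{m-1,m\}}$ (this is essentially the Segal condition). Equipping every object of this diagram with its tautological map to $P$ -- sending $\Delta^{\{i,i+1\}}$ to the edge $\{p_i \leq p_{i+1}\}$ and $\Delta^{\{i\}}$ to the vertex $\{p_i\}$ -- exhibits $\Sigma$ as the colimit of the resulting diagram in the slice \category $\Cat_{\infty,/P}$. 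Now $\Strat_P$ is a reflective subcategory of $\Cat_{\infty,/P}$ with reflector $\Ex_P^{\infty}$, so colimits in $\Strat_P$ are computed by applying $\Ex_P^{\infty}$ to colimits formed in $\Cat_{\infty,/P}$. But since distinct elements of a poset admit no nonidentity morphism between them, each structure map $\fromto{\Delta^{\{i,i+1\}}}{P}$, $\fromto{\Delta^{\{i\}}}{P}$, and $\fromto{\Delta^m}{P}$ is conservative; hence every object in the diagram, together with its colimit $\Sigma$, already lies in $\Strat_P$, and $\Ex_P^{\infty}$ acts as the identity on them. Therefore $\Sigma$ is also the iterated pushout in $\Strat_P$, as needed.

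Finally I would apply $\Map_{\Strat_P}(-,\Pi)$ to the colimit cocone and check -- by unwinding the definition of $N_P(\Pi)$ -- that the restriction maps along the substring inclusions $\{p_i \leq p_{i+1}\} \subseteq \Sigma$ and $\{p_i\} \subseteq \Sigma$ are precisely the legs of the resulting limit cone, which gives the décollage equivalence. I do not anticipate a genuine obstacle here; the only points needing care are the identification $\Map_P(-,\Pi) \simeq \Map_{\Strat_P}(-,\Pi)$ and the transparency of the reflector $\Ex_P^{\infty}$ on the spine diagram, the latter being exactly the conservativity of the maps from totally ordered subposets of $P$. One could also argue entirely in terms of complete Segal spaces: $N_P(\Pi)(\Sigma)$ is the fibre of $N(\Pi)_m \to N(P)_m$ over the point classifying $\Sigma$, and the décollage condition is obtained by taking this fibre in the Segal equivalences for $N(\Pi)$ and $N(P)$ and using that pullbacks commute with the limits in question; this is the same argument in different clothing.
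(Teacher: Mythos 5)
Your proposal is correct and is essentially the paper's own argument: both identify the string $\{p_0\leq\cdots\leq p_m\}$ as the iterated pushout of its edges along its vertices in $\Cat_{\infty,/P}$ (the spine/Segal decomposition) and then apply the corepresentable functor $\Map_P(-,\Pi)$ to convert this colimit into the iterated fibre product defining a spatial décollage. Your extra detour through the reflective subcategory $\Strat_P$ and the reflector $\Ex_P^{\infty}$ is harmless but unnecessary, since $\Map_P(-,\Pi)$ is already the mapping space in $\Cat_{\infty,/P}$, where the pushout is computed directly.
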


\begin{proof}
	In $\Cat_{\infty,/P}$, for any chain $\{p_0 < \cdots < p_n\}\subseteq P$, there is an equivalence
	\begin{equation*}
		\equivto{\{p_0 < p_1\} \unionlimits^{\{p_1\}} \cdots \unionlimits^{\{p_{n-1}\}}\{p_{n-1} < p_n\}}{\{p_0 < \cdots < p_n\}} \comma
	\end{equation*}
	which induces an equivalence
	\begin{equation*}
		\equivto{\Map_P(\{p_0 < \cdots < p_n\},\Pi)}{\Map_P(\{p_0 < p_1\},\Pi) \crosslimits_{\Pi_{p_1}} \cdots \crosslimits_{\Pi_{p_{n-1}}} \Map_P(\{p_{n-1} < p_n\},\Pi)} \comma
	\end{equation*}
	as desired.
\end{proof}

\begin{thm}\label{thm:nerveequiv}
	The functor $ \Nerve \colon \fromto{\Str}{\DecSpace{}} $ is an equivalence of \categories over $\Pos$.
\end{thm}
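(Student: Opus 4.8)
The plan is to reduce the statement to its fibres over $\poSet$ and there to construct an explicit inverse by way of the complete Segal space model of \categories.

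First, $N$ is a functor over $\poSet$ between two cartesian fibrations: $\fromto{\Strat}{\poSet}$ is a bicartesian fibration by the proposition above, and $\fromto{\DecSpace{}}{\poSet}$ is a cartesian fibration by \Cref{cnstr:DecS}. One checks directly that $N$ preserves cartesian edges: over a monotonic map $f\colon\fromto{P}{Q}$, a cartesian edge of $\Strat$ is a pullback square with top arrow $\fromto{\Xi\times_{Q}P}{\Xi}$, and for a string $\Sigma\subseteq P$ there are natural equivalences $\Map_{P}(\Sigma,\Xi\times_{Q}P)\simeq\Map_{Q}(\Sigma,\Xi)\simeq\Map_{Q}(f(\Sigma),\Xi)$, the last one because the canonical map from $\Sigma$ to its image $f(\Sigma)$ becomes invertible after applying $\Ex^{\infty}_{Q}$; hence $N_{P}(f^{\ast}\Xi)\simeq\sd(f)^{\ast}N_{Q}(\Xi)$, which is the cartesian lift of $f$ through $\DecSpace{}$. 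Since a functor over $\poSet$ between cartesian fibrations that preserves cartesian edges is an equivalence as soon as it is a fibrewise equivalence (see \HTT{Corollary}{2.4.4.4}), and since $N_{P}$ lands in $\DecSpace{P}$ by the preceding lemma, it remains only to prove that $N_{P}\colon\fromto{\Strat_{P}}{\DecSpace{P}}$ is an equivalence for each poset $P$.

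Fix $P$. Under the equivalence of Joyal and Tierney \cite{JT} between $\Cat_{\infty}$ and complete Segal spaces, $\Cat_{\infty,/P}$ is identified with the \category of complete Segal spaces over $N(P)$, and hence with a full subcategory of $\Fun(\mbfDelta^{\op},\Space)_{/N(P)}\simeq\Fun((\mbfDelta_{/P})^{\op},\Space)$, where $\mbfDelta_{/P}$ is the category of simplices of the nerve of $P$; under this identification $N_{P}$ becomes restriction along the fully faithful inclusion $i\colon\fromto{\sd(P)}{\mbfDelta_{/P}}$ of the nondegenerate simplices. I would then exploit the right Kan extension $\mathrm{Ran}_{i}$ along $i$, which is fully faithful, satisfies $i^{\ast}\mathrm{Ran}_{i}\simeq\id$, and is computed by $\mathrm{Ran}_{i}(D)(\tau)\simeq D(\mathrm{im}\,\tau)$ (the comma category governing the Kan extension at $\tau$ has $\mathrm{im}\,\tau$ as an initial object). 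For full faithfulness of $N_{P}$: given $\Pi\in\Strat_{P}$, the adjunction between $\Ex^{\infty}_{P}$ and the forgetful functor identifies the moduli space $N(\Pi)(\tau)$ of simplices of $\Pi$ lying over a simplex $\tau$ of $N(P)$ with $\Map_{\Strat_{P}}(\Ex^{\infty}_{P}(\tau),\Pi)$; since $\Ex^{\infty}_{P}$ collapses a chain with repetitions onto its underlying string, $N(\Pi)(\tau)\simeq N_{P}(\Pi)(\mathrm{im}\,\tau)$, that is, $N(\Pi)\simeq\mathrm{Ran}_{i}N_{P}(\Pi)$, so $\Strat_{P}$ lies in the essential image of $\mathrm{Ran}_{i}$ and $N_{P}$ is fully faithful. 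For essential surjectivity: given a spatial décollage $D$, the simplicial space $\widehat{D}\coloneq\mathrm{Ran}_{i}D$ over $N(P)$ is a Segal space — this is precisely the décollage condition on $D$, once one notes that the fibre products over the degenerate simplices of $N(P)$ collapse — and, since $D$ is space-valued and every equivalence of $\widehat{D}$ lies over an identity of $N(P)$, completeness of $\widehat{D}$ and conservativity of $\fromto{\widehat{D}}{N(P)}$ are automatic; thus $\widehat{D}$ is the nerve of an object $\Pi_{D}\in\Strat_{P}$ with $N_{P}(\Pi_{D})=i^{\ast}\widehat{D}\simeq D$.

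The crux lies in the interplay between conservativity and degeneracies: making precise (i) that conservativity of $\fromto{\Pi}{P}$ forces a simplex of $\Pi$ over a chain with a repetition to be obtained from one over the underlying string by inserting equivalences — so that the nerve of $\Pi$ really is the right Kan extension of its décollage — and dually (ii) that $\mathrm{Ran}_{i}$ of a décollage is not merely a Segal space but a \emph{complete} one, and is conservative over $N(P)$. Both are avatars of the principle that completeness comes for free once the strata are required to be spaces, but checking them requires a careful comparison of the décollage and Segal conditions with the Segal and completeness conditions for simplicial spaces over $N(P)$; the remaining points are formal. Alternatively, one could deduce the statement from a Quillen equivalence between the third-named author's Joyal--Kan model structure on $\sSet_{/P}$ and a Segal-type model structure on $\Fun(\sdop(P),\sSet)$, of which the argument above is a streamlined \categorical version.
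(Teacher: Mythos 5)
Your argument is, in substance, the paper's proof: reduce to the fibre over a fixed poset $P$, invoke the Joyal--Tierney theorem to identify $\Cat_{\infty,/P}$ with complete Segal spaces over $NP$ inside $\Fun(\mbfDelta_{/P}^{\op},\Space)$, and compare $\DecSpace{P}$ with the image of $\Strat_P$ by Kan extending along $i\colon\incto{\sd(P)}{\mbfDelta_{/P}}$. The paper leaves the reduction to fibres implicit and simply asserts that this Kan extension is fully faithful with essential image the complete Segal spaces over $NP$ whose fibres are \groupoids; your points (i) and (ii) are exactly the content of that assertion, and your verifications of them (conservativity forcing the value on a degenerate chain to be the value on its image; morphisms in the Segal space associated to a décollage only going weakly up the poset, so that equivalences live over degenerate edges) are correct.

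One correction is needed: the Kan extension you want is the \emph{left} Kan extension $i_!$ --- the left adjoint to restriction $i^{\ast}$ on presheaves, which is what the paper uses --- not the right Kan extension. The formula $\tau\mapsto D(\mathrm{im}\,\tau)$ and the cofinality of the image in the governing comma category are features of $i_!$: the strings through which $\tau$ factors form a category with $\mathrm{im}\,\tau$ initial. The right Kan extension $i_{\ast}D$ is instead a limit over the strings mapping \emph{to} $\tau$, and it genuinely fails your formula: already over the degenerate edge $s_0(p)$ of $NP$ that indexing category is discrete with two objects, so $(i_{\ast}D)(s_0(p))\simeq D\{p\}\times D\{p\}$, and $i_{\ast}D$ is not even a Segal space in general. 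Since everything you actually use --- full faithfulness, $i^{\ast}$ being a retraction of the extension, and the explicit formula --- holds verbatim for $i_!$, replacing ``right'' by ``left'' repairs the step and brings your proof into exact agreement with the paper's. (A minor point: the fibrewise criterion for equivalences between cartesian fibrations that you invoke is the one from \cite[\HTTsubsec{3.3.1}]{HTT}, not \HTT{Corollary}{2.4.4.4}.)
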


\begin{proof}
	Let $P$ be a poset and write $ \DDelta_{/P} $ for the category of simplices of $ P $.
	The Joyal--Tierney theorem \cite{MR2342834} implies that the functor
	\begin{align*}
		\Nerve \colon \Cat_{\infty,/P}& \to \Fun(\Deltaop,\Space)_{/\Nerve(P)} \simeq \Fun(\Deltaop_{/P},\Space) \\ 
		C &\mapsto [\Sigma \mapsto \interior\Fun_{/P}(\Sigma,C)]
	\end{align*}
	is fully faithful, and the essential image $\CSS_{/\Nerve(P)}$ consists of those functors $\fromto{\Deltaop_{/P}}{\Space}$ that satisfy both the Segal condition and the completeness condition.
	Now notice that left Kan extension along the inclusion $ \incto{\sdop(P)}{\Deltaop_{/P}} $ defines a fully faithful functor $\incto{\DecSpace{P}}{\CSS_{/\Nerve(P)}}$ whose essential image consists of those complete Segal spaces $\fromto{C}{\Nerve(P)}$ such that for any $p\in P$, the complete Segal space $C_p$ is \agroupoid.
\end{proof}

\begin{nul}\label{nul:finitedecollages}
	Write $\DecSpacefin{} \subset \DecSpace{}$\index[notation]{DecSpi@$\DecSpacefin{}$} for the full subcategory spanned by those pairs $(P,D)$ where $P$ is a finite poset and $D$ is a spatial décollage over $P$ whose values are all \pifinite.
	Then nerve restricts to an equivalence of \categories $ \Nerve \colon \equivto{\Strfin}{\DecSpacefin{}}$.
\end{nul}


\subsection{Profinite spatial décollages}\label{subsec:profintiedecollages}

In this section we extend the theory of décollages to \proobjects and explain how to understand profinite $ P $-stratified spaces in terms of décollages valued in the \category of \textit{profinite spaces} (\Cref{cnstr:profinitespacesanddecollages}).

\begin{nul}\label{nul:pronerveequiv}
	We extend the nerve $ \Nerve \colon \equivto{\Str}{\DecSpace{}} $ to \proobjects to obtain an equivalence of \categories
	\begin{equation*}
		\Nerve \colon \equivto{\Pro(\Str)}{\Pro(\DecSpace{})}
	\end{equation*}
	over $\Pro(\Pos)$. 
\end{nul}

In order to understand profinite décollages, we recall some basic facts about profinite spaces and the product in the \category of profinite spaces.

\begin{rec}\label{rec:profinspace}
	We write $ \Spaceprofin\coloneq\Pro(\Spacefin) $\index[notation]{Spihat@$\Spaceprofin$} for the \category of \defn{profinite spaces}.%
	\index[terminology]{space!profinite}\index[terminology]{profinite!space}
	We regard the $ \Spaceprofin $ as a full subcategory of the \category $\Pro(\Space)$.
	Precomposition with the inclusion $\incto{\Spacefin}{\Space}$ defines a functor $(-)\profincomp \colon \fromto{\Pro(\Space)}{\Spaceprofin} $ that exhibits $ \Spaceprofin $ as a localization of $ \Pro(\Space) $. 
	Given a prospace $ X $, we call $ X\profincomp $ the \defn{profinite completion} of $ X $\index[terminology]{profinite completion}\index[notation]{Xpihat@$ X\profincomp $}.
\end{rec}

\begin{rec}[{monoidal structures on $ \Pro(\Space) $ \SAG{Remark}{E.2.1.2}}]\label{rec:SAG.E.2.1.2}
	In addition to the cartesian symmetric monoidal structure on $\Pro(\Space)$, there is a related `composition' monoidal structure: since the composition of two left exact accessible functors $ \fromto{\Space}{\Space} $ is again left exact and accessible, the composition monodial structure on $ \Fun(\Space,\Space)^{\op} $ restricts to a monoidal structure $ \goesto{(X,Y)}{X\circ Y} $\index[notation]{zzc@$\circ$} on $ \Pro(\Space) $.
	The identity functor is both the unit for $\circ$ and terminal object of $\Pro(\Space)$.
	Hence the universal property of the product provides is a comparison morphism
	\begin{equation*}
		c_{X,Y} \colon \fromto{X \circ Y}{X \times Y}
	\end{equation*}
	that is natural in $ X $ and $ Y $.
	However, this morphism is not generally an equivalence.

	Since the subcategory $ \Spaceprofin \subset \Pro(\Space) $ is closed under products, if $ X, Y \in \Spaceprofin $, then the morphism $ \fromto{X \circ Y}{X \times Y} $ induces a morphism 
	\begin{equation}\label{eq:profinprodcomparison}
		\fromto{(X\circ Y)\profincomp}{X\times Y} \period
	\end{equation}
	We claim that the morphism \eqref{eq:profinprodcomparison} is an equivalence.
	To see this, we show that the comparison morphism $ \fromto{X \circ Y}{X \times Y} $ becomes an equivalence after evaluation at any truncated space.\footnote{We are grateful to Jacob Lurie for this observation.}
\end{rec}

\begin{lem}\label{lem:compofprofiniteontrun}
	Let $ X $ be a profinite space and $ Y $ a prospace.
	Then for every truncated space $ K $, the morphism
	\begin{equation*}
		c_{X,Y} \colon \fromto{(X \circ Y)(K)}{(X \times Y)(K)}
	\end{equation*}
	is an equivalence in $ \Space^{\op} $.
\end{lem}

\begin{proof}
	Exhibit $ X $ an inverse system $\{X_{\alpha}\}_{\alpha \in A}$ of \textit{\pifinite} spaces and $ Y $ an inverse system $\{Y_{\beta}\}_{\beta \in B}$ of spaces.
	For each $ \alpha \in A $, the fact that the space $X_{\alpha} $ is \pifinite for implies that the functor corepresented by $ X_{\alpha} $ preserves colimits of filtered diagrams of uniformly truncated spaces \SAG{Corollary}{A.2.3.2}.
	Since the space $ K $ is truncated, the filtered diagram
	\begin{equation*}
		\goesto{\beta}{\Map_{\Space}(Y_{\beta},K)}
	\end{equation*}
	is uniformly truncated.
	Hence we have the following equivalences in $ \Space $:
	\begin{align*}
		(X\times Y)(K) &\simeq \colim_{(\alpha,\beta)\in A^{\op}\times B^{\op}} \Map_{\Space}(X_{\alpha} \cross Y_{\beta}, K)  \\
		&\equivalent \colim_{(\alpha,\beta)\in A^{\op}\times B^{\op}} \Map_{\Space}(X_{\alpha}, \Map_{\Space}(Y_{\beta}, K)) \\ 
		&\equivalence \colim_{\alpha \in A^{\op}} \Map_{\Space}(X_{\alpha},\colim_{\beta \in B^{\op}}\Map_{\Space}(Y_{\beta}, K)) \\ 
		&\simeq (X\circ Y)(K) \period \qedhere
	\end{align*}
\end{proof}

\begin{cor}\label{cor:compofprofiniteisprod}
	Let $ X $ and $ Y $ be profinite spaces.
	Then the natural morphism
	\begin{equation*}
		\fromto{(X \of Y)\profincomp}{X \cross Y}
	\end{equation*}
	is an equivalence in $ \Spaceprofin $.
\end{cor}

\begin{nul}
	\Cref{cor:compofprofiniteisprod} is helpful for describing fiber products in $\Spaceprofin$ as well: given morphisms of profinite spaces $ p \colon \fromto{X}{Z} $ and $ q \colon \fromto{Y}{Z} $, the pullback $ X \times_Z Y $ of $ p $ along $ q $ is given by a cobar construction:
	\begin{equation*}
		X \times_Z Y \simeq \lim_{[m] \in \DDelta}\left(X\circ Z^{\circ m}\circ Y\right)\profincomp\period
	\end{equation*}
\end{nul}

We are now ready to describe $ \StrprofinP $ in terms of profinite décollages.

\begin{cnstr}[profinite décollages]\label{cnstr:profinitespacesanddecollages}
	For any finite poset $ P $, write $\DecSpaceprofin{P}$\index[notation]{DecSpihat@$\DecSpaceprofin{}$} for the full subcategory of $\Fun(\sdop(P),\Spaceprofin)$ spanned by those functors
	\begin{equation*}
		D \colon \fromto{\sdop(P)}{\Spaceprofin}
	\end{equation*}
	such that for any chain $\{p_0 < \cdots < p_n\}\subseteq P$, the natural map
	\begin{equation*}
		D\{p_0 < \cdots < p_n\} \to D\{p_0 < p_1\} \crosslimits_{D\{p_1\}} \cdots \crosslimits_{D\{p_{n-1}\}} D\{p_{n-1} < p_n\}
	\end{equation*}
	is an equivalence of profinite spaces.
	We call objects of $\DecSpaceprofin{P}$ \emph{profinite décollages} over $ P $.%
	\index[terminology]{profinite!decollage@décollage}\index[terminology]{decollage@décollage!profinite}

	Combining the equivalence
	\begin{equation*}
		\equivto{\Pro(\Fun(\sdop(P),\Spacefin))}{\Fun(\sdop(P),\Spaceprofin)}
	\end{equation*}
	furnished by \HTT{Proposition}{5.3.5.15} with the equivalences \eqref{nul:finitedecollages} and \eqref{nul:pronerveequiv}, we obtain equivalences of \categories 
	\begin{equation}\label{eq:strprofindecequiv}
		\StrprofinP \equivalence \Pro(\DecSpacefin{P}) \equivalence \DecSpaceprofin{P} \period
	\end{equation}
	Specifically, the composite equivalence \eqref{eq:strprofindecequiv} is given by extending the nerve
	\begin{equation*}
		\Nerve_{P} \colon \equivto{\StrfinP}{\DecSpacefin{P} \subset \DecSpaceprofin{P}}
	\end{equation*}
	along inverse limits.
\end{cnstr}


\newpage

\part{Elements of higher topos theory}\label{part:topoi}

In this part we develop the higher-toposic tools that we'll need to state and prove our \Categorical Hochster Duality Theorem (\Cref{thm:headlineinftyHochster}=\Cref{thm:inftyHochster}).
In \cref{sec:rechighertopoi} we recall a number of important results from higher topos theory and develop the basic frameworks of \textit{(bounded) coherent \topoi} and \textit{(bounded) \pretopoi}.
The theories of coherent \topoi and bounded \pretopoi will be used heavily in the remainder of the text.
In \Cref{sec:shapetheory}, we describe the basic theorems of \textit{shape theory} for \topoi; the generalization of shape theory to stratified \topoi forms the foundation of our work.
\Cref{sec:orientedfiberprod} develops the basics of Deligne's oriented fiber product; this plays a fundamental role in our approach to stratified higher topos theory in \Cref{part:strattopoi}.
In \Cref{sec:localtopoi} we introduce the analogue of local rings in the context of higher topos theory.
In \Cref{section:BC} we generalize work of Moerdijk--Vermeulen and Illusie--Gabber by proving a key \basechange theorem for oriented fiber products of bounded coherent \topoi.
As with the proof of the proper basechange theorem in algebraic geometry, reduction to the local case plays a key role in our proof.
  

\section{Aide-mémoire on higher topoi}\label{sec:rechighertopoi}

In this chapter we recall a number of important results from higher topos theory (mostly from Lurie's \cite[Appendices \SAGapplink{A} \& \SAGapplink{E}]{SAG}), and develop some basic results that we'll use throughout the rest of the text.
This chapter is here mostly for ease of reference; much of the material is expository, and the original material mostly serves to fill small gaps in the existing foundations.

\Cref{subsec:highertopoi} sets our notational conventions for \topoi.
\Cref{subsec:boundedness} introduces the first of two finiteness conditions that we impose on almost all of the \topoi we consider in this text: \textit{boundedness}.
\Cref{subsec:coherence} introduces the second finiteness condition: \textit{coherence}.
\Cref{subsec:n-coherenceonlydependsonn-topos} studies the relationship between coherence and $ n $-topoi.
\Cref{subsec:coherenceforn-localic} provides a convenient reformulation of coherence for $ n $-localic \topoi.
\Cref{subsec:cohfor1localic} shows that a morphism of \textit{finitary \sites} induces a coherent geometric morphism on corresponding \topoi; along with the material from \cref{subsec:coherenceforn-localic}, this implies that the theory of $ 1 $-localic coherent \topoi recovers Grothendieck's theory of coherent ordinary topoi \cite[Exposé VI, Definition 2.3]{MR50:7131}.
\Cref{subsec:examplesofcoherent} uses the material from \cref{subsec:cohfor1localic} to provide examples of coherent \topoi and geometric morphisms coming from algebraic geometry.
\Cref{subsec:classificationofbctopoi} explains how bounded coherent \topoi are classified by their truncated coherent objects.
This perspective will be used extensively throughout our work.
\Cref{subsec:cohinverselimtis} recalls the fact that inverse limits of bounded coherent \topoi are again bounded coherent.
\Cref{subsec:coherenceandfilteredcolims} shows that the pushforward in a coherent geometric morphism commutes with filtered colimits of uniformly truncated diagrams.
\Cref{subsec:conceptual} discusses points of \topoi and hypercompletness, Deligne Completeness, and Lurie's Conceptual Completeness Theorem for bounded coherent \topoi.
\Cref{subsec:bases} explains how bases for Grothendieck topologies work for \topoi (which is more subtle than for ordinary topoi); the key example that we need is that the \topos of sheaves on a finite poset $ P $ is the functor \category $ \Fun(P,\Space) $.


\subsection{Higher topoi}\label{subsec:highertopoi}

We begin by setting our basic notational conventions for higher topoi. 

\begin{ntn}
	We use here the theory of \emph{$n$-topoi}\index[terminology]{ntopoi@$n$-topoi} for $n\in\NNrhd$; see \cite[\HTTch{6}]{HTT}.
	We write $\Top_n\subset\Cat_{\infty,\updelta_1}$\index[notation]{Topn@$\Top_n,\Top_{\infty}$} for the subcategory of $\updelta_1$-small $n$-topoi and geometric morphisms.
	Many of the examples in this paper will have $n\in\{0,1,\infty\}$.

	For any $ \updelta_0 $-small \category $ C $, we write $ \PSh(C) \coloneq \Fun(C^{\op},\Space) $ for the \topos of presheaves of spaces on $ C $.
\end{ntn}

\begin{exm}
	Recall that $0$-topoi are ($\updelta_0$-small) locales \HTT{Proposition}{6.4.2.5}, and $1$-topoi are topoi in the classical sense of Grothendieck \HTT{Remark}{6.4.1.3}. 
\end{exm}

\begin{exm}
	Let $m,n\in\NNrhd$ with $m\leq n$.
	An \emph{$m$-site}\index[terminology]{nsite@$n$-site}\index[terminology]{site} is a $\updelta_0$-small $m$-category $X$ equipped with a Grothendieck topology $\tau$.
	Given an $m$-site $ (X,\tau) $, we write $\Sh_{\tau}(X)_{\leq(n-1)}$\index[notation]{Shtau@$\Sh_{\tau}$} for the $ n $-topos of sheaves $\updelta_0$-small $(n-1)$-groupoids on $X$;
	when $n=\infty$, we will simply write $\Sh_{\tau}(X)$.

	It is not expected that all \topoi are of the form $\Sh_{\tau}(X)$ for an \site $ (X,\tau) $;
	however, if $n\in\NNup$, then every $n$-topos is of the form $\Sh_{\tau}(X)_{\leq(n-1)}$ for some $n$-site $(X,\tau)$ \HTT{Theorem}{6.4.1.5}.
\end{exm}

\begin{exm}\label{exm:widetilde}
	For any topological space $ W $, we write $ \Wtilde \coloneq \Sh(W) $ for the $ 0 $-localic \topos of sheaves of spaces on $W$.
\end{exm}

\begin{ntn}
	Let $n\in\NNrhd$ and let $\XX$ and $\YY$ be $n$-topoi.
	We write\index[notation]{Funstar@$\Funlowerstar,\Funupperstar$}
	\begin{equation*}
		\Funlowerstar(\XX,\YY) \subseteq \Fun(\XX,\YY)
	\end{equation*}
	for the full subcategory spanned by the \textit{geometric morphisms}. 
	We note that $\Funlowerstar(\XX,\YY)$ is accessible \HTT{Proposition}{6.3.1.13}.
	We write
	\begin{equation*}
		\Funupperstar(\YY,\XX)\subseteq\Fun(\YY,\XX)
	\end{equation*}
	for the full subcategory spanned by those functors that are left exact left adjoints, so that $ \Funupperstar(\YY,\XX) \equivalent \Funlowerstar(\XX,\YY)^{\op} $.
\end{ntn}

\begin{ntn}\label{ntn:globalsecpoints}
	Recall that the \topos $\Space$ of spaces is terminal in $\Top_{\infty}$.
	If $\XX$ is \atopos, then we write $ \Gammaup_{\XX,\ast} $ or $ \Gammaup_{\ast} $\index[notation]{Gammastar@$\Gammalowerstar$} for the unique geometric morphism $ \fromto{\XX}{\Space} $; the functor $ \Gammaup_{\XX,\ast} $ is corepresented by the terminal object $ 1_{\XX} \in \XX $.
	The geometric morphism $ \Gammaup_{\XX,\ast} $ is called the \defn{global sections}\index[terminology]{global sections} geometric morphism.
\end{ntn}

\begin{dfn}
	Let $\XX$ be \atopos.
	A \defn{point}\index[terminology]{point!of \atopos} of $\XX$ is a geometric morphism
	\begin{equation*}
		\xlowerstar\colon\fromto{\Space}{\XX} \comma
	\end{equation*}
	which is necessarily a section of $\Gammalowerstar$.
	We often write $\xtilde$\index[notation]{xtilde@$\xtilde$} for this copy of $\Space$, regarded as lying over $\XX$ via the geometric morphism $\xlowerstar$.
\end{dfn}

\begin{rec}[étale geometric morphisms]\label{HTT.6.3.5.6} 
	Let $\XX$ and $\YY$ be \topoi. 
	A geometric morphism $p_{\ast}\colon\fromto{\XX}{\YY}$ is \emph{étale}\index[terminology]{etale@étale!geometric morphism}\index[terminology]{geometric morphism!etale@étale} if $p^{\ast}$ admits a further left adjoint $p_!\colon\fromto{\XX}{\YY}$ that exhibits $\XX$ as the slice \topos \smash{$\YY_{/p_!(1_{\XX})}$.}
	In this case $ p_! $ is identified with the forgetful functor \smash{$ \fromto{\YY_{/p_!(1_{\XX})}}{\YY} $}.

	By \HTT{Corollary}{6.3.5.6}, the functor
	\begin{equation*}
		\fromto{\Funlowerstar(\ZZ,\XX)}{\Funlowerstar(\ZZ,\YY)}
	\end{equation*}
	is a right fibration whose fiber over a geometric morphism $\flowerstar\colon\fromto{\ZZ}{\YY}$ is the ($\updelta_0$-small) \groupoid $\Map_{\XX}(1_{\XX},f^{\ast}p_!(1_{\XX}))$.
\end{rec}

\begin{nul}
	If $\XX$ and $\YY$ are \topoi, the product $\XX\times \YY$\index[notation]{XY@$\XX \times \YY$} in $\Top_{\infty}$%
	\index[terminology]{product!of \topoi}\index[terminology]{topos@\topos!product}
	is \emph{not} the product of \categories;
	rather, the \topos $ \XX \cross \YY $ can be identified with the tensor product of presentable \categories.%
	\footnote{For this reason, Lurie writes $\XX\otimes \YY$ for the product in $\Top_{\infty}$.}

	Similarly, if $\flowerstar\colon\fromto{\XX}{\ZZ}$ and $\glowerstar\colon\fromto{\YY}{\ZZ}$ are geometric morphisms, then the pullback $\XX\times_{\ZZ}\YY$\index[notation]{XZY@$\XX\times_{\ZZ}\YY$} in $\Top_{\infty}$%
	\index[terminology]{fiber product!of \topoi}\index[terminology]{topos@\topos!fiber product}%
	\index[terminology]{pullback!of \topoi}\index[terminology]{topos@\topos!pullback}
	exists \HTT{Proposition}{6.3.4.6}, but the \topos $ \XX \cross_{\ZZ} \YY $ is not the pullback of \categories.

	In \Cref{sec:orientedfiberprod} we also study an \textit{oriented fiber product} of \topoi.
	Again, this oriented fiber product does not coincide with the oriented fiber product of \categories \Cref{nul:orientedfpinCat}. 
	We therefore endeavour to indicate clearly when a product, pullback, or oriented fiber product is formed in $\Top_{\infty}$ or \smash{$\Cat_{\infty,\updelta_1}$}.
\end{nul}

We repeatedly make use of the fact that inverse limits in $ \Top_{\infty} $ are computed in $ \Cat_{\infty,\updelta_1} $.
\index[terminology]{limit!inverse!of \topoi}\index[terminology]{inverse!limit!of \topoi}\index[terminology]{topos@\topos!inverse limit}

\begin{thm}[{\HTT{Theorem}{6.3.3.1}}]\label{thm:filteredlimsinRTop}
	The forgetful functor $ \fromto{\Top_{\infty}}{\Cat_{\infty,\updelta_1}} $ preserves inverse limits.
\end{thm}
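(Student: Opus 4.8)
This is \HTT{Theorem}{6.3.3.1}; the plan is to reconstruct the shape of the argument. Let $A$ be a $\delta_0$-small \category with $A^{\op}$ filtered, let $F \colon \fromto{A}{\Top_{\infty}}$ be a functor, $\alpha \mapsto \XX_{\alpha}$, and for a morphism $u \colon \fromto{\alpha}{\beta}$ of $A$ write $f_{u,\ast}$ and $f_{u}^{\ast}$ for the two functors of the associated geometric morphism. First I would form $\XX \coloneq \lim_{\alpha \in A}\XX_{\alpha}$ in $\Cat_{\infty,\delta_1}$ along the $f_{u,\ast}$ and record two dual descriptions of it. On the one hand, each $f_{u,\ast}$ is accessible and preserves all limits, so $F$ factors through the subcategory $\mathrm{Pr}^{R}\subset\Cat_{\infty,\delta_1}$ of presentable \categories and accessible limit-preserving functors; since $\mathrm{Pr}^{R}$ is closed under small limits in $\Cat_{\infty,\delta_1}$ \cite[\HTTsubsec{5.5.3}]{HTT}, the \category $\XX$ is presentable and each projection $p_{\alpha,\ast}\colon\fromto{\XX}{\XX_{\alpha}}$ is accessible and preserves limits, hence admits a left adjoint $p_{\alpha}^{\ast}$. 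On the other hand -- and this is what makes \emph{inverse} limits special -- because $A^{\op}$ is filtered the very same \category $\XX$, equipped with the functors $p_{\alpha}^{\ast}$, is the filtered colimit $\colim_{\alpha\in A^{\op}}\XX_{\alpha}$ formed along the left adjoints $f_{u}^{\ast}$ in the \category of \topoi and left-exact left adjoints, and that colimit is computed in $\Cat_{\infty,\delta_1}$ as the plain filtered colimit.

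The heart of the matter is to show that $\XX$ is \atopos and that each $p_{\alpha}^{\ast}$ is left exact. Every $f_{u}^{\ast}$ is left exact and colimit-preserving, so $\XX$ is a filtered colimit of \topoi along left-exact colimit-preserving transition functors. Here I would appeal to the Giraud-type characterisation \HTT{Theorem}{6.1.0.6} of \topoi among presentable \categories by the descent condition -- a condition on the interaction of small colimits with finite limits. Filtered colimits of \categories commute with finite limits, and any small diagram in $\XX$ that could witness a failure of descent can be lifted to a single stage $\XX_{\alpha}$ -- either by arranging that $A^{\op}$ be $\kappa$-filtered for $\kappa$ exceeding the size of the diagram, or by reducing to a small generating class of diagram shapes -- so descent holds in $\XX$ because it holds in each $\XX_{\alpha}$ and is detected there by the $p_{\alpha}^{\ast}$. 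This simultaneously shows that $\XX$ is \atopos and that each $p_{\alpha}^{\ast}$ is left exact, so that each $p_{\alpha,\ast}$ is a geometric morphism.

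It then remains to verify the universal property, which is formal: for any \topos $\YY$ the colimit description yields natural equivalences
\begin{equation*}
	\Fun_{\ast}(\YY,\XX)\equivalent\Fun^{\ast}(\XX,\YY)^{\op}\equivalent\paren{\lim_{\alpha\in A}\Fun^{\ast}(\XX_{\alpha},\YY)}^{\op}\equivalent\lim_{\alpha\in A}\Fun_{\ast}(\YY,\XX_{\alpha}) \comma
\end{equation*}
since a compatible family of left-exact left adjoints $\fromto{\XX_{\alpha}}{\YY}$ glues to a left adjoint out of $\XX=\colim_{\alpha\in A^{\op}}\XX_{\alpha}$ that is again left exact, being a filtered colimit of left-exact functors; hence $(\XX,\{p_{\alpha,\ast}\})$ is an inverse limit of $F$ in $\Top_{\infty}$, and it is computed in $\Cat_{\infty,\delta_1}$. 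The main obstacle is the middle step: one must genuinely show that a filtered colimit of \topoi along left-exact left adjoints is again \atopos. One cannot simply transport presheaf presentations of the $\XX_{\alpha}$ levelwise, since there need be no compatible system of sites; and the delicate point in verifying descent is that it refers a priori to colimits of unbounded size, so the reduction to a single stage must either bound the test diagram shapes or exploit the freedom to take $A^{\op}$ as highly filtered as one wishes. Everything else is bookkeeping with $\mathrm{Pr}^{R}$, left-exact left adjoints, and the adjoint functor theorem.
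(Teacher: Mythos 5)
The paper does not prove this statement: it is quoted verbatim from \HTT{Theorem}{6.3.3.1}, so your reconstruction has to be measured against Lurie's argument. Your outer scaffolding is consistent with it — form $\XX \coloneq \lim_{\alpha}\XX_{\alpha}$ in $\Cat_{\infty,\delta_1}$ along the $f_{u,\ast}$, note that $\XX$ is presentable and that the projections $p_{\alpha,\ast}$ admit left adjoints because limits of presentable \categories along accessible limit-preserving functors are computed in $\Cat_{\infty,\delta_1}$, and finish with a formal verification of the universal property. The problem is the middle step, which is where all the content lives.

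The claim that, because $A^{\op}$ is filtered, the limit $\lim_{\alpha}\XX_{\alpha}$ taken along the right adjoints coincides with the \emph{plain} filtered colimit $\colim_{\alpha\in A^{\op}}\XX_{\alpha}$ in $\Cat_{\infty,\delta_1}$ taken along the left adjoints is false. Colimits in $\mathrm{Pr}^{L}$ are computed as limits of the diagrams of right adjoints, not as colimits of underlying \categories, and filteredness does not repair this: the comparison functor $\fromto{\colim_{\alpha\in A^{\op}}\XX_{\alpha}}{\lim_{\alpha}\XX_{\alpha}}$ is usually far from essentially surjective. Concretely, take an inverse system of finite sets $S_{0}\ot S_{1}\ot\cdots$ with limit a profinite set $S$; then $\lim_{n}\Fun(S_{n},\Space)\equivalent\widetilde{S}$, whereas the essential image of the plain colimit consists only of sheaves pulled back from some finite stage, and a sheaf on $S$ with infinitely many pairwise inequivalent stalks is not of that form. (The correct statement in this direction is the one the paper actually relies on elsewhere: by \Cref{prop:SAG.A.8.3.1,prop:SAG.A.8.3.2} it is the bounded \pretopoi of \emph{truncated coherent} objects whose filtered colimit, after applying $\Sheff{-}$, recovers the limit \topos; the underlying \categories satisfy no such colimit formula.) Since your verification of descent — and with it the claims that $\XX$ is \atopos and that the $p_{\alpha}^{\ast}$ are left exact — proceeds by lifting an arbitrary small diagram in $\XX$ to a single stage, it collapses once the colimit description is withdrawn: a general object of $\XX$ lives at no finite stage. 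The same issue infects your last step, where left exactness of the glued functor out of $\XX$ is deduced from "a filtered colimit of left exact functors is left exact". Lurie's proof instead works directly in the limit \category; the genuinely delicate points are an explicit analysis of the left adjoints $p_{\alpha}^{\ast}$ (colimits in $\XX$ are \emph{not} computed componentwise by the $p_{\alpha,\ast}$, which are right adjoints) and a verification of the descent characterisation of \topoi using that $\XX$ is generated under colimits by the images of the $p_{\alpha}^{\ast}$ — which is the salvageable germ of your "reduce to a small generating class" alternative.
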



\subsection{Boundedness}\label{subsec:boundedness}

We now turn to the first of two finiteness conditions that we impose on almost all of the \topoi we consider in this book.

\begin{ntn}\label{ntn:n-trun}
	Let $ C $ be a presentable \category.
	For each integer $ n \geq -2 $, write $ C_{\leq n} \subset C $\index[notation]{Cleqn@$C_{\leq n}, C_{<\infty}$} for the full subcategory spanned by the $ n $-truncated objects, and
	\begin{equation*}
		\trun_{\leq n} \colon \fromto{C}{C_{\leq n}} \index[notation]{tauleqn@$\trun_{\leq n}$}
	\end{equation*}
	for the $ n $-truncation functor, which is left adjoint to the inclusion $ C_{\leq n} \subset C $ \HTT{Proposition}{5.5.6.18}.
	Write $ C_{<\infty} \subset C $ for the full subcategory spanned by those objects which are $ n $-truncated for some integer $ n \geq -2 $.
\end{ntn}

\begin{ntn}
	If $m,n\in\NNrhd$ with $m < n$, then passage to $(m-1)$-truncated objects defines a functor
	\begin{equation*}
		(-)_{\leq m-1}\colon\fromto{\Top_n}{\Top_m} \period
	\end{equation*}
	We call a $ (-1) $-truncated object of an $ n $-topos $ \XX $ an \defn{open}\index[terminology]{open subtopos}\index[terminology]{subtopos!open} in $ \XX $ and write $ \Open(\XX) \colonequals \XX_{\leq -1} $.
	For any open $U$ of $\XX$, the étale geometric morphism $\XX_{/U} \inclusion \XX$ then exhibits $\XX_{/U}$ as an \defn{open subtopos} of $\XX$.
\end{ntn}

\begin{dfn}\label{cnstr:localictopoi}
	If $m,n\in\NNrhd$ with $m < n$, then the functor $ (-)_{\leq m-1}\colon\fromto{\Top_n}{\Top_m}$ admits a fully faithful right adjoint.
	Write $\Top_n^m\subseteq\Top_n$ for the essential image of this functor;
	this consists of those $n$-topoi $\XX$ such that, for every $n$-topos $\YY$, the functor
	\begin{equation*}
		\fromto{\Funlowerstar(\YY,\XX)}{\Funlowerstar(\YY_{\leq m-1},\XX_{\leq m-1})}
	\end{equation*}
	is an equivalence. 
	We call such $n$-topoi \defn{$m$-localic}\index[terminology]{topos@\topos!nlocalic@$n$-localic}\index[terminology]{localic} \cite[\HTTsubsec{6.4.5}]{HTT}.

	The inclusion $ \Top_{\infty}^n \subset \Top_{\infty} $ of the full subcategory of $ n $-localic \topoi admits a left adjoint
	\begin{equation*}
		\Lup_n \colon \fromto{\Top_{\infty}}{\Top_{\infty}^n} \index[notation]{Ln@$ \Lup_n $}
	\end{equation*}
	called \defn{$ n $-localic reflection}\index[terminology]{nlocalic@$n$-localic!reflection}\index[terminology]{localic!reflection}.
\end{dfn}

\begin{nul}\label{nul:nlocalicintermsofsites}
	If $n\in\NNup$, then the proof of \HTT{Proposition}{6.4.5.9} demonstrates that \atopos $\XX$ is $n$-localic if and only if $\XX \equivalent \Sh_{\tau}(X)$, where $(X,\tau)$ is a $\updelta_0$-small $n$-site \textit{with all finite limits}.
\end{nul}

\begin{exm}
	If $W$ is a topological space, then the \topos $\widetilde{W}$ of sheaves on $ W $ is $0$-localic.
\end{exm}

\begin{wrn}
	If $ (X,\tau) $ is an $ n $-site and the $ n $-category $ X $ does \textit{not} have finite limits, then the \topos $ \Sh_{\tau}(X) $ may not be $ N $-localic for any $ N \geq 0 $.
	See \SAG{Counterexample}{20.4.0.1} for a basis $ B $ for the topology on the Hilbert cube $ \prod_{i \in \ZZup} [0,1] $ for which the \topos of sheaves on $ B $ is not $ N $-localic for any $ N \geq 0 $.
\end{wrn}

\begin{exm}
	If $X$ is a scheme, then the \topos $X_{\et}$ of étale sheaves on the $1$-site of étale $X$-schemes is $1$-localic.
\end{exm}

\begin{exm}\label{exm:nlocalicslive}
	Let $ n \in \NNup $ and let $ \XX $ be an $ n $-localic \topos.
	Then \SAG{Lemma}{1.4.7.7} demonstrates that for an object $ U \in \XX $, the over \topos $ \XX_{/U} $ is $ n $-localic if and only if $ U $ is $ n $-truncated.
\end{exm}

\begin{dfn}
	We write $\Top_{<\infty}^{\wedge}$\index[notation]{Tophat@$\Top_{<\infty}^{\wedge}$} the inverse limit of \categories
	\begin{equation*}
		\Top_{<\infty}^{\wedge}\coloneq\lim_{n\in\NNup^{\op}}\Top_n
	\end{equation*}
	along the various truncation functors $ (-)_{\leq m-1}$.
	This is the \category of sequences $\{\XX_n\}_{n\in\NNup}$ in which each $\XX_n$ is an $n$-topos, along with identifications $ \equivto{(\XX_n)_{\leq m-1}}{\XX_m}$ whenever $m\leq n$.
	The truncation functors provide a functor
	\begin{equation*}
		\trun \colon\fromto{\Top_{\infty}}{\Top_{<\infty}^{\wedge}} \comma
	\end{equation*}
	which carries \atopos $\XX$ to the sequence $\{\XX_{\leq n-1}\}_{n\in\NNup}$.
\end{dfn}

\begin{cnstr}\label{cnstr:boundedtopoi}
	The functor $ \trun \colon\fromto{\Top_{\infty}}{\Top_{<\infty}^{\wedge}}$ admits a fully faithful right adjoint, which identifies $\Top_{<\infty}^{\wedge}$ with the full subcategory of $\Top_{\infty}$ spanned by the \defn{bounded \topoi}\index[terminology]{bounded!topos@\topos}\index[terminology]{topos@\topos!bounded} \SAG{Proposition}{A.7.1.5}.
	These are the \topoi that can be exhibited as inverse limits in $\Top_{\infty}$ of a diagram of localic \topoi.
	Equivalently, \atopos $ \XX $ is bounded if and only if the natural geometric morphism
	\begin{equation*}
		\fromto{\XX}{\lim_{n \in \NNup^{\op}} \Lup_n(\XX)}
	\end{equation*}
	is an equivalence. 
\end{cnstr}

Surprisingly, the functor the functor $ \trun \colon\fromto{\Top_{\infty}}{\Top_{<\infty}^{\wedge}}$ also admits a left adjoint; to state what the essential image of this left adjoint is, we first recall a bit about \textit{Postnikov completeness}.

\begin{dfn}\label{def:Postnikovstuff}
	Let $ C $ be a presentable \category.
	We say that:
	\begin{enumerate}[(\ref*{def:Postnikovstuff}.1)]
		\item\label{def:Postnikovstuff.1} \defn{Postnikov towers converge}\index[terminology]{Postnikov towers converge} in $ C $, if for every object $ X \in C $, the natural morphism $ \fromto{X}{\lim_{n \in \NNup^{\op}} \trun_{\leq n} X} $ is an equivalence in $ C $.

		\item\label{def:Postnikovstuff.2} The \category $ C $ is \defn{Postnikov complete}\index[terminology]{Postnikov complete category@Postnikov complete \category}\index[terminology]{category@\category!Postnikov complete} if the natural functor
		\begin{equation*}
			C \to \lim \bigg(
		\begin{tikzcd}[sep=1.5em]
			\cdots \arrow[r] & C_{\leq n+1} \arrow[r, "\trun_{\leq n}"] & C_{\leq n} \arrow[r] & \cdots \arrow[r, "\trun_{\leq 0}"] & C_{\leq 0} 
		\end{tikzcd}\bigg)
		\end{equation*}
		is an equivalence of \categories.
		(Here the limit is formed in $\Cat_{\infty,\updelta_1}$.)
	\end{enumerate}
\end{dfn}

\begin{nul}
	Note that Postnikov completeness implies the convergence of Postnikov towers, but not conversely.
\end{nul}

\begin{cnstr}
	The functor $ \trun \colon\fromto{\Top_{\infty}}{\Top_{<\infty}^{\wedge}}$ also admits a left adjoint, which is necessarily fully faithful.
	This identifies the \category $\Top_{<\infty}^{\wedge}$ with the full subcategory of $\Top_{\infty}$ spanned by the Postnikov complete \topoi \SAG{Corollary}{A.7.2.8}\index[terminology]{Postnikov complete topos@Postnikov complete \topos}\index[terminology]{topos@\topos!Postnikov complete}.

	We write \smash{$ (-)^{\post} $}\index[notation]{Xpost@$\XX^{\post}$} for the right adjoint to the inclusion of the full subcategory of \smash{$ \Top_{\infty} $} spanned by the Postnikov complete \topoi, and write \smash{$ (-)^{\bdd} $}\index[notation]{Xbdd@$\XX^{\bdd}$} for the left adjoint to the inclusion of the full subcategory of $ \Top_{\infty} $ spanned by the bounded \topoi, so that
	\begin{equation*}
		\XX^{\post} \coloneq \lim_{n \in \NNup^{\op}} \XX_{\leq n} \andeq \XX^{\bdd} \coloneq \lim_{n \in \NNup^{\op}} \Lup_n(\XX) \period
	\end{equation*}
	For \atopos $ \XX $, we call $ \XX^{\post} $ the \defn{Postnikov completion}\index[terminology]{Postnikov completion} of $ \XX $ and call $ \XX^{\bdd} $ the \defn{bounded reflection}\index[terminology]{bounded reflection} of $ \XX $.
\end{cnstr}

\begin{nul}
	The relationship between bounded \topoi and Postnikov complete \topoi is formally analogous to the relationship between $p$-nilpotent and $p$-complete abelian groups.
	Of course $p$-nilpotent and $p$-complete abelian groups form equivalent categories, but their embeddings into the category of all abelian groups differ.
\end{nul}


\subsection{Coherence}\label{subsec:coherence}

The second finiteness condition that we impose on almost all of the \topoi we consider is \emph{coherence}.
Coherence is the topos-theoretic analogue of being quasicompact and quasiseparated in the world of schemes.
Indeed, the étale \topos of a scheme $ X $ is coherent if and only if $ X $ is quasicompact and quasiseparated (see \Cref{prop:coherentschemetopos}).

\begin{dfn}[coherence]\label{rec:coherence}
	Let $ 0 \leq r \leq \infty $, and let $\XX$ be an $ r $-topos.
	We say that $\XX$ is \defn{$0$-coherent} if and only if the $0$-topos (=locale) $\Open(\XX)$ is quasicompact.
	Let $n\in\NNup^{\ast}$, and define $n$-coherence of $ r $-topoi and their objects recursively as follows.
	\begin{itemize}
		\item An object $U\in \XX$ is \defn{$n$-coherent}\index[terminology]{ncoherent@$ n $-coherent!topos@\topos}\index[terminology]{topos@\topos!n-coherent@$ n $-coherent}\index[terminology]{ncoherent@$ n $-coherent!object} if and only if the $ r $-topos $\XX_{/U}$ is $n$-coherent.

		\item The $ r $-topos $\XX$ is \defn{locally $n$-coherent}\index[terminology]{locally ncoherent@locally $ n $-coherent!topos@\topos}\index[terminology]{topos@\topos!locally ncoherent@locally $ n $-coherent} if and only if every object $U\in \XX$ admits a cover $\{\fromto{V_i}{U}\}_{i\in I}$ in which each $V_i$ is $n$-coherent.

		\item The $ r $-topos $\XX$ is \defn{$(n+1)$-coherent} if and only if $ \XX $ is locally $n$-coherent, and the $n$-coherent objects of $\XX$ are closed under finite products.
	\end{itemize}

	An $ r $-topos $\XX$ is \defn{coherent}\index[terminology]{coherent!topos@\topos}\index[terminology]{topos@\topos!coherent} if and only if $ \XX $ is $n$-coherent for every $n\in\NNup$, and an object $U$ of an $ r $-topos $\XX$ is \defn{coherent}\index[terminology]{coherent@!object} if and only if $\XX_{/U}$ is a coherent $ r $-topos.
	Finally, an $ r $-topos $\XX$ is \defn{locally coherent}\index[terminology]{locally coherent!topos@\topos}\index[terminology]{topos@\topos!locally coherent} if and only if every object $U\in \XX$ admits a cover $\{\fromto{V_i}{U}\}_{i\in I}$ in which each $V_i$ is coherent.
\end{dfn}

\begin{nul}
	In particular, if $\XX$ is locally $n$-coherent, then $U\in \XX$ is $(n+1)$-coherent if and only if $ U $ is $n$-coherent and for any pair $U', V\in \XX_{/U}$ of $n$-coherent objects, the fiber product $ U' \times_U V $ is $ n $-coherent.
\end{nul}

\begin{nul}
	We are mostly interested in coherence for \topoi, however we have introduced the notion for $ r $-topoi in general because \atopos $ \XX $ is $ n $-coherent if and only if its underlying $ n $-topos $ \XX_{\leq n-1} $ is $ n $-coherent (this is the content of \cref{subsec:n-coherenceonlydependsonn-topos}).
\end{nul}

\begin{ntn}
	Let $ r \in \NNrhd $, and let $\XX$ be an $ r $-topos. 
	Write $ \XXcoh \subset \XX $\index[notation]{Xcoh@$ \XXcoh $} for the full subcategory of $ \XX $ spanned by the coherent objects and $ \XXcohbdd \subset \XX $\index[notation]{Xbc@$ \XXcohbdd $} for the full subcategory of $ \XX $ spanned by the truncated coherent objects.
	For each integer $ n \geq 0 $, write $ \XX^{\ncoh} \subset \XX $\index[notation]{Xncoh@$\XX^{\ncoh}$} for the full subcategory spanned by the $ n $-coherent objects. 
\end{ntn}

\begin{exm}\label{exm:Spacecoh}
	A space $ K \in \Space $ is truncated coherent if and only if $ K $ is \pifinite.
	That is to say,
	\begin{equation*}
		\Space\cohbdd = \Spacefin \period
	\end{equation*}
\end{exm}

\begin{exm}\label{exm:boundedcohisloccoh}
	By \SAG{Proposition}{A.7.5.1}, if $ \XX $ is a bounded coherent \topos, then $ \XX $ is also locally coherent.
\end{exm}

\begin{nul}\label{nul:truncohslice}
	Let $ 0 \leq r \leq \infty $, let $ \XX $ be an $ r $-topos, and let $ U \in \XX $.
	Then for any integer $ n \geq 0 $, an object $ \fromto{U'}{U} $ of $ \XX_{/U} $ is $ n $-coherent if and only if $ U' $ is $ n $-coherent when viewed as an object of $ \XX $.
	Thus we have canonical identifications
	\begin{equation*}
		(\XX^{\ncoh})_{/U} = (\XX_{/U})^{\ncoh} \andeq (\XX^{\coh})_{/U} = (\XX_{/U})^{\coh}
	\end{equation*}
	as full subcategories of $ \XX_{/U} $.
	If $ U \in \XX_{<\infty} $ is a truncated object, then we have a canonical identification
	\begin{equation*}
		(\XXcohbdd)_{/U} = (\XX_{/U})_{<\infty}^{\coh}
	\end{equation*}
	as full subcategories of $ \XX_{/U} $.
\end{nul}

\begin{dfn}\label{def:coherentmors}
	Let $\XX$ and $\YY$ be \topoi.
	We say that a geometric morphism
	\begin{equation*}
		\flowerstar\colon\fromto{\XX}{\YY}
	\end{equation*}
	is \defn{coherent}\index[terminology]{coherent!geometric morphism}\index[terminology]{geometric morphism!coherent} if and only if, for any coherent object \smash{$ F \in \YYcoh $}, the object $f^{\ast}(F) \in \XX $ is coherent as well. 
	We write \smash{$\Topcoh$}\index[notation]{Topcoh@$ \Topcoh $} for the subcategory of $\Top_{\infty}$ whose objects are coherent \topoi and whose morphisms are coherent geometric morphisms.
\end{dfn}

We defer examples of coherent \topoi to \cref{subsec:examplesofcoherent}; we do this in order to put all of our examples from algebraic geometry on the same footing after developing the basic calculus of \textit{finitary sites} in this section and in \cref{subsec:cohfor1localic}.

\begin{dfn}\label{dfn:finitaryinftysite}
	\Asite $(X,\tau)$ is \defn{finitary}\index[terminology]{site@\site!finitary}\index[terminology]{finitary!site@\site} if and only if $ X $ admits all fiber products, and, for every object $ U \in X $ and every covering sieve $ S \subset X_{/U} $, there is a finite subset $\{U_i\}_{i\in I} \subset S $ that generates a covering sieve.

	Let $ (X,\tau_X) $ and $ (Y,\tau_Y) $ be finitary \sites.
	A morphism of \sites
	\begin{equation*}
		\fupperstar \colon \fromto{(Y,\tau_Y)}{(X,\tau_X)}
	\end{equation*}
	is a \defn{morphism of finitary \sites} if $ \fupperstar $ is preserves fiber products. 
\end{dfn}

\begin{prp}[\SAG{Proposition}{A.3.1.3}]\label{prop:SAG.A.3.1.3}
	Let $(X,\tau)$ be a finitary \site and write $ \yo_{\tau} \colon \fromto{X}{\Sh_{\tau}(X)} $\index[notation]{yotau@$\yo_{\tau}$} for the sheafified Yoneda embedding.
	Then:
	\begin{enumerate}[{\upshape (\ref*{prop:SAG.A.3.1.3}.1)}]
		\item The \topos $\Sh_{\tau}(X)$ locally coherent.

		\item For every object $x\in X$, the sheaf $ \yo_{\tau}(x)$ is a coherent object of $\Sh_{\tau}(X)$.
	
		\item If, in addition, $ X $ admits a terminal object, then $ \Sh_{\tau}(X) $ is coherent.
	\end{enumerate}
\end{prp}

An elementary way to construct a finitary \site is to make use of \acategorical analogue of the notion of pretopology on a $1$-category.

\begin{dfn}\label{dfn:presite}
	An \defn{\presite}\index[terminology]{presite@\presite} is a pair $(X,E)$ consisting of \acategory $X$ along with a subcategory $E\subseteq X$ satisfying the following conditions.
	\begin{itemize}
		\item The subcategory $E$ contains all equivalences of $X$.

		\item The \category $X$ admits finite limits, and $E$ is stable under base change.

		\item The \category $X$ admits finite coproducts, finite coproducts are universal in $ X $, and $E$ is closed under finite coproducts.
	\end{itemize} 
\end{dfn}

\begin{cnstr}[{\SAG{Proposition}{A.3.2.1}}]
	Let $(X,E)$ be \apresite.
	Then there exists a topology $\tau_E$ in which the $\tau_E$-covering sieves are generated by \textit{finite} families $\{y_i\to x\}_{i\in I}$ such that $\coprod_{i\in I}y_i\to x$ lies in $E$.
	The \site $(X,\tau_E)$ is finitary.
\end{cnstr}


\subsection{Coherence \& \texorpdfstring{$n$}{n}-topoi}\label{subsec:n-coherenceonlydependsonn-topos}

In this section we prove that the property that \atopos $ \XX $ be $ n $-coherent only depends on its underlying $ n $-topos $ \XX_{\leq n-1} $ of $ (n-1) $-truncated objects (\Cref{cor:n-coherenceonlydependsonn-topos}).\footnote{We are grateful to Jacob Lurie for conveying this observation.}
We begin with some preliminaries on the relationship between coherence and connectivity.

\begin{prp}[{\SAG{Proposition}{A.2.4.1}}]\label{prop:SAG.A.2.4.1}
	Let $ \XX $ be \atopos, let $ f \colon \fromto{U}{V} $ be a morphism in $ \XX $, and let $ n \in \NNup $.
	Then: 
	\begin{enumerate}[{\upshape (\ref*{prop:SAG.A.2.4.1}.1)}]
		\item If $ U $ is $ n $-coherent and $ f $ is $ n $-connective, then $ V $ is $ n $-coherent.

		\item If $ V $ is $ n $-coherent and $ f $ is $ (n+1) $-connective, then $ U $ is $ n $-coherent.
	\end{enumerate}
\end{prp}

\noindent Since the natural morphism from an object in \atopos to its $ n $-truncation is $ (n+1) $-connective, we deduce:

\begin{cor}\label{cor:cohifftrunctionis}
	Let $ \XX $ be \atopos and $ n \in \NNup $.
	An object $ U \in \XX $ is $ n $-coherent if and only if $ \trun_{\leq n-1}(U) $ is an $ n $-coherent object of $ \XX $.
\end{cor}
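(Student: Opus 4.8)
The plan is to read the statement off from \Cref{prop:SAG.A.2.4.1} together with the connectivity remark recorded immediately above it. The only geometric input needed beyond those two facts is the unit morphism $u \colon \fromto{X}{\tau_{\leq n-1}(X)}$, together with the observation — already noted — that $u$ is highly connective: concretely, that $u$ is $(n+1)$-connective, since the fibres of a truncation map carry no homotopy in the relevant low degrees. Everything then reduces to feeding $u$ into the two halves of \Cref{prop:SAG.A.2.4.1}.

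First I would treat the forward implication. If $X$ is $n$-coherent, then applying \Cref{prop:SAG.A.2.4.1}~(1) to the morphism $u$ — which is in particular $n$-connective — shows that its target $\tau_{\leq n-1}(X)$ is $n$-coherent. Then I would treat the converse. If $\tau_{\leq n-1}(X)$ is $n$-coherent, then applying \Cref{prop:SAG.A.2.4.1}~(2) to $u$, now using the full strength that $u$ is $(n+1)$-connective, shows that its source $X$ is $n$-coherent. Combining the two implications gives the asserted equivalence.

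Since the argument is just two one-line applications of an already-established proposition, there is essentially no serious obstacle here; the only point that genuinely requires care is the bookkeeping of truncation levels against connectivity degrees — in particular confirming that the unit map is connective enough (namely $(n+1)$-connective) for the more delicate second half of \Cref{prop:SAG.A.2.4.1} to apply, rather than merely $n$-connective, which is all that the first half consumes.
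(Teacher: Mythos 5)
Your overall route is the paper's: both directions are meant to fall out of \Cref{prop:SAG.A.2.4.1} applied to the truncation unit, and your forward direction is fine as written, since part~(1) of \Cref{prop:SAG.A.2.4.1} only consumes $n$-connectivity. The gap is in the one connectivity claim on which your converse rests. The unit $u \colon \fromto{X}{\tau_{\leq n-1}(X)}$ is \emph{not} $(n+1)$-connective: its fibre has vanishing homotopy sheaves only in degrees $<n$ and retains $\pi_n(X)$, so $u$ is exactly $n$-connective and in general no better. (This matches the convention used elsewhere in the paper, e.g.\ in the proof of \Cref{lem:truncationcommuteswithcertainpullbacks}, where the map to the $n$-truncation $\tau_{\leq n}$ is recorded as $(n+1)$-connective; shifting the index down by one costs a degree of connectivity.) Since part~(2) of \Cref{prop:SAG.A.2.4.1} genuinely requires $(n+1)$-connectivity, your application of it to $u$ does not go through --- and this is precisely the step you yourself flag as needing ``the full strength'' of the connectivity.

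This is not a repairable bookkeeping slip: with $\tau_{\leq n-1}$ the ``if'' direction fails already for $n=0$. In $\XX = \Space$, an infinite discrete space $X$ is not quasicompact ($\Open(\Space_{/X})$ is the power set of $\pi_0(X)$, and the singletons cover the top element with no finite subcover), yet $\tau_{\leq -1}(X)$ is the terminal object, which is $0$-coherent. What does follow cleanly from \Cref{prop:SAG.A.2.4.1} --- and what the sentence preceding the corollary, which speaks of the map to the \emph{$n$-truncation}, actually supports --- is the statement with $\tau_{\leq n}(X)$ in place of $\tau_{\leq n-1}(X)$: there the unit $\fromto{X}{\tau_{\leq n}(X)}$ really is $(n+1)$-connective and both halves of the proposition apply. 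If the corollary is to be used with $\tau_{\leq n-1}$ (as it is, for instance, in the proof of \Cref{cor:strongcoherenceofmorphisms}), the converse direction needs a separate argument and, at minimum, the exclusion of the case $n=0$.
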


\noindent It is also easy to deduce the following.

\begin{cor}[{\SAG{Corollary}{A.2.4.4}}]\label{cor:SAG.A.2.4.4}
	Let $ \XX $ be a coherent \topos and $ n \in \NNup $.
	Then for any $ n $-coherent object $ U \in \XX $, the $ (n-1) $-truncation $ \trun_{\leq n-1}(U) $ of $ U $ is a coherent object of $ \XX $.
\end{cor}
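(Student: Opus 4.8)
The plan is to reduce, using \Cref{cor:cohifftrunctionis}, to the case that $X$ is already $(n-1)$-truncated, and then to prove by induction on $n$ the following statement: \emph{every $(n-1)$-truncated $n$-coherent object $Y$ of a coherent \topos $\XX$ is coherent.} Indeed \Cref{cor:cohifftrunctionis} shows that $\tau_{\leq n-1}(X)$ is $n$-coherent, and it is visibly $(n-1)$-truncated, so this statement applied to $Y \coloneq \tau_{\leq n-1}(X)$ gives the corollary. Since $n$-coherence entails $m$-coherence for all $m \leq n$, the content is that $Y$ is $m$-coherent for every $m > n$.

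The mechanism driving the induction is a computation with loop objects. Suppose $Y$ is $n$-coherent in a coherent \topos $\XX$. Then $Y \times Y$ is $n$-coherent, since coherent \topoi are $(n+1)$-coherent and hence have their $n$-coherent objects closed under finite products; thus $\XX_{/Y\times Y}$ is an $n$-coherent \topos. The diagonal exhibits $Y$ as an $n$-coherent object of $\XX_{/Y\times Y}$ by \Cref{nul:truncohslice}, so its self-product in that \topos --- the inertia object $LY \coloneq Y \times_{Y\times Y} Y$ --- is $(n-1)$-coherent there, as an $n$-coherent \topos has its $(n-1)$-coherent objects closed under finite products; by \Cref{nul:truncohslice} once more, $LY$ is an $(n-1)$-coherent object of $\XX$. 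When in addition $Y$ is $(n-1)$-truncated, $LY$ is $(n-2)$-truncated, so the inductive hypothesis makes $LY$ a coherent object. Applying \Cref{prop:SAG.A.2.4.1} to the $(n-1)$-connective truncation map $Y \to \tau_{\leq n-2}(Y)$ likewise shows that $\tau_{\leq n-2}(Y)$ is $(n-1)$-coherent and $(n-2)$-truncated, hence coherent. (The base case $n = 0$ is separate: $Y$ is then a quasicompact $(-1)$-truncated object, i.e.\ a quasicompact open subtopos of a coherent \topos, which is coherent by the classical theory.)

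With $LY$ and its iterates in hand, I would finish by showing that $\XX_{/Y}$ is $(m+1)$-coherent for every $m \geq n$, by an inner induction on $m$. It is quasicompact because $Y$ is $0$-coherent, and locally $m$-coherent because $\XX$ is and coherence of objects passes to the slice by \Cref{nul:truncohslice}; so by the criterion recalled in \Cref{rec:coherence} the remaining point is that the $m$-coherent objects of $\XX_{/Y}$ are closed under finite products. Given $m$-coherent $A \to Y$ and $B \to Y$ --- equivalently, $A$ and $B$ are $m$-coherent in $\XX$ by \Cref{nul:truncohslice} --- their product in $\XX_{/Y}$ is $A \times_Y B$, which is the pullback of the diagonal $\Delta_Y \colon Y \to Y \times Y$ along $A \times B \to Y \times Y$; here $A \times B$ is $m$-coherent because $\XX$ is $(m+1)$-coherent. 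Thus it suffices to know that $\Delta_Y$ is a relatively coherent morphism: it is $(n-2)$-truncated, and its successive relative diagonals are precisely the iterated loop objects $\Omega^k Y$, which are $(n-1-k)$-truncated and, by iterating the loop-object computation above together with the inductive hypothesis, coherent --- exactly the data needed to run the recursive criterion for relative coherence of a morphism. Pulling the $m$-coherent object $A \times B$ back along the relatively coherent $\Delta_Y$ then makes $A \times_Y B$ $m$-coherent, completing both inductions.

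The main obstacle is this last step: verifying that the diagonal $\Delta_Y$ is relatively coherent. Everything else is bookkeeping around \Cref{prop:SAG.A.2.4.1}, \Cref{cor:cohifftrunctionis}, and \Cref{nul:truncohslice}; the real work is the calculus of quasicompact, quasiseparated, and relatively coherent morphisms, and the one input that calculus needs --- finiteness of the iterated loop objects $\Omega^k Y$, a recursion that terminates because $Y$ is truncated --- is supplied by the loop-object computation.
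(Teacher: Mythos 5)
First, a point of context: the paper does not prove this statement at all --- it is imported wholesale from Lurie (SAG, Corollary A.2.4.4), with the implicit deduction being ``reduce to the $(n-1)$-truncated case via \Cref{cor:cohifftrunctionis}, then iterate SAG's Corollary A.2.4.3, which says that an $(n-1)$-truncated $n$-coherent object of an $(n+1)$-coherent \topos is $(n+1)$-coherent''. Your reduction, your identification of the real content (showing $m$-coherence for all $m>n$, which comes down to relative $m$-coherence of the diagonal $\Delta_Y$), and your computation that $LY \coloneq Y\times_{Y\times Y}Y$ is $(n-1)$-coherent are all correct and faithfully track SAG's route.

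The gap is the truncation claim that powers your induction: it is \emph{false} that $LY$ is $(n-2)$-truncated when $Y$ is $(n-1)$-truncated. The free loop object is exactly as truncated as $Y$: in spaces, take $Y = B\ZZ$ (so $n=2$); then $LY \simeq B\ZZ \times \ZZ$ is $1$-truncated, not $0$-truncated. What drops a degree is the \emph{map} $LY \to Y$, being a pullback of the $(n-2)$-truncated diagonal $\Delta_Y$ --- equivalently, the \emph{based} loops, not the free ones. Since $LY$ is only $(n-1)$-truncated and only known to be $(n-1)$-coherent, your inductive hypothesis (``$(m-1)$-truncated $m$-coherent objects are coherent'', known for $m<n$) does not apply to it, so the coherence of $LY$ is not established; the claims that the higher iterates are ``$(n-1-k)$-truncated and coherent'' fail for the same reason, and their coherence also degrades (each successive slice argument only yields one degree less of coherence). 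These are exactly the inputs you feed into your final step, which is the second problem: the ``recursive criterion for relative coherence of a morphism'' in terms of iterated relative diagonals is stated nowhere in this paper and is not proved in your argument --- it is precisely the hard part (SAG's Proposition A.2.4.2, whose induction runs on the truncation amplitude of the relative-diagonal \emph{maps} and needs only $m$-coherence, not full coherence, of the intermediate stages). To repair the argument you must either prove that criterion with the correct bookkeeping on the maps rather than the objects, or do as the paper does and cite SAG A.2.4.2--A.2.4.3.
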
 

\begin{cor}
	Let $ \XX $ be a coherent \topos.
	Then an object $ U \in \XX $ is coherent if and only if for every $ n \in \NNup $, the $ (n-1) $-truncation $ \trun_{\leq n-1}(U) $ of $ X $ is a coherent object of $ \XX $.
\end{cor}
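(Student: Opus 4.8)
The plan is to reduce immediately to the two corollaries just established. Recall that, by \Cref{rec:coherence}, an object $ X $ of a \topos $ \XX $ is \emph{coherent} precisely when it is $ n $-coherent for every $ n \in \NN $ (equivalently, when $ \XX_{/X} $ is an $ n $-coherent \topos for every $ n $). So it suffices to compare the assertion ``$ X $ is $ n $-coherent for all $ n $'' with the assertion ``$ \tau_{\leq n-1}(X) $ is a coherent object of $ \XX $ for all $ n $''.

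For the forward implication, suppose $ X $ is coherent. Then $ X $ is $ n $-coherent for each $ n $, and since $ \XX $ is assumed coherent, \Cref{cor:SAG.A.2.4.4} applies and shows that $ \tau_{\leq n-1}(X) $ is a coherent object of $ \XX $ for every $ n $.

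For the reverse implication, suppose that $ \tau_{\leq n-1}(X) $ is a coherent object of $ \XX $ for every $ n $. Fix $ n \in \NN $. Since $ \tau_{\leq n-1}(X) $ is coherent, it is in particular $ n $-coherent, so \Cref{cor:cohifftrunctionis}---which holds for any \topos---yields that $ X $ is $ n $-coherent. As $ n $ was arbitrary, $ X $ is coherent.

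There is essentially no obstacle here: the content has already been extracted into \Cref{cor:cohifftrunctionis} and \Cref{cor:SAG.A.2.4.4}, and the only points to attend to are unwinding the definition of a coherent object as ``$ n $-coherent for every $ n $'' and observing that the coherence hypothesis on $ \XX $ is used only in the forward direction (to invoke \Cref{cor:SAG.A.2.4.4}), while the reverse direction is valid in an arbitrary \topos.
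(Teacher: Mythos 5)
Your proof is correct and is exactly the intended argument: the paper states this corollary without proof immediately after \Cref{cor:cohifftrunctionis} and \Cref{cor:SAG.A.2.4.4}, from which it follows just as you describe. Your observation that the coherence of $\XX$ is only needed in the forward direction is also accurate.
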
 

\begin{cor}\label{cor:criterionforcoherence}
	Let $ \flowerstar \colon \fromto{\XX}{\YY} $ be a geometric morphism between coherent \topoi.
	Then $ \flowerstar $ is coherent if and only if $ \fupperstar $ carries $ \YYcohbdd $ to $ \XXcohbdd $.
\end{cor}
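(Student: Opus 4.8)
The plan is to reduce both implications to the interaction of $\fupperstar$ with the truncation functors, leaning on the two corollaries immediately preceding the statement.

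The implication $(\Rightarrow)$ is immediate. If $\flowerstar$ is coherent, then by \Cref{def:coherentmors} the functor $\fupperstar$ carries coherent objects of $\YY$ to coherent objects of $\XX$; and since $\fupperstar$ is left exact it preserves $n$-truncatedness for every $n$ (it preserves monomorphisms, being left exact, and the higher cases follow by induction using the diagonal characterisation of $n$-truncated objects). Hence $\fupperstar$ restricts to a functor $\fromto{\YYcohbdd}{\XXcohbdd}$.

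For $(\Leftarrow)$ the crucial point is the commutation $\fupperstar\tau_{\leq n-1}\simeq\tau_{\leq n-1}\fupperstar$, valid for every $n$ because $\fupperstar$ is the inverse image of a geometric morphism. I would argue this as follows. We have just noted that $\fupperstar$ preserves $n$-truncated objects. Moreover $\fupperstar$ preserves $n$-connective morphisms: it preserves effective epimorphisms — a standard property of inverse image functors, visible for instance through \v{C}ech nerves — and, being left exact, it commutes with the formation of diagonals, so the recursive description of $n$-connectivity (effective epimorphism with $(n-1)$-connective diagonal) gives the claim by induction. Now apply $\fupperstar$ to the unit $\fromto{F}{\tau_{\leq n-1}F}$, which is $n$-connective: we obtain an $n$-connective morphism $\fromto{\fupperstar F}{\fupperstar\tau_{\leq n-1}F}$ whose target is $(n-1)$-truncated, and an $n$-connective morphism into an $(n-1)$-truncated object exhibits its target as the $(n-1)$-truncation of its source; thus $\tau_{\leq n-1}\fupperstar F\simeq\fupperstar\tau_{\leq n-1}F$. (This commutation can also be quoted directly from \cite{HTT}, but the argument above is self-contained.)

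Granting this, the converse is short. Assume $\fupperstar$ carries $\YYcohbdd$ into $\XXcohbdd$, and let $F\in\YY$ be coherent. Since $\YY$ is coherent, \Cref{cor:SAG.A.2.4.4} shows $\tau_{\leq n-1}F$ is a coherent object of $\YY$ for every $n$; it is also truncated, so $\tau_{\leq n-1}F\in\YYcohbdd$, whence $\fupperstar(\tau_{\leq n-1}F)\in\XXcohbdd$ and in particular is a coherent object of $\XX$. By the commutation just established, $\tau_{\leq n-1}(\fupperstar F)\simeq\fupperstar(\tau_{\leq n-1}F)$ is coherent for every $n\in\NN$. Since $\XX$ is coherent, the preceding corollary — which characterises coherent objects as those all of whose truncations are coherent — forces $\fupperstar F$ to be coherent. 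As $F$ was an arbitrary coherent object of $\YY$, the morphism $\flowerstar$ is coherent. The only nonformal ingredient is the commutation of $\fupperstar$ with truncation; everything else is bookkeeping with the two corollaries above.
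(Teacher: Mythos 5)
Your proof is correct and follows exactly the route the paper intends: the corollary is stated as an immediate consequence of the two preceding corollaries, and your argument — forward direction from the definition of coherence plus left-exactness, backward direction via the commutation $\fupperstar\tau_{\leq n-1}\simeq\tau_{\leq n-1}\fupperstar$ together with \Cref{cor:SAG.A.2.4.4} and the characterisation of coherent objects by their truncations — is precisely that deduction, with the standard commutation fact filled in correctly.
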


We also deduce that coherence of a geometric morphism between coherent \topoi is equivalent to the \textit{a priori} stronger condition that the pullback functor preserve $ n $-coherent objects for all $ n \geq 0 $:%
\footnote{This second notion is how Grothendieck and Verdier originally defined coherence for geometric morphisms between ordinary topoi \cite[Exposé VI, Définition 3.1]{MR50:7131}.}

\begin{cor}\label{cor:strongcoherenceofmorphisms}
	Let $ \flowerstar \colon \fromto{\XX}{\YY} $ be a geometric morphism between coherent \topoi.
	Then $ \flowerstar $ is coherent if and only if $ \fupperstar $ carries $ n $-coherent objects of $ \YY $ to $ n $-coherent objects of $ \XX $ for all $ n \in \NNup $.
\end{cor}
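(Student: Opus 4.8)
The plan is to establish the nontrivial implication — that coherence of $\flowerstar$ forces $\fupperstar$ to preserve $n$-coherent objects for each individual $n$ — by reducing the $n$-coherence of $\fupperstar(X)$ to the (already known) coherence of a single truncated object. The reverse implication requires no work: an object of \atopos is coherent precisely when it is $n$-coherent for every $n \in \NN$ (\Cref{rec:coherence}), so if $\fupperstar$ carries $n$-coherent objects to $n$-coherent objects for all $n$, then it carries coherent objects to coherent objects, which is exactly the condition that $\flowerstar$ be coherent.

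So suppose $\flowerstar$ is coherent, fix $n \in \NN$, and let $X \in \YY$ be $n$-coherent; the goal is to show $\fupperstar(X)$ is $n$-coherent. First I would pass to the $(n-1)$-truncation: by \Cref{cor:SAG.A.2.4.4} the object $\tau_{\leq n-1}(X)$ is a truncated coherent object of $\YY$, hence lies in $\YYcohbdd$, so by \Cref{cor:criterionforcoherence} its image $\fupperstar(\tau_{\leq n-1}(X))$ lies in $\XXcohbdd$, and in particular is $n$-coherent. Next I would identify this image with $\tau_{\leq n-1}(\fupperstar(X))$: since $\fupperstar$ is a left exact left adjoint it preserves $(n-1)$-truncated objects and $n$-connective morphisms, and the unit $\fromto{X}{\tau_{\leq n-1}(X)}$ is $n$-connective, so $\fromto{\fupperstar(X)}{\fupperstar(\tau_{\leq n-1}(X))}$ is an $n$-connective morphism with $(n-1)$-truncated target, which therefore exhibits $\fupperstar(\tau_{\leq n-1}(X))$ as $\tau_{\leq n-1}(\fupperstar(X))$. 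Finally, \Cref{cor:cohifftrunctionis} tells us that $\fupperstar(X)$ is $n$-coherent as soon as $\tau_{\leq n-1}(\fupperstar(X))$ is, which we have just shown. This completes the argument once the claim about $n$-connective morphisms is justified.

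I expect the only real obstacle to be that claim, namely that $\fupperstar$ preserves $n$-connective morphisms (equivalently, that $\fupperstar$ commutes with $(n-1)$-truncation). I would prove it by induction on $n$ using the recursive description of $n$-connectivity in \atopos: $\fupperstar$ preserves colimits, hence effective epimorphisms (the base case $n=0$) together with their \v{C}ech nerves and geometric realisations, and $\fupperstar$ preserves finite limits, hence the diagonal of a morphism, which supplies the inductive step. Everything else is bookkeeping with the corollaries already recorded above.
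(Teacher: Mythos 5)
Your proof is correct and follows essentially the same route as the paper's: truncate to $\tau_{\leq n-1}(X)$, invoke \Cref{cor:SAG.A.2.4.4} and the coherence of $\flowerstar$ to see its image is coherent, identify that image with $\tau_{\leq n-1}(\fupperstar(X))$, and conclude by \Cref{cor:cohifftrunctionis}. The only difference is that you spell out why $\fupperstar$ commutes with $(n-1)$-truncation (via preservation of $n$-connective morphisms and truncated objects), a step the paper's proof asserts without comment; that justification is sound.
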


\begin{proof}
	It is immediate from the definition that if $ \fupperstar $ preserves $ n $-coherence for all $ n \geq 0 $, then $ \flowerstar $ is coherent.
	Conversely, assume that $ \flowerstar $ is coherent, and let $ U \in \YY $ be an $ n $-coherent object.
	Since $ \YY $ is coherent, \Cref{cor:SAG.A.2.4.4}=\allowbreak\SAG{Corollary}{A.2.4.4} shows that $ \trun_{\leq n-1}^{\YY}(U) $ is an $ n $-coherent object of $ \YY $.
	Since $ \flowerstar $ is coherent, we see that
	\begin{equation*}
		\fupperstar \trun_{\leq n-1}^{\YY}(U) \equivalent \trun_{\leq n-1}^{\XX}(\fupperstar(U))
	\end{equation*}
	is a coherent object of $ \XX $.
	\Cref{cor:cohifftrunctionis} then shows that $ \fupperstar(U) $ is an $ n $-coherent object of $ \XX $. 
\end{proof}

Before showing that $ n $-coherence only depends on the underlying $ n $-topos, we need two preliminary facts on $ m $-connective morphisms in \atopos.

\begin{lem}\label{lem:fibprodconnective}
	Let $ \XX $ be \atopos and $ m \geq 0 $ an integer. 
	Let $ W \in \XX $ and let $ u \colon \fromto{U'}{U} $ and $ v \colon \fromto{V'}{V} $ be morphisms in $ \XX_{/W} $.
	If $ u $ and $ v $ are $ m $-connective morphisms of $ \XX $, then the induced morphism $ u \cross_W v \colon \fromto{U' \cross_W V'}{U \cross_W V} $ is $ m $-connective.
\end{lem}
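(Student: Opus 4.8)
The plan is to realise $ \fromto{U' \cross_W V'}{U \cross_W V} $ as a composite of two morphisms, each of which is a base change of $ u $ or of $ v $, and then to invoke the stability of $ m $-connective morphisms under base change and under composition. The latter facts are standard: in \atopos the $ m $-connective morphisms form the left class of a factorisation system — the $ m $-connective/$ m $-truncated factorisation system — and such a class is closed under base change and composition (see \cite[\HTTsubsec{6.5.1}]{HTT}).

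Concretely, I would factor the morphism in question as the composite
\begin{equation*}
	U' \cross_W V' \to U \cross_W V' \to U \cross_W V \comma
\end{equation*}
where the first arrow is induced by $ u $ together with the identity of $ V' $, and the second by the identity of $ U $ together with $ v $. It then remains to exhibit each of these arrows as a pullback. For the first: associativity and commutativity of fibre products in $ \XX $ yield a canonical identification $ U' \cross_W V' \equivalent U' \cross_U (U \cross_W V') $ under which the first arrow becomes the projection $ \fromto{U' \cross_U (U \cross_W V')}{U \cross_W V'} $, which is the base change of $ u \colon \fromto{U'}{U} $ along $ \fromto{U \cross_W V'}{U} $. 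For the second: similarly $ U \cross_W V' \equivalent V' \cross_V (U \cross_W V) $, and the second arrow becomes the projection $ \fromto{V' \cross_V (U \cross_W V)}{U \cross_W V} $, which is the base change of $ v \colon \fromto{V'}{V} $ along $ \fromto{U \cross_W V}{V} $. Both identifications are checked by pasting the evident pullback squares.

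Granting these two descriptions, the conclusion is immediate: $ u $ and $ v $ are $ m $-connective by hypothesis, so the two base changes displayed above are $ m $-connective, and hence so is their composite $ \fromto{U' \cross_W V'}{U \cross_W V} $. I do not expect any real obstacle; the only point demanding a little care is keeping the whole factorisation relative to $ W $, so that the iterated fibre products are the ones appearing in the statement and the pasting of pullback squares is the correct one.
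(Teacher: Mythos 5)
Your proof is correct, but it takes a genuinely different route from the paper's. You factor the map as
\begin{equation*}
	U' \cross_W V' \equivalent U' \cross_U (U \cross_W V') \to U \cross_W V' \equivalent V' \cross_V (U \cross_W V) \to U \cross_W V \comma
\end{equation*}
exhibiting it as a composite of a base change of $ u $ and a base change of $ v $, and then invoke stability of $ m $-connective morphisms under pullback and composition. The paper instead first proves the absolute case $ W = 1_{\XX} $, using that $ \tau_{\leq m-1} $ preserves finite products (\HTT{Lemma}{6.5.1.2}) to see that $ u \cross v $ is $ m $-connective, and then deduces the general case by observing that $ \fromto{U' \cross_W V'}{U \cross_W V} $ is the pullback of $ u \cross v $ along $ \fromto{U \cross_W V}{U \cross V} $, via the diagonal $ \Delta_W \colon \fromto{W}{W \cross W} $. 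Your ``one variable at a time'' argument is the more economical of the two: it needs only the two stability properties from \HTT{Proposition}{6.5.1.16} and no facts about truncation functors, whereas the paper's route passes through the product case and the compatibility of truncation with finite products. One small caveat on your justification: the left class of an orthogonal factorisation system is in general closed under composition and \emph{cobase} change, not base change; the pullback-stability of $ m $-connective morphisms is a special feature of \topoi, recorded in \HTT{Proposition}{6.5.1.16}, so your citation does cover the fact you need even though the general principle you appeal to does not.
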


\begin{proof}
	First we treat the case where $ W = 1_{\XX} $ is the terminal object of $ \XX $.
	In this case, since $ \trun_{\leq m-1} \colon \fromto{\XX}{\XX} $ preserves finite products \HTT{Lemma}{6.5.1.2} and $ \trun_{\leq m-1}(u) $ and $ \trun_{\leq m-1}(v) $ are equivalences by assumption, we see that
	\begin{equation*}
		\trun_{\leq m-1}(u \cross v) \equivalent \trun_{\leq m-1}(u) \cross \trun_{\leq m-1}(v)
	\end{equation*}
	is an equivalence.

	Now we treat the general case.
	In the diagram 
	\begin{equation*}
      \begin{tikzcd}[column sep=4em, row sep=2.5em]
	       U' \cross_{W} V' \arrow[dr, phantom, very near start, "\lrcorner", xshift=-1em, yshift=0.25em] \arrow[d] \arrow[r, "u \cross_W v"] & U \cross_W V \arrow[dr, phantom, very near start, "\lrcorner", xshift=-1em, yshift=0.25em] \arrow[d] \arrow[r] & W \arrow[d, "\upDelta_W"] \\ 
	       U' \cross V' \arrow[r, "u \cross v"'] & U \cross V \arrow[r] & W \cross W
      \end{tikzcd}
    \end{equation*}
    both squares are pullbacks and $ u \cross v $ is $ m $-connective (by the preceding paragraph).
    This completes the proof since the class of $ m $-connective morphisms in \atopos is stable under pullback \HTT{Proposition}{6.5.1.16}.
\end{proof}

The following is a useful strenthening of the fact that $ n $-truncation commutes with basechange along a morphism between $ n $-truncated objects \cite[Lemma 1.8]{MR3641669}:

\begin{lem}\label{lem:truncationcommuteswithcertainpullbacks}
	Let $ \XX $ be \atopos and $ n \in \NNup $. 
	Let $ W \in \XX $ and let $ \fromto{U}{W} $ and $ \fromto{V}{W} $ be morphisms in $ \XX $.
	If $ W $ is $ n $-truncated, then the natural morphism
	\begin{equation*}
		\fromto{\trun_{\leq n}(U \cross_W V)}{\trun_{\leq n}(U) \cross_W \trun_{\leq n}(V)}
	\end{equation*}
	is an equivalence.
\end{lem}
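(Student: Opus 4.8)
The plan is to reduce this to the preceding \Cref{lem:fibprodconnective} on fibre products of connective morphisms; the whole point of the hypothesis that $ W $ is $ n $-truncated is that it allows us to feed the truncation unit maps into that lemma.

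First I would note that, since $ W $ is $ n $-truncated, the structure morphisms $ \fromto{U}{W} $ and $ \fromto{V}{W} $ factor, uniquely up to a contractible space of choices, through the truncation unit maps $ u \colon \fromto{U}{\tau_{\leq n}(U)} $ and $ v \colon \fromto{V}{\tau_{\leq n}(V)} $. Fixing such factorisations, $ u $ and $ v $ become morphisms in the slice \topos $ \XX_{/W} $ whose sources and targets all lie over $ W $. Since the unit of the $ n $-truncation is an $ (n+1) $-connective morphism of $ \XX $ (see \cite[\S 6.5.1]{HTT}), \Cref{lem:fibprodconnective}, applied with $ m = n+1 $ to $ u $ and $ v $, shows that the induced morphism
\begin{equation*}
	\fromto{U \cross_W V}{\tau_{\leq n}(U) \cross_W \tau_{\leq n}(V)}
\end{equation*}
is $ (n+1) $-connective.

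Next I would observe that $ \tau_{\leq n}(U) \cross_W \tau_{\leq n}(V) $ is itself an $ n $-truncated object of $ \XX $: the full subcategory $ \tau_{\leq n}\XX \subseteq \XX $ of $ n $-truncated objects is closed under limits in $ \XX $, the inclusion being right adjoint to $ \tau_{\leq n} $, and $ \tau_{\leq n}(U) $, $ \tau_{\leq n}(V) $, and $ W $ are all $ n $-truncated. Finally I would invoke the standard fact that an $ (n+1) $-connective morphism $ \fromto{X}{Y} $ whose target is $ n $-truncated exhibits $ Y $ as the $ n $-truncation of $ X $: such a morphism is left orthogonal to every $ n $-truncated morphism, in particular to $ \fromto{Y}{1_{\XX}} $, so that $ \fromto{\Map_{\XX}(Y,T)}{\Map_{\XX}(X,T)} $ is an equivalence for every $ n $-truncated object $ T $, i.e.\ $ \fromto{X}{Y} $ is the unit of the localisation onto $ n $-truncated objects. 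Applying this to the $ (n+1) $-connective morphism displayed above identifies $ \tau_{\leq n}(U) \cross_W \tau_{\leq n}(V) $ with $ \tau_{\leq n}(U \cross_W V) $, compatibly with the canonical maps out of $ U \cross_W V $; this identification is exactly the natural comparison morphism in the statement, which is therefore an equivalence.

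I do not anticipate a substantive obstacle; the only delicate point is bookkeeping, namely using the hypothesis that $ W $ is $ n $-truncated in exactly the two places where it is indispensable — to lift the maps to $ W $ through the truncation units (so that \Cref{lem:fibprodconnective} applies over $ W $) and to guarantee that $ \tau_{\leq n}(U) \cross_W \tau_{\leq n}(V) $ is again $ n $-truncated. Both of these genuinely fail without the hypothesis: already for $ n = 0 $, truncation does not commute with pullbacks over a non-$ 0 $-truncated base.
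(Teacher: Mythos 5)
Your proof is correct and follows essentially the same route as the paper's: apply \Cref{lem:fibprodconnective} to the $(n+1)$-connective truncation units to see that $\fromto{U \cross_W V}{\tau_{\leq n}(U) \cross_W \tau_{\leq n}(V)}$ is $(n+1)$-connective, note the target is $n$-truncated since $n$-truncated objects are closed under limits, and conclude by the universal characterisation of the $n$-truncation (the paper phrases this via uniqueness of the $(n+1)$-connective/$n$-truncated factorisation, which is the same content as your orthogonality argument). Your explicit remark that $n$-truncatedness of $W$ is needed to make the truncation units into morphisms of $\XX_{/W}$ is a point the paper leaves implicit, and is a worthwhile clarification.
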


\begin{proof}
	Since the natural morphisms $ \surjto{U}{\trun_{\leq n}(U)} $ and $ \surjto{V}{\trun_{\leq n}(V)} $ are $ (n+1) $-connective, by \Cref{lem:fibprodconnective} the natural morphism
	\begin{equation*}
		\phi \colon \fromto{U \cross_W V}{\trun_{\leq n}(U) \cross_W \trun_{\leq n}(V)}
	\end{equation*}
	is $ (n+1) $-connective.
	Since $ W $ is $ n $-truncated and the $ n $-truncated objects of \atopos are closed under limits, the object $ \trun_{\leq n}(U) \cross_W \trun_{\leq n}(V) $ is $ n $-truncated.
	By the uniqueness of the factorization of a morphism in \atopos into an $ (n+1) $-connective morphism followed by an $ n $-truncated morphism, we see that $ \phi $ exhibits $ \trun_{\leq n}(U) \cross_W \trun_{\leq n}(V) $ as the $ n $-truncation of $ U \cross_W V $.
\end{proof}

\begin{prp}\label{prp:n-coherenceonlydependsonn-topos}
	Let $ \XX $ be \atopos and $ n \in \NNup $.
	The following are equivalent for an $ (n-1) $-truncated object $ W \in \XX_{\leq n-1} $:
	\begin{enumerate}[{\upshape (\ref*{prp:n-coherenceonlydependsonn-topos}.1)}]
		\item\label{prp:n-coherenceonlydependsonn-topos.1} As an object of the \topos $ \XX $, the object $ W $ is $ n $-coherent.

		\item\label{prp:n-coherenceonlydependsonn-topos.2} As an object of the $ n $-topos $ \XX_{\leq n-1} $, the object $ W $ is $ n $-coherent.
	\end{enumerate}
\end{prp}

\begin{proof}
	Clearly \enumref{prp:n-coherenceonlydependsonn-topos}{1} implies \enumref{prp:n-coherenceonlydependsonn-topos}{2}.
	We prove that \enumref{prp:n-coherenceonlydependsonn-topos}{2} implies \enumref{prp:n-coherenceonlydependsonn-topos}{1} by induction on $ n $.
	The base case $ n = 0 $ is immediate from the definition of $ 0 $-coherence.

	For the induction step assume we have shown that an $ (n-1) $-truncated object of $ \XX $ is $ n $-coherent if it is $ n $-coherent as an object of the $ n $-topos $ \XX_{\leq n-1} $.
	Let $ W $ be an $ n $-truncated object of $ \XX $ that is $ (n+1) $-coherent as an object of the $ (n+1) $-topos $ \XX_{\leq n} $; we prove that $ W $ is $ (n+1) $-coherent as an object of the \topos $ \XX $.
	First we show that $ \XX_{/W} $ is locally $ n $-coherent.
	Let $ f \colon \fromto{U}{W} $ be a morphism in $ \XX $.
	Since $ W $ is $ n $-truncated, $ f $ factors as a composite
	\begin{equation*}
		U \surjection \trun_{\leq n}(U) \to W \period
	\end{equation*}
	Since $ \XX_{\leq n,/W} $ is locally $ n $-coherent by the inductive hypothesis, there exists a cover
	\begin{equation*}
		\{U_i \to \trun_{\leq n}(U)\}_{i \in I}
	\end{equation*}
	of $ \trun_{\leq n}(U) $ such that for each $ i \in I $, the object $ U_i \in \XX_{\leq n,/W} $ is an $ n $-coherent object of $ \XX_{\leq n,/W} $.
	Equivalently, each $ U_i $ is an $ n $-coherent object of $ \XX_{\leq n} $ \cref{nul:truncohslice}.
	Since the morphism $ \surjto{U}{\trun_{\leq n}(U)} $ is $ (n+1) $-connective, \Cref{prop:SAG.A.2.4.1}=\allowbreak\SAG{Proposition}{A.2.4.1} and the fact that $ (n+1) $-connective morphisms in \atopos are stable under pullback \HTT{Proposition}{6.5.1.16} show that the family 
	\begin{equation*}
		\{U_i \cross_{\trun_{\leq n}(U)} U \to U\}_{i \in I}
	\end{equation*}
	is a cover of $ U $ in $ \XX_{/W} $ by $ n $-coherent objects.
	That is, $ \XX_{/W} $ is locally $ n $-coherent.

	Now let us show that the $ n $-coherent objects of $ \XX_{/W} $ are stable under finite products.
	Let $ f \colon \fromto{U}{W} $ and $ g \colon \fromto{V}{W} $ be morphisms in $ \XX_{/W} $, where $ U $ and $ V $ are $ n $-coherent.
	Then since the $ n $-coherent objects of $ \XX_{\leq n,/W} $ are stable under finite products by the inductive hypothesis, we see that $ \trun_{\leq n}(U) \cross_{W} \trun_{\leq n}(V) $ is an $ n $-coherent object of $ \XX_{\leq n,W} $.
	By the inductive hypothesis and \Cref{cor:cohifftrunctionis}, the object $ \trun_{\leq n}(U) \cross_{W} \trun_{\leq n}(V) $  is an $ n $-coherent object of $ \XX_{/W} $.
	The claim now follows from the fact that the natural morphism
	\begin{equation*}
		\fromto{U \cross_W V}{\trun_{\leq n}(U) \cross_W \trun_{\leq n}(V)}
	\end{equation*}
	is $ (n+1) $-connective (\Cref{lem:truncationcommuteswithcertainpullbacks}) and \Cref{prop:SAG.A.2.4.1}=\allowbreak\SAG{Proposition}{A.2.4.1}.
\end{proof} 
	
\noindent Setting $ W = 1_{\XX} $ in \Cref{prp:n-coherenceonlydependsonn-topos} we deduce:

\begin{cor}\label{cor:n-coherenceonlydependsonn-topos}
	Let $ n \in \NNup $.
	The following are equivalent for \atopos $ \XX $:
	\begin{enumerate}[{\upshape (\ref*{cor:n-coherenceonlydependsonn-topos}.1)}]
		\item The \topos $ \XX $ is $ n $-coherent.

		\item The $ n $-topos $ \XX_{\leq n-1} $ is $ n $-coherent.
	\end{enumerate}
\end{cor}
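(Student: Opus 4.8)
The plan is to deduce this immediately from \Cref{prp:n-coherenceonlydependsonn-topos} by taking the distinguished object there to be the terminal object.

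First I would unwind the definition of coherence for \topoi. By \Cref{rec:coherence}, \atopos $\YY$ is $n$-coherent if and only if its terminal object $1_{\YY}$ is an $n$-coherent object of $\YY$, since $\YY_{/1_{\YY}} \equivalent \YY$. Applying this on the one hand to $\YY = \XX$ and on the other to $\YY = \XX_{\leq n-1}$, the content of the corollary becomes the assertion that $1_{\XX}$ is $n$-coherent when regarded as an object of the \topos $\XX$ if and only if it is $n$-coherent when regarded as an object of the $n$-topos $\XX_{\leq n-1}$. Here I use that the terminal object of $\XX_{\leq n-1}$ is $1_{\XX}$ itself: indeed $1_{\XX}$ is $(-1)$-truncated, hence $(n-1)$-truncated, so it lies in $\XX_{\leq n-1}$, and it is clearly still terminal there.

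It then only remains to observe that $1_{\XX}$ is in particular an $(n-1)$-truncated object of $\XX$, so that \Cref{prp:n-coherenceonlydependsonn-topos} applies verbatim with $W = 1_{\XX}$ and yields exactly the desired equivalence. There is no genuine obstacle at this step; all the substance sits in \Cref{prp:n-coherenceonlydependsonn-topos}, whose proof runs by induction on $n$, using \Cref{prop:SAG.A.2.4.1} in conjunction with \Cref{lem:fibprodconnective} and \Cref{lem:truncationcommuteswithcertainpullbacks} to compare the connectivity of fibre products before and after $n$-truncation.
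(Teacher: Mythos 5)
Your proof is correct and is exactly the paper's argument: the paper deduces the corollary in one line by setting $W = 1_{\XX}$ in \Cref{prp:n-coherenceonlydependsonn-topos}, which is precisely your specialisation. The unwinding you do (that $n$-coherence of \atopos is $n$-coherence of its terminal object, and that $1_{\XX}$ is $(n-1)$-truncated and terminal in $\XX_{\leq n-1}$) is the same implicit bookkeeping the paper leaves to the reader.
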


For the next few results, please recall the notations of \Cref{cnstr:boundedtopoi}.

\begin{cor}\label{cor:n-coherencecanbecheckedonn-localicreflections}
	Let $ n \in \NNup $ and let $ \flowerstar \colon \fromto{\XX}{\YY} $ be a geometric morphism of \topoi.
	If $ \flowerstar $ induces an equivalence $ \equivto{\XX_{\leq n-1}}{\YY_{\leq n-1}} $, then $ \XX $ is $ n $-coherent if and only if $ \YY $ is $ n $-coherent.
	Equivalently, if $ \flowerstar $ induces an equivalence $ \equivto{\Lup_n(\XX)}{\Lup_n(\YY)} $ on $ n $-localic reflections, then $ \XX $ is $ n $-coherent if and only if $ \YY $ is $ n $-coherent.
\end{cor}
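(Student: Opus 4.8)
The plan is to deduce both formulations directly from \Cref{cor:n-coherenceonlydependsonn-topos}, which says that \atopos is $ n $-coherent precisely when its underlying $ n $-topos $ \XX_{\leq n-1} $ of $ (n-1) $-truncated objects is $ n $-coherent. First I would treat the formulation phrased in terms of $ (n-1) $-truncated objects: by \Cref{cor:n-coherenceonlydependsonn-topos}, $ \XX $ is $ n $-coherent if and only if $ \XX_{\leq n-1} $ is an $ n $-coherent $ n $-topos, and similarly $ \YY $ is $ n $-coherent if and only if $ \YY_{\leq n-1} $ is; since $ \flowerstar $ induces an equivalence $ \equivto{\XX_{\leq n-1}}{\YY_{\leq n-1}} $ of $ n $-topoi, these two conditions coincide.

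It then remains only to check that the two hypotheses are interchangeable, namely that $ \flowerstar $ induces an equivalence $ \equivto{\XX_{\leq n-1}}{\YY_{\leq n-1}} $ if and only if it induces an equivalence $ \equivto{L_n(\XX)}{L_n(\YY)} $. I would extract this from the adjunctions recalled in \Cref{cnstr:localictopoi} and \Cref{cnstr:boundedtopoi}. Write $ R $ for the fully faithful right adjoint to $ \tau_{\leq n-1} \colon \fromto{\Top_{\infty}}{\Top_n} $, whose essential image is the full subcategory $ \Top_{\infty}^n \subset \Top_{\infty} $ of $ n $-localic \topoi. Since $ R $ is fully faithful, its counit $ \tau_{\leq n-1}R \to \id_{\Top_n} $ is an equivalence, so $ R $ corestricts to an equivalence $ \equivto{\Top_n}{\Top_{\infty}^n} $; together with the unit $ \fromto{\id_{\Top_{\infty}}}{R\tau_{\leq n-1}} $ this exhibits $ R\tau_{\leq n-1} $ as the reflective localisation onto $ \Top_{\infty}^n $, hence as the $ n $-localic reflection $ L_n $ of \Cref{cnstr:boundedtopoi}. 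Consequently $ L_n(\flowerstar) \equivalent R(\tau_{\leq n-1}\flowerstar) $, and as $ R $ is an equivalence of \categories, $ L_n(\flowerstar) $ is an equivalence if and only if $ \tau_{\leq n-1}(\flowerstar) \colon \fromto{\XX_{\leq n-1}}{\YY_{\leq n-1}} $ is. Granting this identification, the $ L_n $-formulation follows from the first.

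I do not expect a real obstacle here: essentially all of the content sits in \Cref{cor:n-coherenceonlydependsonn-topos} (that $ n $-coherence depends only on the underlying $ n $-topos), and the interchange of the two hypotheses is a purely formal manipulation of the truncation and localisation adjunctions of \Cref{cnstr:localictopoi} and \Cref{cnstr:boundedtopoi}. The single point one should be careful to spell out is that the unit $ \fromto{\XX}{L_n(\XX)} $ induces an equivalence on $ (n-1) $-truncated objects, i.e.\ that $ \tau_{\leq n-1}L_n \equivalent \tau_{\leq n-1} $, which is immediate from $ \tau_{\leq n-1}R \equivalent \id_{\Top_n} $.
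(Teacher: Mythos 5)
Your argument is correct and is exactly the one the paper intends: the corollary is stated without proof as an immediate consequence of \Cref{cor:n-coherenceonlydependsonn-topos}, and your deduction via the equivalence $\equivto{\XX_{\leq n-1}}{\YY_{\leq n-1}}$ is that intended argument. Your careful check that the two hypotheses are interchangeable via the truncation/localisation adjunctions is a formality the paper leaves implicit, and you handle it correctly.
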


\noindent \Cref{cor:n-coherencecanbecheckedonn-localicreflections} shows that there are many different ways to check the $ n $-coherence of \atopos.

\begin{lem}\label{lem:equivalentconditionsforn-coherence}
	Let $ n \in \NNup $.
	The following are equivalent for \atopos $ \XX $:
	\begin{enumerate}[{\upshape (\ref*{lem:equivalentconditionsforn-coherence}.1)}]
		\item\label{lem:equivalentconditionsforn-coherence.1} The \topos $ \XX $ is $ n $-coherent.

		\item\label{lem:equivalentconditionsforn-coherence.2} The $ n $-localic reflection $ \Lup_n(\XX) $ of $ \XX $ is $ n $-coherent.

		\item\label{lem:equivalentconditionsforn-coherence.3} The hypercompletion $ \XX^{\hyp} $ of $ \XX $ is $ n $-coherent (see \Cref{def:hypercompleteness}).

		\item\label{lem:equivalentconditionsforn-coherence.4} The Postnikov completion $ \XX^{\post} $ of $ \XX $ is $ n $-coherent.

		\item\label{lem:equivalentconditionsforn-coherence.5} The bounded reflection $ \XX^{\bdd} $ of $ \XX $ is $ n $-coherent.
	\end{enumerate}
\end{lem}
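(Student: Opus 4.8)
The plan is to deduce all of these equivalences from \Cref{cor:n-coherencecanbecheckedonn-localicreflections} (which itself rests on \Cref{cor:n-coherenceonlydependsonn-topos}): in each case I would produce a natural geometric morphism between $ \XX $ and the modified \topos that induces an equivalence on $ (n-1) $-truncated objects --- equivalently, on $ n $-localic reflections --- and then invoke that corollary.

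For \enumref{lem:equivalentconditionsforn-coherence}{1}$\,\Leftrightarrow\,$\enumref{lem:equivalentconditionsforn-coherence}{2}, apply \Cref{cor:n-coherencecanbecheckedonn-localicreflections} to the unit $ \fromto{\XX}{L_n(\XX)} $: since $ L_n $ is a reflection of $ \Top_{\infty} $ onto its full subcategory of $ n $-localic \topoi, $ L_n $ carries this unit to an equivalence. For \enumref{lem:equivalentconditionsforn-coherence}{1}$\,\Leftrightarrow\,$\enumref{lem:equivalentconditionsforn-coherence}{3}, the hypercompletion comes with a geometric morphism $ \fromto{\XX^{\hyp}}{\XX} $ whose pushforward is the inclusion of the hypercomplete objects; since every $ (n-1) $-truncated object of \atopos is hypercomplete, this pushforward restricts to an equivalence $ \equivto{(\XX^{\hyp})_{\leq n-1}}{\XX_{\leq n-1}} $, so \Cref{cor:n-coherencecanbecheckedonn-localicreflections} again applies.

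For \enumref{lem:equivalentconditionsforn-coherence}{1}$\,\Leftrightarrow\,$\enumref{lem:equivalentconditionsforn-coherence}{4} and \enumref{lem:equivalentconditionsforn-coherence}{1}$\,\Leftrightarrow\,$\enumref{lem:equivalentconditionsforn-coherence}{5}, recall from \Cref{cnstr:boundedtopoi} that $ \XX^{\post} $, resp.\ $ \XX^{\bdd} $, is obtained by applying $ \tau \colon \fromto{\Top_{\infty}}{\Top_{\infty}^{\wedge}} $ followed by its fully faithful left adjoint $ c $, resp.\ its fully faithful right adjoint $ r $. Full faithfulness of $ c $ and $ r $ forces $ \tau c $ and $ \tau r $ to be equivalent to the identity functor on $ \Top_{\infty}^{\wedge} $, whence $ \tau(\XX^{\post}) \equivalent \tau(\XX) \equivalent \tau(\XX^{\bdd}) $; in particular the natural geometric morphisms $ \fromto{\XX^{\post}}{\XX} $ (the counit exhibiting the Postnikov completion) and $ \fromto{\XX}{\XX^{\bdd}} $ (the unit exhibiting the bounded reflection) induce equivalences on $ (n-1) $-truncated objects, so \Cref{cor:n-coherencecanbecheckedonn-localicreflections} completes the argument.

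The entire proof is thus formal once one grants the standard fact that $ (n-1) $-truncated objects in \atopos are hypercomplete; I therefore do not anticipate a genuine obstacle, as all of the substance is already carried by \Cref{cor:n-coherencecanbecheckedonn-localicreflections} (hence by \Cref{cor:n-coherenceonlydependsonn-topos}).
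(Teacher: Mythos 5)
Your proof is correct and follows essentially the same route as the paper: every equivalence is reduced to \Cref{cor:n-coherencecanbecheckedonn-localicreflections} by checking that the relevant comparison geometric morphism induces an equivalence on $(n-1)$-truncated objects. The only (immaterial) difference is in cases \enumref{lem:equivalentconditionsforn-coherence}{4} and \enumref{lem:equivalentconditionsforn-coherence}{5}, where the paper cites \SAG{Proposition}{A.7.3.7} and \SAG{Lemma}{A.7.1.4} while you derive the needed equivalence $ \tau(\XX^{\post}) \equivalent \tau(\XX) \equivalent \tau(\XX^{\bdd}) $ directly from the triangle identities for the adjunctions of \Cref{cnstr:boundedtopoi}; both justifications are valid.
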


\begin{proof}
	The equivalence of these statements follows from repeated application of \Cref{cor:n-coherencecanbecheckedonn-localicreflections}.
	The equivalence of \enumref{lem:equivalentconditionsforn-coherence}{1} and \enumref{lem:equivalentconditionsforn-coherence}{2} follows immediately from \Cref{cor:n-coherencecanbecheckedonn-localicreflections}.

	To see that \enumref{lem:equivalentconditionsforn-coherence}{1}$ \Leftrightarrow $\enumref{lem:equivalentconditionsforn-coherence}{3}, note that since truncated objects are hypercomplete, the natural fully faithful geometric morphism $ \incto{\XX^{\hyp}}{\XX} $ induces an equivalence on $ (n-1) $-truncated objects.

	To see that \enumref{lem:equivalentconditionsforn-coherence}{1}$ \Leftrightarrow $\enumref{lem:equivalentconditionsforn-coherence}{4}, note that by \SAG{Proposition}{A.7.3.7} the natural geometric morphism $ \fromto{\XX^{\post}}{\XX} $ is an equivalence when restricted to $ (n-1) $-truncated objects.

	To see that \enumref{lem:equivalentconditionsforn-coherence}{1}$ \Leftrightarrow $\enumref{lem:equivalentconditionsforn-coherence}{5}, note that since the $ n $-localic reflection functor $ \Lup_n \colon \fromto{\Top_{\infty}}{\Top_{\infty}} $ preserves inverse limits \SAG{Lemma}{A.7.1.4}, the natural geometric morphism
	\begin{equation*}
		\fromto{\XX}{\XX^{\bdd} \equivalent \lim_{k \in \NNup^{\op}} \Lup_k(\XX)}
	\end{equation*}
	induces an equivalence on $ n $-localic reflections.
\end{proof}

\begin{nul}[equivalent conditions for coherence of geometric morphisms]\label{nul:coherencecheckedaftercompletion}
	Let 
	\begin{equation*}
		\begin{tikzcd}
			\XX' \arrow[r, "\xlowerstar"] \arrow[d, swap, "\flowerstar'"] & \XX \arrow[d, "\flowerstar"] \\
			\YY' \arrow[r, "\ylowerstar"'] & \YY 
		\end{tikzcd}
	\end{equation*}
	be a commutative square of coherent \topoi in $ \Top_{\infty} $.
	\Cref{lem:equivalentconditionsforn-coherence,cor:criterionforcoherence} show that if $ \xlowerstar $ and $ \ylowerstar $ induce equivalences
	\begin{equation*}
		\equivto{\XX'_{<\infty}}{\XX_{<\infty}} \andeq \equivto{\YY'_{<\infty}}{\YY_{<\infty}}
	\end{equation*}
	on truncated objects, then $ \flowerstar $ is coherent if and only if $ \flowerstar' $ is coherent.

	In particular, the following are equivalent for a geometric morphism $ \flowerstar \colon \fromto{\XX}{\YY} $ between coherent \topoi:
	\begin{enumerate}[(\ref*{nul:coherencecheckedaftercompletion}.1)]
		\item The geometric morphism $ \flowerstar \colon \fromto{\XX}{\YY} $ is coherent.

		\item The induced geometric morphism $ \flowerstar^{\hyp} \colon \fromto{\XX^{\hyp}}{\YY^{\hyp}} $ on hypercompletions is coherent.

		\item The induced geometric morphism $ \flowerstar^{\post} \colon \fromto{\XX^{\post}}{\YY^{\post}} $ on Postnikov completions is coherent.

		\item The induced geometric morphism $ \flowerstar^{\bdd} \colon \fromto{\XX^{\bdd}}{\YY^{\bdd}} $ on bounded refelections is coherent.
	\end{enumerate}

	Thus the equivalence between Postnikov complete \topoi and bounded \topoi (\Cref{cnstr:boundedtopoi}) restricts to an equivalence between the subcategory of Postnikov complete coherent \topoi and coherent geometric morphisms and the subcategory of bounded coherent \topoi and coherent geometric morphisms.
\end{nul}


\subsection{Coherence of morphisms \& \texorpdfstring{$n$}{n}-localic \texorpdfstring{$\infty$}{∞}-topoi}\label{subsec:coherenceforn-localic}

In this section we prove that coherence for an $ n $-localic \topos is equivalent to $ (n+1) $-coherence, and may be checked on its underling $ n $-topos (\Cref{prop:coherenceforn-localic}).
First we'll need \toposic versions of a number of points from \cite[Exposé VI, \S\S 1--3]{MR50:7131}; these follow easily from \cite[\SAGsubsec{A.2.1}]{SAG}.

\begin{dfn}\label{def:relativecoh}
	Let $ n \in \NNup $ and let $ \XX $ be a locally $ n $-coherent \topos. 
A morphism $ \fromto{U}{V} $ in $ \XX $ is called \defn{relatively $ n $-coherent}\index[terminology]{morphism!relatively ncoherent@relatively $n$-coherent}\index[terminology]{relatively ncoherent@relatively $n$-coherent!morphism} if for every $ n $-coherent object $ V' \in \XX $ and every morphism $ \fromto{V'}{V} $, the fiber product $ U \cross_V V' $ is also $ n $-coherent.
\end{dfn}

\begin{exm}[\SAG{Example}{A.2.1.2}]\label{exm:SAG.A.2.1.2}
	Let $ \XX $ be a locally $ n $-coherent \topos and $ f \colon \fromto{U}{V} $ a morphism in $ \XX $.
	If $ U $ is $ n $-coherent and $ V $ is $ (n+1) $-coherent, then $ f $ is relatively $ n $-coherent.
\end{exm}

\begin{exm}\label{exm:n+1constronglyrelatn-coh}
	As a consequence of \Cref{prop:SAG.A.2.4.1}=\allowbreak\SAG{Proposition}{A.2.4.1} and the fact that the class of $ (n+1) $-connective morphisms in \atopos is stable under pullback \HTT{Proposition}{6.5.1.16}, the $ (n+1) $-connective morphism of \atopos are `relatively $ n $-coherent' in a very strong sense: they satisfy the condition of relative $ n $-coherence without the need of local $ n $-coherence assumptions on the \topos.
\end{exm}

\begin{lem}\label{lem:pullbackofreln-coh}
	Let $ n \in \NNup $ and let $ \XX $ be a locally $ n $-coherent \topos.
	Let $ u \colon \fromto{U'}{U} $ and $ v \colon \fromto{V'}{V} $ be relatively $ n $-coherent morphisms in $ \XX $, $ W \in \XX $ an object, and $ \fromto{U}{W} $ and $ \fromto{V}{W} $ be any morphisms.
	Then the induced morphism $ \fromto{U' \cross_W V'}{U \cross_W V} $ is relatively $ n $-coherent.
\end{lem}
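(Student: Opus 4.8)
The plan is to factor the morphism $\fromto{U' \cross_W V'}{U \cross_W V}$ as a composite of two morphisms, each obtained by base change from $u$ or $v$, and then to invoke two elementary stability properties of relatively $n$-coherent morphisms in the locally $n$-coherent \topos $\XX$: stability under base change and stability under composition.

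First I would record base-change stability: if $f \colon \fromto{A'}{A}$ is relatively $n$-coherent and $\fromto{B}{A}$ is any morphism, then the projection $\fromto{B \cross_A A'}{B}$ is relatively $n$-coherent. This is immediate from \Cref{def:relativecoh}: for an $n$-coherent object $\fromto{C}{B}$ one has $(B \cross_A A') \cross_B C \equivalent A' \cross_A C$, and since $C$ is $n$-coherent and comes equipped with a map to $A$, the fibre product $A' \cross_A C$ is $n$-coherent. Next I would record composition stability: if $f \colon \fromto{A'}{A}$ and $g \colon \fromto{A}{B}$ are relatively $n$-coherent, then so is $g \of f$; again directly from \Cref{def:relativecoh}, given an $n$-coherent $\fromto{C}{B}$, the object $A \cross_B C$ is $n$-coherent by relative $n$-coherence of $g$, hence so is $A' \cross_A (A \cross_B C) \equivalent A' \cross_B C$ by relative $n$-coherence of $f$. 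Neither statement needs more than the definition, since all the fibre products occurring are taken over objects that have already been verified to be $n$-coherent.

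Finally I would assemble the factorisation. Using the canonical identifications $U' \cross_W V' \equivalent U' \cross_U (U \cross_W V')$ and $U \cross_W V' \equivalent (U \cross_W V) \cross_V V'$, the morphism in question factors as
\begin{equation*}
	U' \cross_W V' \longrightarrow U \cross_W V' \longrightarrow U \cross_W V \comma
\end{equation*}
where the first arrow is the base change of $u \colon \fromto{U'}{U}$ along the projection $\fromto{U \cross_W V'}{U}$ and the second is the base change of $v \colon \fromto{V'}{V}$ along the projection $\fromto{U \cross_W V}{V}$. By base-change stability each arrow is relatively $n$-coherent, and by composition stability so is their composite. The only real content is bookkeeping with these pullback identities; I anticipate no genuine obstacle, as this is the $\infty$-categorical transcription of the corresponding statement in \cite[Exposé~VI]{MR50:7131}.
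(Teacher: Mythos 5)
Your proof is correct. The paper argues directly: it fixes an $n$-coherent test object $f \colon \fromto{X}{U \cross_W V}$, rewrites $X \cross_{U \cross_W V} (U' \cross_W V')$ as the iterated fibre product $(X \cross_U U') \cross_V V'$, and applies relative $n$-coherence of $u$ and then of $v$. You instead isolate two general closure properties of relatively $n$-coherent morphisms --- stability under base change and under composition --- and then factor $\fromto{U' \cross_W V'}{U \cross_W V}$ as a base change of $u$ followed by a base change of $v$. Unwinding your two-step factorisation against a test object reproduces exactly the paper's chain of identifications, so the mathematical content is the same; the difference is purely organisational. Your packaging buys reusable lemmas (the base-change and composition stability statements are the $\infty$-categorical analogues of \cite[Exposé VI, Proposition 1.9]{MR50:7131} and are worth having on record), at the cost of slightly more setup; the paper's version is shorter because it never names those intermediate facts. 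Both your auxiliary lemmas are verified correctly from \Cref{def:relativecoh}, and the pullback identities $U' \cross_W V' \equivalent U' \cross_U (U \cross_W V')$ and $U \cross_W V' \equivalent (U \cross_W V) \cross_V V'$ you use are the right ones, so there is no gap.
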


\begin{proof}
	Let $ f \colon \fromto{X}{U \cross_W V} $ be a morphism in $ \XX $ where $ X $ is $ n $-coherent.
	Note that we have equivalences of iterated fiber products
	\begin{align*}
		X \operatornamewithlimits{\cross}_{U \cross_W V} (U' \cross_W V') &\equivalent (X \cross_U U') \cross_X (X \cross_V V') \\
		&\equivalent (X \cross_U U') \cross_V V' \period
	\end{align*} 
	First, since $ X \cross_U U' $ is the pullback of $ \pr_1 \of f \colon \fromto{X}{U} $ along the relatively $ n $-coherent morphism $ u $, the object $ X \cross_U U' $ is $ n $-coherent.
	Second, $ (X \cross_U U') \cross_V V' $ is the pullback of the morphism $ \fromto{X \cross_U U'}{V} $ induced by $ \pr_2 \of f \colon \fromto{X}{V} $ along the relatively $ n $-coherent morphism $ v $.
	Hence $ (X \cross_U U') \cross_V V' $ is an $ n $-coherent object of $ \XX $, as desired.
\end{proof}

\begin{lem}\label{lem:closureofn-cohunderprodu}
	Let $ \XX $ be \atopos and $ m \in \NNup $.
	Let $ \XX_0 \subset \XX $ be a full subcategory satisfying the following conditions:
	\begin{enumerate}[{\upshape (\ref*{lem:closureofn-cohunderprodu}.1)}]
		\item\label{lem:closureofn-cohunderprodu.1} The full subcategory $ \XX_0 \subset \XX $ is closed under finite products.

		\item\label{lem:closureofn-cohunderprodu.2} Every object of $ \XX_0 $ is $ m $-coherent.

		\item\label{lem:closureofn-cohunderprodu.3} For every object $ U \in \XX $, there exists an effective epimorphism $ \surjto{\coprod_{i \in I} U_i}{U} $ where $ U_i \in \XX_0 $ for each $ i \in I $.
	\end{enumerate}
	Then the $ m $-coherent objects of $ \XX $ are closed under finite products.
\end{lem}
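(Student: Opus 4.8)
The plan is to induct on $m$, reducing throughout to binary products: the terminal object $1_{\XX}$ is the empty product of objects of $\XX_0$, hence lies in $\XX_0$ by \enumref{lem:closureofn-cohunderprodu}{1} and is therefore $m$-coherent by \enumref{lem:closureofn-cohunderprodu}{2}. I will use two standard facts about \topoi: effective epimorphisms are stable under base change and composition (so a product of two effective epimorphisms is again an effective epimorphism), and finite coproducts in \atopos are disjoint and universal (so $(\coprod_{i} A_i) \cross (\coprod_{j} B_j) \equivalent \coprod_{i,j} A_i \cross B_j$). I also record that an $(n+1)$-coherent \topos is $n$-coherent — because its terminal object is an empty product of $n$-coherent objects, hence $n$-coherent — and consequently, passing to slices, that an $(n+1)$-coherent object of \atopos is $n$-coherent; iterating, any $(n+1)$-coherent object is in particular $0$-coherent, i.e.\ quasicompact. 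In particular every object of $\XX_0$ is $k$-coherent for every $k \leq m$, so by \enumref{lem:closureofn-cohunderprodu}{3} the \topos $\XX$ is locally $k$-coherent for every $k \leq m$.

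For the base case $m = 0$, given $0$-coherent (i.e.\ quasicompact) objects $X, Y$, I would pick covers $\coprod_{i \in I} X_i \to X$ and $\coprod_{j \in J} Y_j \to Y$ with $X_i, Y_j \in \XX_0$ as in \enumref{lem:closureofn-cohunderprodu}{3}, pass to finite subcovers indexed by $I_0 \subseteq I$ and $J_0 \subseteq J$ using quasicompactness of $X$ and $Y$, and note that $\coprod_{(i,j) \in I_0 \cross J_0} X_i \cross Y_j \to X \cross Y$ is an effective epimorphism from a finite coproduct of quasicompact objects; hence $X \cross Y$ is quasicompact.

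For the inductive step, suppose the lemma holds for $n$, and let $X, Y$ be $(n+1)$-coherent. Every object of $\XX_0$ is then $n$-coherent, so the inductive hypothesis, applied to the same $\XX$ and $\XX_0$, shows the $n$-coherent objects of $\XX$ are closed under finite products; in particular $X \cross Y$ is $n$-coherent. Choosing finite covers $\{X_i \to X\}_{i \in I_0}$ and $\{Y_j \to Y\}_{j \in J_0}$ with $X_i, Y_j \in \XX_0$ as before, the family $\{X_i \cross Y_j \to X \cross Y\}_{(i,j) \in I_0 \cross J_0}$ is a finite cover of $X \cross Y$ by $(n+1)$-coherent objects, and its pairwise fibre products over $X \cross Y$ decompose as
\begin{equation*}
	(X_i \cross Y_j) \cross_{X \cross Y} (X_{i'} \cross Y_{j'}) \equivalent (X_i \cross_X X_{i'}) \cross (Y_j \cross_Y Y_{j'}) \period
\end{equation*}
Since $X$ is $(n+1)$-coherent and $X_i, X_{i'}$ are $n$-coherent, \Cref{exm:SAG.A.2.1.2} shows $X_i \to X$ is relatively $n$-coherent, so $X_i \cross_X X_{i'}$ is $n$-coherent; likewise $Y_j \cross_Y Y_{j'}$ is $n$-coherent, and hence so is their product, by the inductive hypothesis. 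Thus $X \cross Y$ is $n$-coherent and admits a finite cover by $(n+1)$-coherent objects whose pairwise fibre products over $X \cross Y$ are $n$-coherent. I would then conclude that $X \cross Y$ is $(n+1)$-coherent by the Čech-descent criterion for coherence: in a locally $n$-coherent \topos, an object that is $n$-coherent and admits a finite cover by $(n+1)$-coherent objects with $n$-coherent pairwise fibre products over it is itself $(n+1)$-coherent. This criterion is proved by an induction on $n$ parallel to the present one, starting from the recursive characterisation of $(n+1)$-coherence recalled in \Cref{rec:coherence} and using \Cref{exm:SAG.A.2.1.2} together with \Cref{lem:pullbackofreln-coh} — the latter to see that $X_i \cross Y_j \to X \cross Y$ is relatively $n$-coherent, which is what reduces an arbitrary pair of $n$-coherent objects over $X \cross Y$ to the product cover.

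The main obstacle is precisely this Čech-descent step. The characterisation in \Cref{rec:coherence} only tests fibre products of \emph{arbitrary} pairs of $n$-coherent objects over $X \cross Y$, and a direct attempt via the decomposition $A \cross_{X \cross Y} B \equivalent (A \cross_X B) \cross_{Y \cross Y} Y$ runs into the need for $Y \cross Y$ to be $(n+1)$-coherent, i.e.\ the statement under proof. Restricting instead to the product cover $\{X_i \cross Y_j\}$ breaks this circularity, because its overlaps are fibre products over the \emph{separately} $(n+1)$-coherent objects $X$ and $Y$, to which \Cref{exm:SAG.A.2.1.2} applies; the remaining labour is the inductive bookkeeping that transports this back to arbitrary $n$-coherent objects over $X \cross Y$.
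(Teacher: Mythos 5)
Your argument is correct in substance, but it takes a longer route than the paper's, and the one step you leave unproved is exactly where the real content lies. The paper's proof involves no induction on $m$ and never touches pairwise overlaps: it first closes $\XX_0$ under finite coproducts to obtain $\XX'_0$ (still consisting of $m$-coherent objects and still closed under finite products, since colimits in \atopos are universal), uses quasicompactness to choose effective epimorphisms $u \colon \surjto{U'}{U}$ and $v \colon \surjto{V'}{V}$ with $U', V' \in \XX'_0$, notes that $u$ and $v$ are relatively $(m-1)$-coherent by \Cref{exm:SAG.A.2.1.2} and hence that $u \cross v$ is a relatively $(m-1)$-coherent effective epimorphism by \Cref{lem:pullbackofreln-coh}, and then concludes in one stroke from \SAG{Proposition}{A.2.1.3}: the target of a relatively $(m-1)$-coherent effective epimorphism out of an $m$-coherent object is again $m$-coherent. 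Your induction on $m$ and your passage through the product cover $\{X_i \cross Y_j\}$ and its overlaps reconstruct by hand what that single citation delivers.

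The soft spot is your ``\v{C}ech-descent criterion.'' The statement is true --- it is (one direction of) \Cref{lem:relativecoh}, which appears later in the paper --- but it is not obtained by ``an induction on $n$ parallel to the present one'': passing from ``the overlaps $W_k \cross_W W_l$ are $n$-coherent'' to ``$W$ is $(n+1)$-coherent'' rests on \SAG{Corollary}{A.2.1.5} (relative coherence of an effective epimorphism may be checked after pulling it back along itself) together with \SAG{Proposition}{A.2.1.3}, i.e.\ on precisely the descent input you cannot avoid. Once you observe --- as your closing paragraph almost does --- that \Cref{exm:SAG.A.2.1.2} and \Cref{lem:pullbackofreln-coh} already exhibit $\coprod_{i,j} X_i \cross Y_j \to X \cross Y$ as a relatively $n$-coherent effective epimorphism from an $(n+1)$-coherent object, you can apply \SAG{Proposition}{A.2.1.3} directly and discard both the induction and the overlap computation. (Your preliminary reductions and the base case $m = 0$ are fine; the latter is also immediate from \Cref{lem:quotientofqcisqc}.)
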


\begin{proof}
	Let \smash{$ \XX'_0 \subset \XX $} denote the closure of $ \XX_0 $ under finite coproducts; then every object of \smash{$ \XX'_0 $} is $ m $-coherent.
	Since colimits in $ \XX $ are universal and $ \XX_0 $ is closed under finite products, \smash{$ \XX'_0 \subset \XX $} is closed under finite products.

	Let $ U, V \in \XX $ be $ m $-coherent objects; we show that $ U \cross V $ is $ m $-coherent.
	Since $ U $ and $ V $ are quasicompact, there exist effective epimorphisms $ u \colon \surjto{U'}{U} $ and $ v \colon \surjto{V'}{V} $ where $ U',V' \in \XX'_0 $. 
	By \Cref{exm:SAG.A.2.1.2}=\allowbreak\SAG{Example}{A.2.1.2} both $ u $ and $ v $ are relatively $ (m-1) $-coherent.
	\Cref{lem:pullbackofreln-coh} shows that
	\begin{equation*}
		u \cross v \colon \surjto{U' \cross V'}{U \cross V}
	\end{equation*}
	is a relatively $ (m-1) $-coherent effective epimorphism.
	Since $ U' \cross V' \in \XX'_0 $ is $ m $-coherent and $ \XX $ is locally $ m $-coherent, \SAG{Proposition}{A.2.1.3} shows that $ U \cross V $ is $ m $-coherent, as desired.
\end{proof}

\begin{prp}\label{prop:coherenceforn-localic}
	Let $ n \in \NNup $.
	The following are equivalent for an $ n $-localic \topos $ \XX $:
	\begin{enumerate}[{\upshape (\ref*{prop:coherenceforn-localic}.1)}]
		\item\label{prop:coherenceforn-localic.1} The $ n $-topos $ \XX_{\leq n-1} $ is $ (n+1) $-coherent.

		\item\label{prop:coherenceforn-localic.2} The \topos $ \XX $ is $ (n+1) $-coherent.

		\item\label{prop:coherenceforn-localic.3} The \topos $ \XX $ is coherent.

		\item\label{prop:coherenceforn-localic.4} The $ n $-topos $ \XX_{\leq n-1} $ is coherent.
	\end{enumerate}
\end{prp}

\begin{proof}
	Clearly \enumref{prop:coherenceforn-localic}{3}$ \Rightarrow $\enumref{prop:coherenceforn-localic}{4} and \enumref{prop:coherenceforn-localic}{4}$ \Rightarrow $\enumref{prop:coherenceforn-localic}{1}.

	First we show that \enumref{prop:coherenceforn-localic}{1}$ \Rightarrow $\enumref{prop:coherenceforn-localic}{2}.
	\Cref{cor:n-coherenceonlydependsonn-topos} shows that $ \XX $ is $ n $-coherent.
	First notice that every object of $ \XX $ admits a cover by $ (n-1) $-truncated $ n $-coherent objects (so, in particular, $ \XX $ is locally $ n $-coherent).
	This follows from the following observations:
	\begin{itemize}
		\item Since \topos $ \XX $ is $ n $-localic, every object of $ \XX $ admits a cover by $ (n-1) $-truncated objects.

		\item Since the $ n $-topos $ \XX_{\leq n-1} $ is locally $ n $-coherent, \Cref{prp:n-coherenceonlydependsonn-topos} shows that every $ (n-1) $-truncated object of $ \XX $ admits a cover by $ (n-1) $-truncated $ n $-coherent objects.
	\end{itemize} 
	Moreover, since the $ (n-1) $-truncated objects of \atopos are closed under limits and $ \XX_{\leq n-1} $ is $ (n+1) $-coherent, \Cref{prp:n-coherenceonlydependsonn-topos} shows that the $ (n-1) $-truncated $ n $-coherent objects of $ \XX $ are closed under finite products.
	\Cref{lem:closureofn-cohunderprodu} applied to the full subcategory $ \XX_0 \subset \XX $ spanned by the $ (n-1) $-truncated $ n $-coherent objects (so that $ m = n $ in the notation of \Cref{lem:closureofn-cohunderprodu}) now shows that the $ n $-coherent objects of $ \XX $ are closed under finite products. 

	Since an $ n $-localic \topos is $ N $-localic for all $ N \geq n $, to prove the implication \enumref{prop:coherenceforn-localic}{2}$ \Rightarrow $\enumref{prop:coherenceforn-localic}{3}, it suffices to prove that if $ \XX $ is $ (n+1) $-coherent, then $ \XX $ is $ (n+2) $-coherent.
	First we show that $ \XX $ is locally $ (n+1) $-coherent.
	We have already seen that every object of $ \XX $ admits a cover by a $ (n-1) $-truncated $ n $-coherent objects, and that the subcategory $ \XX_0 $ of $ (n-1) $-truncated $ n $-coherent objects is closed under finite products.
	Since $ \XX $ is $ (n+1) $-coherent, \SAG{Corollary}{A.2.4.3} shows that $ (n-1) $-truncated $ n $-coherent objects of $ \XX $ are automatically $ (n+1) $-coherent, immediately implying that $ \XX $ is locally $ (n+1) $-coherent.
	\Cref{lem:closureofn-cohunderprodu} applied to the subcategory $ \XX_0 $ of $ (n-1) $-truncated $ (n+1) $-coherent objects (so that $ m = n+1 $ in the notation of \Cref{lem:closureofn-cohunderprodu}) shows that the $ (n+1) $-coherent objects of $ \XX $ are closed under finite products.
\end{proof}


\subsection[Coherent geometric morphisms via sites \& coherent ordinary topoi]{Coherent geometric morphisms via sites \& coherent ordinary \\ topoi}\label{subsec:cohfor1localic}

In this section we explain the relationship between coherent ordinary topoi  and their corresponding $ 1 $-localic \topoi.%
\footnote{The contents of this section originally appeared in a (partially expository) preprint of the third-named author \cite{Haine:1-localic}.}
(See \cites{Lurie:CatLogic11}[Appendix C, \S\S 5--6]{Ultracategories} for an excellent accounts of coherent ordinary topoi.)
We show that the \category of coherent $ 1 $-localic \topoi is equivalent to the $ 2 $-category of coherent ordinary topoi.
In fact, the results of \cref{subsec:n-coherenceonlydependsonn-topos} allow us to show that the \category of coherent $ n $-localic \topoi is equivalent to the $ (n+1) $-category of coherent $ n $-topoi (\Cref{prop:coherent1localic}).

\begin{rec}\label{nul:coherent1topoi}
	A $ 1 $-topos $ \XX $ is \defn{coherent}\index[terminology]{coherent!1topos@$1$-topos}\index[terminology]{1topos@$1$-topos!coherent} in the sense of \cite[Exposé VI, Definition 2.3]{MR50:7131} if and only if $ \XX $ is $ 2 $-coherent in the sense of \Cref{rec:coherence}.
 	This is true if and only if $ \XX $ is equivalent to the $ 1 $-topos of sheaves of sets on a finitary $ 1 $-site $ (X,\tau) $ with a terminal object.
	\Cref{prop:coherenceforn-localic} shows that $ \XX $ is coherent if and only if its corresponding $ 1 $-localic \topos is coherent.

	A geometric morphism morphism of coherent $ 1 $-topoi $ \flowerstar \colon \fromto{\XX}{\YY} $ is \defn{coherent} \cite[Exposé VI, Definition 3.1]{MR50:7131} if and only if $ \flowerstar $ is induced by a morphism of finitary $ 1 $-sites $ \fupperstar \colon \fromto{(Y,\tau_Y)}{(X,\tau_X)} $.
\end{rec}

The content of the equivalence between coherent $ n $-topoi and coherent $ n $-localic \topoi reduces to showing that a coherent morphism of coherent $ n $-topoi induces a coherent morphism of corresponding $ n $-localic \topoi.
This follows from the fact that coherence of a geometric morphism between locally coherent \topoi can be checked on a generating set of coherent objects (\Cref{cor:checkcohongen}).
A particularly useful consequence is that morphisms of finitary \sites induce coherent geometric morphisms (\Cref{cor:morsitescoherent}).

First we need a few preliminary results.
For this, please recall the notion of \textit{relative $ n $-coherence} (\Cref{def:relativecoh}) introduced in \cref{subsec:coherenceforn-localic}.
	
\begin{lem}\label{lem:quotientofqcisqc}
	Let $ \XX $ be \atopos.
	If $ e \colon \surjto{U}{V} $ is an effective epimorphism in $ \XX $ and $ U $ is quasicompact, then $ V $ is quasicompact.
\end{lem}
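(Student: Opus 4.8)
The plan is to unwind the definition of quasicompactness for objects of $\XX$ and then lean on universality of colimits. By definition, $V$ is quasicompact exactly when the locale $\Open(\XX_{/V})$ of subobjects of $V$ is quasicompact; concretely, this says that whenever $\{V_i \to V\}_{i \in I}$ is a family of monomorphisms for which $\surjto{\coprod_{i \in I} V_i}{V}$ is an effective epimorphism, there is a finite subset $I_0 \subseteq I$ such that $\surjto{\coprod_{i \in I_0} V_i}{V}$ is still an effective epimorphism. So I would fix such a covering family of subobjects of $V$ and try to extract a finite covering subfamily.

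First I would pull the family back along $e$. Since colimits in $\XX$ are universal, there is an identification $\coprod_{i \in I}(U \cross_V V_i) \equivalent U \cross_V \bigl(\coprod_{i \in I} V_i\bigr)$, so the induced map $\coprod_{i \in I}(U \cross_V V_i) \to U$ is a pullback of the effective epimorphism $\coprod_{i \in I} V_i \to V$, hence an effective epimorphism, while each $U \cross_V V_i \to U$ is a pullback of a monomorphism, hence a monomorphism. Thus $\{U \cross_V V_i \to U\}_{i \in I}$ is a covering family of subobjects of $U$, and since $U$ is quasicompact I may choose a finite $I_0 \subseteq I$ for which $\surjto{\coprod_{i \in I_0}(U \cross_V V_i)}{U}$ remains an effective epimorphism.

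The last step is to descend back to $V$. Universality of colimits again exhibits the square
\begin{equation*}
	\begin{tikzcd}
		\coprod_{i \in I_0}(U \cross_V V_i) \arrow[r] \arrow[d] & \coprod_{i \in I_0} V_i \arrow[d] \\
		U \arrow[r, "e" below] & V
	\end{tikzcd}
\end{equation*}
as a pullback. Hence the composite $\coprod_{i \in I_0}(U \cross_V V_i) \to U \to V$ through $e$ is an effective epimorphism, being a composite of two effective epimorphisms, and it also factors through $\coprod_{i \in I_0} V_i \to V$. By the left cancellation property for effective epimorphisms in \atopos (if $\beta \of \alpha$ is an effective epimorphism, then so is $\beta$; see \cite[\HTTsubsec{6.2.3}]{HTT}), the map $\surjto{\coprod_{i \in I_0} V_i}{V}$ is an effective epimorphism, so $I_0$ is the desired finite covering subfamily, and $V$ is quasicompact.

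I do not expect a genuine obstacle: once the reformulation of quasicompactness in terms of covering families of subobjects is in place, the argument is entirely formal, relying only on universality of colimits in $\XX$, stability of monomorphisms and of effective epimorphisms under pullback, and left cancellation for effective epimorphisms. The only point requiring minor care is matching up the two pullback identifications (the one producing the covering family of $U$ and the one exhibiting the square above). An equivalent packaging avoids covers entirely: phrase quasicompactness of an object $W$ as the statement that any filtered colimit of subobjects of $W$ equal to $W$ has a member equal to $W$, pull the filtered system back along $e$, and run the same cancellation step; I would use whichever form reads more smoothly in context.
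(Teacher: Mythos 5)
Your argument is correct. The paper itself gives no direct proof: it simply observes that the lemma is a special case of \SAG{Proposition}{A.2.1.3} (or of \SAG{Proposition}{A.2.4.1}, since effective epimorphisms are precisely the $0$-connective morphisms and quasicompactness is $0$-coherence). What you have written is essentially the proof of that cited result specialised to $n=0$: unwind quasicompactness of $V$ as the finite-subcover condition on the locale $\Open(\XX_{/V})$ of subobjects of $V$, pull a covering family back along $e$ using universality of colimits and stability of monomorphisms and effective epimorphisms under pullback, extract a finite subfamily over $U$, and descend using closure of effective epimorphisms under composition together with left cancellation. All of these ingredients are valid in any \topos, so the proof is complete; the only difference from the paper is that you supply the (standard) argument in full rather than deferring to the reference, which makes the step self-contained at the cost of a few lines.
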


\begin{proof}
	This is a special case of \SAG{Proposition}{A.2.1.3}, or, alternatively, \Cref{prop:SAG.A.2.4.1}=\allowbreak\SAG{Proposition}{A.2.4.1}.
\end{proof}

\begin{lem}\label{lem:relativecoh}
	Let $ n \geq 1 $ be an integer and $ \XX $ a locally $ (n-1) $-coherent \topos.
	Let $ U \in \XX $ and let $ e \colon \surjto{\coprod_{i \in I} U_i}{U} $ be a cover of $ U $ where $ I $ is finite and $ U_i $ is $ n $-coherent for each $ i \in I $.
	The following are equivalent: 
	\begin{enumerate}[{\upshape (\ref*{lem:relativecoh}.1)}]
		\item The effective epimorphism $ e $ is relatively $ (n-1) $-coherent.

		\item For all $ i,j \in I $, the object $ U_i \cross_{U} U_j $ is $ (n-1) $-coherent.

		\item The object $ U $ is $ n $-coherent.
	\end{enumerate}
\end{lem}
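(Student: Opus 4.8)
The plan is to prove the three equivalences $(1)\iff(2)\iff(3)$ for the cover $e\colon\surjto{\coprod_{i\in I}U_i}{U}$ by a finite family of $n$-coherent objects in the locally $(n-1)$-coherent \topos $\XX$.

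\textbf{Proof strategy.} First I would observe that $U$ is automatically quasicompact: since $\coprod_{i\in I}U_i$ is a finite coproduct of quasicompact objects, it is quasicompact, and then \Cref{lem:quotientofqcisqc} applied to the effective epimorphism $e$ shows $U$ is quasicompact. This will be used throughout.

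For $(1)\implies(3)$: assuming $e$ is relatively $(n-1)$-coherent, I want to show $\XX_{/U}$ is $n$-coherent. Since $U$ is quasicompact, $\XX_{/U}$ is $0$-coherent. For local $(n-1)$-coherence of $\XX_{/U}$: any object of $\XX_{/U}$ pulled back along the cover $\{U_i\to U\}$ gives objects over the $n$-coherent (hence $(n-1)$-coherent) $U_i$, and \Cref{nul:truncohslice} together with local $(n-1)$-coherence of $\XX$ handles this. For closure of $(n-1)$-coherent objects of $\XX_{/U}$ under fibre products over $U$: given $(n-1)$-coherent $V,W\in\XX_{/U}$, I pull back the cover to get covers $\{V\cross_U U_i\to V\}$ and $\{W\cross_U U_j\to W\}$; relative $(n-1)$-coherence of $e$ gives that $V\cross_U U_i$ and $W\cross_U U_j$ are $(n-1)$-coherent (being pullbacks of morphisms to $U$ along $e$, using that $U_i\times_U U$, etc., compute correctly — more precisely $V\cross_U U_i = V\cross_U(U_i\cross_U U)$ and the relative coherence hypothesis applies since $V$ is $(n-1)$-coherent). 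Then $(V\cross_U U_i)\cross_U(W\cross_U U_j)$ is a pullback over $U_i$ (or over $U_j$) of $(n-1)$-coherent objects, and since $U_i$ is $n$-coherent and $\XX$ is locally $(n-1)$-coherent, \SAG{Proposition}{A.2.1.3} shows this is $(n-1)$-coherent; assembling over the finite index sets gives $V\cross_U W$ quasicompact via an analogue of \Cref{lem:relativecoh} reasoning, hence $(n-1)$-coherent. This shows $\XX_{/U}$ is $n$-coherent, i.e.\ $U$ is $n$-coherent.

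For $(3)\implies(1)$: this is \Cref{exm:SAG.A.2.1.2}=\allowbreak\SAG{Example}{A.2.1.2}, since $\coprod_{i\in I}U_i$ is $n$-coherent (finite coproduct of $n$-coherent objects) and $U$ is $(n+1)$-coherent — wait, here I only have $U$ is $n$-coherent, so I should instead argue directly: given an $(n-1)$-coherent $V'\to U$, the pullback $(\coprod_i U_i)\cross_U V'\equivalent\coprod_i(U_i\cross_U V')$, and each $U_i\cross_U V'$ is a pullback of a morphism between $(n-1)$-coherent objects ($U_i$ is $n$-coherent hence $(n-1)$-coherent, $V'$ is $(n-1)$-coherent) over the $n$-coherent object $U$; by \SAG{Proposition}{A.2.1.3} (or the defining property of $n$-coherence of $\XX_{/U}$ applied to fibre products) this is $(n-1)$-coherent. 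For $(3)\iff(2)$: if $U$ is $n$-coherent then $U_i\cross_U U_j$ is a fibre product of $(n-1)$-coherent objects in $\XX_{/U}$, hence $(n-1)$-coherent; conversely if all $U_i\cross_U U_j$ are $(n-1)$-coherent, then $e$ is relatively $(n-1)$-coherent when tested on the $U_i$ themselves, and one extends to arbitrary $(n-1)$-coherent objects over $U$ by a quasicompactness/refinement argument, giving $(1)$, which we've shown implies $(3)$.

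\textbf{Main obstacle.} The delicate point will be the bookkeeping in $(1)\implies(3)$, specifically verifying that the $(n-1)$-coherent objects of $\XX_{/U}$ are closed under fibre products: one must reduce fibre products over $U$ of arbitrary $(n-1)$-coherent objects to fibre products over the $n$-coherent $U_i$, and here the correct lemma to invoke is \SAG{Proposition}{A.2.1.3} on the stability of $(n-1)$-coherence under quotients by relatively $(n-1)$-coherent effective epimorphisms, combined with \Cref{lem:pullbackofreln-coh}. Getting the iterated-fibre-product identities right (as in the displayed computation in the proof of \Cref{lem:pullbackofreln-coh}) and confirming that all the relevant effective epimorphisms remain relatively $(n-1)$-coherent after base change is the technical heart; everything else is a direct appeal to the cited results from \SAG{}{} and the preceding lemmas.
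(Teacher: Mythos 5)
Your skeleton --- proving $(1)\Rightarrow(3)$, $(3)\Rightarrow(1)$, $(3)\Rightarrow(2)$, and $(2)\Rightarrow(1)$ --- does close the logical loop, and the easy directions are fine: $(3)\Rightarrow(1)$ is a direct application of \Cref{exm:SAG.A.2.1.2}=\allowbreak\SAG{Example}{A.2.1.2} with $n$ replaced by $n-1$ (your worry about needing $(n+1)$-coherence of $U$ is an off-by-one slip; you need the source $(n-1)$-coherent and the target $n$-coherent, both of which you have), and $(3)\Rightarrow(2)$ is correct. The paper instead runs $(1)\Rightarrow(2)\Rightarrow(1)$ and $(1)\Leftrightarrow(3)$, each implication being a one- or two-line citation to \cite[\SAGsubsec{A.2.1}]{SAG}.

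Two of your steps have genuine gaps. First, in $(1)\Rightarrow(3)$ you try to verify the $n$-coherence of $\XX_{/U}$ by hand, and the concluding move --- assembling the finite cover of $V\times_U W$ by $(n-1)$-coherent pieces and declaring it \enquote{quasicompact \ldots\ hence $(n-1)$-coherent} --- does not follow: quasicompactness is only $0$-coherence, and passing from a finite cover of $V\times_U W$ by $(n-1)$-coherent objects to $(n-1)$-coherence of $V\times_U W$ requires precisely the statement you are in the middle of proving, one coherence level down, with its own relative-coherence hypothesis verified. The whole implication $(1)\Rightarrow(3)$ is exactly \SAG{Proposition}{A.2.1.3} (a relatively $(n-1)$-coherent effective epimorphism from an $n$-coherent object onto $U$ forces $U$ to be $n$-coherent); the paper cites it directly, and you cite it elsewhere in your argument but not at the one point where it does the work. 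Second, in $(2)\Rightarrow(1)$ the phrase \enquote{one extends to arbitrary $(n-1)$-coherent objects over $U$ by a quasicompactness/refinement argument} is where all the content of that implication sits, and it is not an argument. Hypothesis $(2)$ together with \Cref{exm:SAG.A.2.1.2} only tells you that the base change of $e$ along $e$ itself is relatively $(n-1)$-coherent; to descend this to $e$ you need descent of relative $(n-1)$-coherence along effective epimorphisms, which is \SAG{Corollary}{A.2.1.5} and is the key ingredient in the paper's proof of this direction. With those two citations in place your outline collapses to the paper's proof; without them the two implications are not established.
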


\begin{proof}
	If $ e $ is relatively $ (n-1) $-coherent, then since coproducts in $ \XX $ are universal, the fiber product
	\begin{equation*}
		\Big( {\textstyle \coprod_{i \in I} U_i} \Big) \, \crosslimits_U \, \Big( {\textstyle \coprod_{j \in I} U_j} \Big) \equivalent \coprod_{i,j \in I} U_i \cross_{U} U_j
	\end{equation*}
	is $ (n-1) $-coherent.
	Thus $ U_i \cross_{U} U_{j} $ is $ (n-1) $-coherent for all $ i,j \in I $ \SAG{Remark}{A.2.0.16}.

	If each $ U_i \cross_{U} U_j $ is $ (n-1) $-coherent, then since each $ U_i $ is $ n $-coherent we see that the pullback of $ e $ along itself
	\begin{equation*}
		\surjto{\coprod_{i,j\in I} U_i \cross_{U} U_j}{\coprod_{i \in I} U_i}
	\end{equation*}
	is relatively $ (n-1) $-coherent (\Cref{exm:SAG.A.2.1.2}=\allowbreak\SAG{Example}{A.2.1.2}).
	Applying \SAG{Corollary}{A.2.1.5} we deduce that $ e \colon \surjto{\coprod_{i \in I} U_i}{U} $ is relatively $ (n-1) $-coherent.

	To conclude, note that if $ e \colon \surjto{\coprod_{i \in I} U_i}{U} $ is relatively $ (n-1) $-coherent, then \SAG{Proposition}{A.2.1.3} shows that $ U $ is $ n $-coherent.
	On the other hand, if $ U $ is $ n $-coherent, then $ e $ is $ (n-1) $-coherent by \Cref{exm:SAG.A.2.1.2}=\allowbreak\SAG{Example}{A.2.1.2}.
\end{proof}

\begin{prp}\label{prop:gencoherenceofgeom}
	Let $ \flowerstar \colon \fromto{\XX}{\YY} $ be a geometric morphism of \topoi and $ n \in \NNup $.
	Assume that:
	\begin{enumerate}[{\upshape (\ref*{prop:gencoherenceofgeom}.1)}]
		\item\label{prop:gencoherenceofgeom.1} There exists a collection of $ n $-coherent objects $ \YY_0 \subset \Obj(\YY) $ of $ \YY $ such that for every $ n $-coherent object $ U \in \YY $ there exists a cover $ \surjto{\coprod_{i \in I} U_i}{U} $ where $ U_i \in \YY_0 $ for each $ i \in I $.

		\item\label{prop:gencoherenceofgeom.2} The pullback functor $ \fupperstar \colon \fromto{\YY}{\XX} $ takes objects of $ \YY_0 $ to $ n $-coherent objects of $ \XX $.

		\item\label{prop:gencoherenceofgeom.3} If $ n \geq 1 $, the \topoi $ \XX $ and $ \YY $ are locally $ (n-1) $-coherent and $ \fupperstar \colon \fromto{\YY}{\XX} $ takes $ (n-1) $-coherent objects of $ \YY $ to $ (n-1) $-coherent objects of $ \XX $.
	\end{enumerate}
	Then $ \fupperstar $ takes $ n $-coherent objects of $ \YY $ to $ n $-coherent objects of $ \XX $.
\end{prp}

\begin{proof}
	Let $ U \in \YY $ be an $ n $-coherent object; we show that $ \fupperstar(U) $ is $ n $-coherent.
	Since $ U $ is $ 0 $-coherent, by \enumref{prop:gencoherenceofgeom}{1} there exists a cover
	\begin{equation*}
		e \colon \surjto{\coprod_{i \in I} U_i}{U}
	\end{equation*}
	where $ U_i \in \YY_0 $ for each $ i \in I $ and $ I $ is finite.
	By \enumref{prop:gencoherenceofgeom}{2}, for all $ i \in I $ the object $ \fupperstar(U_i) $ is $ n $-coherent, so since $ n $-coherent objects are closed under finite coproducts \SAG{Remark}{A.2.0.16}, the object
	\begin{equation*}
		\fupperstar\paren{\textstyle \coprod_{i\in I} U_i} \equivalent \coprod_{i \in I} \fupperstar(U_i)
	\end{equation*}
	is $ n $-coherent.

	Note that 
	\begin{equation*}
		\fupperstar(e) \colon \surjto{\coprod_{i \in I} \fupperstar(U_i)}{\fupperstar(U)}
	\end{equation*}
	is an effective epimorphism in $ \XX $.
	If $ n = 0 $, this proves the claim (\Cref{lem:quotientofqcisqc}).
	If $ n \geq 1 $, then \Cref{lem:relativecoh} shows that it suffices to show that for all $ i ,j \in I $, the object 
	\begin{equation*}
		\fupperstar(U_i) \cross_{\fupperstar(U)} \fupperstar(U_j) \equivalent \fupperstar(U_i \cross_{U} U_j)
	\end{equation*}
	is $ (n-1) $-coherent.
	This follows from the fact that $ U_i \cross_{U} U_j $ is $ (n-1) $-coherent (by \Cref{lem:relativecoh}) and the assumption that $ \fupperstar $ sends $ (n-1) $-coherent objects of $ \YY $ to $ (n-1) $-coherent objects of $ \XX $.
\end{proof}

\Cref{prop:gencoherenceofgeom} shows that coherence of a geometric morphism between \textit{locally} coherent \topoi is equivalent to the \textit{a priori} stronger condition that the pullback functor preserve $ n $-coherent objects for all $ n \geq 0 $; see also \Cref{cor:strongcoherenceofmorphisms}.

\begin{cor}
	Let $ \flowerstar \colon \fromto{\XX}{\YY} $ be a geometric morphism between locally coherent \topoi.
	Then $ \flowerstar $ is coherent if and only if $ \fupperstar $ takes $ n $-coherent objects of $ \YY $ to $ n $-coherent objects of $ \XX $ for all $ n \geq 0 $.
\end{cor}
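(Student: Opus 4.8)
The plan is to deduce the statement from \Cref{prop:gencoherenceofgeom} by induction on $ n $, using as the generating collection $ \YY_0 $ the full subcategory $ \YYcoh $ of coherent objects of $ \YY $.

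The `if' direction is immediate: an object of \atopos is coherent precisely when it is $ n $-coherent for every $ n \in \NN $, so if $ \fupperstar $ carries $ n $-coherent objects of $ \YY $ to $ n $-coherent objects of $ \XX $ for all $ n $, then it carries coherent objects to coherent objects, i.e., $ \flowerstar $ is coherent by definition.

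For the `only if' direction, I would assume $ \flowerstar $ is coherent and set $ \YY_0 \coloneq \YYcoh $. First I would verify that $ \YY_0 $ satisfies the first two hypotheses of \Cref{prop:gencoherenceofgeom} for every $ n \geq 0 $: the first holds because $ \YY $ is locally coherent, so every object of $ \YY $ -- in particular every $ n $-coherent one -- admits a cover by coherent objects, each of which is in particular $ n $-coherent; the second holds because $ \flowerstar $ being coherent means $ \fupperstar $ carries $ \YYcoh $ into $ \XXcoh $, and every coherent object of $ \XX $ is $ n $-coherent. Then I would run the induction on $ n $. The base case $ n = 0 $ is \Cref{prop:gencoherenceofgeom} with its third hypothesis vacuous. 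For the inductive step, I would observe that $ \XX $ and $ \YY $, being locally coherent, are locally $ (n-1) $-coherent, and that $ \fupperstar $ carries $ (n-1) $-coherent objects of $ \YY $ to $ (n-1) $-coherent objects of $ \XX $ by the inductive hypothesis, so the third hypothesis of \Cref{prop:gencoherenceofgeom} holds; that proposition then yields that $ \fupperstar $ carries $ n $-coherent objects of $ \YY $ to $ n $-coherent objects of $ \XX $, completing the induction.

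I do not expect a genuine obstacle: the argument is a bookkeeping exercise that feeds the hypotheses of \Cref{prop:gencoherenceofgeom} through an induction. The only facts one must keep straight are that a coherent object is $ n $-coherent for every $ n $ and that a locally coherent \topos is locally $ (n-1) $-coherent, both immediate from \Cref{rec:coherence}.
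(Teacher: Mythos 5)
Your argument is correct and is exactly the route the paper intends: the corollary is stated there without a separate proof, as an immediate consequence of \Cref{prop:gencoherenceofgeom} applied with $ \YY_0 = \YYcoh $ and the evident induction on $ n $ (the base case $ n = 0 $ having vacuous third hypothesis). Your verification of the hypotheses — local coherence supplying covers by coherent, hence $ n $-coherent, objects, and coherence of $ \flowerstar $ supplying hypothesis (2) — matches the paper's reasoning.
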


\Cref{prop:gencoherenceofgeom} also shows that coherence of a geometric morphism can be checked on a generating set of coherent objects.

\begin{cor}\label{cor:checkcohongen}
	Let $ \flowerstar \colon \fromto{\XX}{\YY} $ be a geometric morphism between locally coherent \topoi.
	Let $ \YY_0 \subset \Obj(\YYcoh) $ be a collection of coherent objects such that for every object $ U \in \YY $ there exists a cover $ \surjto{\coprod_{i \in I} U_i}{U} $ where $ U_i \in \YY_0 $ for each $ i \in I $.
	If for all $ U \in \YY_0 $ the object $ \fupperstar(U) $ is coherent, the geometric morphism $ \flowerstar \colon \fromto{\XX}{\YY} $ is coherent.
\end{cor}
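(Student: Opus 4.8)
The plan is to deduce this from \Cref{prop:gencoherenceofgeom} by induction on $ n \in \NN $, showing that $ \fupperstar $ carries $ n $-coherent objects of $ \YY $ to $ n $-coherent objects of $ \XX $ for every $ n $. Since an object of a \topos is coherent precisely when it is $ n $-coherent for all $ n $ (\Cref{rec:coherence}), this immediately yields that $ \fupperstar $ preserves coherent objects, which is exactly the condition that $ \flowerstar $ be a coherent geometric morphism (\Cref{def:coherentmors}).

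For the base case $ n = 0 $, I would apply \Cref{prop:gencoherenceofgeom} with the given collection $ \YY_0 $: its hypothesis (1) holds because every object of $ \YY $ — in particular every $ 0 $-coherent (i.e. quasicompact) object — admits a cover by objects of $ \YY_0 $ by assumption, and the members of $ \YY_0 $ are coherent and hence $ 0 $-coherent; hypothesis (2) holds because each $ \fupperstar(U) $ for $ U \in \YY_0 $ is coherent, hence $ 0 $-coherent; and hypothesis (3) is vacuous when $ n = 0 $. Thus $ \fupperstar $ carries quasicompact objects of $ \YY $ to quasicompact objects of $ \XX $.

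For the inductive step, assume $ \fupperstar $ carries $ (n-1) $-coherent objects of $ \YY $ to $ (n-1) $-coherent objects of $ \XX $, and apply \Cref{prop:gencoherenceofgeom} once more with the \emph{same} collection $ \YY_0 $. Hypothesis (1) holds as before, since the members of $ \YY_0 $ are coherent and hence $ n $-coherent, and every $ n $-coherent object of $ \YY $ admits a cover by objects of $ \YY_0 $; hypothesis (2) holds because each $ \fupperstar(U) $, $ U \in \YY_0 $, is coherent and therefore $ n $-coherent; and hypothesis (3) holds because $ \XX $ and $ \YY $ are locally coherent — hence locally $ (n-1) $-coherent — and $ \fupperstar $ preserves $ (n-1) $-coherence by the inductive hypothesis. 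We conclude that $ \fupperstar $ carries $ n $-coherent objects of $ \YY $ to $ n $-coherent objects of $ \XX $, completing the induction.

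The argument is essentially bookkeeping once \Cref{prop:gencoherenceofgeom} is available; the only point worth isolating is the observation that a \emph{single} generating family $ \YY_0 $ whose members are coherent (not merely $ n $-coherent for a fixed $ n $) can be fed into \Cref{prop:gencoherenceofgeom} at every stage, so that hypotheses (1) and (2) are verified uniformly in $ n $ while hypothesis (3) is supplied by the previous step. There is no genuine obstacle here.
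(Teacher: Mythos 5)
Your proof is correct and is exactly the argument the paper intends: the corollary is stated as an immediate consequence of \Cref{prop:gencoherenceofgeom}, obtained by feeding the single generating family $\YY_0$ into that proposition for each $n$ in turn, with the inductive hypothesis supplying condition (3). Nothing to add.
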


For the next result, we need the following lemma.

\begin{lem}\label{lem:sitecoherence}
	Let
	\begin{equation*}
		\fupperstar \colon \fromto{(Y,\tau_Y)}{(X,\tau_X)}
	\end{equation*}
	be a morphism of \sites, and write $ \yo_{\tau_Y} \colon \fromto{Y}{\Sh_{\tau_Y}(Y)} $ for the sheafified Yoneda embedding.
	If the topology $ \tau_X $ is finitary, then
	\begin{equation*}
		\fupperstar \yo_{\tau_Y} \colon \fromto{Y}{\Sh_{\tau_X}(X)}
	\end{equation*}
	factors through $ \Sh_{\tau_X}(X)^{\coh} \subset \Sh_{\tau_X}(X) $.
\end{lem}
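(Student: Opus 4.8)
The plan is to reduce the statement to \Cref{prop:SAG.A.3.1.3}=\allowbreak\SAG{Proposition}{A.3.1.3}, which guarantees that representable sheaves on a finitary \site are coherent. The essential point is that the inverse image functor $ \fupperstar \colon \fromto{\Sh_{\tau_Y}(Y)}{\Sh_{\tau_X}(X)} $ of the geometric morphism induced by a morphism of \sites carries representables to representables. More precisely, writing $ \fupperstar \colon \fromto{Y}{X} $ also for the underlying functor of the given morphism of \sites, I would first record the natural equivalence
\begin{equation*}
	\fupperstar \of \yo_Y \equivalent \yo_X \of \fupperstar
\end{equation*}
of functors $ \fromto{Y}{\Sh_{\tau_X}(X)} $. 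This is built into the construction of the induced geometric morphism: its inverse image $ \fupperstar $ is, by definition, the essentially unique left exact, colimit-preserving extension of $ \yo_X \of \fupperstar $ along $ \yo_Y $ (equivalently, one checks it directly from the adjunction $ \fupperstar \dashv \flowerstar $ and the Yoneda lemma, since $ \flowerstar $ is restriction along $ \fupperstar \colon \fromto{Y}{X} $, exactly as in the classical theory of $ 1 $-sites).

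Granting this identity, the proof concludes at once. Since the topology $ \tau_X $ is finitary, \Cref{prop:SAG.A.3.1.3} shows that $ \yo_X(x) $ is a coherent object of $ \Sh_{\tau_X}(X) $ for every object $ x \in X $. Hence for every $ y \in Y $ the object
\begin{equation*}
	\fupperstar(\yo_Y(y)) \equivalent \yo_X(\fupperstar(y))
\end{equation*}
is coherent, which is precisely the assertion that $ \fupperstar \yo_Y $ factors through the full subcategory $ \Sh_{\tau_X}(X)^{\coh} \subset \Sh_{\tau_X}(X) $.

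I do not expect any genuine obstacle. The only thing requiring a little care is to keep the two uses of the symbol $ \fupperstar $ straight --- the site-level functor $ \fromto{Y}{X} $ and the topos-level inverse image $ \fromto{\Sh_{\tau_Y}(Y)}{\Sh_{\tau_X}(X)} $ --- and to note that the argument uses no hypothesis beyond finitariness of $ \tau_X $; in particular, the requirement in \Cref{dfn:finitaryinftysite} that $ \fupperstar $ preserve fibre products is needed only so that the induced geometric morphism exists, and plays no role in the coherence argument itself. Note also that one does not need $ X $ to admit a terminal object here, since \Cref{prop:SAG.A.3.1.3} asserts coherence of the individual representables $ \yo_X(x) $ without that hypothesis.
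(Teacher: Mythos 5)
Your proof is correct and is essentially the paper's own argument: both reduce to the commutativity of the square $\fupperstar \of \yo_Y \equivalent \yo_X \of \fupperstar$ and then invoke \Cref{prop:SAG.A.3.1.3}=\allowbreak\SAG{Proposition}{A.3.1.3} to see that the representables $\yo_X(x)$ are coherent because $\tau_X$ is finitary. Your added remarks (keeping the two uses of $\fupperstar$ separate, and noting that no terminal object in $X$ is needed) are accurate but not substantively different from what the paper does.
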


\begin{proof}
	We have a commutative square 
	\begin{equation*}
		\begin{tikzcd}
			Y \arrow[r, "\fupperstar"] \arrow[d, "\yo_{\!\tau_Y}" left] & X \arrow[d, "\yo_{\!\!\tau_X}" right] \\
			\Sh_{\tau_Y}(Y) \arrow[r, "\fupperstar" below] & \Sh_{\tau_X}(X)
		\end{tikzcd}
	\end{equation*}
	where the vertical functors are sheafified Yoneda embeddings.
	Since the topology $ \tau_X $ is finitary, the sheafified Yoneda embedding $ \yo_{\tau_X} \colon \fromto{X}{\Sh_{\tau_X}(X)} $ factors through $ \Sh_{\tau_X}(X)^{\coh} $ (\Cref{prop:SAG.A.3.1.3}=\allowbreak\SAG{Proposition}{A.3.1.3}). 
\end{proof}

\begin{cor}\label{cor:morsitescoherent}
	Let $ \fupperstar \colon \fromto{(Y,\tau_Y)}{(X,\tau_X)} $ be a morphism of finitary \sites.
	Then the induced geometric morphism 
	\begin{equation*}
		\flowerstar \colon \fromto{\Sh_{\tau_X}(X)}{\Sh_{\tau_Y}(Y)}
	\end{equation*}
	is coherent.
\end{cor}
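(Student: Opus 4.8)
The plan is to deduce the statement from the criterion of \Cref{cor:checkcohongen}, which reduces coherence of a geometric morphism between \emph{locally} coherent \topoi to the assertion that the pullback functor carries a \emph{generating} family of coherent objects of the target to coherent objects of the source. So the first step is to verify the standing hypothesis of that criterion: since both $(X,\tau_X)$ and $(Y,\tau_Y)$ are finitary \sites, \Cref{prop:SAG.A.3.1.3}=\allowbreak\SAG{Proposition}{A.3.1.3} shows at once that $\Sh_{\tau_X}(X)$ and $\Sh_{\tau_Y}(Y)$ are locally coherent.

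The second step is to exhibit the generating family of coherent objects on the target $\Sh_{\tau_Y}(Y)$. The natural choice is the collection $\YY_0 \coloneq \{\yo_Y(y)\}_{y \in Y}$ of sheafified representables, where $\yo_Y\colon Y \to \Sh_{\tau_Y}(Y)$ is the sheafified Yoneda embedding. That each $\yo_Y(y)$ is a coherent object of $\Sh_{\tau_Y}(Y)$ is again part of \Cref{prop:SAG.A.3.1.3}=\allowbreak\SAG{Proposition}{A.3.1.3}, so $\YY_0 \subseteq \Obj(\Sh_{\tau_Y}(Y)^{\coh})$; and every sheaf $F \in \Sh_{\tau_Y}(Y)$ receives an effective epimorphism $\surjto{\coprod_{i \in I} \yo_Y(y_i)}{F}$ from a coproduct of sheafified representables -- the standard density statement for $\yo_Y$, using that $\Sh_{\tau_Y}(Y)$ is a left-exact accessible localisation of $\PP(Y)$ -- so $\YY_0$ generates $\Sh_{\tau_Y}(Y)$ in the sense required by \Cref{cor:checkcohongen}.

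The third and last step is to check that $\fupperstar$ -- the inverse image of the geometric morphism $\flowerstar$ induced by the morphism of \sites -- carries each $\yo_Y(y)$ to a coherent object of $\Sh_{\tau_X}(X)$. But this is exactly \Cref{lem:sitecoherence}, whose hypothesis, that $\tau_X$ is finitary, is part of our assumptions: it says that $\fupperstar \yo_Y$ factors through $\Sh_{\tau_X}(X)^{\coh}$. Granting all three steps, \Cref{cor:checkcohongen} applies directly and yields that $\flowerstar$ is coherent.

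I do not expect a genuine obstacle here: every ingredient has already been assembled in the preceding lemmas, and the argument is essentially bookkeeping in \Cref{prop:SAG.A.3.1.3}, \Cref{lem:sitecoherence}, and \Cref{cor:checkcohongen}. The only point deserving a moment's care is the density claim in the second step -- that the sheafified representables cover every object of $\Sh_{\tau_Y}(Y)$ -- but this is a standard feature of \categories of sheaves and requires no new input.
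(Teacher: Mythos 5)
Your proposal is correct and follows exactly the same route as the paper's own proof: local coherence of both sheaf \topoi via \Cref{prop:SAG.A.3.1.3}, the sheafified representables $\yo_Y(Y)$ as a generating family of coherent objects, \Cref{lem:sitecoherence} for the preservation of their coherence under $\fupperstar$, and \Cref{cor:checkcohongen} to conclude. No gaps.
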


\begin{proof}
	By \Cref{prop:SAG.A.3.1.3}, both $ \Sh_{\tau_X}(X) $ and $ \Sh_{\tau_Y}(Y) $ are locally coherent.
	The image $ \yo_{\tau_Y}(Y) $ of $ Y $ under the sheafified Yoneda embedding generates $ \Sh_{\tau_Y}(Y) $ under colimits, so by \Cref{cor:checkcohongen} it suffices to check that $ \fupperstar $ carries objects in $ \yo_{\tau_Y}(Y) $ to coherent objects of $ \XX $; this the content of \Cref{lem:sitecoherence}.
\end{proof}

\begin{nul}
	\Cref{prop:coherenceforn-localic,cor:checkcohongen,cor:morsitescoherent} together show that a geometric morphism of coherent $ 1 $-topoi is coherent in the sense of \cite[Exposé VI, Definition 3.1]{MR50:7131} if and only if the geometric morphism of corresponding of $ 1 $-localic \topoi is coherent if and only if the geometric morphism of coherent $ 1 $-topoi is coherent in the sense of \Cref{def:coherentmors}.
\end{nul}

We now turn to the equivalence between coherent $ n $-topoi and coherent $ n $-localic \topoi.

\begin{ntn}
	Let $ n \in \NNup $.
	Write
	\begin{equation*}
		\Top_{\infty}^{n,\coh} \subset \Topcoh
	\end{equation*}
	for the full subcategory spanned by the $ n $-localic coherent \topoi.
	Write
	\begin{equation*}
		\Top_{n}^{\coh} \subset \Top_{n}
	\end{equation*}
	for the subcategory of the $ (n+1) $-category of $ n $-topoi with objects coherent $ n $-topoi and morphisms coherent geometric morphisms.
	When $ n = 1 $, the $ 2 $-category $ \Top_{1}^{\coh} $ is the $ 2 $-category of ordinary coherent topoi and coherent geometric morphisms (both in the sense of \cite[Exposé VI]{MR50:7131}).
\end{ntn}

\noindent \Cref{prop:coherenceforn-localic,cor:checkcohongen} immediately imply the following:

\begin{prp}\label{prop:coherent1localic}
	Let $ n \in \NNup $.
	The equivalence of \categories $ (-)_{\leq n-1} \colon \equivto{\Top_{\infty}^{n}}{\Top_{n}} $ restricts to an equivalence
	\begin{equation*}
		(-)_{\leq n-1} \colon \equivto{\Top_{\infty}^{n,\coh}}{\Top_{n}^{\coh}}
	\end{equation*}
\end{prp}

\begin{cor}\label{cor:coherent1localicmors}
	Let $ n \in \NNup $.
	The following are equivalent for a geometric morphism $ \flowerstar \colon \fromto{\XX}{\YY} $ between $ n $-localic coherent \topoi:
	\begin{enumerate}[{\upshape (\ref*{cor:coherent1localicmors}.1)}]
		\item The geometric morphism $ \flowerstar \colon \fromto{\XX}{\YY} $ is coherent.

		\item The pullback functor $ \fupperstar \colon \fromto{\YY}{\XX} $ carries $ (n-1) $-truncated $ n $-coherent objects of $ \YY $ to $ n $-coherent objects of $ \XX $.
	\end{enumerate}
\end{cor}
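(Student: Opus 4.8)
The plan is to deduce the equivalence from \Cref{cor:checkcohongen}, which reduces coherence of a geometric morphism between locally coherent \topoi to the condition that $ \fupperstar $ send a \emph{generating} family of coherent objects to coherent objects. The implication $ (1) \implies (2) $ needs no work: \Cref{cor:strongcoherenceofmorphisms} already says that when $ \flowerstar $ is coherent the functor $ \fupperstar $ preserves $ m $-coherence for every $ m \in \NN $, and in particular $ \fupperstar $ carries $ (n-1) $-truncated $ n $-coherent objects of $ \YY $ to $ n $-coherent objects of $ \XX $. So the content lies in $ (2) \implies (1) $.

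For that direction, the first step I would take is to isolate a structural fact about an arbitrary $ n $-localic coherent \topos $ \ZZ $ (to be applied to $ \XX $ and to $ \YY $): its $ (n-1) $-truncated $ n $-coherent objects are genuinely coherent objects, and they form a generating family for $ \ZZ $. This is essentially extracted from the proof of \Cref{prop:coherenceforn-localic}. Indeed, that proposition gives that $ \ZZ $ is $ (n+1) $-coherent; its proof produces, for every object of $ \ZZ $, a cover by $ (n-1) $-truncated $ n $-coherent objects, and shows (invoking \SAG{Corollary}{A.2.4.3}) that any such object is in fact $ (n+1) $-coherent. Now if $ U \in \ZZ $ is $ (n-1) $-truncated and $ n $-coherent, then $ U $ is $ n $-truncated, so $ \ZZ_{/U} $ is $ n $-localic by \SAG{Lemma}{1.4.7.7}, and it is $ (n+1) $-coherent because $ U $ is; applying \Cref{prop:coherenceforn-localic} to $ \ZZ_{/U} $ shows it is coherent, i.e.\ $ U $ is a coherent object of $ \ZZ $. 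In particular $ \ZZ $ is locally coherent.

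Granting this, the endgame is short. Assume $ (2) $, and let $ \YY_{0} $ be the generating family of $ (n-1) $-truncated $ n $-coherent (hence coherent) objects of $ \YY $. For $ U \in \YY_{0} $: since $ \fupperstar $ is left exact it carries $ (n-1) $-truncated objects to $ (n-1) $-truncated objects, so $ \fupperstar(U) $ is $ (n-1) $-truncated; it is $ n $-coherent by hypothesis $ (2) $; hence, by the structural fact applied to the $ n $-localic coherent \topos $ \XX $, the object $ \fupperstar(U) $ is coherent in $ \XX $. Since both $ \XX $ and $ \YY $ are locally coherent, \Cref{cor:checkcohongen} now applies and shows that $ \flowerstar $ is coherent.

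I expect the only genuinely nontrivial ingredient to be the structural fact of the second paragraph — that a generating family of an $ n $-localic coherent \topos may be chosen among its $ (n-1) $-truncated $ n $-coherent objects, all of which are coherent — which is not visible from the definitions and rests on the internal workings of the proof of \Cref{prop:coherenceforn-localic} together with \SAG{Corollary}{A.2.4.3}. Everything after that is formal, the only ambient input being that left exact functors preserve truncatedness.
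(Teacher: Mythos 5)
Your proof is correct and is essentially the paper's: the corollary is presented there as an immediate consequence of \Cref{prop:coherenceforn-localic} and \Cref{cor:checkcohongen}, and you have filled in precisely the intended chain --- the $(n-1)$-truncated $n$-coherent objects of an $n$-localic coherent \topos form a generating family of coherent objects, after which \Cref{cor:checkcohongen} finishes the job. The only simplification available is that your detour through the $n$-localicity and $(n+1)$-coherence of $\XX_{/U}$ can be replaced by a direct appeal to \Cref{cor:SAG.A.2.4.4}: an $(n-1)$-truncated $n$-coherent object is its own $(n-1)$-truncation, hence coherent as soon as the ambient \topos is known to be coherent, which \Cref{prop:coherenceforn-localic} supplies.
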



\subsection{Examples of coherent \texorpdfstring{$\infty$}{∞}-topoi from algebraic geometry}\label{subsec:examplesofcoherent}

In this section we use \Cref{cor:morsitescoherent} to provide a few examples of coherent \topoi arising from algebraic geometry.

\begin{exm}\label{ex:Spectralspacecoherent} 
	For a spectral topological space $ S $, write $ \Open^{\qc}(S) \subset \Open(S) $\index[notation]{Openqc@$\Open^{\qc}$} for the locale  of quasicompact opens in $ S $.
	Since the quasicompact opens of $ S $ form a basis for the topology on $ S $ that is closed under finite intersections, the \topos $ \Sh(\Open^{\qc}(S)) $ is $ 0 $-localic \Cref{nul:nlocalicintermsofsites}.
	Applying \cite[Proposition B.6.4]{Ultracategories} we see that the inclusion
	\begin{equation*}
		\Open^{\qc}(S) \subset \Open(S)
	\end{equation*}
	induces an equivalence of $ 0 $-localic \topoi
	\begin{equation*}
		\widetilde{S} \equivalent \Sh(\Open^{\qc}(S))
	\end{equation*}
	(see also \Cref{cor:n-localicbasis}).
	Since Grothendieck topology on \smash{$ \Open^{\qc}(S) $} is finitary, the \topos $ \widetilde{S} $ of sheaves on $ S $ is a coherent \topos.
	(Cf. \SAG{Lemma}{2.3.4.1}).

	If $ f \colon \fromto{S}{T} $ is a quasicompact continuous map of spectral topological spaces, the inverse image map $ f^{-1} \colon \fromto{\Open(T)}{\Open(S)} $ restricts to a map
	\begin{equation*}
		f^{-1} \colon \fromto{\Open^{\qc}(T)}{\Open^{\qc}(S)} \period
	\end{equation*}
	\Cref{cor:morsitescoherent} sho\-ws that the induced geometric morphism $ \flowerstar \colon \fromto{\widetilde{S}}{\widetilde{T}} $ is coherent.
	Since spectral topological spaces are sober, a continuous map $ f \colon \fromto{S}{T} $ of spectral topological spaces induces a coherent geometric morphism on the level of \topoi if and only if $ f $ is quasicompact.
\end{exm}

\begin{nul}
	Note that if $ \XX $ is a coherent \topos, then the underlying topological space of $ \XX $ is spectral (\Cref{cor:n-coherenceonlydependsonn-topos}).
\end{nul}

Combining the fact that the Zariski, Nisnevich\footnote{For background on the Nisnevich topology, see \cites[\SAGsec{3.7}]{SAG}{Hoyois:Nisnevichnotes}{Hoyois:Nisnevichagree}{MR1045853}.}, étale, and proétale\footnote{For background on the proétale topology, see \cites[Tags \href{http://stacks.math.columbia.edu/tag/0988}{0988} \& \href{http://stacks.math.columbia.edu/tag/099R}{099R}]{stacksproject}{MR3379634}.} topoi of a scheme all have the same underlying topological space with the fact that if a scheme $ X $ is quasicompact and quasiseparated, then the $ 1 $-topoi of sheaves on $ X $ in each of these topologies is coherent \cites[\SAGthm{Proposition}{2.3.4.2} \& \SAGthm{Remark}{3.7.4.2}]{SAG}[Appendix A]{MotivicNorms:BachmannHoyois}[Example 7.1.7]{Ultracategories}, we deduce the following: 

\begin{prp}\label{prop:coherentschemetopos}
	The following are equivalent for a scheme $ X $:
	\begin{enumerate}[{\upshape (\ref*{prop:coherentschemetopos}.1)}]
		\item The scheme $ X $ is coherent (i.e., quasicompact and quasiseparated).

		\item The Zariski \topos\index[terminology]{Zariski topos@Zariski \topos}\index[terminology]{topos@\topos!Zariski} $ X_{\zar} $\index[notation]{Xzar@$X_{\zar}$} of $ X $ is a coherent \topos.

		\item The Nisnevich \topos\index[terminology]{Nisnevich topos@Nisnevich \topos}\index[terminology]{topos@\topos!Nisnevich} $ X_{\Nis} $\index[notation]{XNis@$X_{\Nis}$}of $ X $ is a coherent \topos.

		\item The étale \topos\index[terminology]{etale topos@étale \topos}\index[terminology]{topos@\topos!etale@étale} $ X_{\et} $\index[notation]{Xet@$X_{\et}$} of $ X $ is a coherent \topos.

		\item The proétale \topos\index[terminology]{proetale topos@proétale \topos}\index[terminology]{topos@\topos!proétale} $ X_{\proet} $\index[notation]{Xproet@$X_{\proet}$} of $ X $ is a coherent \topos.
	\end{enumerate}
\end{prp}

\begin{nul}
	In the case of the étale topology, see also \SAG{Proposition}{2.3.4.2}.
\end{nul}

\begin{exm}
	Let $ f \colon \fromto{X}{Y} $ be a morphism of coherent schemes and let
	\begin{equation*}
		\tau \in \{\zar,\Nis,\et,\proet\} \period
	\end{equation*}
	Then the induced geometric morphism $ \flowerstar \colon \fromto{X_{\tau}}{Y_{\tau}} $ on \topoi of $ \tau $-sheaves is a coherent geometric morphism of coherent \topoi.
	(Cf. \SAG{Proposition}{2.3.5.1}) 
\end{exm}

\begin{exm}
	Let $ X $ be a coherent scheme.
	Then the natural geometric morphisms
	\begin{equation*}
		\fromto{X_{\proet}}{X_{\et}} \comma \qquad \fromto{X_{\et}}{X_{\Nis}} \comma \andeq \fromto{X_{\Nis}}{X_{\zar}}
	\end{equation*}
	are all coherent geometric morphisms of coherent \topoi.
\end{exm}


\subsection{Classification of bounded coherent \texorpdfstring{$\infty$}{∞}-topoi via \texorpdfstring{$\infty$}{∞}-pretopoi}\label{subsec:classificationofbctopoi}

In this section we explain how \atopos that is both bounded and coherent is \emph{determined} by its truncated coherent objects.

\begin{ntn} 
	Write $\Topbc \subset \Topcoh$\index[notation]{Topbc@$ \Topbc $} for the full subcategory spanned by those coherent \topoi that are also bounded, that is, the \defn{bounded coherent}\index[terminology]{bounded coherent!topos@\topos}\index[terminology]{topos@\topos!bounded coherent} \topoi.
\end{ntn}

To a large extent, bounded coherent \topoi function in much the same way as coherent $1$-topoi.
In particular, any bounded coherent \topos is, in a canonical fashion, the \category of sheaves on \asite with excellent formal properties.

\begin{dfn}\label{dfn:pretopos}
	\Acategory $ X $ called an \defn{\pretopos}\index[terminology]{pretopos@\pretopos} if and only if the following conditions are satisfied.
	\begin{itemize}
		\item The \category $ X $ admits finite limits.

		\item The \category $ X $ admits finite coproducts, which are universal and disjoint.

		\item Groupoid objects in $ X $ are effective, and their geometric realizations are universal.
	\end{itemize}

If $ X $ and $ Y $ are \pretopoi, then a functor $ \fupperstar \colon \fromto{Y}{X} $ is a \emph{morphism of $\infty$-pretopoi}\index[terminology]{morphism!of \pretopoi}\index[terminology]{pretopos@\pretopos!morphism} if $ \fupperstar $ preserves finite limits, finite coproducts, and effective epimorphisms.
	We write
	\begin{equation*}
		\preTop \subset \Cat_{\infty,\updelta_1}
	\end{equation*}
	for the subcategory consisting of \pretopoi and morphisms of \pretopoi.
\end{dfn}

\begin{exm}
	If $\XX$ is a coherent \topos, then the full subcategory $\XXcoh\subseteq\XX$ spanned by the coherent objects is \apretopos \SAG{Corollary}{A.6.1.7}.
\end{exm}

\noindent The following two useful facts are immediate from the definitions.

\begin{lem}\label{lem:prodofpretopoi}
	Let $ \{X_i\}_{i \in I} $ be a collection of \pretopoi.
	Then the product $ \prod_{i \in I} X_i $ in $ \Cat_{\infty,\updelta_1} $ is \apretopos and for each $ j \in I $ the projection
	\begin{equation*}
		\pr_{j} \colon \fromto{\prod_{i\in I} X_i}{X_j}
	\end{equation*}
	is a morphism of \pretopoi.
\end{lem}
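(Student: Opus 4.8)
\textbf{Proof strategy for \Cref{lem:prodofpretopoi}.} The plan is to verify each of the three defining conditions of an \pretopos (\Cref{dfn:pretopos}) for $X \coloneq \prod_{i \in I} X_i$ by exploiting the fact that limits and colimits in a product \category are computed componentwise, and that the projection functors $\pr_j$ detect and create both. First I would recall that for any collection of \categories $\{X_i\}_{i\in I}$, a diagram $\fromto{K}{\prod_i X_i}$ has a limit (resp.\ colimit) if and only if each component $\fromto{K}{X_i}$ does, in which case it is computed componentwise and preserved by each $\pr_j$; moreover a morphism in $\prod_i X_i$ is an equivalence (resp.\ an effective epimorphism, which is detected by the groupoid object it generates together with a colimit condition) if and only if each of its components is. With these observations in hand, each axiom reduces to the corresponding axiom in the factors.

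Concretely, the steps would be: (1) Since each $X_i$ admits finite limits and these are computed componentwise, $X$ admits finite limits and each $\pr_j$ preserves them. (2) Since each $X_i$ admits finite coproducts computed componentwise, so does $X$, and each $\pr_j$ preserves them; universality of coproducts is the statement that base change along any morphism preserves the coproduct decomposition, which holds componentwise hence in $X$; disjointness says the pairwise intersections (fibre products over the coproduct) of the summands are initial, again a componentwise condition, hence inherited. (3) A groupoid object $G_\bullet$ in $X$ is the same as a tuple of groupoid objects $(G_\bullet^{(i)})_{i\in I}$; each is effective in $X_i$ with geometric realisation $|G_\bullet^{(i)}|$, so $|G_\bullet| = (|G_\bullet^{(i)}|)_{i\in I}$ exists and the \v{C}ech nerve of $\fromto{G_0}{|G_\bullet|}$ is $G_\bullet$ componentwise, hence in $X$; universality of geometric realisations is likewise a componentwise base-change condition. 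Finally, $\pr_j$ preserves finite limits (step 1) and finite coproducts (step 2), and it preserves effective epimorphisms because effective epimorphisms in $X$ are exactly those morphisms whose $j$-th component is an effective epimorphism for every $j$; thus $\pr_j$ is a morphism of \pretopoi.

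I do not anticipate a genuine obstacle here — the lemma is "immediate from the definitions" as the text says — but the one point requiring a small amount of care is the bookkeeping that finite limits, finite coproducts, and geometric realisations of groupoid objects are \emph{all} computed componentwise in a product of \categories, so that each of the universality clauses (which are themselves stability-under-base-change statements, i.e.\ assertions about certain limit diagrams) transports factor-by-factor. Once that is recorded once and for all, the verification of the three axioms and of the fact that each $\pr_j$ is a morphism of \pretopoi is entirely routine.
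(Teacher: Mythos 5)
Your proof is correct and is exactly the argument the paper has in mind: the text states the lemma is ``immediate from the definitions,'' and the content of that remark is precisely your observation that finite limits, finite coproducts, \v{C}ech nerves, and geometric realisations in $\prod_{i\in I} X_i$ are all computed componentwise, so each axiom and each universality clause transports factor-by-factor. Nothing further is needed.
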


\begin{lem}\label{lem:pullbackofpretopoi}
	Given morphisms of \pretopoi $ \fromto{X}{Z} $ and $ \fromto{Y}{Z} $, the pullback $ X \cross_Z Y $ in $ \Cat_{\infty,\updelta_1} $ is \apretopos, and the projections
	\begin{equation*}
		\pr_{1} \colon \fromto{X \cross_Z Y}{X} \quad \text{and} \quad \pr_{2} \colon \fromto{X \cross_Z Y}{X}
	\end{equation*}
	are morphisms of \pretopoi.
\end{lem}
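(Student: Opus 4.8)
The plan is to deduce \Cref{lem:pullbackofpretopoi} from the elementary principle that limits and colimits preserved by both legs of a cospan of \categories are created, objectwise, in the pullback. Write $F \colon \fromto{X}{Z}$ and $G \colon \fromto{Y}{Z}$ for the two morphisms of \pretopoi; the pullback $X \cross_Z Y$ is then a $\delta_1$-small \category, an object of which is a pair $(x,y)$ equipped with an equivalence $F(x) \equivalent G(y)$ in $Z$, with mapping spaces
\[
	\Map_{X \cross_Z Y}\bigl((x,y),(x',y')\bigr) \equivalent \Map_X(x,x') \crosslimits_{\Map_Z(Fx,Fx')} \Map_Y(y,y') \period
\]
Since $F$ and $G$ preserve finite limits and finite coproducts, the objectwise criterion gives at once that $X \cross_Z Y$ admits finite limits and finite coproducts, that these are computed componentwise, and that $\pr_1$ and $\pr_2$ preserve and jointly detect them; in particular $\pr_1$ and $\pr_2$ are left exact and preserve finite coproducts. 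Universality and disjointness of finite coproducts are then inherited from $X$ and $Y$, since both are conditions on certain fibre products and on the initial object, all of which are componentwise in $X \cross_Z Y$.

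The one step that is not purely formal — and which I would isolate at the outset — is that a morphism of \pretopoi preserves geometric realisations of groupoid objects, even though this is not literally part of the definition. The argument is: if $U_{\bullet}$ is a groupoid object of \apretopos $X$, then by effectivity $U_{\bullet}$ is the \v{C}ech nerve of an effective epimorphism $q \colon \fromto{U_0}{\lvert U_{\bullet}\rvert}$; a morphism of \pretopoi $F$ preserves $q$ (being an effective epimorphism) and preserves \v{C}ech nerves (being built from finite limits), so $F(U_{\bullet})$ is the \v{C}ech nerve of the effective epimorphism $F(q)$, whence $\lvert F(U_{\bullet})\rvert \equivalent F(\lvert U_{\bullet}\rvert)$ because groupoid objects are effective in the target \pretopos as well.

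With this in hand I would finish as follows. Given a groupoid object $U_{\bullet}$ of $X \cross_Z Y$, its images in $X$, $Y$, and $Z$ are groupoid objects, finite limits being componentwise; each of these has an effective geometric realisation that $F$ and $G$ preserve, so the objectwise criterion produces $\lvert U_{\bullet}\rvert$ in $X \cross_Z Y$, computed componentwise, and exhibits $U_{\bullet}$ as its \v{C}ech nerve — hence groupoid objects in $X \cross_Z Y$ are effective. Universality of these geometric realisations transfers from $X$ and $Y$ exactly as universality of coproducts did. Finally, a morphism $f$ of $X \cross_Z Y$ is an effective epimorphism if and only if the (componentwise) geometric realisation of its (componentwise) \v{C}ech nerve recovers its target, that is, if and only if $\pr_1(f)$ and $\pr_2(f)$ are both effective epimorphisms; so $\pr_1$ and $\pr_2$ preserve effective epimorphisms and are morphisms of \pretopoi. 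The only genuine obstacle here is the preservation of geometric realisations of groupoid objects by morphisms of \pretopoi; everything else is bookkeeping with the componentwise description of (co)limits in a pullback of \categories.
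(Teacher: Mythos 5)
Your proof is correct: the paper offers no argument for this lemma, declaring it (together with \Cref{lem:prodofpretopoi}) ``immediate from the definitions'', and your write-up is exactly the natural unpacking of that claim via componentwise creation of (co)limits in a pullback of \categories along (co)limit-preserving legs. The one step that genuinely requires an argument --- that a morphism of \pretopoi preserves geometric realisations of groupoid objects, deduced from effectivity together with preservation of \v{C}ech nerves and effective epimorphisms --- is the right observation to isolate, and you handle it correctly.
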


\begin{ntn}\label{ntn:effepi}
	Let $ X $ be \apretopos, and write $E\subseteq X$ for the collection of effective epimorphisms in $ X$.
	Then $(X,E)$ is \apresite, and we write $ \eff \coloneq\tau_E$ for the resulting finitary topology on $ X $.
	We call this topology the \defn{effective epimorphism topology} on $ X $ \cite[\SAGsubsec{A.6.2}]{SAG}.
\end{ntn}

\begin{nul}
	The effective epimorphism topology on \apretopos is a subcanonical topology \SAG{Corollary}{A.6.2.6}.
\end{nul}

\begin{dfn}\label{dfn:boundedpretopos}
	\Apretopos $ X $ is \defn{bounded}\index[terminology]{pretopos@\pretopos!bounded}\index[terminology]{bounded!pretopos@\pretopos} if and only if $ X $ is $ \updelta_0 $-small and every object of $ X $ is truncated.
	We write
	\begin{equation*}
		\preTopbdd \subset \preTop
	\end{equation*}
	for the full subcategory spanned by the bounded \pretopoi.
\end{dfn}

\begin{thm}[\SAG{Theorem}{A.7.5.3}]\label{thm:SAG.A.7.5.3}
	The constructions
	\begin{equation*}
		\goesto{\XX}{\XXcohbdd} \andeq \goesto{X}{\Sheff{X}}
	\end{equation*}
	are mutually inverse equivalences of \categories
	\begin{equation*}
		\Topbc \simeq \preTop^{\bdd,\op} \period
	\end{equation*}
\end{thm}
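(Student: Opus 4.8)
This is \SAG{Theorem}{A.7.5.3}; the plan below indicates the shape of a proof assembled from the machinery above. First I would check that both assignments are well-defined functors. For $\XX \in \Topbc$ the full subcategory $\XXcoh \subseteq \XX$ is an $\infty$-pretopos (\SAG{Corollary}{A.6.1.7}, recalled above), and $\XXcohbdd = (\XXcoh)_{<\infty}$ is closed in $\XXcoh$ under finite limits, finite coproducts, and geometric realisations of groupoid objects — finite limits and finite coproducts of $n$-truncated objects are $n$-truncated, and the realisation of a levelwise $n$-truncated groupoid object is $(n+1)$-truncated — so $\XXcohbdd$ is again an $\infty$-pretopos, and it is bounded since $\XXcoh$ is essentially small and its objects are all truncated. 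A coherent geometric morphism $\flowerstar \colon \fromto{\XX}{\YY}$ has $\fupperstar$ preserving coherence (\Cref{cor:strongcoherenceofmorphisms}) and truncatedness, hence restricts to a morphism of $\infty$-pretopoi $\fromto{\YYcohbdd}{\XXcohbdd}$. Conversely, for a bounded $\infty$-pretopos $X$ the pair $(X,\eff)$ is a finitary \site with a terminal object (\Cref{ntn:effepi}), so $\Sheff{X}$ is a coherent \topos and the sheafified Yoneda embedding $\incto{X}{\Sheff{X}}$ is fully faithful with image consisting of coherent objects (\Cref{prop:SAG.A.3.1.3} together with the subcanonicity of the effective-epimorphism topology), and morphisms of bounded $\infty$-pretopoi are morphisms of finitary \sites, so they induce coherent geometric morphisms by \Cref{cor:morsitescoherent}. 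The one remaining well-definedness point — that $\Sheff{X}$ is \emph{bounded} — I would deduce from the fact that $X$ has only truncated objects: one identifies the $n$-localic reflection $L_n(\Sheff{X})$ with the \topos of sheaves of $(n-1)$-truncated spaces on $(X,\eff)$, and then the equivalence $\fromto{\Sheff{X}}{\lim_{n} L_n(\Sheff{X})}$ exhibits $\Sheff{X}$ as bounded (\Cref{cnstr:boundedtopoi}).

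Next I would show the unit $\fromto{X}{(\Sheff{X})^{\coh}_{<\infty}}$ is an equivalence, i.e.\ that every truncated coherent sheaf $F$ on $(X,\eff)$ is representable. Since $F$ is quasicompact there is a finite cover $\surjto{\coprod_{i \in I} \yo(x_i)}{F}$; because $F$ is truncated and coherent, each term of the associated \v{C}ech groupoid object — the iterated fibre products $\yo(x_i) \times_F \cdots \times_F \yo(x_j)$ — is again truncated and coherent, hence (by an induction that tracks coherence and truncation levels simultaneously, using the calculus of \cref{subsec:n-coherenceonlydependsonn-topos} and the epi–mono factorisations available in the pretopos $X$) representable. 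Full faithfulness of $\yo$ then lifts this groupoid object to a groupoid object $G_{\bullet}$ in $X$; effectivity of groupoid objects in the $\infty$-pretopos $X$ gives a realisation $g \in X$, and since $\yo$ preserves finite limits and effective epimorphisms it carries $g = \lvert G_{\bullet} \rvert$ to $F$. Combined with full faithfulness of $\yo$, this gives the unit equivalence $\equivto{X}{(\Sheff{X})^{\coh}_{<\infty}}$, naturally in $X$.

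For the counit $\equivto{\Sheff{\XXcohbdd}}{\XX}$ the inclusion $\incto{\XXcohbdd}{\XX}$ is a left-exact functor from the \site into a \topos carrying covers to covers, hence classifies a geometric morphism whose inverse image $L \colon \fromto{\Sheff{\XXcohbdd}}{\XX}$ is the colimit-preserving left-exact extension of this inclusion. Here boundedness of $\XX$ is essential, since $\XXcohbdd$ generates only $\XX_{<\infty}$ under colimits: one cannot argue directly, so instead one reduces to truncation levels. Since $\XX$ is bounded, $\XX \simeq \lim_{n} L_n(\XX)$ in $\Top_{\infty}$ with each $L_n(\XX)$ an $n$-localic coherent \topos (\Cref{prop:coherenceforn-localic}); one checks $L_n(\XX) \simeq L_n(\Sheff{\XXcohbdd})$ — both are $n$-localic coherent, and their $(n-1)$-truncated parts agree because $\XXcohbdd$ generates $\XX_{\leq n-1}$ for the effective-epimorphism topology, using \Cref{cor:SAG.A.2.4.4} to truncate covers down. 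Then $\Sheff{\XXcohbdd} \simeq \lim_{n} L_n(\Sheff{\XXcohbdd}) \simeq \lim_{n} L_n(\XX) \simeq \XX$, all inverse limits being computed in $\Cat_{\infty,\delta_1}$ via \Cref{thm:filteredlimsinRTop}, and one verifies this is $L$ and is natural. Together with the unit equivalence this shows the two functors are mutually inverse equivalences $\Topbc \simeq \preTopbdd^{\,\op}$.

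The main obstacle is this last step: controlling $\Sheff{\XXcohbdd}$ (and, dually, $\Sheff{X}$) \emph{above} truncation level, where the truncated coherent objects no longer generate, by reducing everything to the $n$-localic case and gluing along $\fromto{\XX}{\lim_n L_n(\XX)}$. The other delicate point is the representability induction in the unit step, where one needs to descend through the \v{C}ech nerve of a cover by coherent objects whose own truncation levels are not a priori bounded; this is exactly where the interaction of coherence, truncation, and $n$-localic reflection developed in \cref{subsec:n-coherenceonlydependsonn-topos} and \cref{subsec:coherenceforn-localic} does the work, after which the remaining verifications are bookkeeping.
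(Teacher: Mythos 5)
The paper does not prove this statement — it is quoted directly from Lurie as \SAG{Theorem}{A.7.5.3} — so there is no internal argument to compare against; your outline is a faithful reconstruction of Lurie's proof, with the same unit/counit structure, the same dévissage of the counit through the $n$-localic reflections $\XX \simeq \lim_{n} L_n(\XX)$ using boundedness, and the same representability induction for truncated coherent sheaves via Čech nerves and effectivity of groupoid objects in the \pretopos. The two points you flag as delicate are exactly where the real work lies, though to make the representability induction close you should run it in relative form — inducting on the truncation level of a morphism $F \to \yo(x)$ to a representable, working in the slice \pretopoi $X_{/x}$ — since the Čech-nerve terms of a cover of an $n$-truncated coherent $F$ are only $(n-1)$-truncated \emph{over} a representable, not absolutely.
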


\noindent In light of \Cref{nul:coherencecheckedaftercompletion} we deduce the following variant for Postnikov complete coherent \topoi:

\begin{cor}\label{cor:postSAG.A.7.5.3}
	Write \smash{$ \Top_{\infty}^{\,\post,\coh} \subset \Topcoh $} for the full subcategory spanned by the Postnikov complete coherent \topoi.
	The constructions $\goesto{\XX}{\XXcohbdd}$ and $\goesto{X}{\Sheff{X}^{\post}}$ are mutually inverse equivalences of \categories
	\begin{equation*}
		\Top_{\infty}^{\,\post,\coh} \simeq \preTop^{\bdd,\op} \period
	\end{equation*}
\end{cor}

We finish this section by recording the following bounded analogue of \Cref{lem:prodofpretopoi} that we use later, as well as a similar result for functor \categories

\begin{lem}\label{lem:prodofbddpretopoi}
	Let $ \{X_i\}_{i \in I} $ be a \emph{finite} collection of bounded \pretopoi.
	Then the \pretopos given by the product $ \prod_{i \in I} X_i $ in $ \Cat_{\infty,\updelta_1} $ is a bounded \pretopos.
\end{lem}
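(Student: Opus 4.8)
The plan is straightforward. By \Cref{lem:prodofpretopoi} the product $X \coloneq \prod_{i\in I} X_i$, formed in $\Cat_{\infty,\delta_1}$, is already an $\infty$-pretopos, so it suffices to verify the two additional conditions of \Cref{dfn:boundedpretopos}: that $X$ is essentially $\delta_0$-small, and that every object of $X$ is truncated. I would dispatch these two points in turn.

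The small-ness is immediate: each $X_i$ is bounded, hence essentially $\delta_0$-small, and a finite (indeed, any $\delta_0$-small-indexed) product of essentially $\delta_0$-small \categories is essentially $\delta_0$-small, since $\Cat_\infty$ is closed under $\delta_0$-small limits inside $\Cat_{\infty,\delta_1}$.

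The real content — and the only place where the hypothesis that $I$ is finite is used — is truncatedness. Here I would argue as follows. Mapping spaces in a product of \categories are computed componentwise: for objects $x = (x_i)_{i\in I}$ and $y = (y_i)_{i\in I}$ of $X$ one has $\Map_X(y,x) \equivalent \prod_{i\in I} \Map_{X_i}(y_i,x_i)$. Since the class of $n$-truncated spaces is closed under products, it follows that $x$ is $n$-truncated in $X$ if and only if each $x_i$ is $n$-truncated in $X_i$. Now, each $X_i$ is bounded, so for every $i\in I$ there is some $n_i \in \NN$ with $x_i$ being $n_i$-truncated; set $n \coloneq \max_{i\in I} n_i$, which is a genuine natural number precisely because $I$ is finite. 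Then $x_i$ is $n$-truncated for every $i\in I$, so $x$ is $n$-truncated, hence truncated. As $x$ was arbitrary, every object of $X$ is truncated, and therefore $X$ is a bounded \pretopos.

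I do not anticipate any genuine obstacle here; the argument is entirely routine. The one point worth flagging is that it genuinely requires $I$ finite: for infinite $I$ the truncation levels $n_i$ may be unbounded, so that an object $(x_i)_{i\in I}$ of $\prod_{i\in I} X_i$ need not be truncated even when each component $x_i$ is — which is exactly why \Cref{lem:prodofpretopoi} holds for arbitrary $I$ but its bounded analogue does not.
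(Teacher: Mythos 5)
Your proof is correct and follows essentially the same route as the paper's: reduce to essential $\delta_0$-smallness (a finite product of essentially $\delta_0$-small \categories is essentially $\delta_0$-small) and to truncatedness, noting that an object of the product is $n$-truncated exactly when all its components are, and using finiteness of $I$ to extract a uniform truncation bound. Your explicit remark about computing mapping spaces componentwise and taking $n = \max_i n_i$ just spells out what the paper leaves implicit.
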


\begin{proof}
	For each $ i \in I $ the \category $ X_i $ is $ \updelta_0 $-small, so the product $ \prod_{i \in I} X_i $ is also $ \updelta_0 $-small.
	For any integer $ n \geq -2 $, an object $ F \in \prod_{i \in I} X_i $ is $ n $-truncated if and only if $ \pr_i(F) \in X_i $ is $ n $-truncated for all $ i \in I $.
	Since $ I $ is \textit{finite} and every object of each of the \categories $ \{X_i\}_{i\in I} $ is truncated by assumption, every object of the product $ \prod_{i \in I} X_i $ is truncated.
\end{proof}

\begin{lem}\label{lem:Funpretopoi}
	Let $ C $ be \acategory and $ X $ \apretopos.
	Then:
	\begin{enumerate}[{\upshape (\ref*{lem:Funpretopoi}.1)}]
		\item\label{lem:Funpretopoi.1} The \category $ \Fun(C,X) $ is \apretopos.

		\item\label{lem:Funpretopoi.2} If $ C $ is $ \updelta_0 $-small and has finitely many objects up to equivalence and $ X $ is bounded, then the \pretopos $ \Fun(C,X) $ is bounded.
	\end{enumerate}
\end{lem}

\begin{proof}
	First, \enumref{lem:Funpretopoi}{1} is clear from the definitions and the fact that (co)limits in functor \categories are computed objectwise.

	In the situation of \enumref{lem:Funpretopoi}{2}, note that since $ C $ and $ X $ are $ \updelta_0 $-small, the \pretopos $ \Fun(C,X) $ is $ \updelta_0 $-small.
	Note that for any integer $ n \geq -2 $, an object $ F \in \Fun(C,X) $ is $ n $-truncated if and only if $ F(c) $ is $ n $-truncated for each $ c \in C $.
	So since every object of $ X $ is truncated and $ C $ has finitely many objects up to equivalence, every object of $ \Fun(C,X) $ is also truncated.
	Hence the \pretopos $ \Fun(C,X) $ is bounded.
\end{proof}


\subsection{Coherence of inverse limits}\label{subsec:cohinverselimtis}

We now recall that bounded coherent \topoi and coherent geometric morphisms are stable under inverse limits in $ \Top_{\infty} $.

\begin{prp}[{\SAG{Proposition}{A.8.3.1}}]\label{prop:SAG.A.8.3.1}
	The \category $ \preTopbdd $ admits filtered colimits and the forgetful functor $ \fromto{\preTopbdd}{\Cat_{\infty,\updelta_1}} $ preserves filtered colimits.
\end{prp}

\begin{prp}[{\SAG{Proposition}{A.8.3.2}}]\label{prop:SAG.A.8.3.2}
	Let $ X \colon \fromto{A}{\preTopbdd} $ be a filtered diagram of bounded \pretopoi.
	Then the natural geometric morphism
	\begin{equation*}
		\fromto{\Sheff{\colim\nolimits_{\alpha \in A} X_{\alpha}} }{\lim\nolimits_{\alpha \in A^{\op}} \Sheff{X_{\alpha}}}
	\end{equation*}
	is an equivalence in $ \Top_{\infty} $.
\end{prp}

\begin{nul}
	See \cite[Lemma 3.3]{ClausenMathew:hyperdescent} for a more general statement about filtered colimits of finitary \sites.
\end{nul}

\noindent The following is immediate from the previous two propositions and \Cref{thm:filteredlimsinRTop}=\allowbreak\HTT{Theorem}{6.3.3.1}.

\begin{cor}[{\SAG{Corollary}{A.8.3.3}}]\label{cor:SAG.A.8.3.3}
	The \category $ \Topbc $ admits inverse limits and the inclusion $ \fromto{\Topbc}{\Top_{\infty}} $ and forgetful functor $ \fromto{\Topbc}{\Cat_{\infty,\updelta_1}} $ both preserve inverse limits.
\end{cor}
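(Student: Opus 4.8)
The plan is to transport the problem across the equivalence $\Topbc \simeq \preTop^{\bdd,\op}$ of \Cref{thm:SAG.A.7.5.3}, under which inverse limits of bounded coherent \topoi correspond to filtered colimits of bounded \pretopoi. So I would begin with an inverse \category $A$ (so that $A^{\op}$ is filtered) and a diagram $\YY \colon \fromto{A}{\Topbc}$; applying the equivalence objectwise produces a filtered diagram $X \colon \fromto{A^{\op}}{\preTopbdd}$ with $X_{\alpha} \equivalent (\YY_{\alpha})^{\coh}_{<\infty}$ and $\YY_{\alpha} \equivalent \Sheff{X_{\alpha}}$, functorially in $\alpha$.

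Next I would invoke \Cref{prop:SAG.A.8.3.1}=\SAG{Proposition}{A.8.3.1}, which gives the colimit $\colim_{\alpha \in A^{\op}} X_{\alpha}$ in $\preTopbdd$; hence $\Sheff{\colim_{\alpha \in A^{\op}} X_{\alpha}}$ lies in $\Topbc$. Since the equivalence of \Cref{thm:SAG.A.7.5.3} is contravariant, it carries this colimit cocone to a limit cone exhibiting $\Sheff{\colim_{\alpha \in A^{\op}} X_{\alpha}}$ as the inverse limit of $\YY$ in $\Topbc$, so $\Topbc$ admits inverse limits (and the legs of the limit cone are coherent geometric morphisms, being images under $\Sheff{-}$ of morphisms of \pretopoi). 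To see that the inclusion $\fromto{\Topbc}{\Top_{\infty}}$ preserves inverse limits, I would use that $\Top_{\infty}$ admits all small limits, so there is a canonical comparison geometric morphism from the limit computed in $\Topbc$ to the limit computed in $\Top_{\infty}$; unwinding the identifications above, this comparison morphism is exactly the natural geometric morphism
\begin{equation*}
	\fromto{\Sheff{\colim\nolimits_{\alpha \in A^{\op}} X_{\alpha}}}{\lim\nolimits_{\alpha \in A} \Sheff{X_{\alpha}}}
\end{equation*}
of \Cref{prop:SAG.A.8.3.2}=\SAG{Proposition}{A.8.3.2}, which is an equivalence. Thus the $\Topbc$-limit cone is carried to a limit cone in $\Top_{\infty}$.

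Finally, the forgetful functor $\fromto{\Topbc}{\Cat_{\infty,\delta_1}}$ factors as the inclusion $\incto{\Topbc}{\Top_{\infty}}$ followed by the forgetful functor $\fromto{\Top_{\infty}}{\Cat_{\infty,\delta_1}}$; the former preserves inverse limits by the previous step and the latter does so by \Cref{thm:filteredlimsinRTop}=\HTT{Theorem}{6.3.3.1}, so the composite preserves inverse limits as well. The only step that demands any real attention is the middle one — checking that the abstract comparison map from the $\Topbc$-limit to the $\Top_{\infty}$-limit is the map supplied by \Cref{prop:SAG.A.8.3.2} — but this is a routine unwinding of the equivalence of \Cref{thm:SAG.A.7.5.3} together with the fact that equivalences of \categories preserve and reflect (co)limit cones, so there is no substantial obstacle: the corollary really is immediate from the two preceding propositions.
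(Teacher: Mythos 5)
Your proposal is correct and follows exactly the route the paper intends: the paper simply declares the corollary immediate from \Cref{prop:SAG.A.8.3.1}, \Cref{prop:SAG.A.8.3.2}, and \Cref{thm:filteredlimsinRTop}, and your argument is precisely the unwinding of that claim through the equivalence $\Topbc \simeq \preTop^{\bdd,\op}$ of \Cref{thm:SAG.A.7.5.3}. The only difference is that you have written out the details the paper leaves implicit.
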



\subsection{Coherence \& preservation of filtered colimits}\label{subsec:coherenceandfilteredcolims}

The goal of this section is to prove the appropriate \toposic generalization of the fact that a coherent geometric morphism of $ 1 $-topoi preserves filtered colimits (see \Cref{cor:coherentmorphismscommutewithfilteredcolims}).\footnote{We learned how to simplify and generalize the material in this section from its original form through a preprint of Chough \cite[Theorem 3.4]{Chough:Proper}.} 

We begin be recalling a basic fact about filtered colimits of truncated objects and introducing some convenient terminology.

\begin{rec}\label{rec:incpreservesfilteredcolim}
	Since filtered colimits commute with finite limits in \atopos, for any \topos $ \XX $ and integer $ n \geq -2 $, the inclusion $ \incto{\XX_{\leq n}}{\XX} $ preserves filtered colimits.
	Thus $ \XX_{\leq n} $ is an $ \upomega $-accessible localization of $ \XX $.
\end{rec}

\begin{dfn}\label{def:almostcompactness}
	Let $ C $ be a presentable \category.
	We say that an object $ X \in C $ is \defn{almost compact}\index[terminology]{almost compact} if $ \trun_{\leq n}(C) $ is a compact object of the $ n $-category $ C_{\leq n} $.

	We say that functor $ F \colon \fromto{C}{D} $ between presentable \categories \defn{almost preserves filtered colimits}\index[terminology]{almost preserves filtered colimits} if for each integer $ n \geq -2 $, the functor $ F \colon \fromto{C_{\leq n}}{D} $ preserves filtered colimits.
\end{dfn}

\begin{lem}\label{lem:yoxcompactness}
	Let $ (X,\tau) $ be a finitary \site, write $ \XX \coloneq \Sh_{\tau}(X) $, and write $ \yo_{\tau} \colon \fromto{X}{\XX} $ for the sheafified Yoneda embedding.
	Then for any object $ x \in X $, the sheaf $ \yo_{\tau}(x) $ is almost compact.
\end{lem}

\begin{proof}
	Write $ U \coloneq \yo_{\tau}(x) $ and let $ \plowerstar \colon \fromto{\XX_{/U}}{\XX} $ denote the natural étale geometric morphism.
	Let $ V \colon \fromto{A}{\XX_{\leq n}} $ be a filtered diagram.
	Then we have natural equivalences
	\begin{align*}
		\Map_{\XX}(U,\textstyle\colim_{\alpha \in A} V_{\alpha}) &\equivalent \Map_{\XX}(\plowershriek(1_{\XX_{/U}}),\textstyle\colim_{\alpha \in A} V_{\alpha}) \\ 
		&\equivalent \Map_{\XX_{/U}}(1_{\XX_{/U}},\textstyle\colim_{\alpha \in A} \pupperstar(V_{\alpha})) \period
	\end{align*}
	Since $ U \in \XX $ is coherent (\Cref{prop:SAG.A.3.1.3}=\allowbreak\SAG{Proposition}{A.3.1.3}), the global sections functor
	\begin{equation*}
		\Map_{\XX_{/U}}(1_{\XX_{/U}},-) \colon \fromto{\XX_{/U}}{\Space}
	\end{equation*}
	almost preserves filtered colimits \SAG{Proposition}{A.2.3.1}.
	Hence we have natural equivalences
	\begin{align*}
		\Map_{\XX}(U,\textstyle\colim_{\alpha \in A} V_{\alpha}) &\equivalent \colim_{\alpha \in A} \Map_{\XX_{/U}}(1_{\XX_{/U}},\pupperstar(V_{\alpha})) \\ 
		&\equivalent \colim_{\alpha \in A} \Map_{\XX}(\plowershriek(1_{\XX_{/U}}),V_{\alpha}) \\
		&\equivalent \colim_{\alpha \in A} \Map_{\XX}(U,V_{\alpha}) \period \qedhere
	\end{align*}
\end{proof}

\begin{prp}\label{prp:flowerstarcommuteswithfilteredcolim}
	Let $ \fupperstar \colon \fromto{(Y,\tau_Y)}{(X,\tau_X)} $ be a morphism of finitary \sites.
	Then the induced functor $ \flowerstar \colon \fromto{\Sh_{\tau_X}(X)}{\Sh_{\tau_Y}(Y)} $ almost preserves filtered colimits.
\end{prp}

\begin{proof}
	Write $ \XX \coloneq \Sh_{\tau_X}(X) $, $ \YY \coloneq \Sh_{\tau_Y}(Y) $, and $ \yo_{\tau_X} \colon \fromto{X}{\XX} $ and $ \yo_{\tau_Y} \colon \fromto{Y}{\YY} $ for the sheafified Yoneda embeddings.
	Let $ V \colon \fromto{A}{\XX_{\leq n}} $ be a filtered diagram.
	Since the essential image of $ \yo_{\tau_Y} $ generates $ \YY $ under colimits, to see that the natural morphism 
	\begin{equation*}
		\fromto{\textstyle\colim_{\alpha \in A} \flowerstar(V_{\alpha})}{\flowerstar(\textstyle \colim_{\alpha \in A} V_{\alpha})}
	\end{equation*}
	is an equivalence, it suffices to show that for all $ y \in Y $, the induced morphism 
	\begin{equation*}
		\fromto{\Map_{\YY}(\yo_{\tau_Y}(y),\textstyle\colim_{\alpha \in A} \flowerstar(V_{\alpha}))}{\Map_{\YY}(\yo_{\tau_Y}(y),\flowerstar(\textstyle \colim_{\alpha \in A} V_{\alpha}))}
	\end{equation*}
	is an equivalence.
	Applying \Cref{lem:yoxcompactness} to $ \yo_{\tau_Y}(y) $ and $ \fupperstar \yo_{\tau_Y}(y) \equivalent \yo_{\tau_X}(\fupperstar(y)) $ we see that we have equivalences
	\begin{align*}
		\Map_{\YY}(\yo_{\tau_Y}(y),\textstyle\colim_{\alpha \in A} \flowerstar(V_{\alpha})) &\equivalent \colim_{\alpha \in A} \Map_{\YY}(\yo_{\tau_Y}(y),\flowerstar(V_{\alpha})) \\ 
		&\equivalent \colim_{\alpha \in A} \Map_{\YY}(\fupperstar \yo_{\tau_Y}(y),V_{\alpha}) \\ 
		&\equivalent \Map_{\YY}(\fupperstar \yo_{\tau_Y}(y), \textstyle \colim_{\alpha \in A} V_{\alpha}) \\
		&\equivalent \Map_{\YY}(\yo_{\tau_Y}(y), \flowerstar(\textstyle \colim_{\alpha \in A} V_{\alpha})) \period \qedhere
	\end{align*} 
\end{proof}

In light of \Cref{thm:SAG.A.7.5.3}=\allowbreak\SAG{Theorem}{A.7.5.3}, \Cref{prp:flowerstarcommuteswithfilteredcolim} special\-izes to the following.

\begin{cor}\label{cor:coherentmorphismscommutewithfilteredcolims}
	Let $ \flowerstar \colon \fromto{\XX}{\YY} $ be a coherent geometric morphism between bounded coherent \topoi.
	Then the functor $ \flowerstar $ almost preserves filtered colimits.
\end{cor}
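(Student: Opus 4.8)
The statement to prove is \Cref{cor:coherentmorphismscommutewithfilteredcolims}: a coherent geometric morphism $\flowerstar \colon \fromto{\XX}{\YY}$ between bounded coherent \topoi restricts, on $n$-truncated objects, to a functor preserving filtered colimits. The plan is to deduce this directly from \Cref{prp:flowerstarcommuteswithfilteredcolim} via \Cref{thm:SAG.A.7.5.3}, exactly as the sentence preceding the statement suggests. The bridge is that \Cref{thm:SAG.A.7.5.3} (=\SAG{Theorem}{A.7.5.3}) realises any bounded coherent \topos canonically as $\Sheff{X}$ for a bounded \pretopos $X$, and moreover any coherent geometric morphism between bounded coherent \topoi is then realised by a morphism of the associated finitary \sites $(X, \eff)$.

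Concretely, first I would invoke \Cref{thm:SAG.A.7.5.3} to write $\XX \equivalent \Sheff{X}$ and $\YY \equivalent \Sheff{Y}$ where $X = \XXcohbdd$ and $Y = \YYcohbdd$ are bounded \pretopoi, equipped with the effective-epimorphism topology $\eff$ of \Cref{ntn:effepi}, which is finitary. Since the equivalence of \Cref{thm:SAG.A.7.5.3} is an (anti)equivalence of \categories, the coherent geometric morphism $\flowerstar$ corresponds to a morphism $\fupperstar \colon \fromto{\YYcohbdd}{\XXcohbdd}$ in $\preTopbdd$; by \Cref{cor:criterionforcoherence} this $\fupperstar$ is precisely the restriction of the pullback functor $\fupperstar \colon \fromto{\YY}{\XX}$ to truncated coherent objects, and it is a morphism of finitary \sites $\fromto{(\YYcohbdd, \eff)}{(\XXcohbdd, \eff)}$ in the sense of \Cref{dfn:finitaryinftysite} (it preserves finite limits, hence fibre products). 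Then \Cref{prp:flowerstarcommuteswithfilteredcolim} applies verbatim to this morphism of finitary \sites and yields that the restriction of $\flowerstar$ to $\XX_{\leq n}$ preserves filtered colimits, for every integer $n \geq -2$.

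The only point requiring a little care — and the main (mild) obstacle — is checking that the morphism of \pretopoi produced by \Cref{thm:SAG.A.7.5.3} really does induce the geometric morphism $\flowerstar$ we started with, i.e. that the square relating $\flowerstar$ and the sheaf-theoretic pushforward along $\fupperstar$ commutes. This is essentially the assertion that the equivalence $\Topbc \simeq \preTop^{\bdd,\op}$ of \Cref{thm:SAG.A.7.5.3} is functorial on morphisms, so that a coherent geometric morphism and its associated morphism of \sites correspond under the equivalence; combined with \Cref{cor:morsitescoherent} (every morphism of finitary \sites induces a coherent geometric morphism) and \Cref{cor:criterionforcoherence} (coherence of a morphism between coherent \topoi is detected on truncated coherent objects), this identifies $\flowerstar$ with the geometric morphism $\Sh_{\eff}(\fupperstar)$ to which \Cref{prp:flowerstarcommuteswithfilteredcolim} speaks. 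Everything else is a direct citation. I would therefore organise the proof as: (i) present the \sites via \Cref{thm:SAG.A.7.5.3}; (ii) identify $\flowerstar$ with the morphism induced by a morphism of finitary \sites, using \Cref{cor:criterionforcoherence}; (iii) conclude by \Cref{prp:flowerstarcommuteswithfilteredcolim}.
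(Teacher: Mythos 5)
Your proposal is correct and is essentially the paper's own argument: the paper dispatches this corollary with the single sentence that, in light of \Cref{thm:SAG.A.7.5.3}, \Cref{prp:flowerstarcommuteswithfilteredcolim} specialises to the statement, which is exactly your reduction to the finitary \sites $(\XXcohbdd,\eff)$ and $(\YYcohbdd,\eff)$. The functoriality point you flag in step (ii) is precisely what the equivalence $\Topbc \simeq \preTop^{\bdd,\op}$ supplies, so there is no gap.
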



\subsection{Points, Conceptual Completeness, \& Deligne Completeness}\label{subsec:conceptual}

In this section we discuss points of \topoi as well as the \toposic generalizations of the Conceptual Completeness Theorem of Makkai--Reyes and Deligne's Completeness Theorem.

\begin{ntn}
	For \atopos $ \XX $, we write
	\begin{equation*}
		\Pt(\XX) \coloneq \Funlowerstar(\Space,\XX)^{\op}\simeq\Funupperstar(\XX,\Space) \index[notation]{Pt@$ \Pt $}
	\end{equation*}
	for the \category of \defn{points}\index[terminology]{category@\category!of points}\index[terminology]{points!\category of} of $ \XX $.

	Note that a morphism $\fromto{\xlowerstar'}{\xlowerstar}$ of $\Pt(\XX)$ is a natural transformation $\fromto{\xlowerstar}{\xlowerstar'}$.
	(The morphisms are the `geometric transformations' usually preferred in $1$-topos theory.)
	This choice is compatible with the direction of posets: for instance, when $P$ is a noetherian poset, one has $\Pt(\widetilde{P})\simeq P$.
\end{ntn}

\noindent For general \topoi, the passage to its \category of points will lose quite a bit of information.
However, the \toposic version of the Conceptual Completeness Theorem of Makkai--Reyes \cite[Theorem 9.2]{MR0505486} tells us that bounded coherent \topoi are to some extent \textit{determined} by their \categories of points.

\begin{thm}[{Conceptual Completeness\index[terminology]{Conceptual Completeness Theorem}; \SAG{Theorem}{A.9.0.6}}]\label{thm:conceptualcompleteness}
	A geometric morphism $ \flowerstar \colon \fromto{\XX}{\YY} $ between bounded coherent \topoi is an equivalence if and only if $ \flowerstar $ is coherent and the induced functor $ \Pt(\flowerstar) \colon \fromto{\Pt(\XX)}{\Pt(\YY)} $ is an equivalence of \categories.
\end{thm}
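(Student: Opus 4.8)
The ``only if'' direction is immediate: an equivalence $f_{\ast}$ has $\fupperstar$ an equivalence, hence coherent, and $\Pt(f_{\ast})$ is then an equivalence. So suppose $f_{\ast}$ is coherent and $\Pt(f_{\ast})$ is an equivalence; I must deduce that $f_{\ast}$ is an equivalence.

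My plan is to transport the problem into the language of bounded $\infty$-pretopoi. Set $A \coloneq \XXcohbdd$ and $B \coloneq \YYcohbdd$; by \Cref{thm:SAG.A.7.5.3} these are bounded $\infty$-pretopoi with $\XX \equivalent \Sheff{A}$ and $\YY \equivalent \Sheff{B}$, and by \Cref{cor:criterionforcoherence} coherence of $f_{\ast}$ says precisely that $\fupperstar$ restricts to a morphism of bounded $\infty$-pretopoi $g\colon\fromto{B}{A}$; moreover $f_{\ast}$ is an equivalence if and only if $g$ is. Using the universal property of the effective-epimorphism topos --- that $\Sheff{A}$ classifies $\Space$-valued models of $A$, i.e.\ functors $\fromto{A}{\Space}$ preserving finite limits, finite coproducts, and effective epimorphisms --- one identifies $\Pt(\XX)$ with the $\infty$-category $\mathrm{Mod}(A)$ of such models, $\Pt(\YY)$ with $\mathrm{Mod}(B)$, and $\Pt(f_{\ast})$ with precomposition $g^{\circ}\colon\fromto{\mathrm{Mod}(A)}{\mathrm{Mod}(B)}$. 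Thus it suffices to prove conceptual completeness for bounded $\infty$-pretopoi: if $g^{\circ}$ is an equivalence then $g$ is.

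The essential input is Deligne's completeness theorem: a bounded coherent \topos has enough points, equivalently the evaluation functors $\{M\colon\fromto{A}{\Space}\}_{M\in\mathrm{Mod}(A)}$ are jointly conservative and jointly detect monomorphisms and effective epimorphisms. Granting this, I would run the Makkai--Reyes argument. \emph{(i) $g$ is conservative:} if $g(\phi)$ is invertible then, since $g^{\circ}$ is essentially surjective, every model $M$ of $B$ is of the form $N\of g$, so $M(\phi)=N(g\phi)$ is invertible; joint conservativity forces $\phi$ invertible. \emph{(ii) $g$ is fully faithful:} it is enough to check this on each truncation level, where $\XX^{\coh}_{\leq n}$ is an $(n+1)$-category, so that $\equivto{\Map_B(X,Y)}{\Map_A(gX,gY)}$ unwinds to a bounded list of conditions each expressible through finite limits in $B$ --- injectivity on $\pi_0$ via equalizers (if $g\phi\equivalent g\psi$ the equalizer $\incto{E}{X}$ satisfies $\equivto{gE}{gX}$, hence $\equivto{M(E)}{M(X)}$ for all $M$, hence $\equivto{E}{X}$ by joint conservativity on the monomorphism $\incto{E}{X}$), surjectivity on $\pi_0$ via a graph-of-a-morphism argument (realizing the graph $\incto{\Gamma_{\theta}}{g(X\cross Y)}$ of a given $\fromto{gX}{gY}$ as $g$ of a subobject of $X\cross Y$, detected on models), and the higher homotopy of mapping spaces by iterating with loop objects and diagonals. \emph{(iii) $g$ is essentially surjective:} the essential image of $g$ is a sub-$\infty$-pretopos $B'\subseteq A$ with $\equivto{B}{B'}$, reducing us to showing that a sub-$\infty$-pretopos inclusion $\incto{B'}{A}$ inducing an equivalence on $\Space$-valued models is an equivalence; this follows from a refinement of Deligne completeness asserting that every truncated coherent object of $\XX=\Sheff{A}$, and every morphism between such, is already ``visible'' at the level of models.

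The main obstacles are steps (ii) and (iii): the fullness and essential surjectivity of $g$ are exactly where the Makkai--Reyes proof does genuine work, requiring Deligne completeness in a form strong enough to recover the whole subobject structure of $A$ --- not merely which morphisms are invertible --- from the models. In the $\infty$-categorical setting there is the additional burden of verifying full faithfulness on every homotopy group of every mapping space rather than only on $\pi_0$; this is precisely why the boundedness hypothesis is indispensable, since it confines $A$ and $B$ levelwise to $(n+1)$-categories, keeping the list of conditions finite and each phrased via finite limits, which all models preserve.
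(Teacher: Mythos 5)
First, a point of comparison: the paper offers no proof of this statement at all --- it is quoted directly from Lurie as \SAG{Theorem}{A.9.0.6} in the aide-m\'emoire, so the only question is whether your sketch would stand on its own. It would not yet. Your reduction to bounded \pretopoi via \Cref{thm:SAG.A.7.5.3}, and the identification of $\Pt$ with $\Space$-valued models so that $\Pt(\flowerstar)$ becomes precomposition with $g\colon\fromto{\YYcohbdd}{\XXcohbdd}$, is the right frame (and is essentially how Lurie sets up the problem), and step (i) is fine. But steps (ii) and (iii) --- fullness and essential surjectivity of $g$ --- \emph{are} the content of the theorem, and you defer both to ``a refinement of Deligne completeness'' that you neither state precisely nor prove. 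Joint detection of invertibility on models is what Deligne completeness gives you; recovering the mapping spaces, the subobject structure, and (hardest of all) the fact that every object of $\XXcohbdd$ is generated under the \pretopos operations from the essential image of $g$ is a genuinely stronger assertion. In the $1$-categorical Makkai--Reyes theorem this is where all the work happens, and Lurie's proof of \SAG{Theorem}{A.9.0.6} is a substantial induction on truncation level that your sketch gestures at (``iterating with loop objects and diagonals'') but does not carry out. As written, the proposal is a plan of attack, not a proof.

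Second, a precision issue with your stated input: it is false that a bounded coherent \topos has enough points. \Categorical Deligne completeness (\Cref{thm:Delignecompleteness}) requires hypercompleteness, and bounded coherent \topoi need not be hypercomplete. What is true --- and what you actually need --- is that in a locally coherent \topos the points jointly detect $\infty$-connective morphisms (\Cref{nul:checkinftyconnonstalks}), hence jointly detect equivalences between \emph{truncated} objects. Since you only ever evaluate models on objects of $\XXcohbdd$ and $\YYcohbdd$, your uses of joint conservativity survive this correction, but the hypothesis must be stated in that restricted form; applied to non-truncated objects it fails, and the boundedness hypothesis in the theorem is there precisely to confine the argument to truncated coherent objects.
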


\begin{dfn}
	\Atopos $ \XX $ \defn{has enough points}\index[terminology]{enough points} if a morphism $ \phi $ in $ \XX $ is an equivalence if and only if for every point $ \xlowerstar \in \Pt(\XX) $ the stalk $ \xupperstar \phi $ is an equivalence.
\end{dfn}

In classical topos theory, the Deligne Completeness Theorem \cite[Exposé VI, Proposition 9.0]{MR50:7131} states that a locally coherent $ 1 $-topos has enough points.
This is no longer true in the setting of \topoi, the main obstruction being that $ \infty $-connective morphisms in \atopos need not be equivalences.
For this reason the \categorical version of Deligne's theorem takes place in the setting of \topoi where $ \infty $-connective morphisms are equivalences, i.e., \topoi in which Whitehead's Theorem is valid.

\begin{rec}
	A morphism $ \phi \colon \fromto{U}{V} $ in \atopos $ \XX $ is \defn{$ \infty $-connective}\index[terminology]{connective@$\infty$-connective} if $ \phi $ is $ n $-connective for each integer $ n \geq -1 $.

	Let $ \flowerstar \colon \fromto{\XX}{\YY} $ be a geometric morphism of \topoi.
	Since the left adjoint $ \fupperstar $ is left exact and preserves effective epimorphisms, if $ \phi $ is an $ \infty $-connective morphism of $ \YY $, then $ \fupperstar(\phi) $ is an $ \infty $-connective morphism of $ \XX $.
\end{rec}

\begin{dfn}\label{def:hypercompleteness}
	Let $ \XX $ be \atopos.
	An object $ U \in \XX $ is \defn{hypercomplete}\index[terminology]{hypercomplete!object}\index[terminology]{object!hypercomplete} if $ U $ is local with respect to the class of $ \infty $-connective morphisms in $ \XX $.
	We write $ \XXhyp \subset \XX $\index[notation]{Xhyp@$ \XXhyp $} for the full subcategory spanned by the hypercomplete objects of $ \XX $.
	We call $\XXhyp$ the \defn{hypercompletion}\index[terminology]{hypercompletion} of $\XX$, and
	we say that $ \XX $ is \defn{hypercomplete}\index[terminology]{hypercomplete!topos@\topos}\index[terminology]{topos@\topos!hypercomplete} if $ \XXhyp = \XX $.
\end{dfn}

\begin{nul}
	The \category $ \XXhyp \subset \XX $ is a left exact localization of $ \XX $, hence \atopos \cite[\HTTpage{699}]{HTT}.
	Moreover, the \topos $ \XXhyp $ is hypercomplete \HTT{Lemma}{6.5.2.12}.
\end{nul}

\begin{cnstr}[functoriality of hypercompletions]\label{nul:lowerstarpreserveshypercomplete}
	Let $ \flowerstar \colon \fromto{\XX}{\YY} $ be a geometric morphism of \topoi.
	Since $ \fupperstar $ preserves $ \infty $-connective morphisms, the pushforward $ \flowerstar \colon \fromto{\XX}{\YY} $ preserves hypercomplete objects, hence restricts to a functor
	\begin{equation*}
		\flowerstar \colon \fromto{\XXhyp}{\YYhyp} \period
	\end{equation*}
	The functor $ \flowerstar \colon \fromto{\XXhyp}{\YYhyp} $ is the right adjoint in a geometric morphism with left exact left adjoint given by the composite 
	\begin{equation*}
		\begin{tikzcd}[sep=1.5em]
			\YYhyp \arrow[r, "\fupperstar"] & \XX \arrow[r] & \XXhyp
		\end{tikzcd}
	\end{equation*} 
	of $ \fupperstar $ with the left adjoint to the inclusion $ \XXhyp \subset \XX $.
	We denote this geometric morphism by \smash{$ \flowerstar^{\hyp} \colon \fromto{\XXhyp}{\YYhyp} $}.
\end{cnstr}

\begin{wrn}
	Let $ \flowerstar \colon \fromto{\XX}{\YY} $ be a geometric morphism of \topoi.
	The pullback functor $ \fupperstar \colon \fromto{\YY}{\XX} $ generally does \textit{not} preserve hypercomplete objects;
	since every \topos is a left exact localization of a presheaf \topos, if this were true then every \topos would be hypercomplete.
\end{wrn}

The hypercompletion $ \XXhyp $ is characterized by the following universal property.

\begin{prp}[{\HTT{Proposition}{6.5.2.13}}]
	Let $ \XX $ be \atopos.
	Then for every hypercomplete \topos $ \HH $, composition with the inclusion $ \XXhyp \subset \XX $ induces an equivalence
	\begin{equation*}
		\equivto{\Funlowerstar(\HH,\XXhyp)}{\Funlowerstar(\HH,\XX)} \period
	\end{equation*}
\end{prp}

\noindent Consequently, the assignment $ \goesto{\XX}{\XXhyp} $ defines a functor right adjoint to the inclusion of hypercomplete \topoi into all \topoi.

\begin{exm}\label{exm:enoughpointsishypercomplete}
	\Atopos with enough points is hypercomplete.
\end{exm}

\begin{exm}
	Let $ \XX $ be a $ 1 $-topos with corresponding $ 1 $-localic \topos $ \XX' $.
	Then $ \XX $ has enough points (in the sense of \cite[Exposé IV, Définition 6.4.1]{MR50:7130}) if and only if the hypercomplete \topos \smash{$ (\XX')^{\hyp} $} has enough points.
\end{exm}

\begin{exm}
	\Atopos $ \XX $ is hypercomplete if and only if the pullback functor $ \pupperstar \colon \fromto{\XX}{\XX^{\post}} $ is conservative.
	In particular, if Postnikov towers converge in $ \XX $ (\Cref{def:Postnikovstuff}), then $ \XX $ is hypercomplete.
	However, the converse in false: 
\end{exm}

\begin{wrn}
	Postnikov towers need not converge in a hypercomplete \topos.
	Morel and Voevodsky provide the following counterexample \cite[\S2.1, Example 1.30]{MR1813224}.
	Let $ G $ denote the profinite group $ \prod_{i \geq 1} \ZZup/2 $.
	Write $ \XX $ for the \topos of hypersheaves on the site of finite continuous $ G $-sets with respect to the topology in which a family of maps is a covering if and only if it is jointly surjective.
	Since $ \Gammaupperstar_{\XX} \colon \fromto{\Space}{\XX} $ preserves connected objects, the constant sheaf $ U \colonequals \Gammaupperstar_{\XX}(\prod_{i \geq 1} \Kup(\ZZup/2,i)) $ at the product of Eilenberg--MacLane spaces $ \Kup(\ZZup/2,i) $ is connected.
	On the other hand, one can show that the limit of the Postnikov tower of $ U $ is the product
	\begin{equation*}
		\lim_{n \geq 0} \trun_{\leq n} U \equivalent \prod_{i \geq 1} \Gammaupperstar_{\XX}(\Kup(\ZZup/2,i)) \comma
	\end{equation*}
	and, moreover, that this object is not connected.
	In particular, the map $ U \to \lim_{n \geq 0} \trun_{\leq n} U $ is not an equivalence.
\end{wrn}

In light of \Cref{exm:enoughpointsishypercomplete}, the following is the correct \toposic generalization of Deligne's Completeness Theorem.

\begin{thm}[{\Categorical Deligne Completeness\index[terminology]{Deligne Completeness Theorem}; \SAG{Proposition}{A.4.0.5}}]\label{thm:Delignecompleteness}
	\Atopos that is locally coherent and hypercomplete has enough points.
\end{thm}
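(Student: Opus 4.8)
This is \SAG{Proposition}{A.4.0.5}, and the plan is to reduce it, via hypercompleteness, to a statement about effective epimorphisms and ultimately to the classical fact that coherent ordinary topoi have enough points. Throughout write $ \XX $ for the given locally coherent hypercomplete \topos. First I would reduce to the case where $ \XX $ is also coherent: choosing a cover $ \{V_i \to 1_{\XX}\}_{i \in I} $ of the terminal object by coherent objects, each slice $ \XX_{/V_i} $ is coherent and remains locally coherent and hypercomplete, the $ V_i $ are jointly conservative, and every point $ \xlowerstar $ of $ \XX $ lies over some $ V_i $ (since $ \coprod_i \xupperstar(V_i) \to 1_{\Space} $ is an effective epimorphism, some $ \xupperstar(V_i) $ is nonempty), so \enquote{having enough points} for the $ \XX_{/V_i} $ implies it for $ \XX $.

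Next I would use hypercompleteness to replace \enquote{equivalence} by \enquote{$ \infty $-connective}, and then recall that a morphism $ \phi $ is $ \infty $-connective exactly when $ \phi $ and all of its iterated diagonals $ \Delta^{(k)}_{\phi} $ (with $ \Delta^{(0)}_{\phi} = \phi $ and $ \Delta^{(k+1)}_{\phi} = \Delta_{\Delta^{(k)}_{\phi}} $) are effective epimorphisms. Since each stalk functor $ \xupperstar $ is left exact, $ \xupperstar(\Delta^{(k)}_{\phi}) = \Delta^{(k)}_{\xupperstar(\phi)} $, so the problem reduces to the claim $ (\star) $: a morphism $ \psi $ of $ \XX $ all of whose stalks are effective epimorphisms is itself an effective epimorphism. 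I would further reduce $ (\star) $ to subterminal objects: $ \psi \colon C \to D $ is an effective epimorphism iff its image subobject $ \operatorname{im}(\psi) \hookrightarrow D $ is an equivalence, and $ \xupperstar $ commutes with truncation and hence with image factorisation; so, after replacing $ \psi $ by $ \operatorname{im}(\psi) $ inside $ \XX_{/D} $, covering $ D $ by coherent objects, and passing to those slices (which remain coherent, locally coherent, and hypercomplete), one is left to show: if $ U \hookrightarrow 1_{\XX} $ is a subterminal object with $ \xupperstar(U) $ terminal for every point $ x $, then $ U = 1_{\XX} $.

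At this point $ U $ is simply an element of the locale $ \Open(\XX) $, which is spectral because $ \XX $ is coherent — it is the locale of opens of a spectral space $ S $, with a basis of quasicompact opens (cf.\ \Cref{thm:Hoschterthesis}). Sobriety of $ S $ means that if $ U \neq 1_{\XX} $ there is a point $ s \in S $ with $ s \notin U $, equivalently a completely prime filter $ \mathscr{F} $ on $ \Open(\XX) $ generated by quasicompact opens and not containing $ U $. The final step is to promote $ s $ to an actual point of the \topos $ \XX $: I would form the filtered colimit of the \enquote{sections over $ V $} functors,
\begin{equation*}
	\xupperstar \coloneq \colim_{V \in \mathscr{F}^{\op}} \Big( \XX \xrightarrow{\,(-) \times V\,} \XX_{/V} \xrightarrow{\,\Gamma_{\ast}\,} \Space \Big) \comma
\end{equation*}
and verify that $ \xupperstar $ is a left exact left adjoint. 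This is where coherence and the prime filter do their work: quasicompactness of the opens in $ \mathscr{F} $ forces the colimit to commute with finite limits (global sections out of a coherent slice commute with the relevant filtered colimits, in the spirit of \Cref{cor:coherentmorphismscommutewithfilteredcolims}), while complete primality of $ \mathscr{F} $ takes care of the empty covering and finite coproducts — this is the \categorical incarnation of Deligne's construction of points of a coherent ordinary topos. For this point $ \xupperstar(U) \simeq \emptyset \not\simeq 1_{\Space} $, contradicting the hypothesis, so $ U = 1_{\XX} $ and, unwinding the reductions, $ \XX $ has enough points. The main obstacle is precisely this last construction together with its left-exactness check; everything preceding it is soft, relying only on hypercompleteness plus the standard facts about truncation, connectivity, and coherence recalled in this section.
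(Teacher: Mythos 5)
First, a point of order: the paper does not prove this statement at all — it is imported verbatim as \SAG{Proposition}{A.4.0.5} — so the only meaningful comparison is with Lurie's argument. Your opening reductions are sound and agree with the standard (and Lurie's) strategy: slice over a coherent cover of $ 1_{\XX} $ to assume $ \XX $ coherent; use hypercompleteness to replace \enquote{equivalence} by \enquote{$ \infty $-connective}; characterise $ \infty $-connectivity via iterated diagonals being effective epimorphisms; use left exactness of stalks to commute with diagonals and with the $ (-1) $-truncated image factorisation; and thereby reduce to showing that a subterminal object with terminal stalks at every point of $ \XX $ is terminal. All of that is soft and correct.

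The gap is in the last step, and it is fatal: the functor $ \xupperstar \coloneq \colim_{V \in \mathscr{F}^{\op}} \Map_{\XX}(V,-) $ attached to a completely prime filter $ \mathscr{F} \subseteq \Open(\XX) $ is left exact (a filtered colimit of left exact functors), but it is \emph{not} a left adjoint, so it is not a point of $ \XX $. Preserving the empty cover and finite coproducts is far from enough; one must preserve all colimits, or equivalently effective epimorphisms of coherent objects, and an effective epimorphism $ \surjto{C}{D} $ only lets you lift a section of $ D $ over $ V $ after passing to a cover of $ V $ by objects that need not be opens, let alone members of $ \mathscr{F} $. Concretely, take $ \XX = (\Spec k)_{\et} $ for $ k $ a field that is not separably closed: the locale $ \Open(\XX) $ is a single point, the only completely prime filter is $ \{1\} $, and your formula returns $ \Gammalowerstar $, i.e.\ (homotopy) $ G_k $-fixed points. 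This functor sends the effective epimorphism $ \surjto{G_k/H}{\ast} $ (for $ H \neq G_k $ open) to $ \fromto{\emptyset}{\ast} $, so it is not the pullback of any geometric morphism. In other words, points of the underlying coherent locale do not lift to points of the \topos by taking sections over a filter of opens, and the entire content of Deligne's theorem is precisely the construction of genuine points: Lurie (following Deligne) builds the point detecting a given non-effective-epimorphism by a transfinite, Henkin-style iteration over the \pretopos of truncated coherent objects, at each stage adjoining a lift along one member of one finite covering. That construction cannot be replaced by sobriety of the spectral space $ S(\XX) $, which is where your proof stops being a proof.
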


\begin{nul}\label{nul:checkinftyconnonstalks}
	Please observe that for \atopos $ \XX $, the hypercompletion $ \XXhyp $ has enough points if and only if $ \infty $-connectiveness of morphisms in $ \XX $ can be checked on stalks.
	Indeed, a morphism $ \phi $ in $ \XX $ is $ \infty $-connective if and only if for every point $ \xlowerstar $ of $ \XX $, the stalk $ \xupperstar \phi $ is an equivalence in $ \Space $.
	The Deligne Completeness Theorem (\Cref{thm:Delignecompleteness}=\allowbreak\SAG{Proposition}{A.4.0.5}) and \Cref{lem:hypcoherent} thus show that $ \infty $-connective\-ness in a locally coherent \topos can be checked on stalks.
\end{nul}

We have already seen that the coherence of \atopos only depends on its hypercompletion (\Cref{lem:equivalentconditionsforn-coherence}).
The following proposition gives a more refined assertion about the relationship between the coherent objects of \atopos and its hypercompletion.

\begin{prp}[{\SAG{Proposition}{A.2.2.2}}]\label{prop:SAG.A.2.2.2}
	Let $ \XX $ be \atopos, and write $ \Lup^{\!\hyp} \colon \fromto{\XX}{\XXhyp} $ for the left adjoint to the inclusion $ \incto{\XXhyp}{\XX} $.
	If $ \XX $ is locally $ n $-coherent for all $ n \geq 0 $, then:
	\begin{enumerate}[{\upshape (\ref*{prop:SAG.A.2.2.2}.1)}]
		\item The \topos $ \XXhyp $ is locally $ n $-coherent for all $ n \geq 0$.

		\item An object $ U \in \XXhyp $ is coherent if and only if $ U $ is coherent when viewed as an object of $ \XX $.

		\item An object $ U \in \XX $ is coherent if and only if $ \Lup^{\!\hyp}(U) \in \XXhyp $ is coherent.
	\end{enumerate}
\end{prp}

\begin{cor}\label{lem:hypcoherent}
	Let $ \XX $ be \atopos.
	If $ \XX $ is (locally) coherent, then the hypercompletion $ \XXhyp $ of $ \XX $ is (locally) coherent.
\end{cor}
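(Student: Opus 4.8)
The plan is to prove the two assertions of the corollary separately, using \Cref{lem:equivalentconditionsforn-coherence} for the coherent case and \Cref{prop:SAG.A.2.2.2} for the locally coherent case.

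First I would handle the coherent case. By \Cref{rec:coherence}, saying that $\XX$ is coherent means exactly that $\XX$ is $n$-coherent for every $n \in \NN$. For each such $n$, the implication \enumref{lem:equivalentconditionsforn-coherence}{1}$\implies$\enumref{lem:equivalentconditionsforn-coherence}{3} of \Cref{lem:equivalentconditionsforn-coherence} gives that the hypercompletion $\XXhyp$ is $n$-coherent. Since this holds for all $n$, the \topos $\XXhyp$ is coherent.

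Next I would treat the locally coherent case. The first step is to note that if $\XX$ is locally coherent, then it is locally $n$-coherent for every $n \geq 0$, since any cover of an object by coherent objects is in particular a cover by $n$-coherent objects (a coherent object $V$ has $\XX_{/V}$ coherent, hence $n$-coherent). Therefore \Cref{prop:SAG.A.2.2.2} applies. Let $L\colon\fromto{\XX}{\XXhyp}$ denote the left adjoint to the inclusion. Given an object $U$ of $\XXhyp$, I view it as an object of $\XX$ and pick a cover $\{\fromto{V_i}{U}\}_{i\in I}$ with each $V_i$ coherent in $\XX$. Because $L$ is a left exact left adjoint (the inverse-image functor of the geometric morphism $\incto{\XXhyp}{\XX}$), it preserves coproducts and effective epimorphisms, so $\{\fromto{L(V_i)}{L(U)}\}_{i\in I}$ is a cover in $\XXhyp$, and $L(U)\equivalent U$ since $U$ is already hypercomplete. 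By the third assertion of \Cref{prop:SAG.A.2.2.2}, each coherent $V_i$ has $L(V_i)$ coherent, and by the second assertion this is the same as $L(V_i)$ being coherent as an object of $\XXhyp$. Thus $\{\fromto{L(V_i)}{U}\}_{i\in I}$ is a cover of $U$ by coherent objects of $\XXhyp$, so $\XXhyp$ is locally coherent.

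I do not expect any serious obstacle: both parts are formal consequences of results already established. The only point that needs a little care is the transport of covers along the hypercompletion localisation $L$ — specifically that $L$ preserves effective epimorphisms and sends coherent objects to coherent objects — both of which are immediate (the former because $L$ is left exact and colimit-preserving, the latter from \Cref{prop:SAG.A.2.2.2}). One could alternatively invoke the first assertion of \Cref{prop:SAG.A.2.2.2} to see that $\XXhyp$ is locally $n$-coherent for all $n$, but that by itself does not yield local coherence in the strong sense used here, which is why the cover-transport argument is the right route.
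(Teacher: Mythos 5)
Your proof is correct and is essentially the intended derivation: the paper states this corollary without proof as an immediate consequence of \Cref{lem:equivalentconditionsforn-coherence} (for the coherent case) and \Cref{prop:SAG.A.2.2.2} (for the locally coherent case), and your two-part argument supplies exactly the missing details. You also correctly identify the one point requiring care — that assertion (1) of \Cref{prop:SAG.A.2.2.2} only yields local $n$-coherence for each $n$, so one must transport covers by coherent objects along the localisation $L$ using assertions (2) and (3) — which is the right way to close the gap.
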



\begin{exm}
	Let $ \XX $ be a bounded coherent \topos.
	Then since $ \XX $ is also locally coherent (\Cref{exm:boundedcohisloccoh}), the hypercompletion $ \XXhyp $ of $ \XX $ is coherent and locally coherent.
\end{exm}





\subsection{Bases for \texorpdfstring{$\infty$}{∞}-topoi}\label{subsec:bases}

Let $ W $ be a topological space and $ B \subset \Open(W) $ a basis for $ W $.
Upon passing to sheaves of sets, right Kan extension defines an equivalence of $ 1 $-topoi
\begin{equation*}\label{equiv:basis}
	\equivto{\Sh(B;\Set)}{\Sh(W;\Set)}
\end{equation*}
with inverse given by restriction of presheaves \cite[Proposition B.6.4]{Ultracategories}.

The analogous statement for sheaves of \textit{spaces} is false:
open subsets of the Hilbert cube $ \prod_{i \in \ZZup} [0,1] $ homeomorphic to a product $ [0,1) \cross \prod_{i \in \ZZup} [0,1] $ form a basis $ B $ for the topology on the Hilbert cube, but sheaves of spaces on $ B $ do not coincide with sheaves on the Hilbert cube \SAG{Counterexample}{20.4.0.1}.
The goal of this section is to show that although right Kan extension need not define an equivalence 
\begin{equation}\label{equiv:basis}
	\fromto{\Sh(B)}{\Sh(W)} \comma
\end{equation}
the failure of \eqref{equiv:basis} to be an equivalence is fundamentally infinitary in nature and \eqref{equiv:basis} \textit{is} an equivalence when we restrict to hypercomplete objects.

We begin by recalling the basics of bases for sites and \sites.

\begin{dfn}\label{def:basis}
	Let $ (C,\tau) $ be \asite.
	A \defn{basis}\index[terminology]{basis} for the topology $ \tau $ on $ C $ is a full subcategory $ B \subset C $ satisfying the following property: for every object $ c \in C $, there exists a set of morphisms $ \{f_i \colon \fromto{b_i}{c}\}_{i\in I} $ such that $ b_i \in B $ for each $ i \in I $ and the set $ \{f_i\}_{i \in I} $ generates a $ \tau $-covering sieve on $ c \in C $.
\end{dfn}

\begin{nul}
	Let $ (C,\tau) $ be \asite and $ B \subset C $ a basis for $ \tau $.
	Then there is a unique topology $ \restrict{\tau}{B} $ on $ B $ satisfying the following property: for each object $ b \in B $, a sieve $ S \subset B_{/b} $ is a $ \restrict{\tau}{B} $-covering sieve if and only if the image of $ S $ under the embedding \smash{$ \incto{B_{/b}}{C_{/b}} $} generates a $ \tau $ covering sieve of $ b \in C $.

	We always regard a basis $ B \subset C $ as \asite equipped the topology \smash{$ \restrict{\tau}{B} $}.
	To simplify notation, we often write $ \tau $ instead of \smash{$ \restrict{\tau}{B} $}.
\end{nul}

\begin{nul}
	Let $ (C,\tau) $ be \asite and $ B \subset C $ a basis for $ \tau $.
	Then for every object $ c \in C $, the full subcategory 
	\begin{equation*}
		B_{/c} \coloneq B \cross_C C_{/c} \subset C_{/c}
	\end{equation*}
	is a basis for the topology on $ C_{/c} $ induced by $ \tau $.
\end{nul}

\begin{exm}
	Let $ W $ be a topological space.
	A full subposet $ B \subset \Open(W) $ is a basis for the standard topology on $ \Open(W) $ (in the sense of \Cref{def:basis}) if and only if $ B $ defines a basis for the topological space $ W $ in the usual sense: every open set of $ W $ can be written as a union of opens belonging to $ B $.
\end{exm}

\begin{exm}\label{exm:posetbasis}
	Let $ P $ be a poset.
	Then the functor $ \fromto{P^{\op}}{\Open(P)} $ defined by $ \goesto{p}{P_{\geq p}} $ is fully faithful and defines a basis for the topology on the Alexandroff topological space $ P $.
	The induced topology on $ P^{\op} \subset \Open(P) $ is the trivial topology.
\end{exm}

The first property of bases for \sites is that a presheaf on $ B $ is a sheaf if and only if its right Kan extension along the inclusion $ \incto{B}{C} $ is a sheaf on $ C $.

\begin{lem}[{\cites[Proposition A.5]{Aoki:tensor}[Proposition B.6.6]{Ultracategories}}]\label{lem:ransheaf}
	Let $ (C,\tau) $ be \asite and $ i \colon \incto{B}{C} $ a basis for the topology $ \tau $ on $ C $.
	Then: 
	\begin{enumerate}[{\upshape (\ref*{lem:ransheaf}.1)}]
		\item\label{lem:ransheaf.1} A presheaf $ F \colon \fromto{B^{\op}}{\Space} $ on $ B $ is a $ \restrict{\tau}{B} $-sheaf if and only if the right Kan extension of $ F $ along $ i \colon \incto{B^{\op}}{C^{\op}} $ is a $ \tau $-sheaf on $ C $.

		\item\label{lem:ransheaf.2} Right Kan extension along $ i $ defines a fully faithful right adjoint
		\begin{equation*}
			\ilowerstar \colon \incto{\Sh_{\tau}(B)}{\Sh_{\tau}(C)}
		\end{equation*}
		with left exact left adjoint given by the composite
		\begin{equation*}
			\begin{tikzcd}
				\Sh_{\tau}(C) \arrow[r, "{\iupperstar}"] & \PSh(B) \arrow[r, "\Lup_{\tau}"] & \Sh_{\tau}(B)
			\end{tikzcd}
		\end{equation*}
		of presheaf restriction followed by $ \restrict{\tau}{B} $-sheafification.
	\end{enumerate}
\end{lem}

\begin{proof}
	Note that \enumref{lem:ransheaf}{2} is an immediate consequence of \enumref{lem:ransheaf}{1}.
	Write $ \ilowerstar F $ for the right Kan extension of $ F $ along $ i \colon \incto{B^{\op}}{C^{\op}} $.

	To prove \enumref{lem:ransheaf}{1}, we first show that if $ \ilowerstar F $ is a $ \tau $-sheaf on $ C $, then $ F $ is a $ \restrict{\tau}{B} $-sheaf on $ B $.
	Let $ b \in B $ and let $ S_B \subset B_{/b} $ be a covering sieve on $ b $; we need to show that the natural map
	\begin{equation*}
		\rho_b^B \colon \fromto{F(b)}{\lim_{b' \in S_B^{\op}} F(b')}
	\end{equation*} 
	is an equivalence.
	Let $ S_C \subset C_{/b} $ denote the sieve generated by $ S_B \subset C_{/b} $.
	Since $ S_B $ is a covering sieve for the topology $ \restrict{\tau}{B} $ on $ B $, the sieve $ S_C $ is a $ \tau $-covering sieve.
	Now notice that the map $ \rho_b^B $ factors as a composite
	\begin{equation*}
		\begin{tikzcd}
			F(b) = \ilowerstar F(b) \arrow[r, "\rho_b^C"] & \displaystyle\lim_{c' \in S_C^{\op}} \ilowerstar F(c') \arrow[r, "\rho'_b"] & \displaystyle\lim_{b' \in S_B^{\op}} \ilowerstar F(b') = \displaystyle\lim_{b' \in S_B^{\op}} F'(b') \period 
		\end{tikzcd}
	\end{equation*}
	The morphism $ \rho_b^C $ is an equivalence because $ \ilowerstar F $ is a $ \tau $-sheaf on $ C $ and $ S_C $ is a $ \tau $-covering sieve.
	The morphism $ \rho'_b $ is an equivalence because $ \ilowerstar F $ is the right Kan extension of $ F $.

	Now we show that if $ F $ is a $ \restrict{\tau}{B} $-sheaf on $ B $, then $ \ilowerstar F $ is a $ \tau $-sheaf on $ C $.
	Let $ c \in C $ and let $ S_C \subset C_{/c} $ be a $ \tau $-covering sieve of $ c $.
	Define
	\begin{equation*}
		S_B \coloneq S \cross_C C_{/c} \subset B_{/c} \period
	\end{equation*}
	We need to show that the top horizontal map in the commutative square
	\begin{equation*}
		\begin{tikzcd}
			\ilowerstar F(c) \arrow[r] \arrow[d] & \displaystyle\lim_{c' \in S_C^{\op}} \ilowerstar F(c') \arrow[d] \\
			\displaystyle\lim_{b \in B_{/c}^{\op}} \ilowerstar F(b) \arrow[r] & \displaystyle\lim_{b \in S_B^{\op}} \ilowerstar F(b)
		\end{tikzcd}
	\end{equation*}
	is an equivalence.
	The vertical maps are equivalences because $ \ilowerstar F $ is the right Kan extension of its restriction to $ B $; hence it suffices to show that the lower horizontal map is an equivalence.
	To see this, first note that the lower horizonal map can be rewritten as a limit of maps
	\begin{equation}\label{eq:limitoflimitsieves}
		\lim_{b \in B_{/c}^{\op}} \lim_{b' \in B_{/b}^{\op}} \ilowerstar F(b') \to \lim_{[f \colon b \to c] \in S_B^{\op}} \lim_{b' \in \fupperstar S_B^{\op}} \ilowerstar F(b') \period
	\end{equation}
	To conclude, note that since $ \iupperstar\ilowerstar F = F $ is a $ \restrict{\tau}{B} $-sheaf on $ B $ and $ B $ is a basis for $ (C,\tau) $, the morphism \eqref{eq:limitoflimitsieves} is an equivalence.
\end{proof}

For sheaves of sets, \Cref{lem:ransheaf} admits a converse: a presheaf of sets $ F $ on $ C $ is a sheaf if and only if the restriction of $ F $ to $ B $ is a sheaf and $ F $ is the right Kan extension of its restriction.
This \textit{fails} for sheaves of spaces in general.
The goal of the remainder of this chapter is to show that this is true if we restrict too \textit{hyper}sheaves.
The technique is to squeeze \smash{$ \Shhyp_{\tau}(C) $} between \smash{$ \Shhyp_{\tau}(B) $} and \smash{$ \Sh_{\tau}(B) $} and apply the following observation.

\begin{lem}\label{lem:hypersqueeze}
	Let $ \XX $ and $ \YY $ be \topoi, and assume that there are fully faithful geometric morphisms
	\begin{equation*}
		\begin{tikzcd}[sep=1.5em]
			\YYhyp \arrow[r, "\flowerstar", hooked] & \XX \arrow[r, "\glowerstar", hooked] & \YY 
		\end{tikzcd}
	\end{equation*} 
	and the composite $ \glowerstar \flowerstar \colon \fromto{\YYhyp}{\YYhyp} $ is the identity.
	Then $ \flowerstar \colon \incto{\YYhyp}{\XXhyp} $ and $ \glowerstar \colon \incto{\XXhyp}{\YYhyp} $ are mutually inverse equivalences.
\end{lem}

\begin{proof}
	Immediate from the fact that $ \flowerstar $ and $ \glowerstar $ preserve hypercomplete objects \Cref{nul:lowerstarpreserveshypercomplete} and the assumption that $ \glowerstar \flowerstar \equivalent \id_{\YYhyp} $.
\end{proof}

The key technical lemma we need to show that a hypersheaf on $ C $ is the right Kan extension of its restriction to $ B $ is a relative version of \SAG{Lemma}{20.4.5.4}.
It seems to have first been noticed by Aoki \cite[Lemma A.10]{Aoki:tensor}.

\begin{lem}\label{lem:SAG.20.4.5.4var}
	Let $ \flowerstar \colon \fromto{\XX}{\YY} $ be a geometric morphism of \topoi and $ B \subset \YY $ a small full subcategory.
	Assume that for each object $ Y \in \YY $, there exists a morphism $ e \colon \fromto{\coprod_{i \in I} U_i}{Y} $ such that $ U_i \in B $ for each $ i \in I $ and $ \fupperstar(e) $ is an effective epimorphism in $ \XX $.
	Then $ \fupperstar(\colim_{U \in B} U) $ is an $ \infty $-connective object of $ \XX $.
\end{lem}

\begin{proof}
	Write $ X \coloneq \fupperstar(\colim_{U \in B} U) $. 
	We prove that $ X $ is $ n $-connective for each $ n \geq 0 $ by induction on $ n $.
	For the base case, note that since $ \fupperstar $ is a left exact left adjoint, the unique morphism $ e \colon \fromto{\coprod_{U \in B} \fupperstar(U)}{1_{\XX}} $ is an effective epimorphism.
	The effective epimorphism $ e $ factors as a composite
	\begin{equation*}
		\begin{tikzcd}[sep=1.5em]
			\displaystyle\coprod_{U \in B} \fupperstar(U) \arrow[r] & \displaystyle\colim_{U \in B} \fupperstar(U) \arrow[r] & 1_{\XX} \comma
		\end{tikzcd} 
	\end{equation*}
	hence the unique morphism $ \fromto{X}{1_{\XX}} $ is an effective epimorphism (i.e., $ X $ is $ 0 $-connective).
	
	For the inductive step, we assume that $ X $ is $ (n-1) $-connective and prove that $ X $ is $ (n-1) $-connective.
	That is, we need to show that the diagonal $ \upDelta_X \colon \fromto{X}{X \cross X} $ is $ (n-1) $-connective.
	Since $ \fupperstar $ is a left exact left adjoint and colimits in \atopos are universal, we can rewrite $ X \cross X $ as the colimit
	\begin{equation*}
		X \cross X \equivalent \colim_{(U,U') \in B \cross B} \fupperstar(U) \cross \fupperstar(U') \period
	\end{equation*}
	Rewriting $ X $ as an iterated colimit
	\begin{equation*}
		X \equivalent \colim_{(U,U') \in B \cross B} \colim_{V \in B_{/(U \cross U')}} \fupperstar(V) \comma
	\end{equation*}
	we see that we can rewrite the diagonal $ \upDelta_{X} $ as a colimit of maps
	\begin{equation*}
		\delta_{U,U'} \colon \colim_{V \in B_{/(U \cross U')}} \fupperstar(V) \to \fupperstar(U) \cross \fupperstar(U') \period
	\end{equation*}
	Thus it suffices to show that each of the maps $ \delta_{U,U'} $ is $ (n-1) $-connective.
	This follows from the inductive hypothesis applied to the geometric morphism
	\begin{equation*}
		\fromto{\XX_{/(\fupperstar(U) \cross \fupperstar(U'))}}{\YY_{/(U \cross U')}}
	\end{equation*}
	whose left exact left adjoint is given by
	\begin{equation*}
		\fupperstar \colon \fromto{\YY_{/(U \cross U')}}{\XX_{/(\fupperstar(U) \cross \fupperstar(U'))}} \period \qedhere
	\end{equation*}
\end{proof}

We are finally ready to prove the main result of this section. 
The following result has appeared in work of Porta--Yue Yu under the additional assumption that representable presheaves are already hypersheaves \cite[Proposition 2.22]{MR3545934}.
We learned of the present proof from Aoki \cite[Appendix A]{Aoki:tensor}.

\begin{prp}\label{prop:hyperbasis}
	Let $ (C,\tau) $ be \asite and $ i \colon \incto{B}{C} $ a basis for the topology $ \tau $.
	Then:
	\begin{enumerate}[{\upshape (\ref*{prop:hyperbasis}.1)}]
		\item\label{prop:hyperbasis.1} If $ F $ is a $ \tau $-hypersheaf on $ C $, then $ F $ is the right Kan extension of its restriction $ \iupperstar F $ to $ B $.

		\item\label{prop:hyperbasis.2} Right Kan extension defines an equivalence of hypercomplete \topoi
		\begin{equation*}
			\ilowerstar \colon \equivto{\Shhyp_{\tau}(B)}{\Shhyp_{\tau}(C)}
		\end{equation*}
		with inverse given by presheaf restriction $ \iupperstar $.

		\item\label{prop:hyperbasis.3} A presheaf $ F \colon \fromto{C^{\op}}{\Space} $ is a $ \tau $-hypersheaf if and only if $ \iupperstar F $ is a $ \restrict{\tau}{B} $-hypersheaf on $ B $ and $ F $ is the right Kan extension of $ \iupperstar F $.
	\end{enumerate}
\end{prp}

\begin{proof}
	Write $ \Lup_\tau \colon \fromto{\PSh(C)}{\Sh_{\tau}(C)} $ for the $ \tau $-sheafification functor and $ \yo \colon \incto{C}{\PSh(C)} $ for the Yoneda embedding.

	First we prove \enumref{prop:hyperbasis}{1}.
	Let $ F $ be a $ \tau $-hypersheaf on $ C $; we prove that the unit $ \fromto{F}{\ilowerstar \iupperstar F} $ is an equivalence.
	By the formula for right Kan extension, for each object $ c \in C $, the unit $ \fromto{F(c)}{\ilowerstar \iupperstar (F)(c)} $ is given by applying the functor $ \Map_{\PSh(C)}(-,F) $ to the natural morphism
	\begin{equation*}
		e_c \colon \fromto{\colim_{b \in B_{/c}^{\op}} \yo(b)}{\yo(c)}
	\end{equation*}
	in $ \PSh(C) $.
	Since $ F $ is a hypercomplete object of $ \Sh_{\tau}(C) $, to prove the claim it suffices to show that the morphism $ \Lup_{\tau}(e_c) $ is $ \infty $-connective for every object $ c \in C $.
	Since $ B \subset C $ is a basis for $ \tau $, this follows from \Cref{lem:SAG.20.4.5.4var} applied to the geometric morphism with left adjoint
	\begin{equation*}
		\begin{tikzcd}[sep=1.5em]
			\PSh(C_{/c}) \equivalent \PSh(C)_{/\yo(c)} \arrow[r, "\Lup_{\tau}"] & \Sh_{\tau}(C)_{/\Lup_{\tau}\!\yo(c)}
		\end{tikzcd} 
	\end{equation*}
	given by $ \tau $-sheafification.

	Now we prove \enumref{prop:hyperbasis}{2}.
	By \enumref{prop:hyperbasis}{1}, \Cref{lem:ransheaf}, and the fact that the right adjoint in a geometric morphism preserves hypercomplete objects \Cref{nul:lowerstarpreserveshypercomplete}, we have fully faithful functors
	\begin{equation*}
		\begin{tikzcd}[sep=1.5em]
			\Shhyp_{\tau}(B) \arrow[r, "\ilowerstar", hooked] & \Shhyp_{\tau}(C) \arrow[r, "\iupperstar", hooked] & \Sh_{\tau}(B) \comma
		\end{tikzcd} 
	\end{equation*}
	where the functor $ \ilowerstar \colon \incto{\Shhyp_{\tau}(B)}{\Shhyp_{\tau}(C)} $ is the right adjoint in a geometric morphism.
	Thus by \Cref{lem:hypersqueeze} it suffices to show that the restriction functor
	\begin{equation}\label{eq:hyperrestrict}
		\iupperstar \colon \incto{\Shhyp_{\tau}(C)}{\Sh_{\tau}(B)}
	\end{equation}
	admits a left exact left adjoint.
	Write $ (-)^{\hyp} \colon \fromto{\Sh_{\tau}(C)}{\Shhyp_{\tau}(C)} $ for the left adjoint to the inclusion $ \fromto{\Shhyp_{\tau}(C)}{\Sh_{\tau}(C)} $.
	We claim that the composite 
	\begin{equation*}
		\begin{tikzcd}[sep=2em]
			\Shhyp_{\tau}(B) \arrow[r, "\ilowerstar", hooked] & \Sh_{\tau}(C) \arrow[r, "(-)^{\hyp}", hooked] & \Shhyp_{\tau}(C) 
		\end{tikzcd} 
	\end{equation*}
	is left adjoint to the restriction \eqref{eq:hyperrestrict}.
	To see this, let $ G \in \Sh_{\tau}(B) $ and $ F \in \Shhyp_{\tau}(C) $ and note that since $ \ilowerstar $ is fully faithful, by \enumref{prop:hyperbasis}{1} and the hypercompleteness of $ F $ we have natural equivalences
	\begin{align*}
		\Map_{\Sh_{\tau}(B)}(G,\iupperstar F) &\equivalent \Map_{\Sh_{\tau}(C)}(\ilowerstar G, \ilowerstar \iupperstar F) \\ 
		&\equivalent \Map_{\Sh_{\tau}(C)}(\ilowerstar G, F) \\
		&\equivalent \Map_{\Shhyp_{\tau}(C)}((\ilowerstar G)^{\hyp}, F) \period
	\end{align*}

	Finally, \enumref{prop:hyperbasis}{3} is immediate from \enumref{prop:hyperbasis}{2} and \Cref{lem:ransheaf}.
\end{proof}

\begin{cor}\label{cor:hypercompletebasis}
	Let $ (C,\tau) $ be \asite and $ i \colon \incto{B}{C} $ a basis for the topology $ \tau $.
	If $ \Sh_{\tau}(C) $ is hypercomplete, then $ \Sh_{\tau}(B) $ is hypercomplete and the geometric morphism $ \ilowerstar \colon \incto{\Sh_{\tau}(B)}{\Sh_{\tau}(C)} $ is an equivalence.
\end{cor}

\begin{cor}\label{cor:n-localicbasis}
	Let $ (C,\tau) $ be \asite, $ i \colon \incto{B}{C} $ a basis for the topology $ \tau $, and $ n \geq 0 $ be an integer.
	If $ \Sh_{\tau}(C) $ and $ \Sh_{\tau}(B) $ are both $ n $-localic, then the geometric morphism $ \ilowerstar \colon \incto{\Sh_{\tau}(B)}{\Sh_{\tau}(C)} $ is an equivalence.
\end{cor}

\begin{exm}\label{exm:Pbasis}
	Let $ P $ be a poset.
	From \Cref{exm:posetbasis} we see that right Kan extension along the inclusion $ P \subset \Open(P)^{\op} $ defined by $ \goesto{p}{P_{\geq p}} $ defines a fully faithful geometric morphism
	\begin{equation*}
		\incto{\Fun(P,\Space)}{\Ptilde}
	\end{equation*}
	that identifies $ \Fun(P,\Space) $ with the hypercompletion of $ \Ptilde $.
	
	In particular, if $ P $ is a \textit{finite} poset, then $ \Ptilde $ is already hypercomplete \cites[\HTTthm{Remark}{7.2.4.18}]{HTT}[Lemma 3.13]{ClausenMathew:hyperdescent}, so right Kan extension defines an equivalence
	\begin{equation*}
		\equivto{\Fun(P,\Space)}{\Ptilde} \period
	\end{equation*}
	See \cite[Corollary 2.4]{AsaiShah} for a direct proof of this fact.
\end{exm}

\begin{wrn}\label{wrn:Ptildenlocalic}
	If $ P $ is an infinite poset, then $ \Ptilde $ need not be hypercomplete; see \cite[Example A.13]{Aoki:tensor} for a counterexample.
	Equivalently, if $ P $ is an infinite poset, then the \topos $ \Fun(P,\Space) $ need not be $ n $-localic for any $ n \geq 0 $.
\end{wrn}

\begin{rmk}
	See \cite[Lemma C.3]{MR3302973} for another very useful criterion for checking that the inclusion of a basis induces an equivalence after passage to sheaves of spaces.
\end{rmk}

\newpage

\section{Shape theory}\label{sec:shapetheory}

This chapter is dedicated to shape theory for \topoi.
The ideas of shape theory for higher topoi go back to Grothendieck's famous letter to Breen \cite{Grothendieck:Breen}, and were later developed by Toën--Vezzosi \cites{MR1949660}[\S3.2]{Toen:HHCSAG}[\S5.3]{TV:SegalTopoi} and Lurie \cites[\HTTsubsec{7.1.6}]{HTT}[\HAsec{A.1}]{HA}[\SAGsec{E.2}]{SAG}.
We refer the reader to these sources for original accounts of the theory.

\Cref{subsec:protruncated} establishes the basic material we'll need on protruncated objects.
\Cref{subsec:shapes} recalls the definition of the shape and explains a few basic properties of the shape; one of the most important of these is that the protruncated shape of \atopos and its hypercompletion agree.  
\Cref{subsec:shapesoflimits} is devoted to proving that the protruncated shape commutes with inverse limits of bouded coherent \topoi (\Cref{cor:protruncshapeinverselim}).
This result provides a computational tool for shapes and will be used repeatedly throughout the text.
\Cref{subsec:profinshape} explains how to regard profinite spaces as \topoi following \cite[\SAGapp{E}]{SAG}.


\subsection{Protruncated objects}\label{subsec:protruncated}

In this section, we recall some facts about protruncated objects that we'll need throughout the text.
We also record an interesting observation which does not seem to be in the literature (\Cref{lem:protruncff}).

\begin{ntn}\label{ntn:pro-n-trun}
	Let $ C $ be a presentable \category.
	For each integer $ n \geq -2 $, the \defn{pro-$ n $-truncation}\index[terminology]{prontruncation@pro-$ n $-truncation} functor $ \trun_{\leq n} \colon \fromto{\Pro(C)}{\Pro(C_{\leq n})} $ is the unique extension of the $ n $-truncation functor $ \trun_{\leq n} \colon \fromto{C}{C_{\leq n}} $ to \proobjects that preserves inverse limits.
\end{ntn}

\begin{nul}\label{nul:protrun}
	Let $ C $ be a presentable \category.
	Then the extension to \proobjects of the functor $ \fromto{C}{\Pro(C_{<\infty})} $ given by sending an object $ X \in C $ to the inverse system given by its Postnikov tower $ \{\trun_{\leq n}(X)\}_{n\geq-2} $ is left adjoint to the inclusion $ \incto{\Pro(C_{<\infty})}{\Pro(C)} $.
	We call this left adjoint
	\begin{equation*}
		\trun_{<\infty} \colon \fromto{\Pro(C)}{\Pro(C_{<\infty})}
	\end{equation*}
	\defn{protruncation}.%
	\index[terminology]{protruncation}\index[notation]{tauless@$\trun_{<\infty}$}
	A morphism of \proobjects $ f \colon \fromto{X}{Y} $, regarded as left exact accessible functors $ \fromto{C}{\Space} $, becomes an equivalence after protuncation if and only if for every truncated object $ K \in C_{<\infty} $, the induced morphism $ f(K) \colon \fromto{X(K)}{Y(K)} $ is an equivalence.

	Since the truncation functors in \atopos preserve finite products \HTT{Lemma}{6.5.1.2}, if $ C $ is \atopos, then the protruncation functor $ \trun_{<\infty} $ also preserves finite products
\end{nul}

\begin{rmk}\label{rmk:ArtinMazurnatural}
	In the terminology of Artin--Mazur \cite[Definition 4.2]{MR0245577}, morphisms in the \category $ \Pro(\Space) $ of prospaces that induce equivalences after protruncation are precisely those morphisms that become \textit{$ \natural $-isomorphisms}\index[terminology]{zzn@$\natural$-isomorphism} in the category $ \Pro(\hoSpace) $.
\end{rmk}

\begin{rmk}\label{rmk:Isaksenmodelstructure}
	Isaksen's \textit{strict model structure} on pro-simplicial sets \cite{MR2039766} presents the \category $ \Pro(\Space) $ of prospaces \cite[Lemma 3.1]{Hoyois:Kunneth}.
	The model structure that Isaksen defines in \cite{MR1828474} is the left Bousfield localization of the strict model structure at the $ \trun_{<\infty} $-equivalences, hence presents the \category $ \Pro(\Space_{<\infty}) $ of protruncated spaces \cite[Remark 3.2]{Hoyois:Kunneth}.
	The latter model structure is what is often used étale homotopy theory, for example in the recent work of Schmidt--Stix \cite{MR3549624} on the étale homotopy type and anabelian geometry.
\end{rmk}

\begin{nul}\label{nul:materialization}
	Let $ C $ be a presentable \category.
	The unique functor
	\begin{equation*}
		\mat \colon \fromto{\Pro(C)}{C}
	\end{equation*}
	that preserves inverse limits and restricts to the identity $ \fromto{C}{C} $ is right adjoint to the Yoneda embedding $ \yo \colon \incto{C}{\Pro(C)} $ \SAG{Example}{A.8.1.7}.
	We call $ \mat $\index[notation]{mat@$\mat$} the \defn{materialization}\index[terminology]{materialization} functor.
	Hence we have adjunctions
	\begin{equation*}
		\begin{tikzcd}
			C \arrow[r, hooked, shift left, "\yo"] & \Pro(C) \arrow[l, shift left, "\mat"] \arrow[r, "\trun_{<\infty}", shift left] & \Pro(C_{<\infty}) \arrow[l, hooked', shift left] \period
		\end{tikzcd}
	\end{equation*}
\end{nul}

If Postnikov towers converge in $ C $ (\Cref{def:Postnikovstuff}), then the composite left adjoint is also fully faithful:

\begin{lem}\label{lem:protruncff}
	Let $ C $ be presentable \category.
	If Postnikov towers converge in $ C $, the protruncation functor
	\begin{equation*}
		\trun_{<\infty} \colon \fromto{C}{\Pro(C_{<\infty})} 
	\end{equation*}
	is fully faithful.
	Moreover, the essential image of $ \trun_{<\infty} \colon \incto{C}{\Pro(C_{<\infty})} $ is the full subcategory spanned by those protruncated objects $ X $ such that for each integer $ n \geq -2 $, the pro-$ n $-truncation $ \trun_{\leq n}(X) \in \Pro(C_{\leq n}) $ is a constant pröbject.
\end{lem}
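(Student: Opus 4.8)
\emph{Overall strategy.} The plan is to run everything off the adjunction chain $C\overset{\yo}{\hookrightarrow}\Pro(C)\overset{\tau_{<\infty}}{\longrightarrow}\Pro(C_{<\infty})$ recorded just above, together with one honest input: Postnikov completeness of $C$ (\SAG{Definition}{A.7.2.1}), which gets used twice — once to force the unit to be an equivalence (hence full faithfulness) and once to reassemble a Postnikov pretower into an object of $C$ (hence the essential-image statement). Throughout, write $\iota\colon\incto{\Pro(C_{<\infty})}{\Pro(C)}$ for the inclusion, so that the functor $\tau_{<\infty}\colon\fromto{C}{\Pro(C_{<\infty})}$ of the lemma is the composite left adjoint $\tau_{<\infty}\of\yo$, with right adjoint $\mat\of\iota$.

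\emph{Full faithfulness.} Since $\mat$ carries a proöbject to its inverse limit in $C$ and restricts to the identity on $C$, for $Z\in C$ the right adjoint evaluates as $\mat\iota\tau_{<\infty}(Z)=\lim_{n}\tau_{\leq n}(Z)$, and unwinding the composite adjunction shows that the unit at $Z$ is exactly the canonical map $\fromto{Z}{\lim_{n}\tau_{\leq n}(Z)}$. By Postnikov completeness this is an equivalence, so the unit of $\tau_{<\infty}\of\yo\leftadjoint\mat\iota$ is an equivalence and $\tau_{<\infty}\colon\fromto{C}{\Pro(C_{<\infty})}$ is fully faithful.

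\emph{Essential image.} For necessity: if $X\equivalent\tau_{<\infty}(Z)$ then $X$ is a proöbject of $C_{<\infty}$, hence protruncated, and $\tau_{\leq n}(X)\equivalent\{\tau_{\leq n}\tau_{\leq m}(Z)\}_{m}=\{\tau_{\leq\min(m,n)}(Z)\}_{m}$, in which the constant system at $\tau_{\leq n}(Z)$ is cofinal; thus $\tau_{\leq n}(X)$ is the constant proöbject $\yo(\tau_{\leq n}(Z))$. For sufficiency I would use two facts. First, for \emph{any} protruncated $X=\{X_{\alpha}\}_{\alpha}\in\Pro(C_{<\infty})$ the natural map $\fromto{X}{\lim_{n}\tau_{\leq n}(X)}$ is an equivalence; this is a direct computation with the formula $\Map_{\Pro(C_{<\infty})}(\{W_{\beta}\},\{T_{\gamma}\})\equivalent\lim_{\gamma}\colim_{\beta}\Map(W_{\beta},T_{\gamma})$, using that each $X_{\alpha}$ is truncated, so its Postnikov tower is eventually constant and the resulting $n$-indexed towers stabilise. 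Second, suppose $X$ is protruncated with $\tau_{\leq n}(X)\equivalent\yo(Y_{n})$ constant for every $n$. Because $\yo\colon\incto{C_{<\infty}}{\Pro(C_{<\infty})}$ is fully faithful and commutes with $n$-truncation, the diagram $n\mapsto\tau_{\leq n}(X)$ lifts essentially uniquely through $\yo$ to a tower $\{Y_{n}\}_{n\geq-2}$ in $C$ with $\tau_{\leq n-1}(Y_{n})\equivalent Y_{n-1}$, i.e. to an object of $\lim_{n}C_{\leq n}$. Postnikov completeness of $C$ produces $Z\in C$ with $\tau_{\leq n}(Z)\equivalent Y_{n}$ compatibly, and then $\tau_{<\infty}(Z)=\{\tau_{\leq n}(Z)\}_{n}\equivalent\{Y_{n}\}_{n}=\lim_{n}\yo(Y_{n})=\lim_{n}\tau_{\leq n}(X)\equivalent X$, the last equivalence being the first fact.

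\emph{Where the difficulty lies.} The formal half (full faithfulness) is immediate once the right adjoint is identified as ``inverse limit of the Postnikov tower''. The genuine content is the description of the essential image: a protruncated proöbject can carry real pro-data in \emph{each} truncation degree, so one must isolate the condition that every pro-$n$-truncation be constant and then rebuild the object. The two load-bearing steps are therefore (a) the recovery $X\equivalent\lim_{n}\tau_{\leq n}(X)$ for protruncated $X$, which needs the small but non-formal pro-mapping-space computation above, and (b) the passage from a Postnikov pretower in $C$ to a single object of $C$, which is precisely the second use of Postnikov completeness; I expect (a) to be the main place one has to be careful.
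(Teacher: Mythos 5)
Your full-faithfulness argument is the same as the paper's: identify the right adjoint of $\tau_{<\infty}\colon\fromto{C}{\Pro(C_{<\infty})}$ as ``inverse limit of the Postnikov tower'' and invoke Postnikov completeness to see that the unit $\fromto{Z}{\mat\tau_{<\infty}(Z)}\equivalent\lim_{n}\tau_{\leq n}(Z)$ is an equivalence. The paper's proof stops there and does not justify the essential-image claim at all; your two-step argument for it — the stabilisation computation showing $X\equivalent\lim_{n}\tau_{\leq n}(X)$ for any protruncated $X$ (using that mapping spaces out of the truncated $X_{\alpha}$ into a fixed truncated target stabilise in $n$), followed by reassembly of the constant pro-$n$-truncations into an object of $C\equivalent\lim_{n}C_{\leq n}$ — is correct and supplies what the paper leaves unproved.
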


\begin{proof}
	It suffices to show that for any object $ X \in C $, the unit morphism $ \fromto{X}{\mat \trun_{<\infty}(X)} $ is an equivalence.
	This follows from the equivalence 
	\begin{equation*}
		\mat \trun_{<\infty}(X) \equivalent \lim_{n\geq-2} \trun_{\leq n}(X)
	\end{equation*}
	and the assumption that Postnikov towers converge in $ C $.
\end{proof}

\begin{nul}
	Composing the fully faithful functor $ \trun_{<\infty} \colon \incto{\Space}{\Pro(\Space_{<\infty})} $ with the inclusion $ \incto{\Pro(\Space_{<\infty})}{\Pro(\Space)} $ gives another embedding of spaces into prospaces, different from the Yoneda embedding $\yo \colon \Space \inclusion \Pro(\Space)$:
	for a space $ K $, the natural morphism of prospaces $ \fromto{\yo(K)}{\trun_{<\infty}(K)} $ is an equivalence if and only if $ K $ is truncated.
	Unlike the Yoneda embedding $\yo$, the functor $ \trun_{<\infty} \colon \incto{\Space}{\Pro(\Space)} $ is neither a left nor a right adjoint.
\end{nul}


\subsection{Shape theory}\label{subsec:shapes}

We now recall the basics of \textit{shape theory} for \topoi.
The shape is crucial to the study of \textit{Stone \topoi} presented in \cref{subsec:profinshape}.
Both shape theory and Stone \topoi are key to our development of the \textit{stratified shape} in \Cref{part:strattopoi} and \textit{stratified étale homotopy type} in \Cref{part:stratetale}.

\begin{dfn}\label{dfn:shape}
	The \defn{shape}\index[terminology]{shape} $ \Shape \colon \fromto{\Top_{\infty}}{\Pro(\Space)} $\index[notation]{Pi@$ \Shape $} is the left adjoint to the extension to \proobjects of the fully faithful functor $ \incto{\Space}{\Top_{\infty}} $ given by
	\begin{equation*}
		\goesto{\Pi}{\Space_{/\Pi} \equivalent \Fun(\Pi,\Space)}
	\end{equation*}
	\cite[\SAGsubsec{E.2.2}]{SAG}.
	The shape admits two other useful descriptions:
	\begin{enumerate}[(1)]
		\item Let $ \XX $ be \atopos, and write $ \upGamma_{\XX,!} \colon \fromto{\XX}{\Pro(\Space)} $ for the proëxistent left adjoint of $ \Gammaupperstar_{\XX} \colon \fromto{\Space}{\XX} $.
		The shape of $ \XX $ is equivalent to the prospace $ \upGamma_{\XX,!}(1_{\XX}) $ \cites[\HAappthm{Remark}{A.1.10}]{HA}[\S 2]{MR3763287}.
	
		\item As a left exact accessible functor $ \fromto{\Space}{\Space} $, the prospace $ \Shape(\XX) $ is the composite $ \Gammalowerstar \Gammaupperstar $ \cites[\HTTsubsec{7.1.6}]{HTT}[\S 2]{MR3763287}.
		Under this identification, the shape assigns to a geometric morphism $ \flowerstar \colon \fromto{\XX}{\YY} $ with unit $ \unit \colon \fromto{\id_{\YY}}{\flowerstar \fupperstar} $ the morphism of prospaces corresponding to
		\begin{equation*}
			\Gammaup_{\YY,\ast} \unit \Gammaupperstar_{\YY} \colon \fromto{\Gammaup_{\YY,\ast} \Gammaupperstar_{\YY}}{\Gammaup_{\YY,\ast} \flowerstar \fupperstar \Gammaupperstar_{\YY} \equivalent \Gammaup_{\XX,\ast} \Gammaupperstar_{\XX}}
		\end{equation*}
		in $ \Pro(\Space)^{\op} \subset \Fun(\Space,\Space) $. 
	\end{enumerate}
\end{dfn}

\begin{nul}\label{nul:classifyingtopos}
	The functor $ \uplambda \colon \fromto{\Pro(\Space)}{\Top_{\infty}} $ given by extending the fully faithful functor $ \incto{\Space}{\Top_{\infty}} $ to \proobjects is \textit{not} itself fully faithful.
\end{nul}

For our first example shape computation, please recall that we write $ \invert \colon \fromto{\Cat_{\infty}}{\Space} $ for the left adjoint to the inclusion (\Cref{ntn:inverteverything}).

\begin{exm}\label{exm:shapeofpresheaf}
	If $ C $ is a small \category, then $ \Gammaupperstar \colon \fromto{\Space}{\Fun(C,\Space)} $ admits a genuine left adjoint $ \Gammalowershriek \colon \fromto{\Fun(C,\Space)}{\Space} $ given by taking the colimit of a diagram $ \fromto{C}{\Space} $.%
	\footnote{That is to say, presheaf \topoi are \textit{locally of constant shape} \cite[\HAappthm{Definition}{A.1.5} \& \HAappthm{Proposition}{A.1.8}]{HA}.}
	The shape of the \topos $ \Fun(C,\Space) $ is thus given by the colimit of the constant diagram at the terminal object of $ \Space $:
	\begin{equation*}
		\Shape(\Fun(C,\Space)) = \Gammalowershriek(1_{\Fun(C,\Space)}) = \textstyle\colim_{C} 1_{\Space} \equivalent \invert(C) \period
	\end{equation*}

	Moreover, the functor $ \invert \colon \fromto{\Cat_{\infty}}{\Space} $ is equivalent to the composite 
	\begin{equation*}
		\begin{tikzcd}[sep=1.5em]
			\Cat_{\infty} \arrow[rrr, "{\Fun(-,\Space)}"] & & & \Top_{\infty} \arrow[r, "\Shape"] & \Space \period
		\end{tikzcd}
	\end{equation*}
\end{exm}

\begin{dfn}\label{dfn:shapeequivalence}
	A geometric morphism $ \flowerstar \colon \fromto{\XX}{\YY} $ of \topoi is a \emph{shape equivalence}\index[terminology]{shape!equivalence}\index[terminology]{equivalence!shape} if the induced morphism
	\begin{equation*}
		\Shape(\flowerstar) \colon \fromto{\Shape(\XX)}{\Shape(\YY)}
	\end{equation*}
	is an equivalence in $ \Pro(\Space) $.
	We say that \atopos $ \XX $ has \defn{trivial shape}\index[terminology]{shape!trivial} if $ \Shape(\XX) $ is a terminal object of $ \Pro(\Space) $.
\end{dfn}

\begin{nul}
	Work of Hoyois \cite[Proposition 2.6]{MR3763287} shows that a geometric morphism $ \flowerstar $ is a shape equivalence if and only if $ \flowerstar $ induces an equivalence of \categories of space-valued torsors. 
\end{nul}

\begin{wrn}
	The pullback (in $ \Top_{\infty} $) of a shape equivalence is not generally a shape equivalence, even when both morphisms are shape equivalences. 
	As an example, consider the space $ X \coloneq [0,1] $, and its closed subspace $ Z \coloneq \{ 0 \} $ and open complement $ U \coloneq (0,1] $.
	Then the \topoi $ \Xtilde $, $ \Utilde $, and $ \Ztilde $ all have trivial shape and the natural inclusions
	\begin{equation*}
		\incto{\Ztilde}{\Xtilde} \andeq \incto{\Utilde}{\Xtilde}
	\end{equation*}
	are both shape equivalences \HAa{Example}{A.4.5}, however the pullback $ \Ztilde \cross_{\Xtilde} \Utilde $ is the initial \topos $ \widetilde{\emptyset} $, which has empty shape.
\end{wrn}

\begin{ntn}
	Let $ n \geq -2 $ be an integer.
	We write\index[notation]{Pileqn@$\Shapen$}
	\begin{equation*}
		\Shapen \coloneq \trun_{\leq n} \of \Shape \colon \fromto{\Top_{\infty}}{\Pro(\Space_{\leq n})} 
	\end{equation*}
	for the \defn{pro-$ n $-truncated shape}\index[terminology]{shape!prontruncated@pro-$n$-truncated}\index[terminology]{prontruncated@pro-$n$-truncated!shape} (\Cref{ntn:pro-n-trun}).
	We write\index[notation]{Pilinfty@$\Shapetrun$}
	\begin{equation*}
		\Shapetrun \coloneq \trun_{<\infty} \of \Shape \colon \fromto{\Top_{\infty}}{\Pro(\Space_{<\infty})} 
	\end{equation*}
	for the \emph{protruncated shape}\index[terminology]{shape!protruncated@protruncated}\index[terminology]{protruncated@protruncated!shape} \Cref{nul:protrun}.
\end{ntn}

\begin{exm}\label{exm:protruncandhypercomp}
	Since truncated objects of \atopos are hypercomplete, for any \topos $ \XX $, the natural geometric morphism $ \incto{\XX^{\hyp}}{\XX} $ induces an equivalence
	\begin{equation*}
		\equivto{\Shapetrun(\XX^{\hyp})}{\Shapetrun(\XX)}
	\end{equation*}
	on protruncated shapes.
\end{exm}


\subsection{Shapes of inverse limits}\label{subsec:shapesoflimits}

This section is dedicated to proving that the protruncated shape preserves limits  of inverse systems of bounded coherent \topoi and coherent geometric morphisms (\Cref{cor:protruncshapeinverselim}).\footnote{A proof of this can be found in work of the third-named author \cite[Proposition 2.2]{Haine:recon}, but we present a better proof here.}
This follows from the more general fact that the protruncated shape preserves limits of inverse systems of \topoi and geometric morphisms in which the pushforward preserve filtered colimits of uniformly truncated objects.
We learned this from Chough \cite[\S 3]{Chough:Proper}; though Chough's paper only states this for the profinite shape, his proof works for the protruncated shape.

We first fix some useful notation for the next few results.
Please also recall \Cref{def:almostcompactness}.

\begin{ntn}\label{ntn:inverselimitargument}
	Let $ \XX \colon \fromto{I}{\Top_{\infty}} $ be an inverse diagram of \topoi.
	For each morphism $ \alpha \colon \fromto{j}{i} $ in $ I $, we write
	\begin{equation*}
		f_{\alpha,\ast} \colon \fromto{\XX_j}{\XX_i}
	\end{equation*}
	for the transition morphism.
	For each $ i \in I $, we write 
	\begin{equation*}
		\pi_{i,\ast} \colon \fromto{\lim_{i \in I} \XX_i}{\XX_i}
	\end{equation*}
	for the projection.
	In addition, assume for each morphism $ \alpha \colon \fromto{j}{i} $ the functor $ f_{\alpha,\ast}$ almost preserves filtered colimits.
\end{ntn}

\begin{prp}\label{lem:ChoughsMoerdijkVermeullenlemma}
	Under the assumptions of \Cref{ntn:inverselimitargument}, for each $ i \in I $ and truncated object $ U \in \XX_{<\infty} $ we have
	\begin{equation}\label{eq:ChoughsMoerdijkVermeullenlemma}
		\piupperstar_i(U) \equivalent \left\{ \colim_{(\alpha,\beta) \in (I_{/i} \cross_{I} I_{/j})^{\op}} f_{\beta,\ast} \fupperstar_{\alpha}(U) \right\}_{j \in I} \period
	\end{equation}
\end{prp}

\begin{proof}
	Since inverse limits in $ \Top_{\infty} $ are computed in $ \Cat_{\infty,\updelta_1} $ (\Cref{thm:filteredlimsinRTop}=\allowbreak\HTT{Theorem}{6.3.3.1}), the assumption that each $ f_{\alpha,\ast} $ almost preserves filtered colimits guarantees that the right-hand side of \eqref{eq:ChoughsMoerdijkVermeullenlemma} is a well-defined object of $ \lim_{j \in I} \XX_j $.

	For each $ i \in I $, the forgetful functor $ \fromto{I_{/i}}{I} $ is limit-cofinal \cite[\HTTthm{Example}{5.4.5.9} \& \HTTthm{Lemma}{5.4.5.12}]{HTT}, so we may without loss of generality assume that $ i \in I $ is a terminal object.
	For each $ k \in I $, write $ f_{k,\ast} \colon \fromto{\XX_k}{\XX_i} $ for the geometric morphism induced by the unique morphism $ \fromto{k}{i} $.
	In this case, a simple cofinality argument shows that
	\begin{equation*}
		\colim_{(\alpha,\beta) \in (I_{/i} \cross_{I} I_{/j})^{\op}} f_{\beta,\ast} \fupperstar_{\alpha}(U) \equivalent \colim_{[\beta \colon k \to j] \in (I_{/j})^{\op}} f_{\beta,\ast} \fupperstar_{k}(U) \period
	\end{equation*}
	By definition, for all $ V \in \XX $ we have
	\begin{align*}
		\Map_{\XX}\paren{\left\{ \colim_{[\beta \colon k \to j] \in (I_{/j})^{\op}} f_{\beta,\ast} \fupperstar_{k}(U) \right\}_{j \in I}, V} &\equivalent \lim_{j \in I} \Map_{\XX_j}\paren{\colim_{\beta \in (I_{/j})^{\op}} f_{\beta,\ast} \fupperstar_{k}(U), \pi_{j,\ast}(V)} \\ 
		&\equivalent \lim_{j \in I} \lim_{\beta \in I_{/j}} \Map_{\XX_j}(f_{\beta,\ast} \fupperstar_{k}(U), \pi_{j,\ast}(V)) \\ 
		&\equivalent \lim_{j \in I} \lim_{\beta \in I_{/j}} \Map_{\XX_j}(f_{\beta,\ast} \fupperstar_{k}(U), f_{\beta,\ast} \pi_{k,\ast}(V))
	\end{align*}
	Rewriting the limit as a limit over $ \beta \in \Fun([1],I) $ and using the fact that the constant functor $ \fromto{I}{\Fun([1],I)} $ is limit-cofinal (since it is a left adjoint), we see that 
	\begin{align*}
		\Map_{\XX}\paren{\left\{ \colim_{[\beta \colon k \to j] \in (I_{/j})^{\op}} f_{\beta,\ast} \fupperstar_{k}(U) \right\}_{j \in I}, V} &\equivalent \lim_{\beta \in \Fun([1],I)} \Map_{\XX_j}(f_{\beta,\ast} \fupperstar_{k}(U), f_{\beta,\ast} \pi_{k,\ast}(V)) \\
		&\equivalent \lim_{k \in I} \Map_{\XX_k}(\fupperstar_k(U),\pi_{k,\ast}(V)) \\ 
		&\equivalent \lim_{k \in I} \Map_{\XX}(\piupperstar_k\fupperstar_k(U),V) \\
		&\equivalent \lim_{k \in I} \Map_{\XX}(\piupperstar_i(U),V) \\
		&= \Map_{\XX}(\piupperstar_i(U),V) \period \qedhere
	\end{align*}
\end{proof}

\begin{cor}\label{lem:filteredcolimdescription}
	Keep the assumptions of \Cref{lem:ChoughsMoerdijkVermeullenlemma}.
	Then for each $ i \in I $ and truncated object $ U \in \XX_{i,<\infty} $, we have an equivalence
	\begin{equation*}
		\pi_{i,\ast} \piupperstar_i(U) \equivalent \colim_{\alpha \in (I_{/i})^{\op}} f_{\alpha,\ast} \fupperstar_{\alpha}(U) 
	\end{equation*}
	of objects of $ \XX_i $.
\end{cor}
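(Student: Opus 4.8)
The plan is to deduce \Cref{lem:filteredcolimdescription} from \Cref{lem:ChoughsMoerdijkVermeullenlemma}=\Cref{lem:ChoughsMoerdijkVermeullenlemma} by applying the projection $ \pi_{i,\ast} $ to the formula for $ \piupperstar_i(U) $ and simplifying. First I would recall that a morphism in $ \Pro(\Space) $ — or rather, an object of the inverse limit $ \lim_{j \in I} \XX_j $, computed in $ \Cat_{\infty,\delta_1} $ by \Cref{thm:filteredlimsinRTop}=\allowbreak\HTT{Theorem}{6.3.3.1} — has its value under $ \pi_{i,\ast} $ given by the $ j = i $ component. Thus from \eqref{eq:ChoughsMoerdijkVermeullenlemma} we read off
\begin{equation*}
	\pi_{i,\ast} \piupperstar_i(U) \equivalent \colim_{(\alpha,\beta) \in (I_{/i} \cross_{I} I_{/i})^{\op}} f_{\beta,\ast} \fupperstar_{\alpha}(U) \period
\end{equation*}

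The remaining step is a cofinality argument: I claim that the diagonal functor $ \fromto{I_{/i}}{I_{/i} \cross_I I_{/i}} $ is limit-cofinal (equivalently, that its opposite is colimit-cofinal), so that the colimit over $ (I_{/i} \cross_I I_{/i})^{\op} $ agrees with the colimit over $ (I_{/i})^{\op} $ of the restricted diagram $ \goesto{\alpha}{f_{\alpha,\ast}\fupperstar_\alpha(U)} $. This is where I would be slightly careful: the \category $ I_{/i} \cross_I I_{/i} $ has objects pairs of morphisms $ \alpha\colon j \to i $, $ \beta\colon k \to i $ together with an identification of their targets, and the diagonal picks out the pairs with $ j = k $ and $ \alpha = \beta $; since $ i $ is not assumed terminal in $ I $ one cannot immediately reduce to the terminal case as in the proof of \Cref{lem:ChoughsMoerdijkVermeullenlemma}, but the fibre product $ I_{/i} \cross_I I_{/i} $ is itself a slice-type \category with a terminal-ish structure making the diagonal a right adjoint up to the relevant cofinality, and \HTTthm{Theorem}{4.1.3.1} applies. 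Concretely, one checks that for a fixed object $ (\alpha\colon j \to i,\ \beta\colon k \to i) $, the comma \category of the diagonal under it is the \category of morphisms in $ I $ out of some $ \ell $ equipped with maps to both $ j $ and $ k $ compatible with $ \alpha,\beta $ — i.e. $ (I_{/j} \cross_{I_{/i}} I_{/k}) $ — and this is weakly contractible because it has an initial object given by... well, it need not, so instead one argues it is filtered or uses that $ I_{/j} \to I_{/i} $ is a right fibration. The cleanest route is to observe $ I_{/i} \cross_I I_{/i} = (I_{/i})_{/\mathrm{id}} \cross \dots $; alternatively, factor the diagonal as $ I_{/i} \to \Fun(\Delta^1, I_{/i}) \cross_{\dots} $ and invoke the same ``constant functor into an arrow category is limit-cofinal'' fact already used at the end of the proof of \Cref{lem:ChoughsMoerdijkVermeullenlemma}.

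I expect the main obstacle to be precisely nailing down this cofinality of the diagonal $ \fromto{(I_{/i})^{\op}}{(I_{/i} \cross_I I_{/i})^{\op}} $ cleanly — it is the kind of statement that is intuitively obvious (a colimit indexed over ``pairs with a common bound'' is the same as over ``single objects'' when the index category is suitably filtered over $ i $) but requires either a direct verification via \HTTthm{Theorem}{4.1.3.1} that the relevant comma \categories are weakly contractible, or an appeal to a packaged lemma about right fibrations and pullbacks of slice categories. Once that is in hand, the equivalence
\begin{equation*}
	\pi_{i,\ast} \piupperstar_i(U) \equivalent \colim_{\alpha \in (I_{/i})^{\op}} f_{\alpha,\ast} \fupperstar_{\alpha}(U)
\end{equation*}
follows immediately, with naturality in $ U $ and in $ i $ inherited from \Cref{lem:ChoughsMoerdijkVermeullenlemma}. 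I would also double-check that the colimit on the right genuinely lives in $ \XX_i $ (not just in some ind-completion): this is guaranteed by the standing hypothesis in \Cref{ntn:inverselimitargument} that each $ f_{\alpha,\ast} $ preserves filtered colimits of uniformly $ n $-truncated objects, since $ U $ is truncated and hence so is every $ \fupperstar_\alpha(U) $ of bounded level along the (filtered) diagram $ (I_{/i})^{\op} $.
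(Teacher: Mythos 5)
Your overall strategy is workable but diverges from the paper's at the crucial step, and the divergence is exactly where your argument becomes shaky. The paper's proof is two lines: since the forgetful functor $ \fromto{I_{/i}}{I} $ is limit-cofinal, one may restrict the whole diagram along it and assume $ i $ is terminal in $ I $; in that case $ I_{/i} \cross_I I_{/i} \to I_{/i} $ is an equivalence (no diagonal-cofinality needed), and the displayed formula is literally the $ i $-th component of the formula of \Cref{lem:ChoughsMoerdijkVermeullenlemma}, whose proof has already collapsed the double index to a single one under the terminality assumption. You instead keep $ i $ general, read off the $ j = i $ component to get a colimit over $ (I_{/i} \cross_I I_{/i})^{\op} $, and then try to show the diagonal $ \fromto{(I_{/i})^{\op}}{(I_{/i} \cross_I I_{/i})^{\op}} $ is colimit-cofinal. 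That statement is in fact true — $ I^{\op} $ is filtered by the standing hypothesis that $ I $ is an inverse category, so $ (I_{/i})^{\op} = (I^{\op})_{i/} $ is filtered and the relevant comma categories are filtered, hence weakly contractible — but it is a strictly harder lemma than anything the paper needs, and the reduction to the terminal case renders it unnecessary.

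The gap is that you never actually establish this cofinality, and your attempted justifications contain errors. You describe an object of $ I_{/i} \cross_I I_{/i} $ as a pair $ \alpha \colon j \to i $, $ \beta \colon k \to i $ "with an identification of their targets"; the fibre product is taken over the source-forgetful functors, so what is identified is the common \emph{source} $ j = k $ (the targets are both $ i $ by fiat). Consequently your identification of the comma category under $ (\alpha,\beta) $ with $ I_{/j} \cross_{I_{/i}} I_{/k} $ is not correct: the comma category is the \category of morphisms $ g \colon \ell \to k $ in $ I $ equipped with a homotopy $ \alpha g \simeq \beta g $, i.e.\ the "co\-equalising" diagrams, and one must check that this is filtered (using that $ I^{\op} $ is filtered and applying the extension property for finite diagrams) to invoke \HTTthm{Theorem}{4.1.3.1}. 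Your two proposed shortcuts both trail off unfinished. To repair the proof as written you would need to supply that filteredness argument; to match the paper, simply perform the reduction to terminal $ i $ \emph{before} unwinding the formula, at which point the entire issue evaporates. (Your final remark about the colimit "living in $ \XX_i $" is a non-issue: $ \XX_i $ is \atopos and admits all colimits; the hypothesis that the $ f_{\alpha,\ast} $ preserve filtered colimits of uniformly truncated objects is needed only to make the right-hand side of \eqref{eq:ChoughsMoerdijkVermeullenlemma} a well-defined object of the inverse limit.)
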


\begin{proof}
	For each $ i \in I $, the forgetful functor $ \fromto{I_{/i}}{I} $ is limit-cofinal \cite[\HTTthm{Example}{5.4.5.9} \& \HTTthm{Lemma}{5.4.5.12}]{HTT}, so we may without loss of generality assume that $ i \in I $ is a terminal object.
	Then the claim is clear from \Cref{lem:ChoughsMoerdijkVermeullenlemma} and the definition of $ \pi_{i,\ast} $.
\end{proof}

\begin{prp}\label{prop:protruncshapeinverselimgeneral}
	Keep the assumptions of \Cref{lem:ChoughsMoerdijkVermeullenlemma}, and in addition assume that for each $ i \in I $ the global sections functor $ \Gammaup_{\XX_i,\ast} \colon \fromto{\XX_i}{\Space} $ almost preserves filtered colimits.
	Then the natural morphism
	\begin{equation*}
		\fromto{\Shape(\XX)}{\lim_{i \in I} \Shape(\XX_i)}
	\end{equation*}
	becomes an equivalence after protruncation.
\end{prp}

\begin{proof}
	For each $ i \in I $, the forgetful functor $ \fromto{I_{/i}}{I} $ is limit-cofinal \cite[\HTTthm{Example}{5.4.5.9} \& \HTTthm{Lemma}{5.4.5.12}]{HTT}, so we may without loss of generality assume that $ I $ admits a terminal object $ 1 $. 
	Write $ \Gammaup_{i,\ast} \coloneq \Gammaup_{\XX_i,\ast} $, $ f_{i,\ast} \colon \fromto{\XX_i}{\XX_1} $ for the geometric morphism induced by the unique morphism $ \fromto{i}{1} $ in $ I $, and $ \Gammalowerstar \colon \fromto{\lim_{j \in I} \XX_{j}}{\Space} $ for the global sections geometric morphism.

	We want to show that the natural morphism
	\begin{equation*}
		\fromto{\colim_{i \in I^{\op}} \Gammaup_{i,\ast} \Gammaupperstar_i}{\Gammalowerstar \Gammaupperstar}
	\end{equation*}
	in $ \Fun(\Space,\Space) $ is an equivalence when restricted to truncated spaces \cref{nul:protrun}.
	For any truncated space $ K $, we see that we have equivalences
	\begin{align*}
		\colim_{i \in I^{\op}} \Gammaup_{i,\ast} \Gammaupperstar_i(K) &\equivalent \colim_{i \in I^{\op}} \Gammaup_{1,\ast} f_{i,\ast} \fupperstar_i \Gammaupperstar_{1}(K) \\ 
		&\equivalence \Gammaup_{1,\ast}\paren{\colim_{i \in I^{\op}} f_{i,\ast} \fupperstar_i \Gammaupperstar_{1}(K)} && (\text{assumption on } \Gammaup_{i,\ast}) \\
		&\equivalent \Gammaup_{1,\ast} \of \paren{\colim_{i \in I^{\op}} f_{i,\ast} \fupperstar_i} \of \Gammaupperstar_{1}(K) \\ 
		&\equivalence \Gammaup_{1,\ast} \of \pi_{1,\ast}\piupperstar_1 \of \Gammaupperstar_{1}(K) && (\text{\Cref{lem:ChoughsMoerdijkVermeullenlemma}}) \\
		&\equivalent \Gammalowerstar \Gammaupperstar(K) \period && \phantom{(\text{\Cref{prop:inverselimitBC}})} \qedhere
	\end{align*}
\end{proof}

\begin{nul}
	In particular, the assumptions of \Cref{prop:protruncshapeinverselimgeneral} are satisfied for inverse systems of coherent \topoi where the transition morphisms almost preserve filtered colimits \SAG{Theorem}{A.2.3.1}.
\end{nul}

\noindent From \Cref{cor:coherentmorphismscommutewithfilteredcolims,prop:protruncshapeinverselimgeneral} we deduce: 

\begin{cor}\label{cor:protruncshapeinverselim}
	The protruncated shape 
	\begin{equation*}
		\Shapetrun \colon \fromto{\Topbc}{\Pro(\Space_{<\infty})}
	\end{equation*}
	preserves inverse limits.
\end{cor}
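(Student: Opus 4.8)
The plan is to deduce this directly from \Cref{prop:protruncshapeinverselimgeneral}, verifying its hypotheses by invoking coherence, and then to transport the conclusion across the protruncation functor. So I would begin with an inverse diagram $\XX \colon \fromto{A}{\Topbc}$, where $A$ is an inverse category. First I would record, via \Cref{cor:SAG.A.8.3.3}=\allowbreak\SAG{Corollary}{A.8.3.3}, that the inverse limit $\XX_{\infty} \coloneq \lim_{\alpha \in A} \XX_{\alpha}$ exists in $\Topbc$ and is computed in $\Top_{\infty}$; in particular the transition morphisms $f_{\beta,\ast}$ of $\XX$ are coherent geometric morphisms between bounded coherent \topoi (since $\XX$ takes values in $\Topbc$), and so too are the projections $\pi_{\alpha,\ast} \colon \fromto{\XX_{\infty}}{\XX_{\alpha}}$.

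Next I would check that the diagram $\XX \colon \fromto{A}{\Top_{\infty}}$ satisfies the hypotheses of \Cref{ntn:inverselimitargument} and \Cref{prop:protruncshapeinverselimgeneral}. The first hypothesis — that for every integer $n \geq -2$ the restriction of each transition morphism $f_{\beta,\ast}$ to $n$-truncated objects preserves filtered colimits — is immediate from \Cref{cor:coherentmorphismscommutewithfilteredcolims}. The second — that for every $\alpha \in A$ and every $n \geq -2$ the global sections functor $\Gamma_{\XX_{\alpha},\ast}$ preserves filtered colimits of $n$-truncated objects — holds because each $\XX_{\alpha}$ is a coherent \topos, by \SAG{Theorem}{A.2.3.1}; this is precisely what is noted in the remark following \Cref{prop:protruncshapeinverselimgeneral}. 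Thus \Cref{prop:protruncshapeinverselimgeneral} applies, and the canonical morphism $\fromto{\Pi_{\infty}(\XX_{\infty})}{\lim_{\alpha \in A} \Pi_{\infty}(\XX_{\alpha})}$ in $\Pro(\Space)$ becomes an equivalence after applying $\tau_{<\infty}$; equivalently, the canonical morphism
\begin{equation*}
	\fromto{\Pi_{<\infty}(\XX_{\infty})}{\tau_{<\infty}\Bigl(\lim_{\alpha \in A} \Pi_{\infty}(\XX_{\alpha})\Bigr)}
\end{equation*}
is an equivalence in $\Pro(\Space_{<\infty})$.

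To conclude, I would observe that protruncation commutes with the inverse limit on the right: by \Cref{nul:protrun} the functor $\tau_{<\infty} \colon \fromto{\Pro(\Space)}{\Pro(\Space_{<\infty})}$ is the extension to pröbjects of the Postnikov-tower functor $\fromto{\Space}{\Pro(\Space_{<\infty})}$, and as such it preserves all $\delta_0$-small inverse limits. Since $A$ is an inverse category, this gives
\begin{equation*}
	\tau_{<\infty}\Bigl(\lim_{\alpha \in A} \Pi_{\infty}(\XX_{\alpha})\Bigr) \equivalent \lim_{\alpha \in A} \tau_{<\infty}\Pi_{\infty}(\XX_{\alpha}) = \lim_{\alpha \in A} \Pi_{<\infty}(\XX_{\alpha})\comma
\end{equation*}
the last limit being formed in $\Pro(\Space_{<\infty})$. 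Composing the two displays then identifies $\Pi_{<\infty}\bigl(\lim_{\alpha} \XX_{\alpha}\bigr)$ with $\lim_{\alpha} \Pi_{<\infty}(\XX_{\alpha})$, as required.

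Most of the genuine work has already been done in \Cref{prop:protruncshapeinverselimgeneral} (and, further upstream, in \Cref{lem:ChoughsMoerdijkVermeullenlemma}), so in this corollary the only point that demands care is the third paragraph above: one must make sure that $\tau_{<\infty}$ really does commute with the relevant inverse limits, and that the limit formed in $\Pro(\Space)$ corresponds to the one formed in $\Pro(\Space_{<\infty})$. I expect that to be the (mild) main obstacle; it is dispatched by the defining universal property of $\tau_{<\infty}$ as a pro-extension preserving inverse limits, but it is the step I would be most careful to spell out so as not to conflate the two ambient pro-categories.
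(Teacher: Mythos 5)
Your proposal is correct and follows essentially the same route as the paper, which deduces the corollary directly from \Cref{cor:coherentmorphismscommutewithfilteredcolims} and \Cref{prop:protruncshapeinverselimgeneral}. The only detail you add beyond the paper's one-line deduction is the (correct) observation that $\tau_{<\infty}$, being the pro-extension of a functor out of $\Space$, preserves inverse limits, so the limit taken in $\Pro(\Space)$ agrees with the one in $\Pro(\Space_{<\infty})$.
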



\subsection{Profinite spaces \& Stone \texorpdfstring{$\infty$}{∞}-topoi}\label{subsec:profinshape}

In this section we discuss profinite spaces and their relation to \topoi, as developed in \cite[\SAGapp{E}]{SAG}.
Recall that we write $ \Spaceprofin $ for the \category $ \Pro(\Spacefin) $ of profinite spaces (\Cref{rec:profinspace}).

\begin{nul}
	The restriction of the materialization functor $ \mat \colon \fromto{\Pro(\Space)}{\Space} $ to $ \Spaceprofin $ is right adjoint to the composite
	\begin{equation*}
		\begin{tikzcd}[sep=2em]
			\Space \arrow[r, hooked, "\yo"] & \Pro(\Space) \arrow[r, "{(-)\profincomp}"] & \Spaceprofin
		\end{tikzcd}
	\end{equation*}
	of the Yoneda embedding $ \Space \subset \Pro(\Space) $ followed by profinite completion.
\end{nul}

\begin{dfn}\label{def:profinshape}
	The \defn{profinite shape}\index[terminology]{shape!profinite}\index[terminology]{profinite!shape} functor is the composite
	\begin{equation*}
		\Shapeprofin \coloneq (-)\profincomp \of \Shape \colon \fromto{\Top_{\infty}}{\Spaceprofin}
	\end{equation*}
	of the shape functor $ \Shape $ with the profinite completion functor $ (-)\profincomp \colon \fromto{\Pro(\Space)}{\Spaceprofin} $.
\end{dfn}

\begin{thm}[{\SAG{Theorem}{E.2.4.1}}]
	The composite
	\begin{equation*}
		\begin{tikzcd}[sep=1.5em]
			\lambdapi \colon \Spaceprofin \arrow[r, hooked] & \Pro(\Space) \arrow[r, "\uplambda"] & \Top_{\infty}
		\end{tikzcd}
	\end{equation*}
	of the inclusion $ \Spaceprofin \subset \Pro(\Space) $ with the functor $ \uplambda $ of \Cref{nul:classifyingtopos} is fully faithful and right adjoint to the profinite shape functor $ \Shapeprofin $.
\end{thm}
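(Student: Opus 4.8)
The plan is to separate the statement into the formal adjunction and the substantive full faithfulness, and then to reduce the latter to the case of a single $ \pi $-finite space, where it follows from the computation of the shape of a presheaf \topos. The adjunction is immediate: $ \Pi_{\infty}^{\wedge} = (-)_{\pi}^{\wedge} \of \Pi_{\infty} $ is a composite of two left adjoints — profinite completion is left adjoint to the inclusion $ \incto{\Space^{\wedge}_{\pi}}{\Pro(\Space)} $, and $ \Pi_{\infty} $ is left adjoint to $ \lambda $ — so $ \Pi_{\infty}^{\wedge} $ is left adjoint to the composite $ \lambda \of (\incto{\Space^{\wedge}_{\pi}}{\Pro(\Space)}) = \lambda_{\pi} $ of the corresponding right adjoints. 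Since $ \incto{\Space^{\wedge}_{\pi}}{\Pro(\Space)} $ is fully faithful, full faithfulness of $ \lambda_{\pi} $ is equivalent to the counit $ \fromto{\Pi_{\infty}^{\wedge}(\lambda_{\pi}(\mathbf{X}))}{\mathbf{X}} $ being an equivalence for every profinite space $ \mathbf{X} $.

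To unwind that, I would present $ \mathbf{X} $ as an inverse system $ \{X_{\alpha}\}_{\alpha \in A} $ of $ \pi $-finite spaces and abbreviate $ \widetilde{X_{\alpha}} \coloneq \Fun(X_{\alpha},\Space) $, so that $ \lambda_{\pi}(\mathbf{X}) = \lim_{\alpha \in A}\widetilde{X_{\alpha}} $ in $ \Top_{\infty} $. A morphism of $ \Space^{\wedge}_{\pi} = \Pro(\Space_{\pi}) $ is an equivalence as soon as the corresponding left exact functors $ \fromto{\Space_{\pi}}{\Space} $ agree on each $ \pi $-finite space $ K $; by the adjunction $ \Pi_{\infty}^{\wedge} \dashv \lambda_{\pi} $ (together with $ \lambda_{\pi}(K) \equivalent \widetilde{K} \coloneq \Fun(K,\Space) $ for $ K $ $ \pi $-finite), the value of $ \Pi_{\infty}^{\wedge}(\lambda_{\pi}(\mathbf{X})) $ at $ K $ is $ \Map_{\Top_{\infty}}(\lim_{\alpha}\widetilde{X_{\alpha}},\widetilde{K}) $, while the value of $ \mathbf{X} $ at $ K $ is $ \colim_{\alpha \in A^{\op}}\Map_{\Space}(X_{\alpha},K) $. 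So the problem becomes: show the canonical comparison
\begin{equation*}
	\fromto{\colim_{\alpha \in A^{\op}}\Map_{\Top_{\infty}}(\widetilde{X_{\alpha}},\widetilde{K})}{\Map_{\Top_{\infty}}\paren{\textstyle\lim_{\alpha \in A}\widetilde{X_{\alpha}},\ \widetilde{K}}}
\end{equation*}
is an equivalence (here $ \Map_{\Top_{\infty}}(\widetilde{X_{\alpha}},\widetilde{K}) \equivalent \Map_{\Space}(X_{\alpha},K) $ by \Cref{exm:shapeofpresheaf} and the adjunction $ \Pi_{\infty} \dashv \lambda $).

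This last step is where coherence enters. Since $ K $ is truncated and $ \widetilde{K} = \lambda(K) $, for every \topos $ \YY $ one has $ \Map_{\Top_{\infty}}(\YY,\widetilde{K}) \equivalent \Map_{\Pro(\Space)}(\Pi_{\infty}(\YY),K) \equivalent \Map_{\Pro(\Space_{<\infty})}(\Pi_{<\infty}(\YY),K) $, the last equivalence because a map into a truncated proöbject factors through protruncation (\Cref{nul:protrun}). Each $ \widetilde{X_{\alpha}} $ is a Stone \topos, hence bounded and coherent, and the transition geometric morphisms — induced by maps of $ \pi $-finite spaces — are coherent (\cite[\SAGsec{E.3}]{SAG}; this can also be assembled from \Cref{cor:morsitescoherent} and \Cref{prop:coherenceforn-localic}). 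Hence the system lies in $ \Topbc $, its limit in $ \Top_{\infty} $ is computed in $ \Topbc $ (\Cref{cor:SAG.A.8.3.3}), and the protruncated shape commutes with it (\Cref{cor:protruncshapeinverselim}):
\begin{equation*}
	\Pi_{<\infty}\paren{\textstyle\lim_{\alpha}\widetilde{X_{\alpha}}} \equivalent \lim_{\alpha}\Pi_{<\infty}(\widetilde{X_{\alpha}}) \equivalent \lim_{\alpha} X_{\alpha} \comma
\end{equation*}
where the last equivalence uses $ \Pi_{\infty}(\widetilde{X_{\alpha}}) \equivalent \invert(X_{\alpha}) \equivalent X_{\alpha} $ (\Cref{exm:shapeofpresheaf}, as $ X_{\alpha} $ is \agroupoid) and that a truncated space is its own protruncation. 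Combining, $ \Map_{\Top_{\infty}}(\lim_{\alpha}\widetilde{X_{\alpha}},\widetilde{K}) \equivalent \Map_{\Pro(\Space_{<\infty})}(\lim_{\alpha} X_{\alpha},K) \equivalent \colim_{\alpha \in A^{\op}}\Map_{\Space}(X_{\alpha},K) $, which matches the value of $ \mathbf{X} $ at $ K $.

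The main obstacle, I expect, is twofold. The only genuinely external input is that $ \Fun(K,\Space) $ is a bounded coherent \topos, with coherent functoriality in $ K $ — this is part of Lurie's theory of Stone \topoi, and once it is in hand \Cref{cor:protruncshapeinverselim} does the real work. The fiddly part is then the bookkeeping: keeping the various (co)filtered indices straight, and verifying that the equivalence produced by \Cref{cor:protruncshapeinverselim} is precisely the (inverse of the) counit rather than merely some abstract equivalence — a naturality check that should be routine but must be carried out with care.
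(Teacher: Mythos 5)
Your argument is correct, but there is nothing in the paper to compare it against: the statement is imported verbatim from Lurie (\SAG{Theorem}{E.2.4.1}), and the paper supplies no proof of its own. Judged on its merits, your derivation is sound and is assembled entirely from the paper's internal machinery. The adjunction step is purely formal, as you say; the reduction of full faithfulness to the counit, and then to the cocompactness of $ \widetilde{K} $ for $ K $ a $ \pi $-finite space, is exactly right --- indeed the paper records that cocompactness as \Cref{prp:spaceGiscocompact} and itself remarks that it is equivalent to \Cref{cor:protruncshapeinverselim}, which is the input you invoke. Your mechanism for proving it (map into the truncated object $ K $ factors through protruncation, then commute $ \Pi_{<\infty} $ past the inverse limit of the bounded coherent system $ \{\widetilde{X}_{\alpha}\}_{\alpha \in A} $) is the same one the paper deploys in the proof of \Cref{prop:Stonemonodromy}. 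Lurie's own proof in \cite[\SAGsec{E.2}]{SAG} instead runs through the classification of lisse objects of a coherent \topos and filtered colimits of bounded \pretopoi; your route is shorter given \Cref{cor:protruncshapeinverselim}, at the cost of needing to know beforehand that $ \Fun(K,\Space) $ is bounded coherent with coherent functoriality in $ K $ --- which, as you correctly insist, must be established without appeal to the theorem itself (it follows from \Cref{prop:coherenceforn-localic} together with \Cref{cor:morsitescoherent}, or from \Cref{nul:Pitildeisbcc} with the trivial stratification). The one non-routine verification you flag --- that the composite equivalence is the counit and not merely some abstract equivalence --- does go through, since every identification in your chain is induced by a unit map or a canonical projection.
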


\begin{dfn}\label{def:Stonetopos}
	\Atopos $ \XX $ is \defn{Stone}%
	\footnote{Lurie calls these \topoi \textit{profinite}.}%
	\index[terminology]{topos@\topos!Stone}\index[terminology]{Stone!topos@\topos}
	if $ \XX $ lies in the essential image of the fully faithful functor $ \lambdapi \colon \incto{\Spaceprofin}{\Top_{\infty}} $.
	We write $ \TopStone \subset \Top_{\infty} $\index[notation]{TopStone@$\TopStone$} for the full subcategory spanned by the Stone \topoi.

	Consequently, the inclusion $ \incto{\TopStone}{\Top_{\infty}} $ admits a left adjoint
	\begin{equation*}
		(-)^{\Stone} \colon \fromto{\Top_{\infty}}{\TopStone}
	\end{equation*}
	which we refer to as the \defn{Stone reflection}.\index[terminology]{Stone reflection}\index[terminology]{reflection!Stone}
\end{dfn}

\begin{nul}
	Since bounded coherent \topoi are closed under inverse limits in $ \Top_{\infty} $ (\Cref{cor:SAG.A.8.3.3}=\allowbreak\SAG{Corollary}{A.8.3.3}), \Cref{exm:Spacecoh} shows that Stone \topoi are bound\-ed coherent.
\end{nul}

\begin{prp}[{\SAG{Proposition}{E.3.1.4}}]\label{prp:FunstartoStoneisspace}
	Let $ \XX $ and $ \YY $ be \topoi.
	If $ \YY $ is Stone, then the \category $ \Funlowerstar(\XX,\YY) $ is a (small) \groupoid.
\end{prp}

\begin{nul}\label{nul:matofprofinshape}
	If $ \YY $ is a Stone \topos, then since $ \Space $ is Stone and $ \lambdapi $ is fully faithful with left adjoint given by the profinite shape, we see that
	\begin{equation*}
		\Pt(\YY) \equivalent \Map_{\Top_{\infty}}(\Space,\YY)\equivalent \mat \Shapeprofin(\YY) \period
	\end{equation*}
\end{nul}

Since Stone \topoi are bounded coherent, \Cref{nul:matofprofinshape} combined with Conceptual Completeness (\Cref{thm:conceptualcompleteness}=\allowbreak\SAG{Theorem}{A.9.0.6}) imply the following `Whitehead Theorem' for profinite spaces. 

\begin{thm}[{Whitehead Theorem for profinite spaces\index[terminology]{Whitehead Theore!for profinite spaces}; \SAG{Theorem}{E.3.1.6}}]\label{thm:profiniteWhitehead}
	The materialization functor $ \mat \colon \fromto{\Spaceprofin}{\Space} $ is conservative.
\end{thm}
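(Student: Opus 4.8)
The plan is to reduce the statement to Conceptual Completeness (\Cref{thm:conceptualcompleteness}). Let $f \colon \fromto{X}{Y}$ be a morphism of profinite spaces for which $\mat(f)$ is an equivalence; we must show that $f$ is an equivalence. Since $\lambda_{\pi} \colon \incto{\Spaceprofin}{\Top_{\infty}}$ is fully faithful, it reflects equivalences, so it suffices to prove that the induced geometric morphism $\lambda_{\pi}(f) \colon \fromto{\lambda_{\pi}(X)}{\lambda_{\pi}(Y)}$ of Stone \topoi is an equivalence. Both $\lambda_{\pi}(X)$ and $\lambda_{\pi}(Y)$ are bounded coherent (being Stone), so by \Cref{thm:conceptualcompleteness} it is enough to check two things: that $\lambda_{\pi}(f)$ is \emph{coherent}, and that $\Pt(\lambda_{\pi}(f))$ is an equivalence of \categories.

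For the statement about points, recall from \Cref{nul:matofprofinshape} that for any Stone \topos $\YY$ there is an equivalence $\Pt(\YY) \simeq \mat \Pi_{\infty}^{\wedge}(\YY)$; combined with the fact that $\Pi_{\infty}^{\wedge} \of \lambda_{\pi} \simeq \id$ (as $\lambda_{\pi}$ is fully faithful and right adjoint to $\Pi_{\infty}^{\wedge}$), this yields a natural identification $\Pt(\lambda_{\pi}(Z)) \simeq \mat(Z)$ for $Z \in \Spaceprofin$. Under this identification $\Pt(\lambda_{\pi}(f))$ is carried to $\mat(f)$, which is an equivalence by hypothesis.

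For coherence, I would argue that $\lambda_{\pi}$ sends \emph{every} morphism of profinite spaces to a coherent geometric morphism. By \Cref{cor:criterionforcoherence}, a geometric morphism between coherent \topoi is coherent precisely when its inverse image functor carries truncated coherent objects to truncated coherent objects. By Lurie's \Categorical Stone Duality \cite[\SAGsec{E.3}]{SAG}, the truncated coherent objects of the Stone \topos $\lambda_{\pi}(\Pi)$ are exactly the lisse sheaves, which the monodromy equivalence identifies with $\Fun(\Pi, \Space_{\pi})$; and because $\lambda_{\pi}$ is fully faithful, the geometric morphism $\lambda_{\pi}(f)$ is classified by $f$ itself, so that under these identifications its inverse image functor is simply restriction along $f \colon \fromto{X}{Y}$, which visibly carries $\Fun(Y, \Space_{\pi})$ into $\Fun(X, \Space_{\pi})$. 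Hence $\lambda_{\pi}(f)$ is coherent. Combining this with the previous paragraph, \Cref{thm:conceptualcompleteness} shows $\lambda_{\pi}(f)$ is an equivalence, and therefore so is $f$.

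The main obstacle is the coherence step: one must be careful that the inverse image functor of $\lambda_{\pi}(f)$ really is ``restriction along $f$'' on the subcategories of truncated coherent objects, and that the lisse characterisation of truncated coherent objects of a Stone \topos is precisely what \cite[\SAGsec{E.3}]{SAG} provides (alternatively, one can obtain coherence by exhibiting $\lambda_{\pi}(f)$ as a cofiltered limit of coherent geometric morphisms between $\pi$-finite \topoi and invoking \Cref{cor:SAG.A.8.3.3}). The remaining verifications are bookkeeping with the adjunction between $\Pi_{\infty}^{\wedge}$ and $\lambda_{\pi}$ together with \Cref{nul:matofprofinshape} and \Cref{prp:FunstartoStoneisspace}.
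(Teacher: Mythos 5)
Your proof is correct and follows exactly the route the paper takes: the paper deduces this statement in one line from Conceptual Completeness (\Cref{thm:conceptualcompleteness}) together with the fact that Stone \topoi are bounded coherent, and your argument simply fills in the details of that deduction (the identification $\Pt(\lambda_{\pi}(Z)) \simeq \mat(Z)$ and the coherence of $\lambda_{\pi}(f)$). The only simplification available is that the coherence step is immediate from \Cref{cor:morphismtoStoneiscoherent}, which states that any geometric morphism of coherent \topoi whose target is Stone is automatically coherent, so your more hands-on argument via lisse sheaves is not needed.
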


\begin{prp}[{\SAG{Proposition}{E.4.6.1}}]\label{prop:matandtruncatedness}
	Let $ n \in \NNup $.
	A morphism $ f $ in $ \Spaceprofin $ is $ n $-truncated if and only if $ \mat(f) $ is an $ n $-truncated morphism of $ \Space $.
\end{prp}

Stone \topoi have a number of useful alternative characterizations.
The first is that, under the assumption of bounded coherence, the conclusion of \Cref{prp:FunstartoStoneisspace}=\allowbreak\SAG{Proposition}{E.3.1.4} actually characterizes Stone \topoi.

\begin{thm}[{\SAG{Theorem}{E.3.4.1}}]\label{thm:Stonesaretopoiwithgroupoidsofpoints}
	Let $ \XX $ be \atopos.
	Then $ \XX $ is Stone if and only if both of the following conditions are satisfied:
	\begin{itemize}
		\item The \topos $ \XX $ is bounded and coherent.

		\item The \category of points $ \Pt(\XX) $ of $ \XX $ is \agroupoid.
	\end{itemize}
\end{thm}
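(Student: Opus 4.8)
The plan is to treat the two implications separately, the forward one being essentially formal and the converse requiring the bulk of the work. If $ \XX $ is Stone, say $ \XX \equivalent \lambda_{\pi}(K) $ for a profinite space $ K $, then $ \XX $ is bounded and coherent by construction (cf.\ \cite[\SAGsec{E.3}]{SAG} and \Cref{nul:dualitycube}), and \Cref{prp:FunstartoStoneisspace}=\allowbreak\SAG{Proposition}{E.3.1.4}, applied with domain $ \Space $, shows that $ \Pt(\XX) = \Funlowerstar(\Space,\XX)^{\op} $ is \agroupoid.

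For the converse, suppose $ \XX $ is bounded coherent with $ \Pt(\XX) $ \agroupoid. Put $ \YY \coloneq \lambda_{\pi}\Pi_{\infty}^{\wedge}(\XX) $ — the Stone reflection of $ \XX $ — and let $ u_{\ast} \colon \fromto{\XX}{\YY} $ be the unit of the adjunction $ \Pi_{\infty}^{\wedge} \dashv \lambda_{\pi} $; it suffices to prove $ u_{\ast} $ is an equivalence. Since Stone \topoi are bounded coherent, \Categorical Conceptual Completeness (\Cref{thm:conceptualcompleteness}=\allowbreak\SAG{Theorem}{A.9.0.6}) reduces this to showing (i) that $ u_{\ast} $ is coherent, and (ii) that $ \Pt(u_{\ast}) \colon \fromto{\Pt(\XX)}{\Pt(\YY)} $ is an equivalence of \categories. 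For (i): because $ \YY $ is Stone, its truncated coherent objects are the truncated lisse sheaves, and the galoisian (monodromy) duality recalled in the introduction gives $ \YY^{\lisse} \equivalent \Fun(\Pi_{\infty}^{\wedge}(\YY),\Space_{\pi}) = \Fun(\Pi_{\infty}^{\wedge}(\XX),\Space_{\pi}) $; under this identification $ u^{\ast} $ sends a representation of $ \Pi_{\infty}^{\wedge}(\XX) $ to the associated lisse sheaf on $ \XX $, which — being locally constant with $ \pi $-finite stalks on a coherent \topos — is coherent. Hence $ u^{\ast} $ carries $ \YYcohbdd $ into $ \XXcohbdd $, and $ u_{\ast} $ is coherent by \Cref{cor:criterionforcoherence}.

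The heart of the matter is (ii). Using \Cref{nul:matofprofinshape} together with the fact that $ \Pi_{\infty}^{\wedge}(u_{\ast}) $ is an equivalence (a triangle identity, since $ \lambda_{\pi} $ is fully faithful), we may identify $ \Pt(\YY) \equivalent \mat\Pi_{\infty}^{\wedge}(\XX) $, so that $ \Pt(u_{\ast}) $ becomes the canonical comparison $ \fromto{\Pt(\XX)}{\mat\Pi_{\infty}^{\wedge}(\XX)} $ carrying a point $ x_{\ast} $ to its class $ \Pi_{\infty}^{\wedge}(x_{\ast}) $ in the profinite shape. To analyse this I would first reduce to the hypercomplete case: both $ \Pt $ and $ \Pi_{\infty}^{\wedge} $ are insensitive to hypercompletion — $ \Pt(\XX) \equivalent \Pt(\XXhyp) $ since points factor through $ \XXhyp $, and $ \Pi_{\infty}^{\wedge}(\XX) \equivalent \Pi_{\infty}^{\wedge}(\XXhyp) $ by \Cref{exm:protruncandhypercomp} because profinite spaces are protruncated — while $ \XXhyp $ is again bounded coherent (\Cref{lem:hypcoherent}) and hypercomplete, hence has enough points by \Categorical Deligne Completeness (\Cref{thm:Delignecompleteness}=\allowbreak\SAG{Proposition}{A.4.0.5}); so we may assume $ \XX $ has enough points. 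The remaining content is then to show that every truncated coherent object $ F \in \XXcohbdd $ is lisse, and this is exactly where the hypothesis that $ \Pt(\XX) $ is \agroupoid is indispensable: in a \topos with enough points the failure of $ F $ to be locally constant is witnessed by a non-invertible morphism of $ \Pt(\XX) $ acting non-invertibly on stalks, and the groupoid hypothesis — inherited by the slice $ \XX_{/F} $, which is bounded coherent with $ \Pt(\XX_{/F}) $ \agroupoid because, by \Cref{HTT.6.3.5.6}, $ \fromto{\Pt(\XX_{/F})}{\Pt(\XX)} $ is a left fibration with \groupoid fibres $ F_{x} $ — excludes this. Granting ``$ \XXcohbdd $ consists of lisse sheaves'', the duality $ \XX^{\lisse} \equivalent \Fun(\Pi_{\infty}^{\wedge}(\XX),\Space_{\pi}) $ identifies $ \XXcohbdd $ with the truncated part of $ \Fun(\Pi_{\infty}^{\wedge}(\XX),\Space_{\pi}) $, which is $ \YYcohbdd $, compatibly with $ u^{\ast} $; then \Cref{thm:SAG.A.7.5.3}=\allowbreak\SAG{Theorem}{A.7.5.3} (the equivalence of bounded coherent \topoi with bounded \pretopoi via truncated coherent objects), or equivalently the already-arranged appeal to \Cref{thm:conceptualcompleteness}, forces $ u_{\ast} $ to be an equivalence — so $ \XX $ is Stone. (One could alternatively conclude directly by quoting Lurie's characterisation of Stone \topoi as the bounded coherent \topoi whose truncated coherent objects are the lisse sheaves.) The main obstacle I anticipate is precisely the step ``every truncated coherent object of a bounded coherent \topos with \agroupoid of points is lisse'' — morally, that such a \topos admits no non-trivial stratification — which I would prove by induction on the truncation level of $ F $, at each stage exploiting the recollements of $ \XX $ cut out by the $ (-1) $-truncated objects along which specialisation data could vary, together with the \toposic fact that a locally constant sheaf with trivial monodromy is constant.
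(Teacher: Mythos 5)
First, a remark on the benchmark: the paper offers no proof of this statement at all --- it is imported verbatim from Lurie as \SAG{Theorem}{E.3.4.1} --- so the only meaningful comparison is with Lurie's argument in \cite[\SAGsec{E.3}]{SAG}. Your forward implication is fine, and the formal scaffolding of your converse is sound: the reduction via Conceptual Completeness, the identification $ \Pt(\YY) \equivalent \mat\Pi_{\infty}^{\wedge}(\XX) $, and the passage to the hypercompletion to secure enough points via Deligne Completeness all work (coherence of the unit is even easier than you make it, since any geometric morphism from a coherent \topos to a Stone \topos is coherent by \Cref{cor:morphismtoStoneiscoherent}). There is also a structural redundancy you half-acknowledge: once you aim to prove that every truncated coherent object is lisse, \Cref{prop:SAG.E.3.1.1} finishes the job directly and the Conceptual Completeness detour is dead weight.

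The genuine gap is at the step you yourself call the heart of the matter: \enquote{every truncated coherent object of a bounded coherent \topos with \agroupoid of points is lisse.} This is not a lemma to be dispatched in a sentence --- it \emph{is} the theorem --- and your justification for it (\enquote{the failure of $ F $ to be locally constant is witnessed by a non-invertible morphism of $ \Pt(\XX) $ acting non-invertibly on stalks}) is precisely the unproved assertion whose demonstration occupies Lurie's \S E.3.4. Nothing in the definition of coherence connects the failure of local constancy of a sheaf to the existence of a specialisation between points; forging that connection is where all the work lies. The base case of the induction you gesture at is already substantive: one must first show that the locale $ \Open(\XX) $ of a bounded coherent \topos whose points admit no specialisations is the locale of a Stone space, i.e.\ that every quasicompact open is clopen --- a Hochster-type statement that is exactly where the groupoid hypothesis first bites --- and only then climb the Postnikov tower, trivialising the gluing functors $ i^{\ast}j_{\ast} $ of the recollements cut out by quasicompact opens. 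Your sketch carries out neither the base case nor the argument that the groupoid hypothesis kills these gluing functors, and the auxiliary claim \enquote{a locally constant sheaf with trivial monodromy is constant} is not the statement you need (what is required is that $ F $ trivialises on a \emph{finite} cover with $ \pi $-finite values). As written, the proposal reduces the theorem to itself.
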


The next characterization is that bounded coherent objects are in fact \textit{lisse}.
First we recall the definition of lisse sheaves as well as some material on lisse sheaves that we'll utilize later on.

\begin{rec}\label{rec:localsyslisse}
	Let $\XX$ be \atopos.
	An object $F\in \XX$ is called a \defn{locally constant}\index[terminology]{locally constant} if and only if there exists a cover $\{U_{i}\}_{i \in I}$ of the terminal object of $\XX$, a corresponding family $\{K_{i}\}_{i \in I}$ of spaces, and an equivalence 
	\begin{equation*}
		F \times U_{i}\simeq \Gammaupperstar_{\XX}(K_{i}) \cross U_i
	\end{equation*}
	in $ \XX_{/U_i} $ for each $ i \in I $.

	We say that a locally constant object $F$ as above is \defn{lisse}\index[terminology]{lisse}\index[terminology]{sheaf!lisse}%
	\footnote{Lurie uses the phrase \defn{locally constant constructible}\index[terminology]{locally constant constructible}\index[terminology]{sheaf!locally constant constructible}.}
	if, in addition, the set $ I $ can be chosen to be finite, and the spaces $K_{i}$ can be chosen to be \pifinite.

	We write 
	\begin{equation*}
		\XX^{\locsys}\subseteq \XX \andeq \XX^{\lisse}\subseteq \XX
	\end{equation*}
	for the full subcategories spanned by the locally constant objects and lisse objects, respectively.
	Please note that for any geometric morphism of \topoi $ \flowerstar \colon \fromto{\XX}{\YY} $, the pullback $ \fupperstar \colon \fromto{\YY}{\XX} $ preserves lisse objects.
\end{rec}

Later we'll find the following simple characterization of lisse sheaves as a single pullback very useful:

\begin{lem}[{\SAG{Proposition}{E.2.7.7}}]\label{prp:SAG.E.2.7.7} 
	Let $\XX$ be \atopos.
	Then an object $F$ of $\XX$ is lisse if and only if there exist: a full subcategory $ G \subset \iota \Spacefin $ spanned by finitely many objects, an unique geometric morphism $\glowerstar\colon\fromto{\XX}{\Space_{/G}}$, and an unique equivalence $F\simeq g^{\ast}(I)$, where $I$ classifies the inclusion functor $\fromto{G}{\Space}$.
\end{lem}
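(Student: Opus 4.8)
The plan is to realise $\Space_{/G}$ as the classifying \topos for \lisse{} data with prescribed fibres and to read the geometric morphism $g_{\ast}$ off a trivialisation of $F$.

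\emph{Step 1 (the key identification).} Since $G$ is a \groupoid spanned by finitely many objects of $\iota\Space_{\pi}$, it is a $\pi$-finite space and the slice $\Space_{/G}$ is the presheaf \topos $\Fun(G,\Space)$, under which the object $I$ is the inclusion functor $\fromto{G}{\Space}$; let $p\colon\fromto{E}{G}$ be the left fibration that straightens to it, so the fibre of $p$ over a point of $G$ is the $\pi$-finite space it names. The geometric morphism $j_{\ast}\colon\fromto{\Space_{/G}}{\Space}$ is étale with $\jlowershriek(1_{\Space_{/G}})\simeq G$, so by \Cref{HTT.6.3.5.6} and the terminality of $\Space$ one has $\Funlowerstar(\XX,\Space_{/G})\simeq\Map_{\XX}(1_{\XX},\Gammaupperstar_{\XX}G)$: a geometric morphism $g_{\ast}\colon\fromto{\XX}{\Space_{/G}}$ is the same datum as a global point $\sigma\colon\fromto{1_{\XX}}{\Gammaupperstar_{\XX}G}$. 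Exhibiting $I$ as the pullback of $\jupperstar(p)$ along the diagonal section $\fromto{1_{\Space_{/G}}}{\jupperstar G}$ and applying the left exact functor $\gupperstar$ yields
\begin{equation*}
	\gupperstar(I)\simeq 1_{\XX}\cross_{\Gammaupperstar_{\XX}G}\Gammaupperstar_{\XX}E \comma
\end{equation*}
the fibre of the constant family $\Gammaupperstar_{\XX}(p)$ at the classifying section $\sigma$.

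\emph{Step 2 (the `if' direction).} Here $I$ is \lisse{} in $\Space_{/G}$: its fibres are the objects of $G$, hence $\pi$-finite, and it is trivialised over the finite effective epimorphism $\coprod_{x\in\pi_{0}G}(\{x\}\hookrightarrow G)\to 1_{\Space_{/G}}$, along which $(\Space_{/G})_{/\{x\}}\simeq\Space$ and $I$ restricts to the constant sheaf at the $\pi$-finite space named by $x$. Since pullback along a geometric morphism preserves \lisse{} objects (\Cref{rec:localsyslisse}), if $F\simeq\gupperstar(I)$ then $F$ is \lisse.

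\emph{Step 3 (the `only if' direction), the crux.} Given $F$ \lisse, trivialised over a finite cover $\{\fromto{U_{i}}{1_{\XX}}\}_{i}$ with $\pi$-finite fibres $\{K_{i}\}$, I would take $G\subset\iota\Space_{\pi}$ to be the full subgroupoid on $\{K_{i}\}$. By Step 1 it then suffices to produce a global point $\sigma\colon\fromto{1_{\XX}}{\Gammaupperstar_{\XX}G}$ with an equivalence $\sigma^{\ast}\Gammaupperstar_{\XX}E\simeq F$, and to verify the relevant uniqueness. Over each $U_{i}$ the constant section at $K_{i}\in G$ works, its fibre along $\Gammaupperstar_{\XX}(p)$ restricting over $U_{i}$ to $\Gammaupperstar(K_{i})\simeq F|_{U_{i}}$; the remaining task is to glue these local sections along the overlaps, with the trivialising equivalences as descent data. \textbf{The hard part is exactly this descent}: the composite equivalences $\Gammaupperstar K_{i}\simeq F\simeq\Gammaupperstar K_{j}$ over $U_{i}\cross_{1_{\XX}}U_{j}$ carry monodromy and need not be $\Gammaupperstar$ of morphisms of $G$, so the local sections do not patch formally — this is a shadow of the monodromy classification of locally constant sheaves. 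I expect to discharge it either by refining the cover so the relevant slices have connected shape and appealing to the description of locally constant sheaves there via the fundamental \groupoid, or by routing through the profinite shape, using that a \lisse{} sheaf is pulled back from a finite stage of $\Pi_{\infty}^{\wedge}(\XX)$ and invoking the universal property of $\lambda_{\pi}$. Once $\sigma$ exists, $\gupperstar(I)\simeq F$ is immediate from Step 1, and the uniqueness clauses then reduce to the classification of Step 1 (the space of compatible data $(\sigma,\sigma^{\ast}\Gammaupperstar_{\XX}E\simeq F)$ is, when nonempty, a torsor under the global sections of the constant sheaf at the based loop space of $G$); alternatively they follow from Conceptual Completeness (\Cref{thm:conceptualcompleteness}), since $\Space_{/G}=\lambda_{\pi}(G)$ is Stone and in particular bounded coherent.
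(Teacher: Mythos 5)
The paper does not prove this statement: it is quoted verbatim from \SAG{Proposition}{E.2.7.7}, so there is no in-house argument to compare yours against, and your proposal has to stand on its own. Your Steps 1 and 2 do: the identification of $\Funlowerstar(\XX,\Space_{/G})$ with $\Map_{\XX}(1_{\XX},\Gammaupperstar_{\XX}G)$ via \Cref{HTT.6.3.5.6}, the resulting formula $\gupperstar(I)\simeq 1_{\XX}\cross_{\Gammaupperstar_{\XX}G}\Gammaupperstar_{\XX}E$, and the observation that $I$ is itself lisse (its fibres are the objects of $G$, and it is trivialised over the finite cover of $1_{\Space_{/G}}$ by the components' basepoints), so that the `if' direction follows from stability of lisse objects under pullback, are all correct.

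Step 3, however, is a statement of the problem rather than a proof, and neither of your proposed repairs closes it. Refining the cover until the slices have connected shape is not available in a general \topos (think of the étale \topos of a scheme), and the fundamental-groupoid description of locally constant sheaves on a slice needs hypotheses you have not secured; routing through \Cref{prop:Stonemonodromy} is logically admissible in this paper but derives the lemma from a strictly harder theorem of which it is normally an ingredient. The idea you are missing is to \emph{internalise} the gluing rather than patch local sections by hand: let $P\in\XX$ be the object classifying pairs $(\sigma,e)$ of a section $\sigma$ of $\Gammaupperstar_{\XX}G$ and an equivalence $e\colon F\simeq\sigma^{\ast}\Gammaupperstar_{\XX}E$ (formed as the internal equivalence object of $F\cross\Gammaupperstar G$ and $\Gammaupperstar E$ in $\XX_{/\Gammaupperstar G}$, pushed forward to $\XX$). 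Both existence and essential uniqueness of $(g_{\ast},e)$ amount to $P$ being terminal, and this can be checked after pullback along the effective epimorphism $\coprod_iU_i\surjection 1_{\XX}$, where $F\simeq\Gammaupperstar K_i$ and $P$ becomes $\Gammaupperstar$ of the space of pairs $(x\in G,\,K_i\simeq x)$, which is contractible. The one point needing care is that $\Gammaupperstar$ commutes with forming these equivalence objects; this holds because $\pi$-finite spaces are truncated with finite skeleta in each dimension, so the relevant exponentials stabilise at a finite skeletal stage and are computed by finite limits, which $\Gammaupperstar$ preserves. Finally, your parenthetical uniqueness claim --- that the space of pairs is a torsor under global sections of the constant sheaf at the based loop space of $G$ --- cannot be right as stated, since it would contradict the essential uniqueness being asserted; the correct statement is that this space is contractible, which is exactly what terminality of $P$ delivers.
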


\noindent The following useful fact is equivalent to the fact that the profinite shape
\begin{equation*}
	\Shapeprofin \colon \fromto{\Topbc}{\Spaceprofin}
\end{equation*}
preserves inverse limits (see \Cref{cor:protruncshapeinverselim}).

\begin{lem}\label{prp:spaceGiscocompact} 
	For any \pifinite space $G$, the \topos $\Space_{/G}$ is cocompact in $\Topbc$.
	That is, for any inverse system $\{\XX_{\alpha}\}_{\alpha \in A}$ of bounded coherent \topoi with limit $\XX$, the natural functor
	\begin{equation*}
		\Funlowerstar(\XX,\Space_{/G})\to\lim_{\alpha \in A}\Funlowerstar(\XX_{\alpha},\Space_{/G})
	\end{equation*}
	is an equivalence.
\end{lem}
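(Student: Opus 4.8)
The plan is to reduce the claim to the known fact that a $\pi$-finite space $G$ defines a \textit{cocompact} object of $\Topbc$ via the classifying \topos construction, and to verify this cocompactness directly using the description of $\Space_{/G}$ as a presheaf \topos together with the fact that inverse limits in $\Top_{\infty}$ are computed in $\Cat_{\infty,\delta_1}$ (\Cref{thm:filteredlimsinRTop}). Concretely, since $G$ is a $\pi$-finite space, \Cref{prp:SAG.E.2.7.7} identifies a geometric morphism $\fromto{\XX}{\Space_{/G}}$ with the datum of a lisse object of $\XX$ classified by a fixed full subcategory $G' \subset \iota\Space_\pi$ spanned by finitely many objects; more precisely, writing $ I \in \Space_{/G} $ for the universal object, the functor $ \fupperstar \mapsto \fupperstar(I) $ is an equivalence from $ \Funlowerstar(\XX,\Space_{/G}) $ onto the full subcategory of $ \iota\XX $ spanned by the lisse objects of the appropriate `type'. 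So the statement to prove becomes: the formation of (a fixed type of) lisse objects sends inverse limits of bounded coherent \topoi to limits of spaces.

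First I would fix an inverse system $\{\XX_\alpha\}_{\alpha\in A}$ of bounded coherent \topoi with limit $\XX = \lim_\alpha \XX_\alpha$, computed both in $\Top_\infty$ and in $\Cat_{\infty,\delta_1}$ by \Cref{cor:SAG.A.8.3.3}, with projections $\pilowerstar_\alpha\colon\fromto{\XX}{\XX_\alpha}$. The key point is that lisse objects are truncated coherent, hence land in the category $\XX_{<\infty}^{\coh}$, and by \Cref{thm:SAG.A.7.5.3} (equivalently, by the explicit description of the bounded coherent limit as $\Sheff{\colim_\alpha \XX_{\alpha,<\infty}^{\coh}}$ via \Cref{prop:SAG.A.8.3.2}) the truncated coherent objects of $\XX$ are a filtered colimit: $\XX_{<\infty}^{\coh} \equivalent \colim_{\alpha\in A^{\op}} \XX_{\alpha,<\infty}^{\coh}$ in $\preTopbdd$, the transition functors being the coherent pullbacks $\fupperstar_{\alpha\beta}$. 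Thus every truncated coherent object of $\XX$ is $\piupperstar_\alpha$ of such an object for some $\alpha$, unique up to further pullback, and two become equivalent iff they do so after pullback along a common refinement. I would then check that under this identification, the property of being lisse (of the fixed finite type determined by $G$) is detected at a finite stage: an object $\pilowerstar_\alpha$-pulled back from $U_\alpha\in\XX_\alpha$ is lisse iff $U_\alpha$ is lisse, because lisseness is the existence of a \emph{finite} cover of $1$ trivialising $U$ to $\pi$-finite constant sheaves (\Cref{rec:localsyslisse}), a condition visibly stable under and reflected by the coherent pullbacks since coherent geometric morphisms commute with filtered colimits of uniformly truncated objects (\Cref{cor:coherentmorphismscommutewithfilteredcolims}) and preserve lisse objects (\Cref{rec:localsyslisse}), and conversely a finite trivialising cover over $\XX$ descends to some finite stage by the filtered-colimit description of $\XX^{\coh}$. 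Putting these together yields $\iota(\XX^{\lisse}) \equivalent \colim_{\alpha\in A^{\op}} \iota(\XX_\alpha^{\lisse})$ at the level of the relevant groupoids of objects, and restricting to the components classified by $G$ gives exactly the asserted equivalence $\Funlowerstar(\XX,\Space_{/G}) \equivalent \lim_\alpha \Funlowerstar(\XX_\alpha,\Space_{/G})$ after passing from the colimit over $A^{\op}$ to the limit over $A$.

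Alternatively, and perhaps more cleanly, I would run the argument purely formally: $\Space_{/G} = \Fun(G,\Space)$ is a presheaf \topos on the (essentially small) \category $G$, so $\Funlowerstar(\XX,\Space_{/G}) \equivalent \Fun^{\ast}(\Space_{/G},\XX) \equivalent \Fun^{\lex,\cts}(\PP(G^{\op}),\XX)$, which by the universal property of presheaf \topoi is $\Fun(G, \iota\XX)$ restricted to those functors landing in lisse objects of the correct type — but in fact one knows from \Cref{prp:SAG.E.2.7.7} that this full subcategory is precisely the image of the universal lisse object $I$, so $\Funlowerstar(\XX,\Space_{/G})$ is (functorially in $\XX$) the groupoid of lisse objects of $\XX$ of type $G$. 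The functor $\XX\mapsto \iota(\XX_{<\infty}^{\coh})$ sends inverse limits in $\Topbc$ to limits of spaces because $\XX_{<\infty}^{\coh}$ is a filtered colimit of the $\XX_{\alpha,<\infty}^{\coh}$ in the $2$-category of categories (\Cref{thm:SAG.A.7.5.3}, \Cref{prop:SAG.A.8.3.2}) and taking interiors commutes with this; one then only needs that the full subcategory of lisse-of-type-$G$ objects is cut out by a condition stable under the (co)filtered transitions, which is the content of the previous paragraph.

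\textbf{Main obstacle.} The routine part is the formal manipulation of $\Space_{/G}$ as a presheaf \topos; the genuine difficulty is the descent/ascent argument for lisseness along the filtered system $\XX_{<\infty}^{\coh}\equivalent\colim_\alpha \XX_{\alpha,<\infty}^{\coh}$: one must verify both that a finite trivialising cover over the limit \topos comes from a finite stage (so that an object pulled back from some $\XX_\alpha$ which is lisse over $\XX$ is already lisse over some $\XX_\beta$, $\beta\to\alpha$) and that lisseness is reflected by the transition functors, being careful that the cover, the indexing set, and the $\pi$-finite fibres are all finite/fixed data so that the argument does not drift to an infinite stage. Matching up the bookkeeping here with the precise form of \Cref{prp:SAG.E.2.7.7} — in particular that the relevant subcategory of $\iota\Space_\pi$ has only finitely many objects, so that "type $G$" is genuinely finitary — is where the real work lies, though it is the expected route given that this lemma is stated to be equivalent to $\Pi_\infty^{\wedge}$ preserving inverse limits (\Cref{cor:protruncshapeinverselim}).
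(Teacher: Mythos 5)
Your proposal is correct in outline but takes a genuinely different --- and much longer --- route than the paper, which offers no proof beyond the sentence preceding the lemma: the statement is a direct reformulation of \Cref{cor:protruncshapeinverselim}. Explicitly, since $\Space_{/G}$ is Stone, $\Funlowerstar(\XX,\Space_{/G})$ is a groupoid, and by \Cref{nul:fullyfaithfultilde} it is identified with $\Map_{\Pro(\Space)}(\Pi_{\infty}(\XX),G)\simeq\Gamma_{\XX,\ast}\Gammaupperstar_{\XX}(G)$; since a $\pi$-finite space is truncated, \Cref{prop:protruncshapeinverselimgeneral} (via \Cref{lem:ChoughsMoerdijkVermeullenlemma}) gives
\begin{equation*}
	\Gamma_{\XX,\ast}\Gammaupperstar_{\XX}(G)\simeq\colim_{\alpha\in A^{\op}}\Gamma_{\XX_{\alpha},\ast}\Gammaupperstar_{\XX_{\alpha}}(G) \comma
\end{equation*}
which is exactly the asserted cocompactness. (The content of the lemma, and the way it is used in \Cref{prp:everytrunccohofspecisconstr}, is this \emph{filtered colimit} statement over $A^{\op}$; your closing remark that one ``passes from the colimit over $A^{\op}$ to the limit over $A$'' elides a real distinction, though the displayed $\lim$ is the paper's own looseness rather than yours.) You observe at the end that this short route is available but do not take it.

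Your route instead goes through lisse objects and the filtered-colimit description $\XXcohbdd\simeq\colim_{\alpha}\XX_{\alpha,<\infty}^{\coh}$, in effect reproving the relevant case of \Cref{cor:protruncshapeinverselim} by \pretoposic means. This can be made to work, but two steps need more than you give them. First, \Cref{prp:SAG.E.2.7.7} identifies $\Funlowerstar(\XX,\Space_{/G})$ with a groupoid of lisse objects only when $G$ is the interior of a full subcategory of $\Space_{\pi}$ spanned by finitely many objects, i.e.\ a finite disjoint union of spaces $B\operatorname{Aut}(K)$ with $K$ $\pi$-finite; the lemma is stated for an arbitrary $\pi$-finite $G$, and you do not address why such a $G$ is of this form (for the paper's one application, to $\Space_{/\iota E}$, it is automatic --- but in general one should fall back on the corepresentability $\Funlowerstar(\XX,\Space_{/G})\simeq\Pi_{\infty}(\XX)(G)$ anyway). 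Second, the descent step --- that a finite trivialising cover over $\XX$, together with the trivialising equivalences, is realised at a finite stage of the colimit --- is the real content, and your citation of \Cref{cor:coherentmorphismscommutewithfilteredcolims} does not do this work: that result concerns pushforwards, and lisseness is certainly not \emph{reflected} by a single coherent pullback. What is actually needed is that the cover may be replaced by one consisting of truncated coherent objects and that effective epimorphisms in the colimit \pretopos are created at finite stages (part of the content of \Cref{prop:SAG.A.8.3.1}). You correctly flag this as the main obstacle, but it remains unsupplied.
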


Now we turn to the important characterization of Stone \topoi in terms of lisse sheaves and its consequences.

\begin{prp}[{\SAG{Proposition}{E.3.1.1}}]\label{prop:SAG.E.3.1.1}
	Let $ \XX $ be \topos.
	Then $ \XX $ is Stone if and only if both of the following conditions are satisfied.
	\begin{itemize}
		\item The \topos $ \XX $ is bounded and coherent.

		\item Every truncated coherent object of $ \XX $ is lisse.
	\end{itemize}
\end{prp}

\begin{cor}[{\SAG{Corollary}{E.3.1.2}}]\label{cor:morphismtoStoneiscoherent}
	Let $ \flowerstar \colon \fromto{\XX}{\YY} $ be a geometric morphism between coherent \topoi.
	If $ \YY $ is Stone, then $ \flowerstar $ is coherent.
\end{cor}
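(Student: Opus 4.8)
The proof is a short reduction that combines three facts already recalled above. By \Cref{cor:criterionforcoherence} it suffices to show that the pullback functor $\fupperstar \colon \fromto{\YY}{\XX}$ carries $\YYcohbdd$ into $\XXcohbdd$. Since $\fupperstar$ is left exact, it automatically preserves $n$-truncatedness of objects, so the content is that $\fupperstar$ sends truncated coherent objects of $\YY$ to \emph{coherent} objects of $\XX$. Now because $\YY$ is Stone, \Cref{prop:SAG.E.3.1.1}=\allowbreak\SAG{Proposition}{E.3.1.1} identifies $\YYcohbdd$ with the full subcategory $\YY^{\lisse}$ of lisse sheaves. Since $\fupperstar$ preserves lisse objects (\Cref{rec:localsyslisse}), we have $\fupperstar(\YYcohbdd) = \fupperstar(\YY^{\lisse}) \subseteq \XX^{\lisse}$. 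Hence the corollary follows once we know that every lisse object of the coherent \topos $\XX$ is truncated and coherent, i.e. that $\XX^{\lisse} \subseteq \XXcohbdd$.

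\textbf{The key lemma: lisse objects of a coherent \topos are truncated coherent.} Let $F \in \XX$ be lisse, with a \emph{finite} cover $\{\fromto{U_i}{1_{\XX}}\}_{i \in I}$ and equivalences $F \cross U_i \simeq \Gammaupperstar_{\XX}(K_i)$ for $\pi$-finite spaces $K_i$. Truncatedness is immediate: choosing $n \in \NN$ with each $K_i$ being $n$-truncated (possible since $I$ is finite), left exactness of $\Gammaupperstar_{\XX}$ makes each $\fromto{\Gammaupperstar_{\XX}(K_i)}{1_{\XX}}$ an $n$-truncated morphism, hence each $\fromto{F \cross U_i}{U_i}$ is $n$-truncated by base change, and therefore $\fromto{F}{1_{\XX}}$ is $n$-truncated since $n$-truncatedness of a morphism is local on the target and $\{U_i\}$ covers $1_{\XX}$. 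For coherence, the main input is that $\Gammaupperstar_{\XX}(K)$ is a coherent object of $\XX$ for every $\pi$-finite space $K$: since $\XXcoh \subseteq \XX$ is an \pretopos (\SAG{Corollary}{A.6.1.7}), it is closed in $\XX$ under finite coproducts, effective epimorphisms, and geometric realisations of groupoid objects, and $\Gammaupperstar_{\XX}$ preserves finite limits and all colimits; decomposing $K$ into its finitely many connected components and inducting on the truncation level --- using at each stage the bar resolution $K^{(j)} \simeq \lvert (\Omega K^{(j)})^{\times \bullet} \rvert$ (the Čech nerve of the effective epimorphism $\fromto{\ast}{K^{(j)}}$), whose terms are finite products of the $\pi$-finite, lower-truncated space $\Omega K^{(j)}$ --- exhibits $\Gammaupperstar_{\XX}(K)$ inside $\XXcoh$. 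Feeding this back, $F$ admits a finite cover by the coherent objects $F\cross U_i \simeq \Gammaupperstar_{\XX}(K_i)$, and one concludes that $F$ itself is coherent (this is exactly the content of the statement, recalled earlier, that lisse sheaves are truncated coherent objects, \cite[\SAGapplink{E}]{SAG}).

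\textbf{Main obstacle.} The reduction itself is formal; the only ingredient not literally recalled in the text above is the lemma that lisse objects of a coherent \topos are truncated coherent, and within it the assertion that $\Gammaupperstar_{\XX}$ carries $\pi$-finite spaces to coherent objects. The delicate point there is the local-to-global step: a lisse object is only \emph{locally} of the form $\Gammaupperstar_{\XX}(K)$, and one must be careful not to invoke local coherence of $\XX$ (which a general coherent \topos need not enjoy), instead assembling $F$ from its finite trivialising cover purely through the pretopos closure properties of $\XXcoh$ (finite coproducts, effective epimorphisms, and geometric realisations of groupoid objects). Everything else --- preservation of lisse objects by $\fupperstar$, the Stone-to-lisse identification, and the coherence criterion --- is already in hand.
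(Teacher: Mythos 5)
Your reduction has exactly the intended shape: the paper gives no proof of its own here (the statement is quoted from \SAG{Corollary}{E.3.1.2}, placed immediately after \Cref{prop:SAG.E.3.1.1} because it is derived from it), and the derivation is precisely your chain
\begin{equation*}
	\fupperstar(\YYcohbdd) \subseteq \fupperstar(\YY^{\lisse}) \subseteq \XX^{\lisse} \subseteq \XXcohbdd
\end{equation*}
combined with \Cref{cor:criterionforcoherence}. The first two inclusions are fine, and your induction showing that $ \Gammaupperstar_{\XX}(K) $ is truncated coherent for every $ \pi $-finite $ K $ is sound: coherent objects of a coherent \topos are closed under finite coproducts, finite products, and geometric realisations of groupoid objects with coherent terms, and the Čech nerve of $ \fromto{\ast}{K^{(j)}} $ has exactly such terms.

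The genuine gap is the last inclusion $ \XX^{\lisse} \subseteq \XXcohbdd $, i.e.\ the local-to-global step you flag as delicate but do not actually carry out. The assembly you propose --- realising $ F $ as the geometric realisation of the Čech nerve of $ \surjto{\coprod_i (F \cross U_i)}{F} $ and invoking the pretopos closure properties of $ \XXcoh $ --- does not go through: the $ k $-th term of that Čech nerve is $ \coprod F \cross U_{i_0} \cross \cdots \cross U_{i_k} \equivalent \coprod \Gammaupperstar_{\XX}(K_{i_0}) \cross U_{i_0} \cross \cdots \cross U_{i_k} $, a product of a coherent object with members of the trivialising cover, and without local coherence of $ \XX $ (which you rightly decline to assume) there is no reason these are coherent. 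At the decisive moment your text simply asserts that ``one concludes that $ F $ itself is coherent,'' citing the very statement being proved. The repair is already recalled in the paper: by \Cref{prp:SAG.E.2.7.7}=\allowbreak\SAG{Proposition}{E.2.7.7}, a lisse object is a \emph{single} pullback $ F \equivalent 1_{\XX} \cross_{\Gammaupperstar_{\XX}(G)} \Gammaupperstar_{\XX}(\widetilde{G}) $, where $ \fromto{\widetilde{G}}{G} $ is the universal family over the $ \pi $-finite space $ G $ and the geometric morphism $ \fromto{\XX}{\Space_{/G}} $ is encoded by a global section $ \fromto{1_{\XX}}{\Gammaupperstar_{\XX}(G)} $ (\Cref{HTT.6.3.5.6}). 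Your lemma makes $ \Gammaupperstar_{\XX}(G) $ and $ \Gammaupperstar_{\XX}(\widetilde{G}) $ truncated coherent, the coherent objects of a coherent \topos are closed under pullbacks over coherent objects directly from the definition of $ (n+1) $-coherence, and $ \fromto{\widetilde{G}}{G} $ is a truncated morphism, so $ F $ is truncated coherent. This single-pullback argument replaces the problematic descent step entirely and closes the proof.
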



By the characterization of Stone \topoi in terms of lisse sheaves, it is not surprising that the Stone reflection of \atopos $ \XX $ can be written as sheaves on $ \XX^{\lisse} $ with respect to the effective epimorphism topology:

\begin{thm}[{\SAG{Theorem}{E.2.3.2}}]\label{thm:SAG.E.2.3.2}
	Let $ \XX $ be \atopos.
	Then:
	\begin{itemize}
		\item The \category $ \XX^{\lisse} $ is a bounded \pretopos and the inclusion $ \incto{\XX^{\lisse}}{\XX} $ is a morphism of \pretopoi.

		\item The inclusion \smash{$ \incto{\XX^{\lisse}}{\XX} $} induces a geometric morphism \smash{$ \fromto{\XX}{\Sheff{\XX^{\lisse}}} $} which exhibits \smash{$ \Sheff{\XX^{\lisse}} $} as the Stone reflection of $ \XX $.
	\end{itemize}
\end{thm}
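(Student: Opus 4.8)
This is \SAG{Theorem}{E.2.3.2}; the plan is to assemble it from the description of lisse sheaves in \Cref{prp:SAG.E.2.7.7}, the equivalence $\Topbc\equivalent\preTop^{\bdd,\op}$ of \Cref{thm:SAG.A.7.5.3}, the intrinsic characterisation of Stone \topoi in \Cref{prop:SAG.E.3.1.1}, and the stability of bounded \pretopoi under filtered colimits (\Cref{prop:SAG.A.8.3.1}).

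\textbf{Step 1: $\XX^{\lisse}$ is a bounded \pretopos and $\incto{\XX^{\lisse}}{\XX}$ is a morphism of \pretopoi.} By \Cref{prp:SAG.E.2.7.7}, every lisse object of $\XX$ has the form $\fupperstar(I_G)$ for an essentially unique geometric morphism $g_{\ast}\colon\fromto{\XX}{\Space_{/G}}$, where $G\subseteq\iota\Space_{\pi}$ is spanned by finitely many objects and $I_G$ classifies the inclusion. Each $\Space_{/G}\equivalent\Fun(G,\Space)$ is a Stone \topos, so by \Cref{prop:SAG.E.3.1.1} and \Cref{thm:SAG.A.7.5.3} the full subcategory $(\Space_{/G})^{\lisse}=(\Space_{/G})^{\coh}_{<\infty}$ is a bounded \pretopos. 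I would organise the pairs $(G,g)$ into a cofiltered indexing category — transition maps being geometric morphisms $\fromto{\Space_{/G}}{\Space_{/G'}}$ compatible with the $g$'s, cofilteredness using that the product in $\Top_{\infty}$ satisfies $\Space_{/G}\times\Space_{/G'}\equivalent\Space_{/(G\times G')}$, $\pi$-finite spaces being closed under finite products — and show that the pullback functors $\fupperstar\colon\fromto{(\Space_{/G})^{\lisse}}{\XX^{\lisse}}$, which are morphisms of bounded \pretopoi (they are left exact, preserve finite coproducts and effective epimorphisms, and preserve lisse objects by \Cref{rec:localsyslisse}), exhibit $\XX^{\lisse}$ as the filtered colimit $\colim_{(g,G)}(\Space_{/G})^{\lisse}$ in $\preTopbdd$. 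Joint essential surjectivity is exactly \Cref{prp:SAG.E.2.7.7}; that the colimit is computed correctly reduces to checking that any finite diagram of lisse objects is pulled back along a single $g$. Then \Cref{prop:SAG.A.8.3.1} gives that $\XX^{\lisse}$ is a bounded \pretopos, and the colimit cocone, which under the identifications is the inclusion $\incto{\XX^{\lisse}}{\XX}$, is a morphism of \pretopoi.

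\textbf{Step 2: the Stone reflection.} Since $\XX^{\lisse}$ is a bounded \pretopos, \Cref{thm:SAG.A.7.5.3} produces a bounded coherent \topos $\Sheff{\XX^{\lisse}}$ with $(\Sheff{\XX^{\lisse}})^{\coh}_{<\infty}\equivalent\XX^{\lisse}$ via the sheafified Yoneda embedding. The inclusion $\incto{\XX^{\lisse}}{\XX}$ is left exact and preserves effective epimorphisms, hence induces a geometric morphism $e_{\ast}\colon\fromto{\XX}{\Sheff{\XX^{\lisse}}}$ with $\eupperstar$ restricting to the inclusion on representables. I must check that $e_{\ast}$ exhibits the Stone reflection. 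For \emph{Stoneness} of $\Sheff{\XX^{\lisse}}$: it is bounded coherent by construction, so by \Cref{prop:SAG.E.3.1.1} it suffices that every truncated coherent object — i.e.\ every $F\in\XX^{\lisse}$ — is lisse in $\Sheff{\XX^{\lisse}}$. Writing $F\equivalent\fupperstar(I_G)$, the morphism of \pretopoi $\fupperstar|_{(\Space_{/G})^{\lisse}}\colon\fromto{(\Space_{/G})^{\lisse}}{\XX^{\lisse}}$ induces, via \Cref{thm:SAG.A.7.5.3} and the identification $\Space_{/G}\equivalent\Sheff{(\Space_{/G})^{\lisse}}$ (valid since $\Space_{/G}$ is Stone), a geometric morphism $\gbar_{\ast}\colon\fromto{\Sheff{\XX^{\lisse}}}{\Space_{/G}}$; one checks $\gbar_{\ast}e_{\ast}\equivalent g_{\ast}$ by comparing the colimit-preserving functors $\gbar^{\ast}\eupperstar$ and $\gupperstar$ on representables. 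Then the image of $F$ in $\Sheff{\XX^{\lisse}}$ equals $\gbar^{\ast}(I_G)$, which is lisse because $\gbar^{\ast}$ preserves lisse objects. For the \emph{universal property}: let $\YY$ be Stone, so $\YY\equivalent\Sheff{\YY^{\lisse}}$ (again \Cref{prop:SAG.E.3.1.1}, \Cref{thm:SAG.A.7.5.3}). Then geometric morphisms $\fromto{\XX}{\YY}$ correspond to morphisms of \pretopoi $\fromto{\YY^{\lisse}}{\XX}$; any such, being the restriction of an inverse-image functor, preserves lisse objects, hence factors through $\XX^{\lisse}$, and so such data correspond to morphisms of bounded \pretopoi $\fromto{\YY^{\lisse}}{\XX^{\lisse}}$, i.e.\ to geometric morphisms $\fromto{\Sheff{\XX^{\lisse}}}{\YY}$. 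One verifies this chain of natural equivalences is restriction along $e_{\ast}$, so $\Sheff{\XX^{\lisse}}\equivalent\XX^{\Stone}$; all the mapping \categories in sight are \groupoids by \Cref{prp:FunstartoStoneisspace}, and if needed \Cref{thm:conceptualcompleteness} reduces any on-the-nose comparison of geometric morphisms between bounded coherent \topoi to one on \categories of points.

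\textbf{Main obstacle.} The real work is the filtered-colimit presentation in Step 1 — in particular closure of $\XX^{\lisse}$ under geometric realisations of groupoid objects, the one non-finitary \pretopos axiom, since the simplices of a groupoid object valued in $\XX^{\lisse}$ are a priori pulled back along different $g$'s. Establishing that $\XX^{\lisse}$ is genuinely the filtered colimit $\colim_{(g,G)}(\Space_{/G})^{\lisse}$ of bounded \pretopoi along morphisms of \pretopoi is where care is required; everything downstream is bookkeeping with \Cref{thm:SAG.A.7.5.3}, \Cref{prp:SAG.E.2.7.7}, and \Cref{prop:SAG.E.3.1.1}.
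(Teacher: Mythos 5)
First, note that the paper offers no proof of this statement: it is quoted verbatim from \SAG{Theorem}{E.2.3.2}, so the only benchmark is Lurie's argument. Judged on its own terms, your Step 2 is the standard deduction and is sound \emph{granted} Step 1: once $\XX^{\lisse}$ is known to be a bounded \pretopos, the identification of geometric morphisms $\fromto{\XX}{\Sheff{\XX^{\lisse}}}$ with morphisms of \pretopoi out of $\XX^{\lisse}$ (\SAG{Proposition}{A.6.4.4} together with \Cref{thm:SAG.A.7.5.3}), combined with the fact that inverse-image functors preserve lisse objects, yields the universal property exactly as you describe.

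The genuine gap is in Step 1, precisely at the point you flag as ``the real work'' and then do not carry out. The claim $\XX^{\lisse}\simeq\colim_{(g,G)}(\Space_{/G})^{\lisse}$ has two halves. Essential surjectivity on objects is \Cref{prp:SAG.E.2.7.7}; but full faithfulness --- that for $I,J$ over $G$ the map
\[
	\colim_{(G'\to G)}\Map_{\Space_{/G'}}(u^{\ast}I,u^{\ast}J)\to\Map_{\XX}(g^{\ast}I,g^{\ast}J)
\]
is an equivalence --- is nowhere addressed, and without it you can neither invoke \Cref{prop:SAG.A.8.3.1} nor deduce closure of $\XX^{\lisse}$ under geometric realisations of groupoid objects. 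This missing statement is essentially the monodromy equivalence $\XX^{\lisse}\simeq\Fun(\Pi_{\infty}^{\wedge}(\XX),\Space_{\pi})$ of \Cref{prop:Stonemonodromy}, which the paper \emph{deduces from} the theorem you are proving; as written, your central claim is therefore either circular or simply unproved. Your fallback reduction, ``any finite diagram of lisse objects is pulled back along a single $g$,'' has the same defect: \Cref{prp:SAG.E.2.7.7} classifies single objects, and upgrading it to finite diagrams requires showing that the internal mapping object between lisse sheaves with values $K$ and $L$ is again lisse with value $\Map(K,L)$, i.e.\ that $\Gammaupperstar$ commutes with cotensors by $\pi$-finite spaces on truncated objects; that computation is the actual content and must be supplied. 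Finally, beware that \Cref{prp:SAG.E.2.7.7} and \Cref{prop:SAG.E.3.1.1} occur downstream of \SAG{Theorem}{E.2.3.2} in Lurie's development; the implication of E.3.1.1 you invoke to show that $\Sheff{\XX^{\lisse}}$ is Stone (bounded coherent with all truncated coherent objects lisse implies Stone) is itself proved there by means of the present theorem, so it cannot be taken as a black box in a self-contained proof.
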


\noindent Now we assemble all of the ways we can check that a geometric morphism induces an equivalence on Stone reflections.

\begin{cor}[{\SAG{Corollary}{E.2.3.3}}]\label{cor:SAG.E.2.3.3}
	Let $ \flowerstar \colon \fromto{\XX}{\YY} $ be a geometric morphism of \topoi.
	The following are equivalent:
	\begin{itemize}
		\item The induced geometric morphism $ \flowerstar^{\Stone} \colon \fromto{\XX^{\Stone}}{\YY^{\Stone}} $ is an equivalence of \topoi.

		\item The geometric morphism $ \flowerstar $ is a profinite shape equivalence.

		\item The morphism $ \Pt(\flowerstar^{\Stone}) $ is an equivalence of \groupoids.

		\item The pullback functor $ \fupperstar $ restricts to an equivalence of \categories $ \equivto{\YY^{\lisse}}{\XX^{\lisse}} $.
	\end{itemize}
\end{cor}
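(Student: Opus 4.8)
The final result is \Cref{cor:SAG.E.2.3.3}: for a geometric morphism $\flowerstar \colon \fromto{\XX}{\YY}$ of \topoi, the following are equivalent: (i) $\flowerstar^{\Stone} \colon \fromto{\XX^{\Stone}}{\YY^{\Stone}}$ is an equivalence; (ii) $\flowerstar$ is a profinite shape equivalence; (iii) $\Pt(\flowerstar^{\Stone})$ is an equivalence of \groupoids; (iv) $\fupperstar$ restricts to an equivalence $\equivto{\YY^{\lisse}}{\XX^{\lisse}}$.

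\begin{proof}
The plan is to run a cycle of implications, leaning on \Cref{thm:SAG.E.2.3.2}=\allowbreak\SAG{Theorem}{E.2.3.2} throughout. First I would establish (i)$\iff$(ii). By definition the profinite shape is $\Pi_{\infty}^{\wedge} = (-)^{\wedge}_{\pi} \of \Pi_{\infty}$, and by \SAG{Theorem}{E.2.4.1} the functor $\lambda_{\pi}$ is fully faithful and right adjoint to $\Pi_{\infty}^{\wedge}$, with essential image the Stone \topoi; hence the Stone reflection $(-)^{\Stone}$ is identified with $\lambda_{\pi} \of \Pi_{\infty}^{\wedge}$. Since $\lambda_{\pi}$ is fully faithful, $\flowerstar^{\Stone}$ is an equivalence if and only if $\Pi_{\infty}^{\wedge}(\flowerstar)$ is an equivalence in $\Spaceprofin$, which is precisely the statement that $\flowerstar$ is a profinite shape equivalence. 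So (i) and (ii) are literally the same condition once one unwinds the adjunction.

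Next, (i)$\iff$(iii). Since Stone \topoi are bounded and coherent (by \Cref{prop:SAG.E.3.1.1}=\allowbreak\SAG{Proposition}{E.3.1.1}) and, by \Cref{cor:morphismtoStoneiscoherent}=\allowbreak\SAG{Corollary}{E.3.1.2}, any geometric morphism to a Stone \topos is coherent, the geometric morphism $\flowerstar^{\Stone}$ between Stone \topoi is automatically coherent. By \Cref{thm:Stonesaretopoiwithgroupoidsofpoints}=\allowbreak\SAG{Theorem}{E.3.4.1}, the \categories of points of $\XX^{\Stone}$ and $\YY^{\Stone}$ are \groupoids. Conceptual Completeness (\Cref{thm:conceptualcompleteness}=\allowbreak\SAG{Theorem}{A.9.0.6}) then says a coherent geometric morphism between bounded coherent \topoi is an equivalence if and only if it induces an equivalence on \categories of points; combined with the fact that both point \categories are \groupoids, this gives (i)$\iff$(iii) at once. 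Alternatively, by \Cref{nul:matofprofinshape} one has $\Pt(\YY^{\Stone}) \equivalent \mat \Pi_{\infty}^{\wedge}(\YY)$ naturally, and then Whitehead's Theorem for profinite spaces (\Cref{thm:profiniteWhitehead}=\allowbreak\SAG{Theorem}{E.3.1.6}), i.e.\ conservativity of $\mat$, upgrades ``$\Pt(\flowerstar^{\Stone})$ an equivalence'' to ``$\Pi_{\infty}^{\wedge}(\flowerstar)$ an equivalence'', recovering (ii).

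Finally I would do (i)$\iff$(iv). By \Cref{thm:SAG.E.2.3.2}=\allowbreak\SAG{Theorem}{E.2.3.2}, the Stone reflection of $\XX$ is $\Sheff{\XX^{\lisse}}$, with the reflection map induced by the inclusion $\incto{\XX^{\lisse}}{\XX}$; and (using that $\fupperstar$ preserves lisse objects, \Cref{rec:localsyslisse}) the commutative square relating $\flowerstar$, $\flowerstar^{\Stone}$, and the restriction $\fupperstar|_{\YY^{\lisse}}$ shows $\flowerstar^{\Stone}$ is the geometric morphism $\fromto{\Sheff{\XX^{\lisse}}}{\Sheff{\YY^{\lisse}}}$ induced by $\fupperstar|\colon\fromto{\YY^{\lisse}}{\XX^{\lisse}}$, a morphism of bounded \pretopoi. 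By \Cref{thm:SAG.A.7.5.3}=\allowbreak\SAG{Theorem}{A.7.5.3}, the assignment $\goesto{X}{\Sheff{X}}$ is an equivalence $\Topbc \simeq \preTop^{\bdd,\op}$; hence $\flowerstar^{\Stone}$ is an equivalence of \topoi if and only if $\fupperstar|\colon\fromto{\YY^{\lisse}}{\XX^{\lisse}}$ is an equivalence of \categories. This closes the cycle. The only genuinely delicate point is verifying the identification of $\flowerstar^{\Stone}$ with the morphism induced by $\fupperstar|$ on lisse subcategories — i.e.\ the naturality of the Stone reflection described via \SAG{Theorem}{E.2.3.2} — but this is immediate from the universal property of the reflection together with the fact that $\fupperstar$ preserves lisse sheaves, so no step requires real work beyond bookkeeping.
\end{proof}
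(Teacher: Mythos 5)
Your proof is correct, but note that the paper does not actually prove this statement: it is recorded as a citation of \SAG{Corollary}{E.2.3.3}, so there is no in-paper argument to compare against. Your reconstruction from the surrounding recalled results is sound and is essentially the standard derivation. The identification of the Stone reflection with $\lambda_{\pi} \of \Pi_{\infty}^{\wedge}$ together with full faithfulness of $\lambda_{\pi}$ gives (i)$\iff$(ii) immediately; the combination of \Cref{thm:Stonesaretopoiwithgroupoidsofpoints}, \Cref{cor:morphismtoStoneiscoherent} (automatic coherence of $\flowerstar^{\Stone}$), and Conceptual Completeness gives (i)$\iff$(iii); and \Cref{thm:SAG.E.2.3.2} plus the equivalence $\Topbc \simeq \preTop^{\bdd,\op}$ of \Cref{thm:SAG.A.7.5.3} gives (i)$\iff$(iv), using that the lisse objects of a Stone \topos are exactly its truncated coherent objects so that $\flowerstar^{\Stone}$ corresponds under that duality to $\fupperstar|_{\YY^{\lisse}}$. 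The one point you flag as delicate — that $\flowerstar^{\Stone}$ is the geometric morphism induced by $\fupperstar|_{\YY^{\lisse}}$ — is indeed the only place requiring care, and your appeal to the universal property of the reflection together with preservation of lisse sheaves under pullback handles it. No gaps.
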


Putting together the basics about Stone \topoi gives an alternative proof of the monodromy equivalence for lisse sheaves proven by Bachmann and Hoyois \cite[Proposition 10.1]{MotivicNorms:BachmannHoyois}.

\begin{prp}\label{prop:Stonemonodromy}
	Let $ \XX $ be \atopos the unit $ \fromto{\XX}{\XX^{\Stone}} $ of the adjunction to Stone \topoi restricts to an equivalence
	\begin{equation*}
		\Fun(\Shapeprofin(\XX),\Spacefin) \equivalent \XX^{\lisse} \period
	\end{equation*}
\end{prp}

\begin{proof}
	Represent the profinite shape $ \Shapeprofin(\XX) $ by an inverse system $ \{\Pi_{\alpha}\}_{\alpha \in A} $ of \pifinite spaces so that 
	\begin{equation*}
		\Fun(\Shapeprofin(\XX),\Spacefin) = \colim_{\alpha \in A^{\op}} \Fun(\Pi_{\alpha},\Spacefin) \period
	\end{equation*}
	By \Cref{exm:Spacecoh,nul:truncohslice}, for any \pifinite space $ \Pi $ we have $ \Fun(\Pi,\Space)_{<\infty}^{\coh} = \Fun(\Pi,\Spacefin) $, so
	\begin{align*}
		\Fun(\Shapeprofin(\XX),\Spacefin) &= \colim_{\alpha \in A^{\op}} \Fun(\Pi_{\alpha},\Space)_{<\infty}^{\coh} \\
		&\equivalent \paren{\textstyle \lim_{\alpha \in A} \Fun(\Pi_{\alpha},\Space)}_{<\infty}^{\coh} && \text{(\Cref{prop:SAG.A.8.3.2})} \\ 
		&\equivalent (\XX^{\Stone})_{<\infty}^{\coh} && \text{(\Cref{def:Stonetopos})} \\ 
		&\equivalent \XX^{\lisse} && \text{(\Cref{thm:SAG.E.2.3.2})} \period \qedhere
	\end{align*}
\end{proof}

\begin{exm}\label{exm:finiteetalesite}
	Let $ X $ be a coherent scheme, and write $ X^{\fet} $ for the \textit{finite étale site}\index[terminology]{site!finite étale}\index[terminology]{finite étale site} of $ X $:
	the full subcategory of the étale site $ X^{\et} $ spanned by the finite étale $ X $-schemes, with the induced topology (see \cite[§VI.9]{MR3444777}).
	Since the finite étale site is a finitary site, the $ 1 $-localic finite étale \topos $ X_{\fet} \coloneq \Sh(X^{\fet}) $ is coherent (\Cref{prop:SAG.A.3.1.3}=\allowbreak\SAG{Proposition}{A.3.1.3}). 
	The finite étale \topos $ X_{\fet} $ is the classifying \topos of the profinite étale fundamental groupoid of $ X $ (cf. \cites[Exposé V, Proposition 5.8]{MR50:7129}[Lemma VI.9.11]{MR3444777}).
	In particular, the finite étale \topos $ X_{\fet} $ is Stone.
\end{exm}

\begin{ntn}\label{ntn:absGalois}
	Let $ k $ be a field and $ \ksep \supset k $ a separable closure of $ k $.
	We write $ \Gk $ for the absolute Galois group of $ k $ with respect to $ \ksep $.
\end{ntn}

\begin{exm}\label{exm:etaletoposfield}
	Let $ k $ be a field and $ \ksep \supset k $ a separable closure of $ k $.
	This choice of separable closure provides an identification $ (\Spec k)_{\et} \equivalent \widetilde{\BG}_{k} $ of the étale \topos of $ \Spec k $ with the classifying \topos of the profinite group $ \Gk $.
	In particular, $ (\Spec k)_{\et} $ is a Stone \topos.
\end{exm}

\newpage

\section{Oriented pushouts \& oriented fiber products}\label{sec:orientedfiberprod} 

This chapter is dedicated to the study of squares of \topoi that commute up to a natural transformation.
We are particularly interested in the two universal examples of these oriented squares: \textit{recollements} or \textit{oriented pushouts}, and \textit{oriented fiber products}.
Recollements are integral to the theory of stratified higher topoi that we present in \Cref{part:strattopoi}.
In the most basic example, \atopos stratified over the poset $ [1] $ is equivalent to the data of a recollement of \topoi.
Moreover, the whole theory of stratified \topoi is really a generalization of this example.
Oriented fiber products appear in a two ways in this text.
The primary way is in the décollage approach to stratified \topoi that we present in \Cref{part:strattopoi}; this is the topos-theoretic analogue of the approach to stratified spaces presented in \Cref{subsec:spatialdecollages,subsec:stratifiednerve,subsec:profintiedecollages}. 
More precisely, the link between two strata in a stratified \topos (satisfying suitable finiteness hypotheses) is their oriented fiber product.

Since they are the technically more challenging of the two, the majority of the chapter is dedicated to the study of oriented fiber products of \topoi.
Deligne originally considered oriented fiber products of $1$-topoi in order to construct the natural target for the nearby cycles functor \cites[Exposé XIII]{MR0354657}{MR726426}\footnote{The latter text was written by Gérard Laumon.} and work with nearby cycles over more general bases (see also the works of Gabber, Orgogozo, and Saito \cites{Illusie:vanishingcycles}[Exposé XII]{MR3309086}{MR726426}{MR2249998}{MR3595935}).
This target for the nearby cycles functor is known as the \textit{vanishing} topos.
Vanishing \topoi play an important role in this text as well: \Cref{sec:localtopoi} is dedicated to the study of a special class of vanishing \topoi that play the role of local rings in higher topos theory.
These \textit{local \topoi} are a key tool that we use to reduce many questions about \topoi with enough points to questions about local \topoi.

The existence of oriented fiber products of $ 1 $-topoi was first proven by Giraud \cite{MR0344253}; in order to prove properties of oriented fiber products, Deligne provided a description in terms of generating sites.
The bulk of the technical work in this chapter is in showing that Deligne's generating site also works in the setting of \topoi. 
The payoff is that this description allows us to easily see that the oriented fiber product of bounded coherent \topoi is again bounded coherent. 

In \cref{subsec:recollements} we review recollements of \topoi.
The recollement of bounded coherent \topoi is generally neither bounded nor coherent; \Cref{cnstr:orientedfibboundedpretop} explains how to fix this.
\Cref{subsec:orientedsquares} discuesses squares of \topoi that commute up to a natural transformation and the definition of \textit{oriented pushouts}.
\Cref{subsec:internalhoms} discusses internal Homs in $ \Top_{\infty} $ and \textit{path \topoi}.
In \cref{subsec:OFP} we introduce the oriented fiber product of \topoi as an iterated pullback involving path \topoi.
In \Cref{subsec:generatingsites} we give a site-theoretic description of the oriented fiber product and use it to prove that the oriented fiber product of bounded coherent \topoi is again bounded coherent (\Cref{lem:orientedcoherent}).
\Cref{subsec:etalecompat} proves a compatibility between étale geometric morphisms and oriented fiber products (\Cref{prop:orientedslice}) that we'll need to prove a basechange theorem for oriented fiber products in \cref{section:BC}.


\subsection{Recollements of higher topoi}\label{subsec:recollements}

We begin with open and closed subtopoi.

\begin{nul} 
	Let $ \XX$ is \atopos and $U \in \XX $.
	Recall that the overcategory $ \XX_{/U}$ is \atopos, and the forgetful functor $j_!\colon\fromto{\XX_{/U}}{\XX}$ admits a right adjoint $ \jupperstar $, which itself admits a right adjoint $ \jlowerstar$ (\Cref{HTT.6.3.5.6}).
	If $ U $ is an open of $ \XX $, the functor $ \jlowerstar$ is fully faithful.
	
	In this case, we write $ \XX_{\smallsetminus U}$ for the full subcategory of $ \XX$ spanned by those objects $F$ such that the projection $ \pr_2 \colon \fromto{F\times U}{U} $ is an equivalence. 
	The inclusion $ \XX_{\smallsetminus U} \subset \XX $ is accessible and admits a left exact left adjoint, so that $ \XX_{\smallsetminus U} $ is \atopos \HTT{Proposition}{7.3.2.3}.
	We call the \topos $ \XX_{\smallsetminus U} $ the \defn{closed complement}\index[terminology]{complement!closed}\index[terminology]{subtopos!closed}\index[terminology]{closed!subtopos} of $ \XX_{/U}$, and $ \ilowerstar \colon\incto{\XX_{\smallsetminus U}}{\XX}$ for the inclusion.

	In this case, $ \XX$ is a recollement \Cref{rec:recollement} of $ \XX_{\smallsetminus U}$ and $ \XX_{/U}$ with gluing functor $ \iupperstar\jlowerstar$, \textit{viz.},
	\begin{equation*}
		\XX \simeq \XX_{\smallsetminus U}\orientedcup^{\iupperstar\jlowerstar}\XX_{/U} \period
	\end{equation*}
\end{nul}

\begin{nul}\label{nul:usingconservativityofrecoll}
	Let $ \XX $ be \atopos, and let
	\begin{equation*}
		\ilowerstar \colon\incto{\ZZ}{\XX} \andeq \jlowerstar \colon\incto{\UU}{\XX}
	\end{equation*}
	be geometric morphisms of \topoi that exhibit $ \XX$ as the recollement $ \orientedpush{\ZZ}{\iupperstar\jlowerstar}{\UU} $.
	Then since $ \iupperstar $ and $ \jupperstar $ are left exact left adjoints, the natural conservative functor
	\begin{equation*}
		(\iupperstar,\jupperstar) \colon \fromto{\XX}{\ZZ \sqcup \UU}
	\end{equation*} 
	preserves and reflects colimits and finite limits.
	(Here $ \ZZ \sqcup \UU $ denotes the coproduct of $ \ZZ $ and $ \UU $ in $ \Top_{\infty} $, which is the \emph{product} of $ \ZZ $ and $ \UU $ in $ \Cat_{\infty,\updelta_1} $.) 
	In particular, a morphism $ f $ in $ \XX $ is: 
	\begin{enumerate}[(\ref*{nul:usingconservativityofrecoll}.1)]
		\item\label{nul:usingconservativityofrecoll.1} an effective epimorphism if and only if both $ \iupperstar(f) $ and $ \jupperstar(f) $ are effective epimorphisms.

		\item\label{nul:usingconservativityofrecoll.2} $ n $-connective for  $ n \in \NNrhd $ if and only if both $ \iupperstar(f) $ and $ \jupperstar(f) $ are $ n $-connective.

		\item\label{nul:usingconservativityofrecoll.3} $ n $-truncated for some integer $ n \geq -2 $ if and only if both $ \iupperstar(f) $ and $ \jupperstar(f) $ are $ n $-truncated.
	\end{enumerate}
	See \Cref{nul:truncatednessinrecoll}.
\end{nul}

\begin{nul}\label{nul:hypercompleterecollement}
	From \enumref{nul:usingconservativityofrecoll}{2} we see that if \atopos $ \XX $ is the recollement of $ \ZZ $ and $ \UU $ and both $ \ZZ $ and $ \UU $ are hypercomplete, then $ \XX $ is also hypercomplete.
\end{nul}

\begin{nul}\label{nul:recollement1-strat}
	A recollement of \topoi is tantamount to a geometric morphism of \topoi $ \fromto{\XX}{\widetilde{[1]}} $. 
	Indeed, if $ \ZZ$ and $ \UU$ are \topoi, and $ \phi\colon\fromto{\UU}{\ZZ}$ is a left exact accessible functor, then the recollement \smash{$ \XX\coloneq\ZZ\orientedcup^{\phi}\UU$} is \atopos \HAa{Proposition}{A.8.15}, and the global sections geometric morphisms $ \fromto{\ZZ}{\Space}$ and $ \fromto{\UU}{\Space} $ induce a geometric morphism
	\begin{equation*}
		\fromto{\XX}{\Space\orientedcup^{\id_{\Space}}\Space\simeq \widetilde{[1]}} \period
	\end{equation*}
	In the other direction, given a geometric morphism $ \fromto{\XX}{\widetilde{[1]}} $, the closed subtopos
	\begin{align*}
		\XX_{0} &\coloneq \widetilde{\{0\}} \cross_{\widetilde{[1]}} \XX \subset \XX
		\intertext{and the open subtopos}
		\XX_{1} &\coloneq \widetilde{\{1\}} \cross_{\widetilde{[1]}} \XX \subset \XX
	\end{align*}
	form a recollement of $ \XX$. 

	In a strong sense, the entire theory of stratified \topoi (\cref{sec:strattopoi}) is a generalization of this observation.
\end{nul}

Since $ n $-localic and bounded \topoi (\Cref{cnstr:localictopoi} \& \Cref{cnstr:boundedtopoi}) are each closed under limits in $ \Top_{\infty} $, we deduce the following.

\begin{lem}\label{lem:boundedrecollement}
	Let $ n \in \NNup $, let $ \XX $ be \atopos, and let $ \ilowerstar \colon\incto{\ZZ}{\XX}$ and $ \jlowerstar \colon\incto{\UU}{\XX}$ be geometric morphisms of \topoi that exhibit $ \XX $ as the recollement of $ \ZZ $ and $ \UU $ along $ \iupperstar\jupperstar $.
	If $ \XX $ is $ n $-localic (respectively, bounded), then both $ \ZZ $ and $ \UU $ are $ n $-localic (resp., bounded).
\end{lem}


\begin{wrn}\label{wrn:boundednessnotpushedout}
	We caution, however, that there isn't a simple converse to \Cref{lem:boundedrecollement}: the recollement of two bounded \topoi is not necessarily bounded.
	To ensure boundedness, we need a condition on the gluing functor.
\end{wrn}

\begin{dfn}\label{dfn:boundedgluing}
	Let $ \ZZ$ and $ \UU$ be two bounded \topoi, and let $ \phi \colon \fromto{\UU}{\ZZ}$ be a left exact accessible functor. 
	We say that $ \phi$ is a \defn{bounded gluing functor}\index[terminology]{gluing functor!bounded}\index[terminology]{bounded!gluing functor} if and only if the recollement \smash{$ \XX \coloneq \ZZ \orientedcup^{\phi} \UU$} is bounded.
\end{dfn}

\begin{qst}
	Do bounded gluing functors admit a simple or useful intrinsic characterization?
\end{qst}

Let us now turn to the coherence of recollements (\Cref{rec:coherence}). 
We can easily chracterize the coherent objects of a coherent recollement.

\begin{prp}[{\cite[Proposition 2.3.22]{DAGXIII}}]\label{prop:DAGXIII.2.3.22}
	Let $ n \in \NNup $, let $ \XX $ be an $ (n+1) $-coherent \topos, and let $ \ilowerstar \colon \incto{\ZZ}{\XX}$ and $ \jlowerstar \colon\incto{\UU}{\XX}$ be geometric morphisms of \topoi that exhibit $ \XX$ as the recollement of $ \ZZ $ and $ \UU $ along $ \iupperstar\jupperstar$.
	If $ \UU $ is $ 0 $-coherent, then an object $ F \in \XX $ is $ n $-coherent if and only if both $ \iupperstar(F) $ and $ \jupperstar(F) $ are $ n $-coherent.
	In particular, the \topoi $ \ZZ $ and $ \UU $ are $ n $-coherent.
\end{prp}

\begin{wrn}\label{wrn:coherencenotpushedout}
	 We caution again that there isn't a simple converse to \Cref{prop:DAGXIII.2.3.22}: as with boundedness, the recollement of two coherent \topoi is not necessarily coherent.
\end{wrn}

\begin{dfn}\label{dfn:coherentgluing}
	Let $ \ZZ$ and $ \UU$ be coherent \topoi, and let $ \phi\colon\fromto{\UU}{\ZZ}$ be a left exact accessible functor.
	We say that $ \phi$ is a \defn{coherent gluing functor}\index[terminology]{gluing functor!coherent}\index[terminology]{coherent!gluing functor} if and only if the recollement \smash{$ \XX\coloneq\ZZ\orientedcup^{\phi}\UU$} is coherent.
\end{dfn}

\begin{nul}
	Let $ \ZZ$ and $ \UU$ be coherent \topoi, and let $ \phi\colon\fromto{\UU}{\ZZ}$ be a left exact accessible functor. Write $ \ilowerstar \colon\incto{\ZZ}{\XX}$ and $ \jlowerstar \colon\incto{\UU}{\XX}$ for the fully faithful functors defining the recollement.
	Then one can show that the gluing functor $ \phi$ is coherent if the following conditions are satisfied.
	\begin{itemize}
		\item The functor $ \jlowerstar$ is quasicompact in the sense that for every quasicompact object $F\in\XX$, the object $ \jupperstar(F)\in\UU$ is also quasicompact.

		\item For every $n\in\NNup$, every object $F\in\UU$ admits a family $ \{G_{\alpha}\to F\}_{\alpha \in A}$ in which each $G_{\alpha}$ is $n$-coherent, and the family $ \{\phi(G_{\alpha})\to\phi(F)\}_{\alpha \in A}$ is a covering in $ \ZZ$.
	\end{itemize}
\end{nul}

\begin{cnstr}[bounded coherent recollement]\label{cnstr:orientedfibboundedpretop} 
	Let $ \ZZ$ and $ \UU$ be bounded coherent \topoi, and let $ \phi\colon\fromto{\UU}{\ZZ}$ be a left exact accessible functor. Form the recollement
	\begin{equation*}
		\XX'\coloneq\ZZ\orientedcup^{\phi}\UU \comma
	\end{equation*}
	and write $ \ilowerstar \colon\incto{\ZZ}{\XX'}$ and $ \jlowerstar \colon\incto{\UU}{\XX'}$ for the induced closed and open embeddings.
	Consider the full subcategory $X_0\subseteq\XX'$ spanned by those objects $F$ such that both $ \iupperstar (F)$ and  $ \jupperstar(F)$ are each truncated coherent, so that $X_0$ is the oriented fiber product \Cref{nul:orientedfpinCat} in $ \Cat_{\infty,\updelta_1}$:
	\begin{equation*}
		X_0 = \commacat{\ZZcohbdd}{\ZZ}{\UUcohbdd} \period
	\end{equation*}
	Then since $ X_0 \subset \XX' $ is closed under finite limits, finite coproducts, and the formation of geometric realizations of groupoid objects, the \category $ X_0 $ is \apretopos and the inclusion $ \incto{X_0}{\XX'} $ is a morphism of \pretopoi (\Cref{dfn:pretopos}).
	Moreover, by \Cref{nul:usingconservativityofrecoll} every object of $ X_0 $ is truncated and by \Cref{nul:orientedfpinCatesssmall} the \category $ X_0 $ is $ \updelta_0 $-small, hence $ X_0 $ is a bounded \pretopos (\Cref{dfn:boundedpretopos}).
	Consequently, the \topos\index[notation]{ZunionbcU@$\ZZ \orientedcupbc^{\phi} \UU$}
	\begin{equation*}
		\ZZ \orientedcupbc^{\phi} \UU \coloneq\Sheff{X_0} 
	\end{equation*}
	is bounded coherent (\Cref{ntn:effepi}).
	By \SAG{Proposition}{A.6.4.4}, we see that the inclusion $ \incto{X_0}{\XX'}$ extends (uniquely) to a comparison geometric morphism
	\begin{equation*}
		\rlowerstar \colon\fromto{\XX'}{\ZZ\orientedcupbc^{\phi}\UU} \period
	\end{equation*}
	The geometric morphism $ \rlowerstar $ is not generally an equivalence; however, $ \rupperstar $  restricts to an equivalence
	\begin{equation*}
		\rupperstar \colon \equivto{(\ZZ\orientedcupbc^{\phi}\UU)\cohbdd}{X_0} \period
	\end{equation*}
	The geometric morphisms $ \rlowerstar \ilowerstar $ and $ \rlowerstar \jlowerstar$ are both coherent by construction.
	We therefore call \smash{$ \ZZ\orientedcupbc^{\phi}\UU$} the \defn{bounded coherent recollement}%
	\index[terminology]{recollement!bounded coherent}\index[terminology]{bounded coherent!recollement}
	of $ \ZZ $ and $ \UU $ along $ \phi $.
\end{cnstr}

\begin{lem}
	Keep the notations of \Cref{cnstr:orientedfibboundedpretop}.
	Then the natural geometric morphism
	\begin{equation*}
		\ZZ\orientedcup^{\iupperstar \rupperstar \rlowerstar \jlowerstar}\UU\to\ZZ\orientedcupbc^{\phi}\UU
	\end{equation*}
	is an equivalence.
\end{lem}

\begin{proof}
	Write $ \XX\coloneq\ZZ\orientedcupbc^{\phi}\UU$.
	To prove the claim, we show that we have equivalences 
	\begin{equation*}
		\rlowerstar \jlowerstar \colon \equivto{\UU}{\XX_{/j_!(1_{\UU})}} \andeq \ilowerstar \rlowerstar \colon \equivto{\ZZ}{\XX_{\smallsetminus j_!(1_{\UU})}} \period
	\end{equation*}

	The object $j_!(1_{\UU}) \in \XX $, is the object
	\begin{equation*}
		(\varnothing_{\ZZ},1_{\UU},\varnothing_{\ZZ}\to \phi(1_{\UU})) \period
	\end{equation*}
	From this description it is clear that $ j_!(1_{\UU}) $ is an object of the \pretopos $ X_0 \subset \XX $ of \Cref{cnstr:orientedfibboundedpretop} and that $ j_!(1_{\UU}) $ is an open of $ \XX $.
	Thus $ \jupperstar \rupperstar$ restricts to an equivalence
	\begin{equation*}
		\equivto{(\XX_{/j_!(1_{\UU})})\cohbdd}{\UUcohbdd} \period
	\end{equation*}
	Hence the functor $ \rlowerstar \jlowerstar \colon\UU\to\XX_{/j_!(1_{\UU})}$ is an equivalence.
	The truncated coherent objects of the closed subtopos $ \XX_{\smallsetminus j_!(1_{\UU})}$ are precisely those of the form $(F_{\ZZ},1_{\UU},F_{\ZZ}\to \phi(1_{\UU}))$ for some truncated coherent object $F_{\ZZ}$ of $ \ZZ$.
	Hence $ \iupperstar \rupperstar$ restricts to an equivalence
	\begin{equation*}
		\equivto{(\XX_{\smallsetminus j_!(1_{\UU})})\cohbdd}{\ZZcohbdd} \period
	\end{equation*}
	Thus the functor $ \ilowerstar \rlowerstar \colon\ZZ\to\XX_{\smallsetminus j_!(1_{\UU})}$ is an equivalence.
\end{proof}

\begin{lem}\label{lem:whenrecollementisbc}
	Let $ \ZZ$, and $ \UU$ be bounded coherent \topoi, and let $ \phi\colon\fromto{\UU}{\ZZ}$ be a bounded coherent gluing functor.
	Then \smash{$ \ZZ\orientedcup^{\phi}\UU$} is the bounded coherent recollement \smash{$ \ZZ\orientedcupbc^{\phi}\UU $}.
\end{lem}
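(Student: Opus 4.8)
The plan is to show that under the hypothesis that $\phi$ is a bounded coherent gluing functor, the comparison geometric morphism $r_{\ast}\colon\fromto{\XX'}{\XX}$ of \Cref{cnstr:orientedfibboundedpretop} is an equivalence, where $\XX'\coloneq\ZZ\orientedcup^{\phi}\UU$ and $\XX\coloneq\ZZ\orientedcup^{\phi}_{\bc}\UU$. Since $\phi$ is a bounded coherent gluing functor, $\XX'$ is by definition a bounded coherent \topos; we already know $\XX$ is bounded coherent by construction. So both \topoi are bounded coherent, and by \Cref{thm:SAG.A.7.5.3}=\allowbreak\SAG{Theorem}{A.7.5.3} it suffices to check that $r_{\ast}$ induces an equivalence on truncated coherent objects. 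But this is exactly the content of the last sentence of \Cref{cnstr:orientedfibboundedpretop}: the functor $r^{\ast}$ restricts to an equivalence $\equivto{\XXcohbdd}{X_0}$, and $X_0$ was defined as the full subcategory of $\XX'$ spanned by those $F$ with $i^{\ast}F$ and $j^{\ast}F$ both truncated coherent.

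So the real work is to identify $X_0$ with $(\XX')^{\coh}_{<\infty}$, i.e. to show that an object $F\in\XX'$ has $i^{\ast}F$ and $j^{\ast}F$ truncated coherent if and only if $F$ itself is truncated coherent in $\XX'$. First I would handle truncatedness: by \Cref{nul:usingconservativityofrecoll} (or \Cref{nul:truncatednessinrecoll}), the conservative left-exact-left-adjoint functor $(i^{\ast},j^{\ast})\colon\fromto{\XX'}{\ZZ\sqcup\UU}$ detects and reflects $n$-truncatedness, so $F$ is truncated iff $i^{\ast}F$ and $j^{\ast}F$ are. For coherence, the key input is \Cref{prop:DAGXIII.2.3.22}=\allowbreak\cite[Proposition 2.3.22]{DAGXIII}: since $\XX'$ is coherent (as $\phi$ is a coherent gluing functor) and $\UU$ is $0$-coherent (being coherent), an object $F\in\XX'$ is $n$-coherent iff both $i^{\ast}F$ and $j^{\ast}F$ are $n$-coherent. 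Combining with \Cref{cor:cohifftrunctionis} to pass between $n$-coherence and coherence on truncated objects, we get $F\in(\XX')^{\coh}_{<\infty}$ iff $i^{\ast}F\in\ZZcohbdd$ and $j^{\ast}F\in\UU^{\coh}_{<\infty}$, which is precisely membership in $X_0$.

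Having identified $(\XX')^{\coh}_{<\infty}=X_0$, I would then observe that the comparison morphism $r_{\ast}$ from \SAG{Proposition}{A.6.4.4} restricts to the identity on $X_0$ (via the equivalence $r^{\ast}\colon\equivto{\XXcohbdd}{X_0}$, which is inverse to the inclusion $\incto{X_0}{\XX'}$ followed by the localisation). Thus $r_{\ast}$ is a coherent geometric morphism between bounded coherent \topoi inducing an equivalence on truncated coherent objects, hence an equivalence by \Cref{thm:SAG.A.7.5.3}. This gives $\ZZ\orientedcup^{\phi}\UU\equivalent\ZZ\orientedcup^{\phi}_{\bc}\UU$, with the recollement structure transported along this equivalence, so the right-hand side is indeed the bounded coherent recollement.

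The main obstacle I anticipate is making sure the hypothesis "bounded coherent gluing functor" is exactly strong enough — one must confirm that it guarantees $\XX'$ is \emph{both} bounded \emph{and} coherent (so that both \Cref{prop:DAGXIII.2.3.22} and \Cref{thm:SAG.A.7.5.3} apply to $\XX'$), and that the definitions of "bounded gluing functor" (\Cref{dfn:boundedgluing}) and "coherent gluing functor" (\Cref{dfn:coherentgluing}) combine cleanly; the phrase "bounded coherent gluing functor" should be read as $\phi$ being simultaneously a bounded gluing functor and a coherent gluing functor. A secondary subtlety is bookkeeping the recollement structure: one should check that the closed and open embeddings $r_{\ast}i_{\ast}$, $r_{\ast}j_{\ast}$ of $\XX$ correspond under the equivalence to the original $i_{\ast}$, $j_{\ast}$ of $\XX'$, which follows from the two preceding lemmas in the excerpt (the equivalences $i_{\ast}r_{\ast}\colon\ZZ\equivalent\XX_{\smallsetminus j_!1_{\UU}}$ and $r_{\ast}j_{\ast}\colon\UU\equivalent\XX_{/j_!1_{\UU}}$), but should be spelled out.
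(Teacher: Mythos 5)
Your proposal is correct and follows exactly the paper's own (one-line) proof, which simply cites \cite[Proposition 2.3.22]{DAGXIII} together with \SAG{Theorem}{A.7.5.3}; you have merely spelled out the details of how those two results combine — identifying $X_0$ with $(\XX')^{\coh}_{<\infty}$ via the coherence and truncatedness criteria for recollements, and then invoking the equivalence between bounded coherent \topoi and bounded \pretopoi.
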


\begin{proof}
	This follows from \Cref{prop:DAGXIII.2.3.22}=\cite[Proposition 2.3.22]{DAGXIII} combined with \Cref{thm:SAG.A.7.5.3}=\allowbreak\SAG{Theorem}{A.7.5.3}.
\end{proof}

The critical point that we use repeatedly in the sequel is the observation that the bounded coherent recollement depends only upon the restriction of the gluing functor to truncated coherent objects.
More precisely, let $ \ZZ$ and $ \UU$ be bounded coherent \topoi, and let $ \phi\colon\fromto{\UU}{\ZZ}$ and $ \phi'\colon\fromto{\UU}{\ZZ}$ be left exact accessible functors.
Let $ \eta\colon\fromto{\phi}{\phi'}$ be a natural transformation.
Now $ \eta$ induces a functor
\begin{equation*}
	\eta^{\ast}\colon\fromto{\ZZ\orientedcup^{\phi}\UU}{\ZZ\orientedcup^{\phi'}\UU}
\end{equation*}
given by the assignment
\begin{equation*}
	(z,u,\alpha \colon \fromto{z}{\phi(u)}) \mapsto (z,u,\eta_u \alpha \colon \fromto{z}{\phi'(u)}) \period
\end{equation*}
The functor $ \eta^{\ast} $ preserves colimits and finite limits; consquently, $ \eta^{\ast} $ is the left adjoint of a geometric morphism $ \eta_{\ast}$.
Note that $ \eta^{\ast} $ restricts to a functor 
\begin{equation*}
	\eta^{\ast}\colon\fromto{\commacat{\ZZcohbdd}{\ZZ,\phi}{\UUcohbdd}\simeq\XXcohbdd}{(\XX')\cohbdd\simeq\commacat{\ZZcohbdd}{\ZZ,\phi'}{\UUcohbdd}} \comma
\end{equation*}
i.e., $ \eta^{\ast} $ preserves truncated coherent objects.
Hence $ \eta^{\ast} $ induces a geometric morphism
\begin{equation*}
	\eta_{\ast} \colon \fromto{\ZZ\orientedcupbc^{\phi'}\UU}{\ZZ\orientedcupbc^{\phi}\UU}
\end{equation*}
on bounded coherent recollements.

\begin{prp}\label{prp:bcorientedpodependsonbcgluing}
	Let $ \ZZ$ and $ \UU$ be bounded coherent \topoi, and let $ \phi\colon\fromto{\UU}{\ZZ}$ and $ \phi'\colon\fromto{\UU}{\ZZ}$ be left exact accessible functors.
	Let $ \eta\colon\fromto{\phi}{\phi'}$ be a natural transformation.
	If \smash{$ \eta|_{\UUcohbdd} $} is an equivalence, then $ \eta$ induces an equivalence
	\begin{equation*}
		\eta_{\ast} \colon \equivto{\ZZ\orientedcupbc^{\phi'}\UU}{\ZZ\orientedcupbc^{\phi}\UU} \period
	\end{equation*}
\end{prp}

\begin{qst} 
	As a result of \Cref{prp:bcorientedpodependsonbcgluing}, the restriction functor
	\begin{equation*}
		\Funlex(\UU,\ZZ)\to \Funlex(\UUcohbdd,\ZZ)
	\end{equation*}
	is fully faithful on bounded coherent gluing functors.
	What is the essential image of the bounded coherent gluing functors? 
	It might be helpful to have a simple intrinsic characterization.
\end{qst}


\subsection{Oriented squares \& oriented pushouts}\label{subsec:orientedsquares}

To speak of oriented pullbacks of \topoi without finding ourselves buried under a mass of pernicious details (or unproved claims) about double \categories or $(\infty,2)$-cate\-gories, we express the universal property of the oriented pushout in simple terms.
The key kind of square we will have to contemplate is the following.

\begin{ntn}
	We exhibit data of geometric morphisms $ \flowerstar \colon\fromto{\XX}{\ZZ}$, $ \glowerstar\colon\fromto{\YY}{\ZZ}$, $ \plowerstar \colon\fromto{\WW}{\XX}$, and $ \qlowerstar \colon\fromto{\WW}{\YY}$, along with a (not necessarily invertible) natural transformation $ \sigma\colon\fromto{\glowerstar\qlowerstar}{\flowerstar \plowerstar}$ by the single square
	\begin{equation}\label{square:laxsquare}
		\begin{tikzcd}
			\WW \arrow[r, "\qlowerstar" above] \arrow[d, "\plowerstar" left] & \YY \arrow[d, "\glowerstar" right] \arrow[dl, phantom, "\scriptstyle \sigma" below right, "\Longleftarrow" sloped] \\ 
			\XX \arrow[r, "\flowerstar" below] & \ZZ \period
		\end{tikzcd}
	\end{equation}
\end{ntn}

\begin{wrn}
	It seems that this convention for writing $2$-cells is the \textit{opposite} of what's written in some of the $1$-topos theory literature \cites{MR977478}{MR1731050}{MR1787303}, but it agrees with of the algebro-geometric literature \cites[Exposé XIII]{MR0354657}{MR726426}.
	We therefore emphasize that our $2$-morphisms are natural transformations between the \textit{right} adjoints.
\end{wrn}

The oriented fiber product in $ \Cat_{\infty,\updelta_1}$ of a diagram of \topoi does not recover the oriented fiber product in $ \Top_{\infty}$, but rather the \textit{oriented pushout} in $ \Top_{\infty}$. 
We shall also have to contemplate the oriented pushout in \smash{$ \Topbc $}.

\begin{cnstr}[oriented pushout]
	The \category $ \Top_{\infty}$ is \textit{tensored} over the \category $ \Cat_{\infty,\updelta_0}$.
	If $ \WW$ \atopos and $C$ is a $ \updelta_0$-small \category, then the \category $ \Fun(C,\WW)$ is \atopos.
	Moreover, the functor $ \fromto{C}{\Funlowerstar(\WW,\Fun(C,\WW))}$ that carries an object $ c \in C $ to the right adjoint of the functor $ \fromto{\Fun(C,\WW)}{\WW} $ given by evaluation at $ c $ induces an equivalence of \categories
	\begin{equation*}
		\equivto{\Funlowerstar(\Fun(C,\WW),\ZZ)}{\Fun(C,\Funlowerstar(\WW,\ZZ))}
	\end{equation*}
	for any \topos $ \ZZ$.

	Let $ \WW$, $ \ZZ$, and $ \UU$ be \topoi, and let $ \plowerstar \colon\fromto{\WW}{\ZZ}$ and $ \qlowerstar \colon\fromto{\WW}{\UU}$ be geometric morphisms.
	Note that the recollement \smash{$ \ZZ\orientedcup^{\plowerstar \qupperstar}\UU$} is the oriented fiber product \smash{$ \commacat{\ZZ}{\WW}{\UU} $} formed in $ \Cat_{\infty,\updelta_1}$ with respect to the \textit{left} adjoints $ \pupperstar $ and $ \qupperstar $. 
	This \topos enjoys the following universal property: specifying a geometric morphism
	\begin{equation*}
		\omega(f,g,\sigma)_{\ast}\colon\fromto{\ZZ\orientedcup^{\plowerstar \qupperstar}\UU}{\XX}
	\end{equation*}
	is equivalent to specifying an oriented square
	\begin{equation*}
		\begin{tikzcd}
			\WW \arrow[r, "\qlowerstar" above] \arrow[d, "\plowerstar" left] & \UU \arrow[d, "\glowerstar" right] \arrow[dl, phantom, "\scriptstyle \sigma" below right, "\Longleftarrow" sloped] \\ 
			\ZZ \arrow[r, "\flowerstar" below] & \XX \period
		\end{tikzcd}
	\end{equation*}
	This universal property specifies the \topos $ \ZZ\orientedcup^{\plowerstar \qupperstar}\UU$ uniquely.
	We write\index[notation]{ZcupWU@$\ZZ\orientedcup^{\WW}\UU$} 
	\begin{equation*}
		\ZZ\orientedcup^{\WW}\UU\coloneq\ZZ\orientedcup^{\plowerstar \qupperstar}\UU \comma
	\end{equation*}
	and we call the \topos \smash{$ \ZZ\orientedcup^{\WW}\UU $} the \defn{oriented pushout}\index[terminology]{oriented pushout} of $ \plowerstar $ and $ \qlowerstar $.
	In this case, we write
	\begin{equation*}
		\ilowerstar \colon \incto{\ZZ}{\ZZ\orientedcup^{\WW}\UU} \andeq
		\jlowerstar \colon \incto{\UU}{\ZZ\orientedcup^{\WW}\UU}
	\end{equation*}
	for the closed embedding and its open complement, respectively.
\end{cnstr}


\begin{wrn}
	If $ \ZZ$, $ \UU$, and $ \WW$ are all bounded coherent, and if $ \plowerstar $ and $ \qlowerstar $ are both coherent geometric morphisms, \Cref{wrn:boundednessnotpushedout} \& \Cref{wrn:coherencenotpushedout} still apply: we generally cannot ensure that the oriented pushout \smash{$ \ZZ\orientedcup^{\phi}\UU$} is either bounded or coherent (cf. \cite[Exposé VI, \S 4]{MR50:7131}).
\end{wrn}

\begin{cnstr}[bounded coherent oriented pushout]\label{cnstr:bcorientedpushout}
	Consider an oriented square
	\begin{equation*}
		\begin{tikzcd}
			\WW \arrow[r, "\qlowerstar" above] \arrow[d, "\plowerstar" left] & \UU \arrow[d, "\glowerstar" right] \arrow[dl, phantom, "\scriptstyle \sigma" below right, "\Longleftarrow" sloped] \\ 
			\ZZ \arrow[r, "\flowerstar" below] & \XX
		\end{tikzcd}
	\end{equation*}
	where all \topoi are bounded coherent and all geometric morphisms are coherent.
	For any truncated coherent object $G\in\XX$, the object $ \omega(f,g,\sigma)^{\ast}G$ is truncated, and the objects
	\begin{equation*}
		\iupperstar \omega(f,g,\sigma)^{\ast}G\simeq \fupperstar G \andeq \jupperstar \omega(f,g,\sigma)^{\ast}G\simeq \gupperstar G
	\end{equation*}
	are each truncated coherent.
	Hence $ \omega(f,g,\sigma)_{\ast}$ factors through the bounded coherent recollement $ \ZZ\orientedcupbc^{\plowerstar \qupperstar}\UU $ (\Cref{cnstr:orientedfibboundedpretop}) in a unique manner.
	Consequently, we write\index[notation]{ZcupbcWU@$\ZZ\orientedcupbc^{\WW}\UU$}
	\begin{equation*}
		\ZZ\orientedcupbc^{\WW}\UU\coloneq \ZZ \orientedcupbc^{\plowerstar \qupperstar}\UU \comma
	\end{equation*}
	and call this \topos the \defn{bounded coherent oriented pushout}\index[terminology]{oriented pushout!bounded coherent}\index[terminology]{bounded coherent oriented pushout}.
	This is the oriented pushout that is correct in $ \Topbc $.
	Accordingly, one has an equivalence of \pretopoi
	\begin{equation*}
		(\ZZ\orientedcupbc^{\WW}\UU)\cohbdd \simeq \commacat{\ZZcohbdd}{\WWcohbdd}{\UUcohbdd} \period
	\end{equation*}

	Please observe that, by construction, in the square
	\begin{equation*}
		\begin{tikzcd}
			\WW \arrow[r, "\qlowerstar" above] \arrow[d, "\plowerstar" left] & \UU \arrow[d, "\jlowerstar" right, hooked] \arrow[dl, phantom, "\scriptstyle \sigma" below right, "\Longleftarrow" sloped] \\ 
			\ZZ \arrow[r, "\ilowerstar" below, hooked] & \ZZ\orientedcupbc^{\WW}\UU\comma
		\end{tikzcd}
	\end{equation*}
	the natural \textit{\basechange morphism}%
	\index[terminology]{basechange transformation@\basechange transformation}\index[terminology]{transformation@basechange@\basechange}
	\begin{equation*}
		\BC_{\sigma}\colon\fromto{\iupperstar \jlowerstar}{\plowerstar \qupperstar}
	\end{equation*}
	becomes an equivalence after restriction to $ \UUcohbdd$.
	A thorough study of \basechange morphisms will occupy \cref{section:BC}.
\end{cnstr}


\subsection{Internal Homs \& path \texorpdfstring{$ \infty$}{∞}-topoi}\label{subsec:internalhoms}

We now begin to study \textit{oriented fiber products} of \topoi.
Oriented fiber products have the universal property that is dual to that of oriented push\-outs.
In order to define oriented fiber products, we must identify the \textit{cotensor} of $ \Top_{\infty}$ over $ \Cat_{\infty,\updelta_0}$, or at least over $ \Pos$.
Partly in order to define oriented fiber products of \topoi now and partly to define the nerve construction for stratified \topoi later (\Cref{cnstr:nerveofstrattopoi}), we recall some facts about the internal Hom in \topoi.

The first point to be made about the internal Hom is that it doesn't always exist.

\begin{rec}
	Recall \SAG{Theorem}{21.1.6.11} that \atopos $ \WW $ is \emph{exponentiable}\index[terminology]{topos@\topos!exponentiable}\index[terminology]{exponentiable \topos} if and only if the functor $-\times \WW\colon\fromto{\Top_{\infty}}{\Top_{\infty}}$ admits a right adjoint $ \MOR(\WW,-)$.\index[notation]{HOM@$\MOR$}
	If $ \WW$ is exponentiable, then for any \topos $ \ZZ$, we have a natural equivalence
	\begin{equation*}
		\equivto{\Pt(\MOR(\WW,\ZZ))^{\op} \equivalent \Funlowerstar(\Space,\MOR(\WW,\ZZ))}{\Funlowerstar(\WW,\ZZ)}.
	\end{equation*}
	We thus call $ \MOR(\WW,\ZZ)$ the \emph{mapping \topos.}\index[terminology]{topos@\topos!mapping}\index[terminology]{mapping \topos}
	Any compactly generated \topos is exponentiable, and exponentiable \topoi admit several useful characterizations; see \SAG{Theorem}{21.1.6.12}.
\end{rec}

\begin{exm}
	Let $ S $ be spectral topological space $S$.
	Then $ \Stilde $ is compactly generated \cites[\HTTthm{Proposition}{6.5.4.4}]{HTT}[\SAGthm{Proposition}{21.1.7.8}]{SAG}, so for any \topos $ \ZZ$, there exists a mapping \topos $ \MOR(\Stilde,\ZZ)$.
	Each point $ s \in S$ induces a geometric morphism
	\begin{equation*}
		\fromto{\MOR(\Stilde,\ZZ)}{\MOR(\widetilde{s},\ZZ)\simeq\ZZ} \period
	\end{equation*}
\end{exm}

\begin{exm}\label{exm:MorPtilde}
	If $P$ is a finite poset, then the functor $ \MOR(\Ptilde,-) \colon \fromto{\Top_{\infty}}{\Top_{\infty}} $ can be identified with the unique limit-preserving endofunctor of $ \Top_{\infty}$ such that, for any small \category $C$, the natural functor
	\begin{equation*}
		\fromto{\MOR(\Ptilde,\Fun(C,\Space))}{\Fun(\Fun(P,C),\Space)}
	\end{equation*}
	is an equivalence.
	In particular, if $P$ and $Q$ are finite posets, then
	\begin{equation*}
		\MOR(\Ptilde,\widetilde{Q})\simeq\widetilde{\Fun(P,Q)} \period
	\end{equation*}
\end{exm}

\begin{dfn}[{\SAG{Definition}{21.3.2.3}}]
	For any \topos $ \XX$, we call the \topos $ \MOR(\widetilde{[1]},\XX)$ the \defn{path \topos}\index[terminology]{path!topos@\topos}\index[terminology]{topos@\topos!path} of $ \XX$.
	We write
	\begin{equation*}
		\Path{\XX} \coloneq \MOR(\widetilde{[1]},\XX) \index[notation]{Path@$ \Path{\XX} $} \period
	\end{equation*}
\end{dfn}

\begin{exm}
	As a special case of \Cref{exm:MorPtilde}, for any $ \updelta_0 $-small \category $C$, there is a natural equivalence
	\begin{equation*}
		\Path{\Fun(C,\Space)} \equivalent \Fun(\Fun([1],C),\Space) \period
	\end{equation*}
\end{exm}

We often make use of the fact that the path \topos of an $ n $-localic \topos is $ n $-localic:

\begin{lem}\label{prop:pathnlocalic}
	Let $ n \in \NNup $, and let $ \ZZ $ be an $ n $-localic \topos.
	Then the path \topos $ \Path{\ZZ} $ is $ n $-localic.
\end{lem}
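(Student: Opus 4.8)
The statement to prove is that for an $n$-localic \topos $\ZZ$, the path \topos $\Path{\ZZ} = \MOR(\widetilde{[1]},\ZZ)$ is again $n$-localic. The natural strategy is to exploit the defining property of $n$-localic \topoi from \Cref{cnstr:localictopoi}: a \topos $\XX$ is $n$-localic if and only if for every \topos $\YY$ the truncation functor induces an equivalence $\fromto{\Fun_{\ast}(\YY,\XX)}{\Fun_{\ast}(\tau_{\leq n-1}\YY,\tau_{\leq n-1}\XX)}$. So I would fix an arbitrary \topos $\YY$ and analyse $\Fun_{\ast}(\YY,\Path{\ZZ})$.

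First I would unwind the mapping-\topos adjunction: since $\widetilde{[1]}$ is exponentiable (it is $\widetilde{P}$ for a finite poset, hence compactly generated), one has a natural equivalence $\Fun_{\ast}(\YY,\MOR(\widetilde{[1]},\ZZ)) \equivalent \Fun_{\ast}(\YY\times\widetilde{[1]},\ZZ)$, where the product is in $\Top_{\infty}$. Now, crucially, $\YY\times\widetilde{[1]}$ can be described as a recollement (it carries a geometric morphism to $\widetilde{[1]}$ whose closed and open pieces are both $\YY$, glued by the identity functor — this is \Cref{nul:recollement1-strat}), so geometric morphisms out of it are exactly oriented squares: a geometric morphism $\fromto{\YY\times\widetilde{[1]}}{\ZZ}$ is the data of two geometric morphisms $\YY\to\ZZ$ together with a natural transformation between them. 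That is, $\Fun_{\ast}(\YY\times\widetilde{[1]},\ZZ) \equivalent \Fun(\Delta^1,\Fun_{\ast}(\YY,\ZZ))$, using the tensoring of $\Top_{\infty}$ over $\Cat_{\infty,\delta_0}$ recalled in \Cref{cnstr:orientedfibboundedpretop}'s neighbourhood (the tensor–cotensor adjunction $\Fun_{\ast}(\Fun(C,\WW),\ZZ)\equivalent\Fun(C,\Fun_{\ast}(\WW,\ZZ))$ with $C=\Delta^1$, $\WW=\YY$). Combining, $\Fun_{\ast}(\YY,\Path{\ZZ}) \equivalent \Fun(\Delta^1,\Fun_{\ast}(\YY,\ZZ))$.

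With this identification in hand, the $n$-localic property of $\ZZ$ gives, for every $\YY$, an equivalence $\equivto{\Fun_{\ast}(\YY,\ZZ)}{\Fun_{\ast}(\tau_{\leq n-1}\YY,\tau_{\leq n-1}\ZZ)}$; applying $\Fun(\Delta^1,-)$ to both sides (which preserves equivalences of \categories) yields
\begin{equation*}
	\Fun_{\ast}(\YY,\Path{\ZZ}) \equivalent \Fun(\Delta^1,\Fun_{\ast}(\YY,\ZZ)) \equivalent \Fun(\Delta^1,\Fun_{\ast}(\tau_{\leq n-1}\YY,\tau_{\leq n-1}\ZZ)) \equivalent \Fun_{\ast}(\tau_{\leq n-1}\YY,\Path{\tau_{\leq n-1}\ZZ}) \period
\end{equation*}
To conclude that $\Path{\ZZ}$ is $n$-localic I still need to know that $\Path{\tau_{\leq n-1}\ZZ} \equivalent \tau_{\leq n-1}\Path{\ZZ}$, i.e.\ that forming the path \topos commutes with $n$-localic reflection — equivalently, that $\Path{\ZZ}$ for an $n$-topos $\ZZ$ is itself an $n$-topos (when we work inside $\Top_n$), and that this matches the $(n-1)$-truncation of $\Path{\ZZ}$ in the $n$-localic world. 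This last compatibility is the step I expect to be the main obstacle: I would establish it either by checking directly that $\MOR(\widetilde{[1]},-)$ restricted to $n$-topoi agrees with the mapping-$n$-topos (using that $\MOR(\widetilde{[1]},\Fun(C,\Space))\equivalent\Fun(\Fun([1],C),\Space)$ from the displayed example, so for an $n$-localic \topos $\Sh_{\tau}(X)$ with $X$ having finite limits one gets the sheaves on a suitable $n$-site), or, more cleanly, by invoking that $\MOR(\widetilde{[1]},-)$ preserves inverse limits together with the fact from \Cref{cnstr:localictopoi} that $n$-localic \topoi are exactly the localic limits — so $\Path{-}$ preserves $n$-localicity because it commutes with the presentation $\ZZ\equivalent\lim_n L_n(\ZZ)$ and with each truncation. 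I would then package these to get $\tau_{\leq n-1}\Path{\ZZ}\equivalent\Path{\tau_{\leq n-1}\ZZ}$, completing the verification of the universal characterisation of $n$-localic \topoi for $\Path{\ZZ}$.
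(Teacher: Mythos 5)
Your route is necessarily different from the paper's, which disposes of the lemma in one line by citing \SAG{Lemma}{21.1.7.3}; so the question is whether your self-contained argument actually closes. The first two-thirds of it do: the identification $\Fun_{\ast}(\YY,\Path{\ZZ}) \equivalent \Fun(\Delta^1,\Fun_{\ast}(\YY,\ZZ))$ is correct (the paper itself uses exactly this in the proof of \Cref{lem:slicePath}), and applying the $n$-localicity of $\ZZ$ inside $\Fun(\Delta^1,-)$ is fine. But you have correctly located, and not actually discharged, the step that carries all the content: the commutation $\tau_{\leq n-1}\Path{\ZZ} \equivalent \Path{\tau_{\leq n-1}\ZZ}$, equivalently the identification of $\Fun(\Delta^1,\Fun_{\ast}(\tau_{\leq n-1}\YY,\tau_{\leq n-1}\ZZ))$ with $\Fun_{\ast}(\tau_{\leq n-1}\YY,\tau_{\leq n-1}\Path{\ZZ})$. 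As written, the proof is incomplete at precisely this point.

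Of your two proposed completions, the first works and the second does not. The site-theoretic route is sound and in fact supersedes the rest of your argument: \acategory-level characterisation (see the discussion following \Cref{cnstr:localictopoi}) says an \topos is $n$-localic if and only if it is $\Sh_{\tau}(X)$ for an $n$-site $(X,\tau)$ with all finite limits, and by \Cref{exm:pathtopossite} one has $\Path{\Sh_{\tau}(X)} \equivalent \Sh_{\tau'}(\Fun(\Delta^{1,\op},X))$; since $\Fun(\Delta^{1,\op},X)$ is again an $n$-category with finite limits (both mapping spaces and limits being computed pointwise), $\Path{\ZZ}$ is $n$-localic with no further bookkeeping. The second route rests on a false premise: it is the \emph{bounded} \topoi (\Cref{cnstr:boundedtopoi}), not the $n$-localic ones, that are characterised as inverse limits of localic \topoi, and for an $n$-localic $\ZZ$ the tower $\lim_{k} L_k(\ZZ)$ is constant from stage $n$ onward, so the fact that $\MOR(\widetilde{[1]},-)$ preserves inverse limits tells you nothing about commuting $\Path{-}$ past a single truncation $\tau_{\leq n-1}$. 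I would either cite \SAG{Lemma}{21.1.7.3} as the paper does, or run the site-theoretic argument on its own.
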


\begin{proof}
	This is a special case of \SAG{Lemma}{21.1.7.3}.
\end{proof}

\subsection{Oriented fiber products}\label{subsec:OFP}

We are now ready to construct the oriented fiber product of \topoi and to relate it to the classical oriented fiber product of $ 1 $-topoi (\Cref{lem:orientedproddiscrete}).

\begin{dfn}\label{cnstr:orientedfibprod}
	If $ \flowerstar \colon \fromto{\XX}{\ZZ} $ and $ \glowerstar\colon\fromto{\YY}{\ZZ}$ are two geometric morphisms of \topoi, then the \emph{oriented fiber product}\index[terminology]{oriented fiber product!of topoi@of \topoi}\index[terminology]{topos@\topos!oriented fiber product}\index[notation]{XtimesYZ@$ \XX\orientedtimes_{\ZZ}\YY$} is the pullback
	\begin{equation*}
		\XX\orientedtimes_{\ZZ} \YY \coloneq \XX \crosslimits_{\MOR(\widetilde{\{0\}},\ZZ)} \MOR(\widetilde{[1]},\ZZ) \crosslimits_{\MOR(\widetilde{\{1\}},\ZZ)} \YY
	\end{equation*}
	in $ \Top_{\infty}$.
	We write $ \pr_{1,\ast} \colon \fromto{\XX\orientedtimes_{\ZZ}\YY}{\XX} $ and $ \pr_{2,\ast} \colon \fromto{\XX\orientedtimes_{\ZZ}\YY}{\YY} $ for the natural geometric morphisms.

	Thus a geometric morphism
	\begin{equation*}
		\psi(p,q,\sigma)_{\ast} \colon \fromto{\WW}{\XX\orientedtimes_{\ZZ}\YY}
	\end{equation*}
	determines and is determined by a square \eqref{square:laxsquare}.
	This universal property specifies the \topos $ \XX\orientedtimes_{\ZZ}\YY$ uniquely.
\end{dfn}

\begin{wrn}
	Please note that the oriented fiber product in $ \Top_{\infty} $ is \emph{not} the oriented/lax pullback in $ \Cat_{\infty,\updelta_1}$; we will therefore take pains to express clearly where the oriented fiber product is taking place.

	Additionally, in this paper, the symbol `$ \orientedtimes$' is only ever used for the oriented fiber product in $ \Top_{\infty}$;
	conversely, we only use the notation $ \commacat{X}{Z}{Y}$ for the oriented fiber product in some $ \Cat_{\infty,\delta}$ (see \Cref{nul:orientedfpinCat}).
\end{wrn}

\begin{nul}\label{nul:orientedcommuteswithlimits}
	Please observe that since the exponential functor $ \Path{-} \colon \fromto{\Top_{\infty}}{\Top_{\infty}} $ is a right adjoint and limits in $ \Fun(\horn{2}{2},\Top_{\infty}) $ are computed pointwise, the functor
	\begin{equation*}
		\fromto{\Fun(\horn{2}{2},\Top_{\infty})}{\Top_{\infty}}
	\end{equation*}
	given by the formation of the oriented fiber product preserves limits.
\end{nul}

\begin{exm} 
	When $ \ZZ = \Space $ is the terminal \topos, the oriented fiber product reduces to the product in $ \Top_{\infty}$:
	\begin{equation*}
		\XX\orientedtimes_{\Space}\YY\simeq\XX\times\YY\period
	\end{equation*}
\end{exm}

\begin{nul}\label{nul:projectionsasglobalsec}
	Let $ \flowerstar \colon \fromto{\XX}{\ZZ} $ and $ \glowerstar \colon \fromto{\YY}{\ZZ} $ be geometric morphisms of \topoi. Then under the identifications $ \XX \equivalent \orientedpull{\XX}{\Space}{\Space} $ and $ \YY \equivalent \orientedpull{\Space}{\Space}{\YY} $, the projections
	\begin{equation*}
		\pr_{1,\ast} \colon \fromto{\orientedpull{\XX}{\ZZ}{\YY}}{\XX} \andeq \pr_{2,\ast} \colon \fromto{\orientedpull{\XX}{\ZZ}{\YY}}{\YY}
	\end{equation*}
	are equivalent to $ \orientedpull{\id_{\XX}}{\Gammaup_{\ZZ,\ast}}{\Gammaup_{\YY,\ast}} $ and $ \orientedpull{\Gammaup_{\XX,\ast}}{\Gammaup_{\ZZ,\ast}}{\id_{\YY}} $, respectively (\Cref{ntn:globalsecpoints}).
\end{nul}

\begin{exm}
	For any \topos $ \XX$, the oriented fiber product $ \XX\orientedtimes_{\XX}\XX$ is canonically identified with the path \topos $ \Path{\XX} $.
\end{exm}

The next thing to notice is that the points of an oriented fiber product of \topoi are the oriented fiber product of the corresponding \categories of points.

\begin{nul}\label{nul:Funstarpreservesorientedprod}
	For any \topos $ \EE $, the functor
	\begin{equation*}
		\Funlowerstar(\EE,-)^{\op} \colon \fromto{\Top_{\infty}}{\Cat_{\infty,\updelta_1}}
	\end{equation*}
	commutes with cotensors with $ \Cat_{\infty,\updelta_1} $ (in particular, cotensoring with $ [1] $) and pullbacks of \topoi.
	Hence $ \Funlowerstar(\EE,-)^{\op} $ carries oriented fiber products in $ \Top_{\infty} $ to oriented fiber products in $ \Cat_{\infty,\updelta_1} $.

	Specalising to the case $ \EE = \Space $, we deduce the following.
\end{nul}

\begin{lem}\label{lem:Ptpreservesorientedprod}
	The functor $ \Pt \colon \fromto{\Top_{\infty}}{\Cat_{\infty,\updelta_1}} $ carries oriented fiber products in $ \Top_{\infty} $ to oriented fiber products in $ \Cat_{\infty,\updelta_1} $.
	That is, if $ \flowerstar \colon \fromto{\XX}{\ZZ} $ and $ \glowerstar \colon \fromto{\YY}{\ZZ} $ are geometric morphisms of \topoi, then the natural functor
	\begin{equation*}
		\Pt(\XX\orientedtimes_{\ZZ}\YY)\to\commacat{\Pt(\XX)}{\Pt(\ZZ)}{\Pt(\YY)}
	\end{equation*}
	is an equivalence.
\end{lem}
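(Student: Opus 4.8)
The statement follows immediately by combining \Cref{nul:Funstarpreservesorientedprod} with the definition of $\Pt$. First I would recall that $\Pt(\XX) = \Funlowerstar(\Space,\XX)^{\op}$ by \Cref{ntn:globalsecpoints}, so that $\Pt = \Funlowerstar(\Space,-)^{\op}$ as a functor $\fromto{\Top_{\infty}}{\Cat_{\infty,\delta_1}}$. The content of \Cref{nul:Funstarpreservesorientedprod} is that for any \topos $\EE$, the functor $\Funlowerstar(\EE,-)^{\op}$ preserves pullbacks of \topoi and commutes with cotensoring by $\Delta^1$ (indeed by any \category in $\Cat_{\infty,\delta_1}$). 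Specialising $\EE = \Space$ then gives the result once one checks that the oriented fibre product is built out of exactly these two operations.

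The key step is therefore to unwind \Cref{cnstr:orientedfibprod}: the oriented fibre product $\XX\orientedtimes_{\ZZ}\YY$ is defined as the iterated pullback
\begin{equation*}
	\XX\times_{\MOR(\widetilde{\{0\}},\ZZ)}\MOR(\widetilde{[1]},\ZZ)\times_{\MOR(\widetilde{\{1\}},\ZZ)}\YY
\end{equation*}
in $\Top_{\infty}$, and the path \topos $\Path{\ZZ} = \MOR(\widetilde{[1]},\ZZ)$ is precisely the cotensor of $\ZZ$ by $\widetilde{[1]}$ — which, via the tensor–cotensor adjunction recorded in the construction of oriented pushouts and the fact that $\Funlowerstar(\widetilde{P},-)$ for $P$ a finite poset corresponds to cotensoring mapping spaces by $\Fun(P,-)$, amounts to cotensoring by $\Delta^1$. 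Applying $\Funlowerstar(\Space,-)^{\op}$ to this diagram, \Cref{nul:Funstarpreservesorientedprod} carries the pullbacks to pullbacks in $\Cat_{\infty,\delta_1}$ and carries $\Funlowerstar(\Space,\Path{\ZZ})^{\op}$ to the cotensor $\Fun(\Delta^1,\Pt(\ZZ))$. Since the oriented fibre product of \categories $\commacat{\Pt(\XX)}{\Pt(\ZZ)}{\Pt(\YY)}$ is by \Cref{nul:orientedfpinCat} the analogous iterated pullback $\Pt(\XX)\times_{\Fun(\{0\},\Pt(\ZZ))}\Fun(\Delta^1,\Pt(\ZZ))\times_{\Fun(\{1\},\Pt(\ZZ))}\Pt(\YY)$, the two iterated pullbacks are identified term by term, and the comparison functor is an equivalence.

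The only genuinely substantive point — and the one I would isolate as the main obstacle — is making precise the identification $\Funlowerstar(\Space,\MOR(\widetilde{[1]},\ZZ))^{\op} \simeq \Fun(\Delta^1,\Pt(\ZZ))$, i.e. that $\Pt(\Path{\ZZ})\simeq\Fun(\Delta^1,\Pt(\ZZ))$. This is exactly the assertion that $\Pt$ commutes with the relevant cotensor; it is a special case of \Cref{nul:Funstarpreservesorientedprod} (with $\EE = \Space$), since $\MOR(\widetilde{[1]},-)$ is the cotensor of $\Top_{\infty}$ over $\Cat_{\infty,\delta_0}$ by the object $[1]$ — this is the content of the internal-hom discussion and \Cref{exm:pathtopossite}, together with the defining equivalence $\equivto{\Funlowerstar(\Fun(C,\WW),\ZZ)}{\Fun(C,\Funlowerstar(\WW,\ZZ))}$ recorded in the oriented pushout construction (applied with $\WW = \Space$). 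Granting this, the rest is a formal bookkeeping argument with iterated pullbacks. In the write-up I would simply say: apply $\Funlowerstar(\Space,-)^{\op}$ to the defining pullback diagram for $\XX\orientedtimes_{\ZZ}\YY$, invoke \Cref{nul:Funstarpreservesorientedprod}, and compare with the defining pullback diagram for $\commacat{\Pt(\XX)}{\Pt(\ZZ)}{\Pt(\YY)}$ in \Cref{nul:orientedfpinCat}.
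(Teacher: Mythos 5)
Your proposal is correct and is essentially identical to the paper's own argument: the paper proves this lemma simply by specialising \Cref{nul:Funstarpreservesorientedprod} to $\EE = \Space$, exactly as you do, with your write-up merely unwinding the iterated-pullback bookkeeping and the cotensor identification $\Pt(\Path{\ZZ})\simeq\Fun(\Delta^1,\Pt(\ZZ))$ that the paper leaves implicit.
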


\begin{exm}
	There is a canonical geometric morphism
	\begin{equation*}
		\psi(\pr_1,\pr_2,\id)_{\ast}\colon\fromto{\XX\times_{\ZZ}\YY}{\XX\orientedtimes_{\ZZ}\YY}\period
	\end{equation*}
\end{exm}

\begin{exm}\label{exm:evanescent}
	The \topos $ \XX\orientedtimes_{\ZZ}\ZZ$ is called the \emph{evanescent}%
	\index[terminology]{topos@\topos!evanescent}\index[terminology]{evanescent \topos}
	(or \emph{vanishing})%
	\index[terminology]{topos@\topos!vanishing}\index[terminology]{vanishing \topos}
	\topos of $ \flowerstar $, and the natural functor
	\begin{equation*}
		\near_{f,\ast}\coloneq\psi(\id_{\XX},f,\id)_{\ast}\colon\fromto{\XX}{\XX\orientedtimes_{\ZZ}\ZZ}
	\end{equation*}
	is called the \defn{nearby cycles functor}.\index[terminology]{nearby cycles}\index[terminology]{functor!nearby cycles}
	Dually, the \topos $ \ZZ\orientedtimes_{\ZZ}\YY$ is called the \defn{coëvanescent}%
	\emph{coëvanescent}\index[terminology]{topos@\topos!coevanescent@coëvanescent}\index[terminology]{coevanescent@coëvanescent \topos} (or \defn{covanishing})%
	\emph{covanishing}\index[terminology]{topos@\topos!covanishing}\index[terminology]{covanishing \topos}
	\topos of $ \glowerstar$, and the natural functor
	\begin{equation*}
		\conearlowerstar^{g}\coloneq\psi(g,\id_{\YY},\id)_{\ast}\colon\fromto{\YY}{\ZZ\orientedtimes_{\ZZ}\YY}
	\end{equation*}
	is called the \defn{conearby cycles functor}.\index[terminology]{conearby cycles}\index[terminology]{functor!conearby cycles}
	 
	The oriented fiber product can be decomposed into fiber products in $ \Top_{\infty}$ involving the evanescent and coëvanescent \topoi as follows: we have equivalences
	\begin{equation*}
		\XX\orientedtimes_{\ZZ}\YY\simeq(\XX\orientedtimes_{\ZZ}\ZZ)\times_{\ZZ}\YY \andeq \XX\orientedtimes_{\ZZ}\YY\simeq\XX\times_{\ZZ}(\ZZ\orientedtimes_{\ZZ}\YY) \comma
	\end{equation*}
	and, more symmetrically,
	\begin{equation*}
		\XX \orientedtimes_{\ZZ} \YY \simeq(\XX\orientedtimes_{\ZZ}\ZZ) \crosslimits_{\Path{\ZZ}}(\ZZ\orientedtimes_{\ZZ}\YY) \period
	\end{equation*}
\end{exm}

\begin{exm}\label{exm:oriendedproduptoloc}
	Keep the notations of \Cref{cnstr:orientedfibprod}, and let $ \plowerstar \colon \incto{\ZZ}{\ZZ'}$ be a fully faithful geometric morphism.
	Then $ \plowerstar $ induces an equivalence of \topoi
	\begin{equation*}
		\equivto{\XX\orientedtimes_{\ZZ}\YY}{\XX\orientedtimes_{\ZZ'}\YY} \period
	\end{equation*}
	To see this, simply note that $ \XX\orientedtimes_{\ZZ}\YY $ and $ \XX\orientedtimes_{\ZZ'}\YY $ have the same universal property since $ \plowerstar $ is fully faithful.
	Hence for the purpose of computing oriented fiber products, we may assume that $ \ZZ$ is a presheaf \topos.
\end{exm}

We're mostly interested in working with $ 1 $-localic \topoi or more generally bounded \topoi.
The following lemma says that taking oriented fiber products doesn't take us out of these subcategories of all \topoi.

\begin{lem}\label{cor:orientedprodnlocalic}
	Let $ \flowerstar \colon \fromto{\XX}{\ZZ} $ and $ \glowerstar \colon \fromto{\YY}{\ZZ} $ be geometric morphisms of \topoi.
	If $ \XX$, $ \YY$, and $ \ZZ$ are $n$-localic (\Cref{cnstr:localictopoi}), so is the oriented fiber product $ \orientedpull{\XX}{\ZZ}{\YY} $.
	Moreover, if $ \XX$, $ \YY$, and $ \ZZ$ are bounded (\Cref{cnstr:boundedtopoi}), so is the oriented fiber product $ \orientedpull{\XX}{\ZZ}{\YY} $.
\end{lem}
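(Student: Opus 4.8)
The plan is to realise the oriented fibre product as a finite limit in $\Top_{\infty}$ of a diagram all of whose vertices are already known to be $n$-localic (respectively bounded), and then to invoke the fact that both classes of \topoi are closed under limits in $\Top_{\infty}$.

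First I would unwind \Cref{cnstr:orientedfibprod}: the \topos $\orientedpull{\XX}{\ZZ}{\YY}$ is the limit in $\Top_{\infty}$ of the diagram
\begin{equation*}
	\XX\longrightarrow\MOR(\widetilde{\{0\}},\ZZ)\longleftarrow\MOR(\widetilde{[1]},\ZZ)\longrightarrow\MOR(\widetilde{\{1\}},\ZZ)\longleftarrow\YY \comma
\end{equation*}
in which the two maps out of $\Path{\ZZ}=\MOR(\widetilde{[1]},\ZZ)$ are restriction along $\incto{\widetilde{\{0\}}}{\widetilde{[1]}}$ and $\incto{\widetilde{\{1\}}}{\widetilde{[1]}}$, and $\MOR(\widetilde{\{0\}},\ZZ)\equivalent\ZZ\equivalent\MOR(\widetilde{\{1\}},\ZZ)$. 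So the vertices of this diagram are $\XX$, $\YY$, two copies of $\ZZ$, and $\Path{\ZZ}$. In the $n$-localic case I would then observe that $\XX$, $\YY$ and $\ZZ$ are $n$-localic by hypothesis and that $\Path{\ZZ}$ is $n$-localic by \Cref{prop:pathnlocalic}; since $n$-localic \topoi are closed under limits in $\Top_{\infty}$, the oriented fibre product is $n$-localic.

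For the bounded case the argument is the same once I know that $\Path{\ZZ}$ is bounded whenever $\ZZ$ is; this is the one point that needs a separate step. Here I would use that $\widetilde{[1]}$ is a finite poset, so $\Path{-}=\MOR(\widetilde{[1]},-)$ preserves limits, and write $\ZZ\equivalent\lim_{n\in\NN^{\op}}L_n(\ZZ)$ as the inverse limit of its $n$-localic reflections (\Cref{cnstr:boundedtopoi}); then $\Path{\ZZ}\equivalent\lim_{n\in\NN^{\op}}\Path{L_n(\ZZ)}$ is an inverse limit of localic \topoi by \Cref{prop:pathnlocalic}, hence bounded. With all five vertices of the displayed diagram bounded, closure of bounded \topoi under limits in $\Top_{\infty}$ finishes the proof. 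I do not expect any genuine obstacle; the only thing requiring care is checking that $\Path{-}$ preserves the relevant limits — which is immediate from exponentiability of $\widetilde{[1]}$ — and bookkeeping of which copy of $\ZZ$ is pulled back along which morphism.
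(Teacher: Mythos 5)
Your proof is correct and follows essentially the same route as the paper: both cases reduce to exhibiting the oriented fibre product as a limit of a diagram controlled by $\Path{\ZZ}$, invoking \Cref{prop:pathnlocalic} and closure of the relevant class under limits. The only (immaterial) difference is in the bounded case, where you verify directly that $\Path{\ZZ}\equivalent\lim_{n}\Path{L_n(\ZZ)}$ is bounded, whereas the paper commutes the whole oriented fibre product past the inverse limit of $n$-localic reflections (\Cref{nul:orientedcommuteswithlimits}) and quotes the first assertion.
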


\begin{proof}
	For the first assertion, by \Cref{prop:pathnlocalic} the oriented fiber product is a limit of $ n $-localic \topoi, hence $n$-localic.
	The second claim follows from the fact that formation of the oriented fiber product preserves limits \Cref{nul:orientedcommuteswithlimits}.
\end{proof}

\noindent The $ 1 $-toposic oriented fiber product \cites{Illusie:vanishingcycles}{MR3309086}{MR726426}{MR1731050}{MR2249998} is related to the oriented fiber product of corresponding $ 1 $-localic \topoi via the following easy result.

\begin{lem}\label{lem:orientedproddiscrete}
	Let $ \flowerstar \colon \fromto{\XX}{\ZZ} $ and $ \glowerstar \colon \fromto{\YY}{\ZZ} $ be geometric morphisms of $ 1 $-topoi, and write $ \XX' $, $ \YY' $, and $ \ZZ' $ for the corresponding $ 1 $-localic \topoi associated to $ \XX $, $ \YY $, and $ \ZZ $, respectively.
	Then the oriented fiber product of $ 1 $-topoi $ \orientedpull{\XX}{\ZZ}{\YY} $ is canonically equivalent to the $ 1 $-topos of $ 0 $-truncated objects of $ \orientedpull{\XX'}{\ZZ'}{\YY'} $.
\end{lem}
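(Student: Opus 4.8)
The plan is to determine the $ 1 $-topos $ \tau_{\leq 0}(\orientedpull{\XX'}{\ZZ'}{\YY'}) $ of $ 0 $-truncated objects by a universal property and to match that universal property with the one characterising the classical $ 1 $-toposic oriented fibre product. First I would observe that $ \orientedpull{\XX'}{\ZZ'}{\YY'} $ is again $ 1 $-localic: since $ \XX' $, $ \YY' $, and $ \ZZ' $ are $ 1 $-localic by hypothesis, this is immediate from \Cref{cor:orientedprodnlocalic}. Because $ \tau_{\leq 0} \colon \fromto{\Top_{\infty}^1}{\Top_1} $ is an equivalence of \categories (\Cref{cnstr:localictopoi}; see also \Cref{prop:coherent1localic}) with inverse $ \goesto{\WW}{\WW'} $, it therefore suffices to exhibit $ \tau_{\leq 0}(\orientedpull{\XX'}{\ZZ'}{\YY'}) $ as an oriented fibre product of $ \XX $, $ \YY $, $ \ZZ $ in $ \Top_1 $, i.e. to prove that for every $ 1 $-topos $ \WW $ the category of geometric morphisms $ \fromto{\WW}{\tau_{\leq 0}(\orientedpull{\XX'}{\ZZ'}{\YY'})} $ is naturally equivalent to the category of oriented squares of $ 1 $-topoi with corners $ \WW $, $ \XX $, $ \YY $, $ \ZZ $ over $ \flowerstar $ and $ \glowerstar $; this is exactly the universal property of the oriented fibre product in the double category of $ 1 $-topoi constructed by Giraud \cite{MR0344253} and Deligne \cite{MR726426} (see also \cite{MR3309086}).

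To verify this, fix a $ 1 $-topos $ \WW $ and let $ \WW' $ be its associated $ 1 $-localic \topos, so that $ \WW = \tau_{\leq 0}\WW' $. The defining property of $ 1 $-localic \topoi applied to $ \orientedpull{\XX'}{\ZZ'}{\YY'} $ gives a natural equivalence
\begin{equation*}
	\Funlowerstar(\WW,\tau_{\leq 0}(\orientedpull{\XX'}{\ZZ'}{\YY'})) \equivalent \Funlowerstar(\WW',\orientedpull{\XX'}{\ZZ'}{\YY'}) \period
\end{equation*}
By \Cref{nul:Funstarpreservesorientedprod} the functor $ \Funlowerstar(\WW',-)^{\op} $ carries the oriented fibre product $ \orientedpull{\XX'}{\ZZ'}{\YY'} $ in $ \Top_{\infty} $ to the oriented fibre product in $ \Cat_{\infty,\delta_1} $ of the diagram formed by $ \Funlowerstar(\WW',\XX')^{\op} $ and $ \Funlowerstar(\WW',\YY')^{\op} $ over $ \Funlowerstar(\WW',\ZZ')^{\op} $. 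Applying the $ 1 $-localic property once more -- now to each of $ \XX' $, $ \YY' $, $ \ZZ' $ -- identifies $ \Funlowerstar(\WW',\XX') \equivalent \Funlowerstar(\WW,\XX) $, and similarly for $ \YY $ and $ \ZZ $, where these are the categories of geometric morphisms of $ 1 $-topoi. Unwinding the opposite categories, and bearing in mind that by our convention a $ 2 $-cell of an oriented square is a natural transformation between the \emph{right} adjoints (as in \Cref{square:laxsquare}), we conclude that $ \Funlowerstar(\WW,\tau_{\leq 0}(\orientedpull{\XX'}{\ZZ'}{\YY'})) $ is naturally the category of oriented squares of $ 1 $-topoi over $ \flowerstar $ and $ \glowerstar $, and the comparison with the Giraud--Deligne universal property then yields the desired canonical equivalence.

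The one genuinely delicate point -- the step I would work through most carefully -- is this last bit of $ 2 $-categorical bookkeeping: one has to be sure that, after truncation and the several passages to opposite categories, the notion of oriented square produced by the universal property of \Cref{cnstr:orientedfibprod} really does coincide with the classical lax-commutative square of $ 1 $-topoi (with the orientation convention of \Cref{square:laxsquare}), and that the Giraud--Deligne oriented fibre product is characterised by this property rather than merely constructed from a presenting site.

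Should the universal-property argument prove awkward, there is an alternative route: compare presenting sites directly, using the explicit site for the path \topos in \Cref{exm:pathtopossite} (hence for $ \orientedpull{\XX'}{\ZZ'}{\YY'} $, decomposed via the evanescent \topoi as in \Cref{exm:evanescent}) against Deligne's explicit site for $ \commacat{\XX}{\ZZ}{\YY} $, and then observe that both sites present the same $ 1 $-topos of $ 0 $-truncated objects. I nonetheless expect the universal-property route to be the cleaner of the two.
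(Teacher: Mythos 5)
Your argument is correct, but it takes a genuinely different route from the paper's. The paper's proof is a two-line observation: the equivalence $ \tau_{\leq 0} \colon \equivto{\Top_{\infty}^1}{\Top_1} $ of \Cref{cnstr:localictopoi} respects cotensoring by $ \Delta^1 $ --- which is checked directly on the explicit generating site for the path \topos of \Cref{exm:pathtopossite} --- and, being an equivalence, it also respects the pullbacks out of which the oriented fibre product is assembled in \Cref{cnstr:orientedfibprod}; since the $ 1 $-toposic oriented fibre product is given by the same formula in $ \Top_1 $, the two constructions correspond under $ \tau_{\leq 0} $. That is essentially the ``alternative route'' you sketch in your last paragraph. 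Your primary route instead corepresents both sides: you use \Cref{cor:orientedprodnlocalic}, the defining property of $ 1 $-localic \topoi, and \Cref{nul:Funstarpreservesorientedprod} to identify $ \Funlowerstar(\WW,\tau_{\leq 0}(\orientedpull{\XX'}{\ZZ'}{\YY'})) $ with the category of oriented squares of $ 1 $-topoi, and then invoke the Giraud--Deligne $ 2 $-universal property. This is valid and arguably more robust on the \categorical side (no site manipulations, and the mapping categories between $ 1 $-topoi are genuine $ 1 $-categories, so the coherence issues you worry about are mild), but it shifts the burden onto the one point you rightly flag: that the classical oriented product of $ 1 $-topoi is \emph{characterised} by this universal property rather than merely constructed from a site. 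That fact is in the literature (e.g.\ Exposé XI of \cite{MR3309086}), but it does need to be cited, together with a check that the orientation convention there matches the one for the right adjoints used here; the paper's proof sidesteps this entirely by comparing the two constructions rather than their universal properties.
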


\begin{proof}
	As a consequence of \Cref{cor:orientedprodnlocalic}, the equivalence of \categories
	\begin{equation*}
		(-)_{\leq 0} \colon \equivto{\Top_{\infty}^1}{\Top_1}
	\end{equation*}
	from $ 1 $-localic \topoi to $ 1 $-topoi (\Cref{cnstr:localictopoi}) respects cotensors by the $ 1 $-category $ [1] $.
	The claim now follows from the definitions of the oriented fiber product in the setting of \topoi and $ 1 $-topoi. 
\end{proof}

\subsection{Generating \texorpdfstring{$ \infty$}{∞}-sites for oriented fiber products}\label{subsec:generatingsites}

We now describe a generating \site for the oriented fiber product in the setting of sheaf \topoi.
This description is adapted from Deligne's site-theoretic description \cites[Exposé XI, \S1]{MR3309086}[3.1.3]{MR726426}.
We employ this generating \site to deduce that the oriented fiber product of bounded coherent \topoi and coherent geometric morphisms is bounded coherent (\Cref{lem:orientedcoherent}).

We begin with oriented fiber products of presheaf \topoi.
To do this, we first need to identify the oriented pushout in the \category of \categories with finite limits and left exact functors between them (\Cref{lem:Wsareorientedpushouts}).
The notation below will follow us throughout this section.

\begin{cnstr}\label{exm:presheaforientedfib} 
	Let $X$, $Y$, and $Z$ be $ \updelta_0$-small \categories with finite limits, and let $ \fupperstar\colon\fromto{Z}{X}$ and $ \gupperstar\colon\fromto{Z}{Y}$ be left exact functors.

	Appealing to the straightening/unstraightening equivalence, let $ m \colon \fromto{\Mup(f,g)}{\horn{2}{2}}$ denote the cartesian fibration classified by the span 
	\begin{equation*}
		\begin{tikzcd}
			X & Z \arrow[l, "\fupperstar"'] \arrow[r, "\gupperstar"] & Y \period
		\end{tikzcd}
	\end{equation*}
	Note that the fibers of $ m $ over the vertices $0$, $1$, and $2$ are $X$, $Y$, and $Z$, respectively.
	Write
	\begin{align*}
		\overleftrightW(f,g) &\coloneq \Fun_{\horn{2}{2}}(\horn{2}{2}, \Mup(f,g)) \\
		&\simeq \Fun(\{0 < 2\},X) \crosslimits_{\Fun(\{2\},X)} Z \crosslimits_{\Fun(\{2\},Y)} \Fun(\{1 < 2\},Y)
	\end{align*}
	for the \category of sections of $m$.
	Let us write $ \Kup_Y$ for the set of those morphisms $ \phi \colon \fromto{[1]\times\horn{2}{2}}{\Mup(f,g)} $ in $ \overleftrightW(f,g) $ of the form
	\begin{equation}\label{diag:KYKX}
		\begin{tikzcd}
			v_X \arrow[d, "\phi_X" left] \arrow[r] & v_Z \arrow[d, "\phi_Z" right] & \arrow[l] v_Y \arrow[d, "\phi_Y" right]\\ 
			u_X \arrow[r] & u_Z & \arrow[l] u_Y \comma
		\end{tikzcd}
	\end{equation}
	in which $ \phi_X$ is an equivalence, and the diagram \eqref{diag:KYKX} exhibits $ \phi_Y$ as the pullback of $ \gupperstar\phi_Z$.
	Dually, let us write $ \Kup_X$ for those morphisms $ \phi$ in which $ \phi_Y$ is an equivalence, and the diagram \eqref{diag:KYKX} exhibits $ \phi_X$ as the pullback of $ \fupperstar\phi_Z$. 

	We now define two new \categories by formally inverting these morphisms \Cref{nul:conventionshighercats}:
	\begin{equation*}
		\overleftW(f,g) \coloneq \Kup_Y^{-1} \overleftrightW(f,g) \andeq \Wup(f,g) \coloneq \Kup_X^{-1}\overleftW(f,g) \period
	\end{equation*}
\end{cnstr}

\begin{nul}
	The \category $ \overleftrightW(f,g)$ admits finite limits, which are computed pointwise. The sets $ \Kup_Y$ and $ \Kup_X$ are stable under composition and pullback.
	It follows that the sets $ \Kup_Y$ and $ \Kup_X$ each give rise to right calculi of fractions on $ \overleftrightW(f,g)$ in the sense of Cisinski's book \cite[Theorem 7.2.16]{MR3931682}.

	Consequently, the mapping spaces in $ \overleftW(f,g)$ admit a very simple description: for any objects $u,v\in\overleftrightW(f,g)$, write
	\begin{equation*}
		\Sup_Y(u,v)\subseteq\overleftrightW(f,g)_{/u} \, \crosslimits_{\overleftrightW(f,g)} \, \overleftrightW(f,g)_{/v}
	\end{equation*}
	for the full subcategory spanned by those diagrams $u\ot w\to v$ in which the morphism $u\ot w$ lies in $ \Kup_Y$.
	Then one can compute the mapping spaces in $ \overleftW(f,g)$ as the classifying \groupoids of these \categories:
	\begin{equation*}
		\Map_{\overleftW(f,g)}(u,v) \simeq \invert \Sup_Y(u,v) \period
	\end{equation*}

	Furthermore, the \categories $ \overleftW(f,g)$ and $ \Wup(f,g) $ admit finite limits, and the localizations
	\begin{equation*}
		\fromto{\overleftrightW(f,g)}{\overleftW(f,g)} \andeq \fromto{\overleftW(f,g)}{W}
	\end{equation*}
	each preserve finite limits \cite[Corollary 7.1.16 \& Theorem 7.2.25]{MR3931682}.
\end{nul}

We now work toward showing that the \category $ \overleftW(f,g) $ is the oriented pushout of the span $ X \ot Z \to Y $ in the \category of \categories with finite limits.
This immediately implies that the \topos of presheaves on $ \overleftW(f,g) $ is the oriented fiber product
\begin{equation*}
	\PSh(X) \underset{\PSh(Z)}{\orientedtimes} \PSh(Y)  \period
\end{equation*}

\begin{cnstr}\label{constr:OFPpresheaf}
	Keep the notations of \Cref{exm:presheaforientedfib}.
	Write
	\begin{align*}
		\pupperstar \colon\fromto{X}{\overleftrightW(f,g)} &\andeq \qupperstar \colon\fromto{Y}{\overleftrightW(f,g)} \\
	\intertext{for the left exact functors defined by the assignments}
		\goesto{x}{[x\to 1\ot 1]} &\andeq \goesto{y}{[1\to 1\ot y]} \period
	\end{align*}
	We also regard the left exact functors $ \pupperstar $ and $ \qupperstar $ as landing in $ \overleftW(f,g)$ and $ \Wup(f,g) $ by composing with the relevant localizations.

	Write $ \supperstar \colon \fromto{Z}{\overleftrightW(f,g)} $ for the section of of the natural projection $ \fromto{\overleftrightW(f,g)}{Z} $ defined be sending an object $ z \in Z $ to the cartesian section $ \fupperstar(z)\to z\ot \gupperstar(z)$.
	Define natural transformations $ \theta $ and $ \xi $ fitting into a span
	\begin{equation*}
		\begin{tikzcd}
			\pupperstar \fupperstar & \arrow[l, "\theta" above] \supperstar \arrow[r, "\xi" above] & \qupperstar \gupperstar
		\end{tikzcd}
	\end{equation*}
	as follows: for any $z\in Z$, the components $ \theta(z)$ and $ \xi(z)$ are given by the diagram
	\begin{equation*}
		\begin{tikzcd}
			\fupperstar(z) \arrow[r] & 1 & \arrow[l] 1 \\ 
			\fupperstar(z) \arrow[u, equals] \arrow[d, "!" left] \arrow[r] & z \arrow[u, "!" right] \arrow[d, "!" right] & \arrow[l] \gupperstar(z) \arrow[u, "!" right] \arrow[d, equals]\\
			1 \arrow[r] & 1 & \arrow[l] \gupperstar(z) \period
		\end{tikzcd}
	\end{equation*}
	In particular, note that $ \theta(z)\in \Kup_Y$ and $ \xi(z)\in \Kup_X$.
	Hence the natural transformation $ \theta $ becomes an equivalence after postcomposition with the localization $ \fromto{\overleftrightW(f,g)}{\overleftW(f,g)}$ at $ \Kup_Y $.
	Write
	\begin{equation*}
		\sigmahat \coloneq \xi\theta^{-1} \colon \fromto{\pupperstar \fupperstar}{\qupperstar \gupperstar}
	\end{equation*}
	for the resulting natural transformation of functors $ \fromto{Z}{\overleftrightW(f,g)} $.
	Note that the natural transformation $ \sigmahat $ becomes an equivalence upon postcomposition with the localization to $ \fromto{\overleftrightW(f,g)}{\Wup(f,g)} $ at $ \Kup_X $. 
	
	In other words, these data specify the following three diagrams of \categories with finite limits and left exact functors between them:
	\begin{equation}\label{square:Wdoublehead}
		\begin{tikzcd}[sep=3.5em]
			Z \arrow[r, "\gupperstar" above] \arrow[dr, "\supperstar" {description, near start}] \arrow[d, "\fupperstar" left] & Y \arrow[d, "\qupperstar" right] \\
			X \arrow[r, "\pupperstar" below] \arrow[ur, phantom, "\scriptstyle \xi" {below right, near end}, "\Longrightarrow" {sloped, near end}, "\scriptstyle \theta" {below right, near start}, "\Longleftarrow"{sloped, near start}] & \overleftrightW(f,g) \comma
		\end{tikzcd}
	\end{equation}
	\begin{equation}\label{square:Wleftarrow}
		\begin{tikzcd}[sep=2.5em]
			Z \arrow[r, "\gupperstar" above] \arrow[d, "\fupperstar" left] & Y \arrow[d, "\qupperstar" right] \\ 
			X \arrow[r, "\pupperstar" below] \arrow[ur, phantom, "\scriptstyle \sigmahat" below right, "\Longrightarrow" sloped] & \overleftW(f,g) \comma
		\end{tikzcd}
	\end{equation}
	\begin{equation}\label{square:W}
		\begin{tikzcd}[sep=2.5em]
			Z \arrow[r, "\gupperstar" above] \arrow[d, "\fupperstar" left] & Y \arrow[d, "\qupperstar" right] \\ 
			X \arrow[r, "\pupperstar" below] & \Wup(f,g) \period
		\end{tikzcd}
	\end{equation}
\end{cnstr}

\begin{lem}\label{lem:Wsareorientedpushouts}
	Keep the notations of \Cref{constr:OFPpresheaf}.
	The squares \eqref{square:Wdoublehead}, \eqref{square:Wleftarrow}, and \eqref{square:W} exhibit $ \overleftrightW(f,g)$, $ \overleftW(f,g)$, and $ \Wup(f,g) $ as universal among \categories with finite limits completing these diagrams. 
	More precisely, if $E$ is \acategory with finite limits, then the induced functors
	\begin{equation*}
		\Funlex(\overleftrightW(f,g), E) \to \Funlex(X,E) \crosslimits_{\Funlex(Z,E)} \Fun(\horn{2}{2}, \Funlex(Z,E)) \crosslimits_{\Funlex(Z,E)} \Funlex(Y,E) \comma
	\end{equation*}
	\begin{equation*}
		\Funlex(\overleftW(f,g), E) \to \Funlex(X,E) \crosslimits_{\Funlex(Z,E)} \Fun([1], \Funlex(Z,E)) \crosslimits_{\Funlex(Z,E)} \Funlex(Y,E) \comma
	\end{equation*}
	and
	\begin{equation*}
		\Funlex(\Wup(f,g), E) \to \Funlex(X,E) \crosslimits_{\Funlex(Z,E)} \Funlex(Y,E) 
	\end{equation*}
	are equivalences.
\end{lem}

\begin{proof}
	The last two equivalences follow from the first by the universal properties of the localizations of $ \overleftrightW(f,g) $ at $ \Kup_Y $ and $ \overleftW(f,g) $ at $ \Kup_X$;
	hence we focus on the first equivalence.

	Let
	\begin{equation}\label{diag:leftrightortient}
		\begin{tikzcd}[sep=3.5em]
			Z \arrow[r, "\gupperstar" above] \arrow[dr, "\supperstar" {description, near start}] \arrow[d, "\fupperstar" left] & Y \arrow[d, "q'" right] \\
			X \arrow[r, "p'" below] \arrow[ur, phantom, "\scriptstyle \xi'" {below right, near end}, "\Longrightarrow" {sloped, near end}, "\scriptstyle \theta'" {below right, near start}, "\Longleftarrow"{sloped, near start}] & E
		\end{tikzcd}
	\end{equation}
	be a diagram of \categories with finite limits and left exact functors between them.
	Define a functor $ F \colon \overleftrightW(f,g) \to E $ by the formula
	\begin{equation*}
		F(v_X \ot v_Z \to v_Y) \coloneq \supperstar(v_Z) \crosslimits_{(p'\fupperstar(v_Z) \times q'\gupperstar(v_Z))} (p'(v_X) \times q'(v_Y)) \period
	\end{equation*}
	The functor  
	\begin{equation*}
		\Funlex(X,E) \crosslimits_{\Funlex(Z,E)} \Fun(\horn{2}{2}, \Funlex(Z,E)) \crosslimits_{\Funlex(Z,E)} \Funlex(Y,E) \to \Funlex(\overleftrightW(f,g), E) \comma
	\end{equation*}
	given by sending the object defined by the diagram \eqref{diag:leftrightortient} to the functor $ F $ is then easily seen to be inverse to the induced functor
	\begin{equation*}
		\Funlex(\overleftrightW(f,g), E) \to \Funlex(X,E) \crosslimits_{\Funlex(Z,E)} \Fun(\horn{2}{2}, \Funlex(Z,E)) \crosslimits_{\Funlex(Z,E)} \Funlex(Y,E) \period \qedhere
	\end{equation*}
\end{proof}

\begin{nul}
	Keep the notations of \Cref{constr:OFPpresheaf}.
	Write $ \sigmahat^{\ast} \colon \fromto{\pupperstar\fupperstar}{\qupperstar\gupperstar} $ for the natural transformation of functors $ \fromto{\PSh(Z)}{\PSh(\overleftW(f,g))} $ given by extending the natural transformation $ \sigmahat $ to presheaves.
	We write $ \sigma $ for the natural transformation adjoint to $ \sigmahat^{\ast} $, so that $ \sigma $ fits into an oriented square
	\begin{equation}\label{square:Wfg}
		\begin{tikzcd}
			\PSh(\overleftW(f,g)) \arrow[r, "\qlowerstar" above] \arrow[d, "\plowerstar" left] & \PSh(Y) \arrow[d, "\glowerstar" right] \arrow[dl, phantom, "\scriptstyle \sigma" below right, "\Longleftarrow" sloped] \\ 
			\PSh(X) \arrow[r, "\flowerstar" below] & \PSh(Z) \period
		\end{tikzcd}
	\end{equation}
	We write
	\begin{equation*}
		\clowerstar \colon \PSh(\overleftW(f,g)) \to \PSh(X) \underset{\PSh(Z)}{\orientedtimes} \PSh(Y) 
	\end{equation*}
	for the geometric morphism specified by the square \eqref{square:Wfg}.
\end{nul}

The following is now immediate from \Cref{lem:Wsareorientedpushouts} and the universal property of the \category of presheaves.

\begin{lem}\label{lem:PShWisorientedpullback}
	With the notations of \Cref{constr:OFPpresheaf}, the geometric morphism 
	\begin{equation*}
		\clowerstar \colon \PSh(\overleftW(f,g)) \to \PSh(X) \underset{\PSh(Z)}{\orientedtimes} \PSh(Y)  \period
	\end{equation*}
	is an equivalence.
\end{lem}

\begin{nul}
	In the same manner, we obtain an identification of the fiber product of presheaf \topoi: there is a natural equivalence of \topoi
	\begin{equation*}
		\equivto{\PSh(\Wup(f,g))}{\PSh(X) \crosslimits_{\PSh(Z)} \PSh(Y)} \period
	\end{equation*}
\end{nul}

Now we explain how to introduce a topology into \Cref{constr:OFPpresheaf} to give a generating \site for the oriented fiber product of sheaf \topoi.

\begin{cnstr}[{topology on $ \overleftW(f,g) $}]\label{exm:sheaforientedfib}
	Let $(X,\tau_X)$, $(Y,\tau_Y)$, and $(Z,\tau_Z)$ be $ \updelta_0$-small \sites with finite limits (\Cref{dfn:finitaryinftysite}).
	Write
	\begin{equation*}
		\XX \coloneq \Sh_{\tau_X}(X) \comma \qquad \YY \coloneq \Sh_{\tau_Y}(Y) \comma \andeq \ZZ \coloneq \Sh_{\tau_Z}(Z) \period
	\end{equation*}
	Let $ \fupperstar \colon \fromto{Z}{X} $ and $ \gupperstar\colon\fromto{Z}{Y} $ be left exact morphisms of \sites, so that $ \fupperstar $ and $ \gupperstar $ induce geometric morphisms
	\begin{equation*}
		\flowerstar \colon\fromto{\XX}{\ZZ} \andeq \glowerstar\colon\fromto{\YY}{\ZZ} \period 
	\end{equation*}

	Let $ \overleftarrow{\tau}$ denote the topology on the \category $ \overleftW(f,g) $ (\Cref{exm:presheaforientedfib}) generated by the families $ \{\phi_i\colon\fromto{v_i}{u}\}_{i \in I} $, such that for each $i\in I$, the morphism $ \phi_i$ is the image of a morphism of $ \overleftrightW(f,g)$ of the form
	\begin{equation*}
		\begin{tikzcd}
			v_{i,X} \arrow[d, "\phi_{i,X}" left] \arrow[r] & v_{i,Z} \arrow[d, "\phi_{i,Z}" right] & \arrow[l] v_{i,Y} \arrow[d, "\phi_{i,Y}" right]\\ 
			u_X \arrow[r] & u_Z & \arrow[l] u_Y \comma
		\end{tikzcd}
	\end{equation*}
	such that one of the following conditions holds:
	\begin{itemize}
		\item The family $ \{\phi_{i,X}\colon\fromto{v_{i,X}}{u_X}\}_{i \in I} $ generates a $ \tau_X$-covering sieve, and for all $i\in I$, the morphisms $ \phi_{i,Z}$ and $ \phi_{i,Y}$ are equivalences.

		\item The family $ \{\phi_{i,Y}\colon\fromto{v_{i,Y}}{u_Y}\}_{i \in I} $ generates a $ \tau_Y$-covering sieve, and for all $i\in I$, the morphisms $ \phi_{i,Z}$ and $ \phi_{i,X}$ are equivalences.
	\end{itemize}
\end{cnstr}

The following is now a consequence of \Cref{lem:PShWisorientedpullback} along with the universal property of localizations:

\begin{lem}
	Keep the notations of \Cref{exm:sheaforientedfib}.
	Then there is a natural equivalence of \topoi
	\begin{equation*}
	    \equivto{\Sh_{\overleftarrow{\tau}}(\overleftW(f,g))}{\XX \orientedtimes_{\ZZ} \YY} \period
	\end{equation*}
\end{lem}

\begin{nul}
	Observe that the topology $ \tau_Z$ is irrelevant here, as we should expect, since
	\begin{equation*}
		\XX\orientedtimes_{\ZZ}\YY\simeq\XX\orientedtimes_{\PSh(Z)}\YY
	\end{equation*}
	(\Cref{exm:oriendedproduptoloc}).
\end{nul}

\begin{nul}
	In the same vein, the topology $ \tau$ on $ \Wup(f,g) $ generated by these same families produces the usual (unoriented) fiber product of \topoi: there is a natural equivalence of \topoi
	\begin{equation*}
		\equivto{\Sh_{\tau}(\Wup(f,g))}{\XX \times_{\ZZ} \YY} \period
	\end{equation*}
\end{nul}

\begin{nul}
	If all of the topologies $ \tau_X$, $ \tau_Y $, and $ \tau_Z $ are finitary, then the \sites $ (\overleftW(f,g),\overleftarrow{\tau}) $ and $ (\Wup(f,g),\tau) $ are finitary.
	This lets us deduce that the oriented fiber product preserves coherence properties.
\end{nul}

We conclude this section with the fundamental coherence results for oriented fiber products that we'll need.
 
\begin{lem}\label{lem:orientedcoherent}
	Keep the notations of \Cref{exm:sheaforientedfib}.
	If the topologies $ \tau_{X} $, $ \tau_{Y} $, and $ \tau_{Z} $ are all finitary, then:
	\begin{enumerate}[{\upshape (\ref*{lem:orientedcoherent}.1)}]
		\item\label{lem:orientedcoherent.1} The oriented fiber product $ \orientedpull{\XX}{\ZZ}{\YY} $ is coherent and locally coherent, and the projections
		\begin{equation*}
		 \pr_{1,\ast} \colon \fromto{\orientedpull{\XX}{\ZZ}{\YY}}{\XX} \andeq \pr_{2,\ast} \colon \fromto{\orientedpull{\XX}{\ZZ}{\YY}}{\YY}
		\end{equation*}
		are coherent geometric morphisms.

		\item\label{lem:orientedcoherent.2} The pullback $ \XX \cross_{\ZZ} \YY$ is coherent and locally coherent, and the projections
		\begin{equation*}
			\pr_{1,\ast} \colon \fromto{\XX \cross_{\ZZ} \YY}{\XX} \andeq \pr_{2,\ast} \colon \fromto{\XX \cross_{\ZZ} \YY}{\YY}
		\end{equation*}
		are coherent geometric morphisms.
	\end{enumerate}
\end{lem}

\begin{proof}
	Since the topology $ \overleftarrow{\tau} $ is finitary, \Cref{prop:SAG.A.3.1.3}=\allowbreak\SAG{Proposition}{A.3.1.3} ensures that the \topoi $ \orientedpull{\XX}{\ZZ}{\YY} $ and $ \XX\times_{\ZZ}\YY$ are coherent and locally coherent.
	Since $ \pr_{1,\ast} $ and $ \pr_{2,\ast} $ are induced by the morphisms of finitary \sites
	\begin{equation*}
		\fromto{(X,\tau_X)}{(\overleftW(f,g),\overleftarrow{\tau})} \andeq \fromto{(Y,\tau_Y)}{(\overleftW(f,g),\overleftarrow{\tau})} \comma
	\end{equation*}
	the remainder of \enumref{lem:orientedcoherent}{1} follows from \Cref{cor:morsitescoherent}.
	The proof of \enumref{lem:orientedcoherent}{2} is the same as the proof of \enumref{lem:orientedcoherent}{1}, replacing the finitary \site $ (\overleftW(f,g),\overleftarrow{\tau}) $ by the finitary \site $ (\Wup(f,g),\tau) $.
\end{proof}

Conceptual Completeness now implies that the oriented fiber product of bounded coherent \topoi is \textit{controlled} by its \category of points in the following sense.

\begin{prp}
	An oriented square 
	\begin{equation*}
		\begin{tikzcd}
			\WW \arrow[r, "\qlowerstar" above] \arrow[d, "\plowerstar" left] & \YY \arrow[d, "\glowerstar" right] \arrow[dl, phantom, "\scriptstyle \sigma" below right, "\Longleftarrow" sloped] \\ 
			\XX \arrow[r, "\flowerstar" below] & \ZZ \comma
		\end{tikzcd}
	\end{equation*}
	of bounded coherent \topoi and coherent geometric morphisms is an oriented fiber product square if and only if the induced oriented square
	\begin{equation*}
		\begin{tikzcd}
			\Pt(\WW) \arrow[r, "\qlowerstar" above] \arrow[d, "\plowerstar" left] & \Pt(\YY) \arrow[d, "\glowerstar" right] \arrow[dl, phantom, "\scriptstyle \sigma" below right, "\Longrightarrow" sloped] \\ 
			\Pt(\XX) \arrow[r, "\flowerstar" below] & \Pt(\ZZ) \comma
		\end{tikzcd}
	\end{equation*}
	in $ \Cat_{\infty,\updelta_1} $ exhibits $ \Pt(\WW) $ as the oriented fiber product $ \commacat{\Pt(\XX)}{\Pt(\ZZ)}{\Pt(\YY)} $ \Cref{nul:orientedfpinCat}.
\end{prp}

\begin{proof} 
	This follows from Conceptual Completeness (\Cref{thm:conceptualcompleteness}=\allowbreak\SAG{Theorem}{A.9.0.6}), along with the fact that the functor $ \Pt \colon \fromto{\Top_{\infty}}{\Cat_{\infty,\updelta_1}} $ preserves oriented fiber product squares (\Cref{lem:Ptpreservesorientedprod}).
\end{proof}


\subsection[Compatibility of oriented fiber products \& étale geometric morphisms]{Compatibility of oriented fiber products \& étale geometric \\ morphisms}\label{subsec:etalecompat}

The goal of this section is to prove that oriented fiber products are compatible with étale geometric morphisms in an appropriate sense (\Cref{prop:orientedslice}).
This compatibility is a key technical ingredient in the proof of our \basechange result for oriented fiber products in \Cref{section:BC} (\Cref{thm:BCfororientedfibs}).
Inspired by Illusie's discussion \cite[Exposé XI, 1.10(b)]{MR3309086}, to prove the relevant compatibility, we decompose the oriented fiber product $ \orientedpull{\XX}{\ZZ}{\YY} $ as the iterated pullback
\begin{equation*}
	\orientedpull{\XX}{\ZZ}{\YY} \equivalent \XX \cross_{\ZZ} \Path{\ZZ} \cross_{\ZZ} \YY 
\end{equation*}
and treat the compatibility with pullbacks and path \topoi separately.

We begin with what must be a standard fact about the compatibility of ordinary pullbacks and étale geometric morphisms (\Cref{lem:slicepullback}).
However, we could not locate the following in the existing literature.

\begin{ntn}\label{ntn:pullbackslice}
	Let $ \flowerstar \colon \fromto{\XX}{\ZZ} $ and $ \glowerstar \colon \fromto{\YY}{\ZZ} $ be geometric morphisms of \topoi, and suppose we are given objects $ X \in \XX $, $ Y \in \YY $, and $ Z \in \ZZ $, along with morphisms $ \phi \colon \fromto{X}{\fupperstar(Z)} $ and $ \psi \colon \fromto{Y}{\gupperstar(Z)} $.
	We write\index[notation]{XtimesZY@$X\cross_ZY$} 
	\begin{equation*}
		X \cross_Z Y \coloneq \prupperstar_1(X) \crosslimits_{\prupperstar_1\fupperstar(Z)} \prupperstar_2(Y)  \in \XX \cross_{\ZZ} \YY
	\end{equation*}
	for the pullback of $ \prupperstar_1(X) $ and $ \prupperstar_2(Y) $ over $ \prupperstar_1\fupperstar(Z) \equivalent \prupperstar_2\gupperstar(Z) $ formed in the (unoriented) pullback \topos $ \XX \cross_{\ZZ} \YY $.
\end{ntn}

\begin{lem}\label{lem:slicepullback}
	Keep  the notations of \Cref{ntn:pullbackslice}.
	Then the natural geometric morphism $ \plowerstar \colon \fromto{\XX_{/X} \cross_{\ZZ_{/Z}} \YY_{/Y}}{\XX \cross_{\ZZ} \YY} $ is étale and $ \plowershriek(1) \equivalent X \cross_Z Y $.
\end{lem}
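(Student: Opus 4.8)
The plan is to produce a natural equivalence $\XX_{/X} \cross_{\ZZ_{/Z}} \YY_{/Y} \equivalent (\XX \cross_{\ZZ} \YY)_{/(X \cross_Z Y)}$ of \topoi over $\XX \cross_{\ZZ} \YY$; granting this, $\plowerstar$ is étale and $\plowershriek(1) \equivalent X \cross_Z Y$ by the definition of a slice \topos. The equivalence will be assembled from purely formal manipulations using only: that étale geometric morphisms are stable under pullback in $\Top_{\infty}$, the base change of $\fromto{\mathcal{E}_{/U}}{\mathcal{E}}$ along $\hlowerstar \colon \fromto{\mathcal{F}}{\mathcal{E}}$ being $\fromto{\mathcal{F}_{/\hupperstar(U)}}{\mathcal{F}}$ (standard; cf.\ \Cref{HTT.6.3.5.6}); the pasting law for pullback squares of \topoi; and the routine identities $(\mathcal{E}_{/U})_{/(V \to U)} \equivalent \mathcal{E}_{/V}$, that the pullback functor of a slice projection $\fromto{\mathcal{E}_{/U}}{\mathcal{E}}$ is $- \cross U$, and that the pullback functor of the map of slices $\fromto{\mathcal{E}_{/V}}{\mathcal{E}_{/U}}$ induced by $\fromto{V}{U}$ is base change along $\fromto{V}{U}$.

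First I would reduce the base from $\ZZ_{/Z}$ to $\ZZ$. By stability of étale morphisms under base change there are natural equivalences $\XX_{/\fupperstar(Z)} \equivalent \XX \cross_{\ZZ} \ZZ_{/Z}$ and $\YY_{/\gupperstar(Z)} \equivalent \YY \cross_{\ZZ} \ZZ_{/Z}$ under which $\fromto{\XX_{/\fupperstar(Z)}}{\ZZ_{/Z}}$ and $\fromto{\YY_{/\gupperstar(Z)}}{\ZZ_{/Z}}$ are the base changes of $\flowerstar$ and $\glowerstar$. Repeated use of the pasting law, together with $\ZZ_{/Z} \cross_{\ZZ_{/Z}} \ZZ_{/Z} \equivalent \ZZ_{/Z}$ and the associativity of iterated pullbacks, then yields a natural equivalence
\[ \XX_{/\fupperstar(Z)} \cross_{\ZZ_{/Z}} \YY_{/\gupperstar(Z)} \equivalent (\XX \cross_{\ZZ} \YY) \cross_{\ZZ} \ZZ_{/Z} \equivalent (\XX \cross_{\ZZ} \YY)_{/\prupperstar_1\fupperstar(Z)} \]
over $\XX \cross_{\ZZ} \YY$, the last step again being base change of an étale geometric morphism. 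Under this equivalence the projection onto $\XX_{/\fupperstar(Z)}$ becomes the base change of $\pr_{1,\ast}$ and the projection onto $\YY_{/\gupperstar(Z)}$ the base change of $\pr_{2,\ast}$.

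Next I would slice back down. It is convenient to isolate the general formal identity: for geometric morphisms $\fromto{\mathcal{A}}{\mathcal{C}}$ and $\fromto{\mathcal{B}}{\mathcal{C}}$ with projections $\alpha_{\ast}, \beta_{\ast}$ from $\mathcal{A} \cross_{\mathcal{C}} \mathcal{B}$, and objects $a \in \mathcal{A}$, $b \in \mathcal{B}$, one has $\mathcal{A}_{/a} \cross_{\mathcal{C}} \mathcal{B}_{/b} \equivalent (\mathcal{A} \cross_{\mathcal{C}} \mathcal{B})_{/(\alpha^{\ast}a \cross \beta^{\ast}b)}$, proved by two applications of the pasting law and two base changes of étale morphisms. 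Applying this with $\mathcal{A} = \XX_{/\fupperstar(Z)}$, $\mathcal{B} = \YY_{/\gupperstar(Z)}$, $\mathcal{C} = \ZZ_{/Z}$, $a = \phi$, $b = \psi$ (so that $\XX_{/X} = \mathcal{A}_{/\phi}$ and $\YY_{/Y} = \mathcal{B}_{/\psi}$), and feeding in the equivalence of the first step, the object $\alpha^{\ast}\phi$ becomes $(\prupperstar_1(X) \to \prupperstar_1\fupperstar(Z))$ and $\beta^{\ast}\psi$ becomes $(\prupperstar_2(Y) \to \prupperstar_1\fupperstar(Z))$ in $(\XX \cross_{\ZZ} \YY)_{/\prupperstar_1\fupperstar(Z)}$; their product there is $\prupperstar_1(X) \cross_{\prupperstar_1\fupperstar(Z)} \prupperstar_2(Y) = X \cross_Z Y$. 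Hence $\XX_{/X} \cross_{\ZZ_{/Z}} \YY_{/Y} \equivalent (\XX \cross_{\ZZ} \YY)_{/X \cross_Z Y}$ over $\XX \cross_{\ZZ} \YY$, which is the assertion.

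The content is entirely formal, so the real work — and the only point requiring genuine care — is the bookkeeping: tracking through the chain of base changes exactly which object of which slice \topos each étale geometric morphism corresponds to, and, above all, checking that the equivalence produced is compatible with the canonical projections to $\XX$, $\YY$, $\ZZ$, so that the resulting geometric morphism really is the natural $\plowerstar$ of the statement rather than merely some étale morphism with the correct source and target. Since each individual identification above is visibly natural, this reduces to assembling those naturality statements. An alternative route, which sidesteps part of this, is to compare the two \topoi via \Cref{HTT.6.3.5.6} and the fact that $\Funlowerstar(\mathcal{T},-)$ preserves pullbacks of \topoi (\Cref{nul:Funstarpreservesorientedprod}), exhibiting both $\Funlowerstar(\mathcal{T},\XX_{/X} \cross_{\ZZ_{/Z}} \YY_{/Y})$ and $\Funlowerstar(\mathcal{T},(\XX \cross_{\ZZ} \YY)_{/X \cross_Z Y})$ as right fibrations over $\Funlowerstar(\mathcal{T},\XX \cross_{\ZZ} \YY)$ with matching fibres; but this too demands the same care in constructing the comparison homotopies, so I expect the pasting argument to be the cleaner one to write out.
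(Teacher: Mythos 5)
Your argument is correct, but it takes a genuinely different route from the paper's. You prove the identification $\XX_{/X} \cross_{\ZZ_{/Z}} \YY_{/Y} \equivalent (\XX \cross_{\ZZ} \YY)_{/(X \cross_Z Y)}$ by repeatedly basechanging \'etale geometric morphisms and invoking the pasting law for pullback squares, reducing everything to the standard facts that the pullback of $\fromto{\EE_{/U}}{\EE}$ along $\hlowerstar$ is $\fromto{\FF_{/\hupperstar(U)}}{\FF}$ and that $(\EE_{/U})_{/(V \to U)} \equivalent \EE_{/V}$. The paper instead does what you sketch as your ``alternative route'': it writes down the comparison geometric morphism $\elowerstar \colon \fromto{(\XX \cross_{\ZZ} \YY)_{/(X \cross_Z Y)}}{\XX_{/X} \cross_{\ZZ_{/Z}} \YY_{/Y}}$ once, from a single commutative diagram, and then tests against an arbitrary \topos $\EE$: by \Cref{HTT.6.3.5.6} both $\Funupperstar((\XX \cross_{\ZZ} \YY)_{/(X \cross_Z Y)},\EE)$ and $\Funupperstar(\XX_{/X} \cross_{\ZZ_{/Z}} \YY_{/Y},\EE)$ are left fibrations over $\Funupperstar(\XX \cross_{\ZZ} \YY,\EE)$ with fibre $\Map_{\EE}(1,\hupperstar(X \cross_Z Y))$ over $\hupperstar$, and $\elowerstar$ induces a fibrewise equivalence. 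The trade-off is exactly the one you identify yourself: the paper's functor-of-points argument fixes the comparison morphism at the outset and so never has to verify that the equivalence assembled from many small identifications is compatible with the projections to $\XX$, $\YY$, $\ZZ$ and agrees with the natural $\plowerstar$ -- that compatibility is built in -- whereas your pasting argument is more elementary in its inputs but puts all the weight on that naturality bookkeeping, which you correctly flag as the only point requiring real care. Both proofs are sound; if you write yours out, make the final compatibility check explicit rather than leaving it at ``each identification is visibly natural.''
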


\begin{proof}
	First note that the commutative square 
	\begin{equation*}
      \begin{tikzcd}
      	(\XX \cross_{\ZZ} \YY)_{/(X \cross_Z Y)} \arrow[r] \arrow[d] & (\XX \cross_{\ZZ} \YY)_{/\prupperstar_2(Y)} \arrow[r] & \YY_{/Y} \arrow[dd] \\ 
      	(\XX \cross_{\ZZ} \YY)_{/\prupperstar_1(X)} \arrow[d] & & \\ 
      	\XX_{/X} \arrow[rr] & & \ZZ_{/Z} 
      \end{tikzcd}
    \end{equation*}
	defines a geometric morphism $ \elowerstar \colon \fromto{(\XX \cross_{\ZZ} \YY)_{/(X \cross_Z Y)}}{\XX_{/X} \cross_{\ZZ_{/Z}} \YY_{/Y}} $.
	We claim that $ \elowerstar $ is an equivalence of \topoi.
	To see this, we show that for any \topos $ \EE $, the induced functor 
	\begin{equation*}
		\Funupperstar((\XX \cross_{\ZZ} \YY)_{/(X \cross_Z Y)},\EE) \to \Funupperstar(\XX_{/X} \cross_{\ZZ_{/Z}} \YY_{/Y},\EE)
	\end{equation*}
	is an equivalence of \categories.
	
	Indeed, for any \topos $ \EE $, consider the commutative square
	\begin{equation*}
      \begin{tikzcd}
      	\Funupperstar(\XX_{/X} \cross_{\ZZ_{/Z}} \YY_{/Y},\EE) \arrow[r, "\sim"{yshift=-0.25ex}] \arrow[d] & \displaystyle \Funupperstar(\XX_{/X},\EE) \crosslimits_{\Funupperstar(\ZZ_{/Z},\EE)} \Funupperstar(\YY_{/Y},\EE) \arrow[d] \\ 
      	\Funupperstar(\XX \cross_{\ZZ} \YY,\EE) \arrow[r, "\sim"{yshift=-0.25ex}] & \displaystyle \Funupperstar(\XX,\EE) \crosslimits_{\Funupperstar(\ZZ,\EE)} \Funupperstar(\YY,\EE)\period
      \end{tikzcd}
    \end{equation*}
    It follows from \Cref{HTT.6.3.5.6}=\allowbreak\HTT{Corollary}{6.3.5.6} that the functor
    \begin{equation*}
    	\fromto{\Funupperstar(\XX_{/X} \cross_{\ZZ_{/Z}} \YY_{/Y},\EE)}{\Funupperstar(\XX \cross_{\ZZ} \YY,\EE)}
    \end{equation*}
    is a left fibration whose fiber over a left exact left adjoint $ \hupperstar \colon \fromto{\XX \cross_{\ZZ} \YY}{\EE} $ is the space
    \begin{equation*}
    	\Map_{\EE}(1,\hupperstar\prupperstar_1(X)) \crosslimits_{\Map_{\EE}(1,\hupperstar\prupperstar_1\fupperstar(Z))} \Map_{\EE}(1,\hupperstar\prupperstar_2(Y)) \equivalent \Map_{\EE}(1,\hupperstar(X \cross_{Z} Y)) \period
    \end{equation*}

    On the other hand, again by \Cref{HTT.6.3.5.6}=\allowbreak\HTT{Corollary}{6.3.5.6}, the natural geometric morphism $ \fromto{(\XX \cross_{\ZZ} \YY)_{/(X \cross_Z Y)}}{\XX \cross_{\ZZ} \YY} $ induces a left fibration  
    \begin{equation*}
    	\fromto{\Funupperstar((\XX \cross_{\ZZ} \YY)_{/(X \cross_Z Y)},\EE)}{\Funupperstar(\XX \cross_{\ZZ} \YY,\EE)} 
    \end{equation*}
    whose fiber over $ \hupperstar $ is the space $ \Map_{\EE}(1,\hupperstar(X \cross_Z Y)) $. 
    Thus the geometric morphism $ \elowerstar $ induces a fiberwise equivalence 
    \begin{equation*}\label{eq:leftfiboverpullback}
    	\fromto{\Funupperstar((\XX \cross_{\ZZ} \YY)_{/(X \cross_Z Y)},\EE)}{\Funupperstar(\XX_{/X} \cross_{\ZZ_{/Z}} \YY_{/Y},\EE)} 
    \end{equation*}
    of left fibrations over $ \Funupperstar(\XX \cross_{\ZZ} \YY,\EE) $, hence an equivalence
\end{proof}

Now we turn to the compatibility of oriented fiber products and étale geometric morphisms. 
We treat the path \topos first, and then apply \Cref{lem:slicepullback} to deduce the general result by expressing the oriented fiber product as an iterated pullback.

\begin{lem}\label{lem:slicePath}
	Let $ \ZZ $ be \atopos, and let $ Z \in \ZZ $ be an object.
	Then the natural geometric morphism $ \plowerstar \colon \fromto{\Path{\ZZ_{/Z}}}{\Path{\ZZ}} $ is étale and $ \plowershriek(1) \equivalent \prupperstar_1(Z) $.
\end{lem}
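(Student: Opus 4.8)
The plan is to mimic the proof of \Cref{lem:slicepullback}. Write $ e_{\ast} \colon \fromto{\ZZ_{/Z}}{\ZZ} $ for the étale geometric morphism with $ e_{!}(1_{\ZZ_{/Z}}) \equivalent Z $, so that the geometric morphism $ p_{\ast} $ of the statement is $ \MOR(\widetilde{[1]},e_{\ast}) $ (here I use that $ \Path{-} = \MOR(\widetilde{[1]},-) $ is a functor on $ \Top_{\infty} $, being right adjoint to $ - \cross \widetilde{[1]} $). Under the identification $ \Path{\ZZ} \equivalent \orientedpull{\ZZ}{\ZZ}{\ZZ} $, the projection $ \pr_{1,\ast} \colon \fromto{\Path{\ZZ}}{\ZZ} $ is $ \MOR(-,\ZZ) $ applied to the closed embedding $ \incto{\widetilde{\{0\}}}{\widetilde{[1]}} $, i.e.\ ``evaluation at the closed endpoint''. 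Naturality of $ \pr_{1,\ast} $ produces a commutative square relating $ p_{\ast} $ and $ \pr_{1,\ast} $, and since $ (\Path{\ZZ})_{/\pr_1^{\ast}(Z)} \equivalent \Path{\ZZ} \cross_{\ZZ} \ZZ_{/Z} $ (slices pull back along geometric morphisms), this square determines an essentially unique geometric morphism
\[
	\theta_{\ast} \colon \fromto{\Path{\ZZ_{/Z}}}{(\Path{\ZZ})_{/\pr_1^{\ast}(Z)}}
\]
over $ \Path{\ZZ} $. As $ (\Path{\ZZ})_{/\pr_1^{\ast}(Z)} $ is étale over $ \Path{\ZZ} $ with $ p_{!}(1) \equivalent \pr_1^{\ast}(Z) $, the lemma reduces to showing that $ \theta_{\ast} $ is an equivalence of \topoi.

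To prove this, I would follow \Cref{lem:slicepullback} and check that $ \theta_{\ast} $ induces a fibrewise equivalence between two left fibrations over $ \Funupperstar(\Path{\ZZ},\EE) $, for every \topos $ \EE $; this suffices, since a geometric morphism inducing an equivalence on $ \Funupperstar(-,\EE) $ for all $ \EE $ is an equivalence. By \Cref{HTT.6.3.5.6}, the functor $ \fromto{\Funupperstar((\Path{\ZZ})_{/\pr_1^{\ast}(Z)},\EE)}{\Funupperstar(\Path{\ZZ},\EE)} $ is a left fibration whose fibre over $ \hupperstar $ (dual to $ \hlowerstar \colon \fromto{\EE}{\Path{\ZZ}} $) is $ \Map_{\EE}(1_{\EE},\hupperstar\pr_1^{\ast}(Z)) $. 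On the other side, I would use the identification $ \Funupperstar(\Path{\ZZ_{/Z}},\EE) = \Funlowerstar(\EE \cross \widetilde{[1]},\ZZ_{/Z})^{\op} $ and apply \Cref{HTT.6.3.5.6} to the étale morphism $ e_{\ast} $ together with the \topos $ \EE \cross \widetilde{[1]} $: this exhibits $ \fromto{\Funupperstar(\Path{\ZZ_{/Z}},\EE)}{\Funupperstar(\Path{\ZZ},\EE)} $ as a left fibration whose fibre over $ \hupperstar $ — with corresponding $ F_{\ast} \colon \fromto{\EE \cross \widetilde{[1]}}{\ZZ} $ — is $ \Map_{\EE \cross \widetilde{[1]}}(1,\fupperstar(Z)) $.

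The crux is to identify these two fibres compatibly. Writing $ i_{0,\ast} \colon \incto{\EE}{\EE \cross \widetilde{[1]}} $ for the closed embedding $ \id_{\EE} \cross (\incto{\widetilde{\{0\}}}{\widetilde{[1]}}) $, a direct computation in the recollement $ \EE \cross \widetilde{[1]} \equivalent \EE \orientedcup^{\id_{\EE}} \EE $ (coming from $ \widetilde{[1]} \equivalent \Space \orientedcup^{\id_{\Space}} \Space $, cf.\ \Cref{nul:recollement1-strat}) shows that $ \Map_{\EE \cross \widetilde{[1]}}(1,-) \equivalent \Map_{\EE}(1_{\EE},\iupperstar_0(-)) $ — that is, the global sections of $ \EE \cross \widetilde{[1]} $ compute the global sections of the closed-endpoint component. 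Since $ \iupperstar_0\fupperstar(Z) \equivalent (F_{\ast} i_{0,\ast})^{\ast}(Z) $ and $ F_{\ast} i_{0,\ast} \colon \fromto{\EE}{\ZZ} $ is precisely $ \pr_{1,\ast} \of \hlowerstar $ (because $ \pr_{1,\ast} $ is evaluation at the closed endpoint), one gets $ \iupperstar_0\fupperstar(Z) \equivalent \hupperstar\pr_1^{\ast}(Z) $, so the two fibres agree; a short check shows $ \theta_{\ast} $ realises this identification. Hence $ \Funupperstar(\theta_{\ast},\EE) $ is an equivalence for every $ \EE $, so $ \theta_{\ast} $ is an equivalence, and composing with the étale structure map yields the claim. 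The main obstacle is bookkeeping rather than a conceptual difficulty: correctly matching the closed endpoint of $ \widetilde{[1]} $ with $ \pr_{1,\ast} $, and verifying the global-sections computation for $ \EE \cross \widetilde{[1]} $; once those are in hand the argument is formal, in parallel with \Cref{lem:slicepullback}.
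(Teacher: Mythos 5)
Your proof is correct and follows essentially the same route as the paper's: both reduce the claim to a fibrewise comparison, for every test \topos $\EE$, of two left fibrations over $\Funupperstar(\Path{\ZZ},\EE)$ whose fibres over $\hupperstar$ are each identified with $\Map_{\EE}(1,\hupperstar\prupperstar_1(Z))$. The only cosmetic differences are that you build the comparison morphism via the universal property of the pullback $\Path{\ZZ}\cross_{\ZZ}\ZZ_{/Z}\equivalent(\Path{\ZZ})_{/\prupperstar_1(Z)}$ (and in the opposite direction from the paper's), and you compute the fibre on the $\Path{\ZZ_{/Z}}$ side by transposing $\widetilde{[1]}$ onto the test \topos and taking global sections of $\EE\cross\widetilde{[1]}$, where the paper instead applies $\Fun(\Delta^1,-)$ to the left fibration attached to $\fromto{\ZZ_{/Z}}{\ZZ}$.
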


\begin{proof}
	We have two geometric morphisms
	\begin{equation*}
		\plowerstar \colon\Path{\ZZ}_{/\pr_1^{\ast}(Z)}\to\ZZ_{/Z} \andeq \qlowerstar \colon\Path{\ZZ}_{/\pr_1^{\ast}(Z)}\to\Path{\ZZ}_{/\pr_2^{\ast}(Z)}\to\ZZ_{/Z}
	\end{equation*}
	along with a natural transformation $ \gamma\colon\fromto{\qlowerstar}{\plowerstar}$.
	These define a geometric morphism
	\begin{equation*}
		\elowerstar \colon \fromto{\Path{\ZZ}_{/\prupperstar_1(Z)}}{\Path{\ZZ_{/Z}}} 
	\end{equation*} 
	over $ \Path{\ZZ} $.
	We claim that $ \elowerstar $ is an equivalence of \topoi.

	First, for any \topos $ \EE $, consider the commutative square
	\begin{equation*}
      \begin{tikzcd}
      	\Funupperstar(\Path{\ZZ_{/Z}},\EE) \arrow[r, "\sim"{yshift=-0.25ex}] \arrow[d] & \Fun([1],\Funupperstar(\ZZ_{/Z},\EE)) \arrow[d] \\ 
      	\Funupperstar(\Path{\ZZ},\EE) \arrow[r, "\sim"{yshift=-0.25ex}] & \Fun([1],\Funupperstar(\ZZ,\EE)) \period
      \end{tikzcd}
    \end{equation*}
    It follows from \cite[Corollaries \HTTthmlink{2.1.2.9} \& \HTTthmlink{6.3.5.6}]{HTT} that the functor
    \begin{equation*}
    	\fromto{\Funupperstar(\Path{\ZZ_{/Z}},\EE)}{\Funupperstar(\Path{\ZZ},\EE)}
    \end{equation*}
    is a left fibration whose fiber over $ \hupperstar $ is the space
    \begin{equation*}
    	\Map_{\EE}(1,\hupperstar\prupperstar_1(Z)) \crosslimits_{\Map_{\EE}(1,\hupperstar\prupperstar_2(Z))} \Map_{\EE}(1,\hupperstar\prupperstar_2(Z)) \equivalent \Map_{\EE}(1,\hupperstar\prupperstar_1(Z)) \period
    \end{equation*}
    Here the map $ \fromto{\Map_{\EE}(1,\hupperstar\prupperstar_1(Z))}{\Map_{\EE}(1,\hupperstar\prupperstar_2(Z))} $ is induced by the natural transformation $ \sigmahat \colon \fromto{\prupperstar_1}{\prupperstar_2} $ adjoint to the natural transformation $ \sigma \colon \fromto{\pr_{2,\ast}}{\pr_{1,\ast}} $ defining the path \topos $ \Path{\ZZ} $.

    On the other hand, by \Cref{HTT.6.3.5.6}=\allowbreak\HTT{Corollary}{6.3.5.6} for any \topos $ \EE $, the natural geometric morphism $ \fromto{\Path{\ZZ}_{/\prupperstar_1(Z)}}{\Path{\ZZ}} $ induces a left fibration  
    \begin{equation*}
    	\fromto{\Funupperstar(\Path{\ZZ}_{/\prupperstar_1(Z)},\EE)}{\Funupperstar(\Path{\ZZ},\EE)} 
    \end{equation*}
    whose fiber over $ \hupperstar$ is the space $ \Map_{\EE}(1,\hupperstar\prupperstar_1(Z)) $. 
    Thus for any \topos $ \EE $, the geometric morphism $ \elowerstar $ induces a fiberwise equivalence 
    \begin{equation*}\label{eq:leftfiboverPath}
    	\fromto{\Funupperstar(\Path{\ZZ}_{/\prupperstar_1(Z)},\EE)}{\Funupperstar(\Path{\ZZ_{/Z}},\EE)} 
    \end{equation*}
    of left fibrations over $ \Funupperstar(\Path{\ZZ},\EE) $.
\end{proof}

Now we define an object $ \orientedpull{X}{Z}{Y} \in \orientedpull{\XX}{\ZZ}{\YY} $ and prove that 
\begin{equation*}
	\orientedpull{\XX_{/X}}{\ZZ_{/Z}}{\YY_{/Y}} \equivalent (\orientedpull{\XX}{\ZZ}{\YY})_{\orientedpull{X}{Z}{Y}} \period
\end{equation*}

\begin{cnstr}\label{cnstr:orientedslice}
	Let $ \flowerstar \colon \fromto{\XX}{\ZZ} $ and $ \glowerstar \colon \fromto{\YY}{\ZZ} $ be geometric morphisms of \topoi, and let $ X \in \XX $, $ Y \in \YY $, and $ Z \in \ZZ $ be objects, and let $ \phi \colon \fromto{X}{\fupperstar(Z)} $ and $ \psi \colon \fromto{Y}{\gupperstar(Z)} $.
	Form the oriented fiber product
	\begin{equation*}
		\begin{tikzcd}
			\orientedpull{\XX}{\ZZ}{\YY} \arrow[r, "\pr_{2,\ast}" above] \arrow[d, "\pr_{1,\ast}" left] & \YY \arrow[d, "\glowerstar" right] \arrow[dl, phantom, "\scriptstyle \sigma" below right, "\Longleftarrow" sloped] \\ 
			\XX \arrow[r, "\flowerstar" below] & \ZZ \period
		\end{tikzcd}
	\end{equation*}
	Define an object $ \orientedpull{X}{Z}{Y} $\index[notation]{XtimesZY@$\orientedpull{X}{Z}{Y}$} of $ \orientedpull{\XX}{\ZZ}{\YY} $ by the pullback square
	\begin{equation*}
      \begin{tikzcd}[column sep=4em, row sep=2.5em]
	       \orientedpull{X}{Z}{Y}  \arrow[dr, phantom, very near start, "\lrcorner", xshift=-1em, yshift=0.25em] \arrow[d] \arrow[r] & \prupperstar_2(Y) \arrow[d, "\prupperstar_2(\psi)"] \\ 
	       \prupperstar_1(X) \arrow[r, "{\sigmahat(Z) \circ \prupperstar_1(\phi)}"'] & \prupperstar_2 \gupperstar(Z) \comma
      \end{tikzcd}
    \end{equation*}
    where
    \begin{equation*}
    	\sigmahat \colon \fromto{\prupperstar_1 \fupperstar}{\prupperstar_2 \gupperstar} 
    \end{equation*}
    is the natural transformation adjoint to the natural transformation $ \sigma \colon \fromto{\glowerstar \pr_{2,\ast}}{\flowerstar \pr_{1,\ast}} $.
\end{cnstr}

\begin{prp}\label{prop:orientedslice}
	Keep the notations of \Cref{cnstr:orientedslice}.
	Then the natural geometric morphism $ \plowerstar \colon \fromto{\orientedpull{\XX_{/X}}{\ZZ_{/Z}}{\YY_{/Y}}}{\orientedpull{\XX}{\ZZ}{\YY}} $ is étale and $ \plowershriek(1) \equivalent \prupperstar_1(\orientedpull{X}{Z}{Y}) $.
\end{prp}

\begin{proof}
	The claim follows from \Cref{lem:slicePath} along with \Cref{lem:slicepullback} applied to the top right, top left, and bottom left cubes in the diagram
	\begin{equation*}
		\begin{tikzcd}[column sep={11ex,between origins}, row sep={8ex,between origins}]
			& \orientedpull{\XX_{/X}}{\ZZ_{/Z}}{\YY_{/Y}} \arrow[dl] \arrow[rr] \arrow[dd] & & \orientedpull{\ZZ_{/Z}}{\ZZ_{/Z}}{\YY_{/Y}} \arrow[dl] \arrow[rr] \arrow[dd] & & \YY_{/Y} \arrow[dl] \arrow[dd] \\
			\orientedpull{\XX}{\ZZ}{\YY} \arrow[dd] \arrow[rr, crossing over] & & \orientedpull{\ZZ}{\ZZ}{\YY} \arrow[dd] \arrow[rr, crossing over] & & \YY \arrow[dd] \\
			& \orientedpull{\XX_{/X}}{\ZZ_{/Z}}{\ZZ_{/Z}} \arrow[dl] \arrow[rr] \arrow[dd] & & \Path{\ZZ_{/Z}} \arrow[dl] \arrow[rr] \arrow[dd] & & \ZZ_{/Z} \arrow[dl] \arrow[dd, equals] \\
			\orientedpull{\XX}{\ZZ}{\ZZ} \arrow[rr, crossing over] \arrow[dd] & & \Path{\ZZ} \arrow[from=uu, crossing over] \arrow[rr, crossing over] & & \ZZ \arrow[from=uu, crossing over] &  \\
			& \XX_{/X} \arrow[dl] \arrow[rr] & & \ZZ_{/Z} \arrow[dl] \arrow[rr, equals] & & \ZZ_{/Z} \arrow[dl] \\
			\XX \arrow[rr] & & \ZZ \arrow[from=uu, crossing over] \arrow[rr, equals] & & \ZZ \arrow[from=uu, crossing over, phantom] \arrow[from=uu, equals] & \phantom{\ZZ_{Z/}} \comma 
		\end{tikzcd}
	\end{equation*}
	where the front and back faces of the bottom right cube are oriented fiber product squares, all other squares are commutative, and the front and back faces of each of the top right, top left, and bottom left cubes are pullback squares.
\end{proof}


\begin{cor}\label{lem:orientedsliceequiv}
	Keep the notations of \Cref{cnstr:orientedslice}.
	If the morphism
	\begin{equation*}
		\prupperstar_2(\psi) \colon \fromto{\prupperstar_2(Y)}{\prupperstar_2\gupperstar(Z)}
	\end{equation*}
	is an equivalence, then we have a natural equivalence
	\begin{equation*}
		(\orientedpull{\XX}{\ZZ}{\YY})_{/\orientedpull{X}{Z}{Y}} \equivalent (\orientedpull{\XX}{\ZZ}{\YY})_{/\prupperstar_1(X)} \period
	\end{equation*}
\end{cor}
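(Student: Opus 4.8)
The plan is to reduce the statement to a single observation: under the hypothesis, the object $\orientedpull{X}{Z}{Y}$ of the oriented fibre product $\orientedpull{\XX}{\ZZ}{\YY}$ is canonically equivalent to $\prupperstar_1(X)$, whence the two slice \topoi in question coincide up to canonical equivalence.

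First I would invoke the defining pullback square of $\orientedpull{X}{Z}{Y}$ from \Cref{cnstr:orientedslice}: it exhibits $\orientedpull{X}{Z}{Y}$ as the pullback of $\prupperstar_2(\psi) \colon \fromto{\prupperstar_2(Y)}{\prupperstar_2\gupperstar(Z)}$ along $\tauhat(Z) \of \prupperstar_1(\phi) \colon \fromto{\prupperstar_1(X)}{\prupperstar_2\gupperstar(Z)}$, with $\orientedpull{X}{Z}{Y}$ lying over $\prupperstar_1(X)$. Since the class of equivalences in \atopos is stable under pullback, the assumption that $\prupperstar_2(\psi)$ is an equivalence forces the structure morphism $\fromto{\orientedpull{X}{Z}{Y}}{\prupperstar_1(X)}$ to be an equivalence as well.

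From here the conclusion is immediate. For \atopos $\EE$ and an equivalence $\fromto{W'}{W}$ in $\EE$, composition with this equivalence gives an equivalence of slice \topoi $\equivto{\EE_{/W'}}{\EE_{/W}}$ (equivalently, an étale geometric morphism is unaffected, up to equivalence, by replacing its classifying object with an equivalent one). Taking $\EE \coloneq \orientedpull{\XX}{\ZZ}{\YY}$, $W \coloneq \prupperstar_1(X)$, and $W' \coloneq \orientedpull{X}{Z}{Y}$, together with the equivalence just established, yields the asserted natural equivalence $\equivto{(\orientedpull{\XX}{\ZZ}{\YY})_{/\orientedpull{X}{Z}{Y}}}{(\orientedpull{\XX}{\ZZ}{\YY})_{/\prupperstar_1(X)}}$. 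Alternatively, one may route the same argument through \Cref{prop:orientedslice}, which identifies $\orientedpull{\XX_{/X}}{\ZZ_{/Z}}{\YY_{/Y}}$ with $(\orientedpull{\XX}{\ZZ}{\YY})_{/\orientedpull{X}{Z}{Y}}$ via an étale geometric morphism; the equivalence $\orientedpull{X}{Z}{Y} \simeq \prupperstar_1(X)$ then rewrites the target slice as $(\orientedpull{\XX}{\ZZ}{\YY})_{/\prupperstar_1(X)}$.

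There is no genuine obstacle: the only input is the stability of equivalences under pullback, and naturality is inherited from the naturality of the constructions in \Cref{cnstr:orientedslice}. The one bookkeeping point to keep straight is that it is $\prupperstar_2(\psi)$, and not $\psi$ itself, that must be invertible — $\psi \colon \fromto{Y}{\gupperstar(Z)}$ may well fail to be an equivalence in $\YY$ — but this is exactly the hypothesis as stated.
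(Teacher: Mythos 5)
Your proposal is correct and is exactly the argument the paper intends: the defining pullback square in \Cref{cnstr:orientedslice} together with the stability of equivalences under pullback shows that $\fromto{\orientedpull{X}{Z}{Y}}{\prupperstar_1(X)}$ is an equivalence, and hence the two slice \topoi agree. The paper leaves this corollary without an explicit proof precisely because it is immediate in this way, so there is nothing further to add.
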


\begin{nul}
	Keep the notations of \Cref{cnstr:orientedslice} and assume, in addition, that $ \XX $, $ \YY $ and $ \ZZ $ are bounded coherent, the geometric morphisms $ \flowerstar $ and $ \glowerstar $ are coherent, and the objects $ X $, $ Y $, and $ Z $ are all truncated coherent.
	Then the object $ \orientedpull{X}{Z}{Y} \in \orientedpull{\XX}{\ZZ}{\YY} $ is the image under the Yoneda embedding $ \yo \colon \incto{\overleftW(f,g)}{\orientedpull{\XX}{\ZZ}{\YY}} $ of the object of $ \overleftW(f,g) $ given by $ X \to Z \ot Y $ (\Cref{exm:sheaforientedfib}) .
\end{nul}

\newpage

\section{Local \texorpdfstring{$\infty$}{∞}-topoi \& localizations}\label{sec:localtopoi}

In this chapter we generalize the basic theory of what are usually called \textit{local geometric morphisms} and \textit{local topoi} to the setting of \topoi \cites[Exposé IV, \S 8]{MR50:7131}[\S C.3.6]{MR2063092}{MR977478}.
Local \topoi play the role of local rings in topos theory: one can \textit{localize} \atopos $ \XX $ at a point $ \xlowerstar \colon \fromto{\Space}{\XX} $ and this construction has the property that the stalk $ \xupperstar U $ of an object $ U \in \XX $ is computed by first pulling back to the localization of $ \XX $ at $ \xlowerstar $ and then taking global sections on the local \topos.
The chief example of a local \topos is the étale \topos of a strictly henselian local ring (\Cref{exm:localizationishenselization}).
As with local rings in algebraic geometry, often questions about \topoi with enough points can be reduced to problems about local \topoi.
This is the main reason we need the theory of local \topoi; to prove a basechange theorem for oriented fiber products (\Cref{thm:BCfororientedfibs}) in \cref{section:BC} by reduction to the local case.

The \toposic theory follows the $ 1 $-toposic story very closely; as such, a number of items in this chapter are likely known to experts.
Notably, Schreiber has studied local \topoi \cite[\S3.2]{Schreiber:cohesive}. 

In \cref{subsec:quasiequiv} we begin by discussing a more general class of geometric morphisms that contains the global sections geometric morphism of a local \topos.
\Cref{subsec:local} then specializes to the study of local \topoi.
\Cref{subsec:nearbyloc} explains how to to use oriented fiber products to localize \atopos at a point.
In \Cref{subsec:compatibilityOFPloc} we prove a compatibility between oriented fiber products and localizations.
In algebraic geometry, the spectrum of the strictly henselian local ring $ \Oup_{X,x}^{\sh} $ of a scheme $ X $ at a geometric point $ \fromto{\bar{x}}{X}$ can be written as a limit over all étale neighborhoods of $ \bar{x} $; \Cref{subsec:GVlocalization} proves that the localization of \atopos at a point can be described in exactly the same way (\Cref{prop:localizationasetale}).
Using this description, in \Cref{subsec:cohloc} we show that the localization of a bounded coherent \topos at a point is coherent (\Cref{lem:localizationiscoherent}).
\Cref{subsec:localexamples} concludes by collecting geometric examples of localizations.


\subsection{Quasi-equivalences}\label{subsec:quasiequiv}

As a precursor, we begin by discussing the \toposic generalization of the notion of a \textit{connected} geometric morphism \cite[p. 525]{MR2063092}.
In the homotopical setting, the term `connected' (and its variants) doesn't seem appropriate.
Instead, we elect for the distinct term \textit{quasi-equivalence}.

\begin{dfn}
	A geometric morphism $ \flowerstar \colon \fromto{\XX}{\YY} $ of \topoi is a \defn{quasi-equivalence}\index[terminology]{quasi-equivalence}\index[terminology]{geometric morphism!quasi-equivalence} if the pullback functor $ \fupperstar $ is fully faithful.
\end{dfn}

\begin{nul}
	Every geometric morphism of \topoi factors as the composite of a quasi-equiv\-alence followed by an \textit{algebraic} geometric morphism \HTT{Proposition}{6.3.6.2}.
	Moreover, this factorization is unique up to (canonical) equivalence .
\end{nul}

If $f_{\ast}$ is a quasi-equivalence, then $f^{\ast}$ is fully faithful, whence we deduce the following.

\begin{lem}\label{lem:globalsecconnected}
	Let $ \flowerstar \colon \fromto{\XX}{\YY} $ be a quasi-equivalence of \topoi.
	Write $ \unit \colon \fromto{\id_{Y}}{\flowerstar \fupperstar} $ for the unit of the adjunction $ \fupperstar \leftadjoint \flowerstar $.
	Then the canonical natural transformation
	\begin{equation*}
		\upGamma_{\YY,\ast}\unit \colon \upGamma_{\YY,\ast} \to \upGamma_{\YY,\ast} \flowerstar \fupperstar \equivalent \upGamma_{\XX,\ast} \fupperstar
	\end{equation*}
	is an equivalence (\Cref{ntn:globalsecpoints}).
\end{lem}

\begin{nul}
	If $ \flowerstar \colon \fromto{\XX}{\YY} $ is a quasi-equivalence of \topoi, then by composing the canonical natural transformation $ \fromto{\Gammaup_{\YY,\ast}}{\Gammaup_{\XX,\ast}\fupperstar} $ with $ \Gammaupperstar_{\YY} $, \Cref{lem:globalsecconnected} ensures that the canonical natural transformation
	\begin{equation*}
		\upGamma_{\YY,\ast} \unit \Gammaupperstar_{\YY} \colon \fromto{\Gammaup_{\YY,\ast} \Gammaupperstar_{\YY}}{\Gammaup_{\YY,\ast} \flowerstar \fupperstar \Gammaupperstar_{\YY} \equivalent \Gammaup_{\XX,\ast} \Gammaupperstar_{\XX}}
	\end{equation*}
	is an equivalence in $ \Pro(\Space)^{\op} \subset \Fun(\Space,\Space) $.
	In particular, $ \flowerstar $ is a shape equivalence (\Cref{dfn:shapeequivalence}).
\end{nul}

\begin{nul}
	As noted in \HTT{Remark}{7.1.6.12}, \atopos $ \XX $ has trivial shape if and only if the geometric morphism $ \fromto{\XX}{\Space} $ is a quasi-equivalence.
	However, in general a shape equivalence of \topoi need not be a quasi-equivalence.
\end{nul}

\begin{exm}
	Let $ X $ be a scheme.
	By \cite[Lemma 5.1.2]{MR3379634}, the natural geometric morphism $ \fromto{X_{\proet}}{X_{\et}} $ from the proétale \topos of $ X $ to the étale \topos of $ X $ is a quasi-equivalence, hence a shape equivalence.
\end{exm}


\subsection{Local \texorpdfstring{$\infty$}{∞}-topoi}\label{subsec:local}

Now we specalize to local \topoi.
Recall that a geometric morphism $ \flowerstar \colon \fromto{\XX}{\YY} $ is \textit{essential}\index[terminology]{essential!geometric morphism}\index[terminology]{geometric morphism!essential} if $ \fupperstar $ admists a left adjoint $ \flowershriek \colon \fromto{\XX}{\YY} $.

\begin{dfn}\label{def:coessentialmorphism}
	We say that a geometric morphism $ \flowerstar \colon \fromto{\XX}{\YY} $ of \topoi is \defn{coëssential}\index[terminology]{coessential@coëssential}\index[terminology]{geometric morphism!coessential@coëssential} if $ \flowerstar $ admits a right adjoint $ \fuppershriek \colon \fromto{\YY}{\XX} $.
	In this case, the functor $ \fuppershriek $ and its left adjoint $ \flowerstar $ define a geometric morphism \smash{$ \fuppershriek \colon \fromto{\YY}{\XX} $} called the \defn{center}\index[terminology]{center!of a coëssential geometric morphism} of $ \flowerstar $.
\end{dfn}

The next definition generalizes what are sometimes called \textit{local geometric morphisms} in the $ 1 $-topos theory literature \cites[\S C.3.6]{MR2063092}{MR977478}.
We instead choose terminology that syncs with the algebro-geometric terminology for local rings and doesn't conflict with other uses of the term `local' in higher category theory.

\begin{dfn}\label{def:localmorphism}
	We say that geometric morphism $ \flowerstar \colon \fromto{\XX}{\YY} $ of \topoi \defn{exhibits $\XX$ as local over $\YY$} if $ \flowerstar $ is both coëssential and a quasi-equivalence.

	\Atopos $ \XX $ is \defn{local}\index[terminology]{local!topos@\topos}\index[terminology]{topos@\topos!local} if $ \XX $ is local over $\Space$.
	In this case we simply call $ \Gammauppershriek_{\XX} \colon \fromto{\Space}{\XX} $ the \defn{center} of $ \XX $.
\end{dfn}

\begin{nul}\label{nul:localequivcond}
	Please observe that a geometric morphism of \topoi $ \flowerstar \colon \fromto{\XX}{\YY} $ exhibits $\XX$ as local over $\YY$ if and only if the functor $ \flowerstar $ admits a \textit{fully faithful} right adjoint $ \fuppershriek $.
	Equivalently, $\XX$ is local over $\YY$ if and only if $ \flowerstar $ admits a section $ \fuppershriek $ in the $(\infty,2)$-category of \topoi.
\end{nul}

\begin{nul}\label{nul:localoverpt}
	Let $ \XX $ be \atopos.
	Note that if the global sections functor $ \Gammalowerstar \colon \fromto{\XX}{\Space} $ admits a right adjoint $ \Gammauppershriek \colon \fromto{\Space}{\XX} $, then $ \Gammauppershriek $ is automatically fully faithful, whence $\XX$ is local.

	Consequently, by the Adjoint Functor Theorem and \Cref{nul:localoverpt}, \atopos $ \XX $ is local if and only if the terminal object $ 1_{\XX} \in \XX $ is completely compact.
\end{nul}

\begin{rmk}\label{rem:1localiclocal}
	Let $ \XX $ be a $ 1 $-topos with corresponding $ 1 $-localic \topos $ \XX' $.
	Then $ \XX' $ is local over $ \Space $ if and only if the global sections functor $ \fromto{(\XX')_{\leq 0} \equivalent \XX}{\Set} $ admits a right adjoint.
\end{rmk}

\begin{lem}
	Let $ \XX $ be a local \topos.
	Then $ \XX $ has homotopy dimension $ \leq 0 $.
	In particular, $ \XX $ has cohomological dimension $ \leq 0 $.
\end{lem}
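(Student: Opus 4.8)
The statement to prove is that a local \topos $\XX$ has homotopy dimension $\leq 0$ (and hence cohomological dimension $\leq 0$). Recall that, by definition, $\XX$ has homotopy dimension $\leq 0$ if and only if every $(-1)$-connective (i.e.\ effective-epi-onto-$1_{\XX}$) object $U \in \XX$ admits a global section, i.e.\ the map $\Map_{\XX}(1_{\XX},U) \to \Map_{\XX}(1_{\XX},1_{\XX})$ is surjective on $\pi_0$, equivalently $\Map_{\XX}(1_{\XX}, U)$ is nonempty. First I would unwind what "local" gives us: by \Cref{nul:localequivcond} and \Cref{nul:localoverpt}, $\XX$ being local means the global sections functor $\Gammalowerstar \colon \fromto{\XX}{\Space}$ admits a fully faithful right adjoint $\Gammauppershriek$, and equivalently that the terminal object $1_{\XX}$ is completely compact, i.e.\ $\Gammalowerstar = \Map_{\XX}(1_{\XX},-)$ preserves all colimits.

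The key step is then the following. Let $U \in \XX$ be an effective epimorphism onto $1_{\XX}$; I must show $\Map_{\XX}(1_{\XX},U) \neq \varnothing$. Since $\fromto{U}{1_{\XX}}$ is an effective epimorphism, $1_{\XX}$ is the geometric realisation of the \v{C}ech nerve of $\fromto{U}{1_{\XX}}$; in particular $1_{\XX}$ is a colimit of a diagram all of whose vertices receive a map from $U$ — more simply, there is \emph{some} colimit diagram $\fromto{K}{\XX}$ with colimit $1_{\XX}$ and with $U$ mapping to the value at (say) the object of $K$ indexing $U$ itself in the \v{C}ech nerve. Applying the colimit-preserving functor $\Gammalowerstar = \Map_{\XX}(1_{\XX},-)$, we get that $\Map_{\XX}(1_{\XX},1_{\XX}) \simeq \colim_{K} \Map_{\XX}(1_{\XX}, -)$ is a colimit of spaces. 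Now $\Map_{\XX}(1_{\XX},1_{\XX})$ contains the identity, hence is nonempty; since a colimit of spaces is nonempty only if some vertex of the indexing diagram has nonempty value (nonemptiness of $\pi_0$ of a colimit: $\pi_0(\colim) = \colim \pi_0$ in sets, which is a quotient of $\coprod \pi_0$), some $\Map_{\XX}(1_{\XX}, V_\alpha)$ is nonempty, where $V_\alpha$ runs over the \v{C}ech nerve of $\fromto{U}{1_{\XX}}$. But every term $V_\alpha$ of that \v{C}ech nerve is of the form $U^{\times_{1_{\XX}} (n+1)} = U^{\times(n+1)}$ (a finite power of $U$, since $1_{\XX}$ is terminal), and a global section of $U^{\times(n+1)}$ yields a global section of $U$ by projection. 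Hence $\Map_{\XX}(1_{\XX},U)\neq\varnothing$, as desired.

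Slightly more cleanly, I would run the argument directly: $1_{\XX} \simeq \colim_{[n]\in\mbfDelta^{\op}} U^{\times(n+1)}$, so applying the colimit-preserving functor $\Map_{\XX}(1_{\XX},-)$ gives $\ast \simeq \Map_{\XX}(1_{\XX},1_{\XX}) \simeq \colim_{[n]\in\mbfDelta^{\op}} \Map_{\XX}(1_{\XX},U^{\times(n+1)}) \simeq \colim_{[n]\in\mbfDelta^{\op}} \Map_{\XX}(1_{\XX},U)^{\times(n+1)}$; taking $\pi_0$ and using that $\pi_0$ commutes with colimits of spaces shows this set is the colimit of the simplicial set $[n]\mapsto \pi_0\Map_{\XX}(1_{\XX},U)^{n+1}$, which is nonempty iff $\pi_0\Map_{\XX}(1_{\XX},U)$ is nonempty (indeed the whole simplicial object is the \v{C}ech nerve of $\varnothing \to \pi_0\Map_{\XX}(1_{\XX},U)$ or is empty). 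Since the colimit is a point, $\pi_0\Map_{\XX}(1_{\XX},U)\neq\varnothing$. This establishes homotopy dimension $\leq 0$. Finally, the parenthetical "in particular, $\XX$ has cohomological dimension $\leq 0$" is immediate from the standard fact (\HTT{Corollary}{7.2.2.30} or thereabouts) that homotopy dimension $\leq n$ implies cohomological dimension $\leq n$.

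\textbf{Main obstacle.} The only real point requiring care is the interchange of $\pi_0$ with the colimit and the bookkeeping that nonemptiness propagates: one must be a little careful that a colimit of \emph{spaces} (not sets) indexed by $\mbfDelta^{\op}$ has $\pi_0$ equal to the colimit of the $\pi_0$'s in sets, and that a simplicial set is nonempty iff its set of $0$-simplices is nonempty — both standard, but worth stating. Everything else (complete compactness of $1_{\XX}$ $\Leftrightarrow$ local, the \v{C}ech-nerve presentation of the terminal object along an effective epimorphism, $U^{\times_{1_{\XX}}(n+1)}=U^{\times(n+1)}$ since $1_{\XX}$ is terminal, and hd $\leq 0$ $\Rightarrow$ cd $\leq 0$) is either cited above in the excerpt or is a black box from \cite{HTT} that we are entitled to invoke.
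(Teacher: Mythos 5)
Your proof is correct and follows essentially the same route as the paper: the paper invokes \HTT{Lemma}{7.2.1.7} to reduce the claim to showing that $\Gamma_{\XX,\ast}$ preserves effective epimorphisms, which holds because locality makes $\Gamma_{\XX,\ast}$ a left adjoint, and then cites \HTT{Corollary}{7.2.2.30} for the cohomological-dimension statement. Your argument simply inlines the relevant direction of that HTT lemma — using colimit-preservation of $\Map_{\XX}(1_{\XX},-)$ on the \v{C}ech nerve of $U\to 1_{\XX}$ and the commutation of $\pi_0$ with colimits — so the key input (local $\Rightarrow$ $\Gamma_{\XX,\ast}$ preserves colimits) is identical.
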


\begin{proof}
	By \HTT{Lemma}{7.2.1.7}, it suffices to show that $ \Gammaup_{\XX,\ast} \colon \fromto{\XX}{\Space} $ preserves effective epimorphisms; this follows from the assumption that $ \Gammaup_{\XX,\ast} $ is a left adjoint.
	The second statement is a consequence of \HTT{Corollary}{7.2.2.30}.
\end{proof}

\begin{dfn}\label{def:localgeometricmorphism}
	Let $ \XX $ and $ \YY $ be local \topoi with centers $ \Gammauppershriek_{\XX} $ and $ \Gammauppershriek_{\YY} $, respectively.
	A geometric morphism $ \flowerstar \colon \fromto{\XX}{\YY} $ is a \defn{local geometric morphism}\index[terminology]{geometric morphism!local}\index[terminology]{local!geometric morphism} if the canonical natural morphism
	\begin{equation*}
		\fromto{\flowerstar \Gammauppershriek_{\XX}}{\Gammauppershriek_{\YY}}
	\end{equation*}
	adjoint to $ \upGamma_{\YY,\ast}\unit \colon \upGamma_{\YY,\ast} \to \upGamma_{\XX,\ast} \fupperstar $is an equivalence.
	Write $ \Toploc \subset \Top_{\infty} $\index[notation]{Toploc@$\Toploc$} for the (non-full) subcategory whose objects are local \topoi and whose morphisms are local geometric morphisms.
\end{dfn}

If $ \XX $ is a local \topos, then the center of $ \XX $ is an initial object of the \category $ \Pt(\XX) $; in fact, more is true.

\begin{ntn}
	Let $f_{\ast}\colon \fromto{\XX}{\YY} $ and $f'_{\ast}\colon\XX'\to\YY$ be geometric morphisms of \topoi.
	Write\index[notation]{FunYstar@$\Fun_{\YY,\ast}$}
	\begin{equation*}
		\Fun_{\YY,\ast}(\XX,\XX') \coloneq \Funlowerstar(\XX,\XX') \crosslimits_{\Funlowerstar(\XX,\YY)} \{f_{\ast}\}
	\end{equation*}
	for the \category of geometric morphisms $ \fromto{\XX}{\XX'} $ \textit{over} $ \YY $.
\end{ntn}

\begin{lem}
	Let $ \flowerstar \colon \fromto{\XX}{\YY} $ be a geometric morphism that exhibits $\XX$ as local over $\YY$ with center $ \fuppershriek $.
	Then $ \fuppershriek $ is a terminal object of the \category $ \Fun_{\YY,\ast}(\YY,\XX) $.
\end{lem}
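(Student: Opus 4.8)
The plan is to exploit the adjunction $\flowerstar \leftadjoint \fuppershriek$ together with the fact that the centre $\fuppershriek$ is \emph{fully faithful}. Write $\eta \colon \fromto{\id_{\XX}}{\fuppershriek\flowerstar}$ and $\epsilon \colon \fromto{\flowerstar\fuppershriek}{\id_{\YY}}$ for the unit and counit of $\flowerstar \leftadjoint \fuppershriek$. Since $\flowerstar$ exhibits $\XX$ as local over $\YY$, the right adjoint $\fuppershriek$ is fully faithful (\Cref{nul:localequivcond}), so by the standard characterisation of fully faithful right adjoints the counit $\epsilon$ is an equivalence. In particular, the composite of the centre $\fuppershriek \colon \fromto{\YY}{\XX}$ with $\flowerstar$ is equivalent, via $\epsilon$, to $\id_{\YY}$, so $\fuppershriek$ is genuinely an object of $\Fun_{\YY,\ast}(\YY,\XX)$, with structure equivalence $\epsilon \colon \flowerstar\fuppershriek \simeq \id_{\YY}$.

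Next I would record the whiskered adjunction. Postcomposition defines functors $\flowerstar \of (-) \colon \fromto{\Funlowerstar(\YY,\XX)}{\Funlowerstar(\YY,\YY)}$ and $\fuppershriek \of (-) \colon \fromto{\Funlowerstar(\YY,\YY)}{\Funlowerstar(\YY,\XX)}$ -- here $\fuppershriek \of g_{\ast}$ is the composite of the geometric morphism $g_{\ast}$ with the centre, hence again a geometric morphism -- and the adjunction $\flowerstar \leftadjoint \fuppershriek$ whiskers to an adjunction $\flowerstar \of (-) \leftadjoint \fuppershriek \of (-)$ on these functor \categories, with counit at $k_{\ast} \in \Funlowerstar(\YY,\YY)$ given by $\epsilon \of k_{\ast}$, again an equivalence. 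By definition $\Fun_{\YY,\ast}(\YY,\XX)$ is the fibre of $\flowerstar \of (-)$ over $\id_{\YY}$. Now let $g_{\ast}$ be any object of $\Fun_{\YY,\ast}(\YY,\XX)$, with structure equivalence $\phi \colon \flowerstar g_{\ast} \simeq \id_{\YY}$. Unwinding the definition of mapping spaces in the fibre $\Funlowerstar(\YY,\XX) \cross_{\Funlowerstar(\YY,\YY)} \{\id_{\YY}\}$, the space $\Map_{\Fun_{\YY,\ast}(\YY,\XX)}(g_{\ast},\fuppershriek)$ is identified with the fibre over $\phi$ of the map
\begin{equation*}
	\Map_{\Funlowerstar(\YY,\XX)}(g_{\ast},\fuppershriek) \xrightarrow{\ \alpha \,\mapsto\, \epsilon \of (\flowerstar\alpha)\ } \Map_{\Funlowerstar(\YY,\YY)}(\flowerstar g_{\ast}, \id_{\YY}) \period
\end{equation*}
By the triangle identities, this map is exactly the adjunction transpose for $\flowerstar \of (-) \leftadjoint \fuppershriek \of (-)$ (using $\fuppershriek \of \id_{\YY} = \fuppershriek$), hence an equivalence; so the fibre in question is contractible. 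Since this holds for every $g_{\ast}$, the object $\fuppershriek$ is terminal in $\Fun_{\YY,\ast}(\YY,\XX)$.

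The argument is entirely formal, so there is no serious obstacle; the two points that need to be stated with care are: (i) that postcomposition with $\fuppershriek$ indeed lands in geometric morphisms and that the adjunction $\flowerstar \leftadjoint \fuppershriek$ restricts to the full subcategories $\Funlowerstar(\YY,-)$ of geometric morphisms, with counit still given componentwise by $\epsilon$; and (ii) the identification of a mapping space in the fibre $\Funlowerstar(\YY,\XX) \cross_{\Funlowerstar(\YY,\YY)} \{\id_{\YY}\}$ with the corresponding fibre of the induced map on mapping spaces, over the basepoint coming from the structure equivalences. Both are routine. Alternatively, one can phrase the whole proof as an instance of the general principle that if $L \leftadjoint R$ with $R$ fully faithful, then $R(d)$ is a terminal object of the fibre $\big(\mathrm{dom}(L)\big) \cross_{\mathrm{cod}(L)} \{d\}$ for every $d$, applied to $L = \flowerstar \of (-)$ and $d = \id_{\YY}$.
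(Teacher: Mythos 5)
Your proposal is correct and is essentially the paper's own argument: the paper computes $\Map_{\Fun_{\YY,\ast}(\YY,\XX)}(\glowerstar,\fuppershriek)\simeq\Map_{\Fun_{\YY,\ast}(\YY,\YY)}(\flowerstar\glowerstar,\id_{\YY})\simeq\Map(\id_{\YY},\id_{\YY})\simeq\ast$ using exactly the whiskered adjunction $\flowerstar\of(-)\leftadjoint\fuppershriek\of(-)$ relative to $\YY$, with the invertibility of the counit (full faithfulness of the centre) making the first identification legitimate over the base. You have merely unpacked the two routine points the paper leaves implicit — that the adjunction restricts to geometric morphisms and that mapping spaces in the fibre are the fibres of the transpose map — so there is no substantive difference.
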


\begin{proof}
	Let $ \glowerstar \colon \YY\to\XX $ be a geometric morphism over $ \YY $.
	Then 
	\begin{align*}
		\Map_{\Fun_{\YY,\ast}(\YY,\XX)}(\glowerstar,\fuppershriek) &\equivalent \Map_{\Fun_{\YY,\ast}(\YY,\YY)}(\flowerstar \glowerstar,\id_{\YY}) \\ 
		&\equivalent \Map_{\Fun_{\YY,\ast}(\YY,\YY)}(\id_{\YY},\id_{\YY}) \equivalent \ast \period \qedhere
	\end{align*}
\end{proof}

Like local rings in algebraic geometry, local \topoi provide a convenient way to compute stalks: take global sections after pulling back to an appropriate local \topos.

\begin{lem}\label{lem:stalksasglobalsections}
	Let $ \plowerstar \colon \fromto{\WW}{\XX} $ be a geometric morphism of \topoi.
	Assume that where $ \WW $ is local with center $ \wlowerstar $, and write $ \xlowerstar \coloneq \plowerstar \wlowerstar $.
	Then $ \xupperstar \equivalent \Gammaup_{\WW,\ast} \pupperstar $.
\end{lem}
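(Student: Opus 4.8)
\textbf{Proof plan for \Cref{lem:stalksasglobalsections}.}

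The statement is that for a geometric morphism $ \plowerstar \colon \fromto{\WW}{\XX} $ with $ \WW $ local with centre $ \wlowerstar $, and $ \xlowerstar \coloneq \plowerstar \wlowerstar $, one has a natural equivalence $ \xupperstar \equivalent \Gamma_{\WW,\ast} \pupperstar $ of functors $ \fromto{\XX}{\Space} $. The plan is to pass to right adjoints and check the corresponding claim about the pushforwards, which is an immediate formal consequence of the definitions together with \Cref{nul:localequivcond}.

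First I would record that $ \xlowerstar \colon \fromto{\Space}{\XX} $ is the geometric morphism $ \plowerstar \wlowerstar $, so that $ \xupperstar \equivalent \wupperstar \pupperstar $ by functoriality of the pullback. Thus it suffices to prove $ \wupperstar \equivalent \Gamma_{\WW,\ast} $, i.e., that for a local \topos $ \WW $ with centre $ \wlowerstar $, the pullback $ \wupperstar \colon \fromto{\WW}{\Space} $ agrees with the global sections functor $ \Gamma_{\WW,\ast} $. Since $ \WW $ is local, $ \wlowerstar $ is by definition the centre of $ \WW $, which is the geometric morphism $ \Gammauppershriek \colon \fromto{\Space}{\WW} $ whose left adjoint is $ \Gamma_{\WW,\ast} $ (\Cref{def:localmorphism}, \Cref{nul:localoverpt}). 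In other words, as a functor $ \fromto{\Space}{\WW} $ the inverse image $ \wlowerstar^{\ast} \colon \fromto{\Space}{\WW} $ of the \emph{point} $ \wlowerstar $ (which is the left adjoint of $ \wlowerstar \colon \fromto{\Space}{\WW} $ viewed as a functor between \categories in the wrong variance) — here I must be careful with the $ \Pt $-conventions. The cleanest route: the centre is characterised as the geometric morphism $ \fuppershriek \colon \fromto{\Space}{\WW} $ whose inverse image functor is $ \Gamma_{\WW,\ast} $ and whose direct image functor is $ \Gammauppershriek $ (the right adjoint of $ \Gamma_{\WW,\ast} $, which exists and is fully faithful precisely because $ \WW $ is local). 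So $ \wlowerstar $, as a geometric morphism $ \fromto{\Space}{\WW} $, has $ \wupperstar = \Gamma_{\WW,\ast} $ by construction. This gives $ \xupperstar \equivalent \wupperstar \pupperstar \equivalent \Gamma_{\WW,\ast} \pupperstar $, as desired, and all the equivalences are natural in the input object of $ \XX $.

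There is essentially no obstacle here; the only thing requiring a moment's care is unwinding the conventions in \Cref{ntn:globalsecpoints} and \Cref{def:localmorphism} so that one correctly identifies the inverse-image functor of the centre $ \wlowerstar \colon \fromto{\Space}{\WW} $ with $ \Gamma_{\WW,\ast} \colon \fromto{\WW}{\Space} $ rather than with its right adjoint $ \Gammauppershriek $. Once that bookkeeping is fixed, the lemma is a one-line composition of known identifications, which is why the authors flag it as immediate.
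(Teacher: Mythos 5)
Your proof is correct and is precisely the "immediate" argument the paper intends (the paper states the lemma without proof): by the conventions of \Cref{def:coessentialmorphism} and \Cref{def:localmorphism}, the centre $\wlowerstar$ is the geometric morphism $\Space \to \WW$ whose inverse image is $\Gamma_{\WW,\ast}$, so $\xupperstar \equivalent \wupperstar\pupperstar \equivalent \Gamma_{\WW,\ast}\pupperstar$. Your care with the variance conventions is exactly the right (and only) point to check.
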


\begin{proof}
	Since $ \wupperstar \equivalent \Gammaup_{\WW,\ast} $ we see that
	\begin{equation*}
		\xupperstar \equivalent (pw)^{\ast} \equivalent \wupperstar \pupperstar \equivalent \Gammaup_{\WW,\ast} \pupperstar \period \qedhere
	\end{equation*}
\end{proof}

\noindent We shall soon see (\Cref{def:localization,nul:localizationislocal}) that for any \topos $ \XX $ and any point $ \xlowerstar \in \Pt(\XX) $, there is a geometric morphism $ \plowerstar \colon \fromto{\WW}{\XX} $ in which $ \WW $ is local with center $ \wlowerstar $ and $ \xlowerstar \equivalent \plowerstar \wlowerstar $ (and is, moreover, universal with this property).




Local geometric morphisms are also stable under pullback, though we do not use this fact in an integral way in the present paper.

\begin{nul}\label{nul:localstableunderpullback}
	Consider a pullback square of \topoi
	\begin{equation*}
      \begin{tikzcd}[sep=2.25em]
	       \XX \cross_{\ZZ} \YY  \arrow[dr, phantom, very near start, "\lrcorner", xshift=-0.25em, yshift=0.25em] \arrow[d, "\bar{g}_{\ast}"'] \arrow[r, "\bar{f}_{\ast}"] & \YY \arrow[d, "\glowerstar"] \\ 
	       \XX \arrow[r, "\flowerstar"'] & \ZZ \comma
      \end{tikzcd}
    \end{equation*}
    where $ \glowerstar $ exhibits $ \YY $ as local over $ \ZZ $ with center $ \guppershriek $.
    By the universal property of the pullback, the identity on $ \XX $ and the geometric morphism $ \guppershriek \flowerstar \colon \fromto{\XX}{\YY} $ induce a geometric morphism
    \begin{equation*}
    	\gbar^{!} \coloneq (\id_{\XX},\guppershriek \flowerstar) \colon \fromto{\XX}{\XX \cross_{\ZZ} \YY}
    \end{equation*}
    such that $ \gbar_{\ast} \gbar^{!} \equivalent \id_{\XX} $ and $ \fbar_{\ast} \gbar^{!} \equivalent \guppershriek \flowerstar $.
    Using the universal property of the pullback and the fact that $ \glowerstar $ is exhibits $ \YY $ as local over $ \ZZ $, one easily checks that the functor $ \gbar^{!} $ is right adjoint to $ \gbar_{\ast} $, so that $ \gbar_{\ast} $ exhibits $ \XX \cross_{\ZZ} \YY $ as local over $ \XX $ with center $ \gbar^{!} $.
\end{nul}

\subsection{Nearby cycles \& localizations}\label{subsec:nearbyloc}

We now show that the evanescent \topos (\Cref{exm:evanescent}) provides a wealth of local \topoi.
Then, following Deligne as well as Johnstone--Moerdijk \cite[Definition 3.1]{MR977478}, we use the evanescent \topos to construct the localization of \atopos at a point.

A site-theoretic proof of the following result (originally stated without proof by Laumon \cite[3.2]{MR726426}) is given in \cite[Exposé XI, Proposition 4.4]{MR3309086}.
The reliance on sites renders the proof given in \cite[Exposé XI]{MR3309086} inadequate in the context of \topoi; luckily the work of Riehl--Verity \cite{RiehlVerity:elements} permits us to employ simple $ 2 $-categorical arguments.

\begin{prp}\label{prop:nearbycycleslocal}
	Let $ \flowerstar \colon \fromto{\XX}{\ZZ} $ be a geometric morphism of \topoi.
	Then:
	\begin{enumerate}[{\upshape (\ref*{prop:nearbycycleslocal}.1)}]
		\item\label{prop:nearbycycleslocal.1} The nearby cycles functor $ \near_{f,\ast} \colon \fromto{\XX}{\orientedpull{\XX}{\ZZ}{\ZZ}} $ is right adjoint to the projection $ \pr_{1,\ast} \colon \fromto{\orientedpull{\XX}{\ZZ}{\ZZ}}{\XX} $.
		
		\item\label{prop:nearbycycleslocal.2} The functor $ \near_{f,\ast} $ is fully faithful.
		Hence the geometric morphism $ \pr_{1,\ast} $ exhibits $ \orientedpull{\XX}{\ZZ}{\ZZ} $ as local over $ \XX $ with center $ \near_{f,\ast} $. 
	\end{enumerate}
\end{prp}

\begin{proof}
	Recall that for any \topos $ \EE $, the functor
	\begin{equation*}
		\Funlowerstar(\EE,-)^{\op} \colon \fromto{\Top_{\infty}}{\Cat_{\infty,\updelta_1}}
	\end{equation*}
	carries oriented fiber products in $ \Top_{\infty} $ to oriented fiber products in $ \Cat_{\infty,\updelta_1} $ \Cref{nul:Funstarpreservesorientedprod}.
	Thus the proof of \cite[Proposition 3.4.6]{RiehlVerity:elements} works perfectly, giving the oriented fiber product in $ \Top_{\infty} $ the necessary `weak universal property' (as Riehl and Verity call it) to apply \cite[Lemma 3.5.9]{RiehlVerity:elements}, proving both \enumref{prop:nearbycycleslocal}{1} and \enumref{prop:nearbycycleslocal}{2}.
\end{proof}

The dual notion to being local over \atopos naturally appears as the property satisfied by the second projection from the coëvanescent \topos.

\begin{dfn}\label{def:colocalmorphism}
	A geometric morphism $ \flowerstar \colon \fromto{\XX}{\YY} $ of \topoi exhibits $ \XX $ as \emph{colocal over $\YY$}\index[terminology]{topos@\topos!colocal}\index[terminology]{colocal!topos@\topos} if $ \flowerstar $ is a quasi-equivalence and $ \fupperstar $ admits a left exact left adjoint $ \flowershriek \colon \fromto{\XX}{\YY} $.
	In this case, the functor $ \fupperstar $ and its left adjoint $ \flowershriek $ define a geometric morphism $ \fupperstar \colon \fromto{\YY}{\XX} $ called the \emph{cocenter} of $ \flowerstar $.
\end{dfn}

\begin{nul}
	In the setting of $1$-topoi, Johnstone \cite[Theorem C.3.6.16]{MR2063092} uses the term \emph{totally connected} for what we call colocal. Again, such lingo is inapt in our context.
\end{nul}

The following is dual to \Cref{prop:nearbycycleslocal}.

\begin{prp}\label{prop:conearbycyclescolocal}
	Let $ \glowerstar \colon \fromto{\YY}{\ZZ} $ be a geometric morphism of \topoi.
	Then:
	\begin{enumerate}[{\upshape (\ref*{prop:nearbycycleslocal}.1)}]
		\item\label{prop:conearbycyclescolocal.1} The conearby cycles functor $ \conearlowerstar^g \colon \fromto{\YY}{\orientedpull{\ZZ}{\ZZ}{\YY}} $ is left adjoint to the projection $ \pr_{2,\ast} \colon \fromto{\orientedpull{\ZZ}{\ZZ}{\YY}}{\YY} $.
		
		\item\label{prop:conearbycyclescolocal.2} The functor \smash{$ \conearlowerstar^g \equivalent \prupperstar_2 $} is fully faithful.
		Hence the geometric morphism $ \pr_{2,\ast} $ exhibits \smash{$\ZZ\orientedtimes_{\ZZ}\YY$} as colocal over $ \YY $ with cocenter \smash{$ \conearlowerstar^g $}. 
	\end{enumerate}
\end{prp}

\begin{nul}
	A geometric morphism $ \flowerstar \colon \fromto{\XX}{\YY} $ that exhibits $ \XX $ as colocal over $ \YY$ always satisfies the \textit{étale projection formula}
	\begin{equation*}
		\flowershriek(\fupperstar(X) \cross_{\fupperstar(Z)} Y) \equivalent X \cross_{Z} \flowershriek(Y) 
	\end{equation*}
	of \HTT{Proposition}{6.3.5.11}.
	However, the geometric morphism $ \flowerstar $ will almost never be étale; $ \flowershriek $ is conservative if and only if $ \flowerstar $ is an equivalence. 
\end{nul}

\begin{exm}\label{ex:pathtoposislocalandcolocal}
	For any \topos $ \XX $ the diagonal functor
	\begin{equation*}
		\psi(\id_{\XX},\id_{\XX},\id)_{\ast} \colon \fromto{\XX}{\orientedpull{\XX}{\XX}{\XX} \equivalent \Path{\XX}}
	\end{equation*}
	is both the nearby and conearby cycles functor.
	Combining \Cref{prop:nearbycycleslocal,prop:conearbycyclescolocal}, we deduce that we have a chain of (left exact) adjoints
	\begin{equation*}
		\begin{tikzcd}[sep=5.5em]
			\Path{\XX} \arrow[r, shift left=1ex, "\pr_{1,\ast}"{xshift=-2ex,description}] \arrow[r, shift right=2.5ex, "\pr_{2,\ast}"'] & \XX \period \arrow[l, hooked', shift left=1ex, "\prupperstar_2"{xshift=2ex,description}] \arrow[l, hooked', shift right=2.5ex, "\prupperstar_1"']
		\end{tikzcd}
	\end{equation*}
	In particular, the geometric morphisms $ \pr_{1,\ast}, \pr_{2,\ast} \colon \fromto{\Path{\XX}}{\XX} $ are shape equivalences.
\end{exm}

Now we define the \textit{localization} of \atopos at a point as a evanescent \topos; for this please recall \Cref{ntn:globalsecpoints}.

\begin{dfn}\label{def:localization}
	Let $ \XX $ be \atopos and $ \xlowerstar \colon \fromto{\Space}{\XX} $ a point of $ \XX $.
	The \defn{localization}\index[terminology]{localization!of \atopos}\index[terminology]{topos@\topos!localization} of $ \XX $ at $ \xlowerstar $ is the evanescent \topos
	\begin{equation*}
		\Loc{\XX}{x} \coloneq \orientedpull{\xtilde}{\XX}{\XX} \period \index[notation]{Xx@$ \Loc{\XX}{x} $}
	\end{equation*}
	We write $ \ell_{x,\ast} \colon \fromto{\Loc{\XX}{x}}{\XX} $ for the second projection $ \pr_{2,\ast} \colon \fromto{\orientedpull{\xtilde}{\XX}{\XX}}{\XX} $.
\end{dfn}

\begin{nul}\label{nul:localizationislocal}
	Let $ \XX $ be \atopos and $ \xlowerstar $ a point of $ \XX $.
	By \Cref{prop:nearbycycleslocal}, the \topos $ \Loc{\XX}{x} $ is local with center $ \conear_{x,\ast} \colon \fromto{\Space}{\Loc{\XX}{x}} $.
	By \Cref{lem:stalksasglobalsections}, for every object $ F \in \XX $ we can compute the stalk at $ x $ via the familiar formula
	\begin{equation*}
		\xupperstar F \equivalent \upGamma(\Loc{\XX}{x};\ell_x^{\ast} F) \period 
	\end{equation*}
\end{nul}

\begin{ntn}
	Write\index[notation]{Topstar@$\Toppt$} $ \Toppt \coloneq \Top_{\infty,\Space/} $ for the \category of \topoi equipped with a topos-theoretic point.
	The assignment $ \goesto{(\XX,\xlowerstar)}{\Loc{\XX}{x}} $ defines a functor
	\begin{equation*}
		\fromto{\Toppt}{\Toploc}
	\end{equation*}
	In the other direction, the assignment $ \goesto{\XX}{(\XX,\Gammauppershriek_{\XX})} $ defines a fully faithful functor
	\begin{equation*}
		\incto{\Toploc}{\Toppt} \period
	\end{equation*}
\end{ntn}

\begin{prp}\label{prop:localizationofalocaltopos}
	Let $ \XX $ be a local \topos with center $ \xlowerstar $.
	Then the geometric morphism $ \ell_{x,\ast} \colon \fromto{\Loc{\XX}{x}}{\XX} $ is an equivalence.
\end{prp}


\begin{proof}
	Let $ \unit \colon \fromto{\id_{\XX}}{\xlowerstar \Gammaup_{\XX,\ast}} $ be the unit of the adjunction $ \Gammaup_{\XX,\ast} \leftadjoint \xlowerstar $.
	Then the oriented square
	\begin{equation*}
		\begin{tikzcd}
			\XX \arrow[r, equals] \arrow[d, "\upGamma_{\XX,\ast}"'] & \XX \arrow[d, equals] \arrow[dl, phantom, "\scriptstyle \unit" below right, "\Longleftarrow" sloped] \\ 
			\widetilde{x} \arrow[r] & \XX 
		\end{tikzcd}
	\end{equation*}
	exhibits $ \XX $ as the oriented fiber product $ \orientedpull{\widetilde{x}}{\XX}{\XX} $.
\end{proof}


\begin{cor}
	The fully faithful functor $ \incto{\Toploc}{\Toppt} $ admits a right adjoint given by the assignment $ \goesto{(\XX,\xlowerstar)}{\Loc{\XX}{x}} $.
\end{cor}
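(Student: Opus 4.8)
The plan is to produce the counit explicitly and verify it on mapping spaces. Write $L\colon\incto{\Toploc}{\Toppt}$ for the inclusion (which is fully faithful) and $R\colon\fromto{\Toppt}{\Toploc}$, $\goesto{(\XX,\xlowerstar)}{\Loc{\XX}{x}}$, for the functor in question; I must show that $L$ is left adjoint to $R$.

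First I would construct the counit $\epsilon\colon\fromto{LR}{\id_{\Toppt}}$. By \Cref{nul:localisationislocal} the \topos $\Loc{\XX}{x}=\orientedpull{\xtilde}{\XX}{\XX}$ is the evanescent \topos of $\xlowerstar\colon\fromto{\xtilde}{\XX}$, hence local with centre $\Psi_{x,\ast}=\psi(\id_{\xtilde},\xlowerstar,\id)_{\ast}$, and $\el_{x,\ast}=\pr_{2,\ast}$. By the universal property of the oriented fibre product $\pr_{2,\ast}\Psi_{x,\ast}\equivalent\xlowerstar$, so $\el_{x,\ast}$ is a local geometric morphism carrying the centre of $\Loc{\XX}{x}$ to $\xlowerstar$; together with this identification it defines the component $\epsilon_{(\XX,\xlowerstar)}\colon\fromto{LR(\XX,\xlowerstar)}{(\XX,\xlowerstar)}$. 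Naturality is immediate from the functoriality of $\Loc{-}{-}$: for a pointed geometric morphism $\fromto{(\XX,\xlowerstar)}{(\YY,\ylowerstar)}$ the induced map $\fromto{\Loc{\XX}{x}}{\Loc{\YY}{y}}$ is compatible with the two second projections by construction, i.e. $\el_{y,\ast}\circ\Loc{g}{}\equivalent g_{\ast}\circ\el_{x,\ast}$. (As a sanity check, \Cref{prop:localisationofalocaltopos} shows that $\el_{x,\ast}$ is an equivalence whenever $\XX$ is local, so $RL\equivalent\id_{\Toploc}$, consistent with $L$ being fully faithful.)

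Since $L$ is fully faithful, it remains to show that for every local \topos $\ZZ$ with centre $\zlowerstar$ and every $(\XX,\xlowerstar)\in\Toppt$, postcomposition with $\epsilon_{(\XX,\xlowerstar)}$ induces an equivalence
\begin{equation*}
	\Map_{\Toploc}(\ZZ,\Loc{\XX}{x})\xrightarrow{\ \sim\ }\Map_{\Toppt}((\ZZ,\zlowerstar),(\XX,\xlowerstar)) \period
\end{equation*}
By the universal property of $\Loc{\XX}{x}=\orientedpull{\xtilde}{\XX}{\XX}$ (\Cref{cnstr:orientedfibprod}), and since $\xtilde\equivalent\Space$ is terminal, a geometric morphism $\fromto{\ZZ}{\Loc{\XX}{x}}$ is the same datum as a pair $(q_{\ast}\colon\fromto{\ZZ}{\XX},\ \tau\colon\fromto{q_{\ast}}{\xlowerstar\Gamma_{\ZZ,\ast}})$; unwinding the descriptions of $\Psi_{x,\ast}$ and of a local geometric morphism, such a pair is local exactly when the whiskering $\tau\zlowerstar\colon\fromto{q_{\ast}\zlowerstar}{\xlowerstar}$ (formed using $\Gamma_{\ZZ,\ast}\zlowerstar\equivalent\id_{\Space}$) is an equivalence, and under $\el_{x,\ast}\circ(-)$ the pair $(q_{\ast},\tau)$ is carried to $q_{\ast}$ pointed by precisely this equivalence. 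The assertion therefore reduces to the claim that, for fixed $q_{\ast}$, whiskering with $\zlowerstar$ defines an equivalence of spaces
\begin{equation*}
	\Map_{\Fun(\ZZ,\XX)}(q_{\ast},\xlowerstar\Gamma_{\ZZ,\ast})\xrightarrow{\ \sim\ }\Map_{\Fun(\Space,\XX)}(q_{\ast}\zlowerstar,\xlowerstar)
\end{equation*}
which then restricts to an equivalence on the subspaces of transformations that become equivalences. This is where the hypothesis that $\ZZ$ is local is used: since $\Gamma_{\ZZ,\ast}$ admits the fully faithful right adjoint $\zlowerstar$, it is a reflective localisation, so for each $s\in\Space$ the comma \category $\Gamma_{\ZZ,\ast}/s$ is equivalent to $\ZZ_{/\zlowerstar(s)}$ and hence has the terminal object $(\zlowerstar(s),\id)$; thus the pointwise left Kan extension of $q_{\ast}$ along $\Gamma_{\ZZ,\ast}$ is computed as $q_{\ast}\zlowerstar$, and the adjunction between $\operatorname{Lan}_{\Gamma_{\ZZ,\ast}}$ and restriction along $\Gamma_{\ZZ,\ast}$ supplies exactly the displayed equivalence, its comparison being whiskering with $\zlowerstar$. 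This equivalence of mapping spaces is natural in $\ZZ$ and in $(\XX,\xlowerstar)$, so $(\Loc{\XX}{x},\el_{x,\ast})$ is a final object of $\Toploc\times_{\Toppt}(\Toppt)_{/(\XX,\xlowerstar)}$, whence $R$ is right adjoint to $L$ with counit $\epsilon$.

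I expect the main obstacle to be the bookkeeping in the third paragraph: matching the \emph{property} of being a local geometric morphism with the \emph{data} packaged by the universal property of the oriented fibre product, and in particular checking that the $2$-cell $\tau$ in the oriented square is freely and uniquely reconstructed from its restriction to the centre $\zlowerstar$, keeping all $2$-cell directions and the triangle identities of $\Gamma_{\ZZ,\ast}\leftadjoint\zlowerstar$ straight. Everything else is formal once this identification is in place.
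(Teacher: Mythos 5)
Your argument is correct, and it is worth noting that the paper offers no proof of this corollary at all: it is stated as an immediate consequence of \Cref{prop:localisationofalocaltopos}, whose content is only that the counit candidate $\el_{x,\ast}$ becomes an equivalence when $\XX$ is already local (i.e.\ that $RL\simeq\id$). That alone does not establish the adjunction, so your verification of the universal property is exactly the missing content. Your reduction is the right one: unwinding the universal property of $\Loc{\XX}{x}=\orientedpull{\xtilde}{\XX}{\XX}$, the whole question becomes whether, for a local $\ZZ$ and fixed $q_{\ast}$, the $2$-cell $\tau\colon\fromto{q_{\ast}}{\xlowerstar\Gamma_{\ZZ,\ast}}$ is freely and uniquely recovered from its whiskering $\tau\zlowerstar$, and your Kan-extension computation — $\Gamma_{\ZZ,\ast}/s\simeq\ZZ_{/\zlowerstar(s)}$ has a terminal object because $\Gamma_{\ZZ,\ast}\leftadjoint\zlowerstar$, so $\operatorname{Lan}_{\Gamma_{\ZZ,\ast}}q_{\ast}\simeq q_{\ast}\zlowerstar$ — is precisely where locality of $\ZZ$ enters and is sound. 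The remaining points you flag as bookkeeping (that a local morphism into $\Loc{\XX}{x}$ is one whose $\tau\zlowerstar$ is invertible, that the composite pointing agrees with $\tau\zlowerstar$ up to the contractible ambiguity coming from $\Aut(\Psi_{x,\ast})\simeq\ast$, and the assembly of the fibrewise equivalence over the space of $q_{\ast}$'s) are all routine and check out. A marginally shorter route, probably closer to what the authors had in mind, is to take the unit to be the inverse of the equivalence $\el_{z,\ast}\colon\equivto{\Loc{\ZZ}{z}}{\ZZ}$ supplied by \Cref{prop:localisationofalocaltopos} together with the functoriality of $\goesto{(\XX,\xlowerstar)}{\Loc{\XX}{x}}$, and then verify the triangle identities; but that verification ultimately requires the same identification of $2$-cells that you carry out explicitly, so your approach buys a self-contained argument at essentially no extra cost.
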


\subsection{Compatibility of oriented fiber products with localizations}\label{subsec:compatibilityOFPloc}

In this section we prove that the formation oriented fiber products is compatible with localizations of \topoi.
First we note that taking path \topoi commutes with the formation of oriented fiber products.

\begin{lem}\label{lem:pathcompatiblewithofp}
	Let $ \flowerstar \colon \fromto{\XX}{\ZZ} $ and $ \glowerstar \colon \fromto{\YY}{\ZZ} $ be geometric morphisms of \topoi.
	Then we have a natural equivalence
	\begin{equation*}
		\Path{\orientedpull{\XX}{\ZZ}{\YY}} \equivalent \Path{\XX} \underset{\Path{\ZZ}}{\orientedtimes} \Path{\YY} \period
	\end{equation*}
\end{lem}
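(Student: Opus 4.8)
The plan is to establish the equivalence by comparing functors of points. The key structural input is that $\widetilde{[1]}$ is the $0$-localic \topos on the finite poset $[1]$, hence compactly generated and therefore exponentiable, so that $\Path{-} = \MOR(\widetilde{[1]},-)$ is right adjoint to $-\cross\widetilde{[1]}$. Concretely, the universal property of the mapping \topos gives, for any \topoi $\WW$ and $\EE$, a natural equivalence $\Funlowerstar(\WW\cross\widetilde{[1]},\EE) \equivalent \Funlowerstar(\WW,\Path{\EE})$, and this equivalence is natural in $\EE$ as well: for a geometric morphism $h_{\ast}\colon\EE\to\EE'$ it intertwines postcomposition with $h_{\ast}$ on the left with postcomposition with $\Path{h_{\ast}}\colon\Path{\EE}\to\Path{\EE'}$ on the right.

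First I would fix a \topos $\WW$ and compute, using this exponential adjunction,
\begin{equation*}
	\Funlowerstar(\WW,\Path{\orientedpull{\XX}{\ZZ}{\YY}}) \equivalent \Funlowerstar(\WW\cross\widetilde{[1]},\orientedpull{\XX}{\ZZ}{\YY}) \period
\end{equation*}
Since the functor $\Funlowerstar(\WW\cross\widetilde{[1]},-)^{\op}\colon\Top_{\infty}\to\Cat_{\infty,\delta_1}$ carries oriented fibre products in $\Top_{\infty}$ to oriented fibre products in $\Cat_{\infty,\delta_1}$ (\Cref{nul:Funstarpreservesorientedprod}, applied with $\EE = \WW\cross\widetilde{[1]}$), the right-hand side is identified (after passing to opposites) with the oriented fibre product
\begin{equation*}
	\commacat{\Funlowerstar(\WW\cross\widetilde{[1]},\XX)^{\op}}{\Funlowerstar(\WW\cross\widetilde{[1]},\ZZ)^{\op}}{\Funlowerstar(\WW\cross\widetilde{[1]},\YY)^{\op}}
\end{equation*}
formed along the maps induced by $\flowerstar$ and $\glowerstar$. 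Rewriting the three \categories via the exponential adjunction as $\Funlowerstar(\WW,\Path{\XX})^{\op}$, $\Funlowerstar(\WW,\Path{\ZZ})^{\op}$, $\Funlowerstar(\WW,\Path{\YY})^{\op}$, and using naturality in the target so that the transition maps become those induced by $\Path{\flowerstar}$ and $\Path{\glowerstar}$, I would identify this with $\commacat{\Funlowerstar(\WW,\Path{\XX})^{\op}}{\Funlowerstar(\WW,\Path{\ZZ})^{\op}}{\Funlowerstar(\WW,\Path{\YY})^{\op}}$. On the other hand, applying \Cref{nul:Funstarpreservesorientedprod} directly to $\orientedpull{\Path{\XX}}{\Path{\ZZ}}{\Path{\YY}}$ yields the \emph{same} oriented fibre product of \categories (in both cases the `middle' \category is $\Fun(\Delta^1,\Funlowerstar(\WW,\Path{\ZZ})^{\op})$, since limits and cotensors of \categories are computed objectwise). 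Thus I obtain a natural equivalence
\begin{equation*}
	\Funlowerstar(\WW,\Path{\orientedpull{\XX}{\ZZ}{\YY}}) \equivalent \Funlowerstar(\WW,\orientedpull{\Path{\XX}}{\Path{\ZZ}}{\Path{\YY}}) \comma
\end{equation*}
and the Yoneda lemma for $\Top_{\infty}$ upgrades this to the asserted equivalence of \topoi.

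Most of this is routine bookkeeping of adjunctions; the one point that genuinely requires care, and which I would flag as the main obstacle, is the naturality of the exponential transpose $\Funlowerstar(\WW\cross\widetilde{[1]},-)\equivalent\Funlowerstar(\WW,\Path{-})$ in the second variable, since that is precisely what forces the transition maps of the two oriented fibre products of \categories to agree rather than merely their underlying triples. A more economical packaging avoids functors of points entirely: $\Path{-}$ preserves limits (it is a right adjoint) and the formation of oriented fibre products preserves limits (\Cref{nul:orientedcommuteswithlimits}), so $\Path{\orientedpull{\XX}{\ZZ}{\YY}}$ is the limit of $\Path{-}$ applied to the diagram defining $\orientedpull{\XX}{\ZZ}{\YY}$; after the identifications $\Path{\MOR(\widetilde{\{i\}},\ZZ)}\equivalent\Path{\ZZ}$ this diagram becomes the one defining $\orientedpull{\Path{\XX}}{\Path{\ZZ}}{\Path{\YY}}$, the only subtlety there being the `swap' equivalence $\MOR(\widetilde{[1]},\Path{\ZZ})\equivalent\Path{\Path{\ZZ}}$ coming from the symmetry of the product in $\Top_{\infty}$ — a subtlety that the functor-of-points argument sidesteps automatically.
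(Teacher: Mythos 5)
Your argument is correct, but your primary route is not the one the paper takes; the ``more economical packaging'' you sketch in your final paragraph is, almost verbatim, the paper's entire proof. The paper simply unwinds the definition $\orientedpull{\XX}{\ZZ}{\YY} = \XX \cross_{\ZZ} \Path{\ZZ} \cross_{\ZZ} \YY$, uses that $\Path{-}$ is a right adjoint on $\Top_{\infty}$ and hence preserves this limit, and reads off
\begin{equation*}
	\Path{\orientedpull{\XX}{\ZZ}{\YY}} \equivalent \Path{\XX} \cross_{\Path{\ZZ}} \Path{\Path{\ZZ}} \cross_{\Path{\ZZ}} \Path{\YY} = \orientedpull{\Path{\XX}}{\Path{\ZZ}}{\Path{\YY}} \period
\end{equation*}
Your main argument instead probes both sides by $\Funlowerstar(\WW,-)$, transposes across the exponential adjunction for $\widetilde{[1]}$, invokes \Cref{nul:Funstarpreservesorientedprod} twice, and concludes by Yoneda. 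That route buys something real: it sidesteps the identification of $\Path{\Path{\ZZ}}$ (with the projections $\Path{\pr_{i,\ast}}$) with $\MOR(\widetilde{[1]},\Path{\ZZ})$ (with the projections given by evaluation at the endpoints), an identification the paper's short computation silently elides and which, as you note, involves the swap of the two $\widetilde{[1]}$-factors. The cost is that you must know the exponential adjunction as an equivalence of \categories $\Funlowerstar(\WW\cross\widetilde{[1]},\XX)\equivalent\Funlowerstar(\WW,\Path{\XX})$, natural in both variables, rather than merely an equivalence of mapping spaces in $\Top_{\infty}$; you correctly flag this naturality as the load-bearing step. Either version is acceptable, but if you keep the functor-of-points argument, make the categorical (not merely space-level) form of the adjunction explicit, since \Cref{nul:Funstarpreservesorientedprod} is a statement about the \categories $\Funlowerstar(\EE,-)^{\op}$ and not only about their interiors.
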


\begin{proof}
	Since the path \topos construction is a right adjoint $ \fromto{\Top_{\infty}}{\Top_{\infty}} $, we have natural equivalences
	\begin{align*}
		\Path{\orientedpull{\XX}{\ZZ}{\YY}} &= \Path{\XX \cross_{\ZZ} \Path{\ZZ} \cross_{\ZZ} \YY} \\ 
		&\equivalent \Path{\XX} \crosslimits_{\Path{\ZZ}} \Path{\Path{\ZZ}} \crosslimits_{\Path{\ZZ}} \Path{\YY} \\
		&= \Path{\XX} \underset{\Path{\ZZ}}{\orientedtimes} \Path{\YY} \period \qedhere
	\end{align*}
\end{proof}

\begin{prp}
	Let $ \flowerstar \colon \fromto{(\XX,\xlowerstar)}{(\ZZ,\zlowerstar)} $ and $ \glowerstar \colon \fromto{(\YY,\ylowerstar)}{(\ZZ,\zlowerstar)} $ be morphisms of pointed \topoi, so that there is an induced point
	\begin{equation*}
		\orientedpull{\xlowerstar}{\zlowerstar}{\ylowerstar} \colon \fromto{\Space \equivalent \orientedpull{\Space}{\Space}{\Space}}{\orientedpull{\XX}{\ZZ}{\YY}} \period
	\end{equation*}
	Then we have a natural equivalence
	\begin{equation*}
		\Loc{(\orientedpull{\XX}{\ZZ}{\YY})}{\orientedpull{\xlowerstar}{\zlowerstar}{\ylowerstar}} \equivalent \orientedpull{\Loc{\XX}{x}}{\Loc{\ZZ}{z}}{\Loc{\YY}{y}} \period
	\end{equation*}
\end{prp}

\begin{proof}
	Consider the diagram $ \fromto{\horn{2}{2}}{\Fun(\horn{2}{2},\Top_{\infty})} $ defined by the diagram
	\begin{equation}\label{diag:locoforientedprod}
		\begin{tikzcd}[sep=2.75em]
		    \Path{\XX} \arrow[r, "\Path{\flowerstar}"] \arrow[d, "\pr_{1,\ast}"'] & \Path{\ZZ} \arrow[d, "\pr_{1,\ast}" description] & \Path{\YY} \arrow[l, "\Path{\glowerstar}"'] \arrow[d, "\pr_{1,\ast}"] \\
			\XX \arrow[r, "\flowerstar" description] & \ZZ & \YY \arrow[l, "\glowerstar" description] \\
			\Space \arrow[r, equals] \arrow[u, "\xlowerstar"] & \Space \arrow[u, "\zlowerstar" description] & \Space \comma \arrow[l, equals] \arrow[u, "\ylowerstar"']
		\end{tikzcd}
	\end{equation}
	where we have displayed objects of $ \Fun(\horn{2}{2},\Top_{\infty}) $ horizontally, and morphisms in $ \Fun(\horn{2}{2},\Top_{\infty}) $ vertically.
	First taking the (vertical) limit of the diagram \eqref{diag:locoforientedprod} in
	\begin{equation*}
		\Fun(\horn{2}{2},\Top_{\infty})
	\end{equation*}
	we obtain the cospan
	\begin{equation*}
		\begin{tikzcd}[sep=2.75em]
			\Loc{\XX}{x} \arrow[r, "\flowerstar"] & \Loc{\ZZ}{z} & \Loc{\YY}{y} \period \arrow[l, "\glowerstar"']
		\end{tikzcd}
	\end{equation*}
	Then taking the oriented fiber product of the this cospan yields $ \orientedpull{\Loc{\XX}{x}}{\Loc{\ZZ}{z}}{\Loc{\YY}{y}} $.
	On the other hand, by \Cref{lem:pathcompatiblewithofp}, first forming the oriented fiber product horizontally then taking pullbacks vertically yields the localization
	\begin{equation*}
		\Loc{(\orientedpull{\XX}{\ZZ}{\YY})}{\orientedpull{\xlowerstar}{\zlowerstar}{\ylowerstar}} \period
	\end{equation*}
	The claim now follows from the fact that the formation of oriented fiber products commutes with limits \Cref{nul:orientedcommuteswithlimits}.
\end{proof}


\subsection{Localization à la Grothendieck--Verdier}\label{subsec:GVlocalization}

In order to get our hands on geometric examples of localized \topoi, we give another description of $ \Loc{\XX}{x} $ that is akin to Grothendieck and Verdier's original ($1$-toposic) definition of the localization as a limit over étale neighborhoods of $ \xlowerstar $ in $ \XX $ \cite[Exposé VI, 8.4.2]{MR50:7131}.

\begin{dfn}
	Let $ (\XX,\xlowerstar) $ be a pointed \topos.
	The \emph{\category of étale neighborhoods of $ \xlowerstar $}\index[terminology]{etale neighborhood@étale neighborhood} is the pullback
	\begin{equation*}
      \begin{tikzcd}
	      \Nbd(x)  \arrow[dr, phantom, very near start, "\lrcorner", xshift=-0.25em, yshift=0.25em] \arrow[d] \arrow[r] & \Space_{\ast} \arrow[d] \\ 
	       \XX \arrow[r, "\xupperstar"'] & \Space 
      \end{tikzcd}
    \end{equation*}
    formed in $ \Cat_{\infty,\updelta_1} $.

	By \cite[\HTTthm{Corollary}{6.3.5.6} \& \HTTthm{Remark}{6.3.5.7}]{HTT} the \category $ \Nbd(x) $ is equivalent to the full subcategory of $ (\Toppt)_{/(\XX,\xlowerstar)} $ spanned by those objects $ \fromto{(\EE,\elowerstar)}{(\XX,\xlowerstar)} $ with the property that the geometric morphism $ \fromto{\EE}{\XX} $ is étale.

	Please note that $ \Nbd(x) $ is an inverse \category.
\end{dfn}

To provide the limit description of the localization as well as the familiar colimit formula for the stalk $ \xupperstar $, we need to take limits of diagrams indexed by the (not necessarily $\updelta_0$-small) \category $ \Nbd(x) $.
Happily the exact same cofinality argument given in \cite[Exposé IV, 6.8]{MR50:7131} works in the setting of higher topoi, showing that $ \Nbd(x) $ admits a limit-cofinal $\updelta_0$-small subcategory.

\begin{cnstr}\label{cnstr:limloc}
	Let $ \XX $ be a \topos and $ \xlowerstar \in \Pt(\XX) $.
	Then by the Yoneda lemma the stalk functor $ \xupperstar \colon \fromto{\XX}{\Space} $ can be computed as the filtered colimit
	\begin{equation*}
		\xupperstar \equivalent \colim_{(U,u) \in \Nbd(x)^{\op}} \Map_{\XX}(U,-) \period
	\end{equation*}

	The assignment $ \goesto{(U,u)}{\XX_{/U}} $ defines a functor $ E_x \colon \Nbd(x) \to \Top_{\infty,/\XX} $.
	Moreover, the natural forgetful functor
	\begin{equation*}
		\fromto{\Top_{\infty,/E_x}}{\Top_{\infty,/\XX}}
	\end{equation*}
	is a right fibration.
	We write $ \lim_{(U,u) \in \Nbd(x)} \XX_{/U} $ for the limit in $ \Top_{\infty,/\XX} $ (equivalently, in $ \Top_{\infty} $) of the diagram $ E_x $.

	By \Cref{HTT.6.3.5.6}=\allowbreak\HTT{Corollary}{6.3.5.6}, specifying a geometric morphism
	\begin{equation*}
		\fromto{\XX'}{\lim_{(U,u) \in \Nbd(x)} \XX_{/U}}
	\end{equation*}
	is equivalent to specifying a geometric morphism $ \plowerstar \colon \fromto{\XX'}{\XX} $ along with a global section
	\begin{equation*}
		s \in \Gammaup_{\XX',\ast}\left(\lim_{(U,u) \in \Nbd(x)}p^{\ast}U\right)\simeq \lim_{(U,u) \in \Nbd(x)}\Gammaup_{\XX',\ast}p^{\ast}U\period
	\end{equation*}

	Since $ \Loc{\XX}{x} $ is the localization of $ \XX $ at $ \xlowerstar $, we have a natural equivalence $ \xupperstar \equivalent \Gammaup_{\Loc{\XX}{x},\ast} \ell_{x}^{\ast} $ \Cref{nul:localizationislocal}.
	Thus for $ U \in \XX $, we obtain a natural equivalence
	\begin{equation*}
		\lim_{(U,u) \in \Nbd(x)} \xupperstar(U) \equivalent \Gammaup_{\XX_{(x)},\ast}\paren{\lim_{(U,u) \in \Nbd(x)} \ell_{x}^{\ast}(U)} \period
	\end{equation*}
	The global sections $ u \in\xupperstar(U) $ for $ (U,u) \in \Nbd(x) $ together define a global section $ s \in\lim_{(U,u) \in \Nbd(x)} \xupperstar(U) $.
	This provides a comparison geometric morphism 
	\begin{equation*}
		c_{x,\ast} \colon \fromto{\Loc{\XX}{x}}{\lim_{(U,u) \in \Nbd(x)} \XX_{/U}}
	\end{equation*}
	over $ \XX $.
\end{cnstr}

\begin{prp}\label{prop:localizationasetale}
	Let $ \XX $ be \atopos and $ \xlowerstar $ a point of $ \XX $.
	Then the comparison geometric morphism $c_{x,\ast}\colon \fromto{\Loc{\XX}{x}}{\lim_{(U,u) \in \Nbd(x)} \XX_{/U}}$ of \Cref{cnstr:limloc} is an equivalence.
\end{prp}

\begin{proof}
	We wish to show that $ c_{x,\ast} \colon \fromto{\Loc{\XX}{x}}{\lim_{(U,u) \in \Nbd(x)} \XX_{/U}} $ induces an equivalence
	\begin{equation*}
		\equivto{\Top_{\infty,/\Loc{\XX}{x}}}{\Top_{\infty,/E_x}} \period
	\end{equation*}
	Since both projections onto $ \Top_{\infty,/\XX} $ are right fibrations, we are reduced to showing that for every object $ \plowerstar \colon \fromto{\XX'}{\XX} $ of $ \Top_{\infty,/\XX} $ the induced map on fibers of these right fibrations is an equivalence.
	By \Cref{HTT.6.3.5.6}=\allowbreak\HTT{Corollary}{6.3.5.6} the fiber of the right fibration $ \fromto{\Top_{\infty,/E_x}}{\Top_{\infty,/\XX}} $
	over $ \plowerstar \colon \fromto{\XX'}{\XX} $ is given by
	\begin{equation*}
		\{\plowerstar \} \crosslimits_{\Top_{\infty,/\XX}} \Top_{\infty,/E_x} \equivalent \lim_{(U,u) \in \Nbd(x)}\Gammaup_{\XX',\ast}p^{\ast}(U) \comma
	\end{equation*}
	On the other hand, we have equivalences
	\begin{align*}
		\{\plowerstar \} \crosslimits_{\Top_{\infty,/\XX}} \Top_{\infty,/\Loc{\XX}{x}} &\equivalent \Map_{\Funlowerstar(\XX',\XX)}(\plowerstar,\xlowerstar \Gammaup_{\XX',\ast}) \\
		&\equivalent \Map_{\Fun(\XX,\Space)}(\xupperstar,\Gammaup_{\XX',\ast}\pupperstar) \period
	\end{align*}
	By the colimit formula for the stalk (\Cref{cnstr:limloc}), we have natural equivalences
	\begin{align*}
		\Map_{\Fun(\XX,\Space)}(\xupperstar,\Gammaup_{\ast}\pupperstar) &\equivalent \Map_{\Fun(\XX,\Space)}\paren{\colim_{(U,u) \in \Nbd(x)^{\op}} \Map_{\XX}(U,-),\Gammaup_{\XX',\ast}\pupperstar} \\
		&\equivalent \lim_{(U,u) \in \Nbd(x)} \Gammaup_{\XX',\ast}\pupperstar(U) \period
	\end{align*}
	Unwinding definitions, we see that the induced map on fibers 
	\begin{equation*}
		\fromto{\{\plowerstar \} \crosslimits_{\Top_{\infty,/\XX}} \Top_{\infty,/\Loc{\XX}{x}}}{\{\plowerstar \} \crosslimits_{\Top_{\infty,/\XX}} \Top_{\infty,/E_x}}
	\end{equation*}
	is an equivalence.
\end{proof}


\subsection{Coherence of localizations}\label{subsec:cohloc}

In this section we use the Grothendieck–Verdier description of the localization to deduce that $ \Loc{\XX}{x} $ is bounded coherent when $ \XX $ is.
Please note that this is not automatic from \Cref{lem:orientedcoherent}, as points of bounded coherent \topoi need not be coherent in general.

\begin{nul}\label{nul:coherentmorphismofslices}
	Let $ f \colon \fromto{U}{V} $ be a morphism between coherent objects of \atopos $ \XX $.
	Then the geometric morphism $ \flowerstar \colon \fromto{\XX_{/U}}{\XX_{/V}} $ is coherent.
\end{nul}

\begin{lem}\label{lem:boundedcoherentslice}
	Let $ \XX $ be a bounded \topos and $ U \in \XX_{<\infty } $ a truncated object of $ \XX $.
	Then the over \topos $ \XX_{/U} $ is bounded.
\end{lem}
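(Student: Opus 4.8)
We must show that if $\XX$ is a bounded \topos and $U \in \XX_{<\infty}$ is truncated, then $\XX_{/U}$ is bounded. The strategy is to exploit the site-theoretic characterisation of boundedness recorded in \Cref{cnstr:boundedtopoi}: a \topos is bounded if and only if it is an inverse limit in $\Top_{\infty}$ of localic \topoi, equivalently if and only if the natural geometric morphism $\fromto{\YY}{\lim_{n \in \NN^{\op}} L_n(\YY)}$ is an equivalence. So the core of the argument is to relate the $n$-localic reflections $L_n(\XX_{/U})$ to those of $\XX$.

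\textbf{First step: reduce to a statement about truncated slices of localic \topoi.} Since $U$ is truncated, say $U \in \tau_{\leq m-1}\XX$ for some $m \in \NN$, I would first invoke the example recorded earlier in the excerpt (following \Cref{cnstr:localictopoi}, citing \SAG{Lemma}{1.4.7.7}) that for an $n$-localic \topos $\YY$ and a truncated object $V \in \YY$, the slice $\YY_{/V}$ is $n$-localic if and only if $V$ is $(n-1)$-truncated. Concretely: write $\XX \equivalent \lim_{n \in \NN^{\op}} L_n(\XX)$ using boundedness of $\XX$, and let $\pi_{n,\ast} \colon \fromto{\XX}{L_n(\XX)}$ be the projections. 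For $n \geq m$, the object $U$ descends (up to equivalence) to a $(m-1)$-truncated, hence $(n-1)$-truncated, object $U_n \in L_n(\XX)$ with $\piupperstar_n(U_n) \equivalent U$; one gets this because $\piupperstar_n$ induces an equivalence on $(n-1)$-truncated objects and $U$ is $(m-1)$-truncated with $m \leq n$. By the cited fact, each $L_n(\XX)_{/U_n}$ is then $n$-localic.

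\textbf{Second step: identify $\XX_{/U}$ with the inverse limit of the slices.} The key is the commutation of étale slices with inverse limits of \topoi. I would argue that the natural geometric morphism
\begin{equation*}
	\fromto{\XX_{/U}}{\lim_{n \in \NN^{\op}} L_n(\XX)_{/U_n}}
\end{equation*}
is an equivalence. This follows from $\XX \equivalent \lim_n L_n(\XX)$ together with \Cref{HTT.6.3.5.6}=\allowbreak\HTT{Corollary}{6.3.5.6}, which controls étale geometric morphisms over inverse limits: a geometric morphism into the limit of the $L_n(\XX)_{/U_n}$ over $\XX$ amounts to a compatible family of global sections of the $\piupperstar_n(U_n) \equivalent U$, and since the transition maps are compatible, such a family is exactly a global section of $U$ in a \topos mapping to $\XX$, i.e. an object of $(\XX_{/U})$-over-$\XX$. (This is the same mechanism used in \Cref{cnstr:limloc} and \Cref{prop:localisationasetale}.) Since each $L_n(\XX)_{/U_n}$ is $n$-localic, hence in particular localic, $\XX_{/U}$ is exhibited as an inverse limit of localic \topoi, and is therefore bounded by the characterisation in \Cref{cnstr:boundedtopoi}.

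\textbf{Anticipated obstacle.} The main subtlety is the bookkeeping in the first two steps: verifying that $U$ really does descend coherently along the whole tower $\{L_n(\XX)\}_{n \geq m}$ to a compatible system $\{U_n\}$ of $(n-1)$-truncated objects, and that the slices $L_n(\XX)_{/U_n}$ assemble into an inverse diagram whose limit is computed correctly. The descent of $U$ is not hard — it uses only that $\piupperstar_n \colon \equivto{(L_n\XX)_{\leq n-1}}{\XX_{\leq n-1}}$ is an equivalence and that $\tau_{\leq m-1}\XX \subseteq \tau_{\leq n-1}\XX$ for $m \leq n$ — but one must be careful that the identifications $\piupperstar_n(U_n) \equivalent U$ are chosen compatibly with the transition functors $\fromto{L_{n+1}(\XX)}{L_n(\XX)}$, which is automatic since these too restrict to equivalences on sufficiently truncated objects. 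Once this compatibility is in hand, the application of \HTT{Corollary}{6.3.5.6} to pass the limit through the slice construction is routine, and boundedness of $\XX_{/U}$ drops out.
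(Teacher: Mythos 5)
Your proof is correct and takes essentially the same route as the paper's (very terse) argument: exhibit $\XX$ as the inverse limit of its localic reflections, observe that slicing a localic \topos over a suitably truncated object stays localic, and identify $\XX_{/U}$ with the inverse limit of the resulting slices via \HTT{Corollary}{6.3.5.6}. One harmless slip: \SAG{Lemma}{1.4.7.7} says $\YY_{/V}$ is $n$-localic if and only if $V$ is $n$-truncated (not $(n-1)$-truncated), but since your $U_n$ is $(n-1)$-truncated and hence $n$-truncated, the conclusion you draw is still valid.
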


\begin{proof}
	Indeed, if $U$ is $n$-truncated, and if $\XX$ is $N$-localic for some $N\geq n$, then $\XX_{/U}$ in $N$-localic as well (\Cref{exm:nlocalicslive}).
	The claim now follows by exhibiting $\XX$ as an inverse limit of localic \topoi.
\end{proof}

\begin{nul}\label{nul:boundedcohloclimit}
	Let $ \XX $ be a bounded coherent \topos and $ \xlowerstar $ a point of $ \XX $.
	Then the full subcategory
	\begin{equation*}
		\Nbd_{<\infty }^{\coh}(x) \subset \Nbd(x)
	\end{equation*}
	consisting of those neighborhoods $ (U,u) $ such that $ U $ is a truncated coherent object of $ \XX $ is limit-cofinal in $ \Nbd(x) $. 
	Thus \Cref{prop:localizationasetale}, \Cref{nul:coherentmorphismofslices}, and \Cref{lem:boundedcoherentslice} together show that
	\begin{equation*}
		\Loc{\XX}{x} \equivalent \lim_{(U,u) \in \Nbd_{<\infty }^{\coh}(x)} \XX_{/U}
	\end{equation*}
	is an inverse limit in $ \Top_{\infty} $ of bounded coherent \topoi and coherent geometric morphisms.
\end{nul}

From \Cref{cor:SAG.A.8.3.3}=\allowbreak\SAG{Corollary}{A.8.3.3} we deduce the following.

\begin{lem}\label{lem:localizationiscoherent}
	Let $ \XX $ be a bounded coherent \topos and $ \xlowerstar $ a point of $ \XX $.
	Then the localization $ \Loc{\XX}{x} $ is bounded coherent and the geometric morphism $ \ell_{x,\ast} \colon \fromto{\Loc{\XX}{x}}{\XX} $ is coherent.
\end{lem}


\subsection{Geometric examples of localizations}\label{subsec:localexamples}

Now we turn to examples of local \topoi coming from algebraic geometry.
For these examples, please recall \Cref{rem:1localiclocal}.

\begin{exm}[{\cite[Example 1.2(a)]{MR977478}}]
	Let $ W $ be a topological space and $ s \in W $ a \emph{special point} in the sense that the only open set of $ W $ containing $ s $ is $ W $ itself.
	Then it is immediate that the functor $ \fromto{\widetilde{W}}{\Space} $ given by taking the stalk at $ s $ is equivalent to the global sections functor.
	Hence the \topos $ \widetilde{W} $ is local with center $ \xlowerstar \colon \fromto{\Space}{\widetilde{W}} $.
\end{exm}

\begin{subexm}[{\cite[Exposé VI, 8.4.6]{MR50:7131}}]
	Let $ A $ be a local ring with maximal ideal $ \mathfrak{m} $.
	Then the point $ \mathfrak{m} $ of the Zariski space $ \Spec(A)^{\zar} $ is special.
	Hence the Zariski \topos $ (\Spec A)_{\zar} $ is local.
	Moreover, if $ \phi \colon \fromto{A}{A'} $ is a local homomorphism of local rings, then the induced geometric morphism of Zariski \topoi $ \fromto{(\Spec A')_{\zar}}{(\Spec A)_{\zar}} $ is a local geometric morphism.  
\end{subexm}



\begin{exm}[{\cite[Exposé VI, 8.4.4]{MR50:7131}}]
	Let $ X $ be a scheme and $ x \in X $.
	Then the localization of the Zariski \topos of $ X $ at the point $ x $ is the Zariski \topos of the local ring $ \Oup_{X,x} $. 
\end{exm}

\begin{exm}\label{exm:localizationishenselization}
	Let $ X $ be a scheme, and let $ x\to X $ be a point with image $x_0\in X^{\zar}$.
	Suppose $x$ is a \defn{geometric point} in the sense that the residue field $\upkappa(x)$ is a separable closure of $\upkappa(x_0)$.
	Then the localization of the étale \topos of $ X $ at $ x $ is the étale \topos of the \defn{strict localization} $ X_{(x)} \coloneq \Spec \Oup_{X,x_0}^{\sh} $.
	That is,
	\begin{equation*}
		(X_{\et})_{(x)} \simeq (X_{(x)})_{\et} \period
	\end{equation*}

	More generally, for any point $ x \to X $, the evanescent \topos \smash{$ x_{\et}\orientedtimes_{X_{\et}}X_{\et} $} admits an analogous description.
	Write $ \Oup_{X,x_0}^{\hens} $ for the hensilization of the local ring $ \Oup_{X,x_0} $, and let
	\begin{equation*}
		A_x \supset \Oup_{X,x_0}^{\hens}
	\end{equation*}
	denote the unramified extension of $ \Oup_{X,x_0}^{\hens} $ with residue field the separable closure of $ \upkappa(x_0) $ in $ \upkappa(x) $.
	Then there is an equivalence of \topoi
	\begin{equation*}
		x_{\et} \orientedtimes_{X_{\et}} X_{\et} \equivalent (\Spec A_{x})_{\et} \period
	\end{equation*}
\end{exm}

\newpage

\section{\Basechange conditions for oriented fiber products}\label{section:BC}

The goal of this chapter is to prove a basechange result for oriented fiber products of bounded coherent \topoi (\Cref{thm:BCfororientedfibs}).
Our result provides a nonabelian refinement of a basechange result of Gabber \cite[Exposé XI, Théorème 2.4]{MR3309086} as well as one of Moerdijk--Vermeulen \cite[Theorem 2(i)]{MR1731050}.
This basechange result is essential to our \textit{décollage} approach to \textit{stratified higher topoi} in \cref{sec:strattopoi}. 
So that we can first introduce the \basechange theorem in question, a detailed overview of this chapter appears at the end of \cref{subsec:BCcondition}.


\subsection{\Basechange transformations \& \basechange conditions}\label{subsec:BCcondition}

We begin by recalling the \textit{\basechange natural transformation} associated to an oriented square of \topoi.
We are mostly concerned with the `left' \basechange transformation, but have one situation in which we need to consider the `right' \basechange transformation, so we introduce them both here.

\begin{dfn}\label{def:basechangeconditions}
	Consider an oriented square of \categories:
	\begin{equation}\label{square:catBCsquare}
		\begin{tikzcd}
			A \arrow[r, "q_{\ast}" above] \arrow[d, "p_{\ast}" left] & C \arrow[d, "g_{\ast}" right] \arrow[dl, phantom, "\scriptstyle \sigma" below right, "\Longleftarrow" sloped] \\ 
			B \arrow[r, "f_{\ast}" below] & D \period
		\end{tikzcd}
	\end{equation}

	\begin{enumerate}[(\ref*{def:basechangeconditions}.1)]
		\item\label{def:basechangeconditions.1} Assume that the functors $ \flowerstar $ and $ \qlowerstar $ admit left adjoints $ \fupperstar $ and $ \qupperstar $, respectively.
		Write $ \counit_f \colon \fromto{f^{\ast}f_{\ast}}{\id_{B}}$ for the counit and $ \unit_q \colon \fromto{\id_{C}}{q_{\ast}q^{\ast}}$ for the unit.
		The \defn{left \basechange transformation}%
		\index[terminology]{basechange transformation@\basechange transformation}\index[terminology]{transformation@basechange@\basechange}
		associated to the oriented square \eqref{square:catBCsquare} is the composition\index[notation]{BCsigma@$\BC_{\sigma}$}
		\begin{equation*}
			\begin{tikzcd}[sep=2.75em]
				\BC_{\sigma} \colon f^{\ast}g_{\ast} \arrow[r, "f^{\ast}g_{\ast}\unit_q"] & f^{\ast}g_{\ast}q_{\ast}q^{\ast} \arrow[r, "f^{\ast}\sigma q^{\ast}"] & f^{\ast}f_{\ast}p_{\ast}q^{\ast} \arrow[r, "\counit_f p_{\ast}q^{\ast}"] & p_{\ast}q^{\ast} \period 
			\end{tikzcd}
		\end{equation*}	
		We say that the square \eqref{square:catBCsquare} is \defn{left adjointable}\index[terminology]{adjointable}\index[terminology]{left adjointable} or \defn{satisfies the left \basechange condition}\index[terminology]{basechange condition@\basechange condition} if the natural transformation $ \BC_{\sigma} f^{\ast}g_{\ast} \to p_{\ast}q^{\ast} $ is an equivalence.

		\item\label{def:basechangeconditions.2} Assume that the functors $ \plowerstar $ and $ \glowerstar $ admit right adjoints $ \puppershriek $ and $ \guppershriek $, respectively.
		Write $ \counit_p \colon \fromto{\plowerstar \puppershriek}{\id_{B}}$ for the counit and $ \unit_g \colon \fromto{\id_{C}}{\guppershriek \glowerstar}$ for the unit.
		The \defn{right \basechange transformation}\index[terminology]{basechange transformation@\basechange transformation} associated to the oriented square \eqref{square:catBCsquare} is the composition
		\begin{equation*}
			\begin{tikzcd}[sep=2.75em]
				\qlowerstar\puppershriek \arrow[r, "\unit_g \qlowerstar\puppershriek"] & \guppershriek\glowerstar\qlowerstar\puppershriek \arrow[r, "\guppershriek\sigma \puppershriek"] & \guppershriek\flowerstar\plowerstar\puppershriek \arrow[r, "\guppershriek\flowerstar \counit_p"] & \guppershriek\flowerstar \period 
			\end{tikzcd}
		\end{equation*}

	\end{enumerate}
\end{dfn}

\begin{rmk}
	In classical category theory, the adjointability of a commutative square of $ 1 $-categories is often referred to as the \textit{Beck--Chevalley condition}, and the basechange trransformations are often referred  to as \textit{Beck--Chevalley transformations} \cites{MR1731050}[Chapter I, \S3]{MR1787303}.
\end{rmk}

\begin{nul}\label{nul:compositeBC}
	Please observe that given oriented squares of \categories
	\begin{equation*}
		\begin{tikzcd}[sep=2.75em]
			A \arrow[r] \arrow[d] & B \arrow[d] \arrow[r] \arrow[dl, phantom, "\Longleftarrow" sloped, "\scriptstyle \sigma" below right] & C  \arrow[d] \arrow[dl, phantom, "\Longleftarrow" sloped, "\scriptstyle \sigma'" below right]  \\
			A' \arrow[r] & B' \arrow[r]  & C' 
		\end{tikzcd}
	\end{equation*}
	in which the horizontal functors all admit left adjoints, the \basechange morphism of the outer oriented rectangle is equivalent to natural transformation given by the composite of the \basechange morphisms
	\begin{equation*}
		\begin{tikzcd}[sep=2.75em]
			A \arrow[d] & B \arrow[d] \arrow[l] & C  \arrow[d] \arrow[l]  \\
			A' & B' \arrow[l] \arrow[ul, phantom, "\Longleftarrow" sloped, "\scriptstyle \BC_{\sigma}" below left] & C' \period \arrow[ul, phantom, "\Longleftarrow" sloped, "\scriptstyle \BC_{\sigma'}" below left] \arrow[l]
		\end{tikzcd}
	\end{equation*}
\end{nul}

The purpose of this chapter is to generalize the Theorem of Gabber--Illusie and Moer\-dijk--Vermeulen that oriented fiber product squares of coherent ordinary topoi satisfy the \basechange condition.
However, the \toposic generalization is a bit more subtle: exactly because coherent geometric morphisms between bounded coherent \topoi only preserve colimits of uniformly truncated filtered diagrams and not all filtered colimits (\Cref{cor:coherentmorphismscommutewithfilteredcolims}), oriented fiber product squares of bounded coherent \topoi only satisfy the weaker \textit{truncated} \basechange condition.

\begin{dfn}
	We say that an oriented square of \topoi and geometric morphisms
	\begin{equation}\label{square:laxBCsquare}
		\begin{tikzcd}
			\WW \arrow[r, "q_{\ast}" above] \arrow[d, "p_{\ast}" left] & \YY \arrow[d, "g_{\ast}" right] \arrow[dl, phantom, "\scriptstyle \sigma" below right, "\Longleftarrow" sloped] \\ 
			\XX \arrow[r, "f_{\ast}" below] & \ZZ \period
		\end{tikzcd}
	\end{equation}
	satisfies the \defn{truncated \basechange condition}%
	\index[terminology]{basechange condition@\basechange condition!truncated}
	if for every truncated object $ F \in \YY_{<\infty} $, the basechange morphism $ \BC_{\sigma}(F) \colon \fromto{\fupperstar\glowerstar(F)}{\pr_{1,\ast}\prupperstar_2(F)} $ is an equivalence in $ \XX $.
\end{dfn}

The following theorem the main result of this chapter:

\begin{thm}\label{thm:BCfororientedfibs}
	Let $f_{\ast}\colon\fromto{\XX}{\ZZ}$ and $g_{\ast}\colon\fromto{\YY}{\ZZ}$ be coherent geometric morphisms between bounded coherent \topoi.
	Then the oriented fiber product square
	\begin{equation}\label{square:orientedfiber}
		\begin{tikzcd}
			\XX\orientedtimes_{\ZZ}\YY \arrow[r, "\pr_{2,\ast}" above] \arrow[d, "\pr_{1,\ast}" left] & \YY \arrow[d, "g_{\ast}" right] \arrow[dl, phantom, "\scriptstyle \sigma" below right, "\Longleftarrow" sloped] \\ 
			\XX \arrow[r, "f_{\ast}" below] & \ZZ 
		\end{tikzcd}
	\end{equation}
	satisfies the truncated \basechange condition.
\end{thm}

\noindent By passing to $1$-localic \topoi in \Cref{thm:BCfororientedfibs}, we deduce Moerdijk and Vermeulen's $ 1 $-toposic \basechange condition \cite[Theorem 2(i)]{MR1731050}.

\begin{cor}\label{cor:BCfororientedfibsdiscrete}
	Let $f_{\ast}\colon\fromto{\XX}{\ZZ}$ and $g_{\ast}\colon\fromto{\YY}{\ZZ}$ be coherent geometric morphisms between coherent $ 1 $-topoi.
	Then the oriented fiber product square of $ 1 $-topoi
	\begin{equation*}
		\begin{tikzcd}
			\XX\orientedtimes_{\ZZ}\YY \arrow[r, "\pr_{2,\ast}" above] \arrow[d, "\pr_{1,\ast}" left] & \YY \arrow[d, "g_{\ast}" right] \arrow[dl, phantom, "\Longleftarrow" sloped] \\ 
			\XX \arrow[r, "f_{\ast}" below] & \ZZ 
		\end{tikzcd}
	\end{equation*}
	satisfies the left \basechange condition.
	That is, the \basechange natural transformation $ \fromto{\fupperstar \glowerstar}{\pr_{1,\ast} \prupperstar_2} $ is an isomorphism.
\end{cor}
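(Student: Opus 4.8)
The plan is to deduce \Cref{thm:BCfororientedfibs} from the \Categorical Deligne Completeness Theorem (\Cref{thm:Delignecompleteness}), reducing the statement about truncated objects to a pointwise statement about stalks, and then to the already-understood behaviour of oriented fibre products on \categories of points (\Cref{lem:Ptpreservesorientedprod}). First I would observe that, by \Cref{lem:orientedcoherent}, the oriented fibre product $\orientedpull{\XX}{\ZZ}{\YY}$ is coherent and locally coherent, and the projections $\pr_{1,\ast}$ and $\pr_{2,\ast}$ are coherent; since the hypercompletion of a locally coherent \topos is again locally coherent (\Cref{lem:hypcoherent}), the \topos $(\orientedpull{\XX}{\ZZ}{\YY})^{\hyp}$ is locally coherent and hypercomplete, hence has enough points by \Cref{thm:Delignecompleteness}. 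Because truncated objects are hypercomplete, checking that $\beta_\tau(F)\colon\fromto{\fupperstar\glowerstar(F)}{\pr_{1,\ast}\prupperstar_2(F)}$ is an equivalence for a truncated $F\in\YY$ amounts to checking, for every point $\xlowerstar$ of $\XX$, that the stalk $\xupperstar\beta_\tau(F)$ is an equivalence in $\Space$.

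The next step is to compute that stalk using localisations. By \Cref{lem:stalksasglobalsections} (together with \Cref{nul:localisationislocal}) the stalk at $\xlowerstar$ is global sections of the pullback to the localisation $\Loc{\XX}{x}$, which by \Cref{lem:localisationiscoherent} is again bounded coherent with $\el_{x,\ast}$ coherent. Since formation of the Beck--Chevalley morphism is compatible with pasting (\Cref{nul:compositeBC}) and with base change along the coherent geometric morphism $\el_{x,\ast}$, I would pull the whole oriented square back along $\el_{x,\ast}\colon\fromto{\Loc{\XX}{x}}{\XX}$. Using \Cref{nul:orientedcommuteswithlimits} and the Grothendieck--Verdier description (\Cref{prop:localisationasetale}, \Cref{nul:boundedcohloclimit}), the pulled-back oriented fibre product is $\orientedpull{\Loc{\XX}{x}}{\ZZ}{\YY}$ (using \Cref{exm:oriendedproduptoloc} if needed to replace $\ZZ$ by a presheaf \topos), and the point $\xlowerstar$ is the centre of $\Loc{\XX}{x}$. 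Since $\Loc{\XX}{x}$ is local, by \Cref{prop:localisationofalocaltopos} the first projection $\orientedpull{\Loc{\XX}{x}}{\ZZ}{\ZZ}\to\Loc{\XX}{x}$ admits a fully faithful section given by nearby cycles (\Cref{prop:nearbycycleslocal}), which lets us identify $\xupperstar\pr_{1,\ast}\prupperstar_2(F)$ with the appropriate global sections over $\orientedpull{\Loc{\XX}{x}}{\ZZ}{\YY}$; on the other side, $\xupperstar\fupperstar\glowerstar(F) = \Gamma_{\Loc{\XX}{x},\ast}\el_x^{\ast}\fupperstar\glowerstar(F)$.

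At this point everything is reduced to the statement that, for a \emph{local} bounded coherent \topos $\WW$ with centre $\wlowerstar$ and coherent geometric morphisms $\fromto{\WW}{\ZZ}\ot\YY$, the bounded Beck--Chevalley morphism for $\orientedpull{\WW}{\ZZ}{\YY}$ is an equivalence at $\wlowerstar$. Here I would argue using \Cref{lem:Ptpreservesorientedprod}: the \category of points of $\orientedpull{\WW}{\ZZ}{\YY}$ is $\commacat{\Pt(\WW)}{\Pt(\ZZ)}{\Pt(\YY)}$, and since $\wlowerstar$ is initial in $\Pt(\WW)$, the fibre of the point \category over $\wlowerstar$ in the first coordinate is just $\commacat{\{\wlowerstar\}}{\Pt(\ZZ)}{\Pt(\YY)} \equivalent \Pt(\YY)_{g(\wlowerstar)/} $ (a comma \category of points of $\YY$ under the image of $\wlowerstar$). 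The morphism $\beta_\tau(F)$ at $\wlowerstar$ then unwinds to the comparison between taking the stalk $g(\wlowerstar)^{\ast}F$ of $F$ at the image point and computing $\prupperstar_2(F)$-global-sections over the slice of $\orientedpull{\WW}{\ZZ}{\YY}$ — and by \Cref{prop:orientedslice} and \Cref{lem:orientedsliceequiv} the relevant slices of the oriented fibre product are themselves oriented fibre products of slices, so a cofinality argument over $\Nbd(g(\wlowerstar))$ (using that filtered colimits of uniformly truncated objects are preserved by coherent pushforwards, \Cref{cor:coherentmorphismscommutewithfilteredcolims}) identifies the two sides. The main obstacle I anticipate is precisely this last identification: carefully matching the colimit-over-neighbourhoods formula for $g(\wlowerstar)^{\ast}F$ with the global-sections-over-the-oriented-slice computation, and verifying that the coherence/boundedness hypotheses are exactly what is needed to commute the relevant filtered colimits past global sections. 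Once $\orientedpull{\XX}{\ZZ}{\YY}$ is known to have enough points and the stalkwise statement is established, the theorem follows; \Cref{cor:BCfororientedfibsdiscrete} is then obtained by applying \Cref{thm:BCfororientedfibs} to the $1$-localic \topoi $\XX'$, $\YY'$, $\ZZ'$ associated to the given coherent $1$-topoi, using \Cref{lem:orientedproddiscrete} to identify $0$-truncated objects of the $\infty$-oriented fibre product with the $1$-toposic oriented fibre product, and noting that on $1$-topoi all objects are truncated so the bounded Beck--Chevalley condition becomes the full Beck--Chevalley condition.
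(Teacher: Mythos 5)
Your deduction of the corollary itself is exactly the paper's: pass to the associated coherent $1$-localic \topoi via \Cref{prop:coherent1localic}, apply \Cref{thm:BCfororientedfibs}, and restrict to $0$-truncated objects using \Cref{lem:orientedproddiscrete} (the observation that every object of a $1$-topos is truncated, so the bounded condition becomes the full one, is the same point the paper makes implicitly).

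Most of your proposal, however, is a re-derivation of \Cref{thm:BCfororientedfibs}, which the corollary is entitled simply to cite. Your strategy there --- Deligne Completeness to reduce to stalks, then localisation at a point and the Grothendieck--Verdier description --- matches the paper's, but the arguments part ways at exactly the step you flag as ``the main obstacle''. The paper does not run a cofinality argument over neighbourhoods matching the colimit formula for the stalk against global sections over oriented slices, and it does not argue through \categories of points (which by themselves cannot detect an equivalence of sheaves). Instead it identifies the pullback of the oriented square along $\el_{x,\ast}$ with $\orientedpull{\Loc{\XX}{x}}{\Loc{\ZZ}{z}}{\Loc{\YY}{z}}$ (this is where \Cref{prop:orientedslice} and \Cref{lem:orientedsliceequiv} enter), invokes \Cref{lem:pr2islocal} to see that the second projection exhibits this oriented fibre product as \emph{local} over $\Loc{\YY}{z}$, and then applies \Cref{lem:orientedlocalstalk}: both $\xupperstar\fupperstar\glowerstar$ and $\xupperstar\pr_{1,\ast}\prupperstar_2$ are identified with the global sections functor of $\Loc{\YY}{z}$, which is corepresented by the terminal object, so the two sides agree for formal reasons, with no filtered-colimit bookkeeping. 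That mechanism is what makes the hard point you anticipate evaporate; as written, your final identification is a genuine gap that still needs to be closed, and the route through \Cref{lem:pr2islocal} and \Cref{lem:orientedlocalstalk} is the cleaner way to close it.
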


\begin{proof}
	Write $ \XX' $, $ \YY' $, and $ \ZZ' $ for the $ 1 $-localic \topoi associated to $ \XX $, $ \YY $, and $ \ZZ $, respectively.
	Combining the equivalence between coherent $ 1 $-localic \topoi and coherent $ 1 $-topoi (\Cref{prop:coherent1localic}) with \Cref{thm:BCfororientedfibs} shows that the oriented fiber product square of \topoi
	\begin{equation*}
		\begin{tikzcd}
			\XX' \orientedtimes_{\ZZ'} \YY' \arrow[r] \arrow[d] & \YY' \arrow[d] \arrow[dl, phantom, "\Longleftarrow" sloped] \\ 
			\XX' \arrow[r] & \ZZ' 
		\end{tikzcd}
	\end{equation*}
	satisfies the truncated \basechange condition.
	We conclude by restricting to $ 0 $-trun\-cated objects and applying \Cref{lem:orientedproddiscrete}.
\end{proof}

We now give an overview of the rest of the chapter.
The chapter is broken into two parts: \cref{subsec:BCexamples,subsec:BCapplications,subsec:BCstable} provide examples and applications of \Cref{thm:BCfororientedfibs} that do no require understanding its proof, and \cref{subsec:localbBCc,subsec:orientedfunctoriality,subsec:proofofBC} are dedicated to the proof of \Cref{thm:BCfororientedfibs}.

\Cref{subsec:BCexamples} provides some example situations where the basechange condition for oriented fiber products can be easily verified.
\Cref{subsec:BCapplications} provides some example applications of \Cref{thm:BCfororientedfibs}; notably we generalize \cites[Exposé X, Corollaire 1.7]{MR50:7129}[Theorem 5.3]{Chough:Proper} by showing that if $ X $ and $ Y $ are coherent schemes over a separably closed field $ k $ and $ Y $ is proper, then the profinite étale homotopy type of $ X \cross_{\Spec k} Y $ coincides with the product of the profinite étale homotopy types of $ X $ and $ Y $.
In \cref{subsec:BCstable} we investigate the stable consequences of \Cref{thm:BCfororientedfibs} and deduce a generalization of the derived categories basechange theorem for oriented fiber products of Gabber--Illusie \cite[Exposé XI, Théorème 2.4]{MR3309086}.

We then embark on our proof of \Cref{thm:BCfororientedfibs}, which is inspired by the proof of the Gabber--Illusie basechange theorem.
Just like how the proof of the proper basechange theorem in étale cohomology reduces to the case where two of the schemes involved are spectra of local rings, our proof of \Cref{thm:BCfororientedfibs} reduces to the case where the \topoi $ \XX $ and $ \ZZ $ are local.
In \Cref{subsec:localbBCc} we prove that fiber product squares obtained by pulling back along a localization $ \ell_{x,\ast} \colon \fromto{\Loc{\XX}{x}}{\XX} $ satisfy the truncated \basechange condition (\Cref{cor:pullbacklocalizationBC}); this is one of the key ingredients that allows us to reduce the proof of \Cref{thm:BCfororientedfibs} to the case where $ \XX $ and $ \ZZ $ are local.
\Cref{subsec:orientedfunctoriality} discusses the functoriality of oriented fiber products in oriented morphisms of cospans that we need to deduce \Cref{thm:BCfororientedfibs} from the contents of  \Cref{subsec:localbBCc}.
In \Cref{subsec:proofofBC} we put everything together and prove \Cref{thm:BCfororientedfibs}.


\subsection{Examples of the \basechange condition}\label{subsec:BCexamples}

In this section we provide a few examples of (oriented) squares that are easily seen to satisfy the \basechange condition.
None of these examples are used in the sequel.
The first two examples are due to an observation of Gabber \cite[Exposé XI, Remarque 4.9]{MR3309086}.

\begin{exm}\label{ex:vanishingBC}
	Let $ \flowerstar \colon \fromto{\XX}{\ZZ} $ be a geometric morphism of \topoi.
	Then from the equivalence $ \near_f^{\ast} \equivalent \pr_{1,\ast} \colon \fromto{\orientedpull{\XX}{\ZZ}{\ZZ}}{\XX} $ and the fact that $ \pr_{2,\ast} \near_{f,\ast} \equivalent \flowerstar $ (\Cref{prop:nearbycycleslocal}), we have equivalences
	\begin{equation*}
		\pr_{1,\ast}\prupperstar_2 \equivalent \near_f^{\ast} \prupperstar_2 \equivalent \fupperstar \period 
	\end{equation*}
	From this we deduce the left \basechange condition for the defining oriented square of the evanescent \topos:
	\begin{equation*}
		\begin{tikzcd}
			\orientedpull{\XX}{\ZZ}{\ZZ} \arrow[r, "\pr_{2,\ast}"] \arrow[d, "\pr_{1,\ast}"'] & \ZZ \arrow[d, equals] \arrow[dl, phantom, "\Longleftarrow" sloped] \\ 
			\XX \arrow[r, "f_{\ast}"'] & \ZZ \period
		\end{tikzcd}
	\end{equation*}
\end{exm}

\begin{exm}\label{ex:covanishingBC}
	Dually, let $ \glowerstar \colon \fromto{\YY}{\ZZ} $ be a geometric morphism of \topoi.
	From \Cref{prop:conearbycyclescolocal} we see that the defining oriented square of the coëvanescent \topos $ \orientedpull{\ZZ}{\ZZ}{\YY} $ satisfies the left \basechange condition.
\end{exm}

As noted by Johnstone--Moerdijk \cite[Remark 2.5]{MR977478}, pullbacks along local geometric morphisms also satisfy the \basechange condition.

\begin{exm}\label{ex:localBC}
	Consider a pullback square of \topoi 
	\begin{equation}\label{square:localpullbackBC}
      \begin{tikzcd}[sep=2.25em, text height=1.5ex, text depth=0.25ex]
	       \XX \cross_{\ZZ} \YY  \arrow[dr, phantom, very near start, "\lrcorner", xshift=-0.25em, yshift=0.25em] \arrow[d, "\gbar_{\ast}"'] \arrow[r, "\fbar_{\ast}"] & \YY \arrow[d, "\glowerstar"] \\ 
	       \XX \arrow[r, "\flowerstar"'] & \ZZ \comma
      \end{tikzcd}
    \end{equation}
    where $ \glowerstar $ exhibits $ \YY $ as local over $ \ZZ $ with center $ \guppershriek $.
    Then by \Cref{nul:localstableunderpullback} the geometric morphism $ \gbar_{\ast} $ exhibits $ \XX \cross_{\ZZ} \YY $ as local over $ \XX $ and the center $ \gbar^{!} $ of $ \gbar_{\ast} $ satisfies $ \fbar_{\ast} \gbar^{!} \equivalent \guppershriek \flowerstar $.
    We have adjunctions
    \begin{equation*}
    	\fupperstar \glowerstar \leftadjoint \guppershriek \flowerstar \quad \text{and} \quad \gbar_{\ast} \fbar^{\ast} \leftadjoint \fbar_{\ast} \gbar^{!} \comma
    \end{equation*}
    so the equivalence $ \fbar_{\ast} \gbar^{!} \equivalent \guppershriek \flowerstar $ shows that $ \fupperstar \glowerstar \equivalent \gbar_{\ast} \fbar^{\ast} $.
    From this equivalence which we deduce the left \basechange condition for the square \eqref{square:localpullbackBC}.
\end{exm}

\begin{exm}
	Let $ \flowerstar \colon \fromto{\XX}{\ZZ} $ and $ \glowerstar \colon \fromto{\YY}{\ZZ} $ be geometric morphisms of \topoi.
	Decompose the oriented fiber product $ \orientedpull{\XX}{\ZZ}{\YY} $ as an iterated pullback
	\begin{equation}\label{diag:orientedfibdecomp}
		\begin{tikzcd}[sep=2.5em]
			\orientedpull{\XX}{\ZZ}{\YY} \arrow[d] \arrow[r] \arrow[dr, phantom, very near start, "\lrcorner", xshift=-0.25em, yshift=0.25em] & \orientedpull{\ZZ}{\ZZ}{\YY} \arrow[r] \arrow[d] \arrow[dr, phantom, very near start, "\lrcorner", xshift=-0.25em, yshift=0.25em] & \YY \arrow[d, "\glowerstar"] \\
			\orientedpull{\XX}{\ZZ}{\ZZ} \arrow[r] \arrow[d] \arrow[dr, phantom, very near start, "\lrcorner", xshift=-0.25em, yshift=0.25em] & \Path{\ZZ} \arrow[r] \arrow[d] & \ZZ \arrow[d, equals] \arrow[dl, phantom, "\Longleftarrow" sloped] \\ 
			\XX \arrow[r, "\flowerstar"'] & \ZZ \arrow[r, equals] & \ZZ \period
		\end{tikzcd}
	\end{equation}
	It follows from \Cref{ex:localBC} that local geometric morphisms are \textit{proper} \HTT{Definition}{7.3.1.4}.
	Assume that $ \glowerstar $ is a proper geometric morphism. 
	Then by applying \Cref{ex:vanishingBC} to the lower right square of \eqref{diag:orientedfibdecomp}, \Cref{ex:pathtoposislocalandcolocal,ex:localBC} to the lower left square of \eqref{diag:orientedfibdecomp}, and the properness of $ \glowerstar $ to the top squares of \eqref{diag:orientedfibdecomp}, we deduce that the three pullback squares in \eqref{diag:orientedfibdecomp} and the oriented square all satisfy the left \basechange condition, and that $ \pr_{1,\ast} \colon \fromto{\orientedpull{\XX}{\ZZ}{\YY}}{\XX} $ is a proper geometric morphism.
\end{exm}


\subsection[Applications of the basechange theorem for oriented fiber products]{Applications of the basechange theorem for oriented fiber \\ products}\label{subsec:BCapplications}

In this section we give a number of applications of our basechange theorem (\Cref{thm:BCfororientedfibs}).

\begin{exm}\label{exm:BeckChevalleyoverStone}
	Let $ \flowerstar \colon \fromto{\XX}{\ZZ} $ and $ \glowerstar \colon \fromto{\YY}{\ZZ} $ be geometric morphisms of \topoi, and assume that $ \XX $ and $ \YY $ are bounded coherent and $ \ZZ $ is Stone.
	Then by \Cref{cor:morphismtoStoneiscoherent}=\allowbreak\SAG{Corollary}{E.3.1.2}, $ \flowerstar $ and $ \glowerstar $ are automatically coherent.
	Since $ \ZZ $ is Stone, \Cref{prp:orientedfiboverStoneisfib} shows that
	\begin{equation*}
		\orientedpull{\XX}{\ZZ}{\YY} \equivalent \XX \cross_{\ZZ} \YY \period
	\end{equation*}
	Hence by \Cref{thm:BCfororientedfibs} we see that the (unoriented) pullback square
	\begin{equation}\label{sq:pullbackBCStone}
      \begin{tikzcd}[sep=2.25em]
	       \XX \cross_{\ZZ} \YY  \arrow[dr, phantom, very near start, "\lrcorner", xshift=-0.25em, yshift=0.25em] \arrow[d, "\pr_{1,\ast}"'] \arrow[r, "\pr_{2,\ast}"] & \YY \arrow[d, "\glowerstar"] \\ 
	       \XX \arrow[r, "\flowerstar"'] & \ZZ 
      \end{tikzcd}
    \end{equation}
    satisfies the truncated \basechange condition.

\end{exm}

\begin{subexm}\label{subexm:profinshapepreservesproducts}
	Set $ \ZZ = \Space $ in \Cref{exm:BeckChevalleyoverStone}, so that $ \flowerstar = \Gammaup_{\XX,\ast} $ and $ \glowerstar = \Gammaup_{\YY,\ast} $.
	Since left exact functors preserve truncated objects, we see that for any truncated space $ K $ the natural morphism
	\begin{equation*}
		\fromto{\Gammaup_{\XX,\ast} \Gammaupperstar_{\XX} \Gammaup_{\YY,\ast} \Gammaupperstar_{\YY}(K)}{\Gammaup_{\XX,\ast} \pr_{1,\ast}\prupperstar_{2} \Gammaupperstar_{\YY}(K)}
	\end{equation*}
	in $ \Space $ is an equivalence.
	Hence the natural morphism
	\begin{equation*}
		\fromto{\Shape(\XX) \of \Shape(\YY)}{\Shape(\XX \cross \YY)}
	\end{equation*}
	of prospaces becomes an equivalence after protruncation.
	Since the composition monoidal structure and cartesian monoidal structre on $ \Pro(\Space) $ coincide on the full subcategory $ \Space_{\pi}^{\wedge} $ of profinite spaces (\Cref{rec:profinspace}), we deduce that
	\begin{equation*}
		\equivto{\Shapeprofin(\XX \cross \YY)}{\Shapeprofin(\XX) \cross \Shapeprofin(\YY)} \period
	\end{equation*}
	Combining this with \Cref{cor:protruncshapeinverselim} we see that the profinite shape $ \Shapeprofin \colon \fromto{\Topbc}{\Spaceprofin} $ preserves both inverse limits and finite products.
\end{subexm}

\begin{exm}\label{exm:properbasechangeforschemes}
	Let $ k $ be a separably closed field and let $ X $ and $ Y $ be $ k $-schemes.
	Assume that $ X $ is coherent and $ Y $ is proper over $ k $.
	Then combining Chough's work generalizing the proper basechange theorem in étale cohomology to the nonabelian setting \cite[Theorem 5.3]{Chough:Proper} with \Cref{subexm:profinshapepreservesproducts}, we see that the natural geometric morphism
	\begin{equation}\label{eq:geomprodcomp}
		\fromto{(X \cross_{\Spec k} Y)_{\et}}{X_{\et} \crosslimits_{(\Spec k)_{\et}} Y_{\et} \equivalent X_{\et} \cross Y_{\et}}
	\end{equation}
	induces an equivalence on profinite shapes.
	Equivalently, the natural geometric morphism \eqref{eq:geomprodcomp} induces an equivalence on lisse sheaves (\Cref{cor:SAG.E.2.3.3}=\allowbreak\SAG{Corollary}{E.2.3.3}).
\end{exm}


\subsection{Stable consequences of nonabelian basechange}\label{subsec:BCstable}

Let $ R $ be a commutative ring and 
\begin{equation*}
	\begin{tikzcd}
		\XX\orientedtimes_{\ZZ}\YY \arrow[r, "\pr_{2,\ast}" above] \arrow[d, "\pr_{1,\ast}" left] & \YY \arrow[d, "g_{\ast}" right] \arrow[dl, phantom, "\scriptstyle \sigma" below right, "\Longleftarrow" sloped] \\ 
		\XX \arrow[r, "f_{\ast}" below] & \ZZ 
	\end{tikzcd}
\end{equation*}
an oriented fiber product square of coherent $ 1 $-topoi and coherent geometric morphisms.
Gabber and Illusie proved the following stable variant of \Cref{thm:BCfororientedfibs}: for any object $ F \in \Dup(\YY;R) $ that is bounded-above with respect to the natural \tstructure on $ \Dup(\YY;R) $ (\Cref{rec:tstructureonDR}), the basechange morphism  
\begin{equation*}
	\fromto{\fupperstar \glowerstar(F)}{\pr_{1,\ast} \prupperstar_2(F)}
\end{equation*}
is an equivalence \cite[Exposé XI, Théorème 2.4]{MR3309086}.
In this section, we explain how to deduce this result of Gabber--Illusie from \Cref{thm:BCfororientedfibs}.
We also show that the result holds more generally when $ \XX $, $ \YY $, and $ \ZZ $ are bounded coherent \topoi and $ R $ is replaced by a connective $ \Eup_1 $-ring spectrum (\Cref{prop:spectralBC,exm:GabberBC}).

The proof ultimately reduces to the fact that the \basechange morphisms are compatible with the forgetful functors from sheaves of $ R $-module spectra to sheaves of spaces; this fact is elementary, but we could not locate it elsewhere in the literature.
To explain this fact, we begin by recalling the basics of \textit{stabilization} and sheaves of $ R $-module spectra.
The reader familiar with this basic fact or more interested in the stable consequences of \Cref{thm:BCfororientedfibs} but not their proofs is encouraged to skip ahead to \Cref{prop:spectralBC}.

\begin{rec}[{stabilization \HA{Definition}{1.4.2.8}}]
	Write $ \Spacefinite \subset \Space $ for the \category of \defn{finite spaces}: the smallest full subcategory of $ \Space $ containing the terminal object and closed under finite colimits.
	Let $ C $ be \acategory with finite limits.
	Recall that the \defn{stabilization}\index[terminology]{stabilization}\index[notation]{Stab@$\Stab$} of $ C $ is the full subcategory
	\begin{equation*}
		\Stab(C) \subset \Fun(\Spacefinitept,C)
	\end{equation*}
	spanned by those functors that preserve the terminal object and carry pushout squares in $ \Spacefinitept $ to pullback squares in $ C $.
	Also recall that the functor
	\begin{equation*}
		\upOmega_C^{\infty} \colon \fromto{\Stab(C)}{C} 
	\end{equation*}
	is defined by evaluation on the $ 0 $-sphere $ \Sup^0 \in \Spacefinitept $.

	Write $ \Sp \colonequals \Stab(\Space) $\index[notation]{Sp@$\Sp$} for the \category of \defn{spectra}.\index[terminology]{spectra}
	If $ C $ is presentable, then the stabilization $ \Stab(C) $ is equivalent to the tensor product of presentable \categories $ C \tensor \Sp $ \HA{Example}{4.8.1.23}.
\end{rec}

\begin{nul}[functoriality of stabilization]\label{nul:Omegainftycommute}
	Let $ F \colon \fromto{C}{D} $ be a left exact functor between \categories with finite limits.
	Then post-composition with $ F $ defines a functor
	\begin{equation*}
		\Stab(F) \colonequals F \of - \colon \fromto{\Stab(C)}{\Stab(D)}
	\end{equation*}
	on stabilizations.
	When it does not cause confusion, we simply denote this induced functor $ \fromto{\Stab(C)}{\Stab(D)} $ by $ F $.
	It is immediate from the definitions that the square
	\begin{equation*}
      \begin{tikzcd}[sep=2.25em]
	       \Stab(C) \arrow[d, "\upOmega_C^{\infty}"'] \arrow[r, "\Stab(F)"] & \Stab(D) \arrow[d, "\upOmega_D^{\infty}"] \\ 
	       C \arrow[r, "F"'] & D 
      \end{tikzcd}
    \end{equation*}
    canonically commutes.
\end{nul}

\begin{nul}[stabilization of adjunctions]
	Let $ \adjto{F}{C}{D}{G} $ be an adjunction between \categories with finite limits, and assume that the left adjoint $ F $ is left exact.
	Then the functor $ F \colon \fromto{\Stab(C)}{\Stab(D)} $ is left adjoint to $ G \colon \fromto{\Stab(D)}{\Stab(C)} $.
\end{nul}

Now let us explain the sense in which stabilization is compatible with basechange morphisms.

\begin{nul}[stabilization of naural transformations]\label{nul:natonStab}
	Let $ F, F' \colon \fromto{C}{D} $ be left exact functors between \categories with finite limits, and let $ \sigma \colon \fromto{F}{F'} $ be a natural transformation.
	Then pointwise appication of $ \sigma $ defines a natural transformation
	\begin{equation*}
		\Stab(\sigma) \colon \fromto{\Stab(F)}{\Stab(F')} \period
	\end{equation*}
	When it does not cause confusion, we simply denote the natural transformation $ \Stab(\sigma) $ by $ \sigma $.

	It is immediate from the definitions that the natural transformation $ \Stab(\sigma) $ is compatible with $ \sigma $ in the following sense: we have a natural identification $ \upOmega_D^{\infty} \Stab(\sigma) = \sigma \upOmega_C^{\infty} $ of natural tranformations
	\begin{equation*}
		F \upOmega_C^{\infty} = \upOmega_D^{\infty} \Stab(F) \to \upOmega_D^{\infty} \Stab(F') = F' \upOmega_C^{\infty} \period
	\end{equation*}
\end{nul}

\begin{nul}[compatibility of stabilization and \basechange morphisms]\label{nul:BCcompatStab}
	Consider an oriented square of \categories and \textit{left exact} functors:
	\begin{equation*}
		\begin{tikzcd}
			A \arrow[r, "q_{\ast}" above] \arrow[d, "p_{\ast}" left] & C \arrow[d, "g_{\ast}" right] \arrow[dl, phantom, "\scriptstyle \sigma" below right, "\Longleftarrow" sloped] \\ 
			B \arrow[r, "f_{\ast}" below] & D \period
		\end{tikzcd}
	\end{equation*}
	Assume that the functors $ \flowerstar $ and $ \qlowerstar $ admit \textit{left exact} left adjoints $ \fupperstar $ and $ \qupperstar $, respectively.
	From \Cref{nul:natonStab} we see that we have a natural identification
	\begin{equation*}
		\upOmega_B^{\infty} \BC_{\Stab(\sigma)} = \BC_{\sigma} \upOmega_C^{\infty} 
	\end{equation*}
	of natural transformations
	\begin{equation*}
		f^{\ast}g_{\ast} \upOmega_C^{\infty} = \upOmega_B^{\infty} \Stab(f^{\ast}) \Stab(g_{\ast}) \to \upOmega_B^{\infty} \Stab(p_{\ast}) \Stab(q^{\ast}) = p_{\ast}q^{\ast} \upOmega_C^{\infty} \period
	\end{equation*}
\end{nul}

Now we generalize to coefficients in any connective $ \Eup_1 $-ring spectrum.

\begin{ntn}
	Let $ \XX $ be \atopos and $ R $ a connective $ \Eup_1 $-ring spectrum.
	Write:
	\begin{itemize}
		\item $ \LMod(R) $ for the \category of left $ R $-module spectra.
		(Note that if $ R $ is an ordinary associative ring, then $ \LMod(R) $ is the derived \category $ \Dup(R) $ obtained from the category of chain complexes of $ R $-modules by formally inverting the quasi-isomorphisms.)

		\item $ \Dup(\XX;R) \colonequals \XX \tensor \LMod(R) $ for the \category of sheaves of (left) $ R $-modules on $ \XX $.

		\item $ \upU_{\XX} \colon \fromto{\Dup(\XX;R)}{\Stab(\XX)} $ for the forgetful functor.
	\end{itemize}
	
	Given a geometric morphism $ \flowerstar \colon \fromto{\XX}{\ZZ} $, we simply write 
	\begin{equation*}
		\flowerstar \colon \fromto{\Dup(\XX;R)}{\Dup(\ZZ;R)}
	\end{equation*}
	for the induced right adjoint functor with left exact left adjoint.
	Note that the induced functors on sheaves of $ R $-module spectra commute with the forgetful functors in the sense that we have canonical identifications
	\begin{equation*}
		\upU_{\ZZ} \flowerstar = \flowerstar \upU_{\XX} \andeq \upU_{\ZZ} \fupperstar = \fupperstar \upU_{\XX} \period
	\end{equation*}
\end{ntn}

The analagoues of \Cref{nul:natonStab} and \Cref{nul:BCcompatStab} remain true when we forget from sheaves of $ R $-module spectra to sheaves of spectra.
The important point is the following:

\begin{nul}\label{nul:BCcompatModR}
	Given an oriented square of \topoi and geometric morphisms
	\begin{equation*}
		\begin{tikzcd}
			\WW \arrow[r, "q_{\ast}" above] \arrow[d, "p_{\ast}" left] & \YY \arrow[d, "g_{\ast}" right] \arrow[dl, phantom, "\scriptstyle \sigma" below right, "\Longleftarrow" sloped] \\ 
			\XX \arrow[r, "f_{\ast}" below] & \ZZ \comma 
		\end{tikzcd}
	\end{equation*}
	we have a natural identification
	\begin{equation*}
		\upU_{\XX} \BC_{\sigma} = \BC_{\sigma} \upU_{\YY} 
	\end{equation*}
	of natural transformations
	\begin{equation*}
		f^{\ast}g_{\ast} \upU_{\YY} = \upU_{\XX} f^{\ast} g_{\ast} \to \upU_{\XX} p_{\ast} q^{\ast} = p_{\ast}q^{\ast} \upU_{\YY} \period
	\end{equation*}
\end{nul}

Finally, to state the main results of this section, let us recall the natural \tstructure on $ \Dup(\XX;R) $.

\begin{conv}\label{conv:homological}
	We use \textit{homological} indexing conventions for our \tstructures. 
	If $ D $ is a stable \category with a \tstructure, then the shift $ G \mapsto G[1] $ is suspension, and we write $ D_{\geq n} \coloneq D_{\geq 0}[n] $ and $ D_{\leq n} \coloneq D_{\leq  0}[n] $.
\end{conv}

\begin{rec}[{\SAG{Proposition}{1.3.2.7}}]\label{rec:tstructureonDR}
	Let $ \XX $ be \atopos.
	Recall that the stabilization $ \Stab(\XX) $ has a natural \tstructure $ (\Stab(\XX)_{\geq 0}, \Stab(\XX)_{\leq 0}) $ defined by saying that $ F \in \Stab(\XX)_{\leq 0} $ if and only if $ \upOmega_{\XX}^{\infty} F $ is a $ 0 $-truncated object of $ \XX $.
	Consequently, for each integer $ n \geq 0 $, an object $ F $ of $ \Stab(\XX) $ is in $ \Stab(\XX)_{\leq n} $ if and only if $ \upOmega_{\XX}^{\infty} F $ is an $ n $-truncated object of $ \XX $.

	Let $ R $ be a connective $ \Eup_1 $-ring spectrum.
	There is a natural \tstructure on $ \Dup(\XX;R) $ given by setting 
	\begin{equation*}
		\Dup(\XX;R)_{\geq 0} \colonequals \upU_{\XX}^{-1}(\Stab(\XX)_{\geq 0}) \andeq \Dup(\XX;R)_{\leq 0} \colonequals \upU_{\XX}^{-1}(\Stab(\XX)_{\leq 0}) \period
	\end{equation*}
\end{rec}

\begin{ntn}
	Let $ S $ be a stable \category with \tstructure $ (S_{\geq 0},S_{\leq 0}) $.
	We write
	\begin{equation*}
		S_{<\infty} \colonequals \Union_{n \in \ZZup} S_{\leq n}
	\end{equation*}
	for the full subcategory of $ S $ spanned by the \defn{\tboundedabove} objects.
\end{ntn}

We are now ready to prove our refinement of the basechange theorem of Gabber--Illusie.
The proof proceeds in two steps.
First we note that it suffices to check the claim in the `universal' case where $ R $ is the sphere spectrum.
We then show that, in this case, the claim follows from the truncated \basechange condition at the level of \topoi.

\begin{prp}\label{prop:spectralBC}
	Let 
	\begin{equation}\label{square:laxBCsquaretostab}
		\begin{tikzcd}
			\WW \arrow[r, "q_{\ast}" above] \arrow[d, "p_{\ast}" left] & \YY \arrow[d, "g_{\ast}" right] \arrow[dl, phantom, "\scriptstyle \sigma" below right, "\Longleftarrow" sloped] \\ 
			\XX \arrow[r, "f_{\ast}" below] & \ZZ \period
		\end{tikzcd}
	\end{equation}
	be an oriented square of \topoi.
	If \eqref{square:laxBCsquaretostab} satisfies the truncated \basechange condition, then for any $ \Eup_1 $-ring spectrum $ R $, the left basechange morphism associated to the oriented square 
	\begin{equation*}\label{square:laxBCsquarestab}
		\begin{tikzcd}
			\Dup(\WW;R) \arrow[r, "q_{\ast}" above] \arrow[d, "p_{\ast}" left] & \Dup(\YY;R) \arrow[d, "g_{\ast}" right] \arrow[dl, phantom, "\scriptstyle \sigma" below right, "\Longleftarrow" sloped] \\ 
			\Dup(\XX;R) \arrow[r, "f_{\ast}" below] & \Dup(\ZZ;R) \period
		\end{tikzcd}
	\end{equation*}
	of stable \categories is an equivalence when restricted to $ \Dup(\YY;R)_{<\infty} \subset \Dup(\YY;R) $.
\end{prp}

\begin{proof}
	Since the forgetful functor $ \upU_{\XX} \colon \fromto{\Dup(\XX;R)}{\Stab(\XX)} $ is conservative, it suffices to show that for all $ F \in \Dup(\YY;R)_{<\infty} $, the morphism
	\begin{equation*}
		\upU_{\XX}\BC(F) \colon \fromto{\upU_{\XX} \fupperstar \glowerstar(F)}{\upU_{\XX} \plowerstar\qupperstar(F)} 
	\end{equation*}
	is an equivalence.
	By \Cref{nul:BCcompatModR}, we see that the morphism $ \upU_{\XX}\BC(F) $ is equivalent to the morphism
	\begin{equation*}
		\BC(\upU_{\YY}F) \colon \fromto{\fupperstar \glowerstar(\upU_{\YY}F)}{\plowerstar\qupperstar(\upU_{\YY}F)} 
	\end{equation*}
	in $ \Stab(\XX) $.

	To see that $ \BC(\upU_{\YY}F) $ is an equivalence, we need to show that for each integer $ n \in \ZZup $, the morphism
	\begin{equation*}
		\upOmega_{\XX}^{\infty-n}\BC(\upU_{\YY}F) \colon \fromto{\upOmega_{\XX}^{\infty-n} \fupperstar \glowerstar(\upU_{\YY}F)}{\upOmega_{\XX}^{\infty-n} \plowerstar\qupperstar(\upU_{\YY}F)} 
	\end{equation*}
	is an equivalence.
	Since both the left and right adjoint in a geometric morphism of \topoi are left exact, applying \Cref{nul:BCcompatStab} we see that the morphism $ \upOmega_{\XX}^{\infty-n}\BC(\upU_{\YY}F) $ is equivalent to the morphism
	\begin{equation*}
		\BC(\upOmega_{\YY}^{\infty-n}\upU_{\YY}F) \colon \fromto{\fupperstar \glowerstar(\upOmega_{\YY}^{\infty-n}\upU_{\YY}F)}{\plowerstar\qupperstar(\upOmega_{\YY}^{\infty-n}\upU_{\YY}F)} \period
	\end{equation*}
	The assumption that $ F \in \Dup(\YY;R)_{<\infty} $ is \tboundedabove guarantees that for all integers $ n \in \ZZup $, the object $ \upOmega_{\YY}^{\infty-n} \upU_{\YY} F $ is truncated.
	Since the square \eqref{square:laxBCsquaretostab} satisfies the truncated \basechange condition, we see that $ \BC(\upOmega_{\YY}^{\infty-n} \upU_{\YY}F) $ is an equivalence.
	This completes the proof.
\end{proof}

\begin{exm}\label{exm:GabberBC}
	Let $f_{\ast}\colon\fromto{\XX}{\ZZ}$ and $g_{\ast}\colon\fromto{\YY}{\ZZ}$ be coherent geometric morphisms between bounded coherent \topoi and let $ R $ be a connective $ \Eup_1 $-ring spectrum.
	\Cref{thm:BCfororientedfibs,prop:spectralBC} show that the left basechange morphism associated to the oriented square 
	\begin{equation*}
		\begin{tikzcd}
			\Dup(\orientedpull{\XX}{\ZZ}{\YY};R) \arrow[r, "\pr_{2,\ast}" above] \arrow[d, "\pr_{1,\ast}" left] & \Dup(\YY;R) \arrow[d, "g_{\ast}" right] \arrow[dl, phantom, "\scriptstyle \sigma" below right, "\Longleftarrow" sloped] \\ 
			\Dup(\XX;R) \arrow[r, "f_{\ast}" below] & \Dup(\ZZ;R) \period
		\end{tikzcd}
	\end{equation*}
	of stable \categories is an equivalence when restricted to $ \Dup(\YY;R)_{<\infty} \subset \Dup(\YY;R) $.
\end{exm}


\subsection{Localizations \& the truncated \basechange condition}\label{subsec:localbBCc}

The remainder of the chapter is dedicated to actually proving \Cref{thm:BCfororientedfibs}.
In this section we prove the following basechange result, which ultimately allows us to reduce to proving \Cref{thm:BCfororientedfibs} in the case where $ \XX $ and $ \ZZ $ are local and $ \flowerstar $ is a local geometric morphism.

\begin{prp}\label{cor:pullbacklocalizationBC}
	Let $ \plowerstar \colon \fromto{\WW}{\XX} $ be a coherent geometric morphism between bounded coherent \topoi.
	Then for any point $ \xlowerstar $ of $ \XX $, the pullback square
	\begin{equation*}
      \begin{tikzcd}[sep=2.25em]
	       \orientedpull{\xtilde}{\XX}{\WW}  \arrow[dr, phantom, very near start, "\lrcorner", xshift=-0.35em, yshift=0.25em] \arrow[d] \arrow[r] & \WW \arrow[d, "\plowerstar"] \\ 
	       \Loc{\XX}{x} \arrow[r, "\ell_{x,\ast}"'] & \XX
      \end{tikzcd}
    \end{equation*}
    satisfies the truncated \basechange condition.
\end{prp}

To prove \Cref{cor:pullbacklocalizationBC}, we use the Grothendieck--Verdier description of the localization (\Cref{prop:localizationasetale}) and the (obvious) fact that pullbacks along étale geometric morphisms satisfy \basechange condition to reduce to a general result on inverse limits (\Cref{prop:inverselimitBC}).


\begin{lem}\label{lem:etaleBC}
	Let $ \flowerstar \colon \fromto{\EE}{\XX} $ and $ \plowerstar \colon \fromto{\WW}{\XX} $ be geometric morphisms of \topoi.
	If $ \flowerstar $ is étale, then the pullback square 
	\begin{equation*}
      \begin{tikzcd}[sep=2.25em, text height=1.5ex, text depth=0.25ex]
	       \EE \cross_{\XX} \WW  \arrow[dr, phantom, very near start, "\lrcorner", xshift=-0.25em, yshift=0.25em] \arrow[d] \arrow[r] & \WW \arrow[d, "\plowerstar"] \\ 
	       \EE \arrow[r, "\flowerstar"'] & \XX
      \end{tikzcd}
    \end{equation*}
    satisfies the left \basechange condition.
\end{lem}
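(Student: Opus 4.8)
The claim is that base change along an étale geometric morphism $\flowerstar \colon \fromto{\EE}{\XX}$ satisfies the (full, not merely bounded) Beck--Chevalley condition. The plan is to exploit the fact that an étale geometric morphism $\flowerstar \colon \fromto{\EE}{\XX}$ has the special structure $\EE \equivalent \XX_{/E}$ for $E \coloneq \flowershriek(1_{\EE})$, with $\fupperstar$ given by $\goesto{F}{F \cross E}$ (more precisely, $\fupperstar(F) = (F \cross E \to E)$) and $\flowershriek$ the forgetful functor. First I would form the pullback $\EE \cross_{\XX} \WW$. Since étale geometric morphisms are stable under base change \HTT{Remark}{6.3.5.7}, the geometric morphism $\fbar_{\ast} \colon \fromto{\EE \cross_{\XX} \WW}{\WW}$ is again étale, and in fact $\EE \cross_{\XX} \WW \equivalent \WW_{/\pupperstar(E)}$ with $\fbar^{\ast} = \goesto{G}{G \cross \pupperstar(E)}$ and $\fbar_{\ast} = \pbar_{\ast}$ the forgetful functor, where $\pbar_{\ast} \colon \fromto{\WW_{/\pupperstar(E)}}{\WW}$ is the étale geometric morphism and $\pbar^{\ast}$ its left adjoint. (This uses the description of pullbacks of slice topoi; compare \Cref{lem:slicepullback}, or one can argue directly from the universal property in \Cref{HTT.6.3.5.6}.)

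The key computation is then to compare $\fupperstar \plowerstar$ with $\pbar_{\ast} \fbar^{\ast}$ as functors $\fromto{\WW}{\EE \equivalent \XX_{/E}}$. For $G \in \WW$, on the one hand $\fupperstar \plowerstar(G) = (\plowerstar(G) \cross E \to E)$. On the other hand $\fbar^{\ast}(G) = G \cross \pupperstar(E) \in \WW_{/\pupperstar(E)}$, and $\pbar_{\ast}$ of this is the object of $\XX_{/E}$ obtained by pushing forward along $\plowerstar$ relative to $E$; concretely, using that $\plowerstar$ carries the étale morphism $\fromto{\pupperstar(E)}{1_{\WW}}$ appropriately and the base-change compatibility of $\plowerstar$ restricted to slices, one has $\pbar_{\ast}(G \cross \pupperstar(E) \to \pupperstar(E)) \equivalent (\plowerstar(G) \cross E \to E)$. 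The cleanest way to see this is to invoke relative adjunctions: both $\fupperstar \plowerstar$ and $\pbar_{\ast} \fbar^{\ast}$ are computed as $\plowerstar$ followed by (or preceded by) the relevant slice operations, and one checks the Beck--Chevalley transformation $\beta_{\id}$ is the identity up to canonical equivalence by unwinding the (co)units. Alternatively, and perhaps most efficiently, I would note that this is the statement that $\plowerstar \colon \fromto{\WW}{\XX}$ carries $\fbar^{\ast}$-cartesian structure correctly — i.e., that $\plowerstar$ commutes with the étale localisation functors — which is exactly \HTT{Remark}{6.3.5.10} (the projection formula / compatibility of pushforward with étale base change).

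The main obstacle is purely bookkeeping: matching up the (co)units of the two adjunctions so that the Beck--Chevalley transformation $\beta_{\id} \colon \fromto{\fupperstar \plowerstar}{\pbar_{\ast} \fbar^{\ast}}$ is identified with the identity equivalence. Once the two functors are identified as $\goesto{G}{(\plowerstar(G) \cross E \to E)}$ it remains to check the identification is the Beck--Chevalley map; here I would either appeal directly to the étale base change statement in \cite[\HTTch{6}]{HTT} (e.g. \HTT{Remark}{6.3.5.10}) or spell out that $\beta_{\id}$ is built from $\fupperstar \plowerstar \eta_{\fbar}$, the $2$-cell of the pullback square (here an equality since the square is strictly commutative with $\tau = \id$), and $\varepsilon_{\flowerstar} \pbar_{\ast} \fbar^{\ast}$, and that each factor is an equivalence because slice topoi and their forgetful functors interact compatibly with pullback. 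No boundedness or coherence hypotheses are needed, which is why the conclusion is the full Beck--Chevalley condition rather than the bounded one.
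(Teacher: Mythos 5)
The paper gives no proof of this lemma at all — it is invoked as an ``(obvious) fact'' in the lead-in to \Cref{cor:pullbacklocalisationBC} and left unjustified — so there is nothing to compare your argument against except the standard argument one would expect, which is exactly what you give. Your proof is correct: identifying $ \EE \equivalent \XX_{/E} $ with $ E = \flowershriek(1_{\EE}) $, the pullback is $ \WW_{/\pupperstar(E)} $ (this is the case $ Z = 1_{\XX} $, $ Y = 1_{\WW} $ of \Cref{lem:slicepullback}), and the key computation is that the pushforward $ \fromto{\WW_{/\pupperstar(E)}}{\XX_{/E}} $ sends $ (H \to \pupperstar(E)) $ to $ (\plowerstar(H) \cross_{\plowerstar\pupperstar(E)} E \to E) $, so that applying it to $ G \cross \pupperstar(E) $ and using that $ \plowerstar $ preserves products gives $ (\plowerstar(G) \cross E \to E) \equivalent \fupperstar\plowerstar(G) $; the remaining identification of this equivalence with $ \beta_{\id} $ is indeed routine unwinding of (co)units. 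Two small remarks: it would strengthen the write-up to make the displayed formula for the slice pushforward explicit rather than gesturing at ``base-change compatibility of $ \plowerstar $ restricted to slices'', and the projection formula you want to cite is \HTT{Proposition}{6.3.5.11} (the reference the paper itself uses elsewhere), not Remark 6.3.5.10. You are also right that no boundedness or coherence hypotheses are needed, which is why the conclusion is the unbounded Beck--Chevalley condition.
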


We fix some useful notation for the result.

\begin{ntn}\label{ntn:inverseBC}
	Let $ \WW,\XX \colon \fromto{I}{\Top_{\infty}} $ be diagrams of \topoi.
	For each morphism $ \alpha \colon \fromto{j}{i} $ in $ I $, we write
	\begin{equation*}
		e_{\alpha,\ast} \colon \fromto{\WW_j}{\WW_i} \quad \text{and} \quad f_{\alpha,\ast} \colon \fromto{\XX_j}{\XX_i}
	\end{equation*}
	for the transition morphisms.
	For each $ i \in I $, we write 
	\begin{equation*}
		\xi_{i,\ast} \colon \fromto{\lim_{i \in I} \WW_i}{\WW_i} \quad \text{and} \quad \pi_{i,\ast} \colon \fromto{\lim_{i \in I} \XX_i}{\XX_i}
	\end{equation*}
	for the projections.
	In addition, we assume that for each morphism $ \alpha \colon \fromto{j}{i} $ in $ I $, the functors
	\begin{equation*}
		e_{\alpha,\ast} \colon \fromto{\WW_{j}}{\WW_i} \andeq f_{\alpha,\ast} \colon \fromto{\XX_{j}}{\XX_i}
	\end{equation*}
	almost preserve filtered colimits (\Cref{def:almostcompactness}).
\end{ntn}

\begin{nul}
	Most importantly, the assumptions of \Cref{ntn:inverseBC} are valid for inverse systems of bounded coherent \topoi and coherent geometric morhisms (\Cref{cor:coherentmorphismscommutewithfilteredcolims}).
\end{nul}

\begin{prp}\label{prop:inverselimitBC}
	Keep the assumptions of \Cref{ntn:inverseBC}.
	Let $ p \colon \fromto{\WW}{\XX} $ be a natural transformation, each of whose components $ p_{i,\ast} \colon \fromto{\WW_i}{\XX_i} $ is almost preserves filtered colimits.
	If for each morphism $ \alpha \colon \fromto{j}{i} $ in $ I $, the square 
	\begin{equation}\label{sq:finiteBCsquare}
		\begin{tikzcd}
		   \WW_j \arrow[d, "p_{j,\ast}"'] \arrow[r, "e_{\alpha,\ast}"] & \WW_{i} \arrow[d, "p_{i,\ast}"] \\ 
		   \XX_j \arrow[r, "f_{\alpha,\ast}"'] & \XX_{i}
		\end{tikzcd}
	\end{equation}
	satisfies the truncated \basechange condition, then for each $ i \in I $ the induced square 
	\begin{equation*}
		\begin{tikzcd}
	       \lim_{i \in I} \WW_i \arrow[d, "\lim_{i \in I} p_{i,\ast}"'] \arrow[r, "\xi_{i,\ast}"] & \WW_{i} \arrow[d, "p_{i,\ast}"] \\ 
	       \lim_{i \in I} \XX_i \arrow[r, "\pi_{i,\ast}"'] & \XX_{i}
      	\end{tikzcd}
	\end{equation*}
	satisfies the truncated \basechange condition.
\end{prp}

\begin{proof}
	Since $ I $ is inverse, for each $ i \in I $, the forgetful functor $ \fromto{I_{/i}}{I} $ is limit-cofinal \cite[\HTTthm{Example}{5.4.5.9} \& \HTTthm{Lemma}{5.4.5.12}]{HTT}.
	Thus we may without loss of generality assume that $ I $ admits a terminal object $ 1 $ and that $ i = 1 $.
	Writing $ q_{\ast} \coloneq \lim_{i \in I} p_{i,\ast} $, we see that we have reduced to showing that the square
	\begin{equation}\label{sq:limitBC}
		\begin{tikzcd}
	       \lim_{i \in I} \WW_i \arrow[d, "q_{\ast}"'] \arrow[r, "\xi_{1,\ast}"] & \WW_{1} \arrow[d, "p_{1,\ast}"] \\ 
	       \lim_{i \in I} \XX_i \arrow[r, "\pi_{1,\ast}"'] & \XX_{1}
      \end{tikzcd}
	\end{equation}
	satisfies the truncated \basechange condition.

	Inverse limits in $ \Top_{\infty} $ are computed in $ \Cat_{\infty,\updelta_1} $ (\Cref{thm:filteredlimsinRTop}=\allowbreak\HTT{Theorem}{6.3.3.1}), so an object of the limit of a diagram $ \YY \colon \fromto{I}{\Top_{\infty}} $ is specified by a compatible system $ \{U_i\}_{i \in I} $ of objects $ U_i \in \YY_i $ along with, for each $ \alpha \colon \fromto{j}{i} $ in $ I $, an equivalence $ \phi_{\alpha}\colon g_{\alpha,\ast}(U_j) \equivalent U_i $, where $ g_{\alpha,\ast} \colon \fromto{\YY_j}{\YY_i} $ is the transition morphism.
	Thus for $ U \in \WW_1 $ we have
	\begin{equation*}
		q_{\ast} \xiupperstar_1(U) \simeq \{p_{i,\ast}\xi_{i,\ast}\xiupperstar_1(U)\}_{i \in I} \comma
	\end{equation*}
	and
	\begin{equation*}
		\piupperstar_1 p_{1,\ast}(U) \simeq \{\pi_{i,\ast}\piupperstar_1 p_{1,\ast}(U)\}_{i \in I} \period
	\end{equation*}
	It therefore suffices to show that for each $ i \in I $, the natural morphism 
	\begin{equation*}
		\pi_{i,\ast}\BC \colon \fromto{\pi_{i,\ast} \piupperstar_1 p_{1,\ast}}{\pi_{i,\ast} q_{\ast} \xiupperstar_1 \equivalent p_{i,\ast} \xi_{i,\ast} \xiupperstar_1}
	\end{equation*}
	induced by the \basechange morphism $ \BC \colon \fromto{\piupperstar_1 p_{1,\ast}}{q_{\ast} \xiupperstar_1} $ is an equivalence when restricted to $ (\WW_{1})_{<\infty} $.

	For $ i \in I $ and $\alpha\colon i\to 1$ the unique morphism, we simply write $ f_{i,\ast} \coloneq f_{\alpha,\ast} $ and $ e_{i,\ast} \coloneq e_{\alpha,\ast} $.
	Note that for every truncated object $ W \in (\WW_{1})_{<\infty} $ we have equivalences
	\begin{align*}
		\pi_{i,\ast} \piupperstar_1 p_{1,\ast}(U) &\equivalent \pi_{i,\ast} \piupperstar_i \fupperstar_i p_{1,\ast}(U)  \\
		&\equivalent \colim_{\alpha \in (I_{/i})^{\op}} f_{\alpha,\ast} \fupperstar_{\alpha} \fupperstar_i p_{1,\ast}(U) && \text{(\Cref{lem:filteredcolimdescription})} \\
		&\equivalence \colim_{[\alpha \colon j \to i ]\in (I_{/i})^{\op}} f_{\alpha,\ast} p_{j,\ast} e_{\alpha}^{\ast} \fupperstar_i(U) \\
		&\equivalent \colim_{\alpha \in (I_{/i})^{\op}} p_{i,\ast} e_{\alpha,\ast} e_{\alpha}^{\ast} e_i^{\ast}(U) \comma
	\end{align*}
	where the third equivalence is by the assumption that the square \eqref{sq:finiteBCsquare} satisfies the truncated basechange condition.
	In addition, \Cref{lem:filteredcolimdescription} and the fact that $ \xiupperstar_i \fupperstar_i \equivalent \xiupperstar_1 $ give equivalences
	\begin{align*}
		p_{i,\ast}\paren{\colim_{\alpha \in (I_{/i})^{\op}} e_{\alpha,\ast} e_{\alpha}^{\ast} e_i^{\ast}(U)} &\equivalent p_{i,\ast} \xi_{i,\ast} \xiupperstar_i \fupperstar_i \equivalent p_{i,\ast} \xi_{i,\ast} \xiupperstar_1(U) 
	\end{align*}
	for every truncated object $ U \in (\WW_{1})_{<\infty} $.
	As left exact functors preserve $ n $-truncatedness for all $ n \geq -2 $, and each $ p_{i,\ast} $ almost preserves filtered colimits by assumption, we see that for every truncated object $ U $ of $ \WW_1 $, the natural morphism
	\begin{align*}
		\fromto{\colim_{\alpha \in (I_{/i})^{\op}} p_{i,\ast} e_{\alpha,\ast} e_{\alpha}^{\ast} e_i^{\ast}(U)}{p_{i,\ast}\paren{\colim_{\alpha \in (I_{/i})^{\op}} e_{\alpha,\ast} e_{\alpha}^{\ast} e_i^{\ast}(U)}}
	\end{align*}
	is an equivalence.
	This provides an equivalence 
	\begin{equation}\label{eq:BCequivontruncatedobjs}
		\equivto{\pi_{i,\ast} \piupperstar_1 p_{1,\ast}(U)}{p_{i,\ast} \xi_{i,\ast} \xiupperstar_1(U)} \period
	\end{equation}
	To conclude, note that the equivalence \eqref{eq:BCequivontruncatedobjs} is homotopic to $ \pi_{1,\ast} \BC(U) $.
\end{proof}

\begin{proof}[Proof of \Cref{cor:pullbacklocalizationBC}]
	Combine \Cref{lem:etaleBC} and \Cref{prop:inverselimitBC}; note that the hypotheses of \Cref{prop:inverselimitBC} are valid by \Cref{nul:boundedcohloclimit,cor:coherentmorphismscommutewithfilteredcolims} (cf. \Cref{cor:SAG.A.8.3.3}=\allowbreak\SAG{Corollary}{A.8.3.3}).
\end{proof}


\subsection{Functoriality of oriented fiber products in oriented diagrams}\label{subsec:orientedfunctoriality}

In this section we discuss the functoriality of the oriented fiber product in oriented diagrams of cospans.
Then we use this additional functoriality to construct some unexpected extra adjoints to the second projection from the oriented fiber product (\Cref{prop:pr2iscoessential}).
In nice cases, this provides a way to check that the \basechange morphism becomes an equivalence after passing to stalks (\Cref{lem:orientedlocalstalk}); this is key to our proof of \Cref{thm:BCfororientedfibs}.

The main results of this section generalize and refine results of Gabber--Illusie \cite[Exposé XI, Proposition 2.3]{MR3309086}.

\begin{nul}\label{nul:laxfunctoriality}
	Suppose that we are given a diagram of \topoi and natural transformations
	\begin{equation*}
		\begin{tikzcd}[sep=2.75em]
			\XX \arrow[r, "\flowerstar"] \arrow[d, "\xlowerstar"'] & \ZZ \arrow[d, "\zlowerstar" description] \arrow[dl, phantom, "\scriptstyle \eta" above left, "\Longleftarrow" sloped] & \YY \arrow[l, "\glowerstar"'] \arrow[d, "\ylowerstar"]  \\
			\XX' \arrow[r, "\flowerstar'"'] & \ZZ' & \YY' \period \arrow[l, "\glowerstar'"] \arrow[ul, phantom, "\scriptstyle \theta" above right, "\Longleftarrow" sloped]
		\end{tikzcd}
	\end{equation*}
	Then by the universal property of the oriented fiber product $ \orientedpull{\XX'}{\ZZ'}{\YY'} $, the diagram
	\begin{equation*}
		\begin{tikzcd}[sep=2em]
			\orientedpull{\XX}{\ZZ}{\YY} \arrow[dd, "\pr_{1,\ast}"'] \arrow[rr, "\pr_{2,\ast}"] & & \YY \arrow[dr, "\ylowerstar"] \arrow[dd, "\glowerstar" description] \arrow[ddll, phantom, "\scriptstyle \sigma" above left, "\Longleftarrow" sloped] & \\
			 & &  & \YY' \arrow[dd, "\glowerstar'"] \arrow[dl, phantom, "\scriptstyle \theta" above left, "\Longleftarrow" sloped] \\ 
			\XX \arrow[rr, "\flowerstar" description] \arrow[dr, "\xlowerstar"'] & & \ZZ \arrow[dr, "\zlowerstar" description] \arrow[dl, phantom, "\scriptstyle \eta" above left, "\Longleftarrow" sloped] & \\
			    & \XX' \arrow[rr, "\flowerstar'"'] & & \ZZ' 
		\end{tikzcd}
	\end{equation*} 
	(functorially) induces a geometric morphism $ \fromto{\orientedpull{\XX}{\ZZ}{\YY}}{\orientedpull{\XX'}{\ZZ'}{\YY'}} $.
	We simply denote this geometric morphism by
	\begin{equation*}
		\orientedpull{\xlowerstar}{\zlowerstar}{\ylowerstar} \colon \fromto{\orientedpull{\XX}{\ZZ}{\YY}}{\orientedpull{\XX'}{\ZZ'}{\YY'}} \comma
	\end{equation*}
	leaving the natural transformations $ \eta $ and $ \theta $ implicit.
	Please note that the geometric morphism $ \orientedpull{\xlowerstar}{\zlowerstar}{\ylowerstar} $ satisfies the obvious relations
	\begin{equation*}
		\pr_{1,\ast} \circ \, (\orientedpull{\xlowerstar}{\zlowerstar}{\ylowerstar}) \equivalent \xlowerstar \pr_{1,\ast} \quad \text{and} \quad \pr_{2,\ast} \circ \, (\orientedpull{\xlowerstar}{\zlowerstar}{\ylowerstar}) \equivalent \ylowerstar \pr_{2,\ast} \period
	\end{equation*}
\end{nul}

\begin{nul}\label{nul:coessentialcospan}
	Suppose that we are given a commutative diagram of \topoi
	\begin{equation}\label{diag:morphismofcospans}
		\begin{tikzcd}[sep=2.75em]
			\XX \arrow[r, "\flowerstar"] \arrow[d, "\xlowerstar"'] & \ZZ \arrow[d, "\zlowerstar" description] & \YY \arrow[l, "\glowerstar"'] \arrow[d, "\ylowerstar"]  \\
			\XX' \arrow[r, "\flowerstar'"'] & \ZZ' & \YY' \comma \arrow[l, "\glowerstar'"]
		\end{tikzcd}
	\end{equation}
	and assume that $ \xlowerstar $, $ \ylowerstar $, and $ \zlowerstar $ are coëssential with centers $ \xuppershriek $, $ \yuppershriek $, and $ \zuppershriek $, respectively. 
	Using the \textit{right} basechange morphisms with respect to the adjunctions $ \xlowerstar \leftadjoint \xuppershriek $, $ \ylowerstar \leftadjoint \yuppershriek $, and $ \zlowerstar \leftadjoint \zuppershriek $ \enumref{def:basechangeconditions}{2} , we obtain a pair of oriented squares 
	\begin{equation}\label{diag:wrongwaylaxsquares}
		\begin{tikzcd}[sep=2.75em]
			\XX' \arrow[r, "\flowerstar'"] \arrow[d, "\xuppershriek"'] & \ZZ' \arrow[d, "\zuppershriek" description] \arrow[dl, phantom, "\Longrightarrow" sloped] & \YY' \arrow[l, "\glowerstar'"'] \arrow[d, "\yuppershriek"]  \\
			\XX \arrow[r, "\flowerstar"'] & \ZZ & \YY \period \arrow[l, "\glowerstar"] \arrow[ul, phantom, "\Longleftarrow" sloped]
		\end{tikzcd}
	\end{equation}
	Note that the natural transformation in the left-hand square of \eqref{diag:wrongwaylaxsquares} points in the \textit{wrong} direction to apply \Cref{nul:laxfunctoriality}.	
\end{nul}

\begin{nul}\label{nul:goodcoessentialcospan}
	Keep the notations of \Cref{nul:coessentialcospan}, and additionally assume that the natural transformation in the left-hand square of \eqref{diag:wrongwaylaxsquares} is an equivalence, so that $ \isomto{\flowerstar \xuppershriek}{\zuppershriek \flowerstar'} $.
	Then by the functoriality of the oriented fiber product in oriented diagrams \Cref{nul:laxfunctoriality}, the diagram \eqref{diag:wrongwaylaxsquares} defines a geometric morphism $ \orientedpull{\xuppershriek}{\zuppershriek}{\yuppershriek} \colon \fromto{\orientedpull{\XX'}{\ZZ'}{\YY'}}{\orientedpull{\XX}{\ZZ}{\YY}} $.	
\end{nul}

The following is now formal.

\begin{prp}\label{prop:pr2iscoessential} 
	With the notations and assumptions of \Cref{nul:goodcoessentialcospan}, the geometric morphism
	\begin{equation*}
		\orientedpull{\xlowerstar}{\zlowerstar}{\ylowerstar} \colon \fromto{\orientedpull{\XX}{\ZZ}{\YY}}{\orientedpull{\XX'}{\ZZ'}{\YY'}}
	\end{equation*}
	is coëssential with center $ \orientedpull{\xuppershriek}{\zuppershriek}{\yuppershriek} \colon \fromto{\orientedpull{\XX'}{\ZZ'}{\YY'}}{\orientedpull{\XX}{\ZZ}{\YY}} $.
\end{prp}


We now explain a particular application of \Cref{prop:pr2iscoessential} that allows us to show that if $ \flowerstar \colon \fromto{\XX}{\ZZ} $ is a local geometric morphism of local \topoi and $ \glowerstar \colon \fromto{\YY}{\ZZ} $ is \textit{any} geometric morphism, then the second projection exhibits $ \orientedpull{\XX}{\ZZ}{\YY} $ as local over $ \YY $.

\begin{nul}\label{nul:definesigma}
	Let $ \flowerstar \colon \fromto{\XX}{\ZZ} $ be a local geometric morphism of local \topoi with centers $ \xlowerstar $ and $ \zlowerstar $, respectively, and let $ \glowerstar \colon \fromto{\YY}{\ZZ} $ be a geometric morphism of \topoi.
	Note that all of the vertical geometric morphisms in the commutative diagram of \topoi 
	\begin{equation*}
		\begin{tikzcd}[sep=2.75em]
			\XX \arrow[r, "\flowerstar"] \arrow[d, "\Gammaup_{\XX,\ast}"'] & \ZZ \arrow[d, "\Gammaup_{\ZZ,\ast}" description] & \YY \arrow[l, "\glowerstar"'] \arrow[d, equals]  \\
			\Space \arrow[r, equals] & \Space & \YY \arrow[l, "\Gammaup_{\YY,\ast}"] 
		\end{tikzcd}
	\end{equation*}
	exhibit the top \topoi as local over the bottom \topoi.
	Since $ \flowerstar $ is a local geometric morphism, applying the discussion of \Cref{nul:coessentialcospan} shows that we are in the situation of \Cref{nul:goodcoessentialcospan}.
	That is to say $ \xlowerstar $, $ \zlowerstar $, and $ \id_{\YY} $ induce a geometric morphism 
	\begin{equation*}
		\orientedpull{\xlowerstar}{\zlowerstar}{\id_{\YY}} \colon \fromto{\YY \equivalent \orientedpull{\Space}{\Space}{\YY}}{\orientedpull{\XX}{\ZZ}{\YY}} \period
	\end{equation*}

\end{nul}

The following is our generalization of \cite[Exposé XI, Proposition 2.3]{MR3309086}.
Note that this generalization is not just \toposic: in our version we don't need to take stalks.

\begin{lem}\label{lem:pr2islocal}
	With the notations of \Cref{nul:definesigma}, the second projection $ \pr_{2,\ast} \colon \fromto{\orientedpull{\XX}{\ZZ}{\YY}}{\YY} $ exhibits $ \orientedpull{\XX}{\ZZ}{\YY} $ as local over $ \YY $ with center
	\begin{equation*}
		\orientedpull{\xlowerstar}{\zlowerstar}{\id_{\YY}} \colon \fromto{\YY \equivalent \orientedpull{\Space}{\Space}{\YY}}{\orientedpull{\XX}{\ZZ}{\YY}} \period
	\end{equation*}
\end{lem}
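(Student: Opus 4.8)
The statement to prove is \Cref{lem:pr2islocal}: given a local geometric morphism $\flowerstar \colon \fromto{\XX}{\ZZ}$ of local \topoi with centres $\xlowerstar$ and $\zlowerstar$, and an arbitrary geometric morphism $\glowerstar \colon \fromto{\YY}{\ZZ}$, the second projection $\pr_{2,\ast} \colon \fromto{\orientedpull{\XX}{\ZZ}{\YY}}{\YY}$ exhibits $\orientedpull{\XX}{\ZZ}{\YY}$ as local over $\YY$ with centre $\orientedpull{\xlowerstar}{\zlowerstar}{\id_{\YY}}$. By \Cref{nul:localequivcond}, this amounts to exhibiting $\orientedpull{\xlowerstar}{\zlowerstar}{\id_{\YY}}$ as a section of $\pr_{2,\ast}$ in the $(\infty,2)$-category $\Top_{\infty}$ whose underlying functor is fully faithful; equivalently, it suffices to show that $\orientedpull{\xlowerstar}{\zlowerstar}{\id_{\YY}}$ is right adjoint to $\pr_{2,\ast}$ (since a right adjoint that is also a section is automatically fully faithful, and conversely).

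The key idea is to recognise $\orientedpull{\xlowerstar}{\zlowerstar}{\id_{\YY}}$ as a \emph{centre of a coëssential geometric morphism}, exactly in the framework set up in \Cref{nul:definesigma} and \Cref{prop:pr2iscoessential}. Concretely, first I would apply \Cref{prop:pr2iscoessential} to the morphism of cospans
\begin{equation*}
	\begin{tikzcd}[sep=2.75em]
		\XX \arrow[r, "\flowerstar"] \arrow[d, "\Gamma_{\XX,\ast}"'] & \ZZ \arrow[d, "\Gamma_{\ZZ,\ast}" description] & \YY \arrow[l, "\glowerstar"'] \arrow[d, equals] \\
		\Space \arrow[r, equals] & \Space & \YY \arrow[l, "\Gamma_{\YY,\ast}"]
	\end{tikzcd}
\end{equation*}
as explained in \Cref{nul:definesigma}: the vertical morphisms $\Gamma_{\XX,\ast}$, $\Gamma_{\ZZ,\ast}$, $\id_{\YY}$ are coëssential with centres $\xlowerstar$, $\zlowerstar$, $\id_{\YY}$ (here $\XX$ and $\ZZ$ being local is precisely the hypothesis), and the left-hand adjoint square is an equivalence $\isomto{\Gamma_{\ZZ,\ast} \flowerstar \xlowerstar}{\Gamma_{\XX,\ast} \xlowerstar}$ — wait, more precisely the relevant left-hand square of the adjoint diagram \eqref{diag:wrongwaylaxsquares} has natural transformation an equivalence because $\flowerstar$ is a \emph{local} geometric morphism, i.e., $\flowerstar \xlowerstar \equivalent \zlowerstar$ (\Cref{def:localgeometricmorphism}). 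This puts us in the situation of \Cref{nul:goodcoessentialcospan}, so \Cref{prop:pr2iscoessential} applies to the induced geometric morphism on oriented fibre products: $\orientedpull{\Gamma_{\XX,\ast}}{\Gamma_{\ZZ,\ast}}{\id_{\YY}} \colon \fromto{\orientedpull{\XX}{\ZZ}{\YY}}{\orientedpull{\Space}{\Space}{\YY}}$ is coëssential with centre $\orientedpull{\xlowerstar}{\zlowerstar}{\id_{\YY}}$.

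It then remains to identify the geometric morphism $\orientedpull{\Gamma_{\XX,\ast}}{\Gamma_{\ZZ,\ast}}{\id_{\YY}}$ with the projection $\pr_{2,\ast}$ under the canonical identification $\orientedpull{\Space}{\Space}{\YY} \equivalent \YY$. This is exactly the content of \Cref{nul:projectionsasglobalsec}, which records that under $\YY \equivalent \orientedpull{\Space}{\Space}{\YY}$ the projection $\pr_{2,\ast} \colon \fromto{\orientedpull{\XX}{\ZZ}{\YY}}{\YY}$ is equivalent to $\orientedpull{\Gamma_{\XX,\ast}}{\Gamma_{\ZZ,\ast}}{\id_{\YY}}$ (via the relations $\pr_{2,\ast} \circ (\orientedpull{\xlowerstar}{\zlowerstar}{\ylowerstar}) \equivalent \ylowerstar \pr_{2,\ast}$ of \Cref{nul:laxfunctoriality} and the uniqueness in the universal property of the oriented fibre product). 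Combining: $\pr_{2,\ast}$ is coëssential with centre $\orientedpull{\xlowerstar}{\zlowerstar}{\id_{\YY}}$, i.e.\ $\orientedpull{\xlowerstar}{\zlowerstar}{\id_{\YY}}$ is right adjoint to $\pr_{2,\ast}$. Finally, since $\pr_{2,\ast} \circ (\orientedpull{\xlowerstar}{\zlowerstar}{\id_{\YY}}) \equivalent \id_{\YY} \circ \pr_{2,\ast}$ applied to $\id_{\YY}$ — more directly, since $\orientedpull{\xlowerstar}{\zlowerstar}{\id_{\YY}}$ is a section of $\pr_{2,\ast}$ by construction (the bottom-right vertical map is $\id_{\YY}$) — its pullback functor is fully faithful, and by \Cref{nul:localequivcond} this shows $\orientedpull{\XX}{\ZZ}{\YY}$ is local over $\YY$ with the asserted centre.

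\textbf{Main obstacle.} The only real work is verifying the hypothesis of \Cref{nul:goodcoessentialcospan} — namely that the natural transformation in the left-hand adjoint square of \eqref{diag:wrongwaylaxsquares} is an equivalence. Unwinding the passage to adjoint squares (\HA{Definition}{4.7.4.13}), this natural transformation is built from the equivalence $\flowerstar \xlowerstar \equivalent \zlowerstar$ witnessing that $\flowerstar$ is local, together with the compatibility of the counits/units of $\Gamma_{\XX,\ast} \leftadjoint \xlowerstar$ and $\Gamma_{\ZZ,\ast} \leftadjoint \zlowerstar$ with $\flowerstar$; this is a diagram-chase that is routine but must be done carefully to ensure the coherences line up. Everything else is a direct citation of \Cref{prop:pr2iscoessential}, \Cref{nul:projectionsasglobalsec}, \Cref{nul:laxfunctoriality}, and \Cref{nul:localequivcond}, so I expect the proof to be short once that identification is in place.
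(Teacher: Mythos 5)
Your proposal is correct and follows essentially the same route as the paper: the paper's proof also deduces coëssentiality of $\pr_{2,\ast}$ directly from \Cref{prop:pr2iscoessential} (the verification that the left-hand adjoint square is an equivalence being exactly the content of \Cref{nul:definesigma}, where locality of $\flowerstar$ gives $\flowerstar\xlowerstar \equivalent \zlowerstar$), and then obtains full faithfulness of the centre from the equivalence $\pr_{2,\ast} \of (\orientedpull{\xlowerstar}{\zlowerstar}{\id_{\YY}}) \equivalent \id_{\YY}$. The only cosmetic difference is that you spell out the identification of $\orientedpull{\Gamma_{\XX,\ast}}{\Gamma_{\ZZ,\ast}}{\id_{\YY}}$ with $\pr_{2,\ast}$ via \Cref{nul:projectionsasglobalsec}, which the paper leaves implicit.
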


\begin{proof}
	The fact that $ \pr_{2,\ast} $ is coëssential with center $ \orientedpull{\xlowerstar}{\zlowerstar}{\id_{\YY}} $ is immediate from \Cref{prop:pr2iscoessential}.
	The full faithfulness of $ \orientedpull{\xlowerstar}{\zlowerstar}{\id_{\YY}} $ follows from the equivalence
	\begin{equation*}
		\pr_{2,\ast} \of \, (\orientedpull{\xlowerstar}{\zlowerstar}{\id_{\YY}}) \equivalent \id_{\YY} \period \qedhere
	\end{equation*}
\end{proof}

In the setting of \Cref{lem:pr2islocal}, we deduce that the \basechange morphism becomes an equivalence after taking its stalk at the center of $ \XX $.

\begin{lem}\label{lem:orientedlocalstalk}
	Consider an oriented square of \topoi
	\begin{equation*}
		\begin{tikzcd}
			\WW \arrow[r, "q_{\ast}" above] \arrow[d, "p_{\ast}" left] & \YY \arrow[d, "g_{\ast}" right] \arrow[dl, phantom, "\scriptstyle \sigma" below right, "\Longleftarrow" sloped] \\ 
			\XX \arrow[r, "f_{\ast}" below] & \ZZ \comma
		\end{tikzcd}
	\end{equation*}
	where $ \qlowerstar $ is a quasi-equivalence, $ \XX $ and $ \ZZ $ are local with centers $ \xlowerstar $ and $ \zlowerstar $, respectively, and $ \flowerstar $ is a local geometric morphism.
	Then the natural transformation
	\begin{equation*}
		\xupperstar \BC_{\sigma} \colon \fromto{\xupperstar \fupperstar \glowerstar}{\xupperstar \plowerstar \qupperstar}
	\end{equation*}
	is an equivalence.
\end{lem}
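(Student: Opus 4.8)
The plan is to reduce \Cref{lem:orientedlocalstalk} to the \emph{bounded}-free statement \Cref{lem:pr2islocal} together with the computation of stalks as global sections on localisations (\Cref{lem:stalksasglobalsections}). First I would observe that since $ \XX $ is local with centre $ \xlowerstar $ and $ \flowerstar $ is a local geometric morphism, we have $ \flowerstar \xlowerstar \equivalent \zlowerstar $, so $ \xupperstar \fupperstar \equivalent \Gamma_{\XX,\ast} \fupperstar \equivalent \Gamma_{\ZZ,\ast} $ by \Cref{lem:globalsecconnected} applied to the quasi-equivalence underlying the coëssential morphism $ \flowerstar $ (here one uses that a local geometric morphism is in particular a quasi-equivalence, \Cref{def:localmorphism}). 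Hence the source of $ \xupperstar \beta_\tau $ is $ \Gamma_{\ZZ,\ast} \glowerstar \equivalent \Gamma_{\YY,\ast} $, again by \Cref{lem:globalsecconnected}. So the task is to identify $ \xupperstar \plowerstar \qupperstar $ with $ \Gamma_{\YY,\ast} $ and check that the Beck--Chevalley map realises this identification.

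The key step is to factor the square through the oriented fibre product: the oriented square defining $ \beta_\tau $ corresponds to a geometric morphism $ \psi(p,q,\tau)_{\ast} \colon \fromto{\WW}{\orientedpull{\XX}{\ZZ}{\YY}} $ with $ \pr_{1,\ast} \psi(p,q,\tau)_{\ast} \equivalent \plowerstar $ and $ \pr_{2,\ast} \psi(p,q,\tau)_{\ast} \equivalent \qlowerstar $, and (by \Cref{nul:compositeBC} applied to the evident factorisation) the Beck--Chevalley morphism $ \beta_\tau $ for the outer square factors as the Beck--Chevalley morphism $ \beta_{\tau_0} \colon \fromto{\fupperstar\glowerstar}{\pr_{1,\ast}\prupperstar_2} $ for the oriented fibre product square followed by the natural map induced by $ \psi(p,q,\tau)^{\ast} $. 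Now apply $ \xupperstar $: since $ \XX $ is local and $ \flowerstar $ is a local geometric morphism, \Cref{lem:pr2islocal} tells us that $ \pr_{2,\ast} \colon \fromto{\orientedpull{\XX}{\ZZ}{\YY}}{\YY} $ exhibits $ \orientedpull{\XX}{\ZZ}{\YY} $ as local over $ \YY $ with centre $ \sigma \coloneq \orientedpull{\xlowerstar}{\zlowerstar}{\id_{\YY}} $. Using \Cref{lem:stalksasglobalsections} (with the roles of the local \topos played by $ \orientedpull{\XX}{\ZZ}{\YY} $ over $ \YY $, whose centre $ \sigma $ satisfies $ \pr_{1,\ast} \sigma \equivalent \xlowerstar $) one computes $ \xupperstar \pr_{1,\ast} \equivalent \sigma^{\ast}\prupperstar_1\pr_{1,\ast} $; more directly, the point $ \sigma $ of $ \orientedpull{\XX}{\ZZ}{\YY} $ satisfies $ \sigma^{\ast} \prupperstar_2 \equivalent \id_{\YY}^{\ast} \equivalent \Gamma_{\YY,\ast}^{\ast}\!\!\cdots $ — concretely, because $ \pr_{1,\ast}\sigma \equivalent \xlowerstar $ and $ \WW $ is local over $ \YY $, taking the stalk at $ \xlowerstar $ of $ \pr_{1,\ast}\prupperstar_2 $ recovers $ \Gamma_{\YY,\ast} $ via the adjunction $ \pr_{1,\ast} \leftadjoint \sigma $ coming from \Cref{lem:pr2islocal} and its dual content (the centre being a section). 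The upshot is $ \xupperstar \pr_{1,\ast}\prupperstar_2 \equivalent \Gamma_{\YY,\ast} $, and a diagram chase shows $ \xupperstar\beta_{\tau_0} $ is the identity under the identifications above; composing with the map induced by $ \psi(p,q,\tau)^{\ast} $, and using that $ \xupperstar\plowerstar\qupperstar \equivalent \Gamma_{\YY,\ast} $ follows from the same computation (as $ \qlowerstar $ is a quasi-equivalence and $ \plowerstar = \pr_{1,\ast}\psi(p,q,\tau)_{\ast} $), we conclude $ \xupperstar\beta_\tau $ is an equivalence.

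The main obstacle I expect is the bookkeeping in the last step: namely showing that after applying $ \xupperstar $ the composite Beck--Chevalley transformation is genuinely an equivalence rather than merely that source and target agree abstractly. The cleanest route is probably to avoid computing $ \beta_\tau $ by hand and instead argue representably — unwind $ \xupperstar \plowerstar \qupperstar $ as $ \Gamma_{\WW',\ast}(\pr_{1}^{\ast}) \cdots $ where $ \WW' \coloneq \Loc{\WW}{w} $ for an appropriate lift of the centre, then use that $ \pr_{2,\ast} $ is local over $ \YY $ (\Cref{lem:pr2islocal}) to see both $ \xupperstar\fupperstar\glowerstar $ and $ \xupperstar\plowerstar\qupperstar $ are canonically $ \Gamma_{\YY,\ast} $, and check the map between them is induced by the counit $ \pr_{2,\ast}\sigma \equivalent \id_{\YY} $, hence an equivalence. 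A secondary subtlety is making precise the identification $ \xupperstar \pr_{1,\ast} \prupperstar_2 \equivalent \Gamma_{\YY,\ast} $: this should follow formally from \Cref{lem:pr2islocal} by noting that $ \Gamma_{\orientedpull{\XX}{\ZZ}{\YY},\ast} \equivalent \Gamma_{\YY,\ast}\pr_{2,\ast} $ and that $ \xlowerstar \equivalent \pr_{1,\ast}\sigma $ with $ \sigma $ the centre, so $ \xupperstar\pr_{1,\ast}\prupperstar_2 \equivalent \sigma^{\ast}\prupperstar_2 $ by \Cref{lem:stalksasglobalsections}$ \,\cdots $, but one must be careful that \Cref{lem:stalksasglobalsections} is applied in the form where the relevant local \topos is $ \orientedpull{\XX}{\ZZ}{\YY} $ \emph{relative to} $ \YY $, not absolutely.
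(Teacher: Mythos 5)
Your overall skeleton --- identify both $ \xupperstar \fupperstar \glowerstar $ and $ \xupperstar \plowerstar \qupperstar $ with the global sections functor $ \Gamma_{\YY,\ast} $ --- is exactly the paper's, but the proposal has a genuine gap at the decisive step. You correctly flag that knowing source and target are each abstractly $ \Gamma_{\YY,\ast} $ does not yet show that the particular map $ \xupperstar \beta_\tau $ is an equivalence, and you propose to close this by a ``diagram chase'' through the factorisation of $ \beta_\tau $ over the oriented fibre product, \Cref{lem:pr2islocal}, and the counit $ \pr_{2,\ast}\sigma \equivalent \id_{\YY} $ --- but you never carry this out, and it is considerably harder than it looks (one would have to track $ \xupperstar $ applied to each factor of the composite Beck--Chevalley transformation of \Cref{nul:compositeBC}). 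The paper's resolution is a one-line Yoneda argument that makes all of this unnecessary: $ \Gamma_{\YY,\ast} $ is corepresented by the terminal object $ 1_{\YY} $, so the space of natural transformations $ \fromto{\Gamma_{\YY,\ast}}{\Gamma_{\YY,\ast}} $ is $ \Map_{\YY}(1_{\YY},1_{\YY}) \equivalent \ast $; hence \emph{any} natural transformation between two functors each equivalent to $ \Gamma_{\YY,\ast} $ is automatically an equivalence. Your parenthetical suggestion to ``argue representably'' gestures at this but never states it, and without it the proof is incomplete.

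Two further points. First, your justification of $ \xupperstar \fupperstar \equivalent \Gamma_{\ZZ,\ast} $ conflates two distinct notions: a \emph{local geometric morphism} between local \topoi (\Cref{def:localgeometricmorphism}) is one preserving centres, and is \emph{not} in general a quasi-equivalence (the centre $ \zlowerstar \colon \fromto{\Space}{\ZZ} $ of a nontrivial local \topos is a local geometric morphism that is not a quasi-equivalence), so \Cref{lem:globalsecconnected} does not apply to $ \flowerstar $. The correct argument uses only that $ \flowerstar $ preserves centres and that $ \ZZ $ is local: $ \xupperstar \fupperstar \equivalent (\flowerstar \xlowerstar)^{\ast} \equivalent \zupperstar \equivalent \Gamma_{\ZZ,\ast} $. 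Second, the detour through the oriented fibre product and \Cref{lem:pr2islocal} to compute $ \xupperstar \plowerstar \qupperstar $ is unnecessary: the direct chain $ \xupperstar \plowerstar \qupperstar \equivalent \Gamma_{\XX,\ast} \plowerstar \qupperstar \equivalent \Gamma_{\WW,\ast} \qupperstar \equivalent \Gamma_{\YY,\ast} $ does the job, the last equivalence being \Cref{lem:globalsecconnected} applied to the quasi-equivalence $ \qlowerstar $ --- which is precisely where that hypothesis enters the proof.
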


\begin{proof}
	We prove the stronger claim that $ \xupperstar \fupperstar \glowerstar \equivalent \xupperstar \plowerstar \qupperstar $ and the space of natural transformations $ \fromto{\xupperstar \fupperstar \glowerstar}{\xupperstar \plowerstar \qupperstar} $ is contractible.
	Since $ \ZZ $ is local we have equivalences
	\begin{equation*}
		\xupperstar \fupperstar \glowerstar \equivalent \zupperstar \glowerstar \equivalent \Gammaup_{\ZZ,\ast} \glowerstar \equivalent \Gammaup_{\YY,\ast} \period
	\end{equation*} 
	Since $ \XX $ is local and $ \qlowerstar $ is a quasi-equivalence, applying \Cref{lem:globalsecconnected} we have equivalences 
	\begin{equation*}
		\xupperstar \plowerstar \qupperstar \equivalent \Gammaup_{\XX,\ast} \plowerstar \qupperstar \equivalent \Gammaup_{\WW,\ast} \qupperstar \equivalent \Gammaup_{\YY,\ast} \period
	\end{equation*} 
	Thus both $ \xupperstar \fupperstar \glowerstar $ and $ \xupperstar \plowerstar \qupperstar $ are equivalent to the global sections functor on $ \YY $.
	We are now done since $ \Gammaup_{\YY,\ast} $ is corepresented by the terminal object of $ \YY $.
\end{proof}


\subsection{Proof of the \basechange condition for oriented fiber products}\label{subsec:proofofBC}

This section is devoted to the proof of \Cref{thm:BCfororientedfibs}.

\begin{proof}[Proof of \Cref{thm:BCfororientedfibs}]
	Write $ \BC \colon \fromto{\fupperstar \glowerstar}{\pr_{1,\ast}\prupperstar_2} $ for the left \basechange natural transformation of the oriented fiber product square \eqref{square:orientedfiber}.
	Notice that since $ \XX $ is bound\-ed coherent, left exact functors preserve truncated objects, and morphisms between truncated objects are truncated, Deligne Completeness \Cref{nul:checkinftyconnonstalks} shows that to prove the claim it suffices to show that for every point $ \xlowerstar \in \Pt(\XX) $ and truncated object $ F \in \YY_{<\infty} $, the morphism
	\begin{equation*}
		\xupperstar \BC(F) \colon \fromto{\xupperstar \fupperstar \glowerstar(F)}{\xupperstar \pr_{1,\ast}\prupperstar_2(F)}
	\end{equation*}
	is an equivalence in $ \Space $.
	We prove this by localizing $ \XX $ at the point $ \xlowerstar $ and reducing to the case where $ \XX $ and $ \ZZ $ are local and $ \flowerstar $ is a local geometric morphism; the claim then follows from \Cref{lem:orientedlocalstalk}.
		
	To reduce to the local case, fix a point $ \xlowerstar \in \Pt(\XX) $, define $ \zlowerstar \coloneq \flowerstar \xlowerstar $, and let $ \fbar_{\ast} \colon \fromto{\Loc{\XX}{x}}{\Loc{\ZZ}{z}} $ be the induced geometric morphism on localizations.
	To simplify notation we write
	\begin{equation*}
		\WW \coloneq \orientedpull{\XX}{\ZZ}{\YY} \comma \qquad \Loc{\WW}{x} \coloneq \Loc{\XX}{x} \cross_{\XX} \WW \comma \andeq \Loc{\YY}{z} \coloneq \Loc{\ZZ}{z} \cross_{\ZZ} \YY \period
	\end{equation*}
	Consider the cube
	\begin{equation}\label{cube:IllusieFundamentalCube}
		\begin{tikzcd}[sep=2.25em]
			& \WW \arrow[rr, "\pr_{2,\ast}"] \arrow[dd, "\pr_{1,\ast}"' near end] & & \YY \arrow[dd, "\glowerstar"] \\
			\Loc{\WW}{x} \arrow[ur, "\ellbar_{x,\ast}"] \arrow[dd, "\plowerstar"'] \arrow[rr, crossing over, "\qlowerstar" near end] & & \Loc{\YY}{z} \arrow[ur, "\ellbar_{z,\ast}" description] \\
			& \XX \arrow[rr, "\flowerstar" near start] & & \ZZ \\
			\Loc{\XX}{x} \arrow[ur, "\ell_{x,\ast}" description] \arrow[rr, "\fbar_{\ast}"'] & & \Loc{\ZZ}{z} \arrow[ur, "\ell_{z,\ast}"'] \arrow[from=uu, crossing over, "\gbar_{\ast}" near start] & \phantom{\ZZ} \comma 
		\end{tikzcd}
	\end{equation}
	formed by pulling back the back vertical face along the bottom horizontal face.
	In the cube \eqref{cube:IllusieFundamentalCube}, the front vertical face is an oriented square, the back vertical face is an oriented fiber product square, all other vertical faces are commutative, and the side faces are pullback squares.
	Moreover, the cube satisfies the following property:
	\begin{enumerate}
		\item[($ \ast $)]\label{thm:BCfororientedfibs.3} The natural transformation between the right adjoints given by the composite of the back and left faces of \eqref{cube:IllusieFundamentalCube} is equivalent to the natural transformation given by the composite of the front and right faces of \eqref{cube:IllusieFundamentalCube}.
	\end{enumerate}
	
	We claim that the front vertical face of \eqref{cube:IllusieFundamentalCube} is an oriented fiber product square. 
	To see this, note that by \Cref{prop:localizationasetale}, the compatibility of the oriented fiber product with limits \Cref{nul:orientedcommuteswithlimits}, the compatibility of oriented fiber products with étale geometric morphisms (\Cref{prop:orientedslice}), and \Cref{lem:orientedsliceequiv}, we have equivalences
	\begin{align*}
		\orientedpull{\Loc{\XX}{x}}{\Loc{\ZZ}{z}}{\Loc{\YY}{z}} &\equivalent \paren{\lim_{U \in \Nbd(x)} \XX_{/U}} \orientedtimes_{\lim_{V \in \Nbd(z)} \ZZ_{/V}} \paren{\lim_{V \in \Nbd(z)} \YY_{/\gupperstar(V)}} \\ 
		&\equivalent \lim_{U \in \Nbd(x)} \lim_{V \in \Nbd(z)} \paren{ \orientedpull{\XX_{/U}}{\ZZ_{/V}}{\YY_{/\gupperstar(V)}} } \\
		&\equivalent \lim_{U \in \Nbd(x)} \lim_{V \in \Nbd(z)} (\orientedpull{\XX}{\ZZ}{\YY})_{/\orientedpull{U}{V}{\gupperstar(V)}} \\
		&\equivalent \lim_{U \in \Nbd(x)} \lim_{V \in \Nbd(z)} (\orientedpull{\XX}{\ZZ}{\YY})_{/\prupperstar_1(U)} \\ 
		&\equivalent \Loc{\XX}{x} \cross_{\XX} \WW = \Loc{\WW}{x} \period
	\end{align*}
	
	Also note that by applying \Cref{lem:pr2islocal} to the front face of \eqref{cube:IllusieFundamentalCube}, we deduce that $ \qlowerstar \colon \fromto{\Loc{\WW}{x}}{\Loc{\YY}{z}} $ exhibits $ \Loc{\WW}{x} $ as local over $ \Loc{\YY}{z} $.

	Now we define natural transformations
	\begin{equation*}
		\alpha^R \colon \fromto{\xupperstar \fupperstar \glowerstar}{\Gammaup_{\Loc{\XX}{x},\ast}\fbar^{\ast} \gbar_{\ast} \ellbar_z^{\ast}} \quad \text{and} \quad \alpha^L \colon \fromto{\xupperstar \pr_{1,\ast} \prupperstar_2}{\Gammaup_{\Loc{\XX}{x},\ast}\plowerstar \qupperstar \ellbar_z^{\ast}} \comma
	\end{equation*}
	which are both equivalences when restricted to $ \YY_{<\infty} $, as follows.
	Write $ \BC^R $ for the \basechange morphism of the right-hand vertical face of \eqref{cube:IllusieFundamentalCube} and $ \BC^L $ for the \basechange morphism of the left-hand vertical face.
	Since the bottom horizontal face of \eqref{cube:IllusieFundamentalCube} commutes, under identification of left adjoints, $ \BC^R $ defines a natural transformation
	\begin{equation*}
		\fbar^{\ast} \BC^R \colon \fromto{\ell_x^{\ast} \fupperstar \glowerstar \equivalent \fbar^{\ast} \ell_z^{\ast}\glowerstar}{\fbar^{\ast} \gbar_{\ast} \ellbar_z^{\ast}} \period
	\end{equation*}
	Let $ \alpha^R $ be the composite
	\begin{equation*}
		\begin{tikzcd}[sep=1.5em]
			\alpha^R \colon \xupperstar \fupperstar \glowerstar \arrow[r, "\sim"{yshift=-0.2em}] & \Gammaup_{\Loc{\XX}{x},\ast}\fbar^{\ast}\ell_z^{\ast} \glowerstar \arrow[rrr, "\Gammaup_{\Loc{\XX}{x},\ast}\fbar^{\ast} \BC^R"] & & & \Gammaup_{\Loc{\XX}{x},\ast}\fbar^{\ast} \gbar_{\ast} \ellbar_z^{\ast} \comma
		\end{tikzcd}
	\end{equation*}
	where the left-hand equivalence is by \Cref{lem:stalksasglobalsections} and the fact that $ \zupperstar = \xupperstar \fupperstar $.
	By \Cref{cor:pullbacklocalizationBC}, $ \BC^R $ is an equivalence when restricted to $ \YY_{<\infty} $; therefore $ \alpha^R $ is also an equivalence when restricted to $ \YY_{<\infty} $.
	Similarly, since the top horizontal face of \eqref{cube:IllusieFundamentalCube} commutes, under identification of left adjoints, $ \BC^L $ defines a natural transformation
	\begin{equation*}
		\BC^L \prupperstar_2 \colon \fromto{\ell_x^{\ast} \pr_{1,\ast}\prupperstar_2}{\plowerstar \ellbar_x^{\ast} \prupperstar_2 \equivalent \plowerstar \qupperstar \ellbar_z^{\ast}} \period
	\end{equation*}
	Let $ \alpha^L $ be the composite
	\begin{equation*}
		\begin{tikzcd}[sep=1.5em]
			\alpha^L \colon \xupperstar \pr_{1,\ast} \prupperstar_2 \arrow[r, "\sim"{yshift=-0.2em}] & \Gammaup_{\Loc{\XX}{x},\ast}\ell_x^{\ast} \pr_{1,\ast} \prupperstar_2 \arrow[rrr, "\Gammaup_{\Loc{\XX}{x},\ast}\BC^L \prupperstar_2"] & & & \Gammaup_{\Loc{\XX}{x},\ast}\plowerstar \qupperstar \ellbar_z^{\ast} \comma
		\end{tikzcd}
	\end{equation*}
	where the left-hand equivalence is ensured by \Cref{lem:stalksasglobalsections}.
	By \Cref{cor:pullbacklocalizationBC}, the natural transformation $ \BC^L $ is an equivalence when restricted to $ \WW_{<\infty} $.
	Since the functor $ \prupperstar_2 $ is left exact we see that $ \alpha^L $ is an equivalence when restricted to $ \YY_{<\infty} $.
	
	Write $ \BC^F \colon \fromto{\fbar^{\ast} \gbar_{\ast}}{\plowerstar\qupperstar} $ for the \basechange morphism for the front vertical face of the cube \eqref{cube:IllusieFundamentalCube}.
	Since $ \qlowerstar \colon \fromto{\Loc{\WW}{x}}{\Loc{\YY}{z}} $ exhibits $ \Loc{\WW}{x} $ as local over $ \Loc{\YY}{z} $, \Cref{lem:orientedlocalstalk} shows that the natural transformation 
	\begin{equation*}
		\Gammaup_{\Loc{\XX}{x},\ast} \BC^F \colon \fromto{\Gammaup_{\Loc{\XX}{x},\ast}\fbar^{\ast} \gbar_{\ast}}{\Gammaup_{\Loc{\XX}{x},\ast}\plowerstar \qupperstar}
	\end{equation*}
	is an equivalence.
	Since $ \alpha^R $ and $ \alpha^L $ are equivalences when restricted to $ \YY_{<\infty} $, to complete the proof it suffices to show that the square
	\begin{equation*}
		\begin{tikzcd}[sep=2.5em]
			\xupperstar \fupperstar \glowerstar \arrow[d, "\xupperstar \BC"'] \arrow[r, "\alpha^R"] & \Gammaup_{\Loc{\XX}{x},\ast}\fbar^{\ast} \gbar_{\ast} \ellbar_z^{\ast} \arrow[d, "\wr"'{xshift=0.1em}, "\Gammaup_{\Loc{\XX}{x},\ast} \BC^F \ellbar_z^{\ast}"] \\
			\xupperstar \pr_{1,\ast} \prupperstar_2 \arrow[r, "\alpha^L"'] & \Gammaup_{\Loc{\XX}{x},\ast}\plowerstar \qupperstar \ellbar_z^{\ast}
		\end{tikzcd}
	\end{equation*}
	commutes.
	This is immediate from the property (\hyperref[thm:BCfororientedfibs.3]{$ \ast $}) combined with \Cref{nul:compositeBC}.
\end{proof}


\newpage

\part{Stratified higher topos theory}\label{part:strattopoi}

In this part, we import the theory of stratifications into higher topos theory (\cref{sec:strattopoi}).
In \cref{sec:spectraltopoi} we introduce a class of bounded coherent \topoi called \textit{spectral} \topoi.
These are the bounded coherent stratified \topoi all of whose strata are Stone \topoi.
The chief example of a spectral \topos is the étale \topos of a coherent scheme (\Cref{exm:etaleisspectra}).
We then prove our \Categorical Hochster Duality Theorem (\Cref{thm:inftyHochster}) which shows that the \category of profinite stratified spaces is equivalent to the \category of spectral \topoi.
In \cref{sec:profinstratshape} we use \Categorical Hochster Duality to provide a stratified refinement of the profinite shape -- the \textit{profinite stratified shape}, and provide stratified refinement of the main results on the profinite shape discussed in \cref{subsec:profinshape}.


\section{Stratified higher topoi}\label{sec:strattopoi}

In this chapter we introduce the theory of stratifications for higher topoi.
Much of the theory of stratified \topoi introduced here closely resembles the theory of stratified spaces introduced in \Cref{sec:hothystratspaces}.
The main result of this chapter is to make this relationship precise: we prove that the assignment $ \goesto{\Pi}{\Fun(\Pi,\Space)} $ defines a fully faithful embedding of profinite stratified spaces into stratified \topoi (\Cref{prp:fullfaithfulnessoftilde}). 
In the following chapter, we provide an intrinsic characterization of the stratified \topoi that arise via this embedding.

\Cref{sec:Stilde} studies the \topos of sheaves on a spectral topological space.
In \cref{subset:toposstratspectral} we introduce stratified \topoi and explore their basic properties.
\Cref{subsec:naturalstrat} explains why every coherent \topos is naturally stratified by its $ 0 $-localic reflection.
\Cref{subsec:Pitilde} explains how stratified spaces give rise to stratified \topoi.
In particular, we show that if $ \fromto{\Pi}{P} $ is an $ n $-truncated \pifinite $ P $-stratified space, then the \topos $ \Fun(\Pi,\Space) $ is $ n $-localic and coherent (\Cref{cor:Pitildeisbcc}).
\Cref{subsec:BCgluing} introduces a key class of oriented squares of \topoi that are both oriented fiber product squares and oriented pushout squares.
\Cref{sec:toposicdec} uses these \textit{gluing squares} to explain a décollage approach to stratified \topoi.
In \cref{subsec:toposicnerve} we explain how to extract a toposic décollage from a stratified \topos and prove that the resulting nerve functor is an equivalence.
In \cref{subsec:lambdahatff} we conclude the chapter by showing that profinite stratified spaces embed into stratified \topoi.


\subsection{Higher topoi attached to finite posets \& spectral topological spaces}\label{sec:Stilde}

\begin{nul}
	A sheaf on a finite poset $P$ (with its Alexandroff topology of \Cref{dfn:Alexandroff}) is determined by its values on the principal open sets.
	These values coincide with the stalks of the sheaf.
	Precisely, the principal opens form a basis for the topology on $ P $; moreover, and the assignment $\goesto{p}{P_{\geq p}}$ defines a fully faithful functor $ i_P \colon \incto{P}{\Open(P)^{\op}} $, and restriction along $ i_P $ defines an equivalence
	\begin{equation*}
		\equivto{\Ptilde \coloneq \Sh(\Open(P))}{\Fun(P,\Space)}
	\end{equation*}
	(\Cref{exm:Pbasis}).
	The inverse is given by right Kan extension.
	In particular, the \topos $\Ptilde$ is both $0$-localic and Postnikov complete (\Cref{def:Postnikovstuff}).
\end{nul}

\begin{nul}\label{nul:Ptildecoherentobjects}
	If $P$ is a finite poset, then $\Ptilde$ is a coherent \topos (\Cref{ex:Spectralspacecoherent}), and a sheaf $F$ on $P$ is $n$-coherent if and only if all of the stalks of $ F $ have finite homotopy sets in degrees $ m \leq n $.
	In particular, we have a natural identification
	\begin{equation*}
		\Ptilde\cohbdd \equivalent \Fun(P,\Spacefin) \period
	\end{equation*}
\end{nul}

\begin{rmk}
	We work with \textit{finite} posets here for a number of reasons.
	First, to guarantee that the \topos $ \Ptilde $ is coherent, second, to guarantee that $ \Ptilde $ can be expressed as the functor category $ \Fun(P,\Space) $, and finally, because we are most interested in working with stratifications of \topoi over spectral topological spaces (i.e., profinite posets).
\end{rmk}

\begin{nul}\label{nul:Stildespectral}
	The functor $ \widetilde{(-)} \colon \goesto{\Posfin}{\Top_{\infty}} $ extends along inverse limits to a functor
	\begin{equation*}
		\fromto{\TSpcspec \simeq \Pro(\Posfin)}{\Top_{\infty}}
	\end{equation*}
	which we also denote by $\goesto{S}{\Stilde}$.
	Thus if $S \coloneq \{P_{\alpha}\}_{\alpha \in A}$ is an inverse system of finite posets, then
	\begin{equation*}
		\Stilde\simeq\lim_{\alpha \in A}\Ptilde_{\alpha}
	\end{equation*}
	in $\Top_{\infty}$.
	That is, by \Cref{thm:filteredlimsinRTop}=\allowbreak\HTT{Theorem}{6.3.3.1}, $ \Stilde $ is equivalent to the \category with objects collections $\{F_{\alpha}\}_{\alpha \in A}$ of functors \smash{$F_{\alpha}\colon\fromto{\Ptilde_{\alpha}}{\Space}$} along with compatible identifications of $F_{\alpha'}$ with the right Kan extension of $F_{\alpha}$ along \smash{$\fromto{P_{\alpha}}{P_{\alpha'}}$} for any morphism $\fromto{\alpha}{\alpha'}$ in $ A $.
	In particular, the \topos \smash{$\Stilde$} is $0$-localic.

	If we think of $ S $ as a spectral topological space, the $ 0 $-topos (locale) $\Open(S)$ is the limit of the $0$-topoi $\Open(P)$ over the $ 1 $-category $\FC(S)$ of finite constructible stratifications $ \fromto{S}{P} $ of $ S $.
	Thus we have an equivalence of $0$-localic \topoi
	\begin{equation*}
		\Stilde\simeq\lim_{P\in \FC(S)}\Ptilde \period
	\end{equation*}
	Since $\Stilde$ is coherent (\Cref{ex:Spectralspacecoherent}), the \pretopos $\Stilde\cohbdd$ of truncated coherent objects of $\Stilde$ can be identified with the filtered colimit
	\begin{equation*}
		\Stilde\cohbdd \equivalent \colim_{P\in \FC(S)^{\op}}\Ptilde\cohbdd \comma
	\end{equation*}
	along the relevant restriction functors (\cref{subsec:cohinverselimtis}).

	Recall that if $f\colon\fromto{S'}{S}$ is a quasicompact continuous map of spectral topological spaces, then the induced geometric morphism $\flowerstar\colon\fromto{\Stilde'}{\Stilde}$ is coherent (\Cref{ex:Spectralspacecoherent}).
\end{nul}

\begin{nul}
	If $ S $ is a spectral topological space, then the \category of points of $\Stilde$ is equivalent to the materialization of $ S $ (regarded as a profinite poset):
	\begin{equation*}
		\Pt(\Stilde) \simeq \mat(S) \period
	\end{equation*}
	Thus the points of $\Stilde$ are precisely the points of $ S $ equipped with the specialization partial ordering (\Cref{def:specializationpreorder}).
\end{nul}


\subsection{Stratifications over spectral topological spaces}\label{subset:toposstratspectral}

We now begin to study stratified \topoi.
The definition is a straightforward generalization of the notion of a stratified topological space (\Cref{dfn:strattopspace,def:profinstrat}).

\begin{dfn} 
	Let $ S $ be a spectral topological space.
	An \defn{$ S $-stratified \topos}\index[terminology]{topos@\topos!stratified}\index[terminology]{stratified topos@stratified \topos} is a geometric morphism of \topoi $\fromto{\XX}{\Stilde}$.
	We write\index[notation]{StrTop@$\StrTop_{\infty},  \StrTop_{\infty,S} $}
	\begin{equation*}
		\StrTop_{\infty,S} \colonequals \Top_{\infty,/\Stilde}
	\end{equation*}
	for the \category of $ S $-stratified \topoi.

	We define
	\begin{equation*}
		\StrTop_{\infty} \coloneq \Fun([1],\Top_{\infty}) \crosslimits_{\Fun(\{1\},\Top_{\infty})} \TSpcspec \period \index[notation]{StrTop@$\StrTop_{\infty}$}
	\end{equation*}
	The fiber over a spectral topological space $ S $ is identified with the \category $\StrTop_{\infty,S}$.
	Since $\Top_{\infty}$ admits fiber products, the projection \smash{$\fromto{\StrTop_{\infty}}{\TSpcspec}$} is a bicartesian fibration.
\end{dfn}

\begin{nul} 
	Let $ S $ be a spectral topological space.
	Since $\Stilde$ is $0$-localic, it follows that an $ S $-stratification of \atopos $\XX$ is the same data as a morphism of $0$-topoi (=locales)
	\begin{equation*}
		\fromto{\Open(\XX)}{\Open(S)} \period
	\end{equation*}
	Thus there is a natural equivalence of \categories
	\begin{equation*}
		\StrTop_{\infty,S} \simeq \Top_{\infty} \crosslimits_{\Top_0} \Top_{0,/\Open(S)} \period
	\end{equation*}
\end{nul}

\begin{ntn} 
	Let $ S $ be a spectral topological space, and let $ \flowerstar \colon \fromto{\XX}{\Stilde} $ be an $ S $-stratified \topos.
	For any open subset $U\subseteq S$, we abuse notation and write $U$ also for the corresponding open of $\Stilde$, and we write
	\begin{equation*}
		\XX_U\coloneq \XX_{/f^{\ast}U} \simeq \XX \, \crosslimits_{\Stilde} \, \widetilde{U} \subseteq \XX \index[notation]{XXU@$\XX_{U}$}
	\end{equation*}
	for the corresponding open subtopos. 
	Dually, if $Z\subseteq S $ is closed, then we write
	\begin{equation*}
		\XX_Z\coloneq \XX_{\smallsetminus f^{\ast}(S \smallsetminus Z)} \simeq \XX \, \crosslimits_{\Stilde} \, \widetilde{Z} \subseteq \XX \index[notation]{XXZ@$\XX_{Z}$}
	\end{equation*}
	for the corresponding closed subtopos.
	If $U$ and $Z$ are complementary, then the \topos $\XX$ is the recollement of $\XX_Z$ and $\XX_U$ along the gluing functor $ \iupperstar \jlowerstar \colon \fromto{\XX_U}{\XX_Z} $.

	More generally, for any subspace $ W \subset S $, we write 
	\begin{equation*}
		\XX_W \coloneq \XX \, \crosslimits_{\Stilde} \, \widetilde{W} \period \index[notation]{XXW@$\XX_{W}$}
	\end{equation*}
	In particular, for any point $ s \in S $ we define the \defn{$s$-th stratum}\index[terminology]{stratum} as the fiber product in $ \Top_{\infty} $:
	\begin{equation*}
		\XX_s \coloneq \XX \, \crosslimits_{\Stilde} \, \widetilde{\{s\}} \subseteq \XX \period \index[notation]{XXs@$\XX_{s}$}
	\end{equation*}
\end{ntn}

\begin{nul}\label{ntn:inverseofsubposets} 
	Let $P$ be a finite poset, and let $\XX$ be a $P$-stratified \topos.
	Note that for any point $p\in P$, the $p$-th stratum is the fiber product in $ \Top_{\infty} $:
	\begin{equation*}
		\XX_p \coloneq \XX_{P_{\geq p}} \, \crosslimits_{\XX} \, \XX_{P_{\leq p}} \period
	\end{equation*}
	The stratum $ \XX_p $ is an open subtopos of the closed subtopos \smash{$\XX_{P_{\leq p}}\subseteq \XX$} as well as a closed subtopos of the open subtopos \smash{$\XX_{P_{\geq p}} \subseteq \XX$}.
\end{nul}

\begin{exm}
	A $\{0\}$-stratified \topos is nothing more than \atopos.
\end{exm}

\begin{exm}
	Rephrasing \Cref{nul:recollement1-strat}, a $[1]$-stratified \topos $\fromto{\XX}{\widetilde{[1]}}$ is the same data as a recollement of \topoi.
\end{exm}

\begin{rmk} 
	To generalize the previous example, let $P$ be a finite poset.
	It appears that the data of a $P$-stratified \topos determines and is determined by a suitable colax functor from $P^{\op}$ to a double \category of \topoi and left exact functors.

	To make a precise assertion, let us say that a locally cocartesian fibration $\fromto{X}{P^{\op}}$ is \defn{left exact} if each fiber $X_p$ admits all finite limits, and for any $p\leq q$ in $P$, the functor $\fromto{X_q}{X_p}$ is left exact. 
	Left exact locally cocartesian fibrations $\fromto{X}{P^{\op}}$ whose fibers are \topoi organize themselves into a \category \smash{$\LocCocart^{\lex,\toptextit}_{P^{\op}}$}.
	It seems likely that one can produce an equivalence of \categories
	\begin{equation*}
		\LocCocart^{\lex,\toptextit}_{P^{\op}}\simeq\StrTop_{\infty,P} \comma
	\end{equation*}
	natural in $P$.
	To prove this would involve a diversion into a simplicial thicket that is unnecessary for our work here.
\end{rmk}

\begin{exm}
	Let $ S $ be a spectral topological space.
	The \topos $\Stilde$ equipped with the identity stratification is the terminal object of $\StrTop_{\infty,S}$. 
\end{exm}

\begin{exm}
	Let $ S $ be a spectral topological space.
	Write $\TSpc^{\sober} \subset \TSpc $\index[notation]{TSpcsob@$ \TSpc^{\sober} $} for the full subcategory spanned by the sober topological spaces. Then the assignment $\goesto{T}{\widetilde{T}}$ defines a fully faithful functor
	\begin{equation*}
		\incto{\TSpc^{\sober}_{/S}}{\StrTop_{\infty,S}} \period 
	\end{equation*}
\end{exm}

We now begin to investigate the basic properties of stratified \topoi.
For the following result, please recall that the coproduct in $ \Top_{\infty} $ is computed as the \textit{product} of \categories. 

\begin{lem}\label{lem:maptostrataisconservative}
	Let $P$ be a finite poset, and let $ \flowerstar \colon \XX \to \Ptilde $ be a $P$-stratified \topos.
	Then the pullback functor
	\begin{equation*}
		(e_p^{\ast})_{p\in P} \colon \XX \to \coprod_{p\in P} \XX_p
	\end{equation*}
	is conservative. 
	That is, a morphism $ \phi \colon F \to G $ of $\XX$ is an equivalence if and only if, for all $p \in P$, the restriction $ \restrict{\phi}{\XX_p} \colon \restrict{F}{\XX_p} \to \restrict{G}{\XX_p} $ is an equivalence.
\end{lem}

\begin{proof}
	If $P = \varnothing $, there is nothing to prove.
	If $P$ is of rank $0$, then $ P $ is discrete and the pullback functor above is an equivalence.
	Now we induct on the rank of $ P $; assume the claim holds for all finite posets of rank $\leq n$.
	Let $ P $ be a finite poset of rank $ n+1 $, and write $ M \subset P $ for the full subposet spanned by the minimal elements of $ P $.
	Then $ M $ is discrete and closed in $ P $.
	Note that the open complement $U \coloneq P \smallsetminus M $ of $ M $ is of rank $ n $.
	Now if $ \restrict{\phi}{\XX_p} $ is an equivalence for each $ p \in P $, then $\restrict{\phi}{\XX_Z}$  is an equivalence by the rank $ 0 $ case, and $\restrict{\phi}{\XX_U}$ is an equivalence by the rank $n$ case.
	Since $\XX$ is a recollement of $\XX_Z$ and $\XX_U$, the pullback $\XX \to \XX_Z \coproduct \XX_U$ is conservative, so $\phi$ itself is an equivalence.
\end{proof}

\noindent In a similar vein, the following useful hypercompleteness result follows from the fact that the recollement of hypercomplete \topoi is hypercomplete \Cref{nul:hypercompleterecollement} and induction on the rank of the poset:

\begin{lem}\label{lem:hypercompletestratified}
	Let $ P $ be a finite poset and $ \fromto{\XX}{\Ptilde} $ a $ P $-stratified \topos.
	If for each $ p \in P $ the stratum $ \XX_p $ is hypercomplete, then the \topos $ \XX $ is hypercomplete.
\end{lem}

In almost all of the examples of stratified \topoi that we consider in this text, the total \topos is bounded coherent and the stratification is a coherent geometric morphism.
We refer to such a stratified \topos as \textit{bounded coherent constructible}:

\begin{dfn}\label{def:boundedcoherentconstrstratifications}
	Let $\XX$ be \atopos and $ S $ a spectral topological space.
	A stratification $\flowerstar\colon\fromto{\XX}{\Stilde}$ is \defn{constructible}\index[terminology]{stratification!constructible}\index[terminology]{constructible!stratification} if and only if, for any quasicompact open $U\subseteq S$ and any quasicompact open $V\in\Open(\XX)$, the \topos
	\begin{equation*}
		\XX_U \cross_{\XX} \XX_{/V} \equivalent \XX_{/(\fupperstar(U) \cross V)}
	\end{equation*}
	is coherent.
	We say that a constructible stratification $ \flowerstar \colon \fromto{\XX}{\Stilde} $ is \defn{coherent constructible}\index[terminology]{stratification!coherent constructible}\index[terminology]{coherent constructible!stratification} if $ \XX $ is a coherent \topos, and we say that $ \flowerstar $ is \defn{bounded coherent constructible}\index[terminology]{stratification!bounded coherent constructible}\index[terminology]{bounded coherent constructible!stratification} if $ \XX $ is a bounded coherent \topos.
\end{dfn}

\begin{nul}[constructibility for stratifications by finite posets]
	Let $ P $ be a finite poset.
	A $ P $-stratified \topos $\flowerstar\colon\fromto{\XX}{\Ptilde}$ is constructible if and only if for every point $p\in P$ and any quasicompact open $ V \in \Open(\XX) $, the \topos $ \XX_{P_{\geq p}} \cross_{\XX} \XX_{/V} $ is coherent. 
\end{nul}

\begin{exm}
	Let $\fromto{\XX}{\widetilde{[1]}}$ be a $ [1] $-stratified \topos.
	If $\XX$ is coherent, the stratification is constructible if and only if the open subtopos $\XX_1$ is quasicompact (\Cref{prop:DAGXIII.2.3.22}=\cite[Proposition 2.3.22]{DAGXIII}).
\end{exm}

Coherent constructibility can be reformulated as the \textit{a priori} stronger condition that the stratification be a coherent geometric morphism:

\begin{lem}
	Let $ S $ be a spectral topological space and $ \flowerstar \colon \fromto{\XX}{\Stilde} $ be an $ S $-stratified \topos.
	If $ \XX $ is coherent, then the stratification $ \flowerstar $ is constructible if and only if $ \flowerstar $ is a coherent geometric morphism.
\end{lem}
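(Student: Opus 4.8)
The plan is to show that for a spectral topological space $S$ and an $S$-stratified \topos $\flowerstar \colon \fromto{\XX}{\Stilde}$ with $\XX$ coherent, constructibility of the stratification is equivalent to coherence of $\flowerstar$. Throughout I will use the description of $\Stilde$ as the $0$-localic \topos $\Sh(\Open^{\qc}(S))$ (\Cref{ex:Spectralspacecoherent}), so that the (sheafified) representables $\yo(U)$ for $U$ a quasicompact open of $S$ form a generating set of \emph{coherent} objects of $\Stilde$ (\Cref{prop:SAG.A.3.1.3}), since the \site $\Open^{\qc}(S)$ is finitary.

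First I would unwind the definition of constructibility (\Cref{def:boundedcoherentconstrstratifications}). For a quasicompact open $U \subseteq S$, the object $\fupperstar(\yo(U)) \in \XX$ is $(-1)$-truncated (an open of $\XX$), and the condition that $\XX_U \cross_{\XX} \XX_{/V}$ be coherent for all quasicompact opens $V \in \Open(\XX)$ says precisely that $\fupperstar(U) \cross V$ is a coherent object of $\XX$ whenever $V$ is. Since $\XX$ is coherent, every object of $\XX$ admits a cover by coherent objects, so this is exactly the statement that $\fupperstar(\yo(U))$ is a \emph{relatively coherent} morphism to $1_{\XX}$ in the sense of \Cref{def:relativecoh}; equivalently, by \SAG{Proposition}{A.2.1.3} and the local coherence of $\XX$ (\Cref{exm:boundedcohisloccoh} applies once we know $\XX$ coherent, or we can argue via \SAG{Proposition}{A.2.1.3} directly), that $\fupperstar(\yo(U))$ is itself a coherent object of $\XX$. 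Thus constructibility of $\flowerstar$ is equivalent to: $\fupperstar$ carries every $\yo(U)$, $U$ a quasicompact open, to a coherent object of $\XX$.

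Next I would invoke \Cref{cor:checkcohongen}: for a geometric morphism between locally coherent \topoi, coherence can be checked on a generating set of coherent objects. Both $\XX$ (coherent, hence locally coherent by \Cref{exm:boundedcohisloccoh}) and $\Stilde$ (coherent and locally coherent by \Cref{ex:Spectralspacecoherent}) are locally coherent, and $\{\yo(U) : U \in \Open^{\qc}(S)\}$ generates $\Stilde$ under colimits. Therefore $\flowerstar$ is coherent if and only if $\fupperstar(\yo(U))$ is coherent for every quasicompact open $U$, which by the previous paragraph is exactly constructibility. The main obstacle — and the only real content — is the identification in the second paragraph: matching the ``$\XX_U \cross_\XX \XX_{/V}$ coherent for all coherent $V$'' formulation with ``$\fupperstar(\yo(U))$ coherent'', which rests on the fact that over a coherent (hence locally coherent) \topos, a $(-1)$-truncated object is coherent precisely when it is relatively coherent over the terminal object, i.e. \Cref{exm:SAG.A.2.1.2} together with \SAG{Proposition}{A.2.1.3}. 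The analogous statement for finite $P$ was already recorded in \Cref{dfn:bccstrattop} via \Cref{prop:DAGXIII.2.3.22}, and the spectral case follows by the same reasoning applied to quasicompact opens of $S$ in place of principal opens of a finite poset. $\qed$
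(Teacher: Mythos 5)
Your proof is correct in substance, but it takes a genuinely different route from the paper's for the harder direction. For constructible $\Rightarrow$ coherent, the paper applies \Cref{cor:criterionforcoherence} and checks that $ \fupperstar $ preserves \emph{truncated coherent} objects: any such object of $ \Stilde $ descends to a finite constructible stratification $ \fromto{S}{P} $, becomes lisse on each stratum after pullback, and is then coherent by \Cref{prop:DAGXIII.2.3.22}. You instead check coherence on the generating family $ \{\yo(U)\}_{U \in \Open^{\qc}(S)} $ via \Cref{cor:checkcohongen}, after observing that constructibility of the stratification is literally the statement that each $ \fupperstar(\yo(U)) $ is coherent (take $ V = 1_{\XX} $ for one implication, and use closure of coherent objects under finite products together with coherence of quasicompact opens for the other -- the detour through relative coherence is unnecessary here, and note that the constructibility condition quantifies only over quasicompact opens $ V $, not over all coherent objects as relative coherence does). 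Your route is arguably more direct and avoids the recollement induction entirely. One caveat: you justify local coherence of $ \XX $ by \Cref{exm:boundedcohisloccoh}, which requires $ \XX $ to be \emph{bounded} coherent, whereas the lemma assumes only coherence, and \Cref{cor:checkcohongen} as stated hypothesises local coherence. This is repairable: the underlying induction in \Cref{prop:gencoherenceofgeom} only needs $ \XX $ and $ \Stilde $ to be locally $ (n-1) $-coherent at stage $ n $, and a coherent \topos is locally $ m $-coherent for every $ m $ by definition, so running that proposition for each $ n $ separately yields the conclusion without any boundedness hypothesis. The forward direction (coherent $\Rightarrow$ constructible) is the same in both arguments.
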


\begin{proof}
	If $ \flowerstar $ is coherent, then since quasicompact opens in $ \XX $ are coherent \SAG{Remark}{A.2.3.5} and coherent objects of $ \XX $ are closed under finite products, $ \flowerstar $ is a constructible stratification

	For the other direction, assume that $ \flowerstar $ is a constructible stratification.
	By \Cref{cor:criterionforcoherence}, to show that $ \flowerstar $ is coherent it suffices to show that $ \fupperstar $ carries truncated coherent objects of $ \Stilde $ to coherent objects of $ \XX $. 
	Let $ F \in \Stilde_{<\infty }^{\coh} $ be a truncated coherent object; then there exists a finite constructible stratification $ \fromto{S}{P} $ such that $ F $ is the pullback of a truncated coherent object of $ \Ptilde $ \Cref{nul:Stildespectral}.
	Thus, for every point $p\in P$, the restriction $ \restrict{\fupperstar(F)}{\XX_p} $ is lisse. 
	By \Cref{prop:DAGXIII.2.3.22}=\cite[Proposition 2.3.22]{DAGXIII} it follows that $ F $ is coherent.
\end{proof}

\begin{ntn}
	Let $ S $ be a spectral topological space.
	We define the \category of \defn{coherent construct\-ible} $ S $-stratified \topoi as the overcategory
	\begin{equation*}
		\StrTopcc_{\infty,S} \coloneq \Top_{\infty,/\Stilde}^{\coh} \period \index[notation]{StrTopcc@$\StrTopcc_{\infty}, \StrTopcc_{\infty,S} $}
	\end{equation*}
	We write $ \StrTopbcc_{\infty,S} \subset \StrTopcc_{\infty,S} $\index[notation]{StrTopbcc@$ \StrTopbcc_{\infty}, \StrTopbcc_{\infty,S}$} for the full subcategory spanned by the \defn{bounded coherent construct\-ible} $ S $-stratified \topoi.

	More generally, we define
	\begin{equation*}
		\StrTopcc_{\infty} \coloneq \Fun([1],\Topcoh) \crosslimits_{\Fun(\{1\},\Topcoh)} \TSpcspec \semicolon
	\end{equation*}
	the fiber over $ S $ is identified with \smash{$\StrTopcc_{\infty,S}$}.
	We write \smash{$ \StrTopbcc_{\infty} \subset \StrTopcc_{\infty} $} for the full subcategory spanned by those objects \smash{$ \fromto{\XX}{\Stilde} $} where $ \XX $ is a bounded \topos.
\end{ntn}


\subsection{The natural stratification of a coherent \texorpdfstring{$\infty$}{∞}-topos}\label{subsec:naturalstrat}

Every coherent \topos $\XX$ has a canonical profinite stratification: the $0$-topos (=locale) $\Open(\XX)$ is the locale of a spectral topological space. This provides a fully faithful embedding of the \category of coherent \topoi into that of coherent constructible stratified \topoi.

To explain this point, let us first recall the equivalence between coherent locales and spectral topological spaces.

\begin{rec}
	Let $ A $ be a locale.
	An object $ a \in A $ is \defn{quasicompact}\footnote{Such elements are sometimes called \defn{finite}; see \cite[Chapter II, \S 3.1]{MR861951}.} if and only if for every subset $ S \subset A $ such that $ \coprod_{s \in S} s = a $, there exists a finite subset $ S_0 \subset S $ such that $ \coprod_{s \in S_0} s = a $.

	The locale $ A $ is \defn{coherent} if and only if $ A $ is coherent in the sense of \Cref{rec:coherence}.
	\Cref{prop:coherenceforn-localic} shows that this is the case if and only if the following conditions are satisfied:
	\begin{enumerate}[(1)]
		\item The quasicompact elements of $ A $ form a \defn{sublattice} of $ A $: the maximal element $ 1_A \in A $ is quasicompact and binary products (=meets) of quasicompact elements are quasicompact.

		\item The quasicompact elements of $ A $ \defn{generate} $ A $: every element $ a \in A $ can be written as a coproduct (=join) $ a = \coprod_{s \in S} s $, where $ S \subset A $ is a subset consisting of quasicompact elements of $ A $.
	\end{enumerate}
	A morphism $ \fromto{A}{A'} $ between coherent locales is \defn{coherent} if and only if the corresponding map of posets $ \fromto{A'}{A} $ sends quasicompact elements to quasicompact elements.

	We write $ \Top_{0}^{\coh} $ for the category of coherent locales and coherent morphisms between them (cf. \Cref{cor:coherent1localicmors}).
\end{rec}

\begin{exm}\label{exm:quasicompactsopensarequasicompact}
	Let $ \XX $ be \atopos.
	Then an open $ U \in \Open(\XX) $ is a quasicompact element of the locale $ \Open(\XX) $ if and only if $ U $ is a quasicompact (i.e., $ 0 $-coherent) object of the \topos $ \XX $.
\end{exm}

The following three results are immediate from the definitions and \Cref{exm:quasicompactsopensarequasicompact}.

\begin{lem}\label{lem:1coherentgivescoherentlocale}
	For any $ 1 $-coherent \topos $ \XX $, the locale $ \Open(\XX) $ is coherent. 
\end{lem}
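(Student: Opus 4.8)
\textbf{Proof plan for \Cref{lem:1coherentgivescoherentlocale}.}

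The statement is that for a $1$-coherent \topos $\XX$, the locale $\Open(\XX)$ is a coherent locale. The plan is to verify the two conditions characterising coherent locales directly, translating $0$-coherence and $1$-coherence of $\XX$ into the lattice-theoretic statements about quasicompact elements of $\Open(\XX)$, using \Cref{exm:quasicompactsopensarequasicompact} throughout as the dictionary between quasicompact elements of the locale $\Open(\XX)$ and quasicompact (i.e.\ $0$-coherent) objects of the \topos $\XX$.

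First I would establish that the quasicompact elements generate $\Open(\XX)$. By definition, a $1$-coherent \topos is locally $0$-coherent, so every object of $\XX$ admits a cover by $0$-coherent objects; applying this to the $(-1)$-truncated objects and passing to images (which are still quasicompact by \Cref{lem:quotientofqcisqc}) shows every open of $\XX$ is a join of quasicompact opens. Under \Cref{exm:quasicompactsopensarequasicompact} this says exactly that the quasicompact elements of $\Open(\XX)$ generate the locale. Next I would check that the quasicompact elements form a sublattice: the terminal object $1_{\XX}$ is $0$-coherent (this is what $0$-coherence of $\XX$ means, since $\Open(\XX)$ is quasicompact), so $1_{\Open(\XX)}$ is a quasicompact element; and by the definition of $2$-coherence (here only $1$-coherence is assumed, but for $(-1)$-truncated objects the binary product in $\XX$ is $0$-coherent whenever the two factors are, since $1$-coherence says the $0$-coherent objects are closed under finite products — unwinding \Cref{rec:coherence}, $1$-coherence is precisely local $0$-coherence together with closure of $0$-coherent objects under finite products), the meet of two quasicompact elements of $\Open(\XX)$ is again quasicompact.

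Assembling these two verifications against the recalled characterisation of coherent locales completes the proof. I do not anticipate a genuine obstacle here: the result is essentially a matter of unwinding definitions, and the only subtlety is making sure the indexing of coherence levels lines up correctly (that ``$1$-coherent \topos'' supplies exactly ``locally $0$-coherent plus $0$-coherent objects closed under binary products'', which is precisely what the sublattice-and-generation conditions on $\Open(\XX)$ require). One should also note that the meet in the locale $\Open(\XX)$ is computed as the product in $\XX$ of $(-1)$-truncated objects, so no separate argument about how meets interact with coherence is needed beyond what $1$-coherence already gives.
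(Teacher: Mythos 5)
Your argument is correct and is exactly the unwinding the paper intends: it states this lemma as ``immediate from the definitions and \Cref{exm:quasicompactsopensarequasicompact}'', and your verification of the generation condition via local $0$-coherence plus \Cref{lem:quotientofqcisqc}, and of the sublattice condition via closure of $0$-coherent objects under finite products (the empty product giving quasicompactness of $1_{\XX}$, binary products giving meets), is precisely that unwinding. The one place worth a half-sentence of care is that $0$-coherence of $\XX$ is not a separate hypothesis but follows from $1$-coherence via the empty-product case; you essentially note this, so there is no gap.
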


\begin{lem}\label{lem:functorialityofOpenincoherentmors}
	Let $ \flowerstar \colon \fromto{\XX}{\YY} $ be a coherent geometric morphism between coherent \topoi.
	Then the induced morphism $ \fromto{\Open(\XX)}{\Open(\YY)} $ of coherent locales is coherent.
\end{lem}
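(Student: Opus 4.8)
The plan is to unwind the definition of a coherent morphism of coherent locales and reduce the statement directly to \Cref{cor:strongcoherenceofmorphisms} together with \Cref{exm:quasicompactsopensarequasicompact}. First I would record that by \Cref{lem:1coherentgivescoherentlocale} both $\Open(\XX)$ and $\Open(\YY)$ are coherent locales, since coherent \topoi are in particular $1$-coherent, so the statement makes sense. The geometric morphism $\flowerstar \colon \fromto{\XX}{\YY}$ induces, by passing to $(-1)$-truncated objects, the asserted morphism of locales $\fromto{\Open(\XX)}{\Open(\YY)}$; concretely, its underlying map of posets $\fromto{\Open(\YY)}{\Open(\XX)}$ is the restriction $\fupperstar|_{\Open(\YY)}$ of the inverse image functor. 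I would note that this restriction indeed lands in $\Open(\XX)$ because $\fupperstar$ is left exact and hence preserves $(-1)$-truncated objects (it preserves the terminal object and binary products, so it carries objects $U$ with $\isomorphic$-diagonal to objects with $\isomorphic$-diagonal).

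Next I would observe that, by definition, the morphism $\fromto{\Open(\XX)}{\Open(\YY)}$ is coherent precisely when the underlying poset map $\fromto{\Open(\YY)}{\Open(\XX)}$ carries quasicompact elements to quasicompact elements. By \Cref{exm:quasicompactsopensarequasicompact}, an open $U$ of \atopos is a quasicompact element of its locale of opens if and only if $U$ is a $0$-coherent object of the \topos. Hence it suffices to show that $\fupperstar$ carries $0$-coherent opens of $\YY$ to $0$-coherent (necessarily open, by left-exactness) objects of $\XX$. Since $\XX$ and $\YY$ are coherent and $\flowerstar$ is a coherent geometric morphism, \Cref{cor:strongcoherenceofmorphisms} applies and shows that $\fupperstar$ carries $n$-coherent objects of $\YY$ to $n$-coherent objects of $\XX$ for every $n \in \NN$; taking $n = 0$ yields exactly the required claim, completing the argument.

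I do not expect a genuinely hard step here: the substantive content is entirely absorbed into \Cref{cor:strongcoherenceofmorphisms}, which upgrades the defining property of a coherent geometric morphism (preservation of coherent objects) to preservation of $n$-coherent objects for all $n$, and in particular of $0$-coherent ones. The only points demanding care are bookkeeping ones — matching the locale-theoretic notion of a quasicompact element of $\Open(\XX)$ with the \toposic notion of a $0$-coherent object, which is precisely \Cref{exm:quasicompactsopensarequasicompact}, and verifying that the restriction of $\fupperstar$ to $(-1)$-truncated objects is the correct underlying map of the induced morphism of locales, which is immediate from left-exactness of $\fupperstar$.
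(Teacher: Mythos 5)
Your argument is correct and is exactly the unwinding the paper intends, since the paper states this lemma as immediate from the definitions together with \Cref{exm:quasicompactsopensarequasicompact}; reducing the quasicompactness claim to \Cref{cor:strongcoherenceofmorphisms} at $n=0$ is a perfectly valid way to make that precise. (A marginally shorter route: quasicompact opens of a coherent \topos are already coherent objects, so the defining property of a coherent geometric morphism applies directly without invoking the full strength of \Cref{cor:strongcoherenceofmorphisms}.)
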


\begin{cor}\label{cor:constrstratonlocales}
	Let $ S $ be a spectral topological space and $ \flowerstar \colon \fromto{\XX}{\Stilde} $ an $ S $-stratified \topos.
	If $ \XX $ is coherent, then $ \flowerstar $ is a constructible stratification if and only if the induced morphism of coherent locales $ \fromto{\Open(\XX)}{\Open(S)} $ is coherent.
\end{cor}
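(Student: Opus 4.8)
The statement \Cref{cor:constrstratonlocales} should follow from combining the two characterisations of constructibility already available: the one in terms of coherence of the geometric morphism (given by \Cref{def:boundedcoherentconstrstratifications} together with the lemma immediately following it, which says that when $\XX$ is coherent, constructibility of $\flowerstar$ is equivalent to coherence of $\flowerstar$), and the characterisation of coherence of geometric morphisms via the locale of opens. So the plan is to reduce to showing: \emph{for a coherent $\infty$-topos $\XX$ and a coherent $0$-topos $\Open(S)$, a geometric morphism $\flowerstar\colon\XX\to\widetilde S$ is coherent if and only if the induced locale map $\Open(\XX)\to\Open(S)$ is coherent.}

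First I would observe that the forward direction is essentially \Cref{lem:functorialityofOpenincoherentmors}: if $\flowerstar$ is coherent, then since the induced morphism on locales $\Open(\XX)\to\Open(\widetilde S)=\Open(S)$ is the map on $(-1)$-truncated objects of a coherent geometric morphism between coherent $\infty$-topoi, it is a coherent morphism of coherent locales. (Here one uses \Cref{lem:1coherentgivescoherentlocale} to know both locales are coherent in the first place, and \Cref{exm:quasicompactsopensarequasicompact} to identify quasicompact opens with $0$-coherent objects.) For the converse, suppose the locale map $\Open(\XX)\to\Open(S)$ is coherent; I want to deduce that $\flowerstar$ carries coherent objects of $\widetilde S$ to coherent objects of $\XX$. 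By \Cref{cor:criterionforcoherence} it suffices to check this on truncated coherent objects, and by \Cref{cor:coherent1localicmors} (applied to the $0$-localic $\widetilde S$) it in fact suffices to check it on $(-1)$-truncated coherent objects of $\widetilde S$, i.e.\ on quasicompact opens of $S$. But for a quasicompact open $U\subseteq S$, the object $\fupperstar(U)\in\XX$ is a $(-1)$-truncated object whose image in $\Open(\XX)$ is exactly the value of the locale map $\Open(S)\to\Open(\XX)$ on $U$; since that locale map is coherent, $\fupperstar(U)$ is a quasicompact element of $\Open(\XX)$, hence a $0$-coherent object of $\XX$ by \Cref{exm:quasicompactsopensarequasicompact}. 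Since $\XX$ is coherent (so in particular locally coherent, and the $(-1)$-truncated $0$-coherent objects are automatically coherent — $\widetilde S$ is $0$-localic so this is the content of \Cref{prop:coherenceforn-localic} type reasoning, or more directly: a $(-1)$-truncated object $U$ with $\XX_{/U}$ quasicompact has $\XX_{/U}$ coherent because $\XX_{/U}$ is $N$-localic whenever $\XX$ is and $U$ is truncated), we conclude that $\fupperstar$ sends quasicompact opens of $S$ to coherent objects of $\XX$. Then \Cref{cor:checkcohongen} finishes the job, since the quasicompact opens generate $\widetilde S$ under colimits (they form a basis), giving that $\flowerstar$ is coherent, and hence (by the lemma after \Cref{def:boundedcoherentconstrstratifications}) that the stratification is constructible.

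The main obstacle I anticipate is the bookkeeping at the $0$-truncated level: one must be careful that the locale map "$\Open(\XX)\to\Open(S)$" induced by $\flowerstar$ is literally the restriction of $\fupperstar$ to $(-1)$-truncated objects, and that coherence of this locale map in the poset-theoretic sense (preserving quasicompact elements, in the direction $\Open(S)\to\Open(\XX)$) matches the condition that $\fupperstar$ preserve quasicompactness of opens. This is a matter of unwinding the definition of a morphism of locales versus a geometric morphism, and checking the direction conventions; once that dictionary is in place the argument is a short concatenation of \Cref{lem:functorialityofOpenincoherentmors}, \Cref{cor:criterionforcoherence}, \Cref{cor:coherent1localicmors}, \Cref{cor:checkcohongen}, and \Cref{exm:quasicompactsopensarequasicompact}, with the final "constructible $\iff$ coherent morphism" equivalence supplied by the lemma immediately following \Cref{def:boundedcoherentconstrstratifications}.
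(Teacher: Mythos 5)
Your argument is correct in substance, but it takes a longer route than the paper's, which treats \Cref{cor:constrstratonlocales} as ``immediate from the definitions and \Cref{exm:quasicompactsopensarequasicompact}.'' The direct unwinding is: by \Cref{def:boundedcoherentconstrstratifications}, constructibility of $\flowerstar$ says that $\fupperstar(U) \cross V$ is a coherent object of $\XX$ for every quasicompact open $U \subseteq S$ and every quasicompact open $V \in \Open(\XX)$, while coherence of the locale map says exactly that each $\fupperstar(U)$ is a quasicompact element of $\Open(\XX)$. Taking $V = 1_{\XX}$ gives one implication; the converse holds because quasicompact opens of the coherent \topos $\XX$ are coherent objects (\SAG{Remark}{A.2.3.5}) and coherent objects are closed under finite products. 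Your route instead factors through the unnamed lemma following \Cref{def:boundedcoherentconstrstratifications} (constructible $\iff$ $\flowerstar$ coherent) and then establishes the stronger statement that coherence of $\flowerstar$ as a geometric morphism of \topoi can be detected on the locales of opens. That intermediate fact is genuinely useful on its own, but it costs you the nontrivial ``constructible $\Rightarrow$ coherent'' direction of that lemma plus the machinery of \Cref{cor:checkcohongen}, where two lines of unwinding would have sufficed.

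Two citations need repair, though neither is fatal. First, \Cref{cor:coherent1localicmors} requires \emph{both} \topoi to be $n$-localic, and $\XX$ here is only assumed coherent; drop that reference and go directly to \Cref{cor:checkcohongen} — or better, to \Cref{prop:gencoherenceofgeom}, whose hypothesis of local $(n-1)$-coherence at each stage is supplied by coherence of $\XX$, whereas \Cref{cor:checkcohongen} as stated asks for local coherence, which an unbounded coherent \topos is not obviously guaranteed to satisfy. Second, your justification that a $(-1)$-truncated quasicompact object of $\XX$ is coherent via ``$\XX_{/U}$ is $N$-localic whenever $\XX$ is'' presumes $\XX$ localic, which it is not; the correct reference is \SAG{Remark}{A.2.3.5}, which the paper already invokes for exactly this fact in the proof of the lemma you rely on.
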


The following classical result is an important recognition principle for coherent locales.

\begin{prp}[{\cite[Chapter II, \S\S 3.3--3.4]{MR861951}}]\label{prp:JohnstoneStonerepresentiation}
	The functor $ \Open \colon \fromto{\TSpcspec}{\Top_{0}} $ given by sending a spectral topological space $ S $ to its locale of open subsets factors through $ \Top_{0}^{\coh} $ and defines an equivalence of categories
	\begin{equation*}
		\Open \colon \equivto{\TSpcspec}{\Top_{0}^{\coh}} \period
	\end{equation*}
\end{prp}

\begin{nul}
	The functor $ \Open \colon \equivto{\TSpcspec}{\Top_{0}^{\coh}} $ has an explicit inverse $ \equivto{\Top_{0}^{\coh}}{\TSpcspec} $ given by taking the topological space of \textit{points} of a locale; see \cite[Chapter II, \S 1.3]{MR861951}.
\end{nul}

\begin{ntn}\label{ntn:0localicrefofcoherent}
	\Cref{lem:functorialityofOpenincoherentmors} and \Cref{prp:JohnstoneStonerepresentiation} provide a functor
	\begin{equation*}
		\begin{tikzcd}
			\Sup \colon \Topcoh \arrow[r, "\Open"] & \Top_{0}^{\coh} \arrow[r, "\sim"{yshift=-0.25em}] & \TSpcspec \comma
		\end{tikzcd}
	\end{equation*}
	which we denote by $ \Sup $\index[notation]{S@$ \Sup $}.
	By definition, the $ 0 $-localic reflection of a coherent \topos $ \XX $ is given by the \topos of sheaves on the spectral topological space $ \Sup(\XX) $.
	Thus $ \XX $ comes equipped with a natural $ \Sup(\XX) $-stratification $ \fromto{\XX}{\widetilde{\Sup(\XX)}} $.

	The localization $ \Top_{\infty} \rightleftarrows \Top_{0} $ thus restricts to a localization $ \Topcoh \rightleftarrows \Top_{0}^{\coh} $.
\end{ntn}

\begin{lem}\label{lem:naturalstratconstructible}
	For any coherent \topos $ \XX $, the natural stratification $ \flowerstar \colon \fromto{\XX}{\widetilde{\Sup(\XX)}} $ is constructible (\Cref{def:boundedcoherentconstrstratifications}).
\end{lem}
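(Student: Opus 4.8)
The plan is to reduce the constructibility of the natural stratification $ \flowerstar \colon \fromto{\XX}{\widetilde{S(\XX)}} $ to the coherence of the morphism of locales $ \fromto{\Open(\XX)}{\Open(S(\XX))} $, which holds essentially by construction, and then invoke \Cref{cor:constrstratonlocales}. Recall that by \Cref{ntn:0localicrefofcoherent}, the $ 0 $-localic reflection of a coherent \topos $ \XX $ is precisely $ \widetilde{S(\XX)} $, so the unit $ \fromto{\XX}{\widetilde{S(\XX)}} $ of the localisation $ \Topcoh \rightleftarrows \Top_0^{\coh} $ induces on $ 0 $-truncated objects the identity morphism $ \Open(\XX) \to \Open(\XX) $ under the identification $ \Open(\widetilde{S(\XX)}) \equivalent \Open(\XX) $ of \Cref{prp:JohnstoneStonerepresentiation}. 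In particular, the induced morphism of coherent locales $ \fromto{\Open(\XX)}{\Open(S(\XX))} $ is an isomorphism, hence \emph{a fortiori} coherent: it carries quasicompact elements to quasicompact elements trivially.

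First I would record that $ \XX $ coherent implies $ \Open(\XX) $ is a coherent locale by \Cref{lem:1coherentgivescoherentlocale} (a coherent \topos is in particular $ 1 $-coherent), so that $ S(\XX) $ is a well-defined spectral topological space and the natural stratification makes sense. Next I would observe that the geometric morphism $ \flowerstar $ is, by the very definition of the $ 0 $-localic reflection, the one classifying the identity map of locales $ \Open(\XX) = \Open(\widetilde{S(\XX)}) $; equivalently, $ \Open(\flowerstar) \colon \fromto{\Open(\XX)}{\Open(S(\XX))} $ is an equivalence of $ 0 $-topoi. Finally, since an equivalence of coherent locales is a coherent morphism, \Cref{cor:constrstratonlocales} applied to the coherent \topos $ \XX $ and the $ S(\XX) $-stratification $ \flowerstar $ immediately yields that $ \flowerstar $ is a constructible stratification, completing the proof.

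The only point requiring a little care — and the natural candidate for the main obstacle — is making precise that $ \Open(\flowerstar) $ really is the identity (equivalently, an equivalence). This is a bookkeeping matter about the localisation $ \Topcoh \rightleftarrows \Top_0^{\coh} $: one must check that the composite functor $ S \colon \Topcoh \to \Top_0^{\coh} \xrightarrow{\sim} \TSpc^{\spec} $ of \Cref{ntn:0localicrefofcoherent}, followed by $ \goesto{S(\XX)}{\widetilde{S(\XX)}} $, is indeed the $ 0 $-localic reflection functor $ L_0 $, so that the unit $ \fromto{\XX}{\widetilde{S(\XX)}} $ agrees with the $ 0 $-localic reflection unit and hence induces an equivalence on $ (-1) $-truncated objects. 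This is immediate from the characterisation of $ L_0 $ as left adjoint to the inclusion $ \Top_0 \subset \Top_{\infty} $ (\Cref{cnstr:boundedtopoi}) together with the fact that the inclusion $ \Top_0^{\coh} \subset \Top_0 $ and the equivalence $ \Open \colon \equivto{\TSpc^{\spec}}{\Top_0^{\coh}} $ of \Cref{prp:JohnstoneStonerepresentiation} are compatible; nothing beyond unwinding definitions is needed.
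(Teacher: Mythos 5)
Your proposal is correct and is essentially the paper's own argument: the paper's proof likewise observes that $ \flowerstar $ induces an equivalence of locales $ \isomto{\Open(\XX)}{\Open(S(\XX))} $ and then invokes \Cref{cor:constrstratonlocales}. Your additional unwinding of the localisation $ \Topcoh \rightleftarrows \Top_0^{\coh} $ is just a more explicit account of the same two-line argument.
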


\begin{proof}
	Clear from \Cref{cor:constrstratonlocales} and the fact that \smash{$ \flowerstar \colon \fromto{\XX}{\widetilde{\Sup(\XX)}} $} induces an equivalence of locales
	\begin{equation*}
		\isomto{\Open(\XX)}{\Open(\widetilde{\Sup(\XX)}) = \Open(\Sup(\XX))} \period \qedhere
	\end{equation*}
\end{proof}

\begin{exm}\label{exm:natstratXet}
	Let $X$ be a coherent scheme.
	Write $X^{\zar}$ for the underlying Zariski spectral topological space of $ X $, and $X_{\et}$ for the étale \topos of $ X $.
	Recall that the \topos $ X_{\et} $ is coherent (\Cref{prop:coherentschemetopos}).
	Since
	\begin{equation*}
		\Open(X_{\et}) \cong \Open(X^{\zar}) \comma
	\end{equation*}
	the natural stratification of the coherent \topos $ X_{\et} $ is given by the natural geometric morphism $ \fromto{X_{\et}}{X_{\zar}} $.
\end{exm}

\begin{nul}\label{nul:naturalstratasadjoint}
	The source functor \smash{$ \fromto{\StrTopcc_{\infty}}{\Topcoh} $} admits a fully faithful left adjoint, given by the assignment
	\begin{equation*}
		\goesto{\XX}{[\fromto{\XX}{\widetilde{\Sup(\XX)}}]} \period
	\end{equation*}
	The essential image of this left adjoint is the full subcategory spanned by those coherent constructible stratified \topoi $ \fromto{\XX}{\Stilde} $ such that the stratification induces an equivalence of locales $ \isomto{\Open(\XX)}{\Open(S)} $.

	The source functor $ \fromto{\StrTopcc_{\infty}}{\Topcoh} $ also admits a fully faithful right adjoint, which carries a coherent \topos $ \XX $ to $ \XX $ equipped with the unique stratification over $ \Space = \widetilde{\{0\}} $.
\end{nul}


\subsection{Stratified \texorpdfstring{$\infty$}{∞}-topoi attached to stratified spaces}\label{subsec:Pitilde}
	
In this section we investigate examples of stratified \topoi that arise from stratified spaces via the following construction.
The main example of interest is when the stratified space is \pifinite.

\begin{cnstr}[the stratified \topos of a stratified space]
	Let $P$ be a finite poset, and $f\colon\fromto{\Pi}{P}$ a $P$-stratified space (\Cref{dfn:stratspaces}); i.e., $f$ is a conservative functor of \categories. 
	In light of the equivalence $\Ptilde\simeq\Fun(P,\Space)$, let us abuse notation slightly and write
	\begin{equation*}
		\Pitilde\coloneq\Fun(\Pi,\Space)
	\end{equation*}
	for the \topos of functors $ \Pi \to \Space$.
	Right Kan extension along $f$ defines a geometric morphism of \topoi
	\begin{equation*}
		\flowerstar\colon\fromto{\Pitilde}{\Ptilde} \comma
	\end{equation*}
	whence $\Pitilde$ is a $P$-stratified \topos.
	For each point $p\in P$, the $p$-th stratum of $\Pitilde$ is canonically identified with the \topos $\widetilde{\Pi_p}=\Fun(\Pi_p,\Space)$. 

	The assignment \smash{$\goesto{\Pi}{\Pitilde}$} defines a functor \smash{$\fromto{\Strat}{\StrTop_{\infty}}$} over \smash{$\Posfin$}.
\end{cnstr}

\begin{exm}[exit-path \categories]\label{subexm:exodromyfortopspaces}
	Let $P$ be a finite poset, and let $ T $ be a conically $P$-stratified topological space in the sense of \HAa{Definition}{A.5.5}.
	The \defn{exit path \category} of $ T $ is the $P$-stratified space
	\begin{equation*}
		\Exit^P(T) \coloneq \Sing^P(T)
	\end{equation*}
	of \cite[\HAappthm{Definition}{A.6.2} \& \HAappthm{Theorem}{A.6.4}]{HA}.
	For each $ p \in P $, the stratum $ \Exit^P(T)_p $ is the fundamental \groupoid $ \Shape(T_p) $ of the topological space $ T_p $.

	Assume that $ T $ is paracompact and the strata of $ \fromto{T}{P} $ are locally of singular shape in the sense of \HAa{Definition}{A.4.15}.
	Consider the $ P $-stratified \topos
	\begin{equation*}
		\fromto{\ExittildeP(T)}{\Ptilde} \period
	\end{equation*}
	In light of \HAa{Theorem}{A.4.19}, for each point $ p \in P $, stratum
	\begin{equation*}
		\ExittildeP(T)_p \equivalent \Fun(\Shape(T_p),\Space)
	\end{equation*}
	is equivalent to the \category of locally constant sheaves on $T_p$.
	Moreover, \cite[\HAappthm{Remark}{A.5.19} \& \HAappthm{Theorem}{A.9.3}]{HA} shows that the \topos \smash{$ \ExittildeP(T) $} is equivalent to the \category of \defn{formally constructible} sheaves on $T$, i.e., those sheaves whose restrictions to each stratum $T_p$ are locally constant.
	(See \Cref{def:Pconstructible}.)
\end{exm}

The remainder of this section is dedicated to showing that if $ f \colon \fromto{\Pi}{P} $ is an $ n $-truncated \pifinite $ P $-stratified space, then the \topos $ \Pitilde $ is $ n $-localic and coherent, and the stratification $ \flowerstar \colon \fromto{\Pitilde}{\Ptilde} $ is constructible (\Cref{cor:Pitildeisbcc}).\footnote{Since the $ n $-category $ \Pi $ need not have finite limits, it is not \textit{a priori} obvious that the \topos $ \Pitilde $ is $ n $-localic (cf. \Cref{wrn:Ptildenlocalic}).}
To do this, we show that $ \Pi^{\op} $ forms a basis for the bounded \pretopos $ \Fun(\Pi,\Spacefin) $, and that the \topos of sheaves on $ \Fun(\Pi,\Spacefin) $ is already hypercomplete.

\begin{cnstr}[{bases for $ \Fun(\Pi,\Spacefin) $}]
	Let $ \Pi $ be a \pifinite stratified space.
	Then the \category $ \Fun(\Pi,\Spacefin) $ is a bounded \pretopos (\Cref{lem:Funpretopoi}).
	Note that $ \Pitilde $ is generated under colimits by the essential image of the Yoneda embedding $ \yo \colon \incto{\Pi^{\op}}{\Pitilde} $, the Yoneda embedding factors through $ \Fun(\Pi,\Spacefin) $, and every object of $ \Fun(\Pi,\Spacefin) $ is quasicompact.
	Hence, for every object $ F \in \Pitilde $ there exists a \textit{finite} set of objects $ \{x_i\}_{i \in I} $ of $ \Pi $ and an effective epimorphism
	\begin{equation*}
		\surjto{\coprod_{i \in I} \yo(x_i)}{F} \period
	\end{equation*}
	That is to say, $ \yo \colon \incto{\Pi^{\op}}{\Pitilde} $ is a basis for the effective epimorphism topology on the bounded \pretopos $ \Fun(\Pi,\Spacefin) $ in the sense of \Cref{def:basis}.

	If $ \Pi $ is an $ n $-category, then the Yoneda embedding factors through $ \Fun(\Pi,\Space_{\uppi,\leq n-1}) $.
	In particular, we have bases
	\begin{equation*}
		\begin{tikzcd}[sep=1.5em]
			\Pi^{\op} \arrow[r, "\yo", hooked]  & \Fun(\Pi,\Space_{\uppi,\leq n-1}) \arrow[r, hooked] & \Fun(\Pi,\Spacefin) \comma
		\end{tikzcd}
	\end{equation*}
	for the effective epimorphism topology on the \pretopos $ \Fun(\Pi,\Spacefin) $.
	Moreover, the middle \category is an $ n $-catego\-ry with finite limits.
	By \Cref{lem:ransheaf,cor:hypercompletebasis} we see that right Kan extension defines fully faithful geometric morphisms 
	\begin{equation}\label{eq:3Pitildes}
		\begin{tikzcd}[sep=1.5em]
			\Sheff{\Pi^{\op}} \arrow[r, hooked]  & \Sheff{\Fun(\Pi,\Space_{\uppi,\leq n-1})} \arrow[r, hooked] & \Sheff{\Fun(\Pi,\Spacefin)} 
		\end{tikzcd}
	\end{equation}
	that become equivalences after hypercompletion.
	To see that $ \Pitilde $ is $ n $-localic and coherent, we show that $ \Sheff{\Pi^{\op}} = \Pitilde $ and prove that the geometric morphisms \eqref{eq:3Pitildes} are equivalences.
	The latter amounts to showing that $ \Sheff{\Fun(\Pi,\Spacefin)} $ is already hypercomplete. 
\end{cnstr}

First we analyze the restriction of the effective epimorphism topology to $ \Pi^{\op} $.
Using the fact that every endomorphism in $ \Pi $ is an equivalence, it is easy to see that all presheaves are sheaves for this topology on $ \Pi^{\op} $.

\begin{dfn}
	Let $ C $ be \acategory.
	The \defn{chaotic topology}\index[terminology]{chaotic topology}\index[terminology]{topology!chaotic} on $ C $ is the Grothendieck topology defined by declaring that a sieve $ S \subset C_{/c} $ is covering if and only if $ S $ contains an object $ s \in S $ such that the structure morphism $ \fromto{s}{c} $ is an equivalence. 

	Note that a every presheaf on $ C $ is a sheaf for the chaotic topology.
\end{dfn}

\begin{lem}\label{lem:chaotictop}
	Let $ \Pi $ be a \pifinite stratified space.
	The restriction of the effective epimorphism topology on $ \Fun(\Pi,\Spacefin) $ to $ \Pi^{\op} \subset \Fun(\Pi,\Spacefin) $ is the chaotic topology.
\end{lem}

\begin{proof}
	Let $ y \in \Pi $ and suppose that we are given a finite set of objects $ \{x_i\}_{i \in I} $ of $ \Pi $ and an effective epimorphism
	\begin{equation*}
		e \colon \surjto{\coprod_{i \in I} \yo(x_i)}{\yo(y)}
	\end{equation*}
	in the \pretopos $ \Fun(\Pi,\Spacefin) $.
	Since the Yoneda embedding is fully faithful, there exist morphisms $ \{e_i \colon \fromto{y}{x_i}\}_{i \in I} $ in $ \Pi $ such that $ e $ is the induced morphism
	\begin{equation*}
		e \equivalent (\eupperstar_i)_{i \in I} \colon \fromto{\coprod_{i \in I} \yo(x_i)}{\yo(y)} \period
	\end{equation*}
	We claim that there exists an index $ i \in I $ such that the morphism $ e_i $ is an equivalence.
	Since $ \Pi $ is layered, it suffices to show that there exists an $ i \in I $ and a morphism $ \fromto{x_i}{y} $ in $ \Pi $.
	To see this, note that since $ e $ is an effective epimorphism, the induced morphism 
	\begin{equation*}
		e(y) \colon \fromto{\coprod_{i \in I} \Map_{\Pi}(x_i,y)}{\Map_{\Pi}(y,y)}
	\end{equation*}
	is a $ \uppi_0 $-surjection of spaces.
	Since $ \uppi_0 \colon \fromto{\Space}{\Set} $ preserves coproducts and $ \uppi_0 \Map_{\Pi}(y,y) $ is nonempty, we deduce that there exists an index $ i \in I $ such that $ \uppi_0 \Map_{\Pi}(x_i,y) $ is nonempty, as desired.
\end{proof}

Now we show that $ \Sheff{\Fun(\Pi,\Spacefin)} $ is hypercomplete.

\begin{nul}\label{nul:natbccstrat}
	Let $ f \colon \fromto{\Pi}{P} $ be a \pifinite stratified space.
	Note that the pullback functor
	\begin{equation*}
		\fupperstar \colon \fromto{\Fun(P,\Spacefin)}{\Fun(\Pi,\Spacefin)}
	\end{equation*}
	is a morphism of \pretopoi, hence induces a natural bounded coherent constructible stratification
	\begin{equation*}
		\flowerstar \colon \fromto{\Sheff{\Fun(\Pi,\Spacefin)}}{\Sheff{\Fun(P,\Spacefin)} \equivalent \Ptilde} \period
	\end{equation*} 
\end{nul}

\begin{prp}\label{prop:SheffFunPiSpacefinhypercomplete}
	Let $ P $ be a finite poset and $ f \colon \fromto{\Pi}{P} $ a \pifinite $ P $-stratified space.
	Then the \topos $ \Sheff{\Fun(\Pi,\Spacefin)} $ is hypercomplete.
\end{prp}

\begin{proof}
	By \Cref{lem:hypercompletestratified} it suffices to show that the strata of the natural stratification
	\begin{equation*}
		\flowerstar \colon \fromto{\Sheff{\Fun(\Pi,\Spacefin)}}{\Ptilde}
	\end{equation*}
	are hypercomplete.
	To see this, note that for each $ p \in P $, we have equivalences
	\begin{align*}
		\Sheff{\Fun(\Pi,\Spacefin)}_p &\equivalent \Sheff{\Fun(\Pi_p,\Spacefin)} \\ 
		&\equivalent \Sheff{(\Spacefin)_{/\Pi_p}} \equivalent \Space_{/\Pi_p} \period \qedhere
	\end{align*}
\end{proof}

Combining \Cref{lem:chaotictop} with \Cref{prop:SheffFunPiSpacefinhypercomplete} and applying \Cref{cor:hypercompletebasis} to the bases
\begin{equation*}
	\Pi^{\op} \subset \Fun(\Pi,\Space_{\uppi,\leq n-1}) \subset \Fun(\Pi,\Spacefin) \comma
\end{equation*}
we conclude the following.

\begin{cor}\label{cor:Pitildeisbcc}
	Let $ P $ be a finite poset, $ n \geq 0 $ be an integer, and $ \Pi \to P $ be an $ n $-truncated \pifinite $ P $-stratified space.
	Then:
	\begin{enumerate}[{\upshape (\ref*{cor:Pitildeisbcc}.1)}]
		\item The \topos $ \Pitilde $ is $ n $-localic and coherent.

		\item Then the stratification $ \fromto{\Pitilde}{\Ptilde} $ is bounded coherent constructible in the sense of \Cref{def:boundedcoherentconstrstratifications}.
		
		\item An object of $ \Pitilde $ is truncated coherent if and only if all of its values are \pifinite spaces.
		That is,
		\begin{equation*}
			\Pitilde\cohbdd = \Fun(\Pi,\Spacefin) \period
		\end{equation*}
	\end{enumerate}
\end{cor}

\Cref{cor:Pitildeisbcc} justifies the following notation:

\begin{ntn}\label{ntn:lambdaP}
	Let $ P $ be a finite poset.
	Denote by
	\begin{equation*}
		\uplambda \colon \fromto{\Stratfin}{\restrict{\StrTopbcc_{\infty}}{\Posfin}} \index[notation]{lambda@$ \uplambda $}
	\end{equation*}
	the functor over $ \Posfin $ defined by the assignment $\goesto{\Pi}{\Pitilde}$.
	For each finite poset $P$, we write $ \uplambda_P \colon \fromto{\Strat_P}{\StrTop_{\infty,P}} $\index[notation]{lambdaP@$ \uplambda_{P} $} for the induced functor on fibers over $ P $.
\end{ntn}


\subsection{Gluing squares}\label{subsec:BCgluing}

In this section we use the truncated \basechange theorem for oriented fiber products (\Cref{thm:BCfororientedfibs}) to study oriented squares of bounded coherent \topoi that are both oriented fiber product squares and oriented pushouts.
These \textit{gluing squares} are essential to our décollage approach to stratified higher topoi in \cref{sec:toposicdec}.

\begin{dfn}\label{def:gluing} 
	A \defn{gluing square}\index[terminology]{gluing square} is an oriented square of \topoi
	\begin{equation*}
		\begin{tikzcd}
			\WW \arrow[r, "q_{\ast}" above] \arrow[d, "p_{\ast}" left] & \UU \arrow[d, "j_{\ast}" right] \arrow[dl, phantom, "\scriptstyle \sigma" below right, "\Longleftarrow" sloped] \\ 
			\ZZ \arrow[r, "i_{\ast}" below] & \XX
		\end{tikzcd}
	\end{equation*}
	satisfying the following properties:
	\begin{enumerate}[(1)]
		\item Every \topos is bounded coherent.

		\item Every geometric morphism is coherent.

		\item The natural geometric morphism $\ZZ\orientedcup^{\WW}_{\bc}\UU\to\XX$ is an equivalence (\Cref{cnstr:bcorientedpushout}).

		\item The natural geometric morphism $\WW\to\ZZ\orientedtimes_{\XX}\UU$ is an equivalence (\Cref{cnstr:orientedfibprod}).
	\end{enumerate}
	We call the oriented fiber product $\WW$ the \defn{link}\index[terminology]{link!of a gluing square} of the gluing square, or the \defn{deleted tubular neighborhood}\index[terminology]{deleted tubular neighborhood} of $\ZZ$ inside $\XX$.
\end{dfn}

\begin{cnstr}[gluing squares from recollements and spans]\label{cnstr:gluingpullback} 
	Let $\XX$ be a bounded coherent \topos, $ \jlowerstar \colon \incto{\UU}{\XX} $ a quasicompact open subtopos, and write $ \ilowerstar \colon \incto{\ZZ}{\XX} $ for the closed complement of $ \UU $.
	Consider the oriented fiber product square
	\begin{equation}\label{square:gluingpulled}
		\begin{tikzcd}
			\ZZ\orientedtimes_{\XX}\UU \arrow[r, "\pr_{2,\ast}" above] \arrow[d, "\pr_{1,\ast}" left] & \UU \arrow[d, "j_{\ast}" right, hooked] \arrow[dl, phantom, "\scriptstyle \sigma" below right, "\Longleftarrow" sloped] \\ 
			\ZZ \arrow[r, "i_{\ast}" below, hooked] & \XX \period
		\end{tikzcd}
	\end{equation}
	The \topos $\XX$ is the bounded coherent recollement $\ZZ\orientedcup^{\pr_{1,\ast}\prupperstar_2}_{\bc}\UU$.
	Indeed, the truncated \basechange theorem (\Cref{thm:BCfororientedfibs}) ensures that the \basechange morphism
	\begin{equation*}
		\BC_{\sigma}\colon i^{\ast}j_{\ast}\to\pr_{1,\ast}\prupperstar_2
	\end{equation*}
	becomes an equivalence after restriction to $\UU^{\coh}_{<\infty}$. So \Cref{prp:bcorientedpodependsonbcgluing} applies, whence \eqref{square:gluingpulled} is a gluing square.

	Dually, let $\WW$, $\ZZ$, and $\UU$ be bounded coherent \topoi, and let $p_{\ast}\colon\fromto{\WW}{\ZZ}$ and $q_{\ast}\colon\fromto{\WW}{\UU}$ be geometric morphisms. Forming the bounded coherent oriented pushout $\XX\coloneq\ZZ\orientedcupbc^{\WW}\UU$, we obtain a square
	\begin{equation}\label{square:gluingpushed}
		\begin{tikzcd}
			\WW \arrow[r, "q_{\ast}" above] \arrow[d, "p_{\ast}" left] & \UU \arrow[d, "j_{\ast}" right, hooked] \arrow[dl, phantom, "\scriptstyle \sigma" below right, "\Longleftarrow" sloped] \\ 
			\ZZ \arrow[r, "i_{\ast}" below, hooked] & \ZZ\orientedcupbc^{\WW}\UU \period
		\end{tikzcd}
	\end{equation}
	We thus obtain a geometric morphism $\psi(p,q,\sigma)_{\ast}\colon\fromto{\WW}{\ZZ\orientedtimes_{\XX}\UU}$, and if $\psi(p,q,\sigma)_{\ast}$ is an equivalence, then the square \eqref{square:gluingpushed} is a gluing square.
\end{cnstr}

\begin{rmk}
	The full subcategory of $ \Fun([1] \cross [1],\Topbc) $ spanned by the gluing squares is equivalent to the (non-full) subcategory of $ \Fun([1],\Cat_{\infty,\updelta_1}) $ whose objects are bounded coherent gluing functors between bounded coherent \topoi and whose morphisms $\fromto{\phi}{\phi'}$ are squares
	\begin{equation*}
		\begin{tikzcd}
			\UU \arrow[r, "\phi" above] \arrow[d, "\flowerstar" left] & \ZZ \arrow[d, "g_{\ast}" right] \\ 
			\UU' \arrow[r, "\phi'" below] & \ZZ'
		\end{tikzcd}
	\end{equation*}
	in which $\flowerstar$ and $g_{\ast}$ are coherent geometric morphisms.
\end{rmk}

\begin{wrn}
	If the coherence assumptions are removed, then \Cref{cnstr:gluingpullback} does not recover $ \XX $ as an oriented pushout of $ \ZZ $ and $ \UU $ along $ \orientedpull{\ZZ}{\XX}{\UU} $.
	To see this, let $ X \coloneq [0,1] $ be the usual closed interval, $ Z \coloneq \{0\} $, and $ U \coloneq X \smallsetminus Z $ the open complement of $ Z $.
	Then the oriented fiber product $ \Ztilde \orientedtimes_{\Xtilde}\Utilde$ is the initial \topos $ \widetilde{\varnothing} $.
	The oriented pushout of $ \Ztilde $ and $ \Utilde $ along $ \widetilde{\varnothing} $ is the coproduct $ \Ztilde \coproduct \Utilde $ in $ \Top_{\infty} $, however the \topos $ \Ztilde \coproduct \Utilde $ is not equivalent to $ \Xtilde $. 
	The main problem here is that the \topoi $ \Utilde $ and $ \Xtilde $ are not coherent.
\end{wrn}

We finish this section with our key example of a gluing square.

\begin{ntn}
	Let $ Y $ be a profinite space.
	We write $ \Ytilde \colonequals \Fun(Y,\Space) $ for the corresponding Stone \topos (\Cref{def:Stonetopos}). 
\end{ntn}

\begin{exm}[{gluing squares of profinite $ [1] $-stratified spaces}]\label{exm:gluingstonetopoi} 
	Let
	\begin{equation}\label{eq:spandecollage}
		\begin{tikzcd}[sep=1.25em]
			Z & W \arrow[l, "p"'] \arrow[r, "q"] & U
		\end{tikzcd}
	\end{equation}
	be a span of profinite spaces, and write $ X $ for the profinite $[1]$-stratified space corresponding to the profinite spatial décollage \eqref{eq:spandecollage}.
	Write $ \XX $ for the form the bounded coherent oriented pushout of Stone \topoi:
	\begin{equation}\label{sq:profinorientedpushout}
		\begin{tikzcd}
			\Wtilde \arrow[r, "q_{\ast}" above] \arrow[d, "p_{\ast}" left] & \Utilde \arrow[d, "j_{\ast}" right, hooked] \arrow[dl, phantom, "\scriptstyle \sigma" below right, "\Longleftarrow" sloped] \\ 
			\Ztilde \arrow[r, "i_{\ast}" below, hooked] & \XX \period
		\end{tikzcd}
	\end{equation}
	Since $\Xtilde$ is the recollement of $\Ztilde$ and $\Utilde$ with gluing functor that agrees with $p_{\ast}q^{\ast}$ when restricted to truncated objects (\Cref{thm:BCfororientedfibs}), and $ \Xtilde $ is bounded coherent, the natural geometric morphism $ \fromto{\XX}{\Xtilde} $ is an equivalence (\Cref{lem:whenrecollementisbc,prp:bcorientedpodependsonbcgluing}).
	Now we compute the link:
	\begin{equation*}
		\Ztilde \orientedtimes_{\XX} \Utilde \simeq \MOR_{\widetilde{[1]}}(\widetilde{[1]},\XX) \simeq \widetilde{\Map_{[1]}([1],X)} \simeq \widetilde{W} \period
	\end{equation*}
	Thus the square \eqref{sq:profinorientedpushout} is in fact a gluing square.
\end{exm}


\subsection{Toposic décollages}\label{sec:toposicdec}

In analogy with the construction of the spatial décollage attached to a stratified space (\Cref{cnstr:nerveofstratifiedspace}), we can attach to a stratified \topos what we call its \textit{(toposic) décollage}.
Whereas a stratified \topos consists of strata that are glued together, its décollage is the result of pulling these strata apart while retaining the linking information necessary to reconstruct the stratified \topos.

\begin{dfn}\label{def:toposicdecollage} 
	Let $P$ be a finite poset.
	We say that a functor $\DD\colon\fromto{\sdop(P)}{\Topbc}$ is a \defn{décollage over $P$}\index[terminology]{decollage@décollage!toposic} if and only if the following conditions are satisfied.
	\begin{enumerate}[(1)]
		\item If $p_0,p_1\in P$ are elements such that $p_0<p_1$, then the square
		\begin{equation*}
			\begin{tikzcd}
				\DD\{p_0 < p_1\} \arrow[r] \arrow[d] & \DD\{p_1\} \arrow[d, "j_{\ast}" right, hooked] \arrow[dl, phantom, "\Longleftarrow" sloped] \\ 
				\DD\{p_0\} \arrow[r, "i_{\ast}" below, hooked] & \DD\{p_0\}\orientedcupbc^{\DD\{p_0 < p_1\}}\DD\{p_1\}
			\end{tikzcd}
		\end{equation*}
		is a gluing square.

		\item For every chain $\{p_0 < \cdots < p_m\}\subseteq P$, the geometric morphism to the fiber product of \topoi
		\begin{equation*}
			\DD\{p_0 < \cdots < p_m\}\to \DD\{p_0 < p_1\} \crosslimits_{\DD\{p_1\}}\DD\{p_1 < p_2\}\crosslimits_{\DD\{p_2\}}\cdots\crosslimits_{\DD\{p_{m-1}\}}\DD\{p_{m-1} < p_m\}
		\end{equation*}
		is an equivalence.
	\end{enumerate}
	We write $ \DecTop{P}\subseteq\Fun(\sdop(P),\Topbc) $\index[notation]{DecTopP@$ \DecTop{P} $} for the full subcategory spanned by the décollages over $P$.
\end{dfn}

\begin{rmk}
	It seems likely that a décollage over $P$ can be thought of as a suitable category internal to \smash{$ \Topbc $} along with a conservative functor to $P$.
	Making such an interpretation precise and helpful is a task that lies outside the scope of this work.
\end{rmk}

\begin{ntn} 
	If $\DD\colon\fromto{\sdop(P)}{\Topbc}$ is a décollage over $P$, and if $p,q\in P$ are points with $p<q$, then for the sake of typographical brevity, we write
	\begin{equation*}
		\DD\{p\}\orientedcup\DD\{q\} \coloneq \DD\{p\} \orientedcupbc^{\DD\{p < q\}} \DD\{q\} \period \index[notation]{cup@$\orientedcup$}
	\end{equation*}
	The two conditions of \Cref{def:toposicdecollage} specify, for each chain $\{p_0 < \cdots < p_m\}\subseteq P$, an equivalence
	\begin{equation*}
		\equivto{\DD\{p_0 < \cdots < p_m\}}{\DD\{p_0\} \underset{\DD\{p_0\}\orientedcup\DD\{p_1\}}{\orientedtimes} \DD\{p_1\} \underset{\DD\{p_1\}\orientedcup\DD\{p_2\}}{\orientedtimes} \cdots \underset{\DD\{p_{m-1}\}\orientedcup\DD\{p_m\}}{\orientedtimes} \DD\{p_m\}} \semicolon
	\end{equation*}
	we call this the \defn{Segal equivalence}\index[terminology]{Segal equivalence} .
\end{ntn}

\begin{exm}
	Let $ P $ be a finite poset.
	The terminal object of $\DecTop{P}$ is the constant functor \smash{$\fromto{\sdop(P)}{\Topbc}$} whose value is the \topos $\Space$.
\end{exm}

Recall that we write $ \lambdapi \colon \incto{\Spaceprofin}{\Topbc} $ for the fully faithful embedding given by the assignment $ \goesto{Y}{\Ytilde} $ (\Cref{def:Stonetopos}).

\begin{exm}[the toposic décollage of a spatial décollage]\label{exm:profinitedecollageastoposic}
	Let $ P $ be a finite poset and let $ D \colon \fromto{\sdop(P)}{\Spaceprofin} $ be a profinite spatial décollage (\Cref{cnstr:profinitespacesanddecollages}).
	Since the functor
	\begin{equation*}
		\lambdapi \colon \incto{\Spaceprofin}{\Topbc}
	\end{equation*}
	is left exact, \Cref{exm:gluingstonetopoi} shows that the functor $ \fromto{\sdop(P)}{\Topbc} $ defined by the assignment
	\begin{equation*}
		\Sigma \mapsto \widetilde{D(\Sigma)}
	\end{equation*}
	is a toposic décollage over $ P $.
	That is to say, objectwise application of $ \lambdapi $ defines a fully faithful functor
	\begin{equation*}
		\lambdapi \of - \colon \fromto{\DecSpaceprofin{P}}{\DecTop{P}} \period
	\end{equation*}
\end{exm}

The following is immediate from \Cref{exm:profinitedecollageastoposic} and the fact that $ \lambdapi $ defines an equivalence between profinite stratified spaces and Stone \topoi.

\begin{prp}\label{prp:decprehochster} 
	Let $P$ be a finite poset.
	Then the essential image of the fully faithful functor
	\begin{equation*}
		\incto{\DecSpaceprofin{P}}{\DecTop{P}}
	\end{equation*}
	given by the objectwise application of $\lambdapi \colon \incto{\Spaceprofin}{\Topbc}$ is the full subcategory
	\begin{equation*}
		\DecTopStone{P} \subset \DecTop{P}
	\end{equation*}
	spanned by those décollages over $P$ that carry each chain to a Stone \topos.
\end{prp}


We finish this section by encoding the functoriality of the décollage construction as in \Cref{cnstr:DecS}.

\begin{cnstr}[functoriality of toposic décollages]
	Write 
	\begin{equation}\label{eq:cartfiboverPosfin}
		\fromto{\textstyle\int_{\Posfin} \Fun(\sdop,\Topbc)}{\Posfin}
	\end{equation}
	for the cartesian fibration classified by the functor $ \fromto{(\Posfin)^{\op}}{\Cat_{\infty}} $ given by the assignment
	\begin{equation*}
		\goesto{P}{\Fun(\sdop(P),\Topbc)}
	\end{equation*}
	with functoriality given by right Kan extension \HTT{Corollary}{3.2.2.13}.
	Thus the objects of $ \int_{\Posfin} \Fun(\sdop,\Topbc) $ consist of pairs $ (P,\FF) $ of a finite poset $ P $ and a functor
	\begin{equation*}
		\FF \colon \fromto{\sdop(P)}{\Topbc} \period
	\end{equation*}
	The fiber of \eqref{eq:cartfiboverPosfin} over a poset $P$ is the \category $ \Fun(\sdop(P),\Topbc) $.

	Let
	\begin{equation*}
		\DecTop{} \subset \textstyle\int_{\Posfin} \Fun(\sdop,\Topbc)\index[notation]{DecTop@$ \DecTop{} $} 
	\end{equation*}
	denote the full subcategory spanned by the pairs $(P,\DD)$ in which $\DD$ is a toposic décollage over $ P$.
	Since $\DecTop{}$ contains all the cartesian edges, the functor
	\begin{equation*}
		\fromto{\DecTop{}}{\Posfin}
	\end{equation*}
	is a cartesian fibration.
\end{cnstr}


\subsection{The nerve of a stratified \texorpdfstring{$\infty$}{∞}-topos}\label{subsec:toposicnerve}

We now explain how to every $ P $-stratified \topos gives rise to a toposic décollage over $ P $ and prove that the resulting \textit{nerve functor}
\begin{equation*}
	\NNerve_{P} \colon \fromto{\StrTopbcc_{\infty,P}}{\DecTop{P}}
\end{equation*}
is an equivalence (\Cref{thm:strattopanddecollages}).

\begin{cnstr}[the nerve of a stratified \topos]\label{cnstr:nerveofstrattopoi} 
	Let $P$ be a finite poset, and let $\flowerstar\colon\fromto{\XX}{\Ptilde}$ be a $P$-stratified \topos.
	Then for any monotonic map $\phi\colon\fromto{Q}{P}$, we define the \defn{\topos of sections of $\XX$ over $Q$} as the pullback of \topoi
	\begin{equation*}
		\MOR_{\Ptilde}(\widetilde{Q},\XX)\coloneq\MOR(\widetilde{Q},\XX) \crosslimits_{\MOR(\widetilde{Q},\Ptilde)} \widetilde{\{\phi\}} \period
	\end{equation*}
	The \topos $\MOR_{\Ptilde}(\widetilde{Q},\XX)$ depends only on the pullback $\XX\times_{\Ptilde}\widetilde{Q}$:
	\begin{equation*}
		\MOR_{\Ptilde}(\widetilde{Q},\XX)\simeq\MOR_{\widetilde{Q}}(\widetilde{Q},\XX\times_{\Ptilde}\widetilde{Q}) \period
	\end{equation*}

	We thus obtain a functor $\NNerve_P(\XX)\colon\fromto{\sdop(P)}{\Top_{\infty}}$ that carries a chain $\Sigma\subseteq P$ to the \topos
	\begin{equation*}
		\NNerve_P(\XX)(\Sigma)\coloneq\MOR_{\Ptilde}(\widetilde{\Sigma},\XX) \period
	\end{equation*}
	For each chain $\{p_0 < \cdots < p_m\} \subseteq P$, we have a natural identification
	\begin{equation*}
		\NNerve_P(\XX)\{p_0 < \cdots < p_m\} \simeq \XX_{p_0}\orientedtimes_{\XX}\XX_{p_1}\orientedtimes_{\XX}\cdots\orientedtimes_{\XX}\XX_{p_m} \period
	\end{equation*}

	In particular, if $\XX$ is bounded coherent constructible (\Cref{def:boundedcoherentconstrstratifications}), then the functor $\NNerve_P(\XX)$ is a décollage over $P$.
	We call $\NNerve_P(\XX)$\index[notation]{NerveP@$\NNerve_{P}$} the \defn{nerve}\index[terminology]{nerve!of a stratified topos@of a stratified \topos} of the $P$-stratified \topos $\XX$, and we call the functor 
	\begin{equation*}
		\NNerve \colon \fromto{\restrict{\StrTopbcc_{\infty}}{\Posfin}}{\DecTop{}} 
	\end{equation*}
	over $\Posfin$ the \defn{nerve functor}.
\end{cnstr}

\begin{exm}[compatibility of nerves]\label{exm:NrhoisrhoN}
	Let $P$ be a finite poset, and $\Pi$ a \pifinite $P$-stratified space.
	We have an identification
	\begin{equation*}
		\NNerve_P(\Pitilde)\simeq\widetilde{\Nerve_P(\Pi)} \comma
	\end{equation*}
	natural in $P$ and $\Pi$.
	To see this, note that for any chain $\Sigma\subseteq P$, the natural morphism 
	\begin{equation*}
		\fromto{\widetilde{\Map_P(\Sigma,\Pi)}}{\MOR_{\Ptilde}(\widetilde{\Sigma},\Pitilde)}
	\end{equation*}
	is an equivalence.
\end{exm}

We now proceed to demonstrate that the nerve is an equivalence of \categories.

\begin{thm}\label{thm:strattopanddecollages}
	For any finite poset $P$, the nerve functor $\NNerve_P\colon\fromto{\StrTopbcc_{\infty,P}}{\DecTop{P}}$ is an equivalence of \categories.
\end{thm}

\begin{proof}
	We begin by reducing to the case in which $P$ is a nonempty, finite, totally ordered set. To make this reduction, we note that $P\simeq\colim_{\Sigma\in\sd(P)}\Sigma$, whence $ \Ptilde $ is the limit $\Ptilde\simeq\lim_{\Sigma\in\sdop(P)}\widetilde{\Sigma}$ in $ \Cat_{\infty,\updelta_1} $ (which is the colimit in $ \Top_{\infty} $).
	Moreover,
	\begin{equation*}
		\sdop(P)\simeq\colim_{\Sigma\in\sd(P)}\sdop(\Sigma) \period
	\end{equation*}
	From this we deduce that
	\begin{equation*}
		\StrTopbcc_{\infty,P}\simeq\colim_{\Sigma\in\sd(P)}\StrTopbcc_{\infty,\Sigma} \andeq \DecTop{P}\simeq\colim_{\Sigma\in\sd(P)}\DecTop{\Sigma}\comma
	\end{equation*}
	which provides our reduction.

	Let $P=[n]\coloneq\{0 < \cdots < n\}$ be a nonempty totally ordered finite set.
	Define an inverse
	\begin{equation*}
		\UUup_n \colon \fromto{\DecTop{[n]}}{\StrTop_{\infty,[n]}^{\bcc}}
	\end{equation*}
	to the nerve functor $\NNerve_{n} \coloneq \NNerve_{[n]} $ as follows.
	Write $ \UUup_n(\DD) $ for the iterated bounded coherent oriented pushout
	\begin{equation*}
		\UUup_n(\DD) \coloneq \DD\{0\}\orientedcupbc^{\DD\{0 < 1\}}\DD\{1\}\orientedcupbc^{\DD\{1 < 2\}}\dots\orientedcupbc^{\DD\{n-1 < n\}}\DD\{n\} \comma
	\end{equation*}
	equipped with its canonical geometric morphism to
	\begin{equation*}
		\widetilde{[n]}\simeq\UUup_n(\Space) \period
	\end{equation*}
	Note that it is immediate from the definition that the geometric morphism $ \fromto{\UUup_n(\DD)}{\widetilde{[n]}} $ is coherent.

	The universal properties of the iterated bounded coherent oriented pushout and the iterated oriented pullback provide natural transformations $ \UUup_n\NNerve_n \to \id $ and $\id\to\NNerve_n\UUup_n$. We aim to show that these natural transformations are equivalences.

	We prove that the natural morphisms $ \UUup_n\NNerve_n \to \id $ and $\id\to\NNerve_n\UUup_n$ are equivalences by induction on $n$.
	The base case $n=0$ is obvious.
	Now assume that $n\geq 1$ and that the natural morphism $ \UUup_{n-1}\NNerve_{n-1} \to \id $ is an equivalence; we prove that the natural morphism $ \UUup_{n}\NNerve_{n} \to \id $ is an equivalence.
	If $\XX$ is a bounded coherent \topos with a constructible stratification $\XX\to\widetilde{[n]}$, then consider the recollement of $\XX$ given by $\XX_{[n-1]}$ and $\XX_n$.
	We thus have a gluing square
	\begin{equation*}
		\begin{tikzcd}
			\XX_{[n-1]}\orientedtimes_{\XX}\XX_n \arrow[r, "q_{\ast}" above] \arrow[d, "p_{\ast}" left] & \XX_n \arrow[d, "j_{\ast}" right, hooked] \arrow[dl, phantom, "\scriptstyle \sigma" below right, "\Longleftarrow" sloped] \\ 
			\XX_{[n-1]} \arrow[r, "i_{\ast}" below, hooked] & \XX \period
		\end{tikzcd}
	\end{equation*}
	As a result, we compute:
	\begin{align*}
		\UUup_n\NNerve_n(\XX) &\simeq \UUup_{n-1}\NNerve_{n-1}(\XX_{[n-1]})\orientedcupbc^{\XX_{[n-1]}\orientedtimes_{\XX}\XX_n}\XX_n \\
		&\simeq \XX_{[n-1]} \orientedcupbc^{\XX_{[n-1]} \orientedtimes_{\XX} \XX_n} \XX_n \\
		&\simeq \XX \comma
	\end{align*}
	as desired.

	Now assume that the natural morphism $\id \to \NNerve_{n-1}\UUup_{n-1}$ is an equivalence; we prove that the natural morphism $\id\to\NNerve_n\UUup_n$ is an equivalence.
	Let $\DD\colon\fromto{\sdop([n])}{\Topbc}$ be a toposic décollage; we need to show that for every chain $ \Sigma \subset [n] $, the natural morphism $ \DD(\Sigma) \to \NNerve_n\UUup_n(\DD)(\Sigma) $ is an equivalence.
	There are two cases to consider: $ \Sigma \neq [n] $ and $ \Sigma = [n] $.
	If $ \Sigma \neq [n] $, then there exists an elment $ k \in [n] $ such that $ k \notin \Sigma $.
	Then applying the inductive hypothesis we see that the map $\DD(\Sigma)\to\NNerve_n\UUup_n(\DD)(\Sigma)$ factors as a composite of equivalences
	\begin{align*}
		\DD(\Sigma) &\simeq (\restrict{\DD}{\sdop([n] \smallsetminus \{k\})})(\Sigma) \\ 
		&\equivalence \NNerve_{[n] \smallsetminus \{k\}} \UUup_{[n] \smallsetminus \{k\}}(\restrict{\DD}{\sdop([n] \smallsetminus \{k\})})(\Sigma) \\ 
		&\simeq \NNerve_n\UUup_n(\DD)(\Sigma) \period
	\end{align*}
	In the case that $ \Sigma=[n] $, note that the morphism $\DD([n])\to\NNerve_n\UUup_n(\DD)([n])$ is homotopic to the Segal equivalence
	\begin{equation*}
		\equivto{\DD\{0 < \cdots < n\}}{\DD\{0\}\underset{\UUup_n(\DD)}{\orientedtimes}\DD\{1\}\underset{\UUup_n(\DD)}{\orientedtimes}\cdots\underset{\UUup_n(\DD)}{\orientedtimes}\DD\{n\}} \comma
	\end{equation*}
	whence our claim.
\end{proof}


\subsection{Profinite stratified spaces as stratified \texorpdfstring{$\infty$}{∞}-topoi}\label{subsec:lambdahatff}

In this section we extend the functor $ \uplambda \colon \fromto{\Strfin}{\restrict{\StrTopbcc_{\infty}}{\Posfin}}$ given by $\goesto{\Pi}{\Pitilde}$ (\Cref{ntn:lambdaP}) to a functor on profinite stratified spaces.
By comparing to the décollage approach to stratified \topoi, we prove that the resulting functor $ \fromto{\Strprofin}{\StrTopbcc_{\infty}} $ is fully faithful (\Cref{prp:fullfaithfulnessoftilde})

\begin{nul}\label{nul:strdecomparisonfinite}
	In light of \Cref{exm:NrhoisrhoN}, for each finite poset $P$, the diagram
	\begin{equation*}
		\begin{tikzcd}
			\StrfinP \arrow[d, "\Nerve_P" left, "\wr"{right, xshift=-0.25ex}] \arrow[r, "\uplambda_P" above] & \StrTopbcc_{\infty,P} \arrow[d, "\NNerve_P" right, "\wr"{left, xshift=0.25ex}]\\ 
			\DecSpacefin{P} \arrow[r, "\lambdapi \circ-" below, hooked] & \DecTop{P}
		\end{tikzcd}
	\end{equation*}
	commutes and the vertical functors are equivalences (\Cref{dfn:finstratspaces}, \Cref{nul:finitedecollages}, and \Cref{thm:strattopanddecollages}).
	Since the functor $ \fromto{\DecSpacefin{P}}{\DecTop{P}} $ given by composition with \smash{$ \lambdapi \colon \incto{\Spaceprofin}{\Topbc} $} is fully faithful (\Cref{exm:profinitedecollageastoposic}), the functor $ \uplambda_P $ is fully faithful.
\end{nul}

\begin{nul}
	The functor $ \uplambda \colon \fromto{\Strfin}{\restrict{\StrTopbcc_{\infty}}{\Posfin}} $ is left exact.
	To see this, we combine two facts.
	First, the functor \smash{$\fromto{\Posfin}{\Topbc}$} given by $\goesto{P}{\Ptilde}$ is left exact.
	Second, for any finite poset $P$, the functor
	\begin{equation*}
		\uplambda_P \colon \fromto{\StrfinP}{\StrTopbcc_{\infty,P}} \comma
	\end{equation*}
	when regarded as a functor $\fromto{\DecSpacefin{P}}{\DecTop{P}}$, is equivalent to composition with $\uplambda_{\{0\}}$, so it too is left exact.
\end{nul}

\begin{cnstr} 
	Since bounded coherent constructible stratified \topoi are closed under the formation of inverse limits in $ \StrTop_{\infty} $ (\Cref{cor:SAG.A.8.3.3}=\allowbreak\SAG{Corollary}{A.8.3.3}), we can now apply \Cref{nul:proexistentadjoint} and extend $\uplambda$ to a functor
	\begin{equation*}
		\lambdahat \colon \fromto{\Strprofin}{\StrTopbcc_{\infty}} \index[notation]{lambdahat@$\lambdahat$}.
	\end{equation*}
	over $\TSpcspec$.
	We denote this functor by the assignment $\goesto{\mbfPi}{\mbfPitilde}$\index[notation]{Pitilde@$\mbfPitilde$}.
\end{cnstr}

\begin{wrn}
	If $ S $ is a spectral topological space and $\Pi$ is a profinite $ S $-stratified space, then although $ S $ determines and is determined by the $\mat(S)$-stratified space $\mat(\Pi)$, the \topoi \smash{$\Pitilde$} and \smash{$\widetilde{\mat(\Pi)}$} are quite different in general.
	The latter is always a presheaf \category, but the former is typically not.
\end{wrn}

\begin{nul}\label{nul:strdecomparisonprofinite}
	Let $ P $ be a finite poset.
	We generalize \Cref{nul:strdecomparisonfinite} as follows.
	Combining \Cref{cnstr:profinitespacesanddecollages,exm:profinitedecollageastoposic,thm:strattopanddecollages}, we see that the square
	\begin{equation*}
		\begin{tikzcd}
			\StrprofinP \arrow[d, "\Nerve_P" left, "\wr"{right, xshift=-0.25ex}] \arrow[r, "\lambdahat_P" above] & \StrTopbcc_{\infty,P} \arrow[d, "\NNerve_P" right, "\wr"{left, xshift=0.25ex}]\\ 
			\DecSpaceprofin{P} \arrow[r, "\lambdapi \circ-" below, hooked] & \DecTop{P} 
		\end{tikzcd}
	\end{equation*}
	commutes.
	Moreover, the vertical functors are equivalences and the bottom horizontal functor is fully faithful. 
	Hence the functor \smash{$ \lambdahat_{P} \colon \incto{\StrprofinP}{\StrTopbcc_{\infty,P}} $} is fully faithful.
\end{nul}

\begin{prp}\label{prp:fullfaithfulnessoftilde} 
	The functor $\lambdahat$ is fully faithful.
	In particular, for every spectral topological space $ S $, the functor
	\begin{equation*}
		\lambdahat_{S} \colon \incto{\StrprofinS}{\StrTopbcc_{\infty,S}}
	\end{equation*}
	is fully faithful.
\end{prp}

\begin{proof}
	Note that we have natural identifications
	\begin{equation*}
		\StrprofinS \simeq \lim_{P\in \FC(S)} \StrprofinP \andeq \StrTopbcc_{\infty,S}\simeq\lim_{P\in \FC(S)}\StrTopbcc_{\infty,P} \comma
	\end{equation*}
	the first of which is \Cref{prp:profinstratSislimoverFS} and the latter of which is obvious.
	The claim now follows from the fact that for each finite poset $ P $, the functor $ \lambdahat_P $ is fully faithful \Cref{nul:strdecomparisonprofinite}.
\end{proof}

\begin{nul}
	Let $ P $ be a finite poset.
	In light of \Cref{prp:decprehochster,prp:fullfaithfulnessoftilde} we see that an $ \fromto{\XX}{\Ptilde} $ is in the essential image of
	\begin{equation*}
		\lambdahat_{P} \colon \incto{\StrprofinP}{\StrTopbcc_{\infty,P}}
	\end{equation*}
	if and only if for every chain $ \{p_0 < \cdots < p_n\} \subset P $, the iterated oriented fiber product
	\begin{equation*}
		\XX_{p_0}\orientedtimes_{\XX}\XX_{p_1}\orientedtimes_{\XX}\cdots\orientedtimes_{\XX}\XX_{p_m}
	\end{equation*}
	is a Stone \topos.
	In the next chapter, we will see that this is equivalent to the \textit{a priori} weaker condition that the strata of $ \XX $ be Stone (\Cref{prp:orientedprodisStone}).
	We will also characterize the essential image of the functor
	\begin{equation*}
		\lambdahat \colon \incto{\Strprofin}{\StrTopbcc_{\infty}} 
	\end{equation*}
	in general, and provide a number of intrinsic descriptions of stratified \topoi in the essential image of $ \lambdahat $ (\Cref{lem:specStoneequiv,thm:inftyHochster,cor:spectralclassificationconstructible}).
\end{nul}

We conclude this chapter with a few remarks describing stratified geometric morphisms $\fromto{\XX}{\Pitilde}$ in a more familiar fashion.
Let us begin with the case in which the base poset is trivial.

\begin{nul}\label{nul:fullyfaithfultilde}
	In light of \Cref{HTT.6.3.5.6}=\allowbreak\HTT{Corollary}{6.3.5.6}, if $\Pi$ is a $\updelta_0$-small space, then for any \topos $ \XX $ there is a natural equivalence
	\begin{equation*}
		\equivto{\Map_{\Pro(\Space)}(\Shape(\XX),\Pi)}{\Funlowerstar(\XX,\Pitilde)} \period
	\end{equation*}
	Here $\Shape(\XX)$ is the shape prospace of \Cref{dfn:shape}.
	In particular, $\Funlowerstar(\XX,\Pitilde)$ is a $\updelta_0$-small \groupoid.

	In particular, if $\Pi,\Pi'$ are $\updelta_0$-small spaces, then the natural map
	\begin{equation*}
		\fromto{\Map_{\Space}(\Pi',\Pi)}{\Map_{\Top_{\infty}}(\Pitilde',\Pitilde)}
	\end{equation*}
	is an equivalence.
\end{nul}

Now we extend this result to the context of $P$-stratified \topoi.

\begin{ntn} 
	Let $P$ be a finite poset, and let $\flowerstar\colon\fromto{\XX}{\Ptilde}$ and $g_{\ast}\colon\fromto{\YY}{\Ptilde}$ be $P$-stratified \topoi. 
	Let us write
	\begin{equation*}
		\Fun_{P,\ast}(\XX,\YY) \coloneq \Funlowerstar(\XX,\YY) \crosslimits_{\Funlowerstar(\XX,\Ptilde)} \{\flowerstar\} \period \index[notation]{FunPlowerstar@$\Fun_{P,\ast}(\XX,\YY)$}
	\end{equation*}
	The mapping space $\Map_{\StrTop_{\infty,P}}(\XX,\YY)$ is the maximal sub-\groupoid of $\Fun_{P,\ast}(\XX,\YY)$.

	If $\XX$ and $\YY$ are bounded coherent and constructibly stratified, then in light of \Cref{thm:strattopanddecollages}, there is a natural equivalence of \categories
	\begin{equation*}
		\Fun_{P,\ast}(\XX,\YY) \simeq \int_{\Sigma\in\sdop(P)}\Funlowerstar(\NNerve_P(\XX)(\Sigma),\NNerve_P(\YY)(\Sigma)) \period
	\end{equation*}
\end{ntn}

\noindent This implies the following.

\begin{prp} 
	Let $P$ be a finite poset and $\XX$ a bounded coherent constructible $ P $-stratified \topos.
	Then for any \pifinite $P$-stratified space $\Pi$, there is a natural equivalence
	\begin{equation*}
		\Fun_{P,\ast}(\XX,\Pitilde) \simeq \int_{\Sigma\in\sdop(P)} \Map_{\Pro(\Space)}(\Shape(\NNerve_P(\XX)(\Sigma)), \Nerve_P(\Pi)(\Sigma)) \period
	\end{equation*}
	In particular, the \category $ \Fun_{P,\ast}(\XX,\Pitilde) $ is \agroupoid.
\end{prp}



\noindent Additionally, \Cref{prp:fullfaithfulnessoftilde} implies the following.

\begin{cor}\label{cor:fullfaithfulnessonstratspaces} 
	For any finite poset $ P $ and \pifinite $P$-stratified spaces $\Pi$ and $\Pi'$, the functor
	\begin{equation*}
		\Map_{P}(\Pi',\Pi)\to\Fun_{P,\ast}(\Pitilde',\Pitilde)
	\end{equation*}
	is an equivalence.
	That is, the functor $\uplambda_P$ is a fully faithful functor $\incto{\StrfinP}{\StrTopbcc_{\infty,P}}$.
\end{cor}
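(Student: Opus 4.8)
The statement \Cref{cor:fullfaithfulnessonstratspaces} asserts that $\lambda_P$ is fully faithful on $\pi$-finite $P$-stratified spaces. The plan is to reduce everything to the already-established full faithfulness of $\lambda_{\{0\}}$ (the case $P = \{0\}$, which is \Cref{nul:fullyfaithfultilde}) by passing through the décollage descriptions on both sides. Concretely, for $\pi$-finite $P$-stratified spaces $\Pi$ and $\Pi'$, I would unwind the mapping space $\Map_P(\Pi',\Pi)$ as an end over the subdivision. Since $N_P$ is an equivalence $\Strat_{\pi,P} \equivalence \DecSpacefin{P}$ (\Cref{dfn:finstratspaces}, \Cref{nul:finitedecollages}) and mapping spaces in a full subcategory of a functor category are computed as ends, one has
\begin{equation*}
	\Map_P(\Pi',\Pi) \simeq \int_{\Sigma \in \sdop(P)} \Map_{\Space}(N_P(\Pi')(\Sigma), N_P(\Pi)(\Sigma)) \period
\end{equation*}
On the toposic side, \Cref{thm:strattopanddecollages} gives the equivalence $\NN_P \colon \StrTop_{\infty,P}^{\bcc} \equivalence \DecTop{P}$, and since $\widetilde{\Pi}$ is bounded coherent constructible (\Cref{nul:Pitildeisbcc}), the space $\Fun_{P,\ast}(\widetilde{\Pi'},\widetilde{\Pi})$ (which, being valued in a full subcategory over $\widetilde{P}$, is an interior of a functor \category) is likewise an end
\begin{equation*}
	\Fun_{P,\ast}(\widetilde{\Pi'},\widetilde{\Pi}) \simeq \int_{\Sigma \in \sdop(P)} \Funlowerstar(\NN_P(\widetilde{\Pi'})(\Sigma), \NN_P(\widetilde{\Pi})(\Sigma)) \period
\end{equation*}

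Next I would invoke \Cref{exm:NrhoisrhoN}, which identifies $\NN_P(\widetilde{\Pi}) \simeq \widetilde{N_P(\Pi)}$ naturally in $P$ and $\Pi$, so that for each string $\Sigma$ the $\Sigma$-component is $\widetilde{N_P(\Pi)(\Sigma)}$ — the $\infty$-topos attached to the space $N_P(\Pi)(\Sigma)$ via $\lambda_{\{0\}}$. The spaces $N_P(\Pi)(\Sigma)$ are $\pi$-finite (they are iterated links of a $\pi$-finite stratified space, hence $\pi$-finite by the Segal condition of \Cref{nul:finitedecollages}). The key input \Cref{nul:fullyfaithfultilde} (equivalently, the full faithfulness of $\lambda_{\{0\}}$ noted just before the corollary) then tells us that the natural map
\begin{equation*}
	\Map_{\Space}(N_P(\Pi')(\Sigma), N_P(\Pi)(\Sigma)) \to \Funlowerstar(\widetilde{N_P(\Pi')(\Sigma)}, \widetilde{N_P(\Pi)(\Sigma)})
\end{equation*}
is an equivalence for each $\Sigma \in \sdop(P)$. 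The remaining point is to check that this objectwise equivalence assembles, compatibly with the end structure, into an equivalence of the two ends displayed above; this is a routine naturality verification using that the nerve functor $\NN_P$ is an equivalence \emph{over} $\poSet$ and that $N$ is compatible with $\lambda$ on décollages (the commuting square relating $N_P$, $\lambda_P$, $\NN_P$, and $\lambda_{\{0\}}\circ-$ recorded just before this subsection). Since ends preserve equivalences computed termwise, applying $\int_{\Sigma \in \sdop(P)}$ to a natural equivalence of functors $\sdop(P) \times \sdop(P)^{\op} \to \Space$ yields the desired equivalence $\Map_P(\Pi',\Pi) \equivalence \Fun_{P,\ast}(\widetilde{\Pi'},\widetilde{\Pi})$, and restricting to interiors gives full faithfulness of $\lambda_P$ on $\Strat_{\pi,P}$.

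\textbf{Main obstacle.} The substantive content has already been done: \Cref{thm:strattopanddecollages} (the nerve equivalence for stratified $\infty$-topoi) and \Cref{nul:fullyfaithfultilde} (full faithfulness of $\widetilde{(-)}$ on $\pi$-finite spaces) together carry the proof. The only genuine care required is bookkeeping the naturality of the end decompositions and verifying that the comparison map $\lambda_P$ really does induce, stringwise, the comparison maps of \Cref{nul:fullyfaithfultilde} — i.e. that the square relating $N_P$, $\NN_P$, $\lambda_P$, and $\lambda_{\{0\}}\circ -$ commutes not just up to equivalence of objects but coherently enough to identify mapping spaces. I expect this to be the trickiest bit to write cleanly, but it is formal given \Cref{exm:NrhoisrhoN} and the fact that $\NN_P$ is an equivalence of \categories over $\poSet$.
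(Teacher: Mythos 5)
Your proposal is correct and follows essentially the same route as the paper: the paper deduces the corollary from the end formula $\Fun_{P,\ast}(\XX,\YY)\simeq\int_{\Sigma\in\sdop(P)}\Funlowerstar(\NN_P(\XX)(\Sigma),\NN_P(\YY)(\Sigma))$ supplied by the nerve equivalence, the identification $\NN_P(\widetilde{\Pi})\simeq\widetilde{N_P(\Pi)}$, and the stringwise full faithfulness of $\lambda_{\{0\}}$, which is exactly your argument. The only cosmetic difference is that the paper phrases the stringwise step via the shape ($\Map_{\Pro(\Space)}(\Pi_{\infty}(\NN_P(\XX)(\Sigma)),N_P(\Pi)(\Sigma))$), which for presheaf \topoi reduces to the mapping spaces you write down.
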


\newpage

\section{Spectral higher topoi}\label{sec:spectraltopoi}

In this chapter, we define the notion of a \textit{spectral \topos.}
The idea is that, on one hand, these are the kinds of \topoi that arise as the étale \topoi of coherent schemes, and on the other, these turn out to be precisely the \topoi that arise as $\mbfPitilde$ for some profinite stratified space $\mbfPi$.

\Cref{subsec:StoneOFP} begins by showing that in an oriented fiber product of bounded coherent \topoi $ \orientedpull{\XX}{\ZZ}{\YY} $, if $ \XX $ and $ \YY $ are Stone, then $ \orientedpull{\XX}{\ZZ}{\YY} $ is Stone; this is key to understand the links in our décollage approach to \textit{spectral \topoi} developed in \cref{subsec:toposicdecollage}.
\Cref{subsec:higherHochster} states and proves our \Categorical Hochster Duality Theorem (\Cref{thm:headlineinftyHochster}).
The \Categorical Hochster Duality Theorem provides an equivalence between profinite stratified spaces and spectral \topoi.
\Cref{subsec:constructiblesheaves} is dedicated to the study of constructible sheaves in the setting of stratified \topoi.
In \Cref{subsec:cohandconstructiblesheaves} we show that spectral \topoi are characterized by the requirement that the constructible sheaves coincide with the truncated coherent objects.


\subsection{Stone \texorpdfstring{$\infty$}{∞}-topoi \& oriented fiber products}\label{subsec:StoneOFP}

In this section we prove two useful facts about oriented fiber products involving Stone \topoi.

\begin{prp}\label{prp:orientedfiboverStoneisfib}
	Let $ \flowerstar \colon \fromto{\XX}{\ZZ} $ and $ \glowerstar \colon \fromto{\YY}{\ZZ} $ be geometric morphisms of \topoi.
	If $ \ZZ $ is Stone, then the natural geometric morphism $ \fromto{\XX \cross_{\ZZ} \YY}{\orientedpull{\XX}{\ZZ}{\YY}} $ is an equivalence.
\end{prp}

\begin{proof}
	It suffices to show that the projections $ \pr_{1,\ast},\pr_{2,\ast} \colon \fromto{\Path{\ZZ}}{\ZZ} $ are equivalences.
	Since $ \ZZ $ is Stone, by \Cref{lem:orientedcoherent} the \topos $ \Path{\ZZ} $ is bounded coherent.
	Moreover, \Cref{thm:Stonesaretopoiwithgroupoidsofpoints}=\allowbreak\SAG{Theorem}{E.3.4.1} shows that the \category $ \Pt(\ZZ) $ is \agroupoid. 
	Thus
	\begin{equation*}
		\Pt(\Path{\ZZ}) \equivalent \Fun([1],\Pt(\ZZ))
	\end{equation*}
	is \agroupoid as well. 
	Again appealing to \Cref{thm:Stonesaretopoiwithgroupoidsofpoints}=\allowbreak\SAG{Theorem}{E.3.4.1} we conclude that $ \Path{\ZZ} $ is Stone.
	The claim now follows from the fact that $ \pr_{1,\ast} $ and $ \pr_{2,\ast} $ are shape equivalences (\Cref{ex:pathtoposislocalandcolocal}).
\end{proof}

\begin{prp}\label{prp:orientedprodisStone}
	Let $ \XX $ and $ \YY $ be Stone \topoi, $ \ZZ $ a bounded coherent \topos, and $ \flowerstar \colon \fromto{\XX}{\ZZ} $ and $ \glowerstar \colon \fromto{\YY}{\ZZ} $ coherent geometric morphisms.
	Then the oriented fiber product $ \orientedpull{\XX}{\ZZ}{\YY} $ is a Stone \topos.
\end{prp}

\begin{proof}
	By \Cref{lem:orientedcoherent} the \topos $ \orientedpull{\XX}{\ZZ}{\YY} $ is bounded coherent, so by \Cref{thm:Stonesaretopoiwithgroupoidsofpoints}=\allowbreak\SAG{Theorem}{E.3.4.1} it suffices to prove that the \category $ \Pt(\orientedpull{\XX}{\ZZ}{\YY}) $ is \agroupoid.
	In light of \Cref{lem:Ptpreservesorientedprod} there is an equivalence
	\begin{equation*}
		\Pt(\orientedpull{\XX}{\ZZ}{\YY}) \equivalent \commacat{\Pt(\XX)}{\Pt(\ZZ)}{\Pt(\YY)} \comma
	\end{equation*}
	so the fact that $ \Pt(\XX) $ and $ \Pt(\YY) $ are \groupoids implies that the \category $\Pt(\orientedpull{\XX}{\ZZ}{\YY})$ is as well.
\end{proof}


\subsection{Spectral \texorpdfstring{$\infty$}{∞}-topoi \& toposic décollages}\label{subsec:toposicdecollage}

In this section we define the \toposic generalization of spectral topological spaces relevant for our \Categorical Hochster Duality Theorem (\Cref{thm:inftyHochster}).

\begin{dfn}\label{def:spectraltopos}
	Let $ S $ be a spectral topological space.
	An $ S $-stratified \topos $ \fromto{\XX}{\Stilde} $ is a \defn{spectral}\index[terminology]{spectral!topos@\topos}\index[terminology]{topos@\topos!spectral} $ S $-stratified \topos if and only if the following conditions are satisfied:
	\begin{itemize}
		\item The \topos $ \XX $ is bounded and coherent.

		\item The stratification by $S$ is constructible.

		\item For every point $ s \in S $, the stratum $ \XX_s \colonequals \stilde \cross_{\Stilde} \XX $ is a Stone \topos.
	\end{itemize}
	We write $ \StrTopspec_{\infty,S} \subset \StrTopbcc_{\infty,S} $ for the full subcategory spanned by the spectral $ S $-stratified \topoi.

	More generally, write
	\begin{equation*}
		\StrTopspec_{\infty} \subset \StrTopbcc_{\infty} \index[notation]{StrTopspec@$\StrTopspec_{\infty}$}
	\end{equation*}
	for the full subcategory whose objects are spectral \topoi and whose morphisms are squares
	\begin{equation*}
		\begin{tikzcd}
			\XX' \arrow[r] \arrow[d] & \XX \arrow[d] \\ 
			\Stilde' \arrow[r] & \Stilde
		\end{tikzcd}
	\end{equation*}
	of coherent geometric morphisms.
	As a consequence of \Cref{lem:orientedcoherent} we observe that the pullback of a spectral \topos along the geometric morphism induced by a quasicompact continuous map is again spectral.
	Hence the functor $\fromto{\StrTopspec_{\infty}}{\TSpcspec}$ is a cartesian fibration.
\end{dfn}

\begin{exm}\label{exm:profinstratsarespectral}
	Let $\fromto{\mbfPi}{S}$ be a profinite stratified space (\Cref{dfn:profinitestratspace}).
	Since the fibers $ \mbfPitilde_s \simeq \widetilde{\mbfPi_s} $ are Stone \topoi, the $ S $-stratified \topos $ \mbfPitilde $ is spectral.
\end{exm}

\begin{nul}
	In \cref{subsec:higherHochster}, we prove the central \Categorical Hochster Duality Theorem, which states that \emph{every} spectral \topos is of the form $\mbfPitilde$ for some profinite stratified space. 
\end{nul}

The key example of a spectral \topos from algebraic geometry is the étale \topos of a coherent scheme.

\begin{exm}\label{exm:etaleisspectra}
	Let $X$ be a coherent scheme.
	We claim that $ X_{\et} $ is spectral with respect to the natural stratification $ \fromto{X_{\et}}{X_{\zar}} $ (\Cref{exm:natstratXet}).

	To see this, we need to show that for any point $ x_0 \in X^{\zar}$, the stratum \smash{$(X_{\et})_{x_0} $} is a Stone \topos.
	Combining the fact that the functor $ \Pt \colon \fromto{\Top_{\infty}}{\Cat_{\infty,\updelta_1}} $ preserves fiber products, with Conceptual Completeness (\Cref{thm:conceptualcompleteness}=\allowbreak\SAG{Theorem}{A.9.0.6}), we see that the natural square
	\begin{equation*}
      \begin{tikzcd}[sep=2.25em]
	       (\Spec \upkappa(x_0))_{\et} \arrow[d] \arrow[r] & X_{\et} \arrow[d] \\ 
	       (\Spec \upkappa(x_0))_{\zar} \arrow[r] & X_{\zar} 
      \end{tikzcd}
    \end{equation*}
    is a pullback square.
    To conclude, recall that a choice of separable closure of the residue field $ \upkappa(x_0) $ provides an identification of $ (\Spec \upkappa(x_0))_{\et} $ with the Stone \topos \smash{$\widetilde{\BG}_{\upkappa(x_0)}$} associated to the absolute Galois group of $ \upkappa(x_0) $ (\Cref{exm:etaletoposfield}).
	Consequently \smash{$ X_{\et} $} is a spectral \topos.
\end{exm}

The following is a convenient reformulation of the condition that a stratified \topos be spectral.

\begin{prp}\label{prop:spectralcheckonpts}
	Let $S$ be a spectral topological space, and let $\XX$ be a bounded coherent constructible $ S $-stratifed \topos.
	Then $\XX$ is spectral if and only if the functor
	\begin{equation*}
		\fromto{\Pt(\XX)}{\Pt(\Stilde)\simeq\mat(S)}
	\end{equation*}
	exhibits $\Pt(\XX)$ as a $\mat(S)$-stratified space.
\end{prp}

\begin{proof}
	This follows directly from \Cref{thm:Stonesaretopoiwithgroupoidsofpoints}=\allowbreak\SAG{Theorem}{E.3.4.1}. 
\end{proof}

\begin{nul}[spectral \topoi as décollages]
	Let $P$ be a finite poset. We now consider the nerve of a spectral $P$-stratified \topos $\fromto{\XX}{\Ptilde}$. Since each stratum $\XX_p$ is Stone, it follows from \Cref{prp:orientedprodisStone} that for any chain $\{p_0 < \cdots < p_n\}\subseteq P$, the value
	\begin{equation*}
		\NNerve_P(\XX)\{p_0 < \cdots < p_n\}\simeq\XX_{p_0}\orientedtimes_{\XX}\XX_{p_1}\orientedtimes_{\XX}\cdots\orientedtimes_{\XX}\XX_{p_n}
	\end{equation*}
	is a Stone \topos. 
	Consequently, we deduce that the equivalence
	\begin{equation*}
		\NNerve_P \colon\equivto{\StrTopbcc_{\infty,P}}{\DecTop{P}}
	\end{equation*}
	restricts to an equivalence between the \category of spectral $P$-stratified \topoi and the full subcategory $\DecTopStone{P}\subset\DecTop{P}$ spanned by those décollages over $P$ that carry each chain to a Stone \topos (\Cref{prp:decprehochster}).
\end{nul}

\begin{lem}\label{lem:specStoneequiv}
	Let $ P $ be a finite poset.
	Then the nerve equivalence 
	\begin{equation*}
		\NNerve_P \colon \equivto{\StrTopbcc_{\infty,P}}{\DecTop{P}}
	\end{equation*}
	restricts to an equivalence $ \equivto{\StrTopspec_{\infty,P}}{\DecTopStone{P}} $.
\end{lem}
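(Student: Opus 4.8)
The plan is to deduce this from the nerve equivalence $\NN_P \colon \equivto{\StrTop_{\infty,P}^{\bcc}}{\DecTop{P}}$ of \Cref{thm:strattopanddecollages} by checking that it carries the full subcategory $\StrTopspec_{\infty,P}$ bijectively onto the full subcategory $\DecTopStone{P}$. Since both inclusions $\StrTopspec_{\infty,P} \subset \StrTop_{\infty,P}^{\bcc}$ and $\DecTopStone{P} \subset \DecTop{P}$ are full, and $\NN_P$ is already known to be an equivalence, it suffices to prove that for a bounded coherent constructible $P$-stratified \topos $\XX$, the \topos $\XX$ is spectral if and only if the décollage $\NN_P(\XX)$ takes every string to a Stone \topos. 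This is precisely the content of the unnumbered \nul\ immediately preceding the statement, so the proof is essentially a matter of assembling that observation into both directions of the biconditional.

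First I would treat the forward direction. Suppose $\XX$ is spectral, so that every stratum $\XX_p$ is a Stone \topos. By \Cref{cnstr:nerveofstrattopoi}, for any string $\{p_0 \leq \cdots \leq p_n\} \subseteq P$ one has
\begin{equation*}
	\NN_P(\XX)\{p_0 \leq \cdots \leq p_n\} \simeq \XX_{p_0} \orientedtimes_{\XX} \XX_{p_1} \orientedtimes_{\XX} \cdots \orientedtimes_{\XX} \XX_{p_n} \period
\end{equation*}
Each stratum is Stone hence bounded coherent, and since $\XX$ is bounded coherent, the geometric morphisms $\XX_{p_i} \to \XX$ are coherent (a closed immersion into a coherent \topos composed with a quasicompact open immersion is coherent; alternatively appeal to \Cref{cor:morphismtoStoneiscoherent} once one knows the intermediate oriented products are Stone). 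Then \Cref{prp:orientedprodisStone} applies iteratively: the oriented fibre product of two Stone \topoi over a bounded coherent \topos along coherent geometric morphisms is again Stone, and an induction on $n$ shows the iterated oriented fibre product is Stone. Hence $\NN_P(\XX)$ lands in $\DecTopStone{P}$.

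For the converse, suppose $\NN_P(\XX)$ carries each string to a Stone \topos; in particular each singleton $\{p\}$ is sent to $\NN_P(\XX)\{p\} \simeq \XX_p$, so every stratum is Stone, and $\XX$ is bounded coherent constructible by hypothesis, so $\XX$ is spectral by \Cref{def:spectraltopos}. Combining the two directions, $\NN_P$ restricts to a functor $\StrTopspec_{\infty,P} \to \DecTopStone{P}$ which is fully faithful (being a restriction of an equivalence to full subcategories) and essentially surjective (since $\NN_P$ is essentially surjective onto $\DecTop{P}$ and hits exactly $\DecTopStone{P}$ on the spectral part). I do not anticipate a serious obstacle here; the only point requiring a little care is the coherence of the geometric morphisms $\XX_{p_i} \to \XX$ needed to invoke \Cref{prp:orientedprodisStone} and \Cref{lem:orientedcoherent} for the iterated oriented fibre products — one should either cite \Cref{cor:morphismtoStoneiscoherent} for the pieces already recognised as Stone, or observe directly that strata inclusions of a bounded coherent \topos are coherent — and checking that the induction on string length goes through cleanly using the identification $\NN_P(\XX)\{p_0 \leq \cdots \leq p_n\} \simeq \NN_P(\XX)\{p_0 \leq \cdots \leq p_{n-1}\} \orientedtimes_{\XX} \XX_{p_n}$ coming from the Segal condition of \Cref{def:toposicdecollage}.
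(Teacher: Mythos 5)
Your proposal is correct and follows essentially the same route as the paper: the paper's justification is the unnumbered remark immediately preceding the lemma, which likewise applies \Cref{prp:orientedprodisStone} iteratively to the identification $\NN_P(\XX)\{p_0 \leq \cdots \leq p_n\} \simeq \XX_{p_0}\orientedtimes_{\XX}\cdots\orientedtimes_{\XX}\XX_{p_n}$ to see that a spectral stratified \topos yields a décollage valued in Stone \topoi, the converse being immediate from the singleton strings. Your extra care about coherence of the strata inclusions is reasonable but already implicit in the paper's standing hypothesis that the stratification is bounded coherent constructible (which is what makes $\NN_P(\XX)$ a décollage in the first place, per \Cref{cnstr:nerveofstrattopoi}).
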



\subsection{Hochster duality for higher topoi}\label{subsec:higherHochster}

In \Cref{nul:dualitycube} we described Hochster duality as a cube of dualities: the equivalence of $1$-categories between profinite posets and spectral topological spaces restricts on one hand to an equivalence between profinite sets and Stone spaces, and on the other to an equivalence between finite posets and finite topological spaces.
Our objective now is to exhibit the analogous cube for higher topoi: 
\begin{equation*}
	\begin{tikzcd}[column sep={10ex,between origins}, row sep={8ex,between origins}]
		& \Spacefin \arrow[dl, hooked'] \arrow[rr, "\sim"{yshift=-0.2em}] \arrow[dd, hooked]  & & \Top_{\infty}^{\fin} \arrow[dl, hooked'] \arrow[dd, hooked] \\
		\Spaceprofin \arrow[dd, hooked] \arrow[rr, crossing over, "\sim"{yshift=-0.2em, near start}] & & \Top_{\infty}^{\Stone} \\
		& \Strfin \arrow[dl, hooked'] \arrow[rr, "\sim"{yshift=-0.2em, near start}] & & \StrTop_{\infty}^{\fin} \arrow[dl, hooked'] \\
		\Strprofin \arrow[rr, "\sim"{yshift=-0.2em}] & & \StrTopspec_{\infty} \arrow[from=uu, crossing over, hooked] & \phantom{\StrTop_{\infty}^{\fin}} \period 
	\end{tikzcd}
\end{equation*}
Here the vertical fully faithful functors are given by equipping an object with the trivial stratification.
The top face of this cube was established by Lurie \cite[\SAGapp{E}]{SAG}.
We now address the bottom face, more precisely the equivalence $\Strprofin\simeq\StrTopspec_{\infty}$.

\begin{thm}[{\Categorical Hochster Duality\index[terminology]{Categorical Hochster Duality@\Categorical Hochster Duality}}]\label{thm:inftyHochster}
	Let $ S $ be a spectral topological space.
	Then the functor 
	\begin{equation*}
		\lambdahat_S\colon\fromto{\StrprofinS}{\StrTopspec_{\infty,S}}
	\end{equation*}
	given by the assignment $ \goesto{\mbfPi}{\mbfPitilde} $ is an equivalence of \categories. 
	Consequently, the functor
	\begin{equation*}
		\lambdahat\colon\fromto{\Strprofin}{\StrTopspec_{\infty}}
	\end{equation*}
	is an equivalence of \categories.
\end{thm}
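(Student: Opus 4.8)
The plan is to reduce \Cref{thm:inftyHochster} to a statement about finite posets and then to Lurie's \Categorical Stone Duality. The first move is a formal dévissage. For a spectral topological space $S$, we have the identification $\Strat^{\wedge}_{\pi,S}\simeq\lim_{P\in\FC(S)}\Strat^{\wedge}_{\pi,P}$ of \Cref{prp:profinstratSislimoverFS}, and likewise $\StrTopspec_{\infty,S}\simeq\lim_{P\in\FC(S)}\StrTopspec_{\infty,P}$ — the latter because a bounded coherent constructible $S$-stratified \topos is the same as a compatible family of bounded coherent constructible $P$-stratified \topoi over all finite constructible stratifications $S\to P$, and spectrality is checked stratumwise, hence checked at each finite stage. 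Since $\widehat{\lambda}_S$ is the limit of the functors $\widehat{\lambda}_P$, it suffices to prove that each $\widehat{\lambda}_P\colon\Strat^{\wedge}_{\pi,P}\to\StrTopspec_{\infty,P}$ is an equivalence for $P$ a finite poset. Then the final clause — that $\widehat{\lambda}\colon\Strat^{\wedge}_{\pi}\to\StrTopspec_{\infty}$ is an equivalence — follows because both sides are cartesian fibrations over $\TSpc^{\spec}$ (noted in \Cref{dfn:profinitestratspace} and \Cref{def:spectraltopos}), $\widehat{\lambda}$ is a functor over $\TSpc^{\spec}$ carrying cartesian edges to cartesian edges (it commutes with pullback along quasicompact continuous maps, since $\widehat\lambda$ preserves limits and pullbacks of $S$-stratified \topoi are computed as honest pullbacks), and a fibrewise equivalence of cartesian fibrations is an equivalence.

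For the finite case, the strategy is to pass through décollages on both sides. On the spatial side, \Cref{cnstr:profinitespacesanddecollages} gives an equivalence $\Strat^{\wedge}_{\pi,P}\simeq\DecSpaceprofin{P}$. On the toposic side, the nerve equivalence $\NN_P\colon\StrTop^{\bcc}_{\infty,P}\simeq\DecTop{P}$ of \Cref{thm:strattopanddecollages}, restricted to spectral stratified \topoi, is the equivalence $\StrTopspec_{\infty,P}\simeq\DecTopStone{P}$ of \Cref{lem:specStoneequiv}. Moreover \Cref{exm:NrhoisrhoN} / the commuting square recorded just before \Cref{cor:fullfaithfulnessonstratspaces} shows that under these two equivalences, $\widehat{\lambda}_P$ is identified with the functor $\DecSpaceprofin{P}\to\DecTop{P}$ given by objectwise application of $\widehat{\lambda}_{\{0\}}\colon\Spaceprofin\to\Topbc$. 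So it suffices to show that this objectwise functor lands in $\DecTopStone{P}$ and is an equivalence onto it. That it lands in $\DecTopStone{P}$ is \Cref{prp:decprehochster} (relying on \Cref{exm:gluingstonetopoi}); full faithfulness was established in \Cref{prp:fullfaithfulnessoftilde}, using full faithfulness of $\Spaceprofin\to\Topbc$, which is \Categorical Stone Duality \cite[\SAGsec{E.3}]{SAG}. The only remaining point is essential surjectivity: an arbitrary object $\DD\in\DecTopStone{P}$ assigns to each string a Stone \topos, and we must realise $\DD$ as $\widehat\lambda$ of a profinite spatial décollage.

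For essential surjectivity, I would argue objectwise using the top face of the Hochster cube. Given $\DD\in\DecTopStone{P}$, each $\DD(\Sigma)$ is a Stone \topos, so by \Categorical Stone Duality \cite[\SAGsec{E.3}]{SAG} there is an essentially unique profinite space $\Pi(\Sigma)$ with $\DD(\Sigma)\simeq\widetilde{\Pi(\Sigma)}$, and the transition maps of $\DD$ correspond (again by full faithfulness on Stone \topoi) to transition maps making $\Pi\colon\sdop(P)\to\Spaceprofin$ a functor. One then checks that $\Pi$ is a profinite spatial décollage, i.e.\ lies in $\DecSpaceprofin{P}$: the décollage conditions of \Cref{def:toposicdecollage} — the gluing-square condition and the Segal/fibre-product condition on $\DD$ — translate into the defining cobar/Segal condition of \Cref{cnstr:profinitespacesanddecollages} for $\Pi$. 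Concretely, $\widehat\lambda$ carries the profinite cobar expression $\lim_{m\in\mbfDelta}(D\{p_0\leq p_1\}\circ D\{p_1\}^{\circ m}\circ\cdots)^{\wedge}_\pi$ to the corresponding iterated oriented fibre product of Stone \topoi (using \Cref{rec:compofprofiniteisprod} to identify the profinite composition with the cartesian product, \Cref{prp:orientedfiboverStoneisfib} to identify oriented fibre products over Stone \topoi with honest pullbacks, and \Cref{prp:orientedprodisStone} to stay within Stone \topoi), so the Segal condition for $\DD$ is equivalent to the cobar condition for $\Pi$; likewise \Cref{exm:gluingstonetopoi} says $\widehat\lambda$ of a profinite $[1]$-stratified space is a gluing square, and conversely a gluing square of Stone \topoi must come from a profinite $[1]$-stratified space because its open and closed parts and link are Stone. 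Hence $\widehat{\lambda}_{\{0\}}\circ\Pi\simeq\DD$, proving essential surjectivity.

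The main obstacle is the last step: verifying that the décollage conditions for $\DD$ are \emph{exactly} matched by the profinite-décollage conditions for $\Pi$, i.e.\ that the bounded coherent oriented pushout of Stone \topoi corresponds under Stone duality to the pushout (gluing) of profinite stratified spaces, and the oriented fibre product to the profinite cobar construction. The forward direction of this correspondence is packaged in \Cref{prp:decprehochster} and \Cref{exm:gluingstonetopoi}; the converse — that every gluing square and every Segal diagram \emph{living in Stone \topoi} is already in the image of $\widehat\lambda$ — is where I expect to spend real effort, and it hinges on the compatibility results \Cref{prp:orientedfiboverStoneisfib}, \Cref{prp:orientedprodisStone}, and \Cref{rec:compofprofiniteisprod} together with the full faithfulness of Stone duality to transport the relevant universal properties back and forth. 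Once that matching is in hand, the theorem assembles formally from the pieces already in the excerpt.
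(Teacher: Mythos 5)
Your proposal is correct and follows essentially the same route as the paper: reduce to a finite poset $P$ using full faithfulness (\Cref{prp:fullfaithfulnessoftilde}) and preservation of inverse limits, then identify both sides with décollages via \Cref{lem:specStoneequiv} and conclude essential surjectivity from \Cref{prp:decprehochster}. The ``matching of conditions'' you flag as the main obstacle is exactly what \Cref{prp:decprehochster} (via \Cref{exm:gluingstonetopoi}, \Cref{prp:orientedfiboverStoneisfib}, and \Cref{rec:compofprofiniteisprod}) already packages, so no further work is needed there.
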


\begin{proof}
	Since $\lambdahat$ is fully faithful (\Cref{prp:fullfaithfulnessoftilde}) and preserves inverse limits, it suffices to prove that for any finite poset $ P $, the fully faithful functor $ \lambdahat \colon \incto{\StrprofinP}{\StrTopspec_{\infty,P}} $ is essentially surjective.
	This follows from the conjunction of \Cref{lem:specStoneequiv} and \Cref{prp:decprehochster}.
\end{proof}

The back face of the cube is just a restriction of the front face: we define $\Top_{\infty}^{\fin}$ as the full subcategory of $ \TopStone $ spanned by the essential image of the fully faithful functor $ \incto{\Spacefin}{\TopStone} $ given by $ \goesto{\Pi}{\Space_{/\Pi} \equivalent \Fun(\Pi,\Space)} $.
Similarly, $\StrTop_{\infty}^{\fin}$ is the \category of bounded coherent constructible \topoi over a finite poset $P$ such that for every point $p\in P$, the \topos $\XX_p$ is in \smash{$\Top_{\infty}^{\fin}$}.


\subsection{Constructible sheaves}\label{subsec:constructiblesheaves}

The truncated coherent objects of a Stone \topos are exactly the \textit{lisse sheaves} (\Cref{rec:localsyslisse}). 
This turns out to be a defining property of Stone \topoi (\Cref{prop:SAG.E.3.1.1}=\allowbreak\SAG{Proposition}{E.3.1.1}). 
In the same manner, the truncated coherent objects of a spectral \topos are exactly the \textit{constructible sheaves}.
In this section we introduce constructible sheaves for stratified \topoi; in the next section, we prove this characterization of spectral \topoi in terms of constructible sheaves.

\begin{ntn}
	Let $P$ be a finite poset and $\XX$ a $P$-stratified \topos.
	For each $p\in P$, we write $e_{p,\ast}\colon\incto{\XX_p}{\XX}$ for the inclusion of the $p$-th stratum.
\end{ntn}

\begin{dfn}\label{def:Pconstructible}
	Let $P$ be a finite poset and $\XX$ a $P$-stratified \topos.
	An object $F\in \XX$ is \defn{formally constructible}\index[terminology]{formally constructible} (or \defn{formally $P$-constructible} if disambiguation is called for) if and only if, for every point $p\in P$, the restriction $ \restrict{F}{\XX_p}  \coloneq e_p^{\ast} F \in \XX_p $ is locally constant.

	We say that $F$ is \defn{constructible}\index[terminology]{constructible} (or \defn{$P$-constructible}) if and only if the following pair of conditions is satisfied:
	\begin{itemize}
		\item The object $F$ is formally constructible.

		\item For any point $p\in P$, the restriction $ \restrict{F}{\XX_p} \in \XX_p$ is lisse.
	\end{itemize}
	We write $ \XX^{\Pcons} \subset \XX $\index[notation]{XPcons@$ \XX^{\Pcons} $} for the full subcategory spanned by the $ P $-constructible sheaves.
\end{dfn}

\begin{nul}
	This notion of constructibility depends upon the whole structure of the stratified \topos, not only upon the underlying \topos.
\end{nul}

\begin{nul}\label{nul:Pconstrpretopos}
	For any finite poset $P$ and $P$-stratified \topos $ \fromto{\XX}{\Ptilde} $, the \category of $ P $-constructible sheaves on $\XX$ is given by the pullback of \categories:
	\begin{equation*}
		\begin{tikzcd}
			\XX^{\Pcons} \arrow[r] \arrow[d, hooked] \arrow[dr, phantom, very near start, "\lrcorner", xshift=-0.45em, yshift=0.25em] & \prod_{p\in P}\XX_p^{\lisse} \arrow[d, hooked] \\ 
			\XX \arrow[r, "(e_p^{\ast})_{p\in P}" below] & \prod_{p\in P}\XX_p\phantom{^{\lisse}} \comma
		\end{tikzcd}
	\end{equation*}
	where \smash{$ \prod_{p\in P}\XX_p $} is the product in $ \Cat_{\infty,\updelta_1} $.
	\Cref{lem:prodofpretopoi,lem:pullbackofpretopoi} now show that \smash{$ \XX^{\Pcons} $} is \apretopos (\Cref{dfn:pretopos}) and the inclusion \smash{$ \incto{\XX^{\Pcons}}{\XX} $} is a morphism of \pretopoi.
\end{nul}

The pullback functor in a geometric morphism of \topoi preserves lisse objects (see \Cref{rec:localsyslisse}); in the same manner, the pullback of a morphism of stratified \topoi preserves constructible objects.

\begin{lem}\label{lem:morspreserveconstr}
	Let $ f \colon \fromto{P}{Q} $ be a morphism of finite posets, and let $ \fromto{\XX}{\Ptilde} $ and $ \fromto{\YY}{\Qtilde} $ be stratified \topoi.
	Then for any geometric morphism $ \qlowerstar \colon \fromto{\XX}{\YY} $ over $ \flowerstar \colon \fromto{\Ptilde}{\Qtilde} $, the pullback $ \qupperstar \colon \fromto{\YY}{\XX} $ sends $ Q $-constructible objects of $ \YY $ to $ P $-constructible objects of $ \XX $.
	Hence $ \qupperstar $ restricts to a morphism of \pretopoi
	\begin{equation*}
		\qupperstar \colon \fromto{\YY^{\Qcons}}{\XX^{\Pcons}} \period
	\end{equation*}
\end{lem}

\begin{proof}
	Let $ F \in \YY^{\Qcons} $ be a $ Q $-constructible object of $ \YY $.
	Then for any point $ p \in P $, the restriction $ \restrict{F}{\YY_{f(p)}} $ is lisse.
	Since the pullback in a geometric morphism preserves lisse objects, we see that the object $ \qupperstar(F)|_{\XX_p} $ is lisse.
	Hence $ \qupperstar(F) $ is $ P $-constructible.

	The fact that \smash{$ \qupperstar \colon \fromto{\YY^{\Qcons}}{\XX^{\Pcons}} $} is a morphism of \pretopoi is immediate from \Cref{nul:Pconstrpretopos}.
\end{proof}

A key feature of \pretopos of constructible sheaves is that it is alway bounded.
We make use of this fact repeatedly throughout the text.

\begin{prp}\label{prop:Pconstrbounded}
	Let $ P $ be a finite poset and $ \fromto{\XX}{\Ptilde} $ a $ P $-stratified \topos.
	Then the \pretopos $ \XX^{\Pcons} $ is bounded (\Cref{dfn:boundedpretopos}).
\end{prp}

\begin{proof}
	If $ P = \emptyset $, then the claim is obvious, so assume that $ P $ is nonempty.
	We prove the claim by induction on the rank of $ P $.

	In the base case where $ P $ has rank $ 0 $, $ P $ is discrete, so $ \XX $ is finite the coproduct of \topoi $ \coprod_{p \in P} \XX_p $ (which is the product $ \prod_{p \in P} \XX_p $ in $ \Cat_{\infty,\updelta_1} $).
	Thus $ \XX^{\Pcons} $ is the product of \categories:
	\begin{equation*}
		\XX^{\Pcons} = \prod_{p \in P} \XX_p^{\lisse} \period
	\end{equation*}
	By \Cref{thm:SAG.E.2.3.2}=\allowbreak\SAG{Theorem}{E.2.3.2}, for all $ p \in P $ the \pretopos \smash{$ \XX_p^{\lisse} $} is bounded; the finiteness of $ P $ and \Cref{lem:prodofbddpretopoi} now show that $ \XX^{\Pcons} $ is also bounded.

	For the induction step, let $ n \geq 0 $ be a natural number and assume that the claim holds for all finite posets $ P $ of rank $ n  $ and $ P $-stratified \topoi $ \fromto{\XX}{\Ptilde} $.
	Let $ P $ be a finite poset of rank $ n+1 $, and write $ M \subset P $ for the full subposet spanned by the minimal elements of $ P $.
	Then $ M $ is discrete and closed in $ P $.
	Write $ U \coloneq P \smallsetminus M $ for the open complement of $ M $ in $ P $.
	Then $ U $ is a poset of rank $ n $.
	Moreover, since $ \Ptilde $ is the recollement of $ \Mtilde $ and $ \Utilde $, the $ P $-stratified \topos $ \XX $ is the recollement of $ \XX_M $ and $ \XX_U $.
	An object $ F \in \XX $ is $ P $-constructible if and only if $ \restrict{F}{\XX_M}  $ and $ \restrict{F}{\XX_U}  $ are both constructible, from which we deduce that $ \XX^{\Pcons} $ is the oriented fiber product of \categories
	\begin{equation*}
		\XX^{\Pcons} \equivalent \commacat{\XX_M^{\Mcons}}{\XX_M}{\XX_U^{\Ucons}} \period 
	\end{equation*}
	Since $ M $ is a poset of rank $ 0 $ and $ U $ is a poset of rank $ n $, by the induction hypothesis both $ \XX_M^{\Mcons} $ and $ \XX_U^{\Ucons} $ are bounded \pretopoi.
	To conclude that the \pretopos $ \XX^{\Pcons} $ is a bounded, note that by \Cref{nul:usingconservativityofrecoll} every object of $ \XX^{\Pcons} $ is truncated and by \Cref{nul:orientedfpinCatesssmall} the \category $ \XX^{\Pcons} $ is $ \updelta_0 $-small.
\end{proof}

Now we extend the definition of constructibility to \topoi stratified over a spectral topological space.

\begin{dfn}
	Let $S$ be a spectral topological space and $\XX$ an $S$-stratified \topos.
	We say that an object $F\in \XX$ is \defn{formally constructible}\index[terminology]{formally constructible} (or \defn{formally $ S $-constructible}) if and only if there exist a finite poset $P$ and a constructible stratification $\fromto{S}{P}$ such that $F$ is formally $P$-constructible.
	We say that $F$ is \defn{constructible}\index[terminology]{constructible} (or \defn{$S$-constructible}) if and only if there exist a poset $P$ and a finite constructible stratification $\fromto{S}{P}$ such that $F$ is $P$-constructible.

	We denote by $\XX^{\Sfcons}\subset \XX$\index[notation]{XXSfcons@$\XX^{\Sfcons}$} (respectively, by $\XX^{\Scons}\subset \XX$\index[notation]{XXScons@$\XX^{\Scons}$}) the full subcategory spanned by the formally constructible objects (respectively, the constructible objects).
\end{dfn}

\begin{nul}
	For any spectral topological space $S$ and $S$-stratified \topos $\fromto{\XX}{\Stilde}$, the \category of constructible sheaves on $\XX$ is thus a filtered colimit of \categories:
	\begin{equation*}
		\XX^{\Scons} \simeq \colim_{P\in \FC(S)^{\op}}\XX^{\Pcons} \period
	\end{equation*}
	Therefore \Cref{lem:morspreserveconstr} and \Cref{prop:Pconstrbounded} combined with \Cref{prop:SAG.A.8.3.1}=\allowbreak\SAG{Proposition}{A.8.3.1} show that $ \XX^{\Scons} $ is a bounded \pretopos.
	Moreover, \Cref{nul:Pconstrpretopos} shows that the inclusion $ \incto{\XX^{\Scons}}{\XX} $ is a morphism of \pretopoi.
\end{nul}

\noindent From \Cref{lem:morspreserveconstr} we immediately deduce the following.

\begin{lem}\label{lem:morspreserveconstrspectral}
	Let $ f \colon \fromto{S}{T} $ be a quasicompact continuous map of spectral topological spaces, and let $ \fromto{\XX}{\Stilde} $ and $ \fromto{\YY}{\Ttilde} $ be stratified \topoi.
	Then for any geometric morphism $ \qlowerstar \colon \fromto{\XX}{\YY} $ over $ \flowerstar \colon \fromto{\Stilde}{\Ttilde} $, the pullback $ \qupperstar \colon \fromto{\YY}{\XX} $ sends $ T $-constructible objects of $ \YY $ to $ S $-constructible objects of $ \XX $.
	Hence $ \qupperstar $ restricts to a morphism of \pretopoi
	\begin{equation*}
		\qupperstar \colon \fromto{\YY^{\Tcons}}{\XX^{\Scons}} \period
	\end{equation*}
\end{lem}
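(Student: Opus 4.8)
The statement to prove is \Cref{lem:morspreserveconstrspectral}: for a quasicompact continuous map $f \colon \fromto{S'}{S}$ of spectral topological spaces and stratified \topoi $\fromto{\XX'}{\widetilde{S'}}$ and $\fromto{\XX}{\Stilde}$, any geometric morphism $\qlowerstar \colon \fromto{\XX'}{\XX}$ over $\flowerstar$ has the property that $\qupperstar$ carries $S$-constructible objects to $S'$-constructible objects.

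The plan is to reduce to the already-proved finite-poset statement \Cref{lem:morspreserveconstr} via the standard presentation of constructible sheaves over a spectral space as a filtered colimit over finite constructible stratifications. First I would recall that, by definition, an object $F \in \XX$ is $S$-constructible precisely when there is a finite constructible stratification $g \colon \fromto{S}{P}$ (of proposets, i.e.\ $P \in \FC(S)$) such that $F$ is $P$-constructible for the induced $P$-stratification of $\XX$; equivalently $F$ lies in the image of $\XX^{\Pconstr} \subseteq \XX^{\Sconstr}$ under the inclusion $\XX^{\Sconstr} \simeq \colim_{P\in \FC(S)^{\op}}\XX^{\Pconstr}$. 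So fix such a $g \colon \fromto{S}{P}$ witnessing $F \in \XX^{\Pconstr}$.

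The key step is then to produce a finite constructible stratification of $S'$ refining the pullback. Since $f \colon \fromto{S'}{S}$ is quasicompact and continuous and $P$ is finite, the composite $\fromto{S'}{S}{P}$ is a finite constructible stratification $g' \colon \fromto{S'}{P}$ of proposets (quasicompactness of $f$ ensures that preimages of the retrocompact opens $P_{\geq p}$ stay retrocompact, so $g' \in \FC(S')$ — this uses the definition of constructibility of a profinite stratification). The geometric morphism $\qlowerstar \colon \fromto{\XX'}{\XX}$ over $\flowerstar$ is then a morphism over $\fromto{\widetilde{P}}{\widetilde{P}}$ (namely the identity) once we push the stratifications down to $\widetilde{P}$, i.e.\ it is a morphism of $P$-stratified \topoi $\fromto{\XX'}{\XX}$ in the sense demanded by \Cref{lem:morspreserveconstr} (with $P' = P$ and $f = \id_P$ there). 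Applying \Cref{lem:morspreserveconstr} gives that $\qupperstar(F)$ is $P$-constructible as an object of $\XX'$ with its $P$-stratification, hence $S'$-constructible. Since every $S$-constructible object of $\XX$ is of the form above for some $P \in \FC(S)$, this proves that $\qupperstar$ carries $\XX^{\Sconstr}$ into $(\XX')^{\Sprimeconstr}$. That $\qupperstar$ restricts to a morphism of \pretopoi on these subcategories is then immediate, since $\qupperstar$ is already known to be a morphism of \pretopoi $\fromto{\XX}{\XX'}$ (it is the pullback of a geometric morphism) and the inclusions $\incto{\XX^{\Sconstr}}{\XX}$, $\incto{(\XX')^{\Sprimeconstr}}{\XX'}$ are morphisms of \pretopoi.

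The only genuinely non-formal point — and the one I would write out carefully — is checking that $g' = g \circ f$ is again a \emph{finite constructible} stratification, i.e.\ that quasicompactness of $f$ is exactly what is needed so that the preimages $f^{-1}(S_{\geq p})$ of the retrocompact opens defining constructibility of $g$ remain retrocompact in $S'$; this is the content of the definition of constructibility for profinite stratifications recalled earlier in \cref{sec:posetrec}, combined with the observation that the composite stratification is the one witnessing $\qlowerstar$ as a morphism of $P$-stratified \topoi. Everything else is bookkeeping on top of \Cref{lem:morspreserveconstr} and the colimit description $\XX^{\Sconstr} \simeq \colim_{P\in \FC(S)^{\op}}\XX^{\Pconstr}$.
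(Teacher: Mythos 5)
Your proof is correct and follows the same route as the paper: the paper deduces this lemma immediately from \Cref{lem:morspreserveconstr} together with the presentation $\XX^{\Sconstr} \simeq \colim_{P\in \FC(S)^{\op}}\XX^{\Pconstr}$, which is exactly your reduction via a witnessing finite constructible stratification $g \colon \fromto{S}{P}$ and the composite $g \circ f$. Your careful check that quasicompactness of $f$ makes $g \circ f$ again a finite constructible stratification is the only non-formal point, and you have it right.
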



\subsection{Coherence \& constructibility}\label{subsec:cohandconstructiblesheaves}

We now turn to the relationship between coherence and constructibility in \topoi stratified by a spectral topological space.
The main result of this section is that a bounded coherent constructible stratified \topos is spectral if and only all of its truncated coherent objects are constructible (\Cref{cor:spectralclassificationconstructible}).

We begin by giving a characterization of constructibility in terms of local constancy over constructible subsets.

\begin{rec}\label{rec:subsetconstr}
	Let $ S $ be a spectral topological space.
	The collection of \defn{constructible} subsets of $ S $ is the smallest collection of subsets of $ S $ containing all quasicompact open subsets and closed under taking finite intersections and complements.
\end{rec}

\begin{lem}\label{lem:checkconstronnbd} 
	Let $S$ be a spectral topological space, and let $\XX$ be an $S$-stratified \topos.
	Then an object $F$ of $\XX$ is constructible if and only if, for every point $s\in S$, there exists a constructible subset $W\subseteq S$ containing $s$ such that $\restrict{F}{\XX_W} $ is lisse.
\end{lem}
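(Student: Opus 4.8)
\textbf{Proof plan for \Cref{lem:checkconstronnbd}.}

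The statement is essentially a local-to-global principle, so the plan is to reduce to a finite stratification and then argue one stratum at a time. First I would observe that the ``only if'' direction is immediate: if $F$ is constructible, there is a finite constructible stratification $\fromto{S}{P}$ for which $F$ is $P$-constructible, and then for each point $s \in S$ one may take $W = P_{\geq p(s)} \cap P_{\leq p(s)}$ pulled back to $S$ — more carefully, take $W$ to be the preimage of the stratum of $P$ containing the image of $s$, which is a constructible (indeed locally closed) subset of $S$ containing $s$, and $F|_{\XX_W}$ is lisse by definition of $P$-constructibility. (One checks that $\XX_W$ is the $p$-th stratum of the $P$-stratified \topos, so this is literally the lisse condition.)

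For the ``if'' direction, suppose that for each $s \in S$ there is a constructible $W_s \ni s$ with $F|_{\XX_{W_s}}$ lisse. By quasicompactness of $S$ and the fact that constructible subsets of a spectral space form a lattice generating the constructible topology, finitely many of the $W_s$ cover $S$; call them $W_1, \dots, W_n$. Each $W_i$ is a finite Boolean combination of quasicompact opens, so there is a single finite constructible stratification $\fromto{S}{P}$ refining all of the $W_i$ — i.e.\ such that each $W_i$ is a union of strata of $P$. (This uses that $\FC(S)$ is cofiltered and that any finite collection of constructible subsets is ``cut out'' at some finite stage; concretely, take the Boolean subalgebra of $\Open^{\qc}(S)$ generated by the quasicompact opens occurring in the $W_i$, which determines a finite poset quotient of $S$.) Now I claim $F$ is $P$-constructible. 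Fix $p \in P$ with corresponding stratum $\XX_p \subseteq \XX$; choose $i$ with $\XX_p \subseteq \XX_{W_i}$, and let $e_{p,\ast} \colon \incto{\XX_p}{\XX_{W_i}}$ be the inclusion. Since $\XX_p$ is a stratum of the $P$-stratified \topos and $P$ refines $W_i$, the stratum $\XX_p$ is also (an open-in-a-closed piece of) $\XX_{W_i}$. The restriction of a lisse sheaf along a geometric morphism is lisse (\Cref{rec:localsyslisse}), so $F|_{\XX_p} = e_p^{\ast}(F|_{\XX_{W_i}})$ is lisse. Hence $F$ is $P$-constructible, and therefore $S$-constructible.

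\textbf{Main obstacle.} The delicate point is the reduction step: producing a \emph{single} finite constructible stratification $\fromto{S}{P}$ that simultaneously refines all the finitely many constructible subsets $W_i$, and then identifying, for each stratum $\XX_p$, an index $i$ and an honest geometric morphism $\incto{\XX_p}{\XX_{W_i}}$ compatible with the ambient stratified structure so that the ``restriction of lisse is lisse'' principle applies verbatim. This requires unwinding that $W_i$ being a union of $P$-strata means $\widetilde{W_i}$ sits inside $\Ptilde$ as a pullback-compatible subtopos, so that $\XX_{W_i} = \XX \times_{\Ptilde} \widetilde{W_i}$ contains $\XX_p = \XX \times_{\Ptilde} \widetilde{\{p\}}$ as the expected subtopos; once this bookkeeping is in place, everything else is formal. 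A small additional point worth spelling out is that $F|_{\XX_{W_i}}$ being lisse automatically forces it to be formally constructible for the induced stratification and lisse on each finer stratum, which is exactly what is needed; this uses only that lisse sheaves pull back to lisse sheaves.
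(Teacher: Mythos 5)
Your proof is correct and follows essentially the same route as the paper's: quasicompactness of the constructible topology yields a finite subcover by the $W_i$, one then chooses a finite constructible stratification $\fromto{S}{P}$ each of whose strata lies in some $W_i$, and concludes that $F$ is $P$-constructible because lisse sheaves pull back to lisse sheaves. The refinement step you flag as the main obstacle is exactly the step the paper also takes (and likewise leaves implicit), so there is no gap.
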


\begin{proof} 
	The `only if' direction is clear.
	Conversely, assume that for every point $s\in S$, there exists a constructible subset $W\subseteq S$ containing $s$ such that $\restrict{F}{\XX_W} $ is lisse.
	Then the collection \smash{$\{W_{\alpha}\}_{\alpha \in A}$} of constructible subsets of $S$ such that \smash{$\restrict{F}{\XX_{W_{\alpha}}}$} is lisse is a cover of $S$ by constructible subsets. 
	Since the constructible topology on $S$ is quasicompact, it follows that there exists a finite subcover $\{W_{\alpha}\}_{\alpha \in A_0}$ of $\{W_{\alpha}\}_{\alpha \in A}$.
	Select a finite constructible stratification $\fromto{S}{P}$ of $S$ such that for every $p\in P$, there exists an $\alpha \in A_0$ such that $S_p\subseteq W_{\alpha}$.
	Then $F$ is $P$-constructible, as desired.
\end{proof}

\begin{lem}\label{lem:everyconstristrunccoh}
	Let $S$ be a spectral topological space, and $\fromto{\XX}{\Stilde}$ a coherent constructible $S$-stratified \topos.
	Then: 
	\begin{enumerate}[{\upshape (\ref*{lem:everyconstristrunccoh}.1)}]
		\item\label{lem:everyconstristrunccoh.1} Every constructible object of $\XX$ is truncated coherent.
	
		\item\label{lem:everyconstristrunccoh.2} If $ \XX $ is also bounded and every truncated coherent object of $\XX$ is constructible, then $\XX$ is spectral.
	\end{enumerate}
\end{lem}

\begin{proof} 
	For the first statement, let $ F\in\XX^{\Scons} $, and let $\fromto{S}{P}$ be a finite constructible stratification such that for every point $p\in P$, the restriction $\restrict{F}{\XX_p}$ is lisse. 
	By \Cref{prop:DAGXIII.2.3.22}=\cite[Proposition 2.3.22]{DAGXIII} it follows that $F$ is coherent.
	Moreover, if each $\restrict{F}{\XX_p}$ is $n$-truncated, then $F$ is $n$-truncated.

	For the second statement, if every truncated coherent object of $\XX$ is constructible and $ \XX $ is bounded, then by \enumref{lem:everyconstristrunccoh}{1} we have that $ \XX^{\Scons} = \XXcohbdd $.
	Hence by the classification of bounded coherent \topoi (\Cref{thm:SAG.A.7.5.3}=\allowbreak\SAG{Theorem}{A.7.5.3}) we see that
	\begin{equation*}
		\XX \simeq \Sheff{\XX^{\Scons}} \period
	\end{equation*}
	For every point $s\in S$, we thus have an equivalence \smash{$\XX_s\simeq\Sheff{\XX_s^{\lisse}}$}.
	This shows that $ \XX_s $ is a Stone \topos, hence $\XX$ is spectral.
\end{proof}

\begin{prp}\label{prp:everytrunccohofspecisconstr}
	If $S$ is a spectral topological space, and $\XX$ is a spectral $S$-stratified \topos $\XX$, then every truncated coherent object of $\XX$ is constructible.
\end{prp}

\begin{proof}
	Let $F$ be a truncated coherent object of $\XX$, and $s\in S$ a point.
	We wish to show that there exists a constructible subset of $ W \subset S$ containing $s$ such that $\restrict{F}{\XX_W}$ is lisse (\Cref{lem:checkconstronnbd}).
	Passing to the closure of $s$, it suffices to assume that $S$ is irreducible, and $s$ is its generic point.

	Since $\restrict{F}{\XX_s}$ is lisse, it follows from \Cref{prp:SAG.E.2.7.7}=\allowbreak\SAG{Proposition}{E.2.7.7} that there exists a full subcategory $E\subset\Spacefin$ spanned by finitely many \pifinite spaces and a unique geometric morphism \smash{$g_{\ast}\colon\fromto{\XX_s}{\Space_{/\interior E}}$} and an equivalence \smash{$\varepsilon_s\colon\equivto{\restrict{F}{\XX_s}}{g^{\ast}(I)}$}, where $I$ is the inclusion functor $\incto{E}{\Space}$.
	Now since $\Space_{/\interior E}$ is cocompact as an object of \smash{$\Topbc$} (\Cref{prp:spaceGiscocompact}) and $\XX_s$ is identified with the limit $\lim_W\XX_W$ over constructible subsets $W \subset S$ containing $s$, it follows that for some such $W$, one may factor $g_{\ast}$ through a geometric morphism $g_{W,\ast}\colon\fromto{\XX_W}{\Space_{/\interior E}}$.
	Now since $\XX_{s,<\infty}^{\coh}\simeq\colim_W\XX_{W,<\infty}^{\coh}$, we shrink $W$ as needed to ensure that there exists an equivalence $\varepsilon_s\colon\equivto{\restrict{F}{\XX_W}}{g_W^{\ast}(I)}$, and conclude that $F$ is lisse on $W$.
\end{proof}

Combining \Cref{prop:spectralcheckonpts,lem:everyconstristrunccoh,prp:everytrunccohofspecisconstr} we arrive at the following equivalent conditions for \atopos to be spectral.

\begin{cor}\label{cor:spectralclassificationconstructible}
	Let $S$ be a spectral topological space.
	The following are equivalent for a bounded coherent constructible $ S $-stratified \topos $ \fromto{\XX}{\Stilde} $: 
	\begin{enumerate}[{\upshape (\ref*{cor:spectralclassificationconstructible}.1)}]
		\item The $ S $-stratified \topos $ \XX $ is spectral.

		\item The functor
		\begin{equation*}
			\fromto{\Pt(\XX)}{\Pt(\Stilde)\simeq\mat(S)}
		\end{equation*}
		exhibits $\Pt(\XX)$ as a $\mat(S)$-stratified space.
	
		\item Every truncated coherent object of $\XX$ is $ S $-constructible.
		That is, $ \XX^{\Scons} = \XXcohbdd $.
	\end{enumerate}
\end{cor}

\begin{exm}
	If $X$ is a coherent scheme, then the truncated coherent objects of $X_{\et}$ are precisely the constructible sheaves of spaces.
	This is the nonabelian analogue of the well-known result that for a finite ring $ R $, the compact objects of the \category $ \Dup(X_{\et};R) $ of étale sheaves of $ R $-complexes on $ X $ coincide with the derived \category of constructible $R$-sheaves \cite[Proposition 2.2.6.2]{MR3887650}.
\end{exm}

We have shown that the \category $\Stratprofin$ of profinite stratified spaces is equivalent to the \category \smash{$\StrTopspec_{\infty}$} (\Cref{thm:inftyHochster}), which is in turn a full subcategory of \smash{$\StrTopbcc_{\infty}$} of bounded coherent constructible stratified \topoi.
However, the \category \smash{$\StrTopbcc_{\infty}$} is a \textit{non-full} subcategory of $\StrTop_{\infty}$.
Just as how every geometric morphism between Stone \topoi is coherent (\Cref{cor:morphismtoStoneiscoherent}=\allowbreak\SAG{Corollary}{E.3.1.2}), the subcategory
\begin{equation*}
	\StrTopspec_{\infty} \subset \StrTop_{\infty}
\end{equation*}
\textit{is} full, as we shall now explain.

\begin{prp}
	Let $f\colon \fromto{S}{T}$ be a quasicompact continuous map of spectral topological spaces, let $ \fromto{\XX}{\Stilde} $ be a coherent constructible stratified \topos, and let $ \fromto{\YY}{\Ttilde} $ be a spectral \topos.
	Then any geometric morphism $ q_{\ast} \colon \XX \to \YY$ over $ f_{\ast} \colon \fromto{\Stilde}{\Ttilde} $ is coherent.
\end{prp}

\begin{proof}
	By \Cref{cor:criterionforcoherence} it suffices to show that if $F\in\XX$ is truncated coherent, then $p^{\ast}F$ is coherent.
	By \Cref{prp:everytrunccohofspecisconstr} we have that
	\begin{equation*}
		\XX^{\Scons} = \XXcohbdd \comma
	\end{equation*}
	so the claim now follows from the facts that $ \qupperstar $ preserves constructibility (\Cref{lem:morspreserveconstrspectral}) and the $ S' $-constructible objects of $ \XX' $ are truncated coherent (\Cref{lem:everyconstristrunccoh}).
\end{proof}

\begin{cor}
	The subcategory $\StrTopspec_{\infty}\subset\StrTop_{\infty}$ is full.
\end{cor}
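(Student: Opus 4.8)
The plan is to read off fullness directly from the proposition immediately preceding this corollary, which already carries all the weight. Recall that by \Cref{def:spectraltopos} the subcategory $\StrTopspec_{\infty}$ is \emph{full} inside $\StrTop_{\infty}^{\wedge,\bcc}$, and that $\StrTop_{\infty}^{\wedge,\bcc}\simeq\Fun(\Delta^1,\Topcoh)\times_{\Fun(\Delta^{\{1\}},\Topcoh)}\TSpc^{\spec}$ sits inside $\StrTop_{\infty}^{\wedge}$ as the (non-full) subcategory whose objects are coherent geometric morphisms $\fromto{\XX}{\widetilde{S}}$ and whose morphisms are squares \emph{all four of whose} geometric morphisms are coherent. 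Thus, to show that $\StrTopspec_{\infty}\subset\StrTop_{\infty}^{\wedge}$ is full, it is enough to check that every morphism of $\StrTop_{\infty}^{\wedge}$ whose source and target are spectral stratified \topoi is in fact a morphism of $\StrTop_{\infty}^{\wedge,\bcc}$; once that is done, fullness of $\StrTopspec_{\infty}$ in $\StrTop_{\infty}^{\wedge,\bcc}$ finishes the argument.

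So I would fix such a morphism, i.e.\ a commutative square
\begin{equation*}
	\begin{tikzcd}
		\XX' \arrow[r, "q_{\ast}"] \arrow[d] & \XX \arrow[d] \\
		\widetilde{S'} \arrow[r, "f_{\ast}"] & \widetilde{S}
	\end{tikzcd}
\end{equation*}
in which $\fromto{\XX'}{\widetilde{S'}}$ and $\fromto{\XX}{\widetilde{S}}$ are spectral and the bottom row is induced by a quasicompact continuous map $f\colon\fromto{S'}{S}$ of spectral topological spaces, and I would verify that each of the four arrows is coherent. The bottom arrow $f_{\ast}$ is coherent by \Cref{ex:Spectralspacecoherent}, since $f$ is quasicompact. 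The two vertical arrows are coherent because a spectral stratified \topos has a coherent underlying \topos together with a constructible stratification, and a constructible stratification of a coherent \topos is a coherent geometric morphism (the lemma recorded just after \Cref{def:boundedcoherentconstrstratifications}, itself a consequence of \Cref{prop:DAGXIII.2.3.22}). Finally, the top arrow $q_{\ast}$ is a geometric morphism over $f_{\ast}$ whose source is a coherent constructible stratified \topos and whose target is spectral, so it is coherent by the preceding proposition. This exhibits the square as a morphism of $\StrTop_{\infty}^{\wedge,\bcc}$, hence — by fullness there — as a morphism of $\StrTopspec_{\infty}$, proving the claim.

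I do not anticipate a genuine obstacle here: the corollary is essentially a packaging of the preceding proposition together with the elementary observations that morphisms of spectral spaces and the structure maps of spectral \topoi are coherent. The actual difficulty lies one step earlier, in that proposition, whose proof invokes \Cref{cor:criterionforcoherence} (coherence of a geometric morphism between coherent \topoi may be tested on truncated coherent objects), \Cref{prp:everytrunccohofspecisconstr} (for spectral $\XX$ one has $\XXcohbdd=\XX^{\Sconstr}$), the preservation of constructibility by pullbacks (\Cref{lem:morspreserveconstrspectral}), and \Cref{lem:everyconstristrunccoh} (constructible objects are truncated and coherent); the only care needed in the corollary itself is to get the bookkeeping of the non-full versus full subcategories right.
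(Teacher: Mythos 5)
Your proof is correct and is precisely the intended argument: the paper states this corollary without proof as an immediate consequence of the preceding proposition, and your unwinding — reducing fullness in $\StrTop_{\infty}^{\wedge}$ to checking that all four geometric morphisms in such a square are coherent, then handling the bottom arrow via \Cref{ex:Spectralspacecoherent}, the vertical arrows via coherence of constructible stratifications of coherent \topoi, and the top arrow via the preceding proposition — is exactly the bookkeeping the authors leave implicit.
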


We finish this section by explaining the generalization of the Stone reflection (\Cref{def:Stonetopos,thm:SAG.E.2.3.2}) to stratified \topoi.

\begin{cnstr}[spectrification]\label{cnstr:spectrify}
	Let $S$ be a spectral topological space, and $\XX$ an $ S $-stratified \topos.
	By \SAG{Proposition}{A.6.4.4}, the inclusion \smash{$\incto{\XX^{\Scons}}{\XX}$} of \pretopoi extends (uniquely) to a geometric morphism \smash{$\fromto{\XX}{\Sheff{\XX^{\Scons}}}$} over $\Stilde$.
	By construction, the $ S $-stratified \topos
	\begin{equation*}
		\XX^{\Sspec} \coloneq \Sheff{\XX^{\Scons}} \index[notation]{XXSspec@$\XX^{\Sspec}$}
	\end{equation*}
	is spectral.
	Furthermore, $\XX^{\Sspec}$ is the universal spectral $S$-stratified \topos receiving a morphism of $ S $-stratified \topoi from $\XX$.
	Thus the assignment
	\begin{equation*}
		\goesto{\XX}{\XX^{\Sspec}}
	\end{equation*}
	defines a relative left adjoint to the inclusion $\incto{\StrTopspec_{\infty}}{\StrTop^{\wedge}_{\infty}}$ over $\TSpcspec$.
	We call $ \XX^{\Sspec} $ the \defn{$ S $-spectrification}\index[terminology]{spectrification}  of $ \XX $.
\end{cnstr}

\begin{exm} 
	When $ S = [n] $, the spectrification of a bounded coherent \topos $\XX$ equipped with a constructible stratification by $[n]$ can be identified as an iterated bounded coherent oriented pushout:
	\begin{equation*}
		\XX^{\nspec}\simeq\XX_0^{\Stone}\orientedcupbc^{(\XX_0\times_{\XX}\XX_1)^{\Stone}}\cdots\orientedcupbc^{(\XX_{n-1}\times_{\XX}\XX_n)^{\Stone}}\XX_n^{\Stone} \period
	\end{equation*}
\end{exm}

\begin{nul}
	Thanks to the existence of the spectrification functor, we deduce the forgetful functor $\fromto{\StrTopspec_{\infty}}{\TSpcspec}$ is a cocartesian fibration (as well as a cartesian fibration): for any quasicompact continuous map $f\colon\fromto{S'}{S}$ and any spectral $S'$-stratified \topos $\XX$, the stratified geometric morphism $\fromto{\XX}{\XX^{\Sspec}}$ is a cocartesian edge over $f$. 
\end{nul}

\begin{lem}
	Let $ S $ be a spectral topological space.
	Then the natural functor
	\begin{equation*}
		\fromto{\StrTopspec_{\infty,S}}{\lim_{P \in \FC(S)} \StrTopspec_{\infty,P}}
	\end{equation*}
	is an equivalence of \categories.
\end{lem}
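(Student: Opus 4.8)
The plan is to exhibit the natural functor $ \StrTopspec_{\infty,S} \to \lim_{P \in \FC(S)} \StrTopspec_{\infty,P} $ as an equivalence by producing an explicit inverse, just as suggested by the one-line proof already indicated for \Cref{prp:profinstratSislimoverFS} and the analogous lemma for $ \StrTop_{\infty,S}^{\wedge,\bcc} $ in the proof of \Cref{prp:fullfaithfulnessoftilde}. Concretely, I would first invoke \Categorical Hochster Duality (\Cref{thm:inftyHochster}), which for each finite poset $ P $ gives an equivalence $ \widehat\lambda_P \colon \equivto{\Strat^{\wedge}_{\pi,P}}{\StrTopspec_{\infty,P}} $, and for the spectral topological space $ S $ an equivalence $ \widehat\lambda_S \colon \equivto{\Strat^{\wedge}_{\pi,S}}{\StrTopspec_{\infty,S}} $. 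These equivalences are natural in the base (they are all restrictions of the single functor $ \widehat\lambda $ over $ \TSpc^{\spec} $), so the square relating $ \widehat\lambda_S $, $ \widehat\lambda_P $ for $ P \in \FC(S) $, and the two limit-comparison functors commutes. It therefore suffices to know that the corresponding functor on the stratified-prospace side,
\begin{equation*}
	\fromto{\Strat^{\wedge}_{\pi,S}}{\lim_{P\in\FC(S)}\Strat^{\wedge}_{\pi,P}} \comma
\end{equation*}
is an equivalence — but this is precisely \Cref{prp:profinstratSislimoverFS}, whose proof records that the formation of the limit in $ \Strat^{\wedge}_{\pi} $ furnishes an inverse.

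So the whole argument reduces to transporting \Cref{prp:profinstratSislimoverFS} across \Categorical Hochster Duality. The one substantive point to check is that the diagram
\begin{equation*}
	\begin{tikzcd}
		\Strat^{\wedge}_{\pi,S} \arrow[r] \arrow[d, "\widehat\lambda_S"'] & \lim_{P\in\FC(S)}\Strat^{\wedge}_{\pi,P} \arrow[d, "\lim_P \widehat\lambda_P"] \\
		\StrTopspec_{\infty,S} \arrow[r] & \lim_{P\in\FC(S)}\StrTopspec_{\infty,P}
	\end{tikzcd}
\end{equation*}
commutes, i.e.\ that the horizontal comparison functors are compatible with $ \widehat\lambda $. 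This is a consequence of the fact that $ \widehat\lambda \colon \fromto{\Strat^{\wedge}_{\pi}}{\StrTopspec_{\infty}} $ is a functor over $ \TSpc^{\spec} $ that preserves inverse limits (it is built from $ \fromto{\poSetfin}{\Topbc} $, $ \goesto{P}{\widetilde P} $, by \Cref{nul:proexistentadjoint} — see the construction immediately preceding \Cref{prp:fullfaithfulnessoftilde}), so both routes around the square compute $ \mbfPi \mapsto \{\,\widetilde{(\eta_P)_{!}\mbfPi}\,\}_{P\in\FC(S)} $ up to coherent equivalence. Given the commuting square with three of its four edges equivalences, the fourth — the functor in the statement — is an equivalence as well.

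I expect the only mild obstacle to be bookkeeping: one must make sure that the comparison functor $ \StrTopspec_{\infty,S} \to \lim_P \StrTopspec_{\infty,P} $ is literally the one induced by the cartesian-fibration structure of $ \fromto{\StrTopspec_{\infty}}{\TSpc^{\spec}} $ (restricting a spectral $ S $-stratified \topos along each $ \fromto{S}{P} $), and likewise on the prospace side, so that naturality of $ \widehat\lambda $ over the base really does give the commuting square above; but all of this is spelled out in the surrounding constructions. Since the excerpt already supplies a one-line proof ("The formation of the limit is an inverse"), I would in practice keep the write-up to a sentence or two: the limit $ \widetilde{\mbfPi} \mapsto \mbfPi $ direction is supplied by \Cref{thm:inftyHochster} together with \Cref{prp:profinstratSislimoverFS}, and composing with the limit of the equivalences $ \widehat\lambda_P $ identifies the displayed functor with the equivalence $ \widehat\lambda_S $.

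\begin{proof}
	By \Categorical Hochster Duality (\Cref{thm:inftyHochster}), for every finite poset $ P $ the functor $ \widehat\lambda_P \colon \fromto{\Strat^{\wedge}_{\pi,P}}{\StrTopspec_{\infty,P}} $ is an equivalence, as is $ \widehat\lambda_S \colon \fromto{\Strat^{\wedge}_{\pi,S}}{\StrTopspec_{\infty,S}} $.
	Since all of these are restrictions of the single functor $ \widehat\lambda $ over $ \TSpc^{\spec} $, which preserves inverse limits, the square
	\begin{equation*}
		\begin{tikzcd}
			\Strat^{\wedge}_{\pi,S} \arrow[r] \arrow[d, "\widehat\lambda_S"', "\wr"] & \lim_{P\in\FC(S)}\Strat^{\wedge}_{\pi,P} \arrow[d, "\wr"] \\
			\StrTopspec_{\infty,S} \arrow[r] & \lim_{P\in\FC(S)}\StrTopspec_{\infty,P}
		\end{tikzcd}
	\end{equation*}
	commutes, where the right vertical arrow is the limit of the equivalences $ \widehat\lambda_P $, hence an equivalence, and the top horizontal arrow is an equivalence by \Cref{prp:profinstratSislimoverFS}.
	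Therefore the bottom horizontal arrow is an equivalence as well.
\end{proof}
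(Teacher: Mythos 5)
Your argument is correct, but it is not the route the paper takes. The paper's proof is the same one-line direct argument as for \Cref{prp:profinstratSislimoverFS}: the inverse functor is given by forming the limit itself (a compatible family of spectral $P$-stratified \topoi has an inverse limit in $\Top_{\infty}$ over $\widetilde{S}\simeq\lim_P\widetilde{P}$, which is again bounded coherent by \Cref{cor:SAG.A.8.3.3} and has Stone strata, hence is spectral over $S$). You instead transport the prospace-side statement across \Categorical Hochster Duality. This is legitimate — \Cref{thm:inftyHochster} is proved before this lemma and its proof does not invoke it (the reduction to finite posets there uses only full faithfulness and preservation of inverse limits of $\widehat{\lambda}$ together with \Cref{prp:profinstratSislimoverFS}) — so there is no circularity, and your two-out-of-three argument in the commuting square closes the deal. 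What your route costs is the dependence on the full duality theorem plus the compatibility check you flag at the end; what it buys is that you never have to verify directly that the limit of a compatible family is spectral. Two small corrections to your bookkeeping paragraph: the comparison functors into the limits are induced by the \emph{cocartesian} (not cartesian) structure of the fibrations over $\TSpc^{\spec}$ — on the \toposic side the pushforward along $\fromto{S}{P}$ is composition with $\fromto{\widetilde{S}}{\widetilde{P}}$ followed by spectrification, since the composite alone need not have Stone strata — and the commutativity of your square then amounts to $\widehat{\lambda}$ preserving cocartesian edges, which holds automatically because an equivalence over a common base between cocartesian fibrations preserves cocartesian edges.
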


\begin{proof}
	The formation of the limit is an inverse.
\end{proof}

\newpage

\section{Profinite stratified shape}\label{sec:profinstratshape}

In this chapter we investigate the inverse to the equivalence of \categories
\begin{equation*}
	\lambdahat\colon\equivto{\Stratprofin}{\StrTopspec_{\infty}}
\end{equation*}
provided by \Categorical Hoschster Duality.
This inverse equivalence provides a stratified refinement of the profinite shape (\Cref{exm:profinshapedeloc}).
We will even show that the inverse is a stratified refinement of the \textit{protruncated} shape (\Cref{thm:protruncdeloc}).

In \Cref{subsec:profinstrshapedef} we introduce this inverse, which we call the \textit{profinite stratified shape}.
To justify this language, \Cref{subsec:shapefromprofinstratshape} shows that, up to protruncation, the shape of a spectral \topos can be recovered from its profinite stratified shape by inverting all morphisms in the `pro' sense.
In \cref{subsec:pointsandmat} we show that the materialization of the profinite stratified shape of a spectral \topos $ \XX $ recovers the \category $ \Pt(\XX) $ of points of $ \XX $.
We also prove stratified refinements of the basic results relating profinite spaces and Stone \topoi discussed in \cref{subsec:profinshape}.
\Cref{subsec:stratvanKampen} provides a van Kampen Theorem that expressed the profinite shape of a spectral \topos in terms of the profinite shapes of its strata and links.


\subsection{The definition of the profinite stratified shape}\label{subsec:profinstrshapedef}

\begin{cnstr}[profinite stratified shape]
	We have shown that the functor over $ \TSpcspec $ 
	\begin{equation*}
		\lambdahat \colon \equivto{\Stratprofin}{\StrTopspec_{\infty}}
	\end{equation*}
	given by the assignment $\goesto{\mbfPi}{\widetilde{\mbfPi}}$ is an equivalence of \categories (\Cref{thm:inftyHochster}).
	The further inclusion
	\begin{equation*}
		\incto{\StrTopspec_{\infty}}{\StrTop_{\infty}}
	\end{equation*}
	admits a left adjoint, given by spectrification (\Cref{cnstr:spectrify}).
	We therefore obtain an adjunction
	\begin{equation*}
		\adjto{\StrShape}{\StrTop_{\infty}}{\Stratprofin}{\lambdahat} \period
	\end{equation*}
	The left adjoint $ \StrShape $\index[notation]{Pi@$ \StrShape $} carries a stratified \topos $\fromto{\XX}{\Stilde}$ to the profinite $S$-stratified space that as a left exact accessible functor $\fromto{\Stratfin}{\Space}$ is given by the assignment
	\begin{equation*}
		\goesto{\Pi}{\Map_{\StrTop_{\infty}}(\XX,\widetilde{\Pi})} \period
	\end{equation*}
	Over any spectral topologcial space $S$, this restricts to an adjunction
	\begin{equation*}
		\adjto{\StrShape^{S}}{\StrTop_{\infty,S}}{\StrprofinS}{\lambdahat_{S}}
	\end{equation*}
	on fibers over $S$.
\end{cnstr}

\begin{exm} 
	For any spectral topological space $S$ and any profinite $S$-stratified space $\mbfPi$, we have $ \StrShape^{S}(\widetilde{\mbfPi})\simeq\mbfPi$.
\end{exm}

\begin{exm}\label{exm:strshape0isprofinshape}
	The functor 
	\begin{equation*}
		\StrShape^{\{0\}} \colon \fromto{\Top_{\infty} \equivalent \StrTop_{\infty,\{0\}}}{\Str_{\uppi,\{0\}}^{\wedge} \equivalent \Spaceprofin}
	\end{equation*}
	is the profinite shape $ \Shapeprofin $ of \Cref{def:profinshape}.
\end{exm}

\begin{dfn}
	Let $S$ be a spectral topological space, and let $\fromto{\XX}{\Stilde}$ be an $S$-stratified \topos.
	Then we call the profinite $S$-stratified space $ \StrShape^{S}(\XX)$\index[notation]{stratified shape@$ \StrShape^{S} $} the \defn{profinite $ S $-stratified shape}\index[terminology]{profinite stratified shape@profinite $S$-stratified shape} of $\XX$.
\end{dfn}

\begin{nul}\label{nul:strdecomparisonprofiniteadjoint}
	Let $ P $ be a finite poset.
	In light of \Cref{nul:strdecomparisonprofinite}, note that the square
	\begin{equation*}
		\begin{tikzcd}
			\StrprofinP \arrow[d, "\Nerve_P" left, "\wr"{right, xshift=-0.25ex}] \arrow[r, "\lambdahat_P" above, hooked] & \StrTopbcc_{\infty,P} \arrow[d, "\NNerve_P" right, "\wr"{left, xshift=0.25ex}] \\
			\DecSpaceprofin{P} \arrow[r, "\lambdapi \circ-" below, hooked] & \DecTop{P} 
		\end{tikzcd}
	\end{equation*}
	commutes.
	Moreover, the vertical functors are equivalences and the horizontal functors are fully faithful right adjoints.
\end{nul}

\begin{nul}[pushing forward the profinite stratified shape]\label{nul:compatibilitywithinverteverything} 
	Let $\phi\colon\fromto{S'}{S}$ is a quasicompact continuous map of spectral topological spaces, and write $ \phi_{!} \colon \fromto{\Str_{\uppi,S'}^{\wedge}}{\StrprofinS} $ be the pushforward functor of \Cref{cnstr:pushforwardofprofinstrat}.
	Since left adjoints compose, there is a natural equivalence
	\begin{equation*}
		\equivto{\phi_!\StrShape^{S'}}{\StrShape^{S}} \period
	\end{equation*}
\end{nul}

\begin{exm}\label{exm:profinshapedeloc}
	As a special case of \Cref{nul:compatibilitywithinverteverything}, we see that for any $S$-stratified \topos $\XX$, the profinite shape $\Shapeprofin(\XX)$ is the classifying profinite space of the profinite \category $\StrShape^{S}(\XX)$.
	Thus the stratification on $\XX$ gives rise to a delocalization of its profinite shape.
\end{exm}

Combining \Categorical Hochster Duality (\Cref{thm:inftyHochster}) with \Cref{prp:everytrunccohofspecisconstr} we deduce the \emph{Exodromy Equivalence} stated as \Cref{mainthm:classifyconstr} in the introduction.

\begin{thm}[{\index[terminology]{Exodromy Equivalence for Stratified \Topoi}Exodromy Equivalence for Stratified \Topoi}]\label{thm:exodromyforstrattopoi}
	Let $ S $ be a spectral topological space and $ \XX $ an $ S $-stratified \topos.
	Then the unit
	\begin{equation*}
		\fromto{\XX}{\Fun(\StrShape^{S}(\XX),\Space)}
	\end{equation*}
	of the adjunction to profinite stratified spaces restricts to an equivalence
	\begin{equation*}
		\Fun(\StrShape^{S}(\XX),\Spacefin) \simeq \XX^{\Scons} \period
	\end{equation*}
\end{thm}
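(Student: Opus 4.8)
The plan is to combine the adjunction $\Pi_{(\infty,1)}^{S,\wedge} \leftadjoint \widehat{\lambda}_S$ with the \Categorical Hochster Duality Theorem (\Cref{thm:inftyHochster}) and the identification of truncated coherent objects of a spectral \topos with constructible sheaves (\Cref{prp:everytrunccohofspecisconstr}). First I would reduce to the case where $ S = P $ is a \emph{finite} poset. Indeed, both sides commute with the relevant inverse limits: the left-hand side because $ \Strat^{\wedge}_{\pi,S} \equivalent \lim_{P \in \FC(S)} \Strat^{\wedge}_{\pi,P} $ (\Cref{prp:profinstratSislimoverFS}) together with \Cref{nul:compatibilitywithinverteverything}, and the right-hand side because $ \XX^{\Sconstr} \equivalent \colim_{P \in \FC(S)^{\op}} \XX^{\Pconstr} $ by definition, and $ \Fun(-,\Space_{\pi}) $ carries the inverse limit of profinite stratified spaces to the corresponding filtered colimit of \categories (using that $ \Space_{\pi} $-valued functors on a $ \pi $-finite stratified space are exactly the truncated coherent objects of $ \widetilde{\Pi} $, and \Cref{prop:SAG.A.8.3.2}). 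So it suffices to prove the finite-poset case.

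For a finite poset $ P $, I would form the \emph{spectrification} $ \XX^{\Pspec} = \Sheff{\XX^{\Pconstr}} $ of \Cref{cnstr:spectrify}; this is a spectral $ P $-stratified \topos, and the natural geometric morphism $ \fromto{\XX}{\XX^{\Pspec}} $ over $ \widetilde{P} $ is the unit of the adjunction to spectral stratified \topoi. By \Categorical Hochster Duality (\Cref{thm:inftyHochster}), there is a profinite $ P $-stratified space $ \mbfPi $ with $ \widetilde{\mbfPi} \equivalent \XX^{\Pspec} $, and by the construction of the profinite stratified shape this $ \mbfPi $ is precisely $ \Pi_{(\infty,1)}^{P,\wedge}(\XX) $; moreover the unit $ \fromto{\XX}{\widetilde{\Pi}_{(\infty,1)}^{P,\wedge}(\XX)} $ \emph{is} the spectrification geometric morphism $ \fromto{\XX}{\XX^{\Pspec}} $. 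Now I invoke \Cref{prp:everytrunccohofspecisconstr}: since $ \XX^{\Pspec} $ is spectral, $ (\XX^{\Pspec})^{\Pconstr} = (\XX^{\Pspec})^{\coh}_{<\infty} $. On the other hand, by the defining property of the spectrification (or directly by \SAG{Proposition}{A.6.4.4} applied to the morphism of \pretopoi $ \incto{\XX^{\Pconstr}}{\XX} $ which restricts to an equivalence onto the truncated coherent objects of $ \Sheff{\XX^{\Pconstr}} $), the pullback along $ \fromto{\XX}{\XX^{\Pspec}} $ restricts to an equivalence
\begin{equation*}
	\equivto{(\XX^{\Pspec})^{\coh}_{<\infty}}{\XX^{\Pconstr}} \period
\end{equation*}
Finally, since $ \Pi_{(\infty,1)}^{P,\wedge}(\XX) = \mbfPi $ is a $ \pi $-finite stratified space (at each finite stage), the \category $ \Fun(\Pi_{(\infty,1)}^{P,\wedge}(\XX),\Space_{\pi}) $ is identified with the truncated coherent objects of $ \widetilde{\mbfPi} = \XX^{\Pspec} $ — this is exactly the content that for a $ \pi $-finite stratified space $ \Pi $, a functor $ \fromto{\Pi}{\Space} $ is a truncated coherent object of $ \widetilde{\Pi} $ iff it is valued in $ \pi $-finite spaces, which follows from \Cref{prop:SAG.E.3.1.1} applied stratum-by-stratum together with the décollage description of $ \DecTopStone{P} $. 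Composing these identifications yields $ \Fun(\Pi_{(\infty,1)}^{P,\wedge}(\XX),\Space_{\pi}) \simeq \XX^{\Pconstr} $, compatibly with the unit map, which is the assertion.

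The main obstacle I anticipate is the bookkeeping that makes the three identifications above \emph{compatible} — i.e., checking that the composite equivalence really is the restriction of the unit $ \fromto{\XX}{\widetilde{\Pi}_{(\infty,1)}^{S,\wedge}(\XX)} $ rather than merely an abstract equivalence of \categories. This requires tracing through: (i) that the unit of the spectrification adjunction agrees, under \Categorical Hochster Duality, with the unit $ \fromto{\XX}{\widetilde{\Pi}_{(\infty,1)}^{S,\wedge}(\XX)} $ of the composite adjunction $ \Pi_{(\infty,1)}^{\wedge} \leftadjoint \widehat{\lambda} $; (ii) that \SAG{Proposition}{A.6.4.4} produces a pullback functor that is genuinely \emph{inverse} to the inclusion $ \incto{\XX^{\Sconstr}}{\XX} $ on truncated coherent objects, not just an equivalence; and (iii) the passage between finite stages and the limit/colimit, where one must check the colimit of the unit maps is the unit map of the colimit. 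None of these should be genuinely difficult, but they are the parts where care is needed; the substantive mathematical inputs (\Categorical Hochster Duality and \Cref{prp:everytrunccohofspecisconstr}) are already in hand.
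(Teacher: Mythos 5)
Your proposal is correct and takes essentially the same route as the paper: the paper deduces the theorem in one line by combining \Categorical Hochster Duality (\Cref{thm:inftyHochster}) with \Cref{prp:everytrunccohofspecisconstr}, and your argument is precisely a fleshed-out version of that combination — unit factors through the spectrification $\Sheff{\XX^{\Sconstr}}$, Hochster Duality identifies the spectrification with $\widetilde{\Pi}_{(\infty,1)}^{S,\wedge}(\XX)$, and the truncated coherent objects of the latter are the $\pi$-finite-valued functors on the one hand and the constructible sheaves on the other. The reduction to finite posets and the compatibility checks you flag are sensible but not part of the paper's (very terse) proof.
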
 

\begin{nul}
	In \Cref{sect:extending} we investigate extensions of \Cref{thm:exodromyforstrattopoi}.
	In particular, we prove stable variant of \Cref{thm:exodromyforstrattopoi} for constructible sheaves with coefficients in a finite ring.
\end{nul}


\subsection{Recovering the protruncated shape from the profinite stratified shape}\label{subsec:shapefromprofinstratshape}

In \Cref{exm:profinshapedeloc} we saw how to recover the profinite shape $ \Shapeprofin(\XX) $ of a spectral stratified \topos $ \XX $ from its profinite stratified shape $ \StrShape(\XX) $ by `inverting all morphisms' in a suitable sense.
This delocalization result comes for free from the functoriality of the profinite stratified shape.
In this section prove a stronger delocalization result (\Cref{thm:protruncdeloc}): the profinite stratified shape is a delocalization of the \textit{protruncated} shape.\footnote{The contents of this section originally appeared in a short preprint by the third-named author \cite{Haine:recon}.} 

The equivalence $ \Strprofin \equivalent \StrTopspec_{\infty} $ provided by \categorical Hochster Duality (\Cref{thm:inftyHochster}) provides a way to recover the shape of a spectral \topos from its profinite stratified shape, via the composite
\begin{equation*}
	\begin{tikzcd}[sep=1.75em]
		\Strprofin \arrow[r, "\sim"{yshift=-0.25em}] & \StrTopspec_{\infty} \arrow[r] & \Topbc \arrow[r, "\Shape"] & \Pro(\Space) \comma
	\end{tikzcd}
\end{equation*}
where the middle functor functor forgets the stratification.
There is another functor $ \invert \colon \fromto{\Strprofin}{\Pro(\Space)} $ that doesn't require the use of \topoi, namely, the extension to \proobjects of the composite 
\begin{equation*}
	\begin{tikzcd}[sep=1.5em]
		\Strfin \arrow[r] & \Cat_{\infty} \arrow[r, "\invert"] & \Space \period
	\end{tikzcd}
\end{equation*}
Here the first functor forgets the stratification and the second functor sends \acategory $ C $ to the \groupoid $ \invert(C) $ obtained by inverting every morphism in $ C $ (\Cref{ntn:inverteverything}).
It follows formally that these two functors agree on $ \Strfin $:

\begin{lem}\label{lem:easyequiv}
	The square
	\begin{equation*}
		\begin{tikzcd}
			\Strfin \arrow[r, hooked, "\lambdahat"] \arrow[d, "\invert"'] & \StrTopspec_{\infty} \arrow[d, "\Shape"] \\ 
			\Space \arrow[r, hooked, "\yo"'] & \Pro(\Space) 
		\end{tikzcd}
	\end{equation*}
	commutes.
\end{lem}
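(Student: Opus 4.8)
The plan is to reduce the statement entirely to \Cref{exm:shapeofpresheaf}. First recall that the restriction of $\widehat{\lambda}$ to $\Str_{\pi}$ carries a $\pi$-finite $P$-stratified space $\fromto{\Pi}{P}$ to the presheaf $\infty$-topos $\widetilde{\Pi} = \Fun(\Pi,\Space)$, equipped with the $P$-stratification obtained by right Kan extension along $\fromto{\Pi}{P}$; in particular, forgetting the stratification of $\widehat{\lambda}(\Pi)$ recovers exactly $\Fun(\Pi,\Space)$, and this identification is natural in $\Pi$. The functor $\Pi_{\infty}\colon\fromto{\StrTopspec_{\infty}}{\Pro(\Space)}$ appearing on the right edge of the square is, by definition, the composite of the forgetful functor $\fromto{\StrTopspec_{\infty}}{\Topbc}$ with the shape $\Pi_{\infty}\colon\fromto{\Topbc}{\Pro(\Space)}$. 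Hence $\Pi_{\infty}\of\widehat{\lambda}$ is naturally identified with the composite of the forgetful functor $\fromto{\Str_{\pi}}{\Cat_{\infty}}$, the functor $\goesto{C}{\Fun(C,\Space)}\colon\fromto{\Cat_{\infty}}{\Top_{\infty}}$, and the shape $\Pi_{\infty}$.

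Next I would invoke \Cref{exm:shapeofpresheaf}. For any small $\infty$-category $C$ the $\infty$-topos $\Fun(C,\Space)$ is locally of constant shape: the constant-diagram functor $\Gammaupperstar\colon\fromto{\Space}{\Fun(C,\Space)}$ admits a genuine left adjoint $\Gammalowershriek$, namely the colimit functor, so that the shape of $\Fun(C,\Space)$ is the \emph{constant} prospace at $\Gammalowershriek(1_{\Fun(C,\Space)}) = \colim_{C} 1_{\Space} \equivalent \invert(C)$. Equivalently, \Cref{exm:shapeofpresheaf} already records that the functor $\invert\colon\fromto{\Cat_{\infty}}{\Space}$ is equivalent to the composite of $\goesto{C}{\Fun(C,\Space)}$ with the shape, where $\Space$ is regarded as the full subcategory of $\Pro(\Space)$ spanned by the constant prospaces via $\yo$. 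Applying this with $C$ the underlying $\infty$-category of a $\pi$-finite stratified space (which is in particular essentially $\delta_0$-small), the first paragraph shows that $\Pi_{\infty}\of\widehat{\lambda}$ factors through $\yo\colon\incto{\Space}{\Pro(\Space)}$ and, so factored, is naturally identified with the composite of the forgetful functor $\fromto{\Str_{\pi}}{\Cat_{\infty}}$ with $\invert$, i.e., with the left edge of the square. This gives the asserted commutativity.

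I expect no genuine obstacle: the only point requiring care is matching up the various forgetful functors — forgetting the stratification of a stratified space versus forgetting the stratification of the associated stratified $\infty$-topos — but this compatibility is immediate from the construction of $\widehat{\lambda}$ as the extension of $\goesto{\Pi}{\widetilde{\Pi}}$. All the substance of the statement is already packaged into \Cref{exm:shapeofpresheaf}.
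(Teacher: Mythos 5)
Your proposal is correct and is essentially the paper's own argument: both reduce the statement to the commutativity of the square relating $\widehat{\lambda}$ to $\goesto{C}{\Fun(C,\Space)}$ via the forgetful functors, and then invoke \Cref{exm:shapeofpresheaf} to identify the shape of a presheaf $\infty$-topos with $\invert(C)$ regarded as a constant prospace. No gaps.
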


\begin{proof}
	By the definition of the equivalence $ \lambdahat \colon \equivto{\Strprofin}{\StrTopspec_{\infty}} $ (\Cref{thm:inftyHochster}), the following square commutes
	\begin{equation*}
		\begin{tikzcd}
			\Strfin \arrow[r, hooked, "\lambdahat"] \arrow[d] & \StrTopspec_{\infty} \arrow[d] \\ 
			\Cat_{\infty} \arrow[r, "{\Fun(-,\Space)}"'] & \Top_{\infty} \kern0.5em \comma
		\end{tikzcd}
	\end{equation*}
	where the vertical functors forget stratifications.
	Combining this with \Cref{exm:shapeofpresheaf} proves the claim.
\end{proof}

\begin{nul}
	Since the functor $ \invert \colon \fromto{\Strprofin}{\Pro(\Space)} $ preserves inverse limits, \Cref{lem:easyequiv} provides a natural transformation 
	\begin{equation*}
		\theta \colon \fromto{\Shape \of \lambdahat}{\invert} \period
	\end{equation*}
\end{nul}

\begin{thm}[{Homotopy\index[terminology]{Homotopy Theorem}}]\label{thm:protruncdeloc}
	The natural transformation
	\begin{equation*}
		\trun_{<\infty} \theta \colon \fromto{\Shapetrun \of \lambdahat}{\trun_{<\infty}\invert}
	\end{equation*}
	of functors $ \fromto{\Strprofin}{\Pro(\Space_{<\infty})} $ is an equivalence.
\end{thm}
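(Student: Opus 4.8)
The plan is to exploit that $\Strat^{\wedge}_{\pi} = \Pro(\Strat_{\pi})$ is freely generated under inverse limits by the full subcategory $\Strat_{\pi}$: a natural transformation between two inverse-limit-preserving functors out of $\Strat^{\wedge}_{\pi}$ is an equivalence as soon as it is one on $\Strat_{\pi}$. So I would check two things: that $\tau_{<\infty}\theta$ is an equivalence on $\Strat_{\pi}$, and that both functors $\Pi_{<\infty}\circ\widehat{\lambda}$ and $\tau_{<\infty}\circ\invert$ preserve inverse limits.

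First I would note that, by the very construction of $\theta$ from \Cref{lem:easyequiv} (using that $\invert$ preserves inverse limits), the restriction of $\theta$ to the full subcategory $\Strat_{\pi}\subset\Strat^{\wedge}_{\pi}$ is the equivalence $\Pi_{\infty}\circ\widehat{\lambda}|_{\Strat_{\pi}}\simeq\invert|_{\Strat_{\pi}}$ of \Cref{lem:easyequiv}. Consequently $\tau_{<\infty}\theta$ is an equivalence when restricted to $\Strat_{\pi}$.

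Next I would verify inverse-limit preservation. The functor $\widehat{\lambda}$ is an equivalence by \Categorical Hochster Duality (\Cref{thm:inftyHochster}), and — by the construction of $\widehat{\lambda}$ via \Cref{nul:proexistentadjoint} together with the stability of bounded coherent constructible stratified \topoi under inverse limits — it carries the canonical cofiltered presentation $\mbfPi=\{\Pi_{\alpha}\}_{\alpha}$ of a profinite stratified space to the inverse limit $\widetilde{\mbfPi}=\lim_{\alpha}\widetilde{\Pi}_{\alpha}$ formed in $\Topbc$ (cf.\ \Cref{cor:SAG.A.8.3.3}). Since the protruncated shape $\Pi_{<\infty}\colon\Topbc\to\Pro(\Space_{<\infty})$ preserves inverse limits (\Cref{cor:protruncshapeinverselim}), the composite $\Pi_{<\infty}\circ\widehat{\lambda}$ preserves inverse limits. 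On the other side, $\invert\colon\Strat^{\wedge}_{\pi}\to\Pro(\Space)$ preserves inverse limits, being the extension to proöbjects of $\invert|_{\Strat_{\pi}}$, and likewise $\tau_{<\infty}\colon\Pro(\Space)\to\Pro(\Space_{<\infty})$ preserves inverse limits, being itself an extension to proöbjects (\Cref{nul:protrun}); hence $\tau_{<\infty}\circ\invert$ preserves inverse limits.

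To conclude, I would fix $\mbfPi\in\Strat^{\wedge}_{\pi}$, write $\mbfPi=\lim_{\alpha}\Pi_{\alpha}$ with $\Pi_{\alpha}\in\Strat_{\pi}$, and use that both functors preserve this inverse limit: by naturality, the component $(\tau_{<\infty}\theta)_{\mbfPi}$ is the map on inverse limits induced by the components $(\tau_{<\infty}\theta)_{\Pi_{\alpha}}$, which are equivalences by the second paragraph, and a limit of equivalences is an equivalence; since $\mbfPi$ was arbitrary, $\tau_{<\infty}\theta$ is an equivalence. The one point that needs care is the assertion that $\tau_{<\infty}$ and $\invert$ preserve inverse limits: each is also a left adjoint, so this property must be read off from their \emph{definition} as inverse-limit-preserving (``extension to proöbjects'') functors rather than from any adjointness, and one should make sure that this is the characterization in play when \Cref{lem:easyequiv} is used to produce $\theta$; granting that, the argument is purely formal.
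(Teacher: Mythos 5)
Your proof is correct and follows essentially the same route as the paper's: reduce to $\Strat_{\pi}$ via the universal property of $\Strat_{\pi}^{\wedge}$, after checking that both composites preserve inverse limits (using \Cref{cor:protruncshapeinverselim} for the protruncated shape of bounded coherent \topoi, and the ``extension to proöbjects'' definitions for $\tau_{<\infty}$ and $\invert$). The care you take over why $\tau_{<\infty}$ and $\invert$ preserve inverse limits is just a more explicit version of what the paper leaves implicit.
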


\begin{proof}
	Since the forgetful functor $ \fromto{\StrTopspec_{\infty}}{\Topbc} $ preserves inverse limits, \Cref{cor:protruncshapeinverselim} implies that the protruncated shape $ \Shapetrun \colon \fromto{\StrTopspec_{\infty}}{\Pro(\Space_{<\infty})} $ preserves inverse limits.
	Both $ \trun_{<\infty} $ and $ \invert $ preserve inverse limits, hence their composite
	\begin{equation*}
		\trun_{<\infty} \invert \colon \fromto{\Strprofin}{\Pro(\Space_{<\infty})}
	\end{equation*}
	preserves inverse limits.
	The claim now follows from the fact that $ \theta $ is an equivalence when restricted to $ \Strfin $ (\Cref{lem:easyequiv}) and the universal property of the \category $ \Strprofin $ of profinite stratified spaces.
\end{proof}


\subsection{Points \& materialization}\label{subsec:pointsandmat} 

We now provide a stratified refinement of \Cref{nul:matofprofinshape}.
This allows us to prove a `Whitehead Theorem' for profinite stratified spaces (\Cref{thm:profinitestratifiedWhitehead}), and effectively speak of \textit{$ n $-truncated} profinite stratified spaces via materialization.
In particular, we show that a spectral \topos is $ n $-localic if and only if its \category of points is an $ n $-category (\Cref{prop:nlocalicntruncated}).

\begin{nul}
	Let $S$ be a spectral topological space, and let $\XX$ be an $S$-stratified \topos.
	The \category of points of $ \XX $ is given by
	\begin{equation*}
		\Pt(\XX) = \Funlowerstar(\Space,\XX)^{\op} \equivalent \Fun_{\StrTop_{\infty},\ast}(\widetilde{\{0\}},\XX)^{\op} \period
	\end{equation*}
	Since $ \StrShape(\widetilde{\{0\}}) \equivalent \ast $, applying $ \StrShape $ yields a natural functor
	\begin{equation*}
		\fromto{\Pt(\XX)}{\Fun_{\Strprofin}(\ast,\StrShape(\XX)) \equivalent \mat\StrShape(\XX)} \period
	\end{equation*}

	In the case where $ \XX $ is a spectral \topos, then \Categorical Hochster Duality (\Cref{thm:inftyHochster}) implies the following stratified refinement of \Cref{nul:matofprofinshape}. 
\end{nul}

\begin{lem}\label{lem:matofPiinfisPt}
	If $ \XX $ is a spectral \topos, then the natural morphism
	\begin{equation*}
		 \fromto{\Pt(\XX)}{\mat \StrShape(\XX)}
	\end{equation*}
	of stratified spaces is an equivalence.
\end{lem}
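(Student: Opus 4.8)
The plan is to reduce the statement to $ \pi $-finite stratified spaces by means of \Categorical Hochster Duality, and then to treat that case with the nerve.

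Since $ \XX $ is spectral, \Cref{thm:inftyHochster} identifies $ \XX $ with $ \widetilde{\mbfPi} $ for $ \mbfPi \coloneq \Pi_{(\infty,1)}^{S,\wedge}(\XX) $, and exhibits $ \Pi_{(\infty,1)}^{\wedge} \of \widehat{\lambda} $ as the identity on $ \Strat_{\pi}^{\wedge} $. So it is enough to show that for every profinite stratified space $ \mbfPi $ the natural map $ \fromto{\Pt(\widetilde{\mbfPi})}{\mat(\mbfPi)} $ constructed above is an equivalence. I would view its two sides as the functors $ \Pt \of \widehat{\lambda} $ and $ \mat \of \Pi_{(\infty,1)}^{\wedge} \of \widehat{\lambda} = \mat $ from $ \Strat_{\pi}^{\wedge} $ to $ \Strat $. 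Here $ \Pt = \Funlowerstar(\Space,-)^{\op} $ preserves inverse limits of \topoi, since such limits are computed in $ \Cat_{\infty,\delta_1} $ (\Cref{thm:filteredlimsinRTop}) and a geometric morphism into an inverse limit is exactly a compatible family of geometric morphisms; $ \widehat{\lambda} $ preserves inverse limits by construction; and $ \mat $ preserves inverse limits, being a right adjoint. Thus both sides are inverse-limit-preserving functors $ \fromto{\Pro(\Strat_{\pi})}{\Strat} $, and by the universal property of the \category of proöbjects such functors, and natural transformations between them, are determined by their restriction to $ \Strat_{\pi} $. Since the restriction of the natural transformation in question to $ \Strat_{\pi} $ is the map $ \fromto{\Pt(\widetilde{\Pi})}{\Pi} $ for $ \pi $-finite stratified spaces $ \fromto{\Pi}{P} $, the proof reduces to showing that this last map is an equivalence.

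To establish this, I would compute the nerve of $ \Pt(\widetilde{\Pi}) $. First, $ \Pt(\widetilde{\Pi}) $ is a $ P $-stratified space: its stratum over $ p \in P $ is $ \Pt(\widetilde{\Pi_p}) $, and since $ \Pi_p $ is a $ \pi $-finite \groupoid the \topos $ \widetilde{\Pi_p} = \Space_{/\Pi_p} $ is étale over $ \Space $, so $ \Pt(\widetilde{\Pi_p}) \simeq \Pi_p $ by \Cref{HTT.6.3.5.6}; in particular the projection $ \fromto{\Pt(\widetilde{\Pi})}{P} $ is conservative. Next, by \Cref{lem:Ptpreservesorientedprod} -- which says $ \Pt $ carries oriented fibre products to oriented fibre products of \categories -- together with the fact that $ \Pt $ preserves the remaining inverse limits of \topoi, the functor $ \Pt $ commutes with the formation of the nerve; hence the spatial nerve of the $ P $-stratified space $ \Pt(\widetilde{\Pi}) $ is obtained by applying $ \Pt $ objectwise to the toposic nerve $ \NN_P(\widetilde{\Pi}) \simeq \widetilde{N_P(\Pi)} $ (\Cref{exm:NrhoisrhoN}). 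Since each link $ N_P(\Pi)(\Sigma) $ is a $ \pi $-finite \groupoid, each value $ \widetilde{N_P(\Pi)(\Sigma)} = \Space_{/N_P(\Pi)(\Sigma)} $ is étale over $ \Space $ with $ \Pt \simeq N_P(\Pi)(\Sigma) $ again by \Cref{HTT.6.3.5.6}; so this nerve is naturally $ N_P(\Pi) $, and the nerve equivalence $ \Strat \simeq \DecSpace{} $ (\Cref{thm:nerveequiv}) yields a natural equivalence $ \Pt(\widetilde{\Pi}) \simeq \Pi $ over $ P $. A diagram chase then identifies this equivalence with the natural map above.

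The main obstacle is this last step: one must check that $ \Pt $ really does commute with passage to the nerve \emph{naturally} and compatibly with the décollage structure maps, and -- more delicately -- that the equivalence $ \Pt(\widetilde{\Pi}) \simeq \Pi $ produced by the nerve comparison coincides with the given natural map rather than being merely some equivalence. Once each functor in play is known to preserve inverse limits, the reduction to the $ \pi $-finite case is formal; the point that makes the $ \pi $-finite case workable is that the strata and higher links of $ \widetilde{\Pi} $ are slice \topoi $ \Space_{/K} $ of $ \pi $-finite \groupoids $ K $, for which $ \Pt(\Space_{/K}) \simeq K $ by \Cref{HTT.6.3.5.6}.
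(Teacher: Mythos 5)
Your proposal is correct in substance but takes a genuinely different — and considerably longer — route than the paper. The paper's proof is essentially a two-line consequence of \Categorical Hochster Duality: since $ \XX \simeq \widetilde{\mbfPi} $ for $ \mbfPi = \Pi_{(\infty,1)}^{\wedge}(\XX) $ and since $ \Space = \widetilde{\{0\}} $ is itself spectral, the full faithfulness of $ \widehat{\lambda} $ (at the level of the $ \Fun_{\ast} $-categories, which is what \Cref{prp:fullfaithfulnessoftilde} and the discussion preceding it provide) identifies $ \Pt(\XX) = \Funlowerstar(\Space,\widetilde{\mbfPi})^{\op} $ with $ \Fun_{\Strprofin}(\ast,\mbfPi) \equivalent \mat(\mbfPi) $ directly — exactly mirroring the Stone case of \Cref{nul:matofprofinshape}. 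You instead reduce to the $\pi$-finite case by a pro-object/limit-preservation argument and then compute the nerve of $ \Pt(\widetilde{\Pi}) $ stratum-by-stratum and link-by-link. All the ingredients you invoke are available and correctly used ($ \Pt $ preserves inverse limits and oriented fibre products, $ \Pt(\Space_{/K}) \simeq K $ for $ K $ $\pi$-finite via \Cref{HTT.6.3.5.6}, the nerve equivalence \Cref{thm:nerveequiv}); indeed your identification $ \Map_P(\Sigma,\Pt(\widetilde{\Pi})) \simeq \Pt(\NN_P(\widetilde{\Pi})(\Sigma)) $ is the same computation the paper performs later in the proof of \Cref{thm:decollageofspectralisSegal}. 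What your route buys is a concrete, hands-on verification that does not presuppose the full strength of \Cref{thm:inftyHochster}'s essential-surjectivity statement; what it costs is that you end up re-deriving a special case of \Cref{cor:fullfaithfulnessonstratspaces}.

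The loose end you flag yourself is the one real weak point: your nerve computation produces \emph{an} equivalence $ \Pt(\widetilde{\Pi}) \simeq \Pi $, whereas the lemma asserts that the specific natural map obtained by applying $ \Pi_{(\infty,1)}^{\wedge} $ to points is an equivalence. This can be patched — since the nerve functor is an equivalence, it suffices to check that the natural map induces an equivalence on $\Sigma$-section spaces for each string $\Sigma$, and that reduces via the triangle identities to \Cref{cor:fullfaithfulnessonstratspaces} — but note that closing it requires exactly the adjunction/full-faithfulness argument that constitutes the paper's entire proof. So while your argument is salvageable, the detour through the nerve does not ultimately let you avoid the input that makes the direct proof immediate.
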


Now we can deduce a stratified refinement of Whitehead's Theorem for profinite spaces (\Cref{thm:profiniteWhitehead}=\allowbreak\SAG{Theorem}{E.3.1.6}).

\begin{thm}[{Whitehead Theorem for profinite stratified spaces\index[terminology]{Whitehead Theorem!for profinite stratified spaces}}]\label{thm:profinitestratifiedWhitehead}
	The materialization functor $ \mat \colon \fromto{\Strprofin}{\Str} $ is conservative.
\end{thm}

\begin{proof}
	Let $ f \colon \fromto{\mbfPi}{\mbfPi'} $ be a morphism in $ \Strprofin $ and assume that $ \mat(f) $ is an equivalence in $ \Str $.
	Write $ \flowerstar \colon \fromto{\widetilde{\mbfPi}}{\widetilde{\mbfPi}'} $ for the induced morphism of spectral \topoi.
	From \Cref{lem:matofPiinfisPt} we deduce that
	\begin{equation*}
		\Pt(\flowerstar) \colon \fromto{\Pt(\widetilde{\mbfPi})}{\Pt(\widetilde{\mbfPi}')}
	\end{equation*}
	is an equivalence of \categories.
	Conceptual Completeness (\Cref{thm:conceptualcompleteness}=\allowbreak\SAG{Theorem}{A.9.0.6}) implies that $ \flowerstar $ is an equivalence of \topoi.
	The full faithfulness of the functor $\goesto{\mbfPi}{\widetilde{\mbfPi}}$ completes the proof.
\end{proof}

We can employ the Whitehead Theorem for profinite stratified spaces to study the Postnikov tower of profinite stratified spaces. 

\begin{dfn}
	Let $ n \in \NNup $.
	A profinite stratified space $ \fromto{\mbfPi}{S} $ is \defn{$ n $-truncated}\index[terminology]{n-truncated@$n$-truncated!profinite stratified space}\index[terminology]{profinite stratified space!n-truncated@$n$-truncated} if and only if $ \mbfPi $ can be exhibited as an inverse limit of finite $n$-truncated \pifinite stratified spaces.
	Equivalently, if we extend $\ho_n\colon\fromto{\Strfin}{\Strfin}$ to an inverse-limit preserving functor $\ho_n\colon\fromto{\Strprofin}{\Strprofin}$, then an $n$-truncated profinite space is one in the essential image of $\ho_n$.

	We write $(\Strprofin)_{\leq n}\subset\Strprofin$ for the full subcategory spanned by the $n$-truncated profinite stratified spaces.
\end{dfn}

\begin{lem}\label{lem:ntruncatedNervemat}
	Let $ n \in \NNup $, and let $S$ be a spectral topological space.
	Then a profinite stratified space $ \fromto{\mbfPi}{S} $ is $ n $-truncated if and only if, for all $ s, t \in \mat(S) $ such that $ s \leq t $, the induced morphism
	\begin{equation*}
		\fromto{\Nerve_{\mat(S)}(\mbfPi)\{s \leq t\}}{\mbfPi_{s} \cross \mbfPi_{t}}
	\end{equation*} 
	is an $ (n-1) $-truncated morphism of $ \Spaceprofin $.
\end{lem}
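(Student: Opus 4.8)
The plan is to reduce the assertion to the characterisation of $n$-truncated $\pi$-finite stratified spaces in \Cref{nul:truncatedstratspace}, transported to the profinite world by combining the fact that $(n-1)$-truncatedness of morphisms in $\Spaceprofin$ is detected by the materialisation (\Cref{prop:matandtruncatedness}=\SAG{Proposition}{E.4.6.1}) with the Profinite Stratified Whitehead Theorem (\Cref{thm:profinitestratifiedWhitehead}). Concretely, write $\mbfPi$ as an inverse system $\{\Pi_\alpha\to Q_\alpha\}_{\alpha\in A}$ of $\pi$-finite stratified spaces with $S\equivalent\{Q_\alpha\}_{\alpha\in A}$, so that $\mat\mbfPi$ is the $\mat(S)$-stratified space obtained by pulling the $\Pi_\alpha$ back to $\mat(S)\equivalent\lim_\alpha Q_\alpha$ and forming the inverse limit there; this is a connected limit, hence computed in $\Cat_{\infty}$ by \HTT{Theorem}{6.3.3.1}. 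Since $N_P(-)\{p\leq q\}=\Map_P(\{p\leq q\},-)$ preserves connected limits and $\mat$ preserves all limits, for $s\leq t$ in $\mat(S)$ the profinite space $N_{\mat(S)}(\mbfPi)\{s,t\}$ --- interpreted via pullback of $\mbfPi$ to the sub-spectral-space $\{s,t\}\subseteq S$, equivalently as the pro-system $\{N_{Q_\alpha}(\Pi_\alpha)\{[s]_{Q_\alpha}\leq[t]_{Q_\alpha}\}\}_\alpha$ --- materialises to $N_{\mat(S)}(\mat\mbfPi)\{s\leq t\}$, and $\mbfPi_s$ materialises to the stratum $(\mat\mbfPi)_s$. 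Combining this with \Cref{prop:matandtruncatedness} and \Cref{nul:truncatedstratspace}, applied to the honest stratified space $\mat\mbfPi$ over the poset $\mat(S)$, shows that the displayed link condition for $\mbfPi$ in $\Spaceprofin$ is equivalent to the statement that $\mat\mbfPi$ is an $n$-truncated stratified space. Thus the lemma reduces to: $\mbfPi$ is $n$-truncated as a profinite stratified space if and only if $\mat\mbfPi$ is $n$-truncated as a stratified space.

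The forward implication is straightforward. If $\mbfPi$ is $n$-truncated it is an inverse limit of finite $n$-truncated $\pi$-finite stratified spaces; $n$-truncatedness of a stratified space is the condition of \Cref{nul:truncatedstratspace} on strata and links, and this is preserved by connected inverse limits, since strata and links of a connected inverse limit are the corresponding inverse limits and the class of $(n-1)$-truncated morphisms of spaces is stable under limits.

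The content is in the converse. Suppose $\mat\mbfPi$ is $n$-truncated. I would show that the unit $\fromto{\mbfPi}{h_n\mbfPi}$ is an equivalence --- recall $h_n\colon\fromto{\Strat^{\wedge}_{\pi}}{\Strat^{\wedge}_{\pi}}$ is the inverse-limit-preserving extension of the $n$-categorical truncation, a localisation onto the $n$-truncated profinite stratified spaces (the $\pi$-finite version is a localisation, and $\Pro$ of a localisation is one), so $\mbfPi$ is $n$-truncated exactly when this unit is invertible. By the Profinite Stratified Whitehead Theorem (\Cref{thm:profinitestratifiedWhitehead}) it suffices to check that $\fromto{\mat\mbfPi}{\mat(h_n\mbfPi)}\equivalent\lim_\alpha h_n\Pi_\alpha$ is an equivalence, which by \Cref{nul:truncatedstratspace} may be checked on each stratum and each mapping space. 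There, the transition map $\fromto{\Pi_\alpha}{h_n\Pi_\alpha}$ is an $n$-connective morphism of $\pi$-finite spaces, hence has $(n-1)$-connected $\pi$-finite fibre; on passing to the inverse limit over $\alpha$, and using that $\lim^1$ of a tower of finite groups vanishes (so $\pi_i$ commutes with the inverse limit and there are no derived-limit contributions), the fibre of $\fromto{\mat\mbfPi}{\mat(h_n\mbfPi)}$ over a point has homotopy groups $\lim_\alpha\pi_i(\mathrm{fibre}_\alpha)$. These vanish for $i\leq n-1$ by $(n-1)$-connectivity, and they vanish for $i\geq n$ precisely because $\mat\mbfPi$ is $n$-truncated: its strata are then $n$-truncated and its mapping spaces $(n-1)$-truncated, so $\lim_\alpha\pi_i(\mathrm{fibre}_\alpha)$ equals $\pi_i$ of the corresponding fibre for $\mat\mbfPi$, which is $0$ for $i\geq n$. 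Hence the fibre is contractible, the unit materialises to an equivalence, and $\mbfPi\equivalent h_n\mbfPi$ is $n$-truncated.

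The main obstacle I anticipate is organising this inverse-limit computation in the converse direction: one must keep track of the varying base posets $Q_\alpha$ against $\mat(S)$, split $\mat$ of the unit into its effect on strata and on mapping spaces (via the décollage description of \Cref{cnstr:profinitespacesanddecollages}), and play the $(n-1)$-connectivity of the fibres against the hypothesis that $\mat\mbfPi$ is $n$-truncated. The genuinely load-bearing point is the vanishing of $\lim^1$ for towers of finite homotopy groups; without it the converse would fail, as it does for general inverse systems of spaces, where a cofiltered limit of $n$-connective maps need not be $n$-connective.
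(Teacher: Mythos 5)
Your proof is correct and follows essentially the same route as the paper's: the forward direction by exhibiting $\mbfPi$ as an inverse system of $n$-truncated $\pi$-finite stratified spaces, and the converse by showing that the unit $\fromto{\mbfPi}{h_n\mbfPi}$ becomes an equivalence after materialisation, via the Profinite Stratified Whitehead Theorem. The only differences are organisational: you establish the statement that materialisation detects $n$-truncatedness (which the paper records as the subsequent lemma, deduced \emph{from} this one) as an intermediate step, and you spell out with a $\lim^1$-vanishing argument for cofiltered systems of finite groups the final step that the paper dismisses as obvious.
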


\begin{proof} 
	Exhibit $\mbfPi$ as an inverse system $\{\Pi_{\alpha}\to P_{\alpha}\}_{\alpha \in A}$ of $n$-truncated \pifinite stratified spaces, and express $s$ and $t$ as inverse systems\pifinite $\{s_{\alpha}\}_{\alpha \in A}$ and $\{t_{\alpha}\}_{\alpha \in A}$ of points.
	Then the inverse system
	\begin{equation*}
		\left\{ \Nerve_{P_{\alpha}}(\Pi_{\alpha})\{s_{\alpha} \leq t_{\alpha}\}\to\Pi_{s_{\alpha}} \cross \Pi_{t_{\alpha}} \right\}_{\alpha \in A} \comma
	\end{equation*}
	exhibiting the morphism $\fromto{\Nerve_{\mat(S)}(\mbfPi)\{s \leq t\}}{\mbfPi_{s} \cross \mbfPi_{t}}$ of $ \Spaceprofin $, is an inverse system of $(n-1)$-truncated morphisms.

	Conversely, assume that $\mbfPi$ is exhibited as an inverse system $\{\Pi_{\alpha}\to P_{\alpha}\}_{\alpha \in A}$ of \pifinite stratified spaces, and that for all $s,t\in\mat(S)$ such that $s\leq t$, the morphism of profinite spaces
	\begin{equation*}
		\fromto{\Nerve_{\mat(S)}(\mbfPi)\{s \leq t\}}{\mbfPi_{s} \cross \mbfPi_{t}}
	\end{equation*}
	is $(n-1)$-truncated.
	Now consider $\ho_n\mbfPi\coloneq\{\ho_n\Pi_{\alpha}\to P_{\alpha}\}_{\alpha \in A}$ and the natural morphism $\fromto{\mbfPi}{\ho_n\mbfPi}$.
	To see that this morphism is an equivalence, we may pass to the materialization by \Cref{thm:profinitestratifiedWhitehead}, where it is obvious.
\end{proof}

\begin{lem}\label{lem:matdetectstruncatedness}
	Let $ n \in \NNup $.
	A profinite stratified space $ \fromto{\mbfPi}{S} $ is $ n $-truncated if and only if $ \mat(\mbfPi) \in \Strat $ is $ n $-truncated in the sense of \Cref{def:truncatednessforstratspaces}.
\end{lem}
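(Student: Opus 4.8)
The plan is to deduce \Cref{lem:matdetectstruncatedness} from the immediately preceding lemma (the characterisation of $n$-truncatedness of a profinite stratified space $\fromto{\mbfPi}{S}$ in terms of the links $\fromto{N_{\mat(S)}(\mbfPi)\{s,t\}}{\mbfPi_s \cross \mbfPi_t}$ being $(n-1)$-truncated morphisms of $\Spaceprofin$) together with the characterisation of $n$-truncated stratified spaces from \Cref{nul:truncatedstratspace} (the links $\fromto{N_{\mat(S)}(\mat(\mbfPi))\{s\leq t\}}{\mat(\mbfPi)_s \cross \mat(\mbfPi)_t}$ being $(n-1)$-truncated morphisms of $\Space$) and \Cref{prop:matandtruncatedness}=\allowbreak\SAG{Proposition}{E.4.6.1}, which says that a morphism $f$ of $\Spaceprofin$ is $n$-truncated if and only if $\mat(f)$ is an $n$-truncated morphism of $\Space$.

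The key point connecting these is that the materialisation functor is compatible with the formation of links and strata. Concretely, I would first observe that for a profinite stratified space $\fromto{\mbfPi}{S}$ and points $s \leq t$ of $\mat(S)$, the materialisation $\mat(\mbfPi) \in \Strat$ has $s$-th stratum $\mat(\mbfPi_s)$ and link $N_{\mat(S)}(\mat(\mbfPi))\{s \leq t\} \equivalent \mat\paren{N_{\mat(S)}(\mbfPi)\{s,t\}}$, with the source and target maps $(s,t)$ obtained by applying $\mat$ to the corresponding maps of profinite spaces. This follows from the fact that $\mat$ preserves inverse limits and finite products (the strata and links of $\mat(\mbfPi)$ are computed as appropriate limits over $\mat(S)$, exactly as in \Cref{cnstr:profinitespacesanddecollages}), together with the compatibility of the materialisation of profinite spaces with the materialisation of profinite stratified spaces. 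Writing $\mbfPi$ as an inverse system $\{\Pi_\alpha \to P_\alpha\}_{\alpha \in A}$ of $\pi$-finite stratified spaces makes this transparent on the nose: the link $N_{\mat(S)}(\mbfPi)\{s,t\}$ is the profinite space $\{N_{P_\alpha}(\Pi_\alpha)\{s_\alpha, t_\alpha\}\}_{\alpha \in A}$, and its materialisation is the corresponding inverse limit of spaces, which is precisely $N_{\mat(S)}(\mat(\mbfPi))\{s \leq t\}$.

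With this compatibility in hand, the two directions are immediate. By the preceding lemma, $\mbfPi$ is $n$-truncated if and only if for all $s \leq t$ in $\mat(S)$ the morphism $\fromto{N_{\mat(S)}(\mbfPi)\{s,t\}}{\mbfPi_s \cross \mbfPi_t}$ is $(n-1)$-truncated in $\Spaceprofin$; by \Cref{prop:matandtruncatedness}=\allowbreak\SAG{Proposition}{E.4.6.1} this holds if and only if its materialisation $\fromto{N_{\mat(S)}(\mat(\mbfPi))\{s \leq t\}}{\mat(\mbfPi)_s \cross \mat(\mbfPi)_t}$ is an $(n-1)$-truncated morphism of $\Space$; and by \Cref{nul:truncatedstratspace} (the fourth bullet, applied to the stratified space $\mat(\mbfPi) \to \mat(S)$) this is exactly the condition that $\mat(\mbfPi)$ be $n$-truncated in the sense of \Cref{def:truncatednessforstratspaces}.

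I do not anticipate a serious obstacle here; the only point requiring a little care is the identification $N_{\mat(S)}(\mat(\mbfPi))\{s \leq t\} \equivalent \mat\paren{N_{\mat(S)}(\mbfPi)\{s,t\}}$ compatibly with the source–target maps, i.e.\ checking that materialisation genuinely commutes with the nerve/décollage constructions on both the stratified-space side and the profinite-space side. This is essentially bookkeeping with the presentation of $\mbfPi$ as an inverse system of $\pi$-finite stratified spaces, using that $\mat$ preserves the relevant inverse limits, so I would carry it out by reducing to that presentation as in the proof of the preceding lemma rather than arguing abstractly.
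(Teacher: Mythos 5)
Your proof is correct and follows essentially the same route as the paper's: both rest on the identification $\mat(N_{\mat(S)}(\mbfPi)\{s,t\}) \equivalent N_{\mat(S)}(\mat(\mbfPi))\{s\leq t\}$ (which the paper states without elaboration and you justify via the presentation as an inverse system), combined with \SAG{Proposition}{E.4.6.1} and the link characterisations of truncatedness on each side. No issues.
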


\begin{proof}
	For all $ s,t \in \mat(S) $ such that $ s \leq t $, we have a natural identification
	\begin{equation*}
		\mat(\Nerve_{\mat(S)}(\mbfPi)\{s \leq t\}) \equivalent \Nerve_{\mat(S)}(\mat(\mbfPi))\{s \leq t\} \period
	\end{equation*}
	By \Cref{prop:matandtruncatedness}=\allowbreak\SAG{Proposition}{E.4.6.1}, the fact that materialization is a right adjoint, and \Cref{lem:ntruncatedNervemat}, we see that a profinite stratified space $ \mbfPi $ is $ n $-truncated if and only if the morphism 
	\begin{equation*}
		\fromto{\Nerve_{\mat(S)}(\mat(\mbfPi))\{s \leq t\}}{\mat(\mbfPi)_{s} \cross \mat(\mbfPi)_{t}}
	\end{equation*} 
	is an $ (n-1) $-truncated morphism of spaces.
	This is true if and only if $ \mat(\mbfPi) $ is $ n $-truncated in the sense of \Cref{def:truncatednessforstratspaces}.
\end{proof}

Under \Categorical Hochster Duality (\Cref{thm:inftyHochster}) $ n $-localic spectral stratified \topoi correspond to $ n $-truncated profinite stratified spaces:

\begin{prp}\label{prop:nlocalicntruncated}
	Let $ \XX $ be a spectral \topos and $ n \in \NNup $.
	Then the following are equivalent:
	\begin{enumerate}[{\upshape (\ref*{prop:nlocalicntruncated}.1)}]
		\item\label{prop:nlocalicntruncated.1} The \topos $ \XX $ is $ n $-localic.

		\item\label{prop:nlocalicntruncated.2} The \category $ \Pt(\XX) $ of points of $ \XX $ is an $ n $-category.

		\item\label{prop:nlocalicntruncated.3} The profinite stratified shape $ \StrShape(\XX) $ is an $ n $-truncated profinite stratified space.
	\end{enumerate}
\end{prp}

\begin{proof}
	First we show that \enumref{prop:nlocalicntruncated}{1}$ \Rightarrow $\enumref{prop:nlocalicntruncated}{2}$ \Rightarrow $\enumref{prop:nlocalicntruncated}{3}.
	If $ \XX $ is $ n $-localic, then the \category $ \Pt(\XX) $ is an $ n $-category, which shows that  
	\begin{equation*}
		\mat \StrShape(\XX) \equivalent \Pt(\XX)
	\end{equation*}
	is an $ n $-category (\Cref{lem:matofPiinfisPt}).
	Applying \Cref{lem:matdetectstruncatedness} we see that $ \StrShape(\XX) $ is an $ n $-truncated profinite stratified space.

	Now we show that \enumref{prop:nlocalicntruncated}{3}$ \Rightarrow $\enumref{prop:nlocalicntruncated}{1}.
	If $ \StrShape(\XX) $ is an $ n $-truncated profinite stratified space, then $ \StrShape(\XX) $ can be exhibited as an inverse system $ \{\Pi_{\alpha}\}_{\alpha \in A} $ of $ n $-truncated \pifinite stratified spaces.
	Since $ n $-localic \topoi are closed under limits in $ \Top_{\infty} $ and each $ \Pitilde_{\alpha} $ is $ n $-localic (\Cref{cor:Pitildeisbcc}), we see that the \topos
	\begin{equation*}
		\XX \equivalent \lim_{\alpha \in A} \widetilde{\Pi}_{\alpha}
	\end{equation*}
	is $ n $-localic.
\end{proof}

\begin{nul}\label{nul:1truncatedprofinstrat} 
	Combining the preceding with ordinary Stone Duality between profinite sets and Stone topological spaces, we see that the functor
	\begin{equation*}
		\Pt \colon \fromto{\StrTop_{\infty}^{\spec,1}}{\Cat_{1}}
	\end{equation*}
	factors through a fully faithful functor
	\begin{equation*}
		\incto{\StrTop_{\infty}^{\spec,1}}{\CS(\TSpc^{\Stone})}
	\end{equation*}
	from the $2$-category of $1$-truncated profinite stratified spaces to the $2$-category of categories in the category of Stone topological spaces (cf. \Cref{def:COCSS}).
	The essential image of this functor is spanned by the \defn{layered} categories -- i.e., the ones in which every endomorphism is an isomorphism.
\end{nul}


\subsection{Stratified homotopy types via décollages}\label{subsec:strathomotopyviadecollage} 

Let $ P $ be a finite poset and $ \XX $ a spectral $ P $-stratified \topos.
In this section we give an explicit description of the profinite stratified shape $ \StrShape^P(\XX) $ as a décollage involving the profinite shapes of the strat and links of $ \XX $.

\begin{cnstr}[shapes of décollages]
	Let $P$ be a finite poset.
	Recall that the functor
	\begin{equation}
		\lambdahat_P^{\dec} \colonequals \lambdapi \of - \colon \incto{\DecSpaceprofin{P}}{\DecTop{P}}
	\end{equation}
	given by composition with $\uplambda_{\{0\}}$ is fully faithful with essential image is $\DecTopStone{P}$ (\Cref{prp:decprehochster}).

	We construct a left adjoint to $ \lambdahat_P^{\dec} $ as follows.
	Write
	\begin{equation*}
		\Shapeprofin^{\predec,P}\colon\fromto{\DecTop{P}}{\Fun(\sdop(P),\Spaceprofin)} \index[notation]{Pi@$ \Shapeprofin^{\predec,P}, \Shapeprofin^{\dec,P} $}
	\end{equation*}
	for the functor given by composition with the profinite shape $\Shapeprofin$.
	Hence $ \Shapeprofin^{\predec,P} $ sends a toposic décollage $\DD\colon\fromto{\sdop(P)}{\Top_{\infty}^{\bc}}$ to the functor $\goesto{\Sigma}{\Shapeprofin\DD(\Sigma)}$. 
	Composing $ \Shapeprofin^{\predec,P} $ with the left adjoint to the inclusion $\incto{\DecSpaceprofin{P}}{\Fun(\sdop(P),\Spaceprofin)}$, we obtain a functor
	\begin{equation*}
		\Shapeprofin^{\dec,P}\colon\fromto{\DecTop{P}}{\DecSpaceprofin{P}} \index[notation]{Pi@$ \Shapeprofin^{\dec,P} $}
	\end{equation*}
	that is left adjoint to $\lambdahat_P^{\dec}$.
\end{cnstr}

Because it involves applying the left adjoint $ \fromto{\Fun(\sdop(P),\Spaceprofin)}{\DecSpaceprofin{P}} $, the functor $\Shapeprofin^{\dec,P}$ is very inexplicit in general.
However, there are two cases in which applying this left adjoint is superfluous: if the poset has rank $ \leq 1 $, or if the décollage comes from a spectral \topos.

\begin{thm}\label{thm:decollageofspectralisSegal}
	Let $P$ be a finite poset and let $\fromto{\XX}{\Ptilde}$ be a bounded coherent constructible $P$-stratified \topos.
	Assume that either of the following conditions holds:
	\begin{enumerate}[{\upshape (\ref*{thm:decollageofspectralisSegal}.1)}]
		\item\label{thm:decollageofspectralisSegal.1} The poset $ P $ has rank $ \leq 1 $.

		\item\label{thm:decollageofspectralisSegal.2} The $ P $-stratified \topos $ \XX $ is spectral.
	\end{enumerate}
	Then the functor $\goesto{\Sigma}{\Shapeprofin\NNerve_P(\XX)(\Sigma)}$ is already a profinite spatial décollage.
	That is, the natural \emph{Segalification} morphism
	\begin{equation*}
		\Shapeprofin^{\predec,P}\NNerve_P(\XX) \to \Shapeprofin^{\dec,P}\NNerve_P(\XX)
	\end{equation*}
	is an equivalence in $ \Fun(\sdop(P),\Spaceprofin) $.
\end{thm}

\begin{proof}
	The Segal condition is vacuous for posets of rank $ \leq 1 $, so \enumref{thm:decollageofspectralisSegal}{1} is clear.

	For \enumref{thm:decollageofspectralisSegal}{2}, it suffices to prove that for every chain $ \Sigma \coloneq \{p_0 < \cdots < p_n\} \subset P $, the natural morphism of profinite spaces
	\begin{equation*}
		f_{\Sigma} \colon \fromto{\Shapeprofin(\XX_{p_0} \orientedtimes_{\XX} \cdots \orientedtimes_{\XX} \XX_{p_n}) = \Shapeprofin\Mor_{\Ptilde}(\widetilde{\Sigma},\XX)}{\Map_{P}(\Sigma,\StrShape(\XX))}
	\end{equation*}
	is an equivalence.
	By Whitehead's Theorem for profinite spaces (\Cref{thm:profiniteWhitehead}=\allowbreak\SAG{Theorem}{E.3.1.6}), it suffices to prove that the materialization $ \mat(f_{\Sigma}) $ is an equivalence.
	Since $ \XX $ is spectral, we have a natural equivalence 
	\begin{equation*}
		\mat \Shapeprofin(\XX_{p_0} \orientedtimes_{\XX} \cdots \orientedtimes_{\XX} \XX_{p_n}) \equivalent \Pt(\XX_{p_0} \orientedtimes_{\XX} \cdots \orientedtimes_{\XX} \XX_{p_n}) 
	\end{equation*}
	(\Cref{nul:matofprofinshape,prp:orientedprodisStone}).
	Similarly, since $ \Sigma $ is a constant \proobject and $ \XX $ is spectral, by Whitehead's Theorem for profinite stratified spaces (\Cref{thm:profinitestratifiedWhitehead}) we have natural equivalences
	\begin{align*}
		\mat\Map_{P}(\Sigma,\StrShape(\XX)) &\equivalent \Map_{P}(\Sigma,\mat\StrShape(\XX)) \\ 
		&\equivalent \Map_{P}(\Sigma,\Pt(\XX)) \period
	\end{align*}
	By the universal property of the iterated oriented fiber product $ \XX_{p_0} \orientedtimes_{\XX} \cdots \orientedtimes_{\XX} \XX_{p_n} $, we have a natural identification 
	\begin{equation}\label{eq:pointsoflinks}
		\equivto{\Map_{P}(\Sigma,\Pt(\XX))}{\Pt(\XX_{p_0} \orientedtimes_{\XX} \cdots \orientedtimes_{\XX} \XX_{p_n})} \period
	\end{equation}
	To complete the proof, note that the materialization $ \mat(f_{\Sigma}) $ is equivalent to the morphism \eqref{eq:pointsoflinks}.
\end{proof}

\begin{exm}
	Let $P$ be a finite poset, and let $\fromto{\XX}{\Ptilde}$ be a spectral $P$-stratified \topos. It follows from \Cref{thm:decollageofspectralisSegal} that, for any point $p\in P$, the $p$-th stratum $\StrShape^{P}(\XX)_p$ is equivalent to the profinite shape $\Shapeprofin(\XX_p)$ of $ \XX_p $.
\end{exm}

\begin{exm}
	Let $P$ be a finite poset, and let $\fromto{\XX}{\Ptilde}$ be a spectral $P$-stratified \topos. It follows from \Cref{thm:decollageofspectralisSegal} that, for any points $p,q\in P$ with $p<q$, the link $\Map_{P}(\{p < q\},\StrShape^{P}(\XX))$ between the $p$-th and $q$-th strata of $\StrShape^{P}(\XX)$ is equivalent to the profinite shape type $\Shapeprofin(\XX_p\orientedtimes_{\XX}\XX_q)$ of the topos-theoretic link.
\end{exm}

\begin{exm} 
	Let $P$ be a finite poset, and $\XX$ a spectral $P$-stratified \topos.
	For any points $p,q\in P$ such that $p < q$, write
	\begin{equation*}
		i_{pq,\ast}\colon\incto{\XX_{p}}{\XX_{\{p < q\}}} \quad \text{and} \quad j_{pq,\ast}\colon\incto{\XX_{q}}{\XX_{\{p < q\}}}
	\end{equation*}
	for the closed and open immersions of strata, respectively.
	Then the \basechange Theorem for oriented fiber products (\Cref{thm:BCfororientedfibs}) ensures that the décollage
	\begin{equation*}
		\Shapeprofin^{\dec,P}(\XX)\colon\fromto{\sdop(P)}{\Spaceprofin}
	\end{equation*}
	carries any chain $\{p_0 < \cdots < p_n\}\subseteq P$ to the left exact accessible functor $\Spacefin\to\Space$ given by the composite
	\begin{equation*}
		\Gammaup_{\XX_{p_0},\ast} i_{p_0p_1}^{\ast}j_{p_0p_1,\ast}i_{p_1p_2}^{\ast}j_{p_1p_2,\ast}\cdots i_{p_{n-1}p_n}^{\ast}j_{p_{n-1}p_n,\ast}\Gammaup_{\XX_{p_n}}^{\ast} \period
	\end{equation*}
\end{exm}


\subsection{The van Kampen Theorem}\label{subsec:stratvanKampen} 

Let $ P $ be a poset.
The `invert everything' functor $\goesto{\Pi}{\invert(\Pi)}$ from $P$-stratified spaces to spaces, regarded as a functor from $ \fromto{\DecSpace{P}}{\Space} $, is given by the formation of the colimit.
That is, if $\fromto{\Pi}{P}$ is a $P$-stratified space, then there is a natural equivalence
\begin{equation*}
	\invert(\Pi) \simeq \colim_{\Sigma \in \sdop(P)} \Nerve_P(\Pi)(\Sigma) \period
\end{equation*}
There is a variant of this formula for \textit{profinite} stratified spaces.
Let $ \invert\profincomp \colon \fromto{\Pro(\Cat_{\infty})}{\Spaceprofin} $ denote the composite of the `invert everything functor' $ \invert \colon \fromto{\Pro(\Cat_{\infty})}{\Pro(\Space)} $ with profinite completion $ (-)\profincomp \colon \fromto{\Pro(\Space)}{\Spaceprofin} $.
For any profinite $ P $-stratified space $ \mbfPi $, there is a natural equivalence
\begin{equation*}
	\invert\profincomp(\mbfPi) \simeq \colim_{\Sigma \in \sdop(P)} \Nerve_P(\mbfPi)(\Sigma) \period
\end{equation*}
The compatibility \Cref{nul:compatibilitywithinverteverything} combined with \Cref{exm:strshape0isprofinshape,nul:strdecomparisonprofiniteadjoint,thm:decollageofspectralisSegal} provide an analogous colimit formula for the profinite shape type of a spectral \topos:

\begin{prp}[{van Kampen Theorem\index[terminology]{van Kampen Theorem}}]\label{prop:vanKampen}
	Let $P$ be a finite poset and let $\fromto{\XX}{\Ptilde}$ be a bounded coherent constructible $P$-stratified \topos.
	Assume that either of the following conditions holds:
	\begin{enumerate}[{\upshape (\ref*{thm:decollageofspectralisSegal}.1)}]
		\item\label{prop:vanKampen.1} The poset $ P $ has rank $ \leq 1 $.

		\item\label{prop:vanKampen.2} The $ P $-stratified \topos $ \XX $ is spectral.
	\end{enumerate}
	Then the profinite shape of $\XX$ is given by the colimit
	\begin{equation*}
		\Shapeprofin(\XX) \simeq \colim_{\Sigma\in\sdop(P)} \Shapeprofin(\NNerve_P(\XX)(\Sigma))
	\end{equation*}
	in the \category of profinite spaces.
\end{prp}

\begin{exm}\label{exm:vanKampen1strat}
	If $\XX$ is a bounded coherent constructible $ [1] $-stratified \topos exhibited as a recollement \smash{$\ZZ\orientedcup^{\phi}\UU$} of bounded coherent \topoi $\ZZ$ and $\UU$, then the induced square
	\begin{equation*}
      \begin{tikzcd}
	       \Shapeprofin(\ZZ\orientedtimes_{\XX}\UU) \arrow[d] \arrow[r] & \Shapeprofin(\UU) \arrow[d] \\ 
	       \Shapeprofin(\ZZ) \arrow[r] & \Shapeprofin(\XX)
      \end{tikzcd}
    \end{equation*}
	is a pushout square in the \category of profinite spaces.
\end{exm}

\begin{exm}
	Let $n\in\NNup$, and let $\fromto{\XX}{\widetilde{[n]}}$ be a spectral $ [n] $-stratified \topos.
	Then $\Shapeprofin(\XX)$ can be exhibited as the colimit of a punctured $(n+1)$-cube $ \fromto{\sdop([n])}{\Spaceprofin}$ given by the assignment
	\begin{equation*}
		\goesto{\{p_0 < \cdots < p_k\}}{\Shapeprofin(\XX_{p_0}\orientedtimes_{\XX}\XX_{p_1}\orientedtimes_{\XX}\cdots\orientedtimes_{\XX}\XX_{p_k})} \period
	\end{equation*}
\end{exm}


\newpage

\part{Stratified étale homotopy theory}\label{part:stratetale}

In this part we use the profinite stratified shape of \cref{sec:profinstratshape} to give a refinement of the étale homotopy theory of Artin--Mazur--Friedlander.
We first recall how to define the étale homotopy type from the \categorical perspective, as well as the main theorems in étale homotopy theory (\cref{sec:recetalehtpytype}).
We then study the profinite stratified shape of the étale \topos of coherent schemes in detail (\cref{sect:ramification}).
In particular, we provide a concrete description in terms the \textit{profinite Galois categories} introduced in the Introduction (preceding \Cref{lede:reconstruction}).
We conclude with \cref{sect:fullfaithfulness} where we discuss Grothendieck's anabelian conjectures and use a theorem of Voevodsky to prove a strong reconstruction theorem for schemes in characteristic $ 0 $ in terms of profinite Galois categories (\Cref{lede:reconstruction}=\Cref{nul:mainthmonGal}).


\section{Aide-mémoire on étale homotopy types}\label{sec:recetalehtpytype}

In this chapter we recall how to situate the \textit{étale homotopy type} of Artin--Mazur--Fried\-lander in the \categorical setting.
We also provide some example computations of étale homotopy types.

\Cref{subsec:etalehomotopy} recalls the definition of the étale homotopy type via shape theory.
In \Cref{subsec:etalehomotopyexamples} we give some sample computations and uses of the étale homtopy type. 
\Cref{subsec:etalemonodromy} recalls the monodromy equivalence for lisse étale sheaves in terms of the profinite étale homotopy type.
\Cref{subsec:etalehomotopyofsschemes} explains how étale homotopy theory works for simplicial schemes.
\Cref{subsec:ordinaryRiemann} recalls Artin and Mazur's formulation of the Riemann Existence Theorem in terms of the étale homotopy type, and \cref{subsec:etalevanKampen} gives a quick proof of the étale van Kampen Theorem.


\subsection{Artin \& Mazur's étale homotopy types of schemes}\label{subsec:etalehomotopy}

From \acategorical perspective, there are \textit{a priori} four étale shapes to contemplate:

\begin{dfn}
	Let $X$ be a scheme.
	We write:
	\begin{itemize}
		\item $\Shapeet(X) \coloneq \Shape(X_{\et})$\index[notation]{Piet@$ \Shapeet, \Shapeethyp, \Shapeetprofin, \Shapeethypprofin $} for the shape of the $1$-localic étale \topos of $ X $,

		\item $\Shapeethyp(X) \coloneq \Shape(X_{\et}^{\hyp})$ for the shape of the hypercomplete étale \topos of $ X $,

		\item $\Shapeetprofin(X) \coloneq \Shapeprofin(X_{\et})$ for the profinite shape of the $1$-localic \topos of $ X $, and

		\item $\Shapeethypprofin(X) \coloneq \Shapeprofin(X_{\et}^{\hyp})$ for the profinite shape of the hypercomplete étale \topos of $ X $.
	\end{itemize}
\end{dfn}

\begin{nul}\label{nul:equivofetalehomotopyonprotrunc}
	As a special case of \Cref{exm:protruncandhypercomp}, we see that the natural morphism
	\begin{equation*}
		\fromto{\Shapeethyp(X)}{\Shapeet(X)}
	\end{equation*}
	becomes an equivalence after protruncation.
	In particular, the natural morphism
	\begin{equation*}
		\fromto{\Shapeethypprofin(X)}{\Shapeetprofin(X)}
	\end{equation*}
	is an equivalence.
	We simply write 
	\begin{equation*}
		\Shapeettrun(X) \coloneq \trun_{<\infty}\Shapeethyp(X) \equivalent \trun_{<\infty}\Shapeet(X)
	\end{equation*}
	for the protruncated shape of the étale \topos.
\end{nul}

For a locally noetherian scheme $ X $, Artin and Mazur \cite[\S 9]{MR0245577} constructed a \proobject in the homotopy category of spaces called the \textit{étale homotopy type} of $ X $.
Friedlander \cite[\S 4]{MR676809} later refined this construction, producing a \proobject in simplicial sets called the \textit{étale topological type} of $ X $.
The image of the étale topological type in $ \Pro(\hoSpace) $ agrees with the étale homotopy type of Artin--Mazur \cite[Proposition 4.5]{MR676809}.
Hoyois \cite[\S 5]{MR3763287} has shown that Friedlander's étale topological type corepresents the shape of the hypercomplete étale \topos of $ X $:

\begin{thm}[{\cite[Corollary 5.6]{MR3763287}}]\label{thm:Hoyoisetalehomotopycomparison}
	Let $ X $ be a locally noetherian scheme.
	Then the étale topological type of $ X $ corepresents the shape \smash{$\Shapeethyp(X)$} of the hypercomplete étale \topos of $ X $.
\end{thm}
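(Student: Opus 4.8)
The statement to prove, \Cref{thm:Hoyoisetalehomotopycomparison}, is attributed to Hoyois \cite[Corollary 5.6]{MR3763287}, so the "proof" is really a matter of assembling the ingredients that Hoyois' argument and the framework of this paper make available. Here is how I would structure it.

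\medskip

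\emph{Plan of the proof.} The plan is to reduce the assertion to two independent facts: first, that Friedlander's étale topological type $ \Et(X) $ is, by construction, a proöbject in simplicial sets obtained by applying the connected-component/Postnikov machinery to hypercovers of the small étale site of $ X $; and second, that for \emph{any} \topos $ \XX $ the shape $ \Pi_{\infty}(\XX) $ of \Cref{dfn:shape} is computed, up to weak equivalence of prospaces, by the Čech (or hyper-Čech) nerve of a generating site. The first step is to recall Friedlander's definition and note that $ \Et(X) $ is functorially the rigid hypercover construction applied to $ X_{\et} $; this is pure bookkeeping from \cite{MR676809}. The second step — the genuine content — is to identify Friedlander's prosimplicial set with the prospace underlying $ \Pi_{\infty}(X_{\et}^{\hyp}) $. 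Here I would invoke the description of the shape as $ \Gamma_{!}(1_{\XX}) $, the proëxistent left adjoint of $ \Gammaupperstar $ evaluated at the terminal object (\Cref{dfn:shape} and \Cref{nul:proexistentadjoint}), together with the standard fact that for a hypercomplete \topos of sheaves on a site, $ \Gamma_{!}(1) $ is corepresented by the system of connected components of hypercovers of the terminal object. Since $ X $ is locally noetherian, the small étale site is finitary (\Cref{dfn:finitaryinftysite}), so $ X_{\et} $ is coherent (\Cref{prop:coherentschemetopos}) and the hypercompletion $ X_{\et}^{\hyp} $ is again well-behaved; crucially $ X_{\et}^{\hyp} $ is Postnikov complete, so its shape is genuinely captured by the Postnikov-tower prospace that Friedlander's construction produces.

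\medskip

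\emph{Key steps, in order.} (1) Recall Friedlander's étale topological type $ \Et(X) \in \Pro(\sSet) $ as the rigid-hypercover functor applied to the small étale site of $ X $, and recall its passage to $ \Pro(h\Space) $ recovers Artin–Mazur. (2) Recall the identification, valid for any \topos, of $ \Pi_{\infty}(\XX) $ with the prospace $ \Gamma_{\XX,\ast}\Gammaupperstar_{\XX} \colon \fromto{\Space}{\Space} $ regarded as an object of $ \Pro(\Space)^{\op} $ (second bullet of \Cref{dfn:shape}). (3) For $ \XX = \Sh_{\tau}(C) $ on a finitary site with a terminal object, exhibit $ \Gamma_{\XX,\ast}\Gammaupperstar_{\XX} $ as the filtered-colimit-over-hypercovers functor $ \goesto{K}{\colim_{\mathcal{U}} \Map(|\mathcal{U}|, K)} $, where $ \mathcal{U} $ ranges over hypercovers of $ 1 $ and $ |\mathcal{U}| $ is the associated simplicial connected-components object — this is exactly the content of Lurie's shape-via-hypercovers computation, cited in \cite{MR3763287}. (4) Observe this presents the same proöbject as Friedlander's, since Friedlander's rigidification is cofinal among hypercovers. (5) Finally, note that hypercompleteness is forced: the shape functor only sees the hyper-Čech data, and $ \Pi_{\infty}(X_{\et}^{\hyp}) \equivalent \Pi_{\infty}(X_{\et}) $ after protruncation by \Cref{nul:equivofetalehomotopyonprotrunc}, but Friedlander's construction produces the \emph{non-protruncated} prospace, which is precisely $ \Pi_{\infty}(X_{\et}^{\hyp}) $ on the nose (not merely after protruncation), because $ X_{\et}^{\hyp} $ is Postnikov complete and the hypercover prospace is already Postnikov-convergent.

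\medskip

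\emph{Main obstacle.} The real work is step (3)–(4): matching the precise prosimplicial-set model of Friedlander — with its rigid hypercovers and its specific indexing category — against the abstract $ \Pro(\Space) $-valued shape functor. The subtlety is that equivalences in $ \Pro(\Space) $ are more delicate than objectwise equivalences, so one must check that the rigidification functor is genuinely \emph{limit-cofinal} (in the sense used for proöbjects, cf. \cite[\HTTthm{Lemma}{5.4.5.12}]{HTT}) among all hypercovers of $ X $, and that the two resulting cofiltered diagrams in $ \Space $ are isomorphic as proöbjects, not merely after applying $ \pi_0 $ or passing to $ h\Space $. This is exactly the comparison carried out in \cite[\S 5]{MR3763287}; I would cite it and spell out only the translation of Hoyois' notation into the shape-theoretic language of this paper (via \Cref{dfn:shape} and the coherence of $ X_{\et} $ from \Cref{prop:coherentschemetopos}). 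Everything else is assembling citations. Accordingly, the proof in the paper can reasonably be the single sentence: "This is \cite[Corollary 5.6]{MR3763287}, whose proof identifies Friedlander's prosimplicial set with $ \Gamma_{X_{\et}^{\hyp},\ast}\Gammaupperstar_{X_{\et}^{\hyp}} $ via the hypercover computation of the shape."
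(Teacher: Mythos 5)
The paper offers no proof of this statement at all: it is recorded purely as a citation to Hoyois, so your decision to treat the ``proof'' as an assembly of Hoyois's ingredients matches the paper's treatment, and your overall outline (shape via $\Gamma_{\ast}\Gamma^{\ast}$, hypercover computation, cofinality of Friedlander's rigid hypercovers) is faithful to how \cite[\S 5]{MR3763287} actually proceeds. However, two of the justifications you volunteer along the way are wrong, and you should not let them stand.

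First, you invoke coherence of $X_{\et}$ via \Cref{prop:coherentschemetopos} for a locally noetherian $X$. That proposition requires $X$ to be quasicompact and quasiseparated; a locally noetherian scheme need not be either, and coherence plays no role in Hoyois's argument. The property that local noetherianness actually buys is that the small étale site is \emph{locally connected} -- every étale $X$-scheme is the coproduct of its (finitely many, on each quasicompact piece) connected components -- which is what makes the connected-components functor on hypercovers well defined and is the hypothesis under which Hoyois's hypercover formula for the shape applies. Second, your step (5) asserts that $X_{\et}^{\hyp}$ is Postnikov complete and that this is what forces the hypercomplete shape to appear ``on the nose.'' Hypercompleteness does not imply Postnikov completeness, and for a general locally noetherian scheme (e.g.\ without finiteness of cohomological dimension) the claim is false; it is also unnecessary. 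The correct reason the hypercompletion enters is simply that the colimit over hypercovers of $\Map(|\mathcal{U}|,K)$ computes global sections of the constant \emph{hypersheaf} at $K$, i.e.\ $\Gamma_{\ast}\Gamma^{\ast}$ for $X_{\et}^{\hyp}$ rather than for $X_{\et}$; this is the content of hyperdescent, not of Postnikov convergence. With those two repairs, the remaining content -- limit-cofinality of rigid hypercovers among all hypercovers, and the identification of the two proöbjects in $\Pro(\Space)$ rather than merely in $\Pro(h\Space)$ -- is exactly what Hoyois proves, and citing it is appropriate.
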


\begin{nul}
	We refer to the shape $ \Shapeet(X) $ of the étale \topos as the \defn{étale shape}\index[terminology]{etale shape@étale shape}\index[terminology]{shape!etale@étale}, call the protruncated shape $ \Shapeettrun(X) $ the \defn{protruncated étale shape}\index[terminology]{etale shape@étale shape!protruncated}\index[terminology]{shape!protruncated étale}\index[terminology]{protruncated!etale shape@étale shape}, and call the profinite shape $ \Shapeetprofin(X) $ the \defn{profinite étale shape}\index[terminology]{etale shape@étale shape!profinite}\index[terminology]{shape!profinite étale}\index[terminology]{profinite!etale shape@étale shape}.
\end{nul}

In many examples, the protruncated étale shape is already profinite:

\begin{thm}[{\cites[Theorem 3.6.5]{DAGXIII}[Theorem 11.1]{MR0245577}[Theorem 7.3]{MR676809}}]\label{thm:profinitenessofetalehomotopy}
	Let $ X $ be a connected noetherian scheme that is geometrically unibranch.
	Then the protruncated étale shape of $ X $ is profinite; that is, the natural morphism
	\begin{equation*}
		\fromto{\Shapeettrun(X)}{\Shapeetprofin(X)} 
	\end{equation*}
	is an equivalence.
\end{thm}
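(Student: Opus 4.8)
The plan is to reduce the statement to the assertion that the protruncated étale shape of a connected geometrically unibranch noetherian scheme is already a profinite space. By \Cref{nul:equivofetalehomotopyonprotrunc}, profinite completion of the protruncated shape recovers the profinite étale shape, so the morphism in question is the unit $\fromto{\Pi_{<\infty}^{\et}(X)}{(\Pi_{<\infty}^{\et}(X))^{\wedge}_{\pi}}$; this is an equivalence precisely when $\Pi_{<\infty}^{\et}(X)$ lies in the essential image of $\incto{\Spaceprofin}{\Pro(\Space_{<\infty})}$. Checking this along the Postnikov tower, it suffices to show that $\pi_0$ is a pro-finite set (immediate, $X$ being connected), that the fundamental progroup of $X_{\et}$ is profinite, and that each higher homotopy pro-group $\pi_m$ ($m\geq 2$), viewed as a continuous module over the fundamental progroup, is pro-finite.

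First I would reduce to the case that $X$ is integral and normal. The inclusion $\incto{X_{\red}}{X}$ is a nilpotent immersion, hence induces an equivalence of étale \topoi, so we may take $X$ reduced; since geometrically unibranch schemes are locally irreducible, a connected one is irreducible, so $X$ is integral. The normalization $\nu\colon\fromto{X^{\nu}}{X}$ is then integral, surjective, and (as $X$ is geometrically unibranch) universally injective, so it is an integral universal homeomorphism and induces an equivalence of étale \topoi; replacing $X$ by the integral normal scheme $X^{\nu}$, we may assume $X$ is integral and normal.

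For such $X$, that the fundamental progroup of $X_{\et}$ is the profinite étale fundamental group $\pi_1^{\et}(X,\bar{x})$ is classical: any connected étale $X$-scheme underlying a locally constant sheaf of sets is finite étale over $X$ — it is finite over the generic point and, $X$ being normal, extends to a finite étale cover — so finite étale covers are cofinal among the covering spaces of $X_{\et}$ (cf. \cite[§10]{MR0245577}). It then remains to control the higher homotopy pro-groups, and here I would argue by Noetherian induction on $\dim X$: passing to the universal pro-(finite étale) cover $\fromto{\widetilde{X}}{X}$, a cofiltered system of connected normal schemes whose étale shape is the universal cover of $\Pi_{<\infty}^{\et}(X)$, reduces the claim to the profiniteness of the simply connected prospace $\Pi_{<\infty}^{\et}(\widetilde{X})$, which is in turn governed by the pro-finiteness, as a continuous module, of $H^{*}_{\et}(\widetilde{X};\mathbf{Z}/\ell)$ for all primes $\ell$. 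The dévissage chooses a dense open $\incto{U}{\widetilde{X}}$ whose closed complement $Z$ has strictly smaller dimension and combines the recollement structure of the étale \topos with the inductive hypothesis for $Z$ and for $U$ — the latter shrunk further to a smooth situation over a field, the base case $\dim X=0$ being $\Spec$ of a field, where the assertion is the profiniteness of the absolute Galois group. Reassembling the Postnikov tower then exhibits $\Pi_{<\infty}^{\et}(X)$ as a cofiltered limit of $\pi$-finite spaces; via \Cref{thm:Hoyoisetalehomotopycomparison} this is exactly the content of \cite[Theorem 11.1]{MR0245577} and \cite[Theorem 3.6.5]{DAGXIII}.

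The main obstacle is this last finiteness control, especially in positive characteristic. One cannot simply quote a finiteness theorem for étale cohomology: the $p$-torsion cohomology of affine varieties in characteristic $p$ is typically infinite-dimensional (Artin–Schreier). The point that makes the argument work is that, although these homotopy and cohomology groups may be infinite, they are always \emph{pro-finite} — cofiltered limits of finite groups — which is exactly what profiniteness of the pro-space demands and exactly what the dévissage is engineered to verify. By contrast, the reduction to the normal case, the treatment of $\pi_0$ and $\pi_1$, and the Postnikov bookkeeping are routine given the $\infty$-toposic machinery recalled earlier.
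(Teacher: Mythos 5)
The paper does not actually prove this theorem: it quotes it from Artin--Mazur, Friedlander, and Lurie, whose arguments all pass through the structure theory of covering spaces of geometrically unibranch schemes (every connected covering space, at every stage of the relevant hypercovering/Postnikov tower, is finite étale; equivalently, every truncated locally constant sheaf is a filtered colimit of lisse sheaves). Your preliminary reductions are fine: the identification of the map with the unit of profinite completion via \Cref{nul:equivofetalehomotopyonprotrunc}, the passage to an integral normal $X$ by topological invariance, and the profiniteness of the fundamental progroup. The trouble starts with the higher homotopy. The assertion that profiniteness of the simply connected prospace $\Pi_{<\infty}^{\et}(\widetilde{X})$ is ``governed by'' pro-finiteness of $H^{\ast}_{\et}(\widetilde{X};\ZZ/\ell)$ for all $\ell$ is false as a criterion: the constant protruncated prospace $K(\QQ,2)$ has vanishing reduced $\ZZ/\ell$-cohomology for every $\ell$, yet its profinite completion is a point while the space is not. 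Ruling out exactly such divisible contributions to the homotopy progroups is the content of the theorem, and mod-$\ell$ cohomology cannot see it; one needs control of the integral pro-homology or the homotopy progroups themselves. (Separately, the universal pro-(finite étale) cover $\widetilde{X}$ is not noetherian, so the announced ``noetherian induction on $\dim X$'' cannot literally be run on it.)

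More fatally, the open--closed dévissage is structurally unsound, because profiniteness of the protruncated shape is not stable under the gluing. The paper's own example of the nodal cubic $C$ over an algebraically closed field of characteristic $0$ is the counterexample: its closed stratum is a point, its open stratum is a smooth curve, and the link consists of two copies of a punctured henselian trait --- all with profinite protruncated shapes --- yet $\Pi_{<\infty}^{\et}(C)\simeq B\ZZ$, which is not profinite. So an induction hypothesis ``for $Z$ and for $U$'' cannot suffice; the unibranch hypothesis must enter globally, not stratum by stratum. Concretely there are two obstructions: (i) any van Kampen decomposition requires the link $\Pi_{\infty}(Z_{\et}\orientedtimes_{X_{\et}}U_{\et})$, which is an oriented fibre product and not the étale shape of a lower-dimensional scheme, so it is not covered by the induction; and (ii) even when all three pieces are profinite, the resulting pushout in $\Pro(\Space_{<\infty})$ need not be profinite, since the inclusion $\incto{\Spaceprofin}{\Pro(\Space_{<\infty})}$ is a right adjoint and does not preserve pushouts (already $\ast\cup^{S^{0}}\ast\simeq S^{1}$, which is exactly what happens at the node). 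To repair the argument you would need to replace the dévissage by the covering-space/lisse-sheaf analysis of the cited references.
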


\begin{qst} 
	Let $ X $ be a connected noetherian scheme that is geometrically unibranch. 
	Even in simple cases, we do not at this point have a very good understanding of the kind of information that is contained in the étale shape $\Shapeet(X)$ but not in the other invariants.
	In this paper, we are content to focus our attention on the profinite shapes types and their stratified variants.
\end{qst}

\begin{nul}\label{nul:etalehomotopygroups}
	Let $ X $ be a scheme and $ \fromto{x}{X} $ a geometric point of $ X $.
	Then $ x $ induces a point of the prospace $ \Shapeet(X) $.
	The \textit{$ n $-th extended étale homotopy progroup}\index[terminology]{extended étale homotopy progroup} of $ X $ is the progroup
	\begin{equation*}
		\pietext_n(X,x) \coloneq \uppi_n(\Shapeet(X), x) \index[notation]{pi@$ \pietext_n $} \period
	\end{equation*} 
	In particular, the progroup $ \pietext_1(X,x) $ is the \textit{groupe fondamentale élargi} of \cite[Exposé X, \S 6]{MR43:223b}; see \cite[Corollary 10.7]{MR0245577}.
	The usual étale fundamenal group of \cite[Exposé V, \S 7]{MR50:7129} is the profinite completion of $ \pietext_1(X,x) $.
	Moreover, the usual étale fundamental group of $ X $ is isomorphic to the profinite fundamental group \smash{$ \uppi_1(\Shapeetprofin(X),x) $} \cite[Corollary 3.9]{MR0245577}.
	We denote the usual (profinitely complete) étale fundamental group by $ \piet_1(X,x) $.
\end{nul}


\subsection{Examples}\label{subsec:etalehomotopyexamples}

In this section we provide some example computations of étale shapes.

\begin{exm}[fields] 
	Let $ k $ be a field, and $ \ksep \supset k $ a separable closure of $ k $.
	Recall that we write $ \Gk $ for the absolute Galois group of $ k $ (\Cref{ntn:absGalois}).
	The choice of separable closure of $ k $ provides an identification
	\begin{equation*}
		\Shapeetprofin(\Spec k) \equivalent \BGk \period
	\end{equation*}
	(See \Cref{subsec:examplesofcoherent}.)
\end{exm}

\begin{exm} 
	Since $ \Spec \ZZup $ has no unramified étale covers, the étale fundamental group of the $ \Spec \ZZup $ is trivial.
	Moreover, for all integers $ i \geq 1 $ and $ n \geq 2 $, the étale cohomology group $ \Hup_{\et}^i(\Spec \ZZup; \ZZup/n) $ is trivial (see \cites{MR2261462}{MO:3103}). 
	The Universal Coefficient Theorem and Hurewicz Theorem imply that the profinite étale shape \smash{$ \Shapeetprofin(\Spec \ZZup) $} of $ \Spec \ZZup $ is trivial (cf. \cite[\S4]{MR0245577}).
	Since $ \ZZup $ is a noetherian domain, \Cref{thm:profinitenessofetalehomotopy} applies, hence the protruncated étale shape $ \Shapeettrun(\Spec \ZZup) $ of $ \Spec \ZZup $ is trivial.
\end{exm}

\begin{exm}
	Let $ k $ be an algebraically closed field of characteristic $ 0 $ and
	\begin{equation*}
		C = \Spec(k[x,y]/(y^2 - x^3 - x^2))
	\end{equation*}
	the nodal cubic.
	Then there is a noncanoical identification $ \Shapeettrun(C) \equivalent \BZZ $.

	Since the group $ \ZZup $ is \textit{good} in the sense of Serre \cite[p. 16]{MR1867431}, the profinite étale shape is given by $ \Shapeetprofin(C) \equivalent \BZZhat $ \cites[\SAGthm{Warning}{E.4.3.4}]{SAG}[Theorem 3.14]{MR3051250}.
\end{exm}

\begin{exm}[curves]
	Let $ C $ be a smooth connected curve over a field.
	If $ C $ is affine or of positve genus, then the protruncated étale homotopy type $ \Shapeettrun(X) $ is $ 1 $-truncated \cites[Proposition 15]{MR1383466}[Lemma 2.7(a)]{MR3549624}.
	Thus we have a noncanonical identification
	\begin{equation*}
		\Shapeettrun(C) \equivalent \Bup\piet_1(C) \period
	\end{equation*}
\end{exm}

\begin{exm}[see \Cref{thm:Riemannexistence}]
	Let $ \Sup^2 \in \Space $ denote the $ 2 $-sphere.
	There an equivalence $ \Shapeetprofin(\PPup_{\CCup}^1) \equivalent (\Sup^2)\profincomp $. 
\end{exm}

\begin{exm}[{\cite[Theorem 1]{MR3248993}}]
	Let $ k $ be an algebraically closed field of positive characteristic and let $ X $ be a smooth $ k $-variety.
	Then $ \Shapeettrun(X) \equivalent \ast $ if and only if $ X $ is \textit{isomorphic} to $ \Spec k $.
\end{exm}

\begin{exm}[\Cref{exm:properbasechangeforschemes}]
	Let $ k $ be a separably closed field, and let $ X $ and $ Y $ be coherent $ k $-schemes.
	If $ Y $ is proper, then the natural morphism of profinite spaces
	\begin{equation*}
		\fromto{\Shapeetprofin(X \cross_{\Spec k} Y)}{\Shapeetprofin(X) \cross \Shapeetprofin(Y)}
	\end{equation*}
	is an equivalence.
\end{exm}

The following two examples are from Achinger's remarkable work on $ \Kup(\pi,1) $-schemes in positive characteristic \cite{MR3714509}.

\begin{exm}[{affine \smash{$ \FFp $}-schemes}]\label{exm:AchingerKpi1}
	Let $ p $ be a prime number.
	Achinger showed that if $ X $ is a connected affine \smash{$ \FFp $}-scheme, then the profinite étale homotopy type \smash{$ \Shapeetprofin(X) $} is $ 1 $-truncated \cite[Theorem 1.1]{MR3714509}.
	This is in stark contrast with the case of schemes in characteristic zero. 
\end{exm}

\begin{exm}\label{exm:AchingerAAn}
	Let $ k $ be an algebraically closed field of characteristic $ p > 0 $.
	By Raynaud's proof of Abhyankar's Conjecture \cite{MR1253200}, a finite group $ G $ arises as a quotient of the profinite group $ \pietext_1(\AAup_k^1) $ if and only if $ G $ is a \textit{quasi-$ p $-group} (i.e., $ G $ has no nontrivial quotient of order prime to $ p $). 
	More generally, it follows from Raynaud's work that for $ n \geq 1 $, a finite group $ G $ arises as a quotient of $ \pietext_1(\AAup_k^n) $ if and only if $ G $ is a quasi-$ p $-group.
	Even though the étale fundamental groups of $ \AAup_k^1 $ and $ \AAup_k^n $ abstractly have the same finite quotients, Achinger showed that for positive integers $ m \neq n $, the étale fundamental groups $ \pietext_1(\AAup_k^m) $ and $ \pietext_1(\AAup_k^n) $ are \textit{not} isomorphic as profinite groups \cite[Proposition 7.6]{MR3714509}.
\end{exm}

\Cref{exm:AchingerAAn} demonstrates how `large' étale fundamental groups of $ \FFp $-schemes tend to be.
One might interpret \Cref{exm:AchingerKpi1,exm:AchingerAAn} by saying that the étale fundamental group of a connected affine $ \FFp $-scheme is so large that it contains all of the homotopical information of the scheme.


\subsection{Monodromy}\label{subsec:etalemonodromy}

Specalising \Cref{prop:Stonemonodromy} to the case of the étale \topos of a scheme shows that lisse étale sheaves are the same as representations of the profinite étale shape:

\begin{prp}
	Let $ X $ be a scheme.
	The unit $ \fromto{X_{\et}}{X_{\et}^{\Stone}} $ restricts to an equivalence
	\begin{equation*}
		\Fun(\Shapeetprofin(X),\Spacefin) \equivalent X_{\et}^{\lisse} \period
	\end{equation*}
\end{prp}

\noindent This generalizes the classical fact that the profinite étale fundamental groupoid
\begin{equation*}
	\Shapeetprofinone(X) \equivalent \trun_{\leq 1} \Shapeetprofin(X)
\end{equation*}
classifies lisse étale sheaves of \textit{sets} (see \Cref{exm:finiteetalesite}).


\subsection{Friedlander's étale homotopy of simplicial schemes}\label{subsec:etalehomotopyofsschemes}

Friedlander \cite[\S 4]{MR676809} extended étale homotopy theory from schemes to simplicial schemes; he called the invariant he constructed the \textit{étale topological type}.
Friedlander's work was later refined by Cox \cite{MR550644}, Isaksen \cite{MR2047848}, Barnea--Schlank \cite{MR3459031}, Carchedi \cite{Carchedi:higheretale}, and Chough \cites{MR3553672}{10.1093/imrn/rnz065}.
Thanks to work of Cox \cite[Theorem III.8]{MR550644}, Isaksen \cite[\S3, Theorem 11]{MR2047848}, and Chough \cite[Proposition 3.2.13]{10.1093/imrn/rnz065}, the étale topolocial type has the following simple description: if $ X_{\ast} $ is a simplicial scheme, then the étale topological type of $ X_{\ast} $ is the colimit of the simplicial prospace
\begin{equation*}
	\goesto{[m]}{\Shapeethyp(X_{m})} \period
\end{equation*}
Again, from \acategorical perspective, there are variations on this notion:

\begin{dfn}\label{def:etalehomotopytypesscheme}
	Let $ X_{\ast} $ be a simplicial scheme.
	We define:
	\begin{itemize}
		\item The \defn{étale shape}\index[terminology]{shape!etale@étale}\index[terminology]{etale shape@étale shape} of $ X_{\ast} $ is the geometric realization
		\begin{equation*}
			\Shapeet(X_{\ast}) \coloneq \colim_{[m] \in \Deltaop} \Shapeet(X_{m}) \index[notation]{Piet@$ \Shapeet, \Shapeethyp $}
		\end{equation*}
		of the simplicial prospace $ \goesto{[m]}{\Shapeet(X_{m})} $.

		\item \defn{Friedlander's étale topological type}\index[terminology]{etale topological type@étale topological type} of $ X_{\ast} $ is the geometric realization
		\begin{equation*}
			\Shapeethyp(X_{\ast}) \coloneq \colim_{[m] \in \Deltaop} \Shapeethyp(X_{m}) 
		\end{equation*}
		of the simplicial prospace $ \goesto{[m]}{\Shapeethyp(X_{m})} $.
	\end{itemize}
\end{dfn}

\begin{nul}\label{nul:equivofetaletopologicalonprotrunc}
	Since protuncation is a left adjoint, from \Cref{nul:equivofetalehomotopyonprotrunc} we deduce that the natural morphism of prospaces
	\begin{equation*}
		\fromto{\Shapeethyp(X_{\ast})}{\Shapeet(X_{\ast})}
	\end{equation*}
	becomes an equivalence after protruncation.
	Hence after profinite completion as well.
\end{nul}	

\begin{nul}[the étale \topos of a simplicial scheme]
	We can extend the functor that assigns a scheme its étale \topos to simplicial schemes by left Kan extension; then the étale \topos of a simplicial scheme $ X_{\ast} $ is given by the geometric realization
	\begin{equation*}
		X_{\ast,\et} \coloneq \colim_{[m] \in \Deltaop} X_{m,\et}
	\end{equation*}
	in $ \Top_{\infty} $.
	Since the shape is a left adjoint, we see that the shape of the \topos $ X_{\ast,\et} $ coincides with the étale shape $ \Shapeet(X_{\ast}) $.
	Since hypercomplete \topoi are closed under colimits in $ \Top_{\infty} $, Friedlander's étale topological type coincides with the shape of the hypercomplete \topos given by the geometric realization of the simplicial hypercomplete \topos \smash{$ \goesto{[m]}{X_{m,\et}^{\hyp}} $}.
\end{nul}


\subsection{Riemann Existence Theorem}\label{subsec:ordinaryRiemann}

In this section we recall the Artin--Mazur--Friedlander Riemann Existence Theorem (\Cref{thm:Riemannexistence}); this states that the profinite étale shape of a scheme of finite type over the complex numbers agrees with the profinite completion of the homotopy type of its underlying analytic space.

\begin{ntn}[analytification]
	Write $ \CCup $ for the field of complex numbers and $ \SchftCC $\index[notation]{SchC@$ \SchftCC $} for category of schemes of finite type over $ \CCup $.
	We write
	\begin{equation*}
		(-)^{\an} \colon \fromto{\SchftCC}{\TSpc}\index[notation]{analytification@$ X^{\an} $}
	\end{equation*}
	for the \defn{analytification}\index[terminology]{analytification} functor: this carries a scheme $ X $ of finite type over $ \CCup $ to $ X(\CCup) $ equipped with the complex analytic topology.

	We simply write $ X_{\an} \colonequals \widetilde{X^{\an}} $ for the \topos of sheaves of spaces on the topological space $ X^{\an} $.
\end{ntn}

\begin{rec}\label{rec:morphismfromanalytictoetale}
	Let $ X $ be a scheme finite type over $ \CCup $.
	In \cite[Exposé XI, 4.0]{MR50:7132}, Artin defines a natural geometric morphism of $ 1 $-topoi
	\begin{equation*}
		\varepsilon_{X,\ast} \colon \fromto{X_{\an,\leq 0}}{X_{\et,\leq 0}}
	\end{equation*}
	from the $ 1 $-topos of sheaves of sets on $ X^{\an} $ to the $ 1 $-topos of sheaves of sets on the étale site of $ X $. The geometric morphism $ \varepsilon_{X,\ast} $ extends to a natural geometric morphism of $ 1 $-localic \topoi 
	\begin{equation*}
		\varepsilon_{X,\ast} \colon \fromto{X_{\an}}{X_{\et}} \period
	\end{equation*}
	The naturality can be encoded as a functor \smash{$\fromto{\SchftCC}{\Fun([1],\Top_{\infty})}$}: if $f \colon\fromto{X}{Y}$ is a morphism of finite type $\CCup$-schemes, then there is a natural equivalence
	\begin{equation*}	
		\flowerstar^{\et} \varepsilon_{X,\ast} \simeq \varepsilon_{Y,\ast} \flowerstar^{\an} \period
	\end{equation*}
\end{rec}

In light of \Cref{thm:Hoyoisetalehomotopycomparison}, the Riemann Existence Theorem proved by Artin--Mazur \cite[Theorem 12.9]{MR0245577} and later Friedlander \cite[Theorem 8.6]{MR676809} asserts that $X_{\an} $ and $ X_{\et} $ have the same profinite shape, when regarded as \proobjects of the homotopy category of spaces.
The Artin--Mazur--Friedlander equivalence refines to an equivalence in the \category of profinite spaces (cf. \cites[Proposition 4.12]{Carchedi:higheretale}[\S4]{10.1093/imrn/rnz065}). 
Indeed, the Théorème de Comparaison \cite[Exposé XI, Théorèmes 4.3 \& 4.4]{MR50:7132} can be employed to provide an equivalence between the \category of lisse étale sheaves of spaces on $ X $ and that of lisse sheaves of spaces on $ X^{\an} $, whence we obtain the following.

\begin{thm}[{Riemann Existence\index[terminology]{Riemann Existence Theorem}}]\label{thm:Riemannexistence}
	Let $ X $ be a scheme finite type over $ \CCup $.
	Then $ \varepsilon_{X}^{\ast} $ restricts to an equivalence \smash{$ \equivto{X_{\et}^{\lisse}}{X_{\an}^{\lisse}} $} of \categories of lisse sheaves. 
	Equivalently, $ \varepsilon_{X,\ast} $ induces an equivalence of profinite spaces
	\begin{equation*}
		\equivto{(X^{\an})\profincomp \simeq \Shapeprofin(X_{\an})}{\Shapeprofin(X_{\et})} \period 
	\end{equation*}
\end{thm}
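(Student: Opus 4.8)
The plan is to deduce the displayed equivalence of profinite spaces formally from the first assertion, and to prove the first assertion — that $\varepsilon^{\ast}$ restricts to an equivalence of lisse \categories — from the classical Théorème de Comparaison by a bootstrap up the Postnikov tower. Since $\varepsilon^{\ast}$ always carries lisse objects to lisse objects (\Cref{rec:localsyslisse}), it does induce a functor $\fromto{X_{\et}^{\lisse}}{(\widetilde{X^{\an}})^{\lisse}}$, and the content of the first assertion is that this functor is an equivalence.

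Granting the first assertion, the displayed equivalence follows from \Cref{cor:SAG.E.2.3.3}=\allowbreak\SAG{Corollary}{E.2.3.3}: condition (4) there — that the pullback restricts to an equivalence on lisse sheaves — is exactly the first assertion, while condition (2) is that $\varepsilon_{\ast}$ is a profinite shape equivalence, i.e.\ that $\fromto{\Pi_{\infty}^{\wedge}(\widetilde{X^{\an}})}{\Pi_{\infty}^{\wedge}(X_{\et})}$ is an equivalence. It then only remains to identify $\Pi_{\infty}^{\wedge}(\widetilde{X^{\an}})$ with $(X^{\an})_{\pi}^{\wedge}$, which holds because the space $X^{\an}$ is triangulable, hence locally contractible and (hereditarily) paracompact, so that $\Pi_{\infty}(\widetilde{X^{\an}})$ is the constant prospace at $\Sing(X^{\an})$ \cite[\HTTsec{7.1.6}]{HTT}, whose profinite completion is by definition $(X^{\an})_{\pi}^{\wedge}$. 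Naturality in $X$ is inherited from \Cref{rec:morphismfromanalytictoetale} together with the naturality of the comparison isomorphisms and of \Cref{cor:SAG.E.2.3.3}.

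To prove the first assertion I would proceed in three steps. (a) In the $0$-truncated case, $\varepsilon^{\ast}$ restricts to an equivalence between the categories of finite locally constant sheaves of sets on $X_{\et}$ and on $X^{\an}$ — equivalently, finite étale $X$-schemes correspond to finite covering spaces of $X^{\an}$ — which is classical Riemann Existence \cite[Exposé XI]{MR50:7132}. (b) For every finite locally constant sheaf of abelian groups $A$ on $X_{\et}$, the comparison map $\fromto{\mathup{H}^{j}_{\et}(X;A)}{\mathup{H}^{j}(X^{\an};\varepsilon^{\ast}A)}$ is an isomorphism for all $j$, and likewise in the nonabelian degrees $j\leq 1$; this is the Théorème de Comparaison \cite[Exposé XI, Théorèmes 4.3 \& 4.4]{MR50:7132}. (c) A lisse sheaf $F$ of $\pi$-finite spaces on \atopos $\XX$ is $n$-truncated for some $n$, and its Postnikov tower $F = \tau_{\leq n}F \to \cdots \to \tau_{\leq 0}F \to 1$ exhibits $F$ as a finite iterated twisted principal fibration: each $\tau_{\leq k}F$ is again lisse, $\tau_{\leq 0}F$ is a finite locally constant sheaf of sets, and for $k\geq 1$ the morphism $\fromto{\tau_{\leq k}F}{\tau_{\leq k-1}F}$ is classified by a $k$-invariant valued in an Eilenberg--MacLane object attached to a finite locally constant sheaf of abelian groups $\pi_k F$. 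Because $\varepsilon^{\ast}$ preserves finite limits, commutes with truncation, and commutes with the formation of internal homs out of lisse objects and of Eilenberg--MacLane objects, one computes mapping spaces between lisse sheaves as global sections of their (again lisse) internal homs and then applies (a) and (b); an induction on $n$ shows that $\varepsilon^{\ast}$ is fully faithful and essentially surjective on $n$-truncated lisse sheaves, and passing to the filtered union over $n$ yields the first assertion.

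The hard part will be step (c): one must verify that the comparison data of (a) and (b) are compatible along the \emph{entire} (bounded) Postnikov tower — that the twisted finite coefficient systems $\pi_k F$ and the $k$-invariants arising on the étale and on the analytic side are matched by $\varepsilon^{\ast}$ — so that the stagewise isomorphisms assemble into an honest equivalence of \categories. An alternative would be to invoke the Artin--Mazur/Friedlander comparison \cite[Theorem 12.9]{MR0245577}, \cite[Theorem 8.6]{MR676809} directly; but that statement lives in $\Pro(h\Space)$, and promoting it to an equivalence in the \category $\Space_{\pi}^{\wedge}$ of profinite spaces is precisely the obstruction that the lisse-sheaf reformulation avoids, since — via the monodromy equivalence (\Cref{prop:Stonemonodromy}) — an equivalence of \categories of lisse sheaves is automatically an equivalence of profinite shapes.
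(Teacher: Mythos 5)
Your proposal follows essentially the same route as the paper: the paper's own justification (given in the paragraph preceding the theorem) is precisely to invoke the Théorème de Comparaison to obtain the equivalence of lisse-sheaf \categories and then deduce the profinite shape equivalence via \Cref{cor:SAG.E.2.3.3}, deferring the details of the refinement to the cited work of Carchedi and Chough. Your Postnikov-tower bootstrap in step (c) is a reasonable way to fill in those details, and you correctly identify the matching of $k$-invariants as the delicate point that the paper leaves to the references.
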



\subsection{A van Kampen Theorem for étale shapes}\label{subsec:etalevanKampen}

In this section we prove a van Kampen Theorem from étale shapes (\Cref{cor:vanKampenetalehomotopy}).
We deduce this from the fact that the functor that sends a scheme to its étale \topos satisfies \textit{Nisnevich excision}\index[terminology]{Nisnevich excision} (\Cref{prop:Nisnevichexcision}). 

\begin{dfn}
	We call a pullback square of schemes
	\begin{equation}\label{sq:elementaryNis}
      \begin{tikzcd}[sep=2.25em]
	       U'  \arrow[dr, phantom, very near start, "\lrcorner", xshift=-0.25em, yshift=0.25em] \arrow[d] \arrow[r, hooked] & X' \arrow[d, "p"] \\ 
	       U \arrow[r, hooked, "j"'] & X 
      \end{tikzcd}
    \end{equation}
    an \defn{elementary Nisnevich square}\index[terminology]{elementary Nisnevich square} if $ j $ is an open immersion, $ p $ is étale, and $ p $ induces an isomorphism $ \isomto{p^{-1}(X \smallsetminus U)}{X \smallsetminus U} $. 
    Here the closed complement $ X \smallsetminus U $ of $ U $ is given the reduced structure.
\end{dfn}

Rydh's general descent theorem \cite[Theorem A]{MR2774654} implies that the formation of the étale $ 1 $-topos sends elementary Nisnevich squares to pushout squares of $ 1 $-topoi. 
The same is true for étale \topoi, though this is not implied by Rydh's result because $ 1 $-localic \topoi are not closed under colimits in $ \Top_{\infty} $.
As in Rydh's theorem, this can be deduced from étale descent (combined with Morel and Voevodsky's theorem characterizing Nisnevich sheaves as presheaves satisfying Nisnevich excision \cites[\SAGthm{Theorem}{3.7.5.1}]{SAG}[\S3, Proposition 1.16]{MR1813224}), but the following proposition provides an elementary proof.

\begin{prp}\label{prop:Nisnevichexcision}
	Given an elementary Nisnevich square of schemes \eqref{sq:elementaryNis}, the induced square of étale \topoi 
	\begin{equation}\label{sq:elementaryNisontopoi}
      \begin{tikzcd}
	       U'_{\et} \arrow[d] \arrow[r, hooked] & X'_{\et} \arrow[d, "p_{\ast}"] \\ 
	       U_{\et} \arrow[r, hooked, "j_{\ast}"'] & X_{\et} 
      \end{tikzcd}
    \end{equation}
    is a pushout square and pullback square in $ \Top_{\infty} $.
	The same is true after passing to hypercomplete étale \topoi.
\end{prp}

\begin{proof}
	The fact that the \eqref{sq:elementaryNisontopoi} is a pullback is immediate from the fact that $ j $ is an open immersion; the same is true for hypercomplete étale \topoi since hypercompletion is a right adjoint.

	Let $ \yo \colon \incto{X^{\et}}{X_{\et}} $ denote the Yoneda embedding of étale site of $ X $ to the étale \topos. 
	Note that if $ Y \in X^{\et} $ is a scheme étale over $ X $, then the natural geometric morphism $ \fromto{Y_{\et}}{(X_{\et})_{/\yo(Y)}} $ is equivalence.
	Since colimits in \atopos are \textit{van Kampen}\footnote{A colimit in \acategory $ C $ with pullbacks is \defn{van Kampen} if the functor $ \fromto{C^{\op}}{\Cat_{\infty}} $ given by $ \goesto{c}{C_{/c}} $ transforms it into a limit in $ \Cat_{\infty} $.
	A presentable \category $ C $ is \atopos if and only if colimits in $ C $ are van Kampen; see \cites[\HTTthm{Proposition}{5.5.3.13}, \href{http://www.math.ias.edu/~lurie/papers/HTT.pdf\#theorem.6.1.3.9}{Theorem 6.1.3.9(3)}, \& \HTTthm{Proposition}{6.3.2.3}]{HTT}{MR3935451}.} and $ \yo(X) $ is the terminal object of $ X_{\et}$, it thus suffices to prove that the pullback square 
	\begin{equation}\label{sq:elementaryNispullbackintopos}
      \begin{tikzcd}[sep=2.25em]
	      \yo(U')  \arrow[dr, phantom, very near start, "\lrcorner", xshift=-0.25em, yshift=0.25em] \arrow[d] \arrow[r] & \yo(X') \arrow[d] \\ 
	       \yo(U) \arrow[r] & \yo(X) 
      \end{tikzcd}
    \end{equation}
    in $ X_{\et} $ is also a pushout.
    In this case, the fact that truncated objects are hypercomplete implies that the same is true in \smash{$ X_{\et}^{\hyp} $}.
	The fact that \eqref{sq:elementaryNispullbackintopos} is a pullback square is immediate from \cite[\href{http://www.math.ias.edu/~lurie/papers/SAG-rootfile.pdf\#theorem.2.5.2.1}{Proposition 2.5.2.1(3)}]{SAG}, the hypotheses of which are valid because \eqref{sq:elementaryNis} is an elementary Nisnevich square.
\end{proof}

\Cref{prop:Nisnevichexcision} immediately implies the classicial `excision' theorem in étale cohomology \cite[Chapter III, Proposition 1.27]{MR559531}.
Since the shape is a left adjoint, the following van Kampen Theorem for the étale shape is immediate.
This generalizes a theorem of Isaksen \cite[\S2, Theorem 8]{MR2047848}.

\begin{cor}[{étale van Kampen Theorem\index[terminology]{etale van Kampen Theorem@étale van Kampen Theorem}\index[terminology]{van Kampen Theorem!etale@étale}}]\label{cor:vanKampenetalehomotopy}
	Given an elementary Nisnevich square of schemes \eqref{sq:elementaryNis}, the induced squares
	\begin{equation*}
      \begin{tikzcd}
	       \Shapeet(U') \arrow[d] \arrow[r] & \Shapeet(X') \arrow[d] \\ 
	       \Shapeet(U) \arrow[r] & \Shapeet(X) 
      \end{tikzcd}
      \andeq
      \begin{tikzcd}
	       \Shapeethyp(U') \arrow[d] \arrow[r] & \Shapeethyp(X') \arrow[d] \\ 
	       \Shapeethyp(U) \arrow[r] & \Shapeethyp(X) 
      \end{tikzcd}
    \end{equation*}
	are pushout squares in $ \Pro(\Space) $. 
\end{cor}

\begin{nul}
	Since protruncation and profinite completion are left adjoints, \Cref{cor:vanKampenetalehomotopy} show that the protruncated and profinite étale shapes send elementary Nisnevich squares to pushout squares in $ \Pro(\Space_{<\infty}) $ and $ \Spaceprofin $, respectively. 
	In particular, \Cref{prop:Nisnevichexcision} (and \SAG{Proposition}{2.5.2.1}) immediately imply Misamore's `étale van Kampen Theorem' \cite[Corollaries 6.5 \& 6.6]{MR2894444} in the case of schemes.
	See also \cites{MR2010804}[\S5]{MR2266996}{MR1914264}.
\end{nul}

\newpage

\section{Galois categories}\label{sect:ramification}

This chapter is dedicated to studying the profinite stratified shape of étale \topoi of coherent schemes; we call this the \textit{stratified étale homotopy type}.
We also show that the stratified étale homotopy type of a coherent scheme $ X $ has a very explicit description as a profinite $ 1 $-category whose topology globalizes the topologies of the absolute Galois groups of the residue fields of the points of $ X $.
The stratified étale homotopy type is the profinite category $ \Gal(X) $ from the Introduction.
For this reason, we also call the profinite $ 1 $-category $ \Gal(X) $ the \textit{Galois category} of $ X $.%
\footnote{The term `Galois category' already has a well-established meaning in Grothendieck's Galois theory \cites[\stackstag{0BMQ}]{stacksproject}[Exposé V]{MR50:7129}{MR2548205}. 
We have chosen to use the term `Galois category' for this distinct notion because $ \Gal(X) $ really is a globalization of the absolute Galois group.}

\Cref{subsec:Galcat} defines the stratified étale homotopy type and shows that it coincides with the `Galois category' from the Introduction.
\Cref{subsec:Galexamples} gives some sample computations of the stratified étale homotopy type.
\Cref{subsec:Galsieves} demonstrates how some properties of schemes can be detected on the level of their Galois categories.
\Cref{subsec:Galundercats} shows that the Galois category of the strict localization of a scheme at a point is an undercategory, and, dually, the Galois category of the strict \textit{normalization} of a scheme at a point is an overcategory.
\Cref{subsec:Galprotruncatedshape} uses the material from \cref{subsec:shapefromprofinstratshape} to show that (up to protruncation), the étale homotopy type of a coherent scheme $ X $ can be recovered by $ \Gal(X) $ by inverting all morphisms.
In the setting of finite type schemes over the complex numbers, \cref{subsec:stratRiemann} provides a stratified refinement of the Riemann Existence Theorem (\Cref{thm:Riemannexistence}).
In \cref{subsec:GalvanKampen} we finish the chapter with a van Kampen Theorem for Galois categories.


\subsection{Galois categories of schemes}\label{subsec:Galcat}

\begin{ntn} 
	Let $ X $ be a coherent scheme.
	We write $\FC(X)$ for the $1$-category of nondegenerate, finite, constructible stratifications of the spectral topological space $ X^{\zar} $ (\Cref{ntn:FC}). 
	Recall that the spectral topological space $ X^{\zar} $ corresponds under Hochster Duality to the profinite poset $ \{P\}_{P\in \FC(X)} $ (\cref{subsec:ordinaryHochsterdual}).
\end{ntn}

\begin{ntn}
	We write $ \Sch $\index[notation]{Sch@$ \Sch $} for the $ 1 $-category of \textit{coherent} schemes \cref{nul:coherentqcqs}.
\end{ntn}

\begin{dfn}
	Let $ X $ be a coherent scheme.
	Then we write
	\begin{equation*}
		\Gal(X)\coloneq \StrShape^{X^{\zar}}(X_{\et}) \period \index[notation]{Gal@$ \Gal(X) $}
	\end{equation*}
	We call $ \Gal(X) $ the \defn{Galois category}\index[terminology]{Galois category} of $ X $.
	
	Since the \topos $ X_{\et} $ is $ 1 $-localic, the profinite stratifed space $ \Gal(X) $ is $ 1 $-truncated (\Cref{prop:nlocalicntruncated}).
	Hence the formation of Galois categories defines a functor
	\begin{equation*}
		\Gal \colon \fromto{\Sch}{(\Strprofin)_{\leq 1}}
	\end{equation*}

	More generally, if $ \fromto{X^{\zar}}{P} $ is a nondegenerate, finite, constructible stratification of $ X $, we define
	\begin{equation*}
		\Gal(X/P) \coloneq \StrShape^{P}(X_{\et}) \period
	\end{equation*} 
\end{dfn}

\begin{nul}
	We obtain a diagram
	\begin{equation*}
		\Gal(X/-)\colon\fromto{\FC(X)}{\Strprofin}
	\end{equation*} 
	of localizations.
\end{nul}

\begin{cnstr}[{explicit description of $ \Gal(X) $}]\label{cnstr:idPiinfty1withGal}
	Let $ X $ be a coherent scheme.
	The $ X^{\zar} $-stratified \topos $X_{\et}$ is spectral.
	By \Cref{lem:matofPiinfisPt} we have a natural equivalence of categories
	\begin{equation*}
		\mat(\Gal(X)) \simeq \Pt(X_{\et}) \period
	\end{equation*}
	The Grothendieck School \cite[Exposé VIII, Théorème 7.9]{MR50:7131} provides the following description of the category $ \Pt(X_{\et}) $ of points of the étale \topos of $ X $: an object is a geometric point $x\to X$, and given geometric points $x\to X$ and $y\to X$, the set $\Map_{\Pt(X_{\et})}(x,y)$ is identified with the set of lifts 
	\begin{equation*}
		\begin{tikzcd}
			X_{(x)} \arrow[d] & \\ 
			X & y \arrow[l] \arrow[ul, dotted] 
		\end{tikzcd}
	\end{equation*}
	of the geometric point $ y $ to the strict localization $ X_{(x)} $ of $ X $ at $ x $.
	In other words, the $ 1 $-category $\mat(\Gal(X))$ agrees with the underlying $1$-category denoted $\Gal(X)$ in \Cref{cnstr:GalXcat}.

	Appealing to \Cref{nul:1truncatedprofinstrat}, we regard $ \Gal(X) $ as a category object in the category of Stone topological spaces.
	Unwinding the definitions, we see that topology on $\Gal(X)$ is precisely the one described in \Cref{cnstr:GalXtop}
\end{cnstr}

Our \Categorical Hochster Duality Theorem (\Cref{thm:inftyHochster}) implies the following \defn{Exodromy Equivalence} for schemes:

\begin{thm}
	Let $ X $ be a coherent scheme.
	Then there is a natural equivalence
	\begin{equation*}
		X_{\et} \simeq \widetilde{\Gal(X)} \period
	\end{equation*}
	Equivalently, there is a natural equivalence
	\begin{equation*}
		X_{\et}^{\cons} \simeq \Fun(\Gal(X),\Spacefin) \period
	\end{equation*}
\end{thm}

\begin{nul}
	If $ X $ and $Y$ are coherent schemes, then the natural map
	\begin{equation*}
		\fromto{\Map_{\Top_{\infty}}(X_{\et},Y_{\et})}{\Map_{\Strprofin}(\Gal(X),\Gal(Y))}
	\end{equation*}
	is an equivalence.
\end{nul}

\begin{nul}
	In \Cref{sect:extending} we prove a variant of the Exodromy Equivalence
	\begin{equation*}
		X_{\et}^{\cons} \simeq \Fun(\Gal(X),\Spacefin) 
	\end{equation*}
	for constructible sheaves with coefficients in a finite ring as well as $ \el $-adic sheaves.
	We also extend the Exodromy Equivalence from coherent schemes to a large class of stacks.
\end{nul}


\subsection{Examples}\label{subsec:Galexamples}

We now provide some example computations of profinite Galois categories.

\begin{exm}
	Let $ X $ be a coherent scheme, and  consider $ X $ with its trivial $\{0\}$-stratifi\-cation. 
	As a special case of \Cref{nul:compatibilitywithinverteverything}, $ \Gal(X/\{0\}) $ recovers the usual profinite étale shape of $ X $: there is a canonical equivalence
	\begin{equation*}
		\equivto{\Gal(X/\{0\})}{\Shapeetprofin(X)} \period
	\end{equation*}
\end{exm}

\begin{exm}[DVRs] 
	Let $ A $ be a discrete valuation ring, write $ K $ for the fraction field of $ A $, and $ k $ for  the residue field of $ A $.
	Write $S=\Spec A$, $ s = \Spec k $ for the closed point of $ S $, and $ \eta = \Spec K $ for the generic point of $ S $.
	Then $S^{\zar}\cong[1]$, so $S_{\et}$ is a spectral \topos that is naturally $[1]$-stratified, with closed stratum $ s_{\et} $ and open stratum $ \eta_{\et} $.

	Fix a separable closure $ k^{\sep} \supset k $.
	Write $S^{\hens} = \Spec A^{\hens} $ and write $S^{\sh}$ for the spectrum of the strict henselization $A^{\sh} $ of $A$ with respect to $ k^{\sep} $.
	Write
	\begin{equation*}
		\eta^{\hens} \colonequals \Spec(\Frac A^{\hens}) \andeq \eta^{\sh} \colonequals \Spec(\Frac A^{\sh}) \period
	\end{equation*}
	In this case, please observe that the evanescent \topos \smash{$ s_{\et}\orientedtimes_{S_{\et}}S_{\et} $} can be identified with the étale \topos \smash{$S^{\hens}_{\et}$} (\Cref{exm:localizationishenselization}), and the oriented fiber product \smash{$s_{\et}\orientedtimes_{S_{\et}}\eta_{\et}$} can be identified with the étale \topos \smash{$\eta^{\hens}_{\et}$}.

	Fix a separable closure $ K^{\sep} \supset K $.
	Write $\Gk$ and $ \GK $ for the absolute Galois groups of $ k $ and $ K $, respectively.
	Write $\Dup_A\subseteq \GK$ for the decomposition group\index[terminology]{decomposition group}\index[notation]{DA@$\Dup_A$} of $ A $, and $\Iup_A\subseteq \Dup_A$ for the inertia group\index[terminology]{inertia group}\index[notation]{IA@$\Iup_A$} of $ A $ \stacks{0BSD}.
	Recall that there is a canonical isomorphism $ \isomto{\Dup_A/\Iup_A}{\Gk} $ \cites[\stackstag{0BSW}]{stacksproject}[\S2.3, Proposition 11]{MR1045822}.
	The choices of separable closures $ k^{\sep} \supset k $ and $ K^{\sep} \supset K $ provide the following identifications of profinite spaces:
	\begin{equation*}
		\Shapeetprofin(\eta)\simeq \BGK \comma \quad \Shapeetprofin(\eta^{\hens})\simeq \BD_A \comma \quad \Shapeetprofin(\eta^{\sh})\simeq \BI_A \comma \quad\text{and} \quad \Shapeetprofin(S^{\hens})\simeq \BGk \period
	\end{equation*}

	The choices of separable closures of $ k $ and $ K $ along with the quotient map $ \fromto{\Dup_A}{\Gk} $ allow us to noncanonically identify the profinite décollage $\Nerve_{[1]}(\Gal(S))$ as the functor \smash{$\fromto{\sdop([1])}{\Spaceprofin}$} given by the diagram
	\begin{equation*}
		\BGk\ot \BD_A\to \BGK \period
	\end{equation*}
\end{exm}

\begin{exm}[{knots and primes\index[terminology]{knots and primes}}]
	Let $ K $ be a number field, and write $ \Oup_K \subset K $ for the ring of integers of $ K $.
	The profinite Galois category $\Gal(\Spec \Oup_K)$ has (isomorphism classes of) objects the prime ideals of $\Oup_K$.
	For each nonzero prime ideal $\pfrak\in\Spec \Oup_K$, the automorphisms of $\pfrak$ can be identified with the absolute Galois group $\Gup_{\upkappa(\pfrak)}$ of the finite field $\upkappa(\pfrak)$.
	Thus the profinite étale shape of $\Spec \Oup_K$ is stratified by the various closed strata, each of which is an embedded profinite circle -- i.e., a knot \smash{$\BG_{\upkappa(\pfrak)} \equivalent \BZZhat $}.
	The open complement of each $\BG_{\upkappa(\pfrak)}$ is the profinite classifying space of the profinite group
	\begin{equation*}
		\Gup_{\pfrak} \coloneq \piet_1(\Spec(\Oup_K) \smallsetminus  \pfrak) \period
	\end{equation*}
	Note that $ \Gup_{\pfrak}  $ is the automorphism group of the maximal Galois extension of $K$ that is ramified at most only at $\pfrak$ and the infinite primes.
	Enveloping each knot is a tubular neighborhood, given by $\Gal(\Spec A_{\pfrak}^{\sh})$, so that the deleted tubular neighborhood of $\BG_{\upkappa(\pfrak)}$ is a $\BG_{K_{\pfrak}}$.
\end{exm}


\subsection{Sieves \& cosieves of Galois categories}\label{subsec:Galsieves}

One can read off various facts about schemes from their Galois categories.
In this section and the next, we begin to propose a dictionary between schemes and their profinite Galois categories.\footnote{This dictionary first appeared in a preprint of the first-named author \cite{Barwick:galperf}.}
We continue this endeavour in \cref{sect:fullfaithfulness}, as the dictionary is strongest between profinite Galois categories and schemes that admit no nontrivial universal homeomorphisms (\Cref{dfn:toprigid}).

The following is immediate.

\begin{prp}
	A monomorphism $U\inclusion X$ of coherent schemes is an open immersion if and only if the induced functor $\Gal(U)\to\Gal(X)$ is equivalent to the inclusion of a cosieve.

	Dually, a monomorphism $Z\inclusion X$ of coherent schemes is a closed immersion if and only if $\Gal(Z)\to\Gal(X)$ is equivalent to the inclusion of a sieve.
\end{prp}

\begin{rec}
	An \defn{interval}\index[terminology]{interval} in \acategory $ C $ is a full subcategory $ D \subseteq C$ such that a morphism $ d \to d' $ of $D$ factors through an object $ c $ of $C$ only if $ c $ lies in $D$.
\end{rec}

\begin{cor}
	A monomorphism $W\inclusion X$ of coherent schemes is a locally closed immersion if and only if the induced functor $\Gal(W)\to\Gal(X)$ is equivalent to the inclusion of an interval.
\end{cor}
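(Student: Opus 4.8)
The plan is to deduce this from the preceding proposition by factoring a locally closed immersion into an open immersion followed by a closed immersion, and by giving a categorical characterisation of intervals as intersections of a sieve with a cosieve. First I would recall the elementary fact that a full subcategory $D \subseteq C$ is an interval if and only if $D = Z \cap U$ for some sieve $Z \subseteq C$ and some cosieve $U \subseteq C$; indeed, given an interval $D$, one may take $U$ to be the cosieve generated by $D$ (objects $c$ admitting a morphism from an object of $D$) and $Z$ to be the sieve generated by $D$, and the interval condition is precisely what makes $D = Z \cap U$. Conversely, the intersection of a sieve and a cosieve is always an interval. This is a purely $1$-categorical statement that we may apply to the homotopy category $h_1\Gal(X)$, or, better, phrase it at the level of profinite $1$-categories so that it interacts correctly with the topology.

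Next I would set up the geometric side. Given a locally closed immersion $W \inclusion X$ of coherent schemes, choose an open subscheme $U \subseteq X$ in which $W$ is closed, so that $W \inclusion U \inclusion X$ with the first map a closed immersion and the second an open immersion. Applying the previous proposition, $\Gal(U) \to \Gal(X)$ is the inclusion of a cosieve and $\Gal(W) \to \Gal(U)$ is the inclusion of a sieve. One then checks that $\Gal(W) \to \Gal(X)$ is the inclusion of $Z \cap U$ for the corresponding sieve $Z$ and cosieve $U$ inside $\Gal(X)$: the cosieve is $\Gal(U)$ itself, and the sieve is the sieve of $\Gal(X)$ generated by $\Gal(W)$, which agrees with $\Gal(W)$ because closedness of $W$ in $U$ already forces the sieve condition relative to $\Gal(U)$, and every morphism of $\Gal(X)$ into an object of $\Gal(W)$ lands in the cosieve $\Gal(U)$ by the cosieve property. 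Hence $\Gal(W)$ is an interval in $\Gal(X)$. For the converse, if $\Gal(W) \to \Gal(X)$ is the inclusion of an interval, write it as $Z \cap U$; then $U$ corresponds (by full faithfulness of $\Gal$, or by the open-immersion criterion applied to the cosieve it generates) to an open immersion $X' \inclusion X$ through which $W \to X$ factors, and inside $X'$ the subcategory $\Gal(W)$ is a sieve, so $W \inclusion X'$ is a closed immersion by the dual statement; composing, $W \inclusion X$ is a locally closed immersion.

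The one genuine subtlety — and the step I expect to be the main obstacle — is that the proposition as stated speaks of the topological categories $\Gal(X)$, which are profinite stratified $1$-types, and "cosieve", "sieve", "interval" must be understood in a way compatible with the topology, not merely at the level of underlying abstract categories. Concretely, one must verify that the full subcategory $\Gal(W) \subseteq \Gal(X)$ is not only an interval of the underlying category but is \emph{closed} (and open, respectively, for the constituent sieve and cosieve) in the profinite/topological sense, i.e. that it arises from a decomposition of the base profinite poset $X^{\zar}$ into a sieve and a cosieve and that the stratified homotopy type restricts correspondingly. This follows from the fact, established in the open-immersion criterion, that open immersions correspond to restricting the $X^{\zar}$-stratification to a cosieve of $X^{\zar}$ (equivalently an open of the spectral space) and closed immersions to a sieve; the locally closed case is then the intersection, which is an interval of $X^{\zar}$, and one uses that $\Gal(-)$ is compatible with passing to locally closed subschemes of coherent schemes — this compatibility is exactly what is encoded in the décollage/stratification formalism of Part~III. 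So the proof is essentially the combination of the preceding proposition, the factorisation of locally closed immersions, and the elementary lemma identifying intervals with intersections of sieves and cosieves; the care required is in tracking that each categorical notion is taken in the correct (topological, profinite, stratified) sense throughout.
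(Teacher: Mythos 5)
Your argument is correct and is exactly the route the paper intends: the corollary is stated without proof as an immediate consequence of the preceding proposition, via the factorisation of a locally closed immersion as a closed immersion into an open subscheme together with the observation that intervals are precisely intersections of sieves with cosieves (all taken in the profinite/topological sense, as you rightly flag). Your write-up simply makes explicit what the paper leaves implicit, including the essential caveat that ``cosieve'' must mean a quasicompact-open cosieve of the profinite poset $X^{\zar}$ rather than a mere cosieve of the underlying specialisation preorder.
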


\begin{rec}
	Let $ C $ be \acategory.
	An object $ c \in C $ is \defn{weakly initial}\index[terminology]{weakly initial}\index[terminology]{object!weakly initial} if for every object $ c' \in C $, the space $ \Map_{C}(c,c') $ is nonempty.
	An object $ c' \in C $ is \defn{weakly terminal}\index[terminology]{weakly terminal}\index[terminology]{object!weakly terminal} if $ c $ is weakly initial in $ C^{\op} $.
\end{rec}

\begin{cor}
	A coherent scheme $ X $ is local if and only if $\Gal(X)$ contains a weakly initial object.
	Dually, a coherent scheme $ X $ is irreducible if and only if $\Gal(X)$ contains a weakly terminal object.
\end{cor}

\begin{nul}
	For any coherent scheme $ X $ and any point $x_0\in X^{\zar}$, the Galois category of the localization is the fiber product
	\begin{equation*}
		\Gal(X_{(x_0)})\simeq\Gal(X) \crosslimits_{X^{\zar}}X^{\zar}_{x_0/} \period
	\end{equation*}
	Dually, for any point $y_0\in X^{\zar}$, the Galois category of the closure $X^{(y_0)}$ of $y_0$ (with the reduced subscheme structure, say) is the fiber product
	\begin{equation*}
		\Gal(X^{(y_0)})\simeq\Gal(X) \crosslimits_{X^{\zar}}X^{\zar}_{/y_0} \period
	\end{equation*}
\end{nul}


\subsection{Undercategories \& overcategories of Galois categories}\label{subsec:Galundercats} 

In this section we extend our dictionary by showing that undercategories correspond to localizations, while overcategories correspond to normalizations (\Cref{cor:Galoverunder}).

\begin{ntn}[strict localization]
	Let $ X $ be a scheme and $x\to X$ a point of $ X $.
	We write $ x_0 \in X^{\zar} $ for the image of $ x $ in the Zariski topological space of $ X $ 
	Let $ \upkappa(x_0)^{\sep} \supseteq \upkappa(x) $ denote the separable closure of $ \upkappa(x_0) $ in $ \upkappa(x) $.
	We write $\Ohens_{X,x_0}$\index[notation]{Oh@$\Ohens_{X,x_0}$} for the henselization\index[terminology]{henselization} of the local ring $\Oup_{X,x_0}$, and write
	\begin{equation*}
		\Ohens_{X,x}\supseteq \Ohens_{X,x_0}
	\end{equation*}
	for the unique extension of henselian local rings that on residue fields reduces to the field extension $\upkappa(x_0)^{\sep} \supseteq \upkappa(x_0)$.
	We also write
	\begin{equation*}
		X_{(x)}	\coloneq \Spec(\Ohens_{X,x}) \period \index[notation]{Xx@$X_{(x)}$}
	\end{equation*}
	We call $X_{(x)}$ the \defn{localization of $ X $ at $x$}\index[terminology]{localization!of a scheme} (\Cref{exm:localizationishenselization}).
	The scheme $ X_{(x)} $ is the limit of the diagram of étale $ X $-schemes $ U \to X $ equipped with a lift $x\to U$ of $ x \to X $ to $U $.

	If $x\to X$ is a geometric point, then $\Ohens_{X,x}$ is the strict henselization of $\Oup_{X,x_0}$, and $X_{(x)}$ is the strict localization of $ X $ at $x$.
\end{ntn}

\begin{ntn}[strict normalization]
	Let $ X $ be a scheme, let $y\to X$ is a geometric point, and let $ \upkappa(y)^{\alg} \supset \upkappa(y) $ be an algebraic closure of $ \upkappa(y) $. 
	We write $ X^{(y_0)} \subset X $ for the integral subscheme defined by the Zariski closure of $ \{y_0\} $ with the reduced subscheme structure.
	We write $X^{(y)}$\index[notation]{Xy@$X^{(y)}$} for the normalization of $X^{(y_0)}$ under $\Spec\upkappa(y)^{\alg}$.
	We call $X^{(y)}$ the \defn{strict normalization of $ X $ at $y$}\index[terminology]{strict normalization}.

	The strict normalization $ X^{(y)} $ can be expressed as an inverse limit of schemes over $ X^{(y_0)} $ as follows. 
	For each intermediate field extension $ \upkappa(y_0) \subset k \subset \upkappa(y)^{\alg} $ finite over $ \upkappa(y_0) $, write $ \fromto{X^{(y_0),k}}{X^{(y_0)}} $ for the normalization of $ X^{(y_0)} $ under $ \Spec k $.
	Then $ X^{(y)} $ is the inverse limit
	\begin{equation*}
		\isomto{X^{(y)}}{\lim_{\upkappa(y_0) \subset k \subset \upkappa(y)^{\alg}} X^{(y_0),k}}   
	\end{equation*}
	of the resulting diagram of normalizations and finite surjective transition morphisms \cite[\S3]{MR3649361}.
\end{ntn}

\begin{rmk}
	\defn{Absolutely integrally closed}\index[terminology]{absolutely integrally closed}\index[terminology]{scheme!absolutely integrally closed} schemes are integral normal schemes whose function field is algebraically closed.
	In other words, an absolutely integrally closed scheme is a strict normalization $X^{(y)}$ of a scheme at a geometric point $y\to X$ \cite{MR0289501}.
	This class of schemes has a number of curious properties:
	\begin{itemize}
		\item If $Z$ is absolutely integrally closed, then for any point $z_0\in Z^{\zar}$, the local ring $\Oup_{Z,z_0}$ is strictly henselian \cite[Proposition 2.6]{MR3649361}.

		\item If $Z$ is absolutely integrally closed, then the étale topos and the Zariski topos of $Z$ coincide \cite[Corollary 2.5]{MR3649361}.
		Hence $\Gal(Z)\simeq Z^{\zar}$. 
		In other words, $\Gal(Z)$ is a profinite poset with a terminal object.

		\item If $Z$ is absolutely integrally closed and $W$ is irreducible, then any integral morphism $W\to Z$ is radicial \cite[Lemma 2.3]{MR3649361}. 
		Thus any integral surjection $W\to Z$ is a universal homeomorphism (\Cref{rec:radicialUH}).

		\item If $Z$ is absolutely integrally closed, then the poset $\Gal(Z)\simeq Z^{\zar}$ has all finite nonempty joins \cite[Theorem 2.1]{tsccontractions}.
	\end{itemize}
\end{rmk}

Here now is the basic observation, which follows more or less immediately from the limit descriptions of the strict localization and the strict normalization:

\begin{prp}
	Let $ X $ be a coherent scheme, and let $x^{\alg} \to X$ and $y^{\alg} \to X$ be points of $ X $ that correspond to algebraic closures of the residue fields of their image points.
	Write $ x \to X $ and $ y \to Y $ for the underlying geometric points of $x^{\alg} \to X$ and $y^{\alg} \to X$, respectively.
	Then the following profinite sets are in (canonical) bijection:
	\begin{itemize}
		\item The profinite set $\Map_{\Gal(X)}(x,y)$ of morphisms $x\to y$ in $\Gal(X)$.

		\item The profinite set $\Mor_X(y,X_{(x)})$ of lifts of $y$ to the strict localization $X_{(x)}$.

		\item The profinite set $\Mor_X(x^{\alg},X^{(y)})$ of lifts of $x$ to the strict normalization $X^{(y)}$.
	\end{itemize}
\end{prp}

\noindent We may thus describe the over- and undercategories of Galois categories.
The expression of the undercategory in terms of the strict localization is originally due to Grothendieck \cite[Exposé VIII, Corollaire 7.6]{MR50:7131}.

\begin{cor}\label{cor:Galoverunder}
	Let $ X $ be a coherent scheme, and let $x\to X$ and $y\to X$ be geometric points of $ X $.
	Then we have
	\begin{equation*}
		\Gal(X)_{x/} \simeq \Gal(X_{(x)}) \andeq \Gal(X)_{/y}\simeq\Gal(X^{(y)}) \period
	\end{equation*}
\end{cor}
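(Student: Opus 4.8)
The plan is to deduce \Cref{cor:Galoverunder} directly from the preceding proposition together with the functoriality of the profinite Galois category and the fibre-product descriptions of under- and overcategories. First I would recall that for any \category $C$ with a conservative functor to a poset, and any object $x$ (resp.\ $y$), the slice $C_{x/}$ (resp.\ $C_{/y}$) is computed as a fibre product over the corresponding slice of the poset; in the stratified setting this is exactly the content of the discussion following \Cref{cnstr:pushforwardofprostrat} and carries over to profinite stratified spaces since the materialisation functor is conservative (\Cref{thm:profinitestratifiedWhitehead}) and limits of profinite stratified spaces are computed as in \Cref{prp:profinstratSislimoverFS}. Concretely, writing $P \coloneq X^{\zar}$, one has
\begin{equation*}
	\Gal(X)_{x/} \simeq \Gal(X) \times_{P} P_{x_0/} \andeq \Gal(X)_{/y}\simeq\Gal(X)\times_{P}P_{/y_0} \period
\end{equation*}

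Next I would combine this with the two previous displayed results of the section: the identification $\Gal(X_{(x_0)})\simeq\Gal(X)\times_{X^{\zar}}X^{\zar}_{x/}$ (and its dual for the closure $X^{(y_0)}$), which reduces the problem from $X_{(x)}$ to the \emph{strict} localisation by passing from the closed point $x_0$ to the geometric point $x$. The remaining point is to match the strata and links: over the unique closed point of $X_{(x)}$ the stratum of $\Gal(X_{(x)})$ is $BG_{\kappa(x)} \simeq \ast$ since $\kappa(x)$ is separably closed (this is where the geometric-point hypothesis enters), so $\Gal(X_{(x)})$ is the undercategory of $\Gal(X)$ under $x$, with the link between $x$ and a geometric point $y$ being exactly $\Map_{\Gal(X)}(x,y)$. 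I would cite the proposition immediately above: its first bijection identifies this link with $\Mor_X(y,X_{(x)})$, which is the set of points of $X_{(x)}$ lying over $y$, i.e.\ the $y$-stratum data of $\Gal(X_{(x)})$; the décollage of $\Gal(X_{(x)})$ is then determined stratum-by-stratum and link-by-link by \Cref{thm:decollageofspectralisSegal}, so checking the equivalence on strata and links suffices by the nerve equivalence \Cref{thm:strattopanddecollages} (or, profinitely, \Cref{thm:inftyHochster}). The dual statement for $\Gal(X)_{/y}\simeq\Gal(X^{(y)})$ is entirely symmetric, using the normalisation $X^{(y)}$ and the third bijection $\Map_{\Gal(X)}(x,y)\simeq\Mor_X(x,X^{(y)})$ together with the fact that $X^{(y)}$ is totally separably closed, so $\Gal(X^{(y)})$ is a profinite poset with terminal object $y$ whose over-strata and links are read off from the $\Mor_X(-,X^{(y)})$ sets.

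The main obstacle I anticipate is the passage from a pointwise (stratum-and-link) identification to an honest equivalence of profinite stratified spaces over the ambient poset $X^{\zar}$ — one must check that the Segal/gluing data of $\Gal(X_{(x)})$ and of $\Gal(X)_{x/}$ agree coherently, not merely on individual strings. This is where \Categorical Hochster Duality (\Cref{thm:inftyHochster}) and the Beck--Chevalley theorem (\Cref{thm:BCfororientedfibs}) do the real work: the link over a string $\{x \leq y_0 \leq \cdots\}$ in either space is an iterated oriented fibre product of the strata inside the relevant spectral \topos, and the localisation compatibilities of \cref{sec:localtopoi} (in particular \Cref{prop:localisationasetale} and the compatibility of oriented fibre products with localisations) guarantee these match. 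A secondary, more bookkeeping-level concern is verifying that the base posets agree — that $X_{(x)}^{\zar} \cong (X^{\zar})_{x_0/}$ and $(X^{(y)})^{\zar}\cong (X^{\zar})_{/y_0}$ — which is a standard fact about henselisation and normalisation that I would invoke rather than reprove.
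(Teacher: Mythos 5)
Your argument rests on two identifications that are false in general. First, the undercategory of a stratified space is \emph{not} the fibre product over the base poset: for $X=\Spec k$ one has $\Gal(X)_{x/}\simeq \ast$ (the undercategory of $BG_k$ under its unique object), whereas $\Gal(X)\times_{X^{\zar}}X^{\zar}_{x_0/}\simeq BG_k$. The fibre product only records which objects lie over a generisation of $x_0$ and forgets the chosen specialisation to $x$, so it computes $\Gal$ of the (non-strict) localisation rather than the undercategory; using it as step one either begs the question or simply fails. Second, the ``standard fact'' $X_{(x)}^{\zar}\cong (X^{\zar})_{x_0/}$ that you propose to invoke is also false: at the node of a nodal curve the strict henselisation has two minimal primes, so $X_{(x)}^{\zar}$ has three points while $(X^{\zar})_{x_0/}$ has two. (Indeed the content of the corollary is precisely that the branches are detected by the morphisms of $\Gal(X)$, not by $X^{\zar}$.) Because of these two errors the reduction in your first steps does not go through, and the subsequent stratum-and-link matching is carried out over the wrong base.

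The intended argument is much shorter and needs none of the d\'ecollage, Hochster duality, or Beck--Chevalley machinery. Both $\fromto{\Gal(X)_{x/}}{\Gal(X)}$ and $\fromto{\Gal(X_{(x)})}{\Gal(X)}$ are left fibrations: the first tautologically, classified by the functor $\Map_{\Gal(X)}(x,-)$; the second because $\fromto{X_{(x)}}{X}$ is a limit of \'etale neighbourhoods, so that $\Gal(X_{(x)})\simeq\lim_{(U,u)}\Gal(U)$ is a limit of left fibrations (\Cref{prp:etaleintermsofGal}) classified by $\Mor_X(-,X_{(x)})=\lim_{(U,u)}\Mor_X(-,U)$. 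The proposition immediately preceding the corollary identifies these two classifying functors canonically, whence the left fibrations --- and hence their total categories --- agree. The statement for $\Gal(X)_{/y}$ is dual, using that $\fromto{X^{(y)}}{X}$ is a limit of finite morphisms and therefore induces a right fibration (\Cref{prp:integralintermsofGal}) classified by $\Mor_X(-,X^{(y)})$.
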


\begin{cor}
	Let $ X $ be a coherent scheme. 
	Then $\Gal(X)$ is equivalent to both of the following full subcategories of $ X $-schemes:
	\begin{itemize}
		\item The full subcategory spanned by the strict localizations of $ X $.

		\item The full subcategory spanned by the strict normalizations of $ X $.
	\end{itemize}
\end{cor}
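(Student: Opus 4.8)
The plan is to deduce this corollary directly from \Cref{cor:Galoverunder} together with the identification of the objects of $\Gal(X)$ with geometric points of $X$. The key observation is that a full subcategory of a category is determined by its essential image, so it suffices to exhibit, for each of the two claimed full subcategories of $X$-schemes, a functor to or from $\Gal(X)$ which is fully faithful with the prescribed essential image.

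First I would set up the functor in the strict-localisation case. Recall that the objects of $\Gal(X)$ are geometric points $\fromto{x}{X}$, and that $\goesto{x}{X_{(x)}}$ sends such a geometric point to its strict localisation (\Cref{exm:localisationishenselisation}). This assignment extends to a functor from $\Gal(X)$ to the category of $X$-schemes: a morphism $\fromto{x}{y}$ in $\Gal(X)$ is, by definition, a lift of $y$ to $X_{(x)}$, equivalently (by the limit description of the strict localisation as the limit of étale neighbourhoods of $x$) a morphism $\fromto{X_{(y)}}{X_{(x)}}$ of $X$-schemes. This is functorial in the evident way. The essential image is by construction the full subcategory spanned by the strict localisations of $X$. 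It remains to check full faithfulness: for geometric points $x,y$, the set of $X$-morphisms $\fromto{X_{(y)}}{X_{(x)}}$ is, using the limit description of $X_{(y)}$ and the fact that each étale neighbourhood of $y$ is of finite presentation while $X_{(x)}$ is the limit of its étale neighbourhoods, identified with $\Mor_X(y, X_{(x)})$, which is $\Map_{\Gal(X)}(x,y)$ by the Proposition preceding \Cref{cor:Galoverunder}. Since morphisms in $\Gal(X)$ organise into a profinite set and the identifications above are compatible with this structure, the functor is fully faithful on the level of mapping profinite sets, hence an equivalence onto its essential image.

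The strict-normalisation case is entirely dual: the assignment $\goesto{y}{X^{(y)}}$ extends to a functor from $\Gal(X)^{\op}$ (or equivalently a contravariant functor on $\Gal(X)$, since $X^{(y)}$ is the limit of finite $X$-schemes through which $y$ factors, so a morphism $\fromto{x}{y}$ yields $\fromto{X^{(x)}}{X^{(y)}}$ — one should be careful about variance here, and I would fix the convention so that the resulting functor lands covariantly in $X$-schemes with essential image the strict normalisations). Full faithfulness again follows from the Proposition before \Cref{cor:Galoverunder}, which identifies $\Map_{\Gal(X)}(x,y)$ with $\Mor_X(x, X^{(y)})$, combined with the limit presentation of $X^{(x)}$ as a cofiltered limit of finite $X$-schemes.

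The main obstacle, and the only point requiring genuine care rather than bookkeeping, is pinning down the variance and the precise sense in which the identification $\Map_{\Gal(X)}(x,y) \simeq \Mor_X(y, X_{(x)})$ upgrades from a bijection of profinite sets to an equivalence of mapping \emph{spaces} (or here, since $\Gal(X)$ is a $1$-category, mapping profinite sets with their topology) that is natural in both variables and compatible with composition. This is essentially the content of \Cref{cor:Galoverunder}: the equivalences $\Gal(X)_{x/}\simeq\Gal(X_{(x)})$ and $\Gal(X)_{/y}\simeq\Gal(X^{(y)})$ are functorial, and the present corollary is the statement that these slice equivalences assemble, as $x$ (resp. $y$) varies, into the two claimed identifications of full subcategories — so once \Cref{cor:Galoverunder} is granted, the argument is a short formal manipulation with over/undercategories and the Yoneda-type description of $\Gal(X)$ via the Proposition preceding it.
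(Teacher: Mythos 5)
Your proposal is correct and is, in substance, the argument the paper leaves implicit: the corollary is read off from the preceding Proposition (the natural bijections $\Map_{\Gal(X)}(x,y)\cong\Mor_X(y,X_{(x)})\cong\Mor_X(x,X^{(y)})$) by promoting the assignments $x\mapsto X_{(x)}$ and $y\mapsto X^{(y)}$ to fully faithful functors, using the limit presentations of $X_{(x)}$ and $X^{(y)}$ (together with the unique lifting of étale morphisms against the strictly henselian $X_{(y)}$, resp.\ the cofinality of the finite $X$-schemes through which $x$ factors) to identify $\Mor_X(X_{(y)},X_{(x)})$ with $\Mor_X(y,X_{(x)})$ and $\Mor_X(X^{(x)},X^{(y)})$ with $\Mor_X(x,X^{(y)})$.

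The one point you isolate as delicate --- the variance --- is the one point you have backwards. A morphism $x\to y$ of $\Gal(X)$ is by definition a lift $y\to X_{(x)}$, and its extension is a morphism $X_{(y)}\to X_{(x)}$ of $X$-schemes; so it is the \emph{localisation} functor that reverses arrows, and $\Gal(X)$ is, strictly speaking, equivalent to the \emph{opposite} of the full subcategory of strict localisations (an imprecision arguably already present in the statement being proved). The same morphism $x\to y$ gives a lift $x\to X^{(y)}$ extending to $X^{(x)}\to X^{(y)}$, so the \emph{normalisation} functor is the covariant one; your parenthetical declares it a functor out of $\Gal(X)^{\op}$ while in the same breath writing down the covariant rule sending $x\to y$ to $X^{(x)}\to X^{(y)}$. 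Swapping the two conventions, the argument goes through exactly as you describe.
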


\noindent Since $\Gal(X^{(y)})\simeq X^{(y),\zar}$, it follows that Galois categories are of a very particular sort:

\begin{cor} 
	Let $ X $ be a coherent scheme.
	For any geometric point $y\to X$, the overcategory $\Gal(X)_{/y}$ is a profinite poset with all finite nonempty joins.
	In particular, every morphism of $\Gal(X)$ is a monomorphism. 
\end{cor}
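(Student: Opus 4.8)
The plan is to deduce both assertions from \Cref{cor:Galoverunder} — the equivalence $\Gal(X)_{/y}\simeq\Gal(X^{(y)})$ — together with Schröer's structure theory of totally separably closed schemes recalled above.

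First I would record that the strict normalisation $X^{(y)}$ is a totally separably closed scheme: by construction it is the normalisation of the reduced closure $X^{(y_0)}$ in $\kappa(y)$, and since $y\to X$ is a geometric point the field $\kappa(y)$ is a separable closure of $\kappa(y_0)$, hence separably closed, so $X^{(y)}$ is integral and normal with separably closed function field. Consequently the étale and Zariski topoi of $X^{(y)}$ coincide, $\Gal(X^{(y)})\simeq X^{(y),\zar}$ is a profinite poset, and combining this with \Cref{cor:Galoverunder} shows $\Gal(X)_{/y}$ is a profinite poset. For the finite nonempty joins I would invoke \cite[Theorem 2.1]{tsccontractions} applied to $Z=X^{(y)}$; alternatively one checks directly that a finite union of irreducible closed subschemes, normalised appropriately, realises the join in $X^{(y),\zar}$.

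For the monomorphism assertion I would use the elementary categorical fact that a category each of whose slices is a preorder has all of its morphisms monic: given $g\colon x\to y$ and $h,h'\colon c\to x$ with $g h = g h' =: k$, the maps $h,h'$ are two morphisms of $\Gal(X)_{/y}$ from the object $(c\xrightarrow{k}y)$ to the object $(x\xrightarrow{g}y)$, hence coincide because $\Gal(X)_{/y}$ is a preorder. Letting $y$ range over all objects of $\Gal(X)$ — i.e., over all geometric points of $X$ — then yields the claim, the required input (each slice is a preorder) being precisely the content of the first part.

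The step requiring the most care — and the main obstacle — is that $\Gal(X)$ is not literally a $1$-category but a profinite stratified space. Since $\Gal(X)=\Pi^{X^{\zar},\wedge}_{(\infty,1)}(X_{\et})$ is $1$-truncated, it is a profinite $1$-category, and by \Cref{nul:1truncatedprofinstrat} a layered category object in Stone topological spaces; I would therefore run the argument after materialisation. The functor $\mat$ is conservative (\Cref{thm:profinitestratifiedWhitehead}) and preserves inverse limits, hence detects monomorphisms and carries the slice $\Gal(X)_{/y}$ to the slice of $\mat(\Gal(X))$ over $y$, which by \Cref{cor:Galoverunder} and \Cref{lem:matofPiinfisPt} is identified with $X^{(y),\zar}$ — an honest poset, where the preorder-to-monomorphism argument applies verbatim.
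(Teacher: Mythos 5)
Your proposal is correct and follows essentially the same route the paper intends: identify $\Gal(X)_{/y}$ with $\Gal(X^{(y)})\simeq X^{(y),\zar}$ via \Cref{cor:Galoverunder}, invoke Schröer's results on totally separably closed schemes for the poset structure and the finite nonempty joins, and conclude the monomorphism statement from the fact that all slices are preorders. The extra care you take with materialisation and conservativity to handle the profinite setting is a sound (and welcome) filling-in of details the paper leaves implicit.
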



\subsection{Recovering the protruncated étale shape}\label{subsec:Galprotruncatedshape}

Since $ \Gal(X) $ is the profinite stratified shape of the spectral \topos $ X_{\et} $, the fact that the profinite stratified shape is a delocalization of the protruncated shape (\Cref{thm:protruncdeloc}) immediately implies the following:

\begin{thm}[Homotopy]\label{thm:mainAG}
	Let $ X $ be a coherent scheme.
	Then there is a natural natural map of prospaces
	\begin{equation*}
		\theta_{X} \colon \fromto{\Shapeet(X)}{\invert(\Gal(X))} \period
	\end{equation*}
	Moreover, $ \theta_{X} $ induces an equivalence on protruncations.

	As a consequence, for each integer $ n \geq 1 $ and geometric point $ \fromto{x}{X} $, there is a natural isomorphisms of progroups
	\begin{equation*}
		\isomto{\pietext_n(X,x)}{\uppi_{n}(\invert(\Gal(X)),x)} \period
	\end{equation*}
	Here $ \pietext_n(X,x) $ is the $ n $-th homotopy progroup of the étale shape $ \Shapeet(X) $ of $ X $ \cref{nul:etalehomotopygroups}.
\end{thm}


\subsection{Stratified Riemann Existence Theorem}\label{subsec:stratRiemann}

We prove a stratified refinement of the Riemann Existence Theorem of Artin--Mazur--Friedlander (\Cref{thm:Riemannexistence}), giving a comparison between étale and analytic stratified homotopy types for schemes of finite type over the field $\CCup$ of complex numbers (\Cref{cor:stratRiemann}).

\begin{cnstr}
	Let $ X $ be a scheme of finite type over $ \CCup $.
	Let us recall the geometric morphism $ \epsilonlowerstar \colon \fromto{X_{\an}}{X_{\et}} $ of \Cref{rec:morphismfromanalytictoetale}.
	We give $X^{\an}$ the profinite stratification $\fromto{X^{\an}}{X^{\zar}}$ of \Cref{exm:Xanprofinstrat}.
	With respect to this stratification, $ \epsilonlowerstar$ is an $ X^{\zar} $-stratified geometric morphism.

	Let $ S $ be a spectral topological space and let $ X^{\zar} \to S $ be a quasicompact continuous map.
	We write
	\begin{equation*}
		\XSan \coloneq \Sheff{X_{\an}^{\Scons}} \andeq \XSet \coloneq \Sheff{X_{\et}^{\Scons}}
	\end{equation*}
	for the $ S $-spectrifications of $ X_{\an} $ and $ X_{\et} $, respectively. 
	Since the geometric morphism $ \epsilonlowerstar \colon \fromto{X_{\an}}{X_{\et}} $ is $ X^{\zar} $-stratified, the pullback functor $ \epsilonupperstar $ restricts to a morphism of \pretopoi
	\begin{equation*}
		\fromto{X_{\et}^{\Scons}}{X_{\an}^{\Scons}}
	\end{equation*}
	(\Cref{lem:morspreserveconstrspectral}), hence induces an $ S $-stratified geometric morphism 
	\begin{equation*}
		\epsilonlowerstar \colon \fromto{\XSan}{\XSet} \period
	\end{equation*}

	The inclusions $ X_{\an}^{\Scons} \inclusion X_{\an} $ and $ X_{\et}^{\Scons} \inclusion X_{\et} $ induce natural geometric morphisms
	\begin{equation*}
		v_{\ast}^{S,\an} \colon X_{\an} \to \XSan \andeq v_{\ast}^{S,\et} \colon X_{\et} \to \XSet
	\end{equation*}
	that exhibit $\XSan$ and $\XSet$ as the $S$-spectrifications of $X_{\an}$ and $X_{\et}$.
	These functors are compatible with $\varepsilon$ in the sense that the squares
	\begin{equation*}\label{eqn:epsilonandv}
		\begin{tikzcd}
			X_{\an} \arrow[d, "v_{\ast}^{S,\an}"'] \arrow[r, " \epsilonlowerstar"] & X_{\et} \arrow[d, "v_{\ast}^{S,\et}"] \\
			\XSan \arrow[r, "\varepsilon_{\ast}"'] & \XSet 
		\end{tikzcd}
	\end{equation*}
	commute, by construction.
\end{cnstr}

\begin{nul}
	Let $ \fromto{X^{\zar}}{P} $ be a finite constructible stratification.
	Then the topological space $X^{\an}$ also inherits a stratification $\fromto{X^{\an}}{P}$, which is conical.
	The Exodromy Equivalence for stratified topological spaces (\Cref{subexm:exodromyfortopspaces}), provides an equivalence
	\begin{equation*}
		\Exit^P(X^{\an})\profincomp \equivalent \StrShape^P(\XPan)
	\end{equation*}
	between the profinite completion (in the stratified sense) of the exit-path \category of $ X^{\an} $ and the profinite stratified shape of $ \XPan $.
\end{nul}

We now discuss a general nonabelian form of the results identified in \cite[6.1.2]{MR751966}.
The key result, which seems to be well-known to experts, is the following `nonabelian derived' form of the Artin Comparison Theorem \cite[Exposé XVI, Théorème 4.1]{MR50:7132}.

\begin{thm}\label{thm:Artincomparison}
	Let $ f \colon X \to Y $ be a morphism of finite type $\CCup$-schemes.
	Then the square
	\begin{equation*}\label{eqn:Artin}
		\begin{tikzcd}
			\XXan \arrow[d, "f^{\an}_{\ast}"'] \arrow[r, " \epsilonlowerstar"] & X_{\et} \arrow[d, "f^{\et}_{\ast}"] \\ 
			\YYan \arrow[r, "\varepsilon_{\ast}"'] & Y_{\et} 
		\end{tikzcd}
	\end{equation*}
	of \topoi satisfies the left \basechange condition when restricted to constructible sheaves on $X_{\et}$.
	That is, for any constructible sheaf $F \in X_{\et}$, the natural \basechange morphism
	\begin{equation*}
		\varepsilon^{\ast} f^{\et}_{\ast} F \to f^{\an}_{\ast} \varepsilon^{\ast} F
	\end{equation*}
	is an equivalence.
\end{thm}

\begin{prp}\label{prop:constructibleanalyticetalesheavesofsets}
	Let $ X $ be a scheme of finite type over $ \CCup $.
	Then the pullback functor $ \epsilonupperstar \colon \fromto{X_{\et}}{X_{\an}} $ restricts to an equivalence on constructible sheaves.
\end{prp}

\begin{proof}
	We induct on the Krull dimension of $ X $; the claim in obvious for dimension $0$.
	
	Let us assume that $ X $ is of dimension $n$, and that the claim is known for schemes of dimension $<n$.
	We may also assume that $ X $ is irreducible.
	In this case, we may write $ X^{\zar} $ as the limit of stratifications of the form $X^{\zar} \to S$,
	where
	\begin{equation*}
		S = Z^{\zar} \cup \{\infty\} \simeq \lim_{P \in \FC(Z)} P^{\rhd}
	\end{equation*}
	for some closed subscheme $ Z \subset X $ of dimension $<n$.
	Hence it will suffice to show that for any such stratification $X^{\zar} \to S$, the $S$-stratified geometric morphism
	\begin{equation*}
		\epsilonlowerstar \colon \XSan \to \XSet
	\end{equation*}
	is an equivalence.
	The source is a recollement of $(Z/Z^{\zar})_{\an}$ and $(U/\infty)_{\an}$:
	\begin{equation*}
		\begin{tikzcd}
			(Z/Z^{\zar})_{\an} \arrow[r, "i^{\an}_{\ast}", hooked] & \XSan & (U/\infty)_{\an} \arrow[l, "j^{\an}_{\ast}"', hooked'] \period
		\end{tikzcd}
	\end{equation*}
	Accordingly, the target is a recollement of $Z_{\et}$ and $(U/\infty)_{\et}$:
	\begin{equation*}
		\begin{tikzcd}
			Z_{\et} \arrow[r, "i^{\et}_{\ast}", hooked] & \XSet & (U/\infty)_{\et} \arrow[l, "j^{\et}_{\ast}"', hooked'] \period
		\end{tikzcd}
	\end{equation*}
	Restricted to the closed piece of this recollement, $\epsilonlowerstar$ is an equivalence by the induction hypothesis.
	Restricted to the open piece, $\epsilonlowerstar$ is an equivalence by the \textit{unstratified} Riemann Existence Theorem (\Cref{thm:Riemannexistence}).

	It therefore suffices to prove that the gluing functors agree under these equivalences.
	In other words, it suffices to prove that for any lisse sheaf $F$ on $U_{\et}$, the morphism
	\begin{equation*}
		\epsilonupperstar i_{\et}^{\ast} j^{\et}_{\ast} F \to i_{\an}^{\ast} j^{\an}_{\ast} \epsilonupperstar F
	\end{equation*}
	is an equivalence (\Cref{prp:bcorientedpodependsonbcgluing}).
	Now \Cref{thm:Artincomparison} applies, completing the proof.
\end{proof}

\begin{ntn}
	Let $ X $ be a scheme of finite type over $ \CCup $.
	We write $ \Gal_{\an}(X) $ for the profinite $ X^{\zar} $-stratified space
	\begin{equation*}
		\Gal_{\an}(X) \coloneq \StrShape^{X^{\zar}}(X_{\an}) \equivalent \StrShape^{X^{\zar}}(\XXan) \period \index[notation]{Galan@$ \Gal_{\an}(X) $}
	\end{equation*}
	We write
	\begin{equation*}
		\varepsilon \colon \Gal_{\an}(X) \to \Gal(X) \period
	\end{equation*}
	for the morphism of profinite $ X^{\zar} $-stratified spaces induced by the geometric morphism $ \epsilonlowerstar $.
\end{ntn}

\begin{cor}[Stratified Riemann Existence]\label{cor:stratRiemann}
	Let $ X $ be a scheme of finite type over $ \CCup $.
	Then the natural morphism $ \varepsilon \colon \Gal_{\an}(X) \to \Gal(X) $ is an equivalence.
\end{cor}

As a corollary, we also obtain:

\begin{cor}
	Let $ X $ be a scheme of finite type over $ \CCup $ and let $X^{\zar} \to P$ be a finite constructible stratification.
	Then the natural functor
	\begin{equation*}
		\Exit^P(X^{\an}) \to \Gal(X/P) 
	\end{equation*}
	exhibits the profinite stratified space $\Gal(X/P)$ as the profinite completion of the exit-path \category of $X^{\an} \to P$.
\end{cor}


\subsection{The van Kampen Theorem for Galois categories}\label{subsec:GalvanKampen}

For this section, we fix a coherent scheme $ X $ and a nondegenerate constructible stratification $ X^{\zar}\to[1] $.
We write $ Z \subset X $ for the closed stratum and $ U \subset X $ for the quasicompact open complement of $ Z $.
The décollage associated to the profinite $ [1] $-stratified space $ \Gal(X/[1]) $ is the functor $\fromto{\sdop([1])}{\Spaceprofin}$ given by the diagram
\begin{equation*}
	\Shapeetprofin(Z) \ot \Shapeprofin(Z_{\et}\orientedtimes_{X_{\et}}U_{\et}) \to \Shapeetprofin(U) \period
\end{equation*}
(Note that any subscheme structure on $Z$ will do, as nilimmersions don't affect the étale \topos.) 
The profinite space $\Shapeprofin(Z_{\et}\orientedtimes_{X_{\et}}U_{\et})$ is the \textit{deleted tubular neighborhood}\index[terminology]{deleted tubular neighborhood} of $\Shapeetprofin(Z)$ in $\Shapeetprofin(X)$.

\begin{exm}
	Suppose that $ Z = \{z\} $ and $ \upkappa(z) $ is separably closed.
	Then the deleted tubular neighborhood of $ \Shapeetprofin(Z) \simeq \ast $ in $ \Shapeetprofin(X) $ is the profinite étale shape of the punctured Milnor tube $ X_{(z)} \smallsetminus \{z\} $.
\end{exm}

\begin{exm}
	Suppose that $ X $ is a curve over a field $k$ and $Z=\{z\}$ is a rational point.
	Then we obtain an identification of the deleted tubular neighborhood with the profinite classifying space of `the' profinite decomposition group $ \Dup_z \subseteq \piet_1(U)$.
	In more general situations, this justifies thinking of the deleted tubular neighborhood $\Shapeprofin(Z_{\et}\orientedtimes_{X_{\et}}U_{\et})$ as a kind of `decomposition homotopy type'.
\end{exm}

Specializing \Cref{exm:vanKampen1strat} to the étale topology yields the following \textit{étale van Kampen Theorem}\index[terminology]{etale van Kampen Theorem@étale van Kampen Theorem}\index[terminology]{van Kampen Theorem!etale@étale}:

\begin{prp}\label{prp:etalevanKampen}
	Let $ X $ be a coherent scheme and let $ X^{\zar}\to[1] $ be a nondegenerate constructible stratification with closed stratum $ Z $ and open stratum $ U $.
	Then the square
	\begin{equation*}
      \begin{tikzcd}
	       \Shapeetprofin(Z_{\et} \orientedtimes_{X_{\et}} U_{\et}) \arrow[d] \arrow[r] & \Shapeetprofin(U_{\et}) \arrow[d] \\ 
	       \Shapeetprofin(Z_{\et}) \arrow[r] & \Shapeetprofin(X_{\et})
      \end{tikzcd}
    \end{equation*}
	is a pushout square in the \category of profinite spaces.
\end{prp}

\begin{nul}
	\Cref{prp:etalevanKampen} functions in the same manner as Friedlander's van Kampen Theorem \cite[Proposition 15.6]{MR676809}.

	Cox \cites{MR512271I}{MR512271II} also developed a deleted tubular neighborhood for schemes, which is what appears in Friedlander's formulation of the van Kampen Theorem.
	It can be shown that Cox's deleted tubular neighborhood and our toposic version have equivalent protruncated shapes types.
\end{nul}

\newpage

\section[Extending exodromy: coefficients, stacks, \& \texorpdfstring{$\el$}{ℓ}-adic sheaves]{Extending exodromy: coefficients, stacks, \& \texorpdfstring{$\el$}{ℓ}-adic \\ sheaves}\label{sect:extending}

Let $X$ be a coherent scheme.
One limitation of our study of constructible sheaves thus far is that we have effectively restricted our attention to constructible sheaves on $X$ with \textit{nonabelian, \pifinite} coefficients.
The first goal of this chapter is to use \Cref{thm:exodromyforstrattopoi} along with some basic facts about the compactness of certain \categories to extend our Exodromy Theorem to coefficients in a \textit{finite} ring.
Specifically, for any \textit{finite} ring $R$, we provide an equivalence between the constructible derived \category of étale sheaves of $ R $-modules on $ X $ and the \category of continuous representations of the profinite $ 1 $-category $ \Gal(X) $ with values in $ \Perf(R) $:

\begin{thm}\label{thm:exodromyfinitering}
	Let $ X $ be a coherent scheme and let $ R $ be a finite ring.
	Then there is an equivalence of \categories
	\begin{equation}\label{eq:exodromyfinitering}
		\Dcons(X_{\et};R) \simeq \Fun(\Gal(X),\Perf(R)) \period
	\end{equation}
\end{thm}

\noindent See \Cref{thm:stableexodromyforstrattopoi} for an even more general statement.

However, many of the serious applications of constructible sheaves and their cohomology arise with coefficients a more general \textit{topological} ring, such as $\ZZell$, $\QQell$, or $\QQellbar$.
Consequently, it is desirable to extend our Exodromy Theorem to include these topological rings.
To do this, we have to solve two problems:

\begin{enumerate}[(1)]
	\item Given a topological ring $ \Lambda $, it is not \textit{a priori} clear how to incorporate the topology on $\Lambda$ in the \category $\Perf(\Lambda)$.
	We address this by introducing \textit{pyknotic} structures -- called \textit{condensed} structures by Clausen and Scholze.
	Pyknotic structures generalize common topological structures in a way that plays well with algebraic structures.
	Using this formalism, if $C$ and $D$ are pyknotic \categories, we can meaningfully speak of the the \category $\Functs(C,D)$ of \textit{continuous functors} $C \to D$.
	Critically, this extends gives us a way of extending the right-hand side of the equivalence \eqref{eq:exodromyfinitering} to topological rings; see \Cref{cor:FunproisFuncts}.

	\item Constructible sheaves of $\Lambda$-modules are not generally sheaves for the étale topology, but the \textit{proétale} topology of Bhatt and Scholze \cite{MR3379634}.
	One expects a version of our Exodromy Theorem for \topoi that closely resemble the proétale \topos of a scheme; we have not been able to obtain such a result.
	Instead, we do something more modest: we identify situations in which the theorem can be extended along limits and filtered colimits.
\end{enumerate}

The formulation and proof of the following pair of results occupies most of this chapter. 

\begin{ntn}
	Let $ X $ be a coherent scheme.
	\begin{itemize}
		\item Let $ \Lambda $ be a noetherian ring that is complete with respect to the topology defined by an ideal $ I \subset \Lambda $.
		We write $ \Dcons(X_{\proet};\Lambda) $\index[notation]{DconsproetLambda@$ \Dcons(X_{\proet};\Lambda) $} for the constructible derived \category of proétale sheaves of $ \Lambda $-modules on $ X $ \cite[Definition 6.5.1]{MR3379634}.

		\item Let $ \el $ be a prime number and $ E $ an algebraic extension of $ \QQell $.
		If $ X $ is topologically noetherian (thus automatically coherent), we write $ \Dcons(X_{\proet};E) $\index[notation]{DconsproetE@$ \Dcons(X_{\proet};E) $} for the constructible derived \category of proétale sheaves of $ E $-modules on $ X $ \cite[Definition 6.8.8]{MR3379634}.
	\end{itemize}
\end{ntn}

\begin{thm}\label{thm:exodromyZell}
	Let $ X $ be a coherent scheme, $\Lambda$ be a noetherian ring, and $ I \subset \Lambda $ an ideal.
	Assume that $ \Lambda $ is complete with respect to the $I$-adic topology and that for each integer $ n \geq 1 $, the quotient ring $ \Lambda/I^n $ is finite.
	Then there is an equivalence of \categories
	\begin{equation*}
		\Dcons(X_{\proet};\Lambda) \simeq \Functs(\Gal(X),\Perf(\Lambda)) \period
	\end{equation*}
\end{thm}

For topologically noetherian schemes, we extend \Cref{thm:exodromyZell} to coefficients in \smash{$ \QQell $} or \smash{$ \QQellbar $}:

\begin{thm}\label{thm:exodromyQellbar}
	Let $ X $ be a topologically noetherian scheme, $ \el $ be a prime number, and $ E $ be an algebraic field extension of $\QQell$.
	Then there is an equivalence of \categories
	\begin{equation*}
		\Dcons(X_{\proet};E) \simeq \Functs(\Gal(X),\Perf(E)) \period
	\end{equation*}
\end{thm}

\begin{rmk}
	We need $ X $ to be topologically noetherian in order to ensure that the standard notion of a constructible complex of \smash{$ \QQell $}-sheaves is equivalent to the requirement that the sheaf is lisse over a finte stratification (see \cite[\S6.6]{MR3379634}).
\end{rmk}

At end end of the chapter, we also explain how to extend these Exodromy Equivalences to a large class of stacks.

\Cref{subsec:catobjectsoftopoi} sets up the background about category objects in \topoi that we need to prove our extensions of the Exodromy Equivalence.
\Cref{subsec:exodromydiscrete} uses the compactness results of category objects that we prove in \cref{subsec:catobjectsoftopoi} to prove \Cref{thm:exodromyfinitering}.
\Cref{subsec:pyknoticspaces} sets up the basics of pyknotic spaces and \categories and explains what we mean by `$ \Functs $'.
Armed with this, in \Cref{subsec:profinitepyknotic,subsec:categoryobjs} we explain how to embed profinite spaces into pyknotic spaces, and profinite stratified spaces into pyknotic \categories.
\Cref{subsec:functorcomparison} shows that functors out of $ \Gal(X) $ in the `pro' sense agree with continuous functors in the pyknotic sense.
This gives a reinterpretation of the Exodromy Theorem with finite coefficients in terms of the pyknotic formalism.
With all of this in place, in \Cref{subsec:profiniteexodromy} we explain how the Exodromy Theorem with profinite coefficients (\Cref{thm:exodromyZell}) follows from Exodromy with finite coefficients (see \Cref{thm:exodromyforOEcoefficients}).
\Cref{subsec:compactnessofpyknoticcats} is dedicated to extending Exodromy with profinite coefficients to coefficients in an algebraic extension of $ \QQell $ (\Cref{thm:exodromyQellbar}); this step is not entirely formal and involves an analysis of \tstructures in the pyknotic world (see \Cref{thm:Qellexodromy}).
\Cref{subsec:exodromyforstacks} ends the chapter by extending the Exodromy Theorem to a large class of stacks (\Cref{prop:GaldeltaConstr}).


\subsection{Category objects of higher topoi}\label{subsec:catobjectsoftopoi}

In preparation for the results of this chapter, it is necessary to develop a little background on the (co)compactness of category objects in \topoi.
First, the cocompactness results that we prove in this section are what allow us to extend the Exodromy Theorem to coefficients in a finite discrete ring (\Cref{thm:exodromyfinitering}).
Second, the compactness results that we prove are what allow us to regard $ \Gal(X) $ as a pyknotic \category and extend the coefficients in the Exodromy Theorem from a finite ring to more general topological rings.

In order to prove the relevant (co)compactness results, it is technically convenient to use the formalism of \textit{complete Segal objects}.
The reason for this is that we can often formulate criteria for (co)compactness of simplicial objects in terms of the (co)compactness of their simplices; see \Cref{prop:Catcocompactness,prop:Catcompactness}. 

\begin{dfn}\label{def:COCSS}
	Let $ D $ be \acategory with finite limits.
	We say that a simplicial object $ F \colon \fromto{\Deltaop}{D} $ is a \defn{category object}\index[terminology]{category object} of $ D $ if $ F $ satisfies the \defn{Segal condition}\index[terminology]{Segal condition}:
	\begin{enumerate}[(\ref*{def:COCSS}.1)]
		\item For every integer $ k \geq 1 $, the natural morphism
		\begin{equation*}
			\fromto{F_k}{F\{0 < 1\} \crosslimits_{F\{1\}} F\{1 < 2\} \crosslimits_{F\{2\}} \cdots \crosslimits_{F\{k-1\}} F\{k-1 < k\}}
		\end{equation*}
		is an equivalence in $ D $.
	\end{enumerate}
	We say that a category object $ F \colon \fromto{\Deltaop}{D} $ is a \defn{complete Segal object}\index[terminology]{complete Segal object} if, in addition, $ F $ satisfies the following \defn{completeness condition}\index[terminology]{completeness condition}:
	\begin{enumerate}[(\ref*{def:COCSS}.1)]
		\setcounter{enumi}{1}
		\item The natural morphism
		\begin{equation*}
			\fromto{F_0}{F_3 \crosslimits_{F\{0 < 2\} \cross F\{1 < 3\}} F_1}
		\end{equation*}
		is an equivalence in $ D $.
	\end{enumerate}

	Write $ \CO(D) \subset \Fun(\Deltaop,D) $\index[notation]{CO@$ \CO(D) $} for the full subcategory spanned by the category objects and $ \CS(D) \subset \CO(D) $\index[notation]{CS@$ \CS(D) $} for the full subcategory spanned by the complete Segal objects.
\end{dfn}

\begin{nul}\label{nul:JoyalTiereny}
	Joyal and Tierney \cite{MR2342834} showed that the nerve construction defines an equivalence 
	\begin{equation*}
		\Nerve \colon \equivto{\Cat_{\infty}}{\CS(\Space)}
	\end{equation*}
	from the \category of \categories to the \category of complete Segal spaces.
	For each integer $ n \geq 0 $, the nerve restricts to an equivalence 
	\begin{equation*}
		\Nerve \colon \equivto{\Cat_{n}}{\CS(\Space_{\leq n-1})}
	\end{equation*}
	between $ n $-categories and complete Segal objects in $ (n-1) $-truncated spaces.
\end{nul}

\begin{nul}
	See \cite[\SAGsubsec{A.8.2}]{SAG} for more on category objects.
\end{nul}

\begin{nul}
	Let $ D $ be \acategory with finite limits.
	Then the full subcategories
	\begin{equation*}
		\CS(D) \subset \CO(D) \subset \Fun(\Deltaop,D)
	\end{equation*}
	are closed under all limits that exist in $ D $.
\end{nul}

\begin{nul}
	Let $ \XX $ be \topos.
	Since finite limits commute with filtered colimits in \topoi, the full subcategories
	\begin{equation*}
		\CS(\XX) \subset \CO(\XX) \subset \Fun(\Deltaop,\XX)
	\end{equation*}
	are closed under limits and filtered colimits. 
	In particular $ \CS(\XX) $ and $ \CO(\XX) $ are presentable and the incluions into $ \Fun(\Deltaop,\XX) $ admit left adjoints.
\end{nul}

For future use, we'll record a few facts about the interaction between \categories of product-preserving functors and complete Segal objects.
We later use this to see that formation of pyknotic objects and complete Segal objects commute.
All are immediate from the definitions.

\begin{ntn}
	Let $ B $ and $ D $ be \categories with finite products.
	We write
	\begin{equation*}
		\Funcross(B,D) \subset \Fun(B,D)
	\end{equation*}
	for the full subcategory spanned by the functors that preserve finite products.
\end{ntn}

\begin{lem}\label{lem:prodFuncommuteswithFun}
	Let $ B $, $ C $, and $ D $ be \categories, and assume that $ B $ and $ D $ have finite products.
	Then the natural equivalence of \categories
	\begin{equation*}
		\Fun(B,\Fun(C,D)) \equivalent \Fun(C,\Fun(B,D))
	\end{equation*}
	restricts to an equivalence
	\begin{equation*}
		\Funcross(B,\Fun(C,D)) \equivalent \Fun(C,\Fun^{\times}(B,D)) \period
	\end{equation*}
\end{lem}

\begin{cor}\label{lem:pykCS}
	Let $ B $ be \acategory with products and $ D $ \acategory with finite limits.
	Then the natural equivalence of \categories 
	\begin{equation*}
		\Funcross(B,\Fun(\Deltaop,D)) \equivalent \Fun(\Deltaop,\Fun^{\times}(B,D))
	\end{equation*}
	restrict to equivalences
	\begin{equation*}
		\Funcross(B,\CO(D)) \equivalent \CO(\Funcross(B,D)) \andeq \Funcross(B,\CS(D)) \equivalent \CS(\Funcross(B,D)) \period
	\end{equation*}
\end{cor}

We now prove the relevant cocompactness result that we use in \cref{subsec:categoryobjs} in order to see that profinite stratified spaces embed into pyknotic \categories.
The main result we need is that category objects in \pifinite spaces are cocompact as category objects in profinite spaces.
We show this by combining the fact that $ \Spacefin \subset \Spaceprofin $ consists of cocompact objects with the fact that a category object in truncated spaces is always right Kan-extended from a finite subcategory of $ \Deltaop $.

\begin{ntn}
	Let $ m $ be a positive integer.
	We write $ \DDelta_{\leq m} \subset \DDelta $\index[notation]{Deltam@$ \DDelta_{\leq m} $} for the full subcategory spanned by the objects $ [0], [1], \ldots, [m] $.
\end{ntn}

\begin{dfn}\label{ntn:COtruc}
	Let $ D $ be \acategory with finite limits and $ m \geq 1 $ an integer.
	An \defn{$ m $-skeletal category object}\index[terminology]{category object!skeletal@category object!$m$-skeletal} of $ D $ is a functor $ F \colon \fromto{\Deltaop_{\leq m}}{D} $ satisfying the \defn{Segal condition}\index[terminology]{Segal condition}: for every integer $ 0 \leq k \leq m $, the natural morphism
	\begin{equation*}
		\fromto{F_k}{F\{0 < 1\} \crosslimits_{F\{1\}} F\{1 < 2\} \crosslimits_{F\{2\}} \cdots \crosslimits_{F\{k-1\}} F\{k-1 < k\}}
	\end{equation*}
	is an equivalence in $ D $.
	We write 
	\begin{equation*}
		\CO_{\leq m}(D) \subset \Fun(\Deltaop_{\leq m},D) \index[notation]{COm@$ \CO_{\leq m}(D) $}
	\end{equation*}	
	for the full subcategory spanned by the $ m $-skeletal category objects.
\end{dfn}

\begin{nul}
	Let $ \XX $ be \atopos and $ m \geq 1 $ an integer.
	Then the restriction functor
	\begin{equation*}
		\restrict{(-)}{\Deltaop_{\leq m}} \colon \fromto{\CO(\XX)}{\CO_{\leq m}(\XX)}
	\end{equation*}
	commutes with all limits and filtered colimits.
\end{nul}

\begin{prp}\label{prop:Catcocompactness}
	Let $ \XX $ be \acategory with finite limits, $ n \geq -2 $ an integer, and $ C \in \CO(\XX) $ a category object of $ \XX $.
	If the natural map
	\begin{equation*}
		\fromto{C\{0<1\}}{C\{0\} \cross C\{1\}}
	\end{equation*}
	is $ n $-truncated and for each $ I \in \DDelta $, the object $ C(I) $ of $ \XX $ is cocompact, then $ C $ is a cocompact object of $ \CO(\XX) $. 
\end{prp}

\begin{proof}[Proof of \Cref{prop:Catcompactness}]
	Let $ D \colon \fromto{A}{\CO(\XX)} $ be a cofiltered diagram.
	By \SAG{Proposition}{A.8.2.6}, the object $ C $ is right Kan extended from $ \Deltaop_{\leq n+2} $, hence we see that
	\begin{equation*}
		\colim_{\alpha \in A^{\op}} \Map_{\CO(\XX)}(D_{\alpha},C) \equivalent \colim_{\alpha \in A^{\op}} \Map_{\CO_{\leq n+2}(\XX)}(\restrict{D_{\alpha}}{\Deltaop_{\leq n+2}}, \restrict{C}{\Deltaop_{\leq n+2}}) \period
	\end{equation*}
	From the end description of mapping spaces in a functor category, the fact that finite limits commute with filtered colimits in $ \Space $, and the cocompactness of $ C(I) $ for each $ I \in \Deltaop $, we see that we have equivalences
	\begin{align*}
		\colim_{\alpha \in A^{\op}} \Map_{\CO(\XX)}(D_{\alpha},C) &\equivalent \colim_{\alpha \in A^{\op}} \int_{I \in \DDelta_{\leq n+2}} \Map_{\XX}(D_{\alpha}(I),C(I)) \\ 
		&\equivalence \int_{I \in \DDelta_{\leq n+2}} \colim_{\alpha \in A^{\op}} \Map_{\XX}(D_{\alpha}(I),C(I)) \\ 
		&\equivalence \int_{I \in \DDelta_{\leq n+2}} \Map_{\XX}(\textstyle \lim_{\alpha \in A^{\op}} D_{\alpha}(I),C(I)) \\
		&\equivalent \Map_{\CO(\XX)}(\textstyle \lim_{\alpha \in A^{\op}} D_{\alpha}, C) \period \qedhere
	\end{align*} 
\end{proof}	

\begin{cor}\label{cor:COspacefiniscocompact}
	Every object in the image of the Yoneda embedding $ \incto{\CO(\Spacefin)}{\CO(\Spaceprofin)} $ is cocompact.
\end{cor}

We now turn to the compactness of category objects.
The formal manipulations in the proof of the following proposition are very similar to those used in the proof of \Cref{prop:Catcocompactness}.

\begin{prp}\label{prop:Catcompactness}
	Let $ \XX $ be \atopos and $ C \in \CO(\XX) $ a category object of $ \XX $.
	If for each $ I \in \DDelta $, the object $ C(I) $ is almost compact (\Cref{def:almostcompactness}), then for each integer $ n \geq -2 $ the functor
	\begin{equation*}
		\Map_{\CO(\XX)}(C,-) \colon \fromto{\CO(\XX_{\leq n})}{\Space}
	\end{equation*}
	preserves filtered colimits.
\end{prp}	

\begin{nul}\label{nul:compactnesseasyhypotheses}
	The hypotheses of \Cref{prop:Catcompactness} are satisfied if $ \XX $ is a locally coherent \topos and for each $ I \in \DDelta $, the object $ C(I) $ is coherent \SAG{Corollary}{A.2.3.2}.
\end{nul}

\begin{proof}[Proof of \Cref{prop:Catcompactness}]
	Let $ D \colon \fromto{A}{\CO(\XX_{\leq n})} $ be a filtered diagram.
	First note that by \SAG{Proposition}{A.8.2.6}, for each $ \alpha \in A $, the category object $ D_{\alpha} $ is right Kan-extended from $ \Deltaop_{\leq n+2} $.
	Moreover, since $ \XX_{\leq n} \subset \XX $ is closed under filtered colimits, the colimit $ \colim_{\alpha \in A} D_{\alpha} $ is also right Kan extended from $ \Deltaop_{\leq n+2} $. 
	Thus we see that
	\begin{equation*}
		\colim_{\alpha \in A} \Map_{\CO(\XX)}(C,D_{\alpha}) \equivalent \colim_{\alpha \in A} \Map_{\CO_{\leq n+2}(\XX)}(\restrict{C}{\Deltaop_{\leq n+2}},\restrict{D_{\alpha}}{\Deltaop_{\leq n+2}})
	\end{equation*}

	From the end description of mapping spaces in a functor category and the fact that finite limits commute with filtered colimits in $ \Space $, we see that we have equivalences
	\begin{align*}
		\colim_{\alpha \in A} \Map_{\CO(\XX)}(C,D_{\alpha}) &\equivalent \colim_{\alpha \in A} \int_{I \in \DDelta_{\leq n+2}} \Map_{\XX}(C(I),D_{\alpha}(I)) \\ 
		&\equivalence \int_{I \in \DDelta_{\leq n+2}} \colim_{\alpha \in A} \Map_{\XX}(C(I),D_{\alpha}(I))
	\end{align*}
	Since the object $ C(I) $ is almost compact for each $ I \in \DDelta $ and $ \colim_{\alpha \in A} D_{\alpha} $ is right Kan extended from $ \Deltaop_{\leq n+2} $, we see that
	\begin{align*}
		\colim_{\alpha \in A} \Map_{\CO(\XX)}(C,D_{\alpha})  &\equivalence \int_{I \in \DDelta_{\leq n+2}} \Map_{\XX}(C(I),\textstyle \colim_{\alpha \in A} D_{\alpha}(I)) \\
		&\equivalent \Map_{\CO(\XX)}(C,\textstyle \colim_{\alpha \in A} D_{\alpha}) \period \qedhere
	\end{align*} 
\end{proof}	


\subsection{Exodromy with discrete coefficients}\label{subsec:exodromydiscrete}

Let $ X $ be a coherent scheme.
The original formulation of the Exodromy Theorem for schemes says that functors $ \Gal(X) \to \Spacefin $ are the same things as constructible sheaves of spaces on $ X $ (\Cref{cnstr:idPiinfty1withGal}).
We now prove the analagous claim where $ \Spacefin $ is replaced by the \category of perfect complexes over a finite ring $ R $.
The important property shared by $ \Spacefin $ and $ \Perf(R) $ that allows us to reduce the claim to \Cref{prop:Catcocompactness} is that all of the mapping spaces in these \categories are \pifinite. 

\begin{dfn}
	We say that \acategory $ C $ is \defn{locally \pifinite}\index[terminology]{category@\category!locally \pifinite}\index[terminology]{locally \pifinite!\category} if for all objects $ X,Y \in C $, the mapping space $ \Map_C(X,Y) $ is \pifinite.
	We say that a locally \pifinite \category $ C $ is \defn{\pifinite}\index[terminology]{category@\category!\pifinite}\index[terminology]{pifinite@\pifinite!\category} if $ C $ has finitely many objects up to equivalence.
\end{dfn}

\begin{exms}
	\hfill
	\begin{itemize}
		\item The \category $ \Spacefin $ of \pifinite spaces is locally \pifinite \SAG{Remark}{E.2.6.4}.

		\item For any finite ring $ R $, the \category $ \Perf(R) $ of perfect complexes over $ R $ is locally \pifinite.

		\item If $ \Pi $ is a \pifinite stratified space, then the \category $ \Pi $ is \pifinite
	\end{itemize}
\end{exms}

\begin{ntn}\label{ntn:subpi}
	Let $ C $ be a locally \pifinite \category.
	We denote by $ \Subpi(C) $\index[notation]{Subpi@$ \Subpi(C) $} the filtered poset of \pifinite full subcategories of $ C $.
	Note that $ C $ is the filtered union
	\begin{equation*}
		C = \colim_{C_0 \in \Subpi(C)} C_0 \period
	\end{equation*}
\end{ntn}

\begin{nul}\label{nul:canassumelocallypifiniteispifinite}
	Note that if $\mbfPi $ is a profinite stratified space and $ C $ is a locally \pifinite \category, then any functor $ \mbfPi \to C $ lands in a \pifinite full subcategory of $ C $.
	This is because the profinite set $\uppi_0(\interior \mbfPi) $ is quasicompact, so its image in the discrete set $ \uppi_0(\interior C) $ is finite.
	In particular, the natural functor 
	\begin{equation*}
		\colim_{C_0 \in \Subpi(C)} \Fun(\mbfPi,C_0) \to \Fun(\mbfPi,C)
	\end{equation*}
	is an equivalence.
\end{nul}

We now introduce the notion of a constructible sheaf with values in a general \category.

\begin{rec}
	Let $ \YY $ be \atopos and $ D $ a presentable \category.
	A \defn{$ D $-valued sheaf}\index[terminology]{sheaf} on $ \YY $ is a limit-preserving functor $ \fromto{\YY^{\op}}{D} $.
	We write $ \Sh(\YY;D) \subset \Fun(\YY^{\op},D) $ for the full subcategory spanned by the $ D $-valued sheaves.
	Note that $ \Sh(\YY;D) $ is equivalent to the tensor product of presentable \categories $ \YY \tensor D $ \cite[\SAGsubsec{1.3.1}]{SAG}.
\end{rec}

\begin{dfn}\label{def:consgencoeff}
	Let $\YY$ be \atopos, and let $ C $ be a $ \updelta_0 $-small \category.
	Then a sheaf $F$ on $\YY$ with values in $ \PSh(C) $ is said to be a \defn{lisse sheaf valued in $C$}\index[terminology]{sheaf!lisse}\index[terminology]{lisse} if and only if there exist a finite covering $ \coprod_{i\in I} U_i \surjection 1_{\YY} $, a collection $\{K_i\}_{i\in I}$ of objects of $ C $, and equivalences between $ \restrict{F}{U_i} $ and the constant sheaf on $\YY_{/U_i}$ at $K_i$.
	
	Let $ S $ be a spectral topological space, and let $ \XX $ be an $S$-stratified \topos.
	A sheaf $F$ on $\XX $ valued in $\PSh(C)$ is said to be a \defn{constructible sheaf valued in $ C $}\index[terminology]{sheaf!constructible}\index[terminology]{constructible} if and only if there exists a finite constructible stratification $ S \to P $ such that for each element $ p \in P$, the restriction $ \restrict{F}{\XX_p} $ is a lisse sheaf valued in $C$.
	We write
	\begin{equation*}
		\Cons^S(\XX;C) \subset \Sh(\XX; \PSh(C)) \index[notation]{cons@$\Cons^S(\XX;C)$}
	\end{equation*}
	for the full subcategory spanned by the constructible sheaves valued in $ C $.
\end{dfn}

\begin{nul}
	By definition, the \category $ \Cons^S(\XX;C) $ is the filtered colimit
	\begin{equation*}
		\Cons^S(\XX; C) \simeq \colim_{P \in \FC(S)} \Cons^P(\XX;C)
	\end{equation*}
	over the finite constructible stratifications $ S \to P $.
	
	Let $P$ be a finite poset, $ Z \subseteq P $ a closed subset, and $ U \colonequals P \smallsetminus Z $ the open complement of $ Z $.
	Then $\Cons^P(\XX;C)$ is a recollement of $\Cons^Z(\XX_Z;C)$ and $\Cons^U(\XX_U;C)$:
	\begin{equation*}
		\Cons^P(\XX;C) \simeq \commacatdisplay{\Cons^Z(\XX_Z;C)}{\Cons^Z(\XX_Z;C)}{\Cons^U(\XX_U;C)} \period
	\end{equation*}
\end{nul}

\begin{exm}
	Let $ S $ be a spectral topological space, and let $ \XX $ be an $S$-stratified \topos.
	Using the identification of the \category $ \Space $ of spaces as sheaves on $ \Spacefin $ with respect to the effective epimorphism topology, we see that we have a natural identification
	\begin{equation*}
		\XX^{\Scons} = \Cons^{S}(\XX;\Spacefin) \period
	\end{equation*}
\end{exm}

\begin{exm}
	Let $ X $ be a coherent scheme and $ R $ a finite ring.
	Note that the derived \category $ \Dup(R) $ is compactly generated by $ \Perf(R) $ and $ \Dup(R) \subset \PSh(\Perf(R)) $ is closed under limits.
	Hence we have a natural identification
	\begin{equation*}
		\Dcons(X_{\et};R) = \Cons^{X^{\zar}}(X_{\et};\Perf(R))
	\end{equation*}
	of the constructible derived \category of étale sheaves of $ R $-modules on $ X $ with the \category of constructible sheaves valued in $ \Perf(R) $ introduced in \Cref{def:consgencoeff}.
\end{exm}

Now we turn to extending the Exodromy Theorem from constructible sheaves valued in \pifinite spaces, to constructible sheaves valued in any locally \pifinite \category.

\begin{lem}\label{lem:canassumelocallypifiniteispifinite}
	Let $ C $ be a $\updelta_0$-small, locally \pifinite \category.
	Let $S$ be a spectral topological space, and let $\XX $ be an $S$-stratified \topos.
	Then the natural functor
	\begin{equation*}
		\colim_{C_0 \in \Subpi(C)} \Cons^{S}(\XX; C_0) \to \Cons^{S}(\XX;C)
	\end{equation*}
	is an equivalence.
\end{lem}

\begin{proof}
	Since both source and target are filtered colimits over finite constructible stratifications of $S$, we are free to assume that $S=P$ is a finite poset.
	By inducting on the rank of $P$ and using the recollement decomposition of the source and the target, we may also assume that $P$ is the trivial poset.
	Now if $F$ is a lisse sheaf in $ C $ on $\XX$, then select a finite covering $ \coprod_{i\in I} U_i \surjection 1_{\YY} $, a collection $\{K_i\}_{i\in I}$ of objects of $ C $, and equivalences between $ \restrict{F}{U_i} $ and the constant sheaf on $\XX_{/U_i}$ at $K_i$.
	If $C_0\subseteq C$ denotes the full subcategory spanned by the objects $K_i$, then the sheaf takes values in $\PSh(C_0)$ (embedded in $\PSh(C)$ via right Kan extension).
\end{proof}

\begin{thm}\label{thm:stableexodromyforstrattopoi}
	Let $ C $ be a $\updelta_0$-small, locally \pifinite category.
	Let $ S $ be a spectral topological space, and $ \XX $ an $ S $-stratified \topos.
	Then there is a natural equivalence
	\begin{equation*}
		\Fun(\StrShape^{S}(\XX),C) \simeq \Cons^S(\XX;C) \period
	\end{equation*}
\end{thm}

\begin{proof}
	In light of \Cref{nul:canassumelocallypifiniteispifinite} and \Cref{lem:canassumelocallypifiniteispifinite}, we may assume that $ C $ is \pifinite.
	
	Let $ \mbfPi \coloneq \StrShape^S(\XX) $.
	By \Cref{prop:Catcompactness} and the fact that $ C $ is \pifinite, it follows that for any integer $n\geq 0$, the natural functor
	\begin{equation*}
		\Fun(\mbfPi,\Fun(C^{\op},\Space_{\uppi,\leq n})) \to \Fun(C^{\op},\Fun(\mbfPi,\Space_{\uppi,\leq n}))
	\end{equation*}
	is an equivalence.
	Passing to colimits over $n$, we conclude that the natural functor
	\begin{equation*}
		\Fun(\mbfPi,\Fun(C^{\op},\Spacefin)) \to \Fun(C^{\op},\Fun(\mbfPi,\Spacefin))
	\end{equation*}
	is an equivalence as well.
	Applying $\Fun(C^{\op},-) $ to the Exodromy Equivalence of \Cref{thm:exodromyforstrattopoi} to obtain an equivalence
	\begin{equation*}
		\Fun(\mbfPi,\Fun(C^{\op},\Spacefin)) \simeq \Cons^S(\XX;\Fun(C^{\op},\Spacefin)) \period
	\end{equation*}
	Now we conclude by observing that the functors $ \StrShape^S(\XX) \to \Fun(C^{\op},\Spacefin) $ that land in the essential image of the Yoneda embedding correspond under this equivalence to the constructible sheaves valued in $ C $.
\end{proof}

\begin{cor}\label{cor:stableexodromyforschemes}
	Let $ X $ be a coherent scheme and $ R $ a finite ring.
	Then there is a natural equivalence
	\begin{equation*}
		\Dcons(X_{\et};R) \equivalent \Fun(\Gal(X),\Perf(R)) \period
	\end{equation*}
\end{cor}

\begin{nul}
	Attached to any constructible sheaf $F$ of $R$-complexes on $X$, we have an associated \defn{exodromy representation}
	\begin{equation*}
		\exrep_F \colon \Gal(X) \to \Perf(R)
	\end{equation*}
	that is sufficient to reconstruct $F$.
\end{nul}


\subsection{Pyknotic spaces \& pyknotic higher categories}\label{subsec:pyknoticspaces}

In order to upgrade the coefficients in \Cref{cor:stableexodromyforschemes} from a finite discrete ring to a more general topological ring, we make use of the pyknotic formalism.
In this section, we briefly describe the describe elements of the pyknotic formalism that we need to extend our Exodromy Theorem to $ \el $-adic sheaves.
For more details on the pyknotic formalism, we refer the reader to \cite{pyknoticI,Scholze:Condensedtalk,Scholze:condensedtalknotes,Scholze:condensednotes}.

\begin{cnstr}
	Stone duality identifies the category $\Stn$\index[notation]{Stn@$\Stn$} of Stone topological spaces with the category \smash{$\Pro(\Setfin)$} of profinite sets.
	The subcategory $E \subseteq \Stn$ consisting of effective epimorphisms is an $1$-presite structure on $\Stn$.
	We write $\eff \coloneq \tau_E$\index[notation]{eff@$\eff$} for the resulting finitary topology, the \defn{effective epimorphism topology}\index[terminology]{effective epimorphism topology}\index[terminology]{topology!effective epimorphism}.
\end{cnstr}

\begin{dfn}
	A \defn{pyknotic space}\index[terminology]{pyknotic space} is a hypersheaf $\Stn^{\op} \to \Space_{\updelta_1}$ for the effective epimorphism topology.
	We write
	\begin{equation*}
		\Pyk(\Space) \coloneq \Sheffhyp{\Stn;\Space_{\updelta_1}} \index[notation]{PykS@$\Pyk(\Space)$} 
	\end{equation*}
	for the \category of pyknotic spaces.
\end{dfn}

\begin{wrn}
	The category $\Stn$ is $\updelta_1$-small, but it is not $\updelta_0$-small.
	Moreover, there does \textit{not} exist a cofinal $\updelta_0$-small set of covering sieves of an object for the effective epimorphism topology.
	Consequently, when we speak of hypersheaves on $\Stn$ for the effective epimorphism topology, we have to consider hypersheaves valued in $\updelta_1$-small spaces.
	The result will be a \defn{large \topos}, that is, a left exact $\updelta_1$-accessible localization of \acategory $\Fun(C^{\op},\Space_{\updelta_1})$ of presheaves of $\updelta_1$-small spaces on a $\updelta_1$-small \category $C$.

	Large \topoi work exactly as do \topoi, except that everything has to be shifted one universe up.
	For example, if $\XX$ is bounded and coherent as a large \topos, then $\XXcohbdd$ is only $\updelta_1$-small.

	An alternative to working with large \topoi is considering instead only the accessible hypersheaves $\Stn^{\op} \to \Space_{\updelta_0}$.
	These are what Clausen and Scholze call \defn{condensed spaces}.
	These do not form a \topos (large or small), but in many ways the \category of accessible sheaves is relatively well behaved.
\end{wrn}

\begin{nul}
	The large \topos $\Pyk(\Space)$ is hypercomplete, coherent, and locally coherent \cite[Propositions \SAGthmlink{A.2.2.2} \& \SAGthmlink{A.3.1.3}]{SAG}.
\end{nul}

Let us identify two further generating sites for $\Pyk(\Space)$ -- one larger and one smaller.

\begin{ntn}
	Define full subcategories
	\begin{equation*}
		\Proj \subset \Comp \subset \TSpc
	\end{equation*}
	as follows:
	\begin{itemize}
		\item $ \Comp $\index[notation]{Comp@$\Comp$}  is spanned by the \defn{compacta} -- i.e., compact Hausdorff topological spaces.

		\item $ \Proj $\index[notation]{Proj@$\Proj$} is spanned by the \defn{projective compacta} -- i.e., compact Hausdorff topological spaces that are extremally disconnected \cites{MR0121775}[Chapter III, \S3.7]{MR861951}.
	\end{itemize}

	Equivalently, a topological space is a projective compactum if and only if it can be exhibited as the retract of the Stone--Čech compactification $\SC(S)$ of some set $S$.
	In particular, $ \Proj \subset \Stn $.
\end{ntn}

\begin{nul}\label{nul:descriptionsofPykS}
	For every compactum $ K $, there is a natural surjection $ \surjto{\SC(K^{\disc})}{K} $ from the Stone--Čech compactification of the discrete topological space $ K^{\disc} $ with underlying set $ K $ to $ K $ (cf. \cite[Remark  2.8]{Scholze:diamonds}).
	Hence the subcategories $ \Stn \subset \Comp $ and $ \Proj \subset \Comp $ are bases for the effective epimorphism topology on $ \Comp $ (\Cref{def:basis}).
	Therefore, restriction of presheaves defines equivalences of \categories
	\begin{equation*}
		\Sheffhyp{\Comp;\Space_{\updelta_1}} \equivalence \Sheffhyp{\Stn;\Space_{\updelta_1}} \equivalence \Sheffhyp{\Proj;\Space_{\updelta_1}}
	\end{equation*}
	with inverses given by right Kan extension (\Cref{prop:hyperbasis}).
\end{nul}

\begin{wrn}\label{wrn:PykS}
	Since the $ 1 $-sites $ \Comp $ and $ \Stn $ have finite limits and the inclusion $ \incto{\Stn}{\Comp} $ preserves finite limits, \Cref{cor:n-localicbasis} shows that restriction defines an equivalence of $ 1 $-localic topoi
	\begin{equation*}
		\equivto{\Sheff{\Comp;\Space_{\updelta_1}}}{\Sheff{\Stn;\Space_{\updelta_1}}} \period
	\end{equation*}

	However, as pointed out to us by Clausen and Scholze, since the $ 1 $-site $ \Proj $ of projective compacta does not have finite limits, and restriction only defines an equivalence
	\begin{equation*}
		\equivto{\Sheffhyp{\Comp;\Space_{\updelta_1}}}{\Sheffhyp{\Proj;\Space_{\updelta_1}}}
	\end{equation*}
	on topoi of \textit{hypersheaves}.
\end{wrn}

\begin{nul}\label{nul:PykSasnonablianderived}
	Since $ \Proj \subset \Comp $ consists of projective objects of $ \Comp $, the \v{C}ech nerve of any surjection in $ \Proj $ is a split simplicial object.
	Hence by \SAG{Proposition}{A.3.3.1} we see that a functor
	\begin{equation*}
		F \colon \fromto{\Projop}{\Space_{\updelta_1}} 
	\end{equation*}
	is a sheaf with respect to the effective epimorphism topology if and only if $ F $ carries coproducts in $ \Proj $ to products in $ \Space $.
	That is to say, the category $ \Pyk(\Space) $ is equivalent to the \category of functors $ \Projop \to \Space_{\updelta_1} $ that carry finite coproducts of projective compacta to products.

	From this description, it is essentially immediate that the \topos $ \Sheff{\Proj;\Space_{\updelta_1}} $ is Postnikov complete, whence we obtain an equivalence
	\begin{equation*}
		\Sheffhyp{\Comp;\Space_{\updelta_1}} \simeq \Sheff{\Proj;\Space_{\updelta_1}} 
	\end{equation*}
	(cf. \cite[\S2.4]{pyknoticI}).
	That is to say, the \category $ \Pyk(\Space) $ is the \textit{nonabelian derived \category}\index[terminology]{nonabelian derived category!nonabelian derived \category} or \textit{animation}\index[terminology]{animation} of the category $ \Proj $ \cites[\HTTsubsec{5.5.8}]{HTT}[\S5.1]{CesnaviciusScholze}
\end{nul}

This last description of the \category of pyknotic spaces lets us define pyknotic objects in any \category with finite products.

\begin{dfn}
	Let $C$ be a \category with all finite products.
	The \category $\Pyk(C)$\index[notation]{PykC@$\Pyk(C)$} of \defn{pyknotic objects}\index[terminology]{pyknotic object} of $C$ is the full subcategory of $\Fun(\Projop,C)$ spanned by those functors that carry finite coproducts of projective compacta to products in $ C $.
\end{dfn}

\begin{cnstr}[discrete \& indiscrete objects]\label{nul:PykSislocal}
	Let $C$ be a $\updelta_1$-small presentable \category.
	The global sections functor $ \fromto{\Pyk(C)}{C} $ is given by evaluation at the one-point compactum $ \pt $.
	For any pyknotic object $Y$, we write $Y^{\und} \coloneq Y(\pt)$.
	We call $ Y^{\und} $ the \defn{underlying object} of $Y$.

	Left adjoint to this is the constant sheaf functor $ \fromto{C}{\Pyk(C)}$ that carries an object $X \in C$ to what we will call the \defn{discrete pyknotic object}\index[terminology]{pyknotic object!discrete}\index[terminology]{discrete pyknotic object} $X^{\disc}$\index[notation]{discrete@$ X^{\disc} $} attached to $X$.
	This pyknotic object can be described explicitly: $X^{\disc}$ is given by the assignment 
	\begin{equation*}
		K \mapsto X^K \coloneq \colim_{I \in \Setfin_{K/}} \prod_{i \in I} X \period
	\end{equation*}
	
	The underlying space functor also admits a \textit{right} adjoint $X \mapsto X^{\indisc}$\index[notation]{Xindisc@$ X^{\indisc} $}.
	For an object $ X \in C $, the sheaf $ X^{\indisc} \colon \fromto{\Projop}{C} $ is given by the assignment
	\begin{equation*}
		K \mapsto X^{|K|} \coloneq \prod_{k \in |K|} X \comma
	\end{equation*}
	i.e., the product of copies of $ X $ indexed by the underlying \textit{set} $ |K| $ of the topological space $ K $.
	We call $X^{\indisc}$ the \defn{indiscrete pyknotic object}\index[terminology]{pyknotic object!indiscrete}\index[terminology]{indiscrete pyknotic object} attached to $X$.

	Both the discrete and indiscrete functors are fully faithful, so that
	\begin{equation*}
		(X^{\disc})^{\und} = (X^{\indisc})^{\und} = X \period
	\end{equation*}
	Accordingly, we say that a pyknotic object in the essential image of $X \mapsto X^{\disc}$ is \defn{discrete}, and a pyknotic object in the essential image of $X \mapsto X^{\indisc}$ is \defn{indiscrete}.
\end{cnstr}

\begin{exm}\label{exm:pifinitestratspacesaspyknoticcategories}
	Let $ \Pi \to P $ be a \pifinite stratified space.
	Then the pyknotic \category $\Pi^{\disc}$ can be identified as above: $\Pi^{\disc}(K) \simeq \Pi^K$.
	Since profinite stratified spaces embed fully faithfully into \topoi (\Cref{prp:fullfaithfulnessoftilde} \& \Cref{nul:naturalstratasadjoint}), one obtains an equivalence
	\begin{equation*}
		\Pi^{\disc}(K) \simeq \Pi^K \simeq \Funlowerstar(\Ktilde, \Pitilde) \comma
	\end{equation*}
	natural in $K$ and $\Pi$.
\end{exm}

\begin{wrn}
	The center $ X \mapsto X^{\indisc} $ is \textit{not} the only point of the \topos $ \Pyk(\Space) $.
	Let $ T $ be a topological space.
	Define pyknotic set $ \Pup_T $ by sending $K$ to the quotient of the set continuous maps $K \to T$ by the locally constant maps:
	\begin{equation*}
		\Pup_T(K) \coloneq \Map_{\TSpc}(K, T)/\Map_{\TSpc}^{\lc}(K, T) \period
	\end{equation*}
	If $T$ is nonempty, then the pyknotic set $ \Pup_T$ has underlying set $\pt$;
	thus if $T$ is neither empty nor $\pt$, then $ \Pup_T$ is a nontrivial pyknotic structure on the point.
	See \stacks{0991}.
\end{wrn}

\begin{exm}
	For any finite set $J$, the discrete pyknotic set $J^{\disc}$ is the sheaf
	\begin{equation*}
		K \mapsto \Map_{\Proj}(J, K)
	\end{equation*}
	represented by $J$.
	If $\{J_{\alpha}\}_{\alpha\in\Lambda}$ is an inverse system of finite sets, then the limit
	\begin{equation*}
		\lim_{\alpha\in\Lambda} J_{\alpha}^{\disc}
	\end{equation*}
	is the sheaf represented by the Stone topological space $\lim_{\alpha\in\Lambda} J_{\alpha}$;
	this is not discrete.
	Cf. \Cref{nul:Yonedaisdiscrete}.
\end{exm}

\begin{exm}\label{exm:cgembed}
	Write $ \TSpc^{\cg} \subset \TSpc $ for the full subcategory spanned by the \textit{compactly generated} topological spaces.
	Then the functor $ \fromto{\TSpc^{\cg}}{\Pyk(\Set)} $ given by the assignment
	\begin{equation*}
		T \mapsto [K \mapsto \Map_{\TSpc}(K,T)]
	\end{equation*}
	is a fully faithful right adjoint.
	See \cites[Example 2.1.6]{pyknoticI}[Proposition 1.7]{Scholze:condensedtalknotes}.
\end{exm}

Now we may speak of pyknotic \emph{\categories}, which are nothing more than functors $ \Proj^{\op} \to \Cat_{\infty} $ that carry finite coproduts of projective compacta to the corresponding finite products of \categories.
We can also interpret pyknotic \categories as complete Segal objects in pyknotic spaces:

\begin{exm}\label{exm:PykCatasCSPykS}
	\Cref{lem:pykCS} provides equivalences
	\begin{equation*}
		\Pyk(\CO(\Space)) \equivalent \CO(\Pyk(\Space)) \andeq \Pyk(\Cat_{\infty}) \equivalent \CS(\Pyk(\Space)) \period
	\end{equation*}
	In light of \Cref{nul:JoyalTiereny}, for each integer $ n \geq 0 $, \Cref{lem:pykCS} provides an equivalence
	\begin{equation*}
		\Pyk(\Cat_{n}) \equivalent \CS(\Pyk(\Space_{\leq n-1})) \period
	\end{equation*} 
\end{exm}

\begin{dfn}\label{ntn:Functs}
	Let $C$ and $D$ be pyknotic \categories.
	The \category of \defn{continuous functors}\index[terminology]{continuous functor} $C \to D$ is the end
	\begin{equation*}
		\Functs(C, D) \coloneq \int_{K \in \Proj} \Fun(C(K), D(K)) \period \index[notation]{Functs@$\Functs(C,D)$}
	\end{equation*}
	This is part of the natural enrichment of $\Pyk(\Cat_{\infty})$ over $\Cat_{\infty}$.
\end{dfn}

\begin{exm}\label{exm:FunctsasCSS}
	Let $ C $ and $ D $ be pyknotic \categories.
	As a complete Segal space, the \category $ \Functs(C,D) $ of continuous functors $ \fromto{C}{D} $ is given by the assignment
	\begin{equation*}
		[n] \mapsto \Map_{\Pyk(\Cat_{\infty})}(C \cross [n]^{\disc},D) = \int_{K \in \Proj} \Map_{\Cat_{\infty}}((C \cross [n]^{\disc})(K),D(K)) \period
	\end{equation*}
\end{exm}


\subsection{Profinite spaces as pyknotic spaces}\label{subsec:profinitepyknotic}

The goal of this section is to show that not only profinite \textit{sets} embed into $ \Pyk(\Space) $, but profinite spaces actually embed into $ \Pyk(\Space) $. 
The first approach one might have to showing this is to try to show that \pifinite spaces are cocompact when regarded as discrete pyknotic spaces, as this implies that the proëxtension of the discrete functor
\begin{equation*}
	(-)^{\disc} \colon \incto{\Spacefin}{\Pyk(\Space)}
\end{equation*}
is fully faithful.
Unfortunately, this approach is destined for failure: finite \textit{sets} aren't even cocompact objects of $ \Pyk(\Space) $, as the following counterexample shows.

\begin{ctrexm}
	The finite set $ \{0,1\} $ with two elements is not cocompact when regarded as a discrete pyknotic space.
	Since the embedding of compactly generated topological spaces into $ \Pyk(\Set) $ preserves limits, and the image of a finite set $ S $ under the Yoneda embedding $ \incto{\Pro(\Setfin)}{\Pyk(\Set)} $ coincides with its image under the embedding $ \incto{\TSpccg}{\Pyk(\Set)} $, to see that $ \{0,1\} $ is not cocompact in $ \Pyk(\Set) $, it suffices to prove that the discrete topological space $ \{0,1\} $ is not cocompact in $ \TSpccg $. 
	To see this, let $ s \colon \incto{\NNup}{\NNup} $ be the successor function $ \goesto{n}{n+1} $ and consider the diagram of discrete topological spaces
	\begin{equation*}
		\begin{tikzcd}[sep=1.5em]
			\cdots \arrow[r, "s", hooked] & \NNup \arrow[r, "s", hooked] &  \NNup \arrow[r, "s", hooked] & \NNup \period
		\end{tikzcd}
	\end{equation*}
	We claim that the map of sets
	\begin{equation}\label{eq:sucessor}
		\textstyle\colim_{n} \Map(\NNup,\{0,1\}) \to \Map(\textstyle\lim_n \NNup,\{0,1\})
	\end{equation}
	is not a bijection.
	To see this, note that the limit $ \lim_{n} \NNup $ empty, so $ \Map(\lim_n \NNup,\{0,1\}) $ has cardinality $ 1 $.
	On the other hand, $ \Map(\NNup,\{0,1\}) $ is the powerset $ \Pup(\NNup) $ of $ \NNup $, and the colimit $ \colim_{n} \Pup(\NNup) $ along the inverse image maps $ s^{-1} \colon \surjto{\Pup(\NNup)}{\Pup(\NNup)} $ has infinite cardinality.
\end{ctrexm}

The approach we take to show that profinite spaces embed into $ \Pyk(\Space) $ is somewhat indirect: we show that profinite sets form a basis for the effective epimorphism topology on $ \Spaceprofin $, so that hypersheaves on $ \Pro(\Setfin) $ and $ \Spaceprofin $ coincide (\Cref{prop:hyperbasis}).
The Yoneda embedding provides the desired embedding $ \incto{\Spaceprofin}{\Pyk(\Space)} $.

In order to get this approach off the ground, we first need to talk about the effective epimorphism topology on $ \Spaceprofin $.
The existence of this topology is immediate from \cite[{\SAGthm{Proposition}{A.3.2.1} \& \SAGthm{Theorem}{E.6.3.1}}]{SAG}.

\begin{prp}
	Write $ E \subseteq \Spaceprofin $ for the subcategory of those morphisms $ e \colon \fromto{X}{Y} $ in $ \Spaceprofin $ that can be written as an inverse limit of morphisms $ e_{\alpha} \colon \fromto{X_{\alpha}}{Y_{\alpha}} $ where each $ e_{\alpha} $ is an effective epimorphism of \pifinite spaces.
	Then $ E $ defines a \presite structure on $ \Spaceprofin $.
	We write $ \eff \coloneq \tau_E $\index[notation]{eff@$\eff$} for the resulting finitary topology, the \defn{effective epimorphism topology}\index[terminology]{topology!effective epimorphism}\index[terminology]{effective epimorphism topology}.
\end{prp}


\begin{nul}\label{exm:effepionProsubcanon}
	From \SAG{Proposition}{A.3.3.1} it follows that the effective epimorphism topology on $ \Spaceprofin $ is subcanonical.
	Moreover, since the Yoneda embedding
	\begin{equation*}
		\incto{\Spaceprofin}{\Sheff{\Spaceprofin;\Space_{\updelta_1}}}
	\end{equation*}
	preserves $\updelta_0$-small limits, truncated objects of a \topos are hypercomplete, and hypercomplete objects are closed under limits, the Yoneda embedding factors through $ \Sheffhyp{\Spaceprofin;\Space_{\updelta_1}} $.
\end{nul}

Now we show that $ \Pro(\Setfin) \subset \Spaceprofin $ is a basis for the effective epimorphism topology (in the sense of \Cref{def:basis}).
In fact, we show that every object of $ \Spaceprofin $ admits an effective epimorphism from a profinite set.
This requires a number of preliminaries.

\begin{nul}\label{nul:n-truncaedcohbasis}
	Note that for every space $ U $ there exists an effective epimorphism $ \surjto{\uppi_{0}(U)}{U} $.
	In particular, $ \Setfin \subset \Spacefin $ is a basis for the effective epimorphism topology and every object admits a cover by a single object of $ \Setfin $.
\end{nul}

Since we must contend with \proobjects, it isn't immediate from \Cref{nul:n-truncaedcohbasis} that every profinite space admits an effective epimorphism from a profinite set.
To show this, we'll use the fact that we can always arrange to index a \proobject by a particularly nice poset.

\begin{dfn}
	We say that a poset $ A $ is \defn{down-finite}\index[terminology]{down-finite}\index[terminology]{poset!down-finite} if for every element $ \alpha \in A $, the set $ \{ \beta \in A \,|\, \beta \leq \alpha \} $ is finite.
\end{dfn}

\begin{lem}[{\SAG{Lemma}{E.1.6.4}}]\label{lem:reducetoresidfinite}
	Let $ A' $ be a filtered poset.
	Then there exists a colimt-cofinal map of posets $ \fromto{A}{A'} $, where $ A $ is a down-finite filtered poset.
\end{lem}

\begin{cnstr}[rank function]
	If $ A $ is a down-finite poset, then there exists a map of posets
	\begin{equation*}
		\rank \colon \fromto{A}{\NNup}\index[notation]{rank@$\rank$}
	\end{equation*}
	called the \defn{rank}\index[terminology]{rank} which is determined by the following requirement: for each $ \alpha \in A $, the number $ \rk(\alpha) $ is the smallest natural number not equal to $ \rk(\beta) $ for $ \beta < \alpha $ (cf. \HA{Remark}{A.5.17}).
	In particular, $ \rk(\alpha) = 0 $ if and only if $ \alpha $ is a minimal element of $ A $.
\end{cnstr}

\begin{prp}\label{prp:probasis}
	For every object $ X \in \Spaceprofin $, there exists an cover $ \surjto{Y}{X} $ for the effective epimorphism topology on $ \Spaceprofin $ for which $ Y \in \Pro(\Setfin) $.
	In particular, $ \Pro(\Setfin) \subset \Spaceprofin $ is a basis for the effective epimorphism topology on $ \Spaceprofin $.
\end{prp}

\begin{proof}
	To simplify notation, write $ C \coloneq \Spacefin $ and $ D \coloneq \Setfin $.
	Let $ \{X_\alpha\}_{\alpha \in A^{\op}} $ be an object of $ \Pro(C) $.
	We without loss of generality assume that $ A $ is a down-finite filtered poset (\Cref{lem:reducetoresidfinite}).
	We construct a morphism $ e \colon \fromto{\{Y_\alpha\}_{\alpha \in A^{\op}}}{\{X_\alpha\}_{\alpha \in A^{\op}}} $ in $ \Pro(C) $ such that for each $ \alpha \in A $, the morphism $ e_\alpha \colon \fromto{Y_\alpha}{X_\alpha} $ is an effective epimorphism and $ Y_{\alpha} \in D $.
	We construct this inductively on the rank of elements of $ A $.
	For each $ n \in \NNup $, write 
	\begin{equation*}
		A_{\leq n} \coloneq \{ \alpha \in A \, | \, \rk(\alpha) \leq n \} \period
	\end{equation*}

	First, for each element $ \alpha \in A $ with $ \rk(\alpha) = 0 $ (i.e., minimal element of $ A $), appealing to \Cref{nul:n-truncaedcohbasis}, choose an effective epimorphism $ e_\alpha \colon \surjto{Y_{\alpha}}{X_{\alpha}} $ where $ Y_\alpha \in D $.
	
	For the induction step, suppose that we have defined a functor $ Y \colon \fromto{A_{\leq n}^{\op}}{D} $ along with a natural effective epimorphism $ e \colon \surjto{Y}{\restrict{X}{A_{\leq n}^{\op}}} $; we now extend $ Y $ to $ A_{\leq n+1}^{\op} $ as follows.
	For each $ \alpha \in A $ with $ \rank(\alpha) = n+1 $, consider the pulled-back effective epimorphism
	\begin{equation*}
		\surjto{\coprod_{\substack{\beta < \alpha \\ \rank(\beta) = n}} X_{\alpha} \crosslimits_{X_\beta} Y_{\beta}}{X_{\alpha}} \period
	\end{equation*}
	For each $ \beta < \alpha $ with $ \rk(\beta) = n $, appealing to \Cref{nul:n-truncaedcohbasis} we choose an effective epimorphism $ e'_\beta \colon \surjto{Y'_{\beta}}{X_{\alpha} \cross_{X_{\beta}} Y_{\beta}} $, and define the effective epimorphism $ e_{\alpha} \colon \surjto{Y_\alpha}{X_{\alpha}} $ as the composite
	\begin{equation*}
		\begin{tikzcd}
			e_\alpha \colon Y_{\alpha} \coloneq \displaystyle \coprod_{\substack{\beta < \alpha \\ \rank(\beta) = n}} Y'_\beta \arrow[r, ->>, "\coprod_\beta e'_\beta"] & \displaystyle \coprod_{\substack{\beta < \alpha \\ \rank(\beta) = n}} X_{\alpha} \crosslimits_{X_\beta} Y_{\beta} \arrow[r, ->>] & X_\alpha \period
		\end{tikzcd}
	\end{equation*}
	Then by construction the functor $ Y \colon \fromto{A_{\leq n}^{\op}}{D} $ extends to a functor $ Y \colon \fromto{A_{\leq n+1}^{\op}}{D} $ equipped with a natural effective epimorphism $ e \colon \surjto{Y}{\restrict{X}{A_{\leq n+1}^{\op}}} $, as desired.
\end{proof} 

As an immediate consequence of \Cref{prop:hyperbasis}, we obtain the desired equivalence on hypersheaves:

\begin{cor}\label{cor:proembeds}
	Restriction of presheaves along the inclusion $ \incto{\Pro(\Setfin)}{\Spaceprofin} $ defines an equivalence of large \topoi
	\begin{equation*}
		\Sheffhyp{\Spaceprofin;\Space_{\updelta_1}} \equivalence \Pyk(\Space) 
	\end{equation*}
	with inverse given by right Kan extension.
\end{cor}

\begin{nul}\label{nul:Yonedaisdiscrete}
	We finish this section by showing that the restricted Yoneda embedding
	\begin{equation*}
		 \yo \colon \incto{\Spacefin}{\Pyk(\Space)}
	\end{equation*}
	agrees with the discrete functor $ \Gammaupperstar \colon \incto{\Spacefin}{\Pyk(\Space)} $. 
	To see this, first note that the global sections functor $ \Gammalowerstar \colon \fromto{\Pyk(\Space)}{\Space} $ is given by the composite
	\begin{equation*}
		\begin{tikzcd}
			\Pyk(\Space) \arrow[r, "\restrict{(-)}{\Spacefin}"] & \Sheff{\Spacefin} \arrow[r, "\sim"{yshift=-0.2em}] & \Space
		\end{tikzcd}
	\end{equation*}
	of restriction along the inclusion $ \incto{\Spacefin}{\Spaceprofin} $ with evaluation at the terminal object.
	The inverse equivalence $ \equivto{\Space}{\Sheff{\Spacefin}} $ is given by sending a space $ Y $ to the functor
	\begin{equation*}
		\Map_{\Space}(-,Y) \colon \fromto{\Spacefin^{\op}}{\Space} \period
	\end{equation*}
	Hence we have natural equivalences 
	\begin{align*}
		\Map_{\Pyk(\Space)}(\yo(K),X) &\equivalent X(K) \equivalent \restrict{X}{\Spacefin^{\op}}(K) \\ 
		&\equivalent \Map_{\Space}(K,\Gammalowerstar(X)) \\ 
		&\equivalent \Map_{\Pyk(\Space)}(\Gammaupperstar(K),X) \period
	\end{align*}

	Since the Yoneda embedding $ \yo \colon \incto{\Spaceprofin}{\Pyk(\Space)} $ preserves inverse limits, we see that the Yoneda embedding is the extension of the discrete functor $ \Gammaupperstar \colon \incto{\Spacefin}{\Pyk(\Space)} $ to \proobjects.
\end{nul}


\subsection{Profinite stratified spaces as pyknotic \texorpdfstring{$\infty$}{∞}-categories}\label{subsec:categoryobjs}

Our next goal is to show that profinite layered \categories embed into $ \Pyk(\Cat_{\infty}) $.
We'll deduce this from the fact that profinite spaces embed into $ \Pyk(\Space) $ by regarding profinite layered \categories as \textit{complete Segal objects} of $ \Spaceprofin $.

We consider a composite of three functors:
\begin{enumerate}[(1)]
	\item The nerve provides an embedding $ \Nerve \colon \incto{\Layfin}{\CS(\Spacefin)} $ of \pifinite layered \categories into complete Segal objects in \pifinite spaces.
	Extending the nerve to \proobjects provides an embedding $ \Nerve \colon \incto{\Pro(\Layfin)}{\Pro(\CS(\Spacefin))} $.

	\item The proëxtension of the Yoneda embedding $ \incto{\CS(\Spacefin)}{\CS(\Spaceprofin)} $ defines a functor
	\begin{equation*}
		\fromto{\Pro(\CS(\Spacefin))}{\CS(\Spaceprofin)} \period
	\end{equation*}
	Since the image of Yoneda embedding $ \incto{\CS(\Spacefin)}{\CS(\Spaceprofin)} $ consists of cocompact objects (\Cref{cor:COspacefiniscocompact}), applying \HTT{Proposition}{5.3.5.11} we see that this functor is fully faithful.
	
	\item \Cref{cor:proembeds} proves an embedding $ \incto{\Spaceprofin}{\Pyk(\Space)} $.
	Passing to complete Segal objects, we obtain an embedding
	\begin{equation*}
		\incto{\CS(\Spaceprofin)}{\CS(\Pyk(\Space)) \equivalent \Pyk(\Cat_{\infty})} \period
	\end{equation*}
\end{enumerate}

We thus have fully faithful functors
\begin{equation}\label{eq:profinlayeredembed}
	\begin{tikzcd}[sep=1.5em]
		\Pro(\Layfin) \arrow[r, hooked, "\Nerve"] & \Pro(\CS(\Spacefin)) \arrow[r, hooked] & \CS(\Spaceprofin) \arrow[r, hooked] & \Pyk(\Cat_{\infty}) \period
	\end{tikzcd}
\end{equation}
Since the embedding \eqref{eq:profinlayeredembed} preserves inverse limits and agrees with the discrete functor $ (-)^{\disc} \colon \incto{\Layfin}{\Pyk(\Cat_{\infty})} $ when restricted to \pifinite layered \categories \Cref{nul:Yonedaisdiscrete}, it is the extension to \proobjects of the discrete functor.
Thus we have shown:

\begin{prp}\label{exm:profinstratspaceembedintoPyk}
	The functor $ \fromto{\Pro(\Layfin)}{\Pyk(\Cat_{\infty})} $ defined by extending the discrete functor $ (-)^{\disc} \colon \incto{\Layfin}{\Pyk(\Cat_{\infty})} $ to \proobjects is fully faithful.
\end{prp}

In particular, we may now deduce a key almost-cocompactness result for profinite stratified spaces, when regarded as pyknotic \categories.
The following result plays an important role in the proof of \Cref{thm:Qellexodromy}.

\begin{cor}\label{cor:profincompactuniform}
	Let $ \mbfPi $ be a profinite layered \category, let $ n \geq 0 $ be an integer, and let $ D \colon \fromto{A}{\Pyk(\Cat_{n})} $ be a filtered diagram.
	Then the natural functor
	\begin{equation*}
		\colim_{\alpha \in A} \Functs(\mbfPi,D_{\alpha}) \to \Functs(\mbfPi,\textstyle \colim_{\alpha \in A} D_{\alpha})
	\end{equation*}
	is an equivalence.
\end{cor}

\begin{proof}
	In light of \Cref{exm:FunctsasCSS,nul:compactnesseasyhypotheses}, note that both $ \mbfPi $ and $ \mbfPi \cross [1] $ satisfy the hypotheses of \Cref{prop:Catcompactness}.
\end{proof}

\begin{exm}
	Let $\mbfPi$ be a profinite stratified space.
	We may now generalize \Cref{exm:pifinitestratspacesaspyknoticcategories} as follows.
	Again since profinite stratified spaces embed fully faithfully into \topoi,
	it follows that we obtain the following formula for $\mbfPi$ as a pyknotic \category:
	\begin{equation*}
		\mbfPi(K) \simeq \Funlowerstar(\Ktilde, \widetilde{\mbfPi}) \comma
	\end{equation*}
	for any projective compactum $K$, functorially in $K$ and $\mbfPi$.
	Thus, if $\XX \to \widetilde{S}$ is a spectral \topos, then one has
	\begin{equation*}
		\StrShape^S(\XX)(K) \simeq \Funlowerstar(\Ktilde, \XX) \comma
	\end{equation*}
	for any projective compactum $K$.

	More generally, we may use this formula to define a more refined shape for any bounded coherent \topos $\XX$:
	the assignment
	\begin{equation*}
		K \mapsto \Funlowerstar(\Ktilde, \XX)
	\end{equation*}
	defines a pyknotic \category \smash{$\upPi_{(\infty,1)}^{\pyk}(\XX)$} that we call the \defn{pyknotic shape} of $ \XX $.
	Following Lurie's work on ultracategories \cite{Ultracategories}, it is reasonable to expect that the \topos $\XX$ can itself be reconstructed from the \category of continuous functors
	\begin{equation*}
		\upPi_{(\infty,1)}^{\pyk}(\XX) \to \Spaceu \comma
	\end{equation*}
	where \smash{$ \Spaceu $} denotes the pyknotic \category \smash{$ K \mapsto \Ktilde $}.
	We will not pursue such questions here, however.
\end{exm}


\subsection{Exodromy with discrete coefficients, revisited}\label{subsec:functorcomparison}

Let $ X $ be a coherent scheme.
The original formulation of the Exodromy Theorem for schemes says that if $ C $ is a locally \pifinite \category, then functors $ \Gal(X) \to C $ (in the `pro' sense) are the same things as constructible sheaves of spaces on $ X $ (\Cref{thm:stableexodromyforstrattopoi}).
The goal of this section is to give a pyknotic reformulation of this theorem.
To do this, we show that functors $ \fromto{\Gal(X)}{C} $ in the `pro' sense are the same as continuous functors $ \fromto{\Gal(X)}{C^{\disc}} $ in the pyknotic sense; here we regarded $ \Gal(X) $ as a pyknotic category under the embedding of \Cref{exm:profinstratspaceembedintoPyk}.

\begin{lem}\label{cor:FunproisFuncts}
	Let $ C $ be a locally \pifinite \category and $ \mbfPi = \{\Pi_{\alpha}\}_{\alpha \in A} $ a profinite layered \category.
	Then the natural functor
	\begin{equation*}
		\Fun(\mbfPi, C) = \colim_{\alpha \in A^{\op}} \Fun(\Pi_{\alpha},C) \to \Functs(\mbfPi,C^{\disc})
	\end{equation*} 
	is an equivalence.
\end{lem}

\begin{proof}
	Since the \category $ C $ is locally \pifinite, $ C $ is the filtered union
	\begin{equation*}
		C = \colim_{C_0 \in \Subpi(C)} C_{0}
	\end{equation*}
	over the poset of \pifinite full subcategories of $ C $ ordered by inlusion (\Cref{ntn:subpi}).
	Since the pyknotic set $ \uppi_0(\interior \mbfPi) $ is quasicompact and $ \uppi_0(\interior C) $ is discrete, we see that every continuous functor $ \fromto{\mbfPi}{C^{\disc}} $ factors through $ C_0^{\disc} \subset C^{\disc} $ for some \pifinite full subcategory $ C_0 \subset C $.
	Hence we have an identification
	\begin{equation*}
		\Functs(\mbfPi,C^{\disc}) = \colim_{C_0 \in \Subpi(C)} \Functs(\mbfPi,C_0^{\disc}) \period
	\end{equation*}
	Since each \category $ C_0 $ is \pifinite, each $ \Pi_{\alpha} $ has finitely many objects up to equivalence, and colimits commute, from \Cref{prop:Catcocompactness} we see that 
	\begin{align*}
		\colim_{C_0 \in \Subpi(C)} \Functs(\mbfPi,C_0^{\disc}) &\equivalent \colim_{C_0 \in \Subpi(C)} \colim_{\alpha \in A^{\op}} \Functs(\Pi_{\alpha}^{\disc},C_0^{\disc}) \\ 
		&\equivalent \colim_{C_0 \in \Subpi(C)} \colim_{\alpha \in A^{\op}} \Fun(\Pi_{\alpha},C_0) \\
		&\equivalent \colim_{\alpha \in A^{\op}} \Fun(\Pi_{\alpha},C) \\
		&= \Fun(\mbfPi,C) \period \qedhere
	\end{align*}
\end{proof}

\begin{exm}
	Let $ \mbfPi $ be a profinite layered \category.
	\Cref{cor:FunproisFuncts} provides an equivalence
	\begin{equation*}
		\Fun(\mbfPi,\Spacefin) \equivalent \Functs(\mbfPi,\Spacefin^{\disc}) \period
	\end{equation*}
	Moreover, for any finite ring $ R $, \Cref{cor:FunproisFuncts} provides an equivalence
	\begin{equation*}
		\Fun(\mbfPi,\Perf(R)) \equivalent \Functs(\mbfPi,\Perf(R)^{\disc}) \period
	\end{equation*}
\end{exm}

We thus obtain the following reformulation of \Cref{thm:stableexodromyforstrattopoi}:

\begin{cor}\label{cor:pyknoticdiscreteexodromy}
	Let $ C $ be a $\updelta_0$-small, locally \pifinite category.
	Let $ S $ be a spectral topological space, and $ \XX $ an $ S $-stratified \topos.
	Then there is a natural equivalence
	\begin{equation*}
		\Functs(\StrShape^{S}(\XX),C^{\disc}) \simeq \Cons^S(\XX;C) \period
	\end{equation*}
\end{cor}


\subsection{Exodromy with profinite coefficients}\label{subsec:profiniteexodromy}

In this section we extend the Exodromy Theorem from finite coefficients to coefficients in the ring of integers in a nonarchimedean local field.
First we isolate the class of profinite rings that we're interested in.

\begin{dfn}\label{def:I-complete}
	Let $ \Lambda $ be ring and $ I \subset \Lambda $ an ideal.
	We say that $ \Lambda $ is \defn{$ I $-profinite}\index[terminology]{ring!profinite@$I$-profinite}\index[terminology]{profinite ring@$I$-profinite ring} if: 
	\begin{enumerate}[(\ref*{def:I-complete}.1)]
		\item\label{def:I-complete.1} the ring $ \Lambda $ is noetherian,

		\item\label{def:I-complete.2} the ring $ \Lambda $ is complete with respect to the topology defined by the ideal $ I $,

		\item\label{def:I-complete.3} and for each integer $ n \geq 1 $, the quotient ring $ \Lambda/I^n $ is finite.
	\end{enumerate}

	We simply say `let $ \Lambda $ be an $ I $-profinite ring' to mean  `let $ \Lambda $ be a ring with ideal $ I \subset \Lambda $ satisfying \enumref{def:I-complete}{1}--\enumref{def:I-complete}{3}'.  
\end{dfn}

\begin{nul}
	The reason for the noetherian hypothesis is to apply results from \cite[\S\S6.5-6.8]{MR3379634}; these results use the noetherianity of $ \Lambda $ to apply the Artin--Rees Lemma. 
	The requirement that the quotients $ \Lambda/I^n $ be finite is so that we have access to the Exodromy Theorem with coefficients in a finite discrete ring (\Cref{cor:stableexodromyforschemes,cor:pyknoticdiscreteexodromy}).
\end{nul}

\begin{exm}
	Let $ E $ be a nonarchimedean local field with ring of integers $ \Oup_E $.
	Write $ \mfrak_{E} \subset \Oup_E $ for the maximal ideal.
	Then $ \Oup_E $ is an $ \mfrak_{E} $-profinite ring.

	In particular, for any prime number $ \el $ and prime power $ q $, the ring $ \ZZell $ is $ (\el) $-profinite and the ring $ \FFq\llbracket t \rrbracket $ is $ (t) $-profinite.
\end{exm}

\begin{dfn}
	Let $ \Lambda $ be an $ I $-profinite ring.
	The pyknotic \category of \defn{perfect $\Lambda$-complexes}\index[terminology]{perfect complex} is the limit
	\begin{equation*}
		\Perf(\Lambda) \coloneq \lim_{n \geq 1} \Perf(\Lambda/I^n)^{\disc} \index[notation]{PerfLambda@$\Perf(\Lambda)$}
	\end{equation*}
	in $\Pyk(\Cat_{\infty})$.
\end{dfn}

\begin{nul}
	Please observe that if $ \Lambda $ is an $ I $-profinite ring, since the underlying functor preserves limits, the underlying \category $\Perf(\Lambda)^{\und}$ coincides with the usual \category of perfect complexes on $\Lambda$.

	More generally, by \Cref{nul:PykSislocal} we see that for any projective compactum $K$ exhibited as a profinite set $\{K_{\alpha}\}_{\alpha \in A^{\op}}$, we have
	\begin{equation*}
		\Perf(\Lambda)(K) \simeq \lim_{n \geq 1} \colim_{\alpha \in A} \Perf(\Lambda/I^n)^{K_{\alpha}} \period
	\end{equation*}
\end{nul}

\begin{rmk}
	It is not necessary to give such an \textit{ad hoc} definition of the pyknotic \category of perfect complexes on a profinite ring.
	There is an intrinsic definition for a general pyknotic ring, but to develop this material here would take us too far afield.

	In the case of a $ I $-profinite ring, the more intrinsic definition recovers the definition given here.
\end{rmk}

\begin{rec}
	Let $X$ be a coherent scheme, and let $ \Lambda $ be an $ I $-profinite ring.
	Recall \cite[Definition 6.5.1]{MR3379634} that the \category of constructible $\Lambda$-complexes on $X$ can be identified as the limit of \categories
	\begin{equation*}
		\Dcons(X_{\proet};\Lambda) \simeq \lim_{n \geq 1} \Dcons(X_{\et};\Lambda/I^n) \period
	\end{equation*}
	In other words, a constructible $\Lambda$-complex on $X$ is a prosystem $\{F_n\}_{n \geq 1}$ consisting of constructible $(\Lambda/I^n)$-complexes $F_n$ on $X$ along with coherent identifications
	\begin{equation*}
		F_m \simeq F_n \otimes_{\Lambda/I^n} (\Lambda/I^m)
	\end{equation*}
	for all $m \leq n$.
\end{rec}

\begin{thm}\label{thm:exodromyforOEcoefficients}
	Let $X$ be a coherent scheme, and let $ \Lambda $ be an $ I $-profinite ring.
	Then there is a natural equivalence of \categories
	\begin{equation*}
		\Dcons(X_{\proet};\Lambda) \simeq \Functs(\Gal(X),\Perf(\Lambda)) \period
	\end{equation*}
\end{thm}

\begin{proof}
	This follows by taking limits over $ n $ of the equivalences
	\begin{equation*}
		\Dcons(X_{\proet};\Lambda/I^n) \simeq \Functs(\Gal(X),\Perf(\Lambda/I^n)) \period
	\end{equation*}
	provided by \Cref{cor:stableexodromyforschemes}.
\end{proof}

\begin{nul}
	Let $X$ be a coherent scheme, and let $ \Lambda $ be an $ I $-profinite ring.
	Attached to any constructible sheaf $F$ of $\Lambda$-complexes on $X$, we have an associated \defn{exodromy representation}
	\begin{equation*}
		\exrep_F \colon \Gal(X) \to \Perf(\Lambda)
	\end{equation*}
	that is sufficient to reconstruct $F$.
\end{nul}

\begin{wrn}\label{wrn:BSexample6.6.12}
	For an $ I $-profinite ring $ \Lambda $, the pyknotic \category $\Perf(\Lambda)$ is \textit{not} generally discrete.
	Moreover, it is not generally the case that an exodromy representation $\exrep_F \colon \Gal(X) \to \Perf(\Lambda)$ factors through a quotient of $\Gal(X)$ with only finitely many isomorphism types.
	For instance, Bhatt and Scholze give an example of a constructible $\ZZell$-complex on a nonnoetherian scheme that is not lisse on the strata of any finite stratification \cite[Example 6.6.12]{MR3379634}.
\end{wrn}

If $ X $ is a topologically noetherian scheme, Bhatt and Scholze demonstrate that this problem does not arise.
To explain, this, let us briefly recall the basics of the \textit{constructible topology} on a spectral topological space.

\begin{rec}
	Let $ S $ be a spectral topological space.
	The \defn{constructible topology} on $ S $ is the topology on the underlying set of $ S $ generated by the constructible subsets of $ S $ (\Cref{rec:subsetconstr}).
	We write $ S^{\ctop} $ for the set $ S $ equipped with the constructible topology.
	The topological space $ S^{\ctop} $ is a Stone topological space.

	The constructible topology admits a description in terms of \proobjects: if we exhibit $ S $ as a profinite poset $ \{P_{\alpha}\}_{\alpha \in A} $, then the profinite set $ S^c $ is given by the inverse system $ \{\interior P_{\alpha}\}_{\alpha \in A} $ of the underlying sets of the posets $ P_{\alpha} $.
	In particular, the assignment $ \goesto{S}{S^{\ctop}} $ is a right adjoint to the inclusion $ \incto{\Stn}{\TSpcspec} $.

	If $ X $ is a coherent scheme, then notice that the profinite set $ \uppi_0 \interior \Gal(X) $ coincides with the set $ X^{\zar} $ equipped with the constructible topology.
	If $\Lambda$ is an $I$-profinite ring, then the exodromy representation $\exrep_F$ attached to a constructible $\Lambda$-complex on $X$ induces a continuous map of pyknotic sets
	\begin{equation*}
		\uppi_0 \interior \exrep_F \colon X^{\zar,\ctop} = \uppi_0 \interior \Gal(X) \to \uppi_0 \interior \Perf(\Lambda) \period
	\end{equation*}
\end{rec}

\begin{lem}[{\cite[Proposition 6.6.11]{MR3379634}}] \label{lem:BS6.6.11}
	Let $X$ be a topologically noetherian scheme, and let $\Lambda$ be an $I$-profinite ring.
	Then for any constructible sheaf $F$ of $\Lambda$-complexes on $X$, the continuous map
	\begin{equation*}
		\uppi_0 \interior \exrep_F \colon X^{\zar,\ctop} \to \uppi_0 \interior \Perf(\Lambda)
	\end{equation*}
	factors through a finite (discrete) quotient of $X^{\zar,\ctop}$.
\end{lem}


\subsection{Exodromy with \texorpdfstring{$\el$}{ℓ}-adic coefficients}\label{subsec:compactnessofpyknoticcats}

The goal of this section is to use the Exodromy Theorem with $ \ZZell $-coefficients (\Cref{thm:exodromyforOEcoefficients}) to prove the Exodromy Theorem with \smash{$ \QQell $}- or \smash{$ \QQellbar $}-coefficients.
For this we begin by developing the basics of how to regard $ \Perf(\ZZell) $ as a pyknotic object in stable \categories with \tstructure and \texact functors. 
Recall that we use \textit{homological} indexing conventions for our \tstructures (\Cref{conv:homological}).

\begin{rec}[\tstructure on filtered colimits]\label{rec:tstructurefilteredcolim}
	Let $ \{C_{\alpha}\}_{\alpha \in A} $ be a filtered diagram of stable \categories with \tstructures and \texact transition morphisms. 
	Then the colimit $ C \coloneq \colim_{\alpha \in A} C_{\alpha} $ in $ \Cat_{\infty} $ admits a natural \tstructure defined by
	\begin{equation*}
		C_{\geq 0} \coloneq \colim_{\alpha \in A} C_{\alpha,\geq 0} \andeq C_{\leq 0} \coloneq \colim_{\alpha \in A} C_{\alpha,\leq 0} \period
	\end{equation*}
	Moreover, if for each $ \alpha \in A $ the \tstructure on $ C_{\alpha} $ is bounded, then the natural \tstructure on $ C $ is bounded as well.
\end{rec}

\begin{rec}[{\tstructure on $ \Perf(\Lambda)^{\und} $}]
	Let $ \Lambda $ be an $ I $-profinite ring.
	The stable \category $\Perf(\Lambda)^{\und}$ carries its usual a bounded \tstructure, defined as follows.
	Let $F = \{F_n\}_{n \geq 1}$ be a perfect $\Lambda$-complex.
	Then $F \in \Perf(\Lambda)^{\und}_{\geq 0}$ if and only if, for any $n \geq 1$, the perfect $(\Lambda/I^n)$-complex $F_n$ lies in $\Perf(\Lambda/I^n)_{\geq 0}$.
	On the other hand, $F \in \Perf(\Lambda)^{\und}_{\geq 0}$ if and only if, for all $k > 0$, the prosystem $\{\Hup_k(F_n) \}_{n \geq 1}$ vanishes.
\end{rec}

\begin{cnstr}[{\tstructure on $ \Perf(\Lambda) $}]
	Now we extend this to a \tstructure on all the values of the pyknotic \category $\Perf(\Lambda)$ on a projective compactum $ K $.
	To this end, for any $n \geq 1$, and any finite set $ J $, we endow the $ J $-fold product $ \Perf(\Lambda/I^n)^J $ with the product \tstructure induced from the \tstructure on $ \Perf(\Lambda/I^n) $; this \tstructure is bounded because $ J $ is finite.
	For any projective compactum $ K = \{K_{\alpha}\}_{\alpha \in A^{\op}} $, we also endow the filtered colimit
	\begin{equation*}
		\Perf(\Lambda/I^n)^K = \colim_{\alpha \in A} \Perf(\Lambda/I^n)^{K_{\alpha}}
	\end{equation*}
	with its natural bounded \tstructure (\Cref{rec:tstructurefilteredcolim}).
	With this definition, the assignment
	\begin{equation*}
		K \mapsto \Perf(\Lambda/I^n)^K
	\end{equation*}
	is a pyknotic object in stable \categories with bounded \tstructures and \texact functors.

	For any projective compactum $K$, the limit $\lim_{n\geq 1} \Perf(\Lambda/I^n)^K$ is an inverse system of right \texact functors.
	Hence we may follow the example of $\Perf(\Lambda)^{\und}$ and define
	\begin{equation*}
		\Perf(\Lambda)(K)_{\geq 0} \coloneq \lim_{n \geq 1} \,(\Perf(\Lambda/I^n)^{K})_{\geq 0} \comma
	\end{equation*} 
	and $\Perf(\Lambda)(K)_{\leq 0}$ as the full subcategory of $\Perf(\Lambda)(K)$ spanned by those $F = \{F_n\}_{n \geq 1}$ such that for each $k > 0$, the prosystem $\{ \Hup_k(F_n) \}_{n \geq 1}$ vanishes.
	With this definition, the assignment $K \mapsto \Perf(\Lambda)(K)$ is a pyknotic object in stable \categories with \tstructures and \texact functors.

	Given integers $ a \leq b $, we write $ \Perf(\Lambda)_{[a,b]} $ for the subfunctor of $ \Perf(\Lambda) $ given by the assignment
	\begin{equation*}
		K \mapsto (\Perf(\Lambda)(K))_{[a,b]} \period
	\end{equation*}
\end{cnstr}

The following is now immediate from \cite[Lemma 6.5.3]{MR3379634} and \Cref{lem:BS6.6.11}:

\begin{lem}\label{lem:uniformboundednessofexodromyreps}
	Let $X$ be a topologically noetherian scheme, let $ \Lambda $ be an $ I $-profinite ring, and let $ F $ be a constructible $\Lambda$-complex on $X$.
	Then the exodromy representation
	\begin{equation*}
		\exrep_F \colon \Gal(X) \to \Perf(\Lambda)
	\end{equation*}
	is \emph{bounded} in the sense that there exists integers $a \leq b$ for which $\exrep_F $ factors through $\Perf(\Lambda)_{[a,b]} \subset \Perf(\Lambda)$.
\end{lem}

Now we are ready to extend to $ \QQell $-coefficients.

\begin{dfn}
	Let $ \el $ be a prime number and $E$ an algebraic extension of $\QQell $.
	We define the pyknotic \category $\Perf(E)$ as the filtered colimit of pyknotic \categories
	\begin{equation*}
		\Perf(E) \coloneq \colim_{E' \subset E} \Perf(\Oup_{E'})[\el^{-1}] \comma
	\end{equation*}
	over finite subextensions $ \QQell \subset E' \subset E$.
	Here, $C[\el^{-1}]$ is shorthand for the filtered colimit of pyknotic \categories
	\begin{equation*}
		C[\el^{-1}] \coloneq \colim\big(
		\begin{tikzcd}[sep=1.5em]
			C \arrow[r, "\cdot \el"] & C \arrow[r, "\cdot \el"] &  C \arrow[r, "\cdot \el"] & \cdots 
		\end{tikzcd}
		\big) \comma
	\end{equation*}
	where $ \cdot \el \colon \fromto{C}{C} $ denotes multiplication by $ \el $.
\end{dfn}

\begin{rec}
	Let $X$ be a topologically noetherian scheme, and let $E$ be an algebraic extension of $\QQell $.
	Recall \cite[Proposition 6.8.14]{MR3379634} that the \category of constructible $E$-complexes on $X$ can be identified as the filtered colimit
	\begin{equation*}
		\Dcons(X_{\proet};E) \simeq \colim_{E' \subset E} \Dcons(X_{\et};\Oup_{E'})[\el^{-1}] \comma
	\end{equation*}
	over finite subextensions $\QQell \subset E' \subset E$.

	Accordingly, we observe that $\Perf(E)^{\und}$ is the usual \category of perfect $E$-complex\-es.
	In particular, if $ \xi $ is a geometric point, then $\Dcons(\xi;E) \equivalent \Perf(E)^{\und} $.
\end{rec}

\begin{nul}
	More generally, we can be unpack the value of the pyknotic \category $\Perf(E)$ on a projective compactum $ K = \{K_{\alpha}\}_{\alpha \in A^{\op}}$ quite explicitly:
	\begin{equation*}
		\Perf(E)(K) \simeq \colim_{E' \subseteq E} \left( \lim_{n\geq 1} \colim_{\alpha \in A} \Perf(\Oup_{E'}/\mfrak_{E'}^n)^{K_{\alpha}} \right) [\el^{-1}] \period
	\end{equation*}
\end{nul}

We finish this section by proving the Exodromy Theorem for \smash{$ \QQell $}- and \smash{$ \QQellbar $}-coefficients.

\begin{thm}[{Exodromy for $ \el $-adic sheaves\index[terminology]{Exodromy Theorem!for elladic sheaves@for $\el$-adic sheaves}}]\label{thm:Qellexodromy}
	Let $X$ be a topologically noetherian scheme, $ \el $ a prime number, and $E$ an algebraic extension of $\QQell$.
	Then there is a natural equivalence of \categories
	\begin{equation*}
		\Dcons(X_{\proet};E) \simeq \Functs(\Gal(X),\Perf(E)) \period
	\end{equation*}
\end{thm}

\noindent To prove this theorem, we appeal to the work of \Cref{prop:Catcompactness} on the the \textit{almost compactness} of profinite layered \categories.

\begin{proof}[Proof of \Cref{thm:Qellexodromy}]
	Note that for each projective compactum $ K $, the value $\Perf(E)(K)$ is a filtered colimit over \texact functors.
	Hence combining \Cref{thm:exodromyforOEcoefficients,lem:uniformboundednessofexodromyreps,cor:profincompactuniform} we see that we have equivalences
	\begin{align*}
		\Dcons(X_{\proet}; E) &\equivalent \colim_{E' \subseteq E} \Dcons(X_{\et}; \Oup_{E'})[\el^{-1}] \\ 
		&\equivalent \colim_{E' \subseteq E} \Functs(\Gal(X),\Perf(\Oup_{E'}))[\el^{-1}] \\
		&\equivalent \colim_{E' \subseteq E} \colim_{N\geq 0} \Functs(\Gal(X),\Perf(\Oup_{E'})_{[-N,N]})[\el^{-1}] \\
		&\equivalent \colim_{N\geq 0} \Functs(\Gal(X), \colim_{E' \subseteq E} \Perf(\Oup_{E'})[\el^{-1}]_{[-N,N]}) \\
		&\equivalent \Functs(\Gal(X), \Perf(E)) \period \qedhere
	\end{align*}
\end{proof}

\begin{nul}
	Let $X$ be a topologically noetherian scheme, and let $E$ be an algebraic extension of $\QQell$.
	Attached to any constructible sheaf $F$ of $E$-complexes on $X$, we have an associated \defn{exodromy representation}
	\begin{equation*}
		\exrep_F \colon \Gal(X) \to \Perf(E)
	\end{equation*}
	that is sufficient to reconstruct $F$.

	For more general pyknotic rings $\Lambda$, once one has a good pyknotic \category $\Perf(\Lambda)$ of perfect $\Lambda$-complexes, it seems sensible simply to \textit{define} constructible sheaves as continuous representations $\Gal(X) \to \Perf(\Lambda)$ (even if $X$ does not satisfy any noetherian hypotheses).
\end{nul}

\begin{nul}
	Recall the following well-known example of Deligne \cite[Example 7.4.9]{MR3379634}:
	let $C$ be a smooth complete curve of genus at least $1$ over an algebraically closed field, with two points identified.
	Let $E$ be an algebraic extension of $\QQell$.
	The usual protruncated étale homotopy type $\Shapetrun^{\et}(C)$ is insufficient to reconstruct $E$-local systems on $ C $.

	However, the profinite \textit{stratified} étale homotopy type does suffice to recover these local systems: the \category of local systems of $E$-complexes on $C$ is equivalent to the \category of continuous functors $\Gal(C) \to \Perf(E)$ that carry all morphisms of $\Gal(X)$ to equivalences.

	Since the classifying space functor $ \invert \colon \fromto{\Cat_{\infty}}{\Space} $ preserves finite products, post-composition with $ \invert $ defines a left adjoint $\invert^{\pyk} $ to the inclusion $\Pyk(\Space) \inclusion \Pyk(\Cat_{\infty})$.
	For any coherent scheme $X$, one can form the \defn{pyknotic étale homotopy type}
	\begin{equation*}
		\Shape^{\pyk,\et}(X) \coloneq \invert^{\pyk}(\Gal(X)) \period
	\end{equation*}
	This pyknotic space suffices to reconstruct local systems with all the coefficient types discussed in this section.
	Its homotopy groups are the \defn{pyknotic étale homotopy groups} \smash{$\uppi_{\ast}^{\pyk,\et}(X)$}.
	
	One can even show that the Bhatt--Scholze proétale fundamental group corepresents the pyknotic fundamental group \smash{$\uppi_1^{\pyk,\et}(X)$};
	we intend to explore this and related points in future work.
\end{nul}


\subsection{Fibered Galois categories \& exodromy for simplicial schemes and stacks}\label{subsec:exodromyforstacks}

We finish this chapter by extending our notion of Galois categories to simplicial schemes and stacks, and proving our Exodromy Theorem in this context.\footnote{The material from this section first appeared – with a slightly different set-up – in a preprint of the first- and third-named authors \cite{exodromyforstacks}.}

\begin{rec}[{\HTT{Definition}{6.3.1.6}}]
	Let $ M $ be \acategory.
	A functor $ p \colon \XX \to M $ is a \defn{topos fibration}\index[terminology]{topos fibration}\index[terminology]{fibration!topos} if $ p $ is a bicartesian fibration, for each $ m \in M $ the fiber $ \XX_{m} $ is \atopos, and for each morphism  $f \colon n \to m$ of $ M $, the induced pullback functor $ \fupperstar \colon \XX_n \to \XX_m $ is left exact.
\end{rec}

\begin{dfn}
	Let $M$ be \acategory.
	A \defn{bounded coherent topos fibration}\index[terminology]{topos fibration!bounded coherent} $ \XX \to M $ is a topos fibration in which each fiber $\XX_m$ is a bounded coherent \topos, and for every morphism $f \colon n \to m$ of $ M $, the induced geometric morphism $ \flowerstar \colon \XX_m \to \XX_n$ is coherent.
	A \defn{spectral topos fibration}\index[terminology]{topos fibration!spectral} $ \XX \to S $ is a bounded coherent topos fibration in which each fiber $\XX_m$ is a spectral \topos (for the canonical profinite stratification of \cref{subsec:naturalstrat}).
\end{dfn}

\begin{nul}
	The usual straightening/unstraightening equivalence restricts to an equivalence between the \category of bounded coherent (respectively, spectral) topos fibrations $\XX \to M$ and the \category of functors from $M^{\op}$ to the \category of bounded coherent (resp., spectral) \topoi (cf. \HTT{Proposition}{6.3.1.7}).

	For a bounded coherent topos fibration $ \fromto{\XX}{M} $ we write $\XXcohbdd \subseteq \XX$ for the full subcategory spanned by the objects that are truncated and coherent in their fiber.
	Then $\XXcohbdd \to M$ is a cocartesian fibration that is classified by a functor from $ M $ to the category of bounded \pretopoi \cite[\SAGthm{Definition}{A.7.4.1} \& \SAGthm{Theorem}{A.7.5.3}]{SAG}.
\end{nul}

\begin{exm}
	If $X_{\ast}$ is a simplicial coherent scheme, then the fibered topos $X_{\ast,\et} \to \DDelta$ is a spectral topos fibration.
\end{exm}

A fibered form of \Categorical Hochster Duality is what allows us to construct fibered Galois categories.
To prove this fibered form of \Categorical Hochster Duality, we need to make sense of \categories fibered in profinite stratified spaces.

\begin{dfn}\label{dfn:fiberedinprofinite}
	Let $M$ be \acategory.
	We say that a functor $ f \colon \Pi \to M$ is an \defn{\category over $M$ fibered in layered \categories}\index[terminology]{category@\category!fibered in layered categories@\category fibered in layered \categories} if $ f $ is a catesian fibration whose fibers are layered \categories.
	We write $\Laycart_{/M}$\index[notation]{Laycart@$ \Laycart_{/M} $} for the \category of \categories over $M$ fibered in layered \categories.

	More generally, an \defn{\category over $ M $ fibered in profinite layered \categories} is a pyknotic object $ \mbfPi $ in \categories over $ M $ such that:
	\begin{itemize}
		\item for every projective compactum $K$, the functor $ \mbfPi(K) \to M $ is a cartesian fibration, and 

		\item for every object $m \in M$, the pyknotic \category $ \mbfPi_m \coloneq \mbfPi \times_M \{m\} $ is a profinite layered \category.
	\end{itemize}
	We write $\Lay_{\uppi,/M}^{\cart,\wedge}$\index[notation]{Laycartprofin@$ \Lay_{\uppi,/M}^{\cart,\wedge} $} for the \category of \categories over $M$ fibered in profinite layered \categories.
\end{dfn}

\begin{wrn}
	One might also contemplate the \category $\Pro(\Laycart_{\uppi,/M})$ of \proobjects in the full subcategory
	\begin{equation*}
		\Laycart_{\uppi,/M} \subseteq \Laycart_{/M}
	\end{equation*}
	spanned by those cartesian fibrations whose fibers are \pifinite layered \categories.
	This is generally \emph{not} equivalent to the \category of categories over $M$ fibered in profinite layered \categories.
	Under straightening/unstraightening, provides equivalences of \categories
	\begin{align*}
		\Lay_{\uppi,/M}^{\cart,\wedge} &\equivalent \Fun(M^{\op},\Layfin^{\wedge}) \\
		\intertext{and}
		\Pro(\Laycart_{\uppi,/M}) &\equivalent \Pro(\Fun(M^{\op},\Layfin)) \period
	\end{align*}
	Thus the \categories $ \Lay_{\uppi,/M}^{\cart,\wedge} $ and $ \Pro(\Laycart_{\uppi,/M}) $ coincide when $M$ is a finite poset \HTT{Proposition}{5.3.5.15}, but otherwise typically do not coincide.
\end{wrn}

\begin{nul}[Fibered \Categorical Hochster Duality]\label{nul:descriptionoffiberedHochster}
	Let $M$ be \acategory.
	By \Categorical Hochster Duality, the \category of spectral topos fibrations over $M$ is equivalent to the \category $\Lay_{\uppi,/M}^{\cart,\wedge}$.
	Let us make the equivalence explicit.
	If $\XX \to M$ is a spectral topos fibration, then we define \acategory over $M$ fibered in layered \categories
	\begin{equation*}
		\StrShape^{M}(\XX) \to M
	\end{equation*}
	as follows.
	An object of $\StrShape^{M}(\XX)$ is a pair $(m,\nu)$, where $m\in M$ and $\nu_{\ast} \colon \Space \to \XX_m$ is a point.
	A morphism
	\begin{equation*}
		(m,\nu) \to (n,\xi)
	\end{equation*}
	consists of morphism $f \colon m \to n$ of $M$ along with a natural transformation $\nu_{\ast} \to f_{\ast}\xi_{\ast}$.
	The \category $\StrShape^{M}(\XX)$ fibered in layered \categories admits a canonical fiberwise profinite structure;
	for each object $ m \in M $, the fiber \smash{$\StrShape^{M}(\XX)_m$} is the profinite stratified shape \smash{$\StrShape(\XX_m)$}.

	In the other direction, if $\mbfPi \to M$ is \acategory over $M$ fibered in profinite layered \categories, then let $X_0 \to M$ denote the cocartesian fibration in which the objects are pairs $(m,F)$ consisting of an object $m\in M$ and a functor $F \colon \mbfPi_m \to \Spacefin$, and a morphism
	\begin{equation*}
		(f, \phi) \colon (m, F) \to (n, G)
	\end{equation*}
	consists of a morphism $f \colon m \to n$ of $M$ and a natural transformation $\phi \colon f_!F \to G$.
	Then $(\mbfPitilde)\cohbdd$ is equivalent to the subcategory of $X_0$ whose objects are those pairs $(m,F)$ in which $F$ is continuous and whose morphisms are those pairs $(f, \phi)$ in which $\phi$ is continuous.
\end{nul}

\begin{cnstr}
	If $M$ is \acategory and $\YY$ is a bounded coherent topos, then the projection $\YY \times M \to M$ is a bounded coherent topos fibration.
	The assignment $ \goesto{\YY}{\YY \times M} $ defines a functor from the \category of bounded coherent topoi to the \category of bounded coherent topos fibrations over $M$.
	This functor admits a left adjoint, which we denote by $|\!-\!|_M$.
	At the level of \pretopoi, $(|\XX|_M)\cohbdd$ is equivalent to the \category of cocartesian sections of $\XXcohbdd \to M$, i.e., the limit of the corresponding functor $ \fromto{M}{\preTopbdd} $.
\end{cnstr}

Now we arrive at the main topos-theoretic result.
\begin{prp}\label{prop:maintopos}
	Let $M$ be \acategory, and let $\XX \to M$ be a spectral topos fibration.
	Then the \pretopos $(|\XX|_M)\cohbdd$ is equivalent to the \category of continuous functors
	\begin{equation*}
		F \colon \StrShape^{M}(\XX) \to \Spacefin^{\disc}
	\end{equation*}
	with the following properties.
	\begin{enumerate}[{\upshape (\ref*{prop:maintopos}.1)}]
	 	\item\label{prop:maintopos.1} The functor $F$ carries every cartesian edge to an equivalence.

	 	\item\label{prop:maintopos.2} The functor $F$ is \defn{uniformly truncated} in the following sense: there exists an $N\in\NNup$ such that for each object $(m,\nu) \in  \StrShape^{S}(\XX)$, the space $F(m,\nu)$ is $N$-truncated.
	\end{enumerate}
\end{prp}

\begin{proof}
	The \pretopos $(|\XX|_M)\cohbdd$ can be identified with the \category of cocartesian sections of the cocartesian fibration $\XXcohbdd \to M$.
	The description of \Cref{nul:descriptionoffiberedHochster} completes the proof.
\end{proof}

Please note that the condition \enumref{prop:maintopos}{2} is automatic if $M$ has only finitely many connected components (e.g., $ M = \DDelta $).

Finally, since the profinite stratified shape is a delocalization of the protruncated shape (\Cref{thm:protruncdeloc}) we deduce the following:

\begin{prp}\label{prop:protruncated}
 	Let $M$ be \acategory, and let $\XX \to M$ be a spectral topos fibration.
 	Then the protruncated shape of the \topos $|\XX|_M$ is equivalent to the protruncation of the classifying prospace of $ \StrShape^{M}(\XX)$.
\end{prp} 

\begin{cnstr}[{$ \GalDelta(X_{\ast}) $}]\label{cnstr:GalDelta}
	Let $X_{\ast}$ be a simplicial coherent scheme.
	Denote by $\GalDelta(X_{\ast})$\index[notation]{GalDelta@$\GalDelta(X_{\ast})$} the following $1$-category.
	The objects are pairs $(m,\nu)$ consisting of an object $ [m] \in \DDelta$ and a geometric point $\nu \to Y_m$.
	A morphism
	\begin{equation*}
		(m,\nu) \to (n,\xi)
	\end{equation*}
	of $\GalDelta(X_{\ast})$ consists of a morphism $\sigma \colon m \to n$ of $\DDelta$ and a specialization $\nu \leftsquigarrow \sigma^{\ast}(\xi)$.
	There is an obvious forgetful functor $\GalDelta(X_{\ast}) \to \DDelta$, which is a cartesian fibration.
	A morphism $(m,\nu) \to (n,\xi)$ is cartesian over $\sigma \colon m \to n$ in $\DDelta$ if and only if the specialization $\nu \leftsquigarrow \sigma^{\ast}(\xi)$ is an isomorphism.

	The $ 1 $-category $\GalDelta(X_{\ast})$ over $\DDelta$ is naturally fibered in profinite layered $ 1 $-categories.
	Moreover, the \category over $ \DDelta $ fibered in profinite layered \categories $ \StrShape^{\DDelta}(X_{\ast,\et}) $ associated to the spectral topos fibration
	\begin{equation*}
		X_{\ast,\et} \to \DDelta
	\end{equation*}
	is identified with $\GalDelta(X_{\ast})$.

	In this case, \Cref{prop:maintopos} implies that $(|X_{\ast,\et}|_{\DDelta})\cohbdd$ is equivalent to the \category of continuous functors \smash{$ \GalDelta(X_{\ast}) \to \Spacefin^{\disc} $} that carry cartesian edges to equivalences.
\end{cnstr}

\begin{exm}
	If $X_{\ast}$ is a simplicial coherent scheme, then classifying prospace of the fiberwise profinite category $\GalDelta(X_{\ast})$ is equivalent to the protruncation of the étale homotopy type of $X_{\ast}$ (\Cref{thm:mainAG}).
\end{exm}

Now let us use this formalism to extend the \textit{Exodromy Equivalence} of \Cref{mainthm:classifyconstr} to the context of simplicial schemes and thus stacks.

\begin{cnstr}
	Write $ \Aff $\index[notation]{Aff@$ \Aff $} for the $ 1 $-category of affine schemes.
	We employ \HTT{Corollary}{3.2.2.13} to construct \acategory $ \PShet $\index[notation]{PShet@$ \PShet $} and a cocartesian fibration
	\begin{equation*}
		\PShet \to \Aff^{\op}
	\end{equation*}
	in which:
	\begin{itemize}
		\item The objects of $\PShet$ are pairs $(S, F)$ consisting of an affine scheme $S$ and a presheaf $ F $ on the small étale site of $S$.

		\item A morphism $(S, F) \to (T, G)$ is a pair $(f, \phi)$ consisting of a morphism $f \colon T \to S$ and a morphism of presheaves $\phi \colon f^{-1}F \to G$ on the small étale site of $T$.
	\end{itemize}

	Define $\Shet \subset \PShet$\index[notation]{Shet@$ \Shet $} to be the full subcategory spanned by those pairs $(S, F)$ in which $F$ is a sheaf; then $\Shet \to \Aff^{\op}$ is a topos fibration.
	Define $\Conset \subset \Shet$\index[notation]{Conset@$ \Conset $} to be the further full subcategory spanned by those pairs $(S, F)$ in which $F$ is a constructible sheaf (\Cref{def:Pconstructible}); then $ \Conset \to \Aff^{\op}$ is a cocartesian fibration.
\end{cnstr}

\begin{dfn}
	Let $X \to \Aff$ be a stack, i.e., a right fibration that is classified by an accessible fpqc sheaf $\Aff^{\op} \to \Space$.
	A \defn{constructible sheaf} on $X$ is a cocartesian section
	\begin{equation*}
		F \colon X^{\op} \to \Conset
	\end{equation*}
	over $\Aff^{\op}$.
	We write $ \Conset(X) $ for the \category of constructible sheaves on $ X $.
\end{dfn}

\begin{wrn}
	This can only be expected to be a reasonable definition for coherent stacks.	
\end{wrn}

\begin{nul}
	Informally, a constructible sheaf $F$ on $X$ assigns to every affine scheme $S$ over $X$ a constructible sheaf $F_S$ on $ S $ and to every morphism $f \colon S \to T$ of affine schemes an equivalence $F_S \simeq f^{\ast}F_T$.
	In other words, the \category of constructible sheaves on $X$ is the limit of the diagram $X^{\op} \to \Cat_{\infty} $ given by the assignment $S \mapsto \Conset(S) = S_{\et}^{\cons} $.

	Since $X$ is not generally a $ \updelta_0 $-small category, it is not obvious that this limit exists in $\Cat_{\infty} $.
	However, if $X$ contains a $ \updelta_0 $-small limit-cofinal full subcategory $Y$, then the desired limit exists.
\end{nul}

We thus conclude:

\begin{prp}\label{prop:GaldeltaConstr}
	Let $p \colon X \to \Aff$ is a stack.
	If $X_{\ast}$ is a simplicial coherent scheme presenting $ X $, then there is an equivalence between the \category $\Conset(X)$ and the \category of continuous functors
	\begin{equation*}
		\GalDelta(X_{\ast}) \to \Spacefin
	\end{equation*}
	that carry cartesian edges to equivalences (cf. \Cref{ntn:Functs}).
\end{prp}

Recall that the protruncated étale homotopy type of a simplicial scheme $ X_{\ast} $ can be identified with the colimit in protruncated spaces of the simplicial object that carries $ [m] \in \DDelta$ to the protruncated étale shape of the fibers of the cartesian fibration \smash{$\GalDelta(X_{\ast}) \to \DDelta$} agree with the protruncated étale shape of the schemes $Y_m$, it follows from \Cref{prop:protruncated} that the protruncated shape of the total category $\GalDelta(X_{\ast}) $ is the colimit of this simplicial diagram.
In other words:

\begin{thm}
	Let $ X_{\ast} $ be a simplicial coherent scheme.
	The classifying protruncated space of $\GalDelta(X_{\ast})$ recovers the protruncated étale homotopy type of $X_{\ast}$.
\end{thm}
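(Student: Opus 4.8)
The statement to prove is that the classifying protruncated space of $\GalDelta(Y_{\ast})$ recovers the protruncated étale topological type of $Y_{\ast}$. My plan is to reduce this to the already-proven \Cref{prop:protruncated} by identifying $\GalDelta(Y_{\ast})$ as the fibred Galois category attached to the spectral topos fibration $X_{\ast,\et} \to \mbfDelta$.

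First I would recall from \Cref{nul:descriptionoffibredHochster} the explicit description of the \category over $ \mbfDelta $ fibred in profinite layered \categories $ \Pi_{(\infty,1)}^{\Delta,\wedge}(X_{\ast,\et}) $: its objects are pairs $(m,\nu)$ with $m \in \mbfDelta$ and $\nu_{\ast} \colon \Space \to (X_{\ast,\et})_m = Y_{m,\et}$ a point, and its morphisms $(m,\nu) \to (n,\xi)$ are pairs consisting of a morphism $\sigma \colon m \to n$ of $\mbfDelta$ and a natural transformation $\nu_{\ast} \to \sigma_{\ast}\xi_{\ast}$. Using \Cref{cnstr:idPiinfty1withGal}, which identifies $\mat(\Gal(Y_m))$ with $\Pt(Y_{m,\et})$ — the ordinary category of geometric points and specialisations — I would observe that the $1$-category $\GalDelta(Y_{\ast})$ of \Cref{cnstr:GalDelta} is exactly the fibrewise materialisation of $ \Pi_{(\infty,1)}^{\Delta,\wedge}(X_{\ast,\et}) $; more precisely, since each $Y_{m,\et}$ is $1$-localic, the profinite layered \category $\Gal(Y_m) = \Pi_{(\infty,1)}^{\wedge}(Y_{m,\et})$ is $1$-truncated (by \Cref{nul:1truncatedprofinstrat} and \Cref{prop:nlocalicntruncated}), and a morphism $(m,\nu)\to(n,\xi)$ in $\Pi_{(\infty,1)}^{\Delta,\wedge}(X_{\ast,\et})$ over $\sigma$ is precisely a specialisation $\nu \leftsquigarrow \sigma^{\ast}(\xi)$ in $\Gal(Y_m)$, with cartesian morphisms being the isomorphism specialisations. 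Thus $\GalDelta(Y_{\ast}) \simeq \Pi_{(\infty,1)}^{\Delta,\wedge}(X_{\ast,\et})$ as \categories fibred in profinite layered \categories over $\mbfDelta$ — this is already asserted in the excerpt just before \Cref{prop:maintopos}.

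Next I would invoke \Cref{prop:protruncated}: applied to $S = \mbfDelta$ and the spectral topos fibration $\XX = X_{\ast,\et} \to \mbfDelta$, it gives that the protruncated shape of $|X_{\ast,\et}|_{\Delta}$ is equivalent to the protruncated homotopy type of $ \Pi^{\Delta,\wedge}_{(\infty,1)}(X_{\ast,\et}) \simeq \GalDelta(Y_{\ast})$. It then remains to identify $\Pi_{<\infty}$ of $|X_{\ast,\et}|_{\Delta}$ with the protruncated étale topological type of $Y_{\ast}$. For this I would use that $|\!-\!|_{\Delta}$ is a left adjoint and $\Pi_{\infty}$ is a left adjoint, so the shape of $|X_{\ast,\et}|_{\Delta}$ is the colimit over $\mbfDelta^{\op}$ of the shapes $\Pi_{\infty}(Y_{m,\et}) = \Pi_{\infty}^{\et}(Y_m)$; by \Cref{def:etalehomotopytypesscheme} this colimit is the étale shape $\Pi_{\infty}^{\et}(Y_{\ast})$, and after applying the (left adjoint) protruncation functor and using \Cref{nul:equivofetaletopologicalonprotrunc} one gets the protruncated étale topological type $\Pi_{<\infty}(\Pi_{\infty}^{\et,\hyp}(Y_{\ast})) = \Pi_{<\infty}^{\et}(Y_{\ast})$, which by \Cref{thm:Hoyoisetalehomotopycomparison} is the Friedlander construction up to protruncation. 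Assembling these equivalences yields the claim.

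The main obstacle I anticipate is the careful bookkeeping in the first step: verifying that the fibred construction $\GalDelta(Y_{\ast})$ of \Cref{cnstr:GalDelta} genuinely agrees with $\Pi_{(\infty,1)}^{\Delta,\wedge}(X_{\ast,\et})$ \emph{as objects of} $\Lay^{\cart,\wedge}_{\pi,/\mbfDelta}$ — i.e., compatibly with both the cartesian fibration structure over $\mbfDelta$ and the fibrewise profinite structure — rather than merely fibrewise or on underlying $1$-categories. This requires unwinding \Cref{nul:descriptionoffibredHochster} and checking that the transition functors (induced by pullback of geometric points along $X_{\ast,\et}$-transition morphisms, i.e., along the face and degeneracy maps of $Y_{\ast}$) match the specialisation-pullback in the definition of $\GalDelta$; everything else is a formal consequence of results already established in the excerpt (\Cref{prop:protruncated}, \Cref{def:etalehomotopytypesscheme}, \Cref{thm:Hoyoisetalehomotopycomparison}, and the left-adjointness of $|\!-\!|_{\Delta}$, $\Pi_{\infty}$, $\tau_{<\infty}$).
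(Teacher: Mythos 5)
Your proposal follows essentially the same route as the paper's own argument: identify $\GalDelta(Y_{\ast})$ with the fibred profinite stratified shape $\Pi^{\Delta,\wedge}_{(\infty,1)}(Y_{\ast,\et})$ of the spectral topos fibration $Y_{\ast,\et}\to\mbfDelta$, apply \Cref{prop:protruncated}, and then match the protruncated shape of the realisation with the colimit over $\mbfDelta^{\op}$ of the protruncated étale shapes $\Pi^{\et}_{<\infty}(Y_m)$, i.e.\ with the protruncated étale topological type of \Cref{def:etalehomotopytypesscheme} and \Cref{nul:equivofetaletopologicalonprotrunc}. The paper's proof is exactly this (stated more tersely, and no more careful than you are about comparing the colimit computed in bounded coherent \topoi with the colimit in $\Top_{\infty}$ defining $Y_{\ast,\et}$), so your proposal is correct and matches the paper.
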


Combining this with \Cref{prop:GaldeltaConstr} we obtain:

\begin{cor}\label{cor:exodromyforArtinn-stacks}
	Let $ n \in \NNup $ and let $ X $ be an Artin $ n $-stack.
	If $X_{\ast}$ is a simplicial coherent scheme presenting $X$, then the localization of $\GalDelta(X_{\ast})$ at the cartesian edges classifies constructible sheaves on $ X $.
\end{cor}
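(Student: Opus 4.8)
The final statement, Corollary \ref{cor:exodromyforArtinn-stacks}, asserts that for an Artin $ n $-stack $ X $ with a presentation $ Y_{\ast} $, the localisation of $ \GalDelta(Y_{\ast}) $ at the cartesian edges classifies constructible sheaves on $ X $.

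The plan is to deduce this directly from the two immediately preceding results: Proposition \ref{prop:GaldeltaConstr} and Theorem (the classifying protruncated space statement), together with the observation that for an Artin $ n $-stack the uniform truncatedness hypothesis in Proposition \ref{prop:maintopos} is automatic. First I would recall that an Artin $ n $-stack $ X $ admits a presentation by a simplicial scheme $ Y_{\ast} $ (a smooth groupoid object iterated $ n $ times, or more precisely a hypercover by schemes), and that $ X $, viewed as a stack $ \fromto{X}{\Aff} $, contains a small limit-cofinal full subcategory — concretely, the full subcategory spanned by the affine schemes mapping to the $ Y_m $ — so that the limit defining $ \Constr_{\et}(X) $ exists in $ \Cat_{\infty} $. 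This is what makes $ \Constr_{\et}(X) $ well-defined, as flagged in the remark following the definition of constructible sheaves on a stack.

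Next I would invoke Proposition \ref{prop:GaldeltaConstr}: since $ X $ is presented by $ Y_{\ast} $, the \category $ \Constr_{\et}(X) $ is equivalent to the \category of functors $ \fromto{\GalDelta(Y_{\ast})}{\Spacefin} $ that carry cartesian edges to equivalences and restrict on each fibre $ \Gal(Y_m) $ to a continuous functor. Now, a functor out of $ \GalDelta(Y_{\ast}) $ that inverts the cartesian edges is the same datum as a functor out of the localisation $ \GalDelta(Y_{\ast})[\cart^{-1}] $; this is just the universal property of localisation (\Cref{nul:conventionshighercats}). So it remains to observe that, once the cartesian edges are inverted, the two conditions "restricts to a continuous functor on each $ \Gal(Y_m) $" and "is a functor to $ \Spacefin $" together cut out exactly the constructible sheaves, and that the uniform-truncatedness condition of Proposition \ref{prop:maintopos} is vacuous here because $ \mbfDelta $ is weakly contractible with finitely many components — indeed the indexing category $ \mbfDelta $ has a single connected component, so any fibrewise-uniformly-truncated functor is globally uniformly truncated (each $ \Gal(Y_m) $ is a $ 1 $-category and continuity forces values in $ \pi $-finite spaces of bounded truncation on each stratum, and the simplicial identities propagate a single bound). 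This identifies $ \Constr_{\et}(X) $ with $ \Fun(\GalDelta(Y_{\ast})[\cart^{-1}], \Spacefin) $ in the appropriate sense, which is precisely the assertion that the localisation classifies constructible sheaves.

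The main obstacle I anticipate is bookkeeping around the well-definedness and presentation-independence: one must check that $ \Constr_{\et}(X) $ does not depend on the choice of simplicial presentation $ Y_{\ast} $ (two presentations of the same Artin $ n $-stack are connected by a zig-zag of refinements, and étale descent — which underlies the very definition of $ \Constr_{\et} $ as a cocartesian fibration satisfying fpqc, a fortiori étale, descent — guarantees the resulting localisations agree), and correspondingly that $ \GalDelta(Y_{\ast})[\cart^{-1}] $ is a presentation-independent invariant of $ X $. For the latter, one uses that the localisation computes the same object as the relative nerve / Grothendieck construction of the functor $ \goesto{m}{\Gal(Y_m)} $ colimited over $ \mbfDelta^{\op} $ in the appropriate fibred sense, and Proposition \ref{prop:protruncated} shows its protruncated shape is the protruncated étale topological type of $ X $, which is manifestly presentation-independent. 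The verification that every Artin $ n $-stack does admit a presentation by a simplicial \emph{coherent} scheme (so that $ \GalDelta $ is defined) is standard but should be cited rather than reproved. Apart from these points, the argument is a formal concatenation of Propositions \ref{prop:GaldeltaConstr}, \ref{prop:maintopos}, and \ref{prop:protruncated} with the universal property of localisation.
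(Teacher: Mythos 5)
Your proposal is correct and follows the paper's own (very terse) derivation: the corollary is obtained by combining \Cref{prop:GaldeltaConstr} with the universal property of localisation, so that functors out of $\GalDelta(Y_{\ast})$ inverting the cartesian edges are exactly functors out of the localisation, and the uniform-truncatedness clause of \Cref{prop:maintopos} is automatic since $\mbfDelta$ has finitely many connected components — a point the paper itself records. The additional checks you flag (existence of a presentation by coherent schemes, presentation-independence via \Cref{prop:protruncated}) are sensible due diligence but go beyond what the paper writes down.
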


\noindent \Cref{cor:exodromyforArtinn-stacks} speaks only of Artin $n$-stacks, but of course applies just as well to any coherent fpqc stack with a presentation by a simplicial coherent scheme.

\begin{exm}
	Let $ k $ be a ring, $ G $ be an affine $ k $-group, and $ X $ be a $k$-scheme with an action of $G$.
	Recall that the simplicial $k$-scheme $\Bar_{k,\ast}(X,G,k)$ whose $n$-simplices are $X \times_{\Spec k} G^{n}$ presents the quotient stack $X/G$.

	By \Cref{cor:exodromyforArtinn-stacks}, the category of $G$-equivariant constructible sheaves on $X$ is equivalent to the category of continuous functors
	\begin{equation*}
		\GalDelta(\Bar_{k,\ast}(X,G,k)) \to \Spacefin
	\end{equation*}
	that carry the cartesian edges to equivalences.
	If $R$ is a finite ring, then the derived category of $G$-equivariant constructible sheaves of $R$-modules on $X$ is equivalent to the category of continuous functors
	\begin{equation*}
		\GalDelta(\Bar_{k,\ast}(X,G,k)) \to \Perf(R)
	\end{equation*}
	that carry cartesian edges to equivalences.

	The objects of the category $\GalDelta(\Bar_{k,\ast}(X,G,k))$ can be thought of as tuples
	\begin{equation*}
		([m], \Omega, x_0, g_1, \dots, g_m)
	\end{equation*}
	where $ [m] \in \DDelta $, $\Omega$ is a separably closed field, and
	\begin{equation*}
		x_0 \colon \Spec \Omega \to X \andeq g_1,\ldots,g_m \colon \Spec \Omega \to G
	\end{equation*}
	are points with the property that $(x_0, g_1, \dots, g_m)$ is a geometric point of $X \times_{\Spec k} G^m$ such that that $\Omega$ is the separable closure of the residue field of the image of $(x_0, g_1, \dots, g_m)$ in the Zariski space of $ X \times_{\Spec k} G^m $.
\end{exm}

\newpage

\section[Perfectly reduced schemes \& reconstruction of absolute schemes]{Perfectly reduced schemes \& reconstruction of \\ absolute schemes}\label{sect:fullfaithfulness} 

We have shown that the étale \topos $X_{\et}$ of a coherent scheme $X$ can be reconstructed from the profinite \category $\Gal(X)$. 
Following Grothendieck's \textit{Brief an Faltings} \cite[(8)]{MR1483108}, we can ask to what extent $X$ itself can be recovered from $X_{\et}$. We first note that there are three easily-spotted obstacles to the \textit{conservativity} of the functor $\goesto{X}{X_{\et}}$.
\begin{enumerate}[(1)]
	\item One must restrict attention to schemes over a base with suitable finiteness conditions: for example, a nontrivial extension $ K \subset L $ of separably closed fields induces an equivalence of étale \topoi (since both are the terminal \topos).

	\item The base must be sufficiently small: over $\CCup$, for example, any two smooth proper curves of the same genus have equivalent étale \topoi.

	\item One must account for universal homeomorphisms: for example, the normalization of the cuspidal cubic induces an equivalence of étale \topoi.
	In fact, any universal homeomorphism induces an equivalence of étale \topoi; this is the \emph{invariance topologique} of the étale \topos \cites[Exposé IX, 4.10]{MR50:7129}[Exposé VIII, 1.1]{MR50:7131}.
\end{enumerate}

The first two points compel us to impose serious finiteness conditions on our schemes, and this last point compels us to consider the \category obtained from the $1$-category $ \Sch $\index[notation]{Sch@$ \Sch $} of coherent schemes by inverting universal homeomorphisms.
Fortunately, it is not necessary to do something excessively abstract: there is a $1$-categorical colocalization that performs this function.

\Cref{subsec:UHGalois} analyzes the effect of universal homeomorphisms on Galois categories.
\Cref{subsec:perfred} recalls how to characterize schemes that admit no nontrivial universal homeomorphisms.
\Cref{subsec:perfection} shows that the subcategory of such schemes can be obtained from the category of all schemes by formally inverting the universal homeomorphisms.
\Cref{subsec:Grothendieckconjecture} discusses Grothendieck's anabelian conjectures and proves \Cref{lede:reconstruction}; that the Galois category is a complete invariant of normal schemes over a finitely generated field of characteristic $ 0 $ (\Cref{nul:mainthmonGal}).
\Cref{subsec:examplecurves} illustrates our main theorem by making explicit how to reconstruct curves from a combination of stratified-homotopy-theoretic and Galois-theoretic data.


\subsection[Universal homeomorphisms and equivalences of Galois categories]{Universal homeomorphisms and equivalences of Galois \\ categories}\label{subsec:UHGalois}

Now we arrive at a sensitive question: under which circumstances does a morphism of schemes induce an equivalence of étale topoi or, equivalently, of Galois categories?
The well-known theorem here is Grothendieck's \emph{invariance topologique} of the étale topos \cite[Exposé VIII, 1.1]{MR50:7131}, which states that a universal homeomorphism induces an equivalence on étale topoi.
Let us reprove this result with the aid of Galois categories.

\begin{rec}[{\stacks{01S2}}]\label{rec:radicialUH}
	A morphism of schemes $ f \colon X \to Y $ is \defn{radicial}\index[terminology]{radicial}  if the underlying map of Zariski topological spaces is injective and for every $ x_0 \in X $, the field extension $ \upkappa(f(x_0))\subseteq\upkappa(x_0) $ is purely inseparable.
	Equivalently, $ f $ is radicial if and only if $ f $ is universally injective.

	A morphism of schemes is a \defn{universal homeomorphism}\index[terminology]{universal homeomorphism}  if and only if it is radicial, surjective, and universally closed. 
\end{rec}

\begin{prp}\label{prp:radicialintermsofGal} 
	Let $f\colon X\to Y$ be a morphism of coherent schemes.
	If $f$ is radicial, then every fiber of $\Gal(X)\to\Gal(Y)$ is either empty or a contractible groupoid.
	Conversely, if $f$ is of finite type, and if every fiber of $\Gal(X)\to\Gal(Y)$ is either empty or a contractible groupoid, then $f$ is radicial.
\end{prp}

\begin{proof}
	If $f$ is radicial, then the map $X^{\zar}\to Y^{\zar}$ is an injection.
	Moreover, for every point $x_0\in X^{\zar}$, the field extension $\upkappa(f(x_0))\subseteq\upkappa(x_0)$ is purely inseparable, so the map $\BG_{\upkappa(x_0)}\to \BG_{\upkappa(f(x_0))}$ on fibers is an equivalence.
	Hence for every geometric point $ y \to Y $ with image $y_0$, the fiber over $y$ is a contractible groupoid.

	Conversely, if $f$ is of finite type, and if every fiber of $\Gal(X)\to\Gal(Y)$ is either empty or a contractible groupoid, then the map $X^{\zar}\to Y^{\zar}$ is an injection.
	In particular, $ f $ is quasifinite.
	For every point $x_0\in X^{\zar}$, the fibers of the map $\BG_{\upkappa(x_0)}\to \BG_{\upkappa(f(x_0))}$ are all contractible, hence the map $\BG_{\upkappa(x_0)}\to \BG_{\upkappa(f(x_0))}$ is an equivalence.
	Now since the extnesion $\upkappa(f(x_0))\subseteq\upkappa(x_0)$ is finite, it is purely inseparable.
\end{proof}

\begin{exm} 
	The finite type hypothesis in the second half of \Cref{prp:radicialintermsofGal} is necessary: any nontrivial extension $ K \subset L $ of separably closed fields induces the identity on trivial Galois categories.
\end{exm}

\begin{cor}\label{cor:radicialsurjintermsofGal}
	Let $f\colon X\to Y$ be a morphism of coherent schemes.
	If $f$ is radicial and surjective, then every fiber of $\Gal(X)\to\Gal(Y)$ is a contractible groupoid.
	Conversely, if $f$ is of finite type, and if every fiber of $\Gal(X)\to\Gal(Y)$ is a contractible groupoid, then $f$ is radicial and surjective.
\end{cor}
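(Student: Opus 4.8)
The statement to prove is \Cref{cor:radicialsurjintermsofGal}: for a morphism $f\colon X\to Y$ of coherent schemes, if $f$ is radicial and surjective then every fibre of $\Gal(X)\to\Gal(Y)$ is a singleton, and conversely if $f$ is of finite type and every fibre of $\Gal(X)\to\Gal(Y)$ is a singleton, then $f$ is radicial and surjective.

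The plan is to deduce this directly from \Cref{prp:radicialintermsofGal} together with a translation of surjectivity into a statement about the underlying map of posets. First recall that the functor $\fromto{\Gal(X)}{\Gal(Y)}$, when we pass to materialisations (\Cref{cnstr:idPiinfty1withGal}), sits over the map of specialisation posets $\fromto{X^{\zar}}{Y^{\zar}}$, and that the fibre of $\Gal(X)\to\Gal(Y)$ over a geometric point $y\to Y$ with image $y_0\in Y^{\zar}$ records the geometric points of $X$ lying over $y$. In particular this fibre is nonempty if and only if $y_0$ lies in the image of $X^{\zar}\to Y^{\zar}$. So ``every fibre of $\Gal(X)\to\Gal(Y)$ is nonempty'' is equivalent to ``$X^{\zar}\to Y^{\zar}$ is surjective'', i.e.\ to $f$ being surjective (as a map of topological spaces; since $f$ is a morphism of schemes, surjectivity on points is the usual notion).

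With that observation in hand, the forward direction is immediate: if $f$ is radicial and surjective, then by \Cref{prp:radicialintermsofGal} every fibre of $\Gal(X)\to\Gal(Y)$ is either empty or a singleton, and by surjectivity no fibre is empty, so every fibre is a singleton. For the converse, assume $f$ is of finite type and every fibre of $\Gal(X)\to\Gal(Y)$ is a singleton. Then in particular every fibre is either empty or a singleton, so the second half of \Cref{prp:radicialintermsofGal} applies (the finite type hypothesis is exactly what is needed there) and gives that $f$ is radicial; and since no fibre is empty, $f$ is surjective. I would write this out in two short paragraphs, one for each implication, citing \Cref{prp:radicialintermsofGal} for the radicial part and the fibre-nonemptiness remark for the surjectivity part.

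I do not anticipate a genuine obstacle here, since this is a formal corollary; the only point requiring a sentence of care is the identification of ``all fibres nonempty'' with ``$f$ surjective'', which follows from the description of $\mat(\Gal(X))$ as the category of points of $X_{\et}$ (\Cref{cnstr:idPiinfty1withGal}): a geometric point of $Y$ lying over $y_0\in Y^{\zar}$ lifts to a geometric point of $X$ precisely when $y_0$ is in the image of $X\to Y$, because one may base change $X$ along the geometric point $\Spec\kappa\to Y$ and use that a nonempty quasicompact scheme over a separably closed field has a geometric point. I would state this as a brief lemma-free remark inside the proof rather than isolating it.
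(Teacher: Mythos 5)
Your proposal is correct and is exactly the deduction the paper intends: the corollary is stated in the paper without a separate proof, as an immediate consequence of \Cref{prp:radicialintermsofGal} once one notes that nonemptiness of all fibres of $\Gal(X)\to\Gal(Y)$ is equivalent to surjectivity of $f$. Your careful justification of that equivalence via lifting geometric points is the only point of substance, and it is handled correctly.
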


Now we turn to describing integral morphisms in terms of fibrations of Galois categories.

\begin{rec} 
	A functor between \categories $f \colon C\to D $ is a \defn{right fibration}\index[terminology]{fibration!right}\index[terminology]{right fibration} if and only if, for every object $ c \in C$, the induced functor $C_{/c} \to D_{/f(c)}$ is an equivalence of \categories.
	Dually, $f$ is a \defn{left fibration}\index[terminology]{fibration!left}\index[terminology]{left fibration} if and only if $f^{\op}$ is a right fibration, so that for every object $ c \in C$, the induced functor $C_{c/}\to D_{f(c)/}$ is an equivalence of \categories.
\end{rec}

\begin{prp}\label{prp:integralintermsofGal} 
	Let $f\colon X\to Y$ be a morphism of coherent schemes. 
	If $f$ is an integral morphism, then $\Gal(X)\to\Gal(Y)$ is a right fibration.
\end{prp}

\begin{proof}
	Assume that $f$ is integral.
	Then for every geometric point $x\to X$, the induced morphism $X^{(x)}\to Y^{(f(x))}$ is also integral, and by Schröer's result \cite[Lemma 2.3]{MR3649361}, it is radicial as well. 
	Hence at the level of Zariski topological spaces, $X^{(x),\zar}\to Y^{(f(x)),\zar}$ is an inclusion of a closed subset; since the source and target are each irreducible, and the inclusion carries the generic point to the generic point, it is a homeomorphism. 
	(In fact, $X^{(x)}\to Y^{(f(x))}$ is a universal homeomorphism.) 
	Thus
	\begin{equation*}
		\Gal(X)_{/x}\simeq\Gal(X^{(x)})\simeq X^{(x),\zar}\to Y^{(f(x)),\zar}\simeq\Gal(Y^{(f(x))})\simeq\Gal(Y)_{/f(x)}
	\end{equation*}
	is an equivalence.
	That is, $\Gal(X)\to\Gal(Y)$ is a right fibration.
\end{proof}

An equivalence of categories is a right fibration with fibers contractible groupoids.
We thus deduce:

\begin{prp} 
	Let $f\colon X\to Y$ be a morphism of coherent schemes.
	If $f$ is a universal homeomorphism, then $\Gal(X)\to\Gal(Y)$ is an equivalence.
	In particular, since the étale \topoi of $X$ and $Y$ depend only on $\Gal(X)$ and $\Gal(Y)$, it follows that $f_{\ast} \colon X_{\et} \to Y_{\et} $ is an equivalence of \topoi.
\end{prp}


\subsection{Perfectly reduced schemes}\label{subsec:perfred}

The notion of a perfect scheme is usually defined only for $ \FFp $-schemes.
Here, we extend this notion to arbitrary reduced schemes in a way that restricts to the usual notion for $ \FFp $-schemes.

Just as a reduced scheme receives no nontrivial nilimmersions, a perfect scheme receives no nontrivial universal homeomorphisms.
This is in fact a local condition that can be expressed in very concrete terms:

\begin{lem}\label{lem:trigequiv}
	The following are equivalent for a coherent scheme $X$.
	\begin{enumerate}[{\upshape (\ref*{lem:trigequiv}.1)}]
		\item If $ f \colon \fromto{X'}{X}$ is a universal homeomorphism and $X'$ is reduced, then $ f $ is an isomorphism.

		\item Every universal homeomorphism $\fromto{X'}{X}$ admits a section.

		\item There exists an affine open covering $\{\Spec A_i\}_{i\in I}$ of $X$ such that for every $i\in I$, the following conditions hold:
		\begin{itemize}
			\item For all $a,b\in A_i$ such that $ a^2 = b^3 $, there exists a unique element $ c \in A_i$ such that $ a = c^3 $ and $ b = c^2 $.

			\item For each prime number $p$ and all $ a,b \in A_i $ such that $ a^p = p^p b$, there exists a unique element $ c \in A_i $ such that $ a = pc $ and $ b = c^p $.
		\end{itemize} 
	\end{enumerate}
\end{lem}

\noindent This is discussed in \stacks{0EUK}. See also \cites[1.4 and 1.7]{MR618807}[Appendix B]{MR2679038}[Theorem 1]{MR714467}.

\begin{dfn}\label{dfn:toprigid}
	We say that a coherent scheme $X$ is \defn{perfectly reduced}\index[terminology]{perfectly reduced}\index[terminology]{scheme!perfectly reduced} if $ X $ satisfies the equivalent conditions of \Cref{lem:trigequiv}.
	Denote by $\Schperf\subset\Sch$\index[notation]{Schperf@$ \Schperf $} the full subcategory spanned by the perfectly reduced schemes.

	We say that a coherent scheme $ X $ is \defn{seminormal}\index[terminology]{seminormal}\index[terminology]{scheme!seminormal} if and only if there exists an affine open covering $\{\Spec A_i\}_{i\in I}$ of $X$ such that for each $ i \in I $ and all $ a,b \in A_i $ such that $ a^2 = b^3 $, there exists a unique element $ c \in A_i $ such that $ a = c^3 $ and $ b = c^2 $.
\end{dfn}

\begin{rmk}
	Rydh has also studied perfectly reduced schemes under the name \defn{absolutely weakly normal}\index[terminology]{absolutely weakly normal}\index[terminology]{scheme!absolutely weakly normal} schemes \cite[Definition B.1]{MR2679038}.
\end{rmk}

\begin{exm}\label{exm:twnsch}
	A $ \QQup $-scheme is perfectly reduced if and only if it is seminormal.

	Let $p$ be a prime number.
	A reduced $ \FFp $-scheme is perfectly reduced if and only if it is perfect.
\end{exm}


\subsection{Perfection}\label{subsec:perfection}

We now show that $\Schperf$ is the result of inverting the universal homeomorphisms in $\Sch$.
More precisely, we show that the inclusion $\incto{\Schperf}{\Sch}$ admits a right adjoint $\goesto{X}{X_{\perf}}$ and the counit $\fromto{X_{\perf}}{X}$ is a universal homeomorphism. We first check that inverse limits of universal homeomorphisms are universal homeomorphisms. 

\begin{lem}\label{prp:cofiltlimuhuh}
	Let $X$ be a scheme.
	Let $ A $ be an inverse category, and $ W \colon\fromto{A}{\Sch_{/X}}$ a diagram of $X$-schemes.
	Assume that for each object $\alpha \in A$, the structure morphism $p_{\alpha}\colon\fromto{W_{\alpha}}{X}$ is a universal homeomorphism.
	Then the natural morphism
	\begin{equation*}
		p \colon \fromto{W' \coloneq \lim_{\alpha \in A^{\op}}W_{\alpha}}{X}
	\end{equation*}
	is a universal homeomorphism.
\end{lem}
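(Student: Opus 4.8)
The statement to prove is \Cref{prp:cofiltlimuhuh}: an inverse limit of universal homeomorphisms over a fixed base $X$ is again a universal homeomorphism over $X$. The plan is to reduce to the affine case, then check separately that the limit morphism $p$ is (a) integral, (b) surjective, and (c) radicial, since a universal homeomorphism is precisely a morphism that is integral, universally closed, and radicial (and an integral surjective radicial morphism is a universal homeomorphism, cf.\ \stacks{04DF}). First I would reduce to the case where $X = \Spec R$ is affine: the formation of the inverse limit $W' = \lim_{\alpha} W_{\alpha}$ commutes with passage to a Zariski-open subscheme of $X$, each $W_{\alpha}$ is affine over $X$ (being integral, hence affine, over $X$), and all three target properties are Zariski-local on $X$. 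In the affine case, write $W_{\alpha} = \Spec S_{\alpha}$ with $S_{\alpha}$ an $R$-algebra integral over $R$; then $W' = \Spec S$, where $S = \colim_{\alpha \in A} S_{\alpha}$ is the filtered colimit of the $R$-algebras $S_{\alpha}$ (filtered because $A^{\op}$ is filtered, by \Cref{nul:conventionshighercats}).

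For step (a), integrality: a filtered colimit of integral $R$-algebras is integral over $R$, since any element of $S = \colim_{\alpha} S_{\alpha}$ is the image of some $s \in S_{\alpha}$, which satisfies a monic polynomial over $R$, and that relation persists in $S$. For step (c), radicialness: $p$ is radicial iff it is universally injective iff the diagonal $\Delta_{W'/X}$ is surjective, equivalently $S \otimes_R S \to S$ has the property that for every field $K$ the map $\Map_R(S, K) \to \Map_R(S, K)$... more concretely, $\Spec(S \otimes_R S) \to \Spec S$ is surjective. Since $S \otimes_R S = \colim_{\alpha} (S_{\alpha} \otimes_R S_{\alpha})$ (filtered colimits commute with tensor products), and $\Spec$ of a filtered colimit of rings is the inverse limit of the spectra as topological spaces with surjective transition maps when each transition is... here I would instead argue directly: radicialness is equivalent to the condition that for each point $x_0 \in X^{\zar}$ the fibre $W'_{x_0}$ has at most one point with purely inseparable residue field extension; and $W'_{x_0} = \lim_{\alpha} (W_{\alpha})_{x_0}$ where each $(W_{\alpha})_{x_0}$ is a single point with purely inseparable residue extension over $\kappa(x_0)$, so the limit is again such. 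Step (b), surjectivity: each $W_{\alpha} \to X$ is surjective and closed (integral maps are closed), so $W_{\alpha}^{\zar} \to X^{\zar}$ is a quotient map of spectral spaces; the underlying topological space of $\Spec S = \Spec \colim_{\alpha} S_{\alpha}$ is $\lim_{\alpha} W_{\alpha}^{\zar}$ (inverse limit of spectral spaces, \stacks{0A2W}), and an inverse limit of nonempty spectral spaces along surjective spectral maps is nonempty with surjection onto $X^{\zar}$ — this uses quasicompactness of the constructible topology (Hochster duality, \Cref{thm:Hoschterthesis}) to guarantee the fibres $\lim_{\alpha}(W_{\alpha})_{x_0}$ are nonempty.

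Finally, to conclude that $p$ is a \emph{universal} homeomorphism rather than merely a homeomorphism, I note that integrality, surjectivity, and radicialness are all stable under arbitrary base change $X' \to X$: the base change $W' \times_X X'$ is the inverse limit $\lim_{\alpha}(W_{\alpha} \times_X X')$ (limits commute with base change), each $W_{\alpha} \times_X X' \to X'$ is a universal homeomorphism by hypothesis, so the same argument applies verbatim over $X'$. An integral, surjective, radicial morphism is universally closed (integral implies universally closed) and injective on points universally, hence a universal homeomorphism. The main obstacle I anticipate is step (b): carefully establishing that the inverse limit of the spectral spaces $W_{\alpha}^{\zar}$ surjects onto $X^{\zar}$, which is where one genuinely needs a compactness argument (the constructible-topology quasicompactness from Hochster duality) to rule out the limit of the fibres being empty; the integrality and radicialness steps are essentially formal manipulations of filtered colimits.
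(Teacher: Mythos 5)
Your proposal is correct and follows essentially the same route as the paper: integrality via the filtered colimit of integral $O_X$-algebras, universal injectivity via points/fibres, and surjectivity via a limit-of-spectral-spaces compactness argument. The only real difference is that the paper simply cites EGA IV, 8.3.8(i) for the surjectivity step that you re-derive by hand (and it checks universal injectivity by noting $W'(k)\hookrightarrow W_{\alpha}(k)$ for every field $k$, which is a slightly cleaner packaging of your fibrewise argument).
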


\begin{proof}
	All the transition morphisms $\fromto{W_{\alpha}}{W_{\alpha'}}$ are universal homeomorphisms. 
	It follows from \cite[8.3.8(i)]{MR36:178} that $p$ is surjective. 
	For any field $k$, the diagram
	\begin{equation*}
		W(k)\colon\fromto{A^{\op}}{\Set}
	\end{equation*}
	is a diagram of injections, hence for each $ \alpha \in A^{\op}$, the map $\fromto{W'(k)}{W_{\alpha}(k)}$ is an injection.
	Thus $p$ is a universal injection. 
	It remains to show that $p$ is integral.
	Since $ W $ is a diagram of affine $ X $-schemes, it is enough to observe that the filtered colimit $\colim_{\alpha \in A}p_{\alpha,\ast} \Oup_{W_{\alpha}}$ is an integral $ \Oup_X $-algebra.
\end{proof}

\begin{prp} 
	The inclusion $\incto{\Schperf}{\Sch}$ admits a right adjoint, and the counit $\fromto{X_{\perf}}{X}$ is a universal homeomorphism.
\end{prp}

\begin{proof} 
	For any coherent scheme $X$, let $\UH_X\subset\Sch_{/X}$ be the full subcategory spanned by the universal homeomorphisms $p\colon\fromto{Y}{X}$.
	The full subcategory of $ \UH_X $ spanned by the finite universal homeomorphisms is limit-cofinal in $ \UH_X $.
	Hence the limit of $X$-schemes 
	\begin{equation*}
		X_{\perf} \coloneq \lim_{Y \in \UH_X} Y
	\end{equation*}
	exists and defines a universal homeomorphism $ \counit_{X} \colon\fromto{X_{\perf}}{X}$.
	Any universal homeomorphism $\fromto{Y}{X_{\perf}}$ admits a section, which proves that $X_{\perf}$ is perfect. 
	Moreover, if $Z$ is perfect, then for any morphism $f\colon\fromto{Z}{X}$, the pullback $\fromto{Z\cong Z\times_XX_{\perf}}{X_{\perf}}$ provides an inverse to the natural map
	\begin{equation*}
		\fromto{\Mor_{\Sch}(Z,X_{\perf})}{\Mor_{\Sch}(Z,X)} \period
	\end{equation*}
	This proves that $ \counit $ is a counit morphism exhibiting $ \Schperf $ as a colocalization of $ \Sch $.
\end{proof}

\begin{cor}
	The \category obtained from the $1$-category $\Sch$ by inverting universal homeomorphisms is equivalent to $\Schperf$.
\end{cor}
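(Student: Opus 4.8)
The plan is to deduce the corollary from the universal property of localisation of \categories together with the fact, just proved, that the inclusion $\incto{\Sch_{\perf}}{\Sch}$ is a colocalisation (i.e.\ admits a right adjoint) whose counit $\varepsilon\colon\fromto{X_{\perf}}{X}$ is a universal homeomorphism. Write $W$ for the set of universal homeomorphisms in the $1$-category $\Sch$. First I would observe that $W$ contains all isomorphisms and is stable under composition, so that $W^{-1}\Sch$ makes sense \cref{nul:conventionshighercats}, and that the localisation functor $\gamma\colon\fromto{\Sch}{W^{-1}\Sch}$ is characterised by the property that, for any \category $D$, composition with $\gamma$ identifies $\Fun(W^{-1}\Sch,D)$ with the full subcategory of $\Fun(\Sch,D)$ spanned by those functors carrying $W$ to equivalences.

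The key step is to show that the colocalisation $\goesto{X}{X_{\perf}}$ exhibits $\Sch_{\perf}$ as $W^{-1}\Sch$. I would argue this in two halves. First, the inclusion $\iota\colon\incto{\Sch_{\perf}}{\Sch}$ carries every morphism of $\Sch_{\perf}$ that is a universal homeomorphism to an isomorphism: a universal homeomorphism between perfectly reduced schemes admits a section by \Cref{lem:trigequiv}, and a section of a universal injection which is itself a universal homeomorphism forces the map to be an isomorphism. Hence $\iota$ factors through a functor $\bar\iota\colon\fromto{W_{\perf}^{-1}\Sch_{\perf}}{\Sch_{\perf}}$; but the universal homeomorphisms in $\Sch_{\perf}$ are already isomorphisms, so $W_{\perf}^{-1}\Sch_{\perf}\simeq\Sch_{\perf}$ and there is nothing to inverte there. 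Second, and this is the substantive point, the right adjoint $(-)_{\perf}$ carries every universal homeomorphism $f\colon\fromto{X'}{X}$ in $\Sch$ to an equivalence $\fromto{X'_{\perf}}{X_{\perf}}$: indeed in the commuting square with counits $\varepsilon_{X'},\varepsilon_X$, both $\varepsilon_{X'}$, $\varepsilon_X$, and $f$ are universal homeomorphisms, hence so is the induced map $X'_{\perf}\to X_{\perf}$; but it is a universal homeomorphism between perfectly reduced schemes, hence an isomorphism. Therefore $(-)_{\perf}$ descends along $\gamma$ to a functor $P\colon\fromto{W^{-1}\Sch}{\Sch_{\perf}}$ with $P\circ\gamma\simeq(-)_{\perf}$.

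It then remains to check that $P$ and the functor $\gamma\circ\iota\colon\fromto{\Sch_{\perf}}{W^{-1}\Sch}$ are mutually inverse equivalences. In one direction, $P\circ(\gamma\circ\iota)\simeq(-)_{\perf}\circ\iota\simeq\id_{\Sch_{\perf}}$, since the counit $\fromto{X_{\perf}}{X}$ is an isomorphism when $X$ is already perfectly reduced (the adjunction restricted to $\Sch_{\perf}$ is the identity). In the other direction, one has the counit $\fromto{\iota(X_{\perf})}{X}$ of the colocalisation; since it is a universal homeomorphism it becomes an equivalence after applying $\gamma$, and these equivalences assemble into a natural equivalence $\fromto{(\gamma\circ\iota)\circ P}{\id_{W^{-1}\Sch}}$ — here I would use that $\gamma$ is essentially surjective and that natural transformations in $W^{-1}\Sch$ are detected, via the universal property, on the level of $\Sch$. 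This gives the asserted equivalence $W^{-1}\Sch\simeq\Sch_{\perf}$.

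The main obstacle I anticipate is the bookkeeping in the last paragraph: verifying that the isomorphisms $\fromto{\gamma\iota X_{\perf}}{\gamma X}$ are natural in $X$ as a transformation of functors $W^{-1}\Sch\to W^{-1}\Sch$, rather than merely objectwise, and that this is enough to conclude $P$ is an equivalence. The clean way around this is purely formal: a right adjoint $R$ to a fully faithful inclusion $\iota$ whose counit $\fromto{\iota R}{\id}$ lies objectwise in a class $W$ of morphisms automatically exhibits the source of $\iota$ as the localisation $W^{-1}(\text{target})$, provided $W$ is closed under composition and contains the isomorphisms and $\iota$ inverts $W\cap(\text{morphisms of }\Sch_{\perf})$; this is a standard reflective/coreflective-localisation argument (dual to the statement that a Bousfield localisation presents a localisation at its local equivalences). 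I would spell this argument out in the generality needed and then simply cite it, so that the only scheme-theoretic inputs remain \Cref{lem:trigequiv}, \Cref{prp:cofiltlimuhuh}, and the preceding proposition constructing $(-)_{\perf}$.
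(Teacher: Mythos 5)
Your proposal is correct and is precisely the argument the paper intends: the corollary is stated as an immediate consequence of the preceding proposition, via the standard fact that a coreflective subcategory whose counit lies objectwise in $W$ presents the localisation $W^{-1}\Sch$. The only simplification available is that a universal homeomorphism between perfectly reduced schemes is an isomorphism directly by the first condition of \Cref{lem:trigequiv} (the source is in particular reduced), so the detour through sections is unnecessary.
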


\begin{dfn}
	We call the right adjoint $\goesto{X}{X_{\perf}}$\index[notation]{Xperf@$ X_{\perf} $} the \defn{perfection functor}\index[terminology]{perfection}.
\end{dfn}

\begin{nul}[absolute weak normalization]
	Rydh presented an alternative description of perfection under the name \defn{absolute weak normalization}\index[terminology]{absolute weak normalization} \cite[Appendix B]{MR2679038}.
	Let $X$ be a reduced coherent scheme, and assume that $ X $ satisfies one of the following properties: 
	\begin{enumerate}[(1)]
		\item The set of irreducible components of $ X $ is finite. 
		In this case, write $\overline{X}$ for `the' \textit{absolute integral closure} of $X$ \cite[\S1]{MR0289501}.

		\item The scheme $ X $ is affine.
		In this case, write $\overline{X}$ for `the' \textit{total integral closure} of $X$ \cites{MR0224600,MR0257064}
	\end{enumerate}
	In each of these cases, one can show that $X_{\perf}$ is isomorphic to the weak normalization of $X$ (in the sense of Andreotti--Bombieri \cite[Teorema 2]{MR0266923}) under \smash{$\fromto{\overline{X}}{X}$}.
\end{nul}

\begin{exm}
	For reduced $ \QQup $-schemes, perfection agrees with seminormalization \stacks{0EUT}.
\end{exm}

\begin{exm}
	Let $ p $ be a prime number.
	If $ X $ is a reduced $ \FFp $-scheme then by \cite[Lemma 3.8]{MR3674218} we have a natural isomorphism
	\begin{equation*}
		X_{\perf} \isomorphic \lim \bigg(
		\begin{tikzcd}[sep=2.5em]
			\cdots \arrow[r, "\Frob_X"] & X \arrow[r, "\Frob_X"] & X 
		\end{tikzcd}\bigg) \comma
	\end{equation*}
	where $ \Frob_X $ is the absolute Frobenius.
\end{exm}

In preparation for discussing Grothendieck's Conjecture in the next section, the remainder of this section is concerned with studying properties of schemes up to universal homeomorphism.

\begin{dfn}
	Let $ X $ and $ Y $ be coherent schemes.
	A \defn{topological morphism}\index[terminology]{topological morphism} from $X$ to $Y$ is an morphism $f\colon X_{\perf}\to Y$.
	If $f$ induces an isomorphism $X_{\perf}\equivalence Y_{\!\perf}$, then we call $ f $ a \defn{topological equivalence}\index[terminology]{topological equivalence} from $X$ to $Y$.
\end{dfn}

\begin{dfn} 
	Let $P$ be a property of morphisms of schemes that is stable under base change and composition.
	We say that a morphism $f\colon X\to Y$ is \defn{topologically $P$}\index[terminology]{topologically P@topologically $ P $} if and only if $ f $ is topologically equivalent to a morphism of schemes $f'\colon X'\to Y'$ with property $P$.
\end{dfn}

\begin{nul} 
	Let $P$ be a property of morphisms of schemes that is stable under base change and composition. The class of topologically $P$ morphisms is the smallest class of morphisms $P^t$ that contains $P$ and satisfies the following condition: for any commutative diagram
	\begin{equation*}
		\begin{tikzcd}
			X \arrow[r, "f" above] \arrow[d, "\phi" left] & Y \arrow[d, "\psi" right]\\ 
			X' \arrow[r, "f'" below] & Y'
		\end{tikzcd}
	\end{equation*}
	in which $\phi$ and $\psi$ are universal homeomorphisms, the morphism $f$ lies in $P^t$ if and only if $f'$ does.

	A morphism $f\colon X\to Y$ of perfectly reduced schemes is topologically $P$ if and only if $ f $ factors as a universal homeomorphism $X\to X'$ followed by a morphism $X'\to Y$ with property $P$.
\end{nul}

\begin{exm}
	A morphism $f\colon X\to Y$ of perfectly reduced schemes is topologically radicial, surjective, universally closed, or integral if and only if $ f $ is radicial, surjective, universally closed, or integral (respectively).
\end{exm}

\begin{exm}
	A morphism $f\colon X\to Y$ of perfectly reduced schemes is topologically étale if and only if $ f $ is étale.
	Indeed, if $f'\colon X'\to Y$ is étale, then $X'$ is perfectly reduced \cite[Proposition B.6(ii)]{MR2679038}.
\end{exm}


\subsection[Grothendieck's Conjecture \& the proof of the Reconstruction Theorem]{Grothendieck's Conjecture \& the proof of the Reconstruction Theorem}\label{subsec:Grothendieckconjecture}

The étale fundamental group is an information-dense invariant, and Grothendieck's \textit{Anabelian Conjectures} are roughly an investigation of the extent to which the étale fundamental group is a \textit{complete} invariant for certain classes of schemes.
In dimension $0$, the classical Neukirch--Uchida Theorem \cites{MR0244211}{MR0258804}{MR0432593} ensures that two number fields are isomorphic if and only if their absolute Galois groups are isomorphic as profinite groups.
In dimension $1$, Tamagawa \cite{MR1478817} and Mochizuki \cite{MR1432110} show that dominant morphisms between smooth hyperbolic curves over suitable fields of characteristic zero can be detected at the level of fundamental groups.
Work of Pop \cite[Theorem 1]{MR1259367} shows that an isomorphism between two function fields over finitely generated fields can be detected at the level of Galois groups. 

If the étale fundamental group is information-dense, then the étale homotopy type must be even more so.
Indeed, Schmidt and Stix \cite[Theorem 1.2]{MR3549624} show that over a finitely generated field $k$ of characteristic $0$, if $ X $ and $Y$ are smooth, geometrically connected varieties that can be embedded as locally closed subschemes of a product of hyperbolic curves, then the map
\begin{equation*}
	\fromto{\Isom_k(X,Y)}{\Isom_{\BG_k}(\Shapeetprofin(X),\Shapeetprofin(Y))}
\end{equation*}
is a split injection with a natural retraction.
Here $\Isom_{\BG_k}$ denotes the set of homotopy classes of equivalences of profinite spaces over $\BG_k$.

In this section we discuss the relationship between the Galois category of a coherent scheme and Grothendieck's anabelian program.
We begin by isolating the class of schemes that appear in Grothendieck's Conjecture.

\begin{dfn}\label{dfn:absolute} 
	We call a scheme $ X $ \defn{absolute}\index[terminology]{scheme!absolute}\index[terminology]{absolute scheme} if $ X $ is perfectly reduced and the morphism $X\to \Spec \ZZup$ is topologically essentially of finite type. 
	Write $ \Sch_{\abs}\subset\Schperf $\index[notation]{Schabs@$ \Sch_{\abs} $} for the subcategory whose objects are absolute schemes and whose morphisms are of finite type.
\end{dfn}

In order to understand the extent to which the formation of the étale \topos of an absolute scheme is fully faithful, we need to isolate a basic property that morphisms of étale \topoi induced by morphisms of schemes have that general geometric morphisms do not have.
The things to notice is that Chevalley's Theorem ensures that any morphism of finite presentation between coherent schemes carries constructible sets to constructible sets.  

\begin{dfn}[admissible morphisms]
	Let $S$ and $T$ be spectral topological spaces.
	We say that a quasicompact continuous map $f\colon S\to T$ is \defn{admissible}\index[terminology]{admissible} if and only if the $ f $ sends constructible subset of $S$ to constructible subsets of $ T $.

	Accordingly, we say that a morphism $ \fromto{\Pi}{\Pi'} $ of profinite stratified spaces is \defn{admissible} if and only if the induced quasicompact continuous map of spectral topological spaces $ \fromto{\ho_0(\Pi)}{\ho_0(\Pi')} $ is admissible. 
	We write $\Strprofinadm \subset \Strprofin$ for the subcategory containing all objects with morphisms the admissible morphisms.

	Likewise, if $\XX$ and $\YY$ are bounded coherent \topoi, we say that a coherent geometric morphism $ f_{\ast} \colon \fromto{\XX}{\YY} $ is \defn{admissible}\index[terminology]{admissible} if and only if the induced quasicompact continuous map of spectral topological spaces $ \fromto{\Sup(\XX)}{\Sup(\YY)} $ is admissible (\Cref{ntn:0localicrefofcoherent}).
	We write
	\begin{equation*}
		\Topbcadm \subset \Topbc \index[notation]{Topbcadm@$ \Topbcadm $}
	\end{equation*}
	for the subcategory containing all objects with morphisms the admissible geometric morphisms.
\end{dfn}

\begin{nul}[admissible morphisms of Jacobson spaces]
	Recall that a topological space $ S $ is \defn{Jacobson} if every nonempty locally closed subset of $ S $ contains a closed point of $ S $ \stacks{005T}.
	If $S$ and $T$ are Jacobson spectral topological spaces, then a quasicompact continuous map $ f \colon \fromto{S}{T} $ is admissible if and only if $ f $ carries closed points of $ S $ to closed points of $ T $.
	Similarly, if $\Pi$ and $\Pi'$ are profinite stratified spaces such that $ \ho_0(\Pi) $ and $ \ho_0(\Pi) $ are Jacobson spectral topological spaces, then a morphism $ f \colon \fromto{\Pi}{\Pi'} $ is admissible if and only if $ f $ carries minimal objects to minimal objects.
\end{nul}

Here is the `tantalising conjecture' of Grothendieck in his letter to Faltings \cite[p. 7]{MR1483108}:

\begin{cnj}\label{cnj:main} 
	The functor
	\begin{equation*}
		(-)_{\et} \colon \fromto{\Sch_{\abs}}{(\Topbcadm)_{/(\Spec\ZZup)_{\et}}}
	\end{equation*}
	is fully faithful.
	In particular, if $X$ and $Y$ are absolute schemes, then any admissible geometric morphism $ X_{\et}\to Y_{\et}$ is induced by a finite type morphism $X\to Y$.
\end{cnj}

\Cref{cnj:main} implies the following stratified anabelian result:

\begin{cor}\label{thm:stratifiedanabelian}
	Assume \Cref{cnj:main}.
	Then the functor
	\begin{equation*}
		\Gal \colon \fromto{\Sch_{\abs}}{(\Strprofinadm)_{/\Gal(\Spec\ZZup)}}
	\end{equation*}
	is fully faithful.
	In particular, if $X$ and $Y$ are absolute schemes, then any admissible morphism $\Gal(X) \to \Gal(Y)$ is induced by a finite type morphism $X\to Y$.
\end{cor}

An early paper of Voevodsky \cite{MR1098621} provides a proof of \Cref{cnj:main} when restricted to normal absolute schemes of characteristic $0$.

\begin{thm}[{\cite[Theorem 3.1]{MR1098621}}]
	Let $k$ be a finitely generated field of characteristic $0$, and write $\Sch_{k}^{\norm}$ for the category of normal schemes of finite type over $k$.
	Then the functor
	\begin{equation*}
		(-)_{\et} \colon \fromto{\Sch_{k}^{\norm}}{(\Topbcadm)_{/(\Spec k)_{\et}}}
	\end{equation*}
	is fully faithful.
\end{thm}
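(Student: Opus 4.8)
The plan is to deduce this fully faithfulness statement from the combination of our \Categorical Hochster Duality Theorem, the reconstruction of the étale \topos of a coherent scheme from its profinite Galois category, and Voevodsky's theorem on reconstruction of normal varieties from their étale \topoi. The key point is that everything in the statement is an assertion about the functor $\goesto{X}{X_{\et}}$, and the two reconstruction results in question -- \Cref{thm:inftyHochster} and the cited theorem of Voevodsky -- are already in the literature (the latter) or established above (the former). So the proof is largely a matter of assembling the pieces and checking compatibility of the various categorical structures.

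First I would observe that for a coherent scheme $X$ the étale \topos $X_{\et}$ is a spectral $X^{\zar}$-stratified \topos (\Cref{exm:etaleisspectra}), so by \Categorical Hochster Duality (\Cref{thm:inftyHochster}) one has $X_{\et}\simeq\widetilde{\Gal(X)}$ and, crucially, the functor $\goesto{X}{X_{\et}}$ is \emph{conservative} on the appropriate class of schemes: a morphism $f\colon\fromto{X}{Y}$ of reduced normal $k$-varieties induces an equivalence $X_{\et}\to Y_{\et}$ if and only if it induces an equivalence $\Gal(X)\to\Gal(Y)$, which (since these schemes are perfectly reduced in characteristic $0$ -- seminormal normal varieties are seminormal, hence perfectly reduced by \Cref{exm:twnsch}) forces $f$ to be a universal homeomorphism, hence an isomorphism, since a universal homeomorphism between reduced normal schemes is an isomorphism. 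This handles conservativity. For full faithfulness, I would invoke Voevodsky's theorem \cite[Theorem 3.1]{MR1098621} directly: it asserts precisely that $\goesto{X}{X_{\et}}$ is a fully faithful functor from $\Sch_{\norm,k}$ to admissible coherent geometric morphisms over $(\Spec k)_{\et}$. The only subtlety is to match Voevodsky's formulation -- which is phrased in terms of ordinary ($1$-)topoi -- with ours, phrased in terms of $1$-localic \topoi; this is exactly the equivalence $\Top_{\infty}^{1,\coh}\simeq\Top_{1}^{\coh}$ of \Cref{prop:coherent1localic}, together with the observation that this equivalence restricts to one respecting admissibility (since admissibility is defined via the $0$-localic reflection, which only sees the underlying locale, hence is unaffected by passing from a $1$-topos to the associated $1$-localic \topos).

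The main steps, then, are: (1) identify the category $\Topbcadm_{/(\Spec k)_{\et}}$ with the corresponding $2$-category of ordinary coherent topoi over $(\Spec k)_{1\text{-}\et}$ with admissible coherent geometric morphisms, via \Cref{prop:coherent1localic} and the definition of admissibility; (2) observe that under this identification the functor $\goesto{X}{X_{\et}}$ from $\Sch_{\norm,k}$ corresponds to Voevodsky's functor, using that the étale $1$-topos of a scheme is the $0$-truncation of its étale \topos (and that étale morphisms of finite type induce admissible geometric morphisms by Chevalley's theorem); (3) apply Voevodsky's theorem \cite[Theorem 3.1]{MR1098621} verbatim. I expect the main obstacle to be step (2): one must verify carefully that the mapping space $\Map_{(\Topbcadm)_{/(\Spec k)_{\et}}}(X_{\et},Y_{\et})$ really is discrete (so that asserting an equivalence of mapping \emph{spaces} reduces to Voevodsky's assertion about sets of geometric morphisms), and that the notion of `admissible' morphism of $\infty$-topoi we use agrees on the nose with Voevodsky's hypothesis on the induced maps of spectral spaces. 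Discreteness of the mapping space is not automatic -- a priori a geometric morphism of $1$-localic \topoi has a space of automorphisms -- but it follows because coherent geometric morphisms between $1$-localic coherent \topoi are detected on $0$-truncated objects (\Cref{cor:coherent1localicmors}), and geometric morphisms of ordinary topoi, regarded as objects of a $2$-category, have discrete mapping spaces when one remembers only the direction of $2$-cells appropriate to our conventions; alternatively one can cite Conceptual Completeness (\Cref{thm:conceptualcompleteness}) to reduce to the (discrete) level of functors of \categories of points. Once discreteness is in hand, the remaining verifications are routine bookkeeping about sites and the definition of the étale topology, and the theorem follows.
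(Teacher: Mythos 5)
The paper offers no proof of this statement at all: it is imported verbatim from Voevodsky, so your proposal --- which reduces everything to \cite[Theorem 3.1]{MR1098621} together with the translation between coherent $1$-topoi and coherent $1$-localic \topoi via \Cref{prop:coherent1localic} and Conceptual Completeness --- takes exactly the same route, merely making explicit the bookkeeping the paper leaves implicit. The one superfluous piece is your opening discussion of conservativity via Hochster duality and perfect reducedness: a fully faithful functor is automatically conservative, so none of that is needed for the statement as given.
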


\noindent Voevodsky also claims that, with some modifications, his proof will work when $k$ is a finitely generated field of positive characteristic and of transcendence degree $ \geq 1$.

Voevodsky's result combined with Conceptual Completeness (\Cref{thm:conceptualcompleteness}=\allowbreak\SAG{Theorem}{A.9.0.6}) show that a morphism $ f \colon \fromto{X}{Y} $ of reduced normal schemes of finite type over a finitely generated filed of characteristic $ 0 $ is an isomorphism if and only if $ f $ induces and equivalence on categories of points $ \equivto{\Pt(X_{\et})}{\Pt(Y_{\et})} $.
Combining our \categorical Hochster Duality Theorem with Voevodsky's Theorem and our identification of $\StrShapeet(X)$ with the topological category $\Gal(X) $ (\Cref{cnstr:idPiinfty1withGal}), we can upgrade this conservativity result to the following strong Reconstruction Theorem for these schemes:

\begin{thm}[Reconstruction]\label{nul:mainthmonGal}
	Let $k$ be a finitely generated field of characteristic $ 0 $ and let \smash{$ \kbar \supset k $} be an algebraic closure of $ k $.
	Then for any normal $k$-schemes $X$ and $Y$ of finite type, the natural map
	\begin{equation*}
		\Mor_k(X,Y) \to \Mor_{\BG_k}(\Gal(X),\Gal(Y))
	\end{equation*}
	identifies $ \Mor_k(X,Y) $ with the subgroupoid of continuous functors $ \fromto{\Gal(X)}{\Gal(Y)} $ that carry minimal objects to minimal objects.

	In particular, if $X$ and $Y$ are normal $k$-schemes of finite type, and $ \Gal(X) $ and $ \Gal(Y)$ are equivalent as topological categories over $\BG_k$, then $X$ and $Y$ are isomorphic as $ k $-schemes.
\end{thm}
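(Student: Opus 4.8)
The plan is to assemble \Cref{nul:mainthmonGal} from three ingredients already available: (i) the identification of $\Pi_{(\infty,1)}^{\et,\wedge}(X)$ with the topological category $\Gal(X)$ from \Cref{cnstr:idPiinfty1withGal}; (ii) the \Categorical Hochster Duality equivalence $\widehat{\lambda}\colon\equivto{\Strat^{\wedge}_{\pi}}{\StrTopspec_{\infty}}$ of \Cref{thm:inftyHochster}, which is moreover fully faithful as a functor into $\Top_{\infty}$ since $\StrTopspec_{\infty}\subset\Top_{\infty}^{\wedge}$ is full; and (iii) Voevodsky's Theorem together with Conceptual Completeness (\Cref{thm:conceptualcompleteness}). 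First I would fix reduced normal $k$-schemes $X$ and $Y$ of finite type and unwind the target of the claimed map. By \Cref{cnstr:idPiinfty1withGal}, $X_{\et}\simeq\widetilde{\Gal(X)}$ and $Y_{\et}\simeq\widetilde{\Gal(Y)}$, and since $\widehat{\lambda}$ is fully faithful the space of geometric morphisms $\fromto{X_{\et}}{Y_{\et}}$ over $(\Spec k)_{\et}\simeq\widetilde{BG_k}$ is naturally equivalent to the space of continuous functors $\fromto{\Gal(X)}{\Gal(Y)}$ over $BG_k$. Because $\Gal(X)$ and $\Gal(Y)$ are $1$-truncated profinite stratified spaces, this mapping space is a $1$-groupoid (indeed, by \Cref{nul:1truncatedprofinstrat}, it is the groupoid of continuous functors between the corresponding layered category objects in Stone topological spaces).

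Next I would match the admissibility condition on the topos side with the `preserves minimal objects' condition on the categorical side. The étale topos $X_{\et}$ of a reduced normal $k$-variety has $0$-localic reflection $\widetilde{S(X_{\et})}=\widetilde{X^{\zar}}$, and $X^{\zar}$ is a Jacobson spectral space; thus a coherent geometric morphism $\fromto{X_{\et}}{Y_{\et}}$ is admissible if and only if the induced map $\fromto{X^{\zar}}{Y^{\zar}}$ carries closed points to closed points, equivalently if and only if the associated functor $\fromto{h_0\Gal(X)}{h_0\Gal(Y)}$ carries minimal objects to minimal objects, equivalently if and only if the continuous functor $\fromto{\Gal(X)}{\Gal(Y)}$ carries minimal objects to minimal objects. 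This identifies the subgroupoid of admissible geometric morphisms $\fromto{X_{\et}}{Y_{\et}}$ over $\widetilde{BG_k}$ with the subgroupoid of continuous functors $\fromto{\Gal(X)}{\Gal(Y)}$ over $BG_k$ preserving minimal objects.

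Then I would invoke Voevodsky's Theorem \cite[Theorem 3.1]{MR1098621}: the functor $\goesto{X}{X_{\et}}$ from $\Sch_{\norm,k}$ to $(\Topbcadm)_{/(\Spec k)_{\et}}$ is fully faithful. Combining this with the two previous paragraphs, the composite
\begin{equation*}
	\Mor_k(X,Y)\to\Map_{\widetilde{BG_k}}^{\adm}(X_{\et},Y_{\et})\to\Mor_{BG_k}^{\textup{min}}(\Gal(X),\Gal(Y))
\end{equation*}
is an equivalence, where the superscript on the middle term denotes admissible geometric morphisms over $\widetilde{BG_k}$ and on the right the subgroupoid of continuous functors preserving minimal objects; this is precisely the first assertion. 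For the final sentence: an equivalence $\Gal(X)\to\Gal(Y)$ of topological categories over $BG_k$ is in particular a continuous functor preserving minimal objects (an equivalence of categories carries minimal objects to minimal objects), hence by the first part is induced by a $k$-morphism $f\colon\fromto{X}{Y}$; applying the same reasoning to the inverse equivalence produces $g\colon\fromto{Y}{X}$, and since $\goesto{X}{\Gal(X)}$ is a functor with $\Gal(gf)\simeq\id$ and $\Gal(fg)\simeq\id$, full faithfulness forces $gf\simeq\id_X$ and $fg\simeq\id_Y$, so $X\isomorphic Y$.

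The step I expect to be the main obstacle is the precise bookkeeping in the second paragraph: verifying that the notion of admissibility built into $\Topbcadm$ (defined via the $0$-localic reflection functor $S$ and admissible maps of spectral spaces) translates exactly into the `carries minimal objects to minimal objects' condition on $h_0$ of the associated profinite stratified category, and that this is compatible with the $BG_k$-equivariant structure. This requires knowing that $X^{\zar}$ is Jacobson for $X$ of finite type over $k$ (so that admissibility is equivalent to preservation of closed points), that $h_0\Gal(X)\simeq X^{\zar}$ as posets-with-Galois-action, and that $\widehat{\lambda}$ intertwines the functor $S$ on $\Topbc$ with the functor $h_0$ on $\Strat^{\wedge}_{\pi}$ — all of which follow from the constructions in \cref{subsec:naturalstrat} and \cref{sec:profinstratshape}, but must be checked carefully. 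Everything else is a formal consequence of full faithfulness of $\widehat{\lambda}$ and of Voevodsky's theorem.
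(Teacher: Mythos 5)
Your proposal is correct and follows essentially the same route as the paper: the theorem is obtained by combining Voevodsky's full-faithfulness theorem for $\goesto{X}{X_{\et}}$ with \Categorical Hochster Duality (full faithfulness of $\widehat{\lambda}$) and the identification $X_{\et}\simeq\widetilde{\Gal(X)}$, with admissibility translating into preservation of minimal objects because $X^{\zar}$ is Jacobson. Your second paragraph in fact spells out the admissibility bookkeeping more explicitly than the paper does, but the underlying argument is identical.
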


Thus the category of normal $k$-schemes of finite type can be embedded as a subcategory of profinite categories with an action of $\Gk$, as asserted in \Cref{lede:reconstruction}.

\begin{nul}
	The data of the map \smash{$\Gal(X) \to \BG_k$} is the same as a continuous \smash{$ \Gk $} action on the fiber over the point of \smash{$ \BG_k $} specified by the algebraic closure \smash{$ \kbar \supset k $}.
	This fiber can be identified with the Galois category \smash{$ \Gal(X_{\kbar}) $}.
\end{nul}

\begin{exm}\label{exm:StratRiemannfgfield}
	Let $ k $ be a finitely generated field of characteristic $ 0 $ and choose a complex embedding $ \incto{\kbar}{\CCup} $.
	Then by Stratified Riemann Existence (\Cref{cor:stratRiemann}), a normal $k$-variety $X$ can be reconstructed from the profinite stratified space
	\begin{equation*}
		\StrShape(\widetilde{X_{\kbar}^{\an}};X_{\kbar}^{\zar})
	\end{equation*}
	with its $\Gk$-action.
\end{exm}


\subsection{Example: Curves}\label{subsec:examplecurves}

Let $ k $ be a finitely generated field of characteristic $ 0 $.
In this section, we illustrate how to use \Cref{nul:mainthmonGal} to reconstruct a connected, smooth, complete curve over $k$ from a combination of stratified-homotopy-theoretic and Galois-theoretic data.

\begin{ntn}
	Let $n \geq 2$ be an integer.
	Let $\fan_n$\index[notation]{zzzn@$ \fan_n $} be the poset with underlying set $ \{0,1,\dots,n-1,\infty\} $, where $0,1,\dots,n-1$ are pairwise incomparable, and for each element $i\in\{0,1,\dots,n-1\}$ we have $ i < \infty$.
	Let $p_n \colon \fan_{n+1} \to \fan_n$ be the map of posets defined by
	\begin{equation*}
		p_n(i) \coloneq \begin{cases}
			i\comma & i\in\{0,1,\dots,n-1\}  \\
			\infty\comma & i\in\{n,\infty\} \period
		\end{cases}
	\end{equation*}
	Write $ \fan $\index[notation]{zzz@$ \fan $} for the profinite poset definied by the inverse system
	\begin{equation*}
		\begin{tikzcd}[sep=1.5em]
			\cdots \arrow[r] & \fan_4 \arrow[r, "p_3"] & \fan_3 \arrow[r, "p_2"] & \fan_2 \period
		\end{tikzcd}
	\end{equation*}
	As a spectral topological space, $ \fan $ is isomorphic to the underlying Zariski topological space of any connected, normal curve.
\end{ntn}

For each integer $ g \geq 0 $ we construct a profinite stratified space $ \Chat_g $ that abstractly plays the role of a connected, smooth, complete curve of genus $ g $.
To do this, we first identify the décollage over $ \fan_n $ associated to the exit-path \category of a closed smooth surface of genus $ g $ with closed strata given by $ n $ marked points.

\begin{cnstr}[{the profinite stratified space \smash{$ \Chat_g $}}]
	Let $g\geq 0$ be an integer.
	Define a spatial décollage
	\begin{equation*}
		\upC_{g,n} \colon \sdop(\fan_n) \to \Space \index[notation]{Cgn@$ \upC_{g,n} $}
	\end{equation*}
	over $\fan_n$ as follows.
	For each $i\in \{0,1,\dots,n-1\}$, set $\upC_{g,n}\{i\} \coloneq \{i\}$, and let $\upC_{g,n}\{\infty\}$ be the classifying space of the free group on generators
	\begin{equation*}
		a_1,\ b_1,\ a_2,\ b_2,\ \dots,\ a_g,\ b_g,\ c_1,\ c_2,\ \dots,\ c_{n-1} \period
	\end{equation*}
	For each $i\in \{0,1,\dots,n-1\}$, let $\upC_{g,n}\{i<\infty\}$ be the classifying space of the free group on a single generator $\xi_i$.
	The morphisms $\upC_{g,n}\{i<\infty\} \to \upC_{g,n}\{\infty\}$ carry the generator $\xi_i$ to
	\begin{equation*}
		\begin{cases}
			\left( [a_1, b_1][a_2, b_2] \cdots [a_g, b_g] \right)( c_1 c_2 \dots c_{n-1})^{-1} \comma & i=0  \\
			c_i \comma & i \neq 0 \period
		\end{cases}
	\end{equation*}
	
	Define a morphism of spatial décollages $ \upC_{g,n+1} \to \upC_{g,n}$ over $p_n$ by carrying $a_j \mapsto a_j$, $b_j \mapsto b_j$, $c_i \mapsto c_i$ for $i\in \{0,1,\dots,n-1\}$, and $ c_n $ to the identity.
	We abuse notation and also write $ \upC_{g,n} $ for the $ \fan_{n} $-stratified space associated to the spatial décollage $ \upC_{g,n} $.
	With this notation, we have defined an inverse system of stratified spaces
	\begin{equation*}
		\cdots \to \upC_{g,4} \to \upC_{g,3} \to \upC_{g,2} \period
	\end{equation*}

	For each integer $ n \geq 2 $, let \smash{$ \Chat_{g,n} $}\index[notation]{Chatgn@$ \Chat_{g,n} $} be the profinite completion of the $ \fan_n $-stratified space \smash{$ \upC_{g,n} $}.
	We write \smash{$ \Chat_g $}\index[notation]{Chatg@$ \Chat_{g} $} for the profinite $ \fan $-stratified space defined by the inverse system of profinite stratified spaces
	\begin{equation*}
		\cdots \to \Chat_{g,4} \to \Chat_{g,3} \to \Chat_{g,2} \period
	\end{equation*}
\end{cnstr}

For the remainder of this section we fix a finitely generated field $k$ of characteristic $0$ and an algebraic closure \smash{$ \kbar \supset k $} of $ k $.
The following is immediate from \Cref{cor:stratRiemann}.

\begin{prp}
	Let $C$ be a connected, smooth, complete curve over $k$ of genus $g$.
	Then $\Gal(C_{\kbar})$ is equivalent to the profinite stratified space $\Chat_g$.
\end{prp}

\Cref{nul:mainthmonGal} says that the curve $C$ can be reconstructed from the profinite stratified space $\Gal(C_{\kbar}) \simeq \Chat_g$ with its action of $\Gk$.

To explain this point in more detail, let us make the following slightly tongue-in-cheek definition.

\begin{dfn}
	Let $ k $ be a field.
	An \defn{incorporeal field extension}\index[terminology]{incorporeal!field extension} of $k$ is a finite transitive $\Gk$-set.
\end{dfn}

Galois theory shows that the assignment $ E \mapsto \Gal(E \otimes_k \kbar) $ defines an equivalence from the category of finite extensions of $ k $ to the category of incorporeal field extensions of $k$.
We partially extend this to curves.

\begin{dfn}
	Let $ k $ be a field and $ g \geq 0 $ an integer.
	An \defn{incorporeal curve over $k$ of genus $g$}\index[terminology]{incorporeal!curve} is a continuous action $ \alpha $ of $\Gk$ on $\Chat_g$.

	Let $ (\Chat_{g_1},\alpha_1) $ and $ (\Chat_{g_2},\alpha_2) $ be incorporeal curves over $ k $.
	A \defn{$k$-morphism}
	\begin{equation*}
		(\Chat_{g_1},\alpha_1) \to (\Chat_{g_2},\alpha_2)
	\end{equation*}
	is a $\Gk$-equivariant continuous functor $ \Chat_{g_1} \to \Chat_{g_2} $.

	Let $S$ be an incorporeal field extension of $k$.
	An \defn{$S$-point} of an incorporeal curve $(\Chat_g,\alpha)$ over $k$ is a $\Gk$-equivariant functor $ S \to \Chat_g $.
\end{dfn}

Incorporeal curves are completely group-theoretic objects.
They amount to inverse families of free profinite groups along with actions of $\Gk$.

\begin{nul}
	\Cref{nul:mainthmonGal} implies that the assignment $C \mapsto \Gal(C_{\kbar})$ defines a fully faithful functor from connected, smooth, complete curves over $k$ to incorporeal curves over $k$.

	Additionally, it allows one to reconstruct the points of $C$ from the corresponding incorporeal curve.
	For any finite extension $ k' \supset k$, we have a natural bijection between the set of $ k' $-points of $C$ and the set of \smash{$\Gal(k' \otimes_k \kbar)$}-points of $\Gal(C_{\kbar})$.
\end{nul}


\newpage

\DeclareFieldFormat{labelnumberwidth}{#1}
\printbibliography[keyword=alph, heading=references]
\DeclareFieldFormat{labelnumberwidth}{{#1\adddot\midsentence}}
\printbibliography[heading=none, notkeyword=alph]


\newpage
\printindex[notation]

\newpage
\printindex[terminology]

\end{document}